\newenvironment{rouge}
{\relax\color{red}}
{\hspace*{.3ex}\relax}
\newcommand{\ber}[1][{\mbox{$\bullet$}}]
{\begin{rouge}\marginnote{#1}}
\newcommand{\er}{\end{rouge}}
\newenvironment{bleu}
{\relax\color{blue}}
{\hspace*{.3ex}\relax}
\newcommand{\beb}{\begin{bleu}}
\newcommand{\eb}{\end{bleu}}
\newcommand{\nc}{\newcommand}
\theoremstyle{plain}
\newtheorem{theorem}{Theorem}[subsection]
\newtheorem*{theorem*}{Theorem}
\newtheorem{corollary}[theorem]{Corollary}
\newtheorem{proposition}[theorem]{Proposition}
\newtheorem{lemma}[theorem]{Lemma}
\newtheorem{sublemma}[theorem]{Sublemma}
\theoremstyle{definition}
\newtheorem{definition}[theorem]{Definition}
\newtheorem*{example*}{Example}
\newtheorem{notation}[theorem]{Notation}
\newtheorem{remark}[theorem]{Remark}
\newtheorem{convention}[theorem]{Convention}
\renewcommand{\emptyset}{\varnothing}
\newcommand{\ooint}[1]{\left]{#1}\right[}
\newcommand{\point}{{\{\mathrm{pt}\}}}
\newcommand{\DUnion}{\bigsqcup\limits}
\newcommand{\dunion}{\sqcup}
\newcommand{\union}{\cup}
\newcommand{\Union}{\bigcup\limits}
\newcommand{\C}{\mathbb{C}}
\newcommand{\R}{\mathbb{R}}
\newcommand{\Z}{\mathbb{Z}}
\newcommand{\op}{\mathrm{op}}
\DeclareMathOperator{\id}{id}
\newcommand{\derd}{\mathsf{D}}
\newcommand{\dere}{\mathsf{E}}
\newcommand{\derr}{\mathsf{R}}
\newcommand{\derl}{\mathsf{L}}
\newcommand{\BDC}{\derd^{\mathrm{b}}}
\newcommand{\DSum}{\bigoplus}
\newcommand{\dsum}[1][]{\mathbin{\oplus_{#1}}}
\newcommand{\ilim}[1][]{\mathop{\varinjlim}\limits_{#1}}
\newcommand{\comp}{\mathbin{\circ}}
\renewcommand{\to}[1][]{\xrightarrow{#1}}
\newcommand{\from}[1][]{\xleftarrow{#1}}
\newcommand{\isofrom}[1][]{\xleftarrow[#1]%
{\raisebox{-.4ex}[0ex][-.4ex]{$\mspace{2mu}\sim\mspace{2mu}$}}}
\newcommand{\isoto}[1][]{\xrightarrow[#1]{%
{\raisebox{-.4ex}[0ex][-.4ex]{$\mspace{1mu}\sim\mspace{2mu}$}}}}
\newcommand{\Endo}[1][]{\mathrm{End}_{\raise1.5ex\hbox to.1em{}#1}}
\newcommand{\Hom}[1][]{\mathrm{Hom}_{\raise1.5ex\hbox to.1em{}#1}}
\newcommand{\RHom}[1][]{\derr\mathrm{Hom}_{\raise1.5ex\hbox to.1em{}#1}}
\newcommand{\Ext}[2][]{\mathrm{Ext}_{\raise1.5ex\hbox to.1em{}#1}^{#2}}
\newcommand{\Mod}{\mathrm{Mod}}
\newcommand{\Tens}[1][]{\mathbin{\otimes_{\raise1.5ex\hbox to-.1em{}#1}}}
\newcommand{\LTens}[1][]{\mathbin{\otimes_{\raise1.5ex\hbox to-.1em{}#1}^{\derl}}}
\newcommand{\Tor}[2][]{\mathrm{Tor}^{\raise1.5ex\hbox to.1em{}#1}_{#2}}
\newcommand{\sheaffont}[1]{\mathcal{#1}}
\def\sha{\sheaffont{A}}
\def\shb{\sheaffont{B}}
\def\shc{\sheaffont{C}}
\def\shd{\sheaffont{D}}
\def\she{\sheaffont{E}}
\def\shi{\sheaffont{I}}
\def\shk{\sheaffont{K}}
\def\shl{\sheaffont{L}}
\def\shm{\sheaffont{M}}
\def\shn{\sheaffont{N}}
\def\shs{\sheaffont{S}}
\def\sht{\sheaffont{T}}
\newcommand{\sect}{\varGamma}
\newcommand{\rsect}{\derr\varGamma}
\newcommand{\shendo}[1][]{{\sheaffont{E}nd}_{\raise1.5ex\hbox to.1em{}#1}}
\renewcommand{\hom}[1][]{{\sheaffont{H}om}_{\raise1.5ex\hbox to.1em{}#1}}
\newcommand{\aut}[1][]{{\sheaffont{A}ut}_{\raise1.5ex\hbox to.1em{}#1}}
\newcommand{\inn}[1][]{{\sheaffont{I}nn}_{\raise1.5ex\hbox to.1em{}#1}}
\newcommand{\rhom}[1][]{{\derr\sheaffont{H}om}_{\raise1.5ex\hbox to.1em{}#1}}
\newcommand{\ext}[2][]{{\sheaffont{E}xt}_{\raise1.5ex\hbox to.1em{}#1}^{#2}}
\newcommand{\thom}[1][]{{\sheaffont{T}hom}_{\raise1.5ex\hbox to.1em{}#1}}
\newcommand{\tens}[1][]{\mathbin{\otimes_{\raise1.5ex\hbox to-.1em{}#1}}}
\newcommand{\ltens}[1][]{\mathbin{\otimes_{\raise1.5ex\hbox to-.1em{}#1}^{\derl}}}
\newcommand{\tor}[2][]{{\sheaffont{T}or}^{\raise1.5ex\hbox to.1em{}#1}_{#2}}
\newcommand{\wtens}{\mathbin{\mathop{\otimes}\limits^{{}_{\mathrm{w}}}}}
\newcommand{\etens}[1][]{\mathbin{\boxtimes_{\raise1.5ex\hbox to-.1em{}#1}}}
\DeclareMathOperator{\supp}{supp}
\newcommand{\oim}[1]{#1_*}
\newcommand{\roim}[1]{\derr#1_*}
\newcommand{\roimv}[1]{\derr#1}
\newcommand{\reim}[1]{\derr#1_{\mspace{.5mu}!}\mspace{2mu}}
\newcommand{\reimv}[1]{\derr#1\mspace{2mu}}
\newcommand{\reeim}[1]{\derr#1_{\mspace{1mu}!!}\mspace{1mu}}
\newcommand{\reeimv}[1]{\derr#1\mspace{1mu}}
\newcommand{\opb}[1]{#1^{-1}}
\newcommand{\opbv}[1]{#1}
\newcommand{\epb}[1]{#1^{\mspace{1.5mu}!}\mspace{2mu}}
\newcommand{\epbv}[1]{#1\mspace{2mu}}
\DeclareMathOperator{\ori}{or}
\newcommand{\tenstop}[1][]{\mathbin{\hat{\otimes}_{\raise1.5ex\hbox to-.1em{}#1}}}
\newcommand{\homtop}[1][]{\sheaffont{L}_{\raise1.5ex\hbox to.1em{}#1}}
\newcommand{\Homtop}[1][]{\mathrm{L}_{\raise1.5ex\hbox to.1em{}#1}}
\newcommand{\D}{\sheaffont{D}}
\renewcommand{\O}{\sheaffont{O}}
\newcommand{\Db}{\sheaffont{D}b}
\DeclareMathOperator{\chv}{char}
\newcommand{\detens}[1][]%
{\mathbin{\boxtimes_{\raise1.5ex\hbox to-.1em{}#1}^{\mspace{2mu}\mathsf{D}}}}
\newcommand{\doim}[1]{{\mathsf{D}#1}_*\mspace{1mu}}
\newcommand{\doimv}[1]{{\mathsf{D}#1}}
\newcommand{\dopb}[1]{{\mathsf{D}#1}^{\mspace{1mu}*}}
\newcommand{\dopbv}[1]{{\mathsf{D}#1}}
\newcommand{\dtens}[1][]{\mathbin{\otimes_{\raise1.5ex\hbox to-.1em{}#1}^{\mathsf{D}}}}
\newcommand{\ddual}{\mathbb{D}}
\newcommand{\good}{\mathrm{good}}
\newcommand{\qgood}{\mathrm{q\text-good}}
\newcommand{\coh}{\mathrm{coh}}
\newcommand{\hol}{\mathrm{hol}}
\newcommand{\reghol}{{\mathrm{rh}}}
\newcommand{\Cfield}{\C}
\newcommand{\iCfield}{\ind\C}
\newcommand{\field}{\mathbf{k}}
\newcommand{\ind}{\mathrm{I}\mspace{2mu}}
\newcommand{\ifield}{\ind\field}
\newcommand{\Rc}{{\R\text-\mathrm{c}}}
\newcommand{\Cc}{{\C\text-\mathrm{c}}}
\newcommand{\reg}{{\operatorname{reg}}}
\newcommand{\shEnd}[1][]{\sheaffont{E}\mspace{-.5mu}nd_{#1}}
\newcommand{\ctens}{\mathbin{\mathop\otimes\limits^+}}
\newcommand{\cetens}{\mathbin{\mathop\boxtimes\limits^+}}
\newcommand{\cihom}{{\shi hom}^+}
\newcommand{\PR}{\mathsf{P}}
\newcommand{\PP}{\mathbb{P}}
\newcommand{\Ci}{\shc^{\infty}}
\newcommand{\dist}{\mathrm{dist}}
\newcommand{\sa}{\mathrm{sa}}
\newcommand{\tmp}{\mathsf{t}}
\newcommand{\dr}{\mathcal{DR}}
\newcommand{\drt}{\mathcal{DR}^\tmp}
\newcommand{\sol}{\mathcal Sol}
\newcommand{\solt}{\mathcal Sol^{\mspace{2.5mu}\tmp}}
\newcommand{\Dbt}{\Db^{\mspace{1mu}\tmp}}
\newcommand{\Dbvt}{\Db^{\mspace{1mu}\tmp,\vee}}
\newcommand{\Cit}{\shc^{\infty,\tmp}}
\newcommand{\Ot}{\O^{\mspace{2mu}\tmp}}
\newcommand{\Ovt}{\Omega^{\mspace{1mu}\tmp}}
\renewcommand{\Re}{\operatorname{Re}}
\renewcommand{\Im}{\operatorname{Im}}
\newcommand{\ihom}[1][]{{\shi hom}_{\raise1.5ex\hbox to.1em{}#1}}
\newcommand{\rihom}[1][]{{\derr\mspace{2mu}\shi hom}_{\raise1.5ex\hbox to.1em{}#1}}
\newcommand{\ii}[1][]{{\sheaffont{I}h}_{\raise1.5ex\hbox to.1em{}#1}}
\renewcommand{\ss}{\operatorname{sing.supp}}
\newcommand{\indlim}[1][]{\mathop{\text{\rm``$\varinjlim$''}}\limits_{#1}}
\newcommand{\prolim}[1][]{\mathop{\text{\rm``$\varprojlim$''}}\limits_{#1}}
\newcommand{\fb}{\mathfrak{b}}
\newcommand{\ft}{\mathfrak{t}}
\renewcommand{\comp}[1][]{\mathbin{\mathop{\circ}\limits_{#1}}}
\newcommand{\dcomp}[1][]{\mathbin{\circ_{\raise1.5ex\hbox to-.1em{}#1}^{\mathsf{D}}}}
\newcommand{\mop}{\mathrm{r}}
\newcommand{\enh}{\mathsf{E}}
\newcommand{\Tmp}{\mathsf{T}}
\newcommand{\OEn}{\O^\enh}
\newcommand{\DbT}{\Db^\Tmp}
\newcommand{\DbE}{\Db^\enh}
\newcommand{\OvE}{\Omega^\enh}
\newcommand{\drE}{\mathcal{DR}^\enh}
\newcommand{\solE}{\mathcal{S}ol^{\mspace{1mu}\enh}}
\newcommand{\fhom}{\mathcal{H}om^\enh}
\nc{\Bec}[2][{\mathrm{b}}]{\dere^{#1}(\ifield_{#2})}
\newcommand{\BEC}[2][\ifield]{\dere^{\mathrm{b}}(#1_{#2})}
\newcommand{\BECRc}[2][\ifield]{\dere^{\mathrm{b}}_\Rc(#1_{#2})}
\newcommand{\BECp}[2][\ifield]{\dere^{\mathrm{b}}_+(#1_{#2})}
\newcommand{\BECm}[2][\ifield]{\dere^{\mathrm{b}}_-(#1_{#2})}
\newcommand{\BECpm}[2][\ifield]{\dere^{\mathrm{b}}_\pm(#1_{#2})}
\newcommand{\Edual}{\dual^\enh}
\newcommand{\Eoim}[1]{{\enh#1}_*}
\newcommand{\Eoimv}[1]{{\enh#1}}
\newcommand{\Eeeim}[1]{{\enh#1}_{!!}}
\newcommand{\Eeeimv}[1]{{\enh#1}}
\newcommand{\Eopb}[1]{{\enh#1}^{-1}}
\newcommand{\Eopbv}[1]{{\enh#1}}
\newcommand{\Eepb}[1]{{\enh\mspace{1mu}#1}^{\mspace{1.5mu}!}}
\newcommand{\Eepbv}[1]{{\enh\mspace{1mu}#1}}
\newcommand{\LE}{\operatorname{L^\enh}}
\newcommand{\RE}{\operatorname{R^\enh}}
\newcommand{\bL}{\check L}
\newcommand{\bM}{\check M}
\newcommand{\bN}{\check N}
\newcommand{\tot}{{\operatorname{tot}}}
\newcommand{\sing}{{\operatorname{sing}}}
\newcommand{\suban}{{\operatorname{suban}}}
\newcommand{\semicolon}{\nobreak \mskip2mu\mathpunct{}\nonscript\mkern-\thinmuskip{;}\mskip6mu plus1mu\relax}
\newcommand{\dual}{\mathrm{D}}
\newcommand{\sep}{\mspace{2mu}}
\newenvironment{myequation}
{\relax\setlength{\arraycolsep}{1pt}\begin{eqnarray}}
{\end{eqnarray}}
\newenvironment{myequationn}
{\relax\setlength{\arraycolsep}{1pt}\begin{eqnarray*}}
{\end{eqnarray*}}
\nc{\eq}{\begin{myequation}}
\nc{\eneq}{\end{myequation}}
\nc{\eqn}{\begin{myequationn}}
\nc{\eneqn}{\end{myequationn}}
\newcommand{\defeq}{\mathbin{:=}}
\newcommand{\bl}{\bigl(}
\newcommand{\br}{\bigr)}
\newcommand{\To}[1][]{\xrightarrow[]{\mspace{10mu}{#1}\mspace{10mu}}}
\newenvironment{myarray}[1]{\relax\setlength{\arraycolsep}{1pt}

\begin{array}{#1}}{\end{array}\relax}
\newcommand{\ba}{\begin{myarray}}
\newcommand{\ea}{\end{myarray}}
\newcommand{\db}[1]{\raisebox{-.3ex}[1.5ex][1ex]{$#1$}}
\newcommand{\hs}{\hspace*}
\newcommand{\be}{\begin{enumerate}}
\newcommand{\ee}{\end{enumerate}}
\newcommand{\bnum}{\be[{\rm(i)}]}
\newcommand{\bna}{\be[{\rm(a)}]}
\nc{\bwr}{\mbox{\large{$\wr$}}}
\nc{\vphi}{\varphi}
\nc{\Proof}{\begin{proof}}
\nc{\QED}{\end{proof}}
\nc{\Cor}{\begin{corollary}}
\nc{\encor}{\end{corollary}}
\nc{\ol}{\overline}
\nc{\vs}{\vspace*}
\nc{\monoto}{\rightarrowtail}
\nc{\tone}{\To[+1]}
\nc{\set}[2]{\left\{#1 \hs{.3ex};\hs{.3ex}#2\right\}}
\nc{\tp}{\tilde{p}}
\nc{\cl}{\colon}
\nc{\cor}{\field}
\newcommand{\soplus}{\mathop{\scalebox{0.8}{$\displaystyle\bigoplus$}}\limits}
\begin{document}

\title[Riemann-Hilbert correspondence]%
{Riemann-Hilbert correspondence for holonomic D-modules}
\author[A.~D'Agnolo]{Andrea D'Agnolo}
\address{Dipartimento di Matematica\\
Universit{\`a} di Padova\\
via Trieste 63, 35121 Padova, Italy}
\email{dagnolo@math.unipd.it}
\thanks{The first author was partially supported by
grant CPDA122824/12, Padova University, and by the project ``Differential
methods in Arithmetic, Geometry and Algebra'', Fondazione Cariparo.}

\author[M.~Kashiwara]{Masaki Kashiwara} 

\thanks{The second author was partially supported by Grant-in-Aid for
Scientific Research (B) 22340005, Japan Society for the Promotion of
Science.}
\address{Research Institute for Mathematical Sciences\\
Kyoto University\\ 
Kyoto 606-8502, Japan
}
\email{masaki@kurims.kyoto-u.ac.jp}

\keywords{irregular Riemann-Hilbert problem, irregular holonomic $\D$-modules, 
ind-sheaves, Stokes phenomenon}
\subjclass[2010]{32C38, 35A27, 32S60}

\date{August 12, 2015}

\begin{abstract}
The classical Riemann-Hilbert correspondence establishes an equivalence between the triangulated category of regular holonomic $\D$-modules and that of constructible sheaves.

In this paper, we prove a Riemann-Hilbert correspondence for holonomic $\D$-modules which are not necessarily regular. The construction of our target category is based on the theory of ind-sheaves by Kashiwara-Schapira and influenced by Tamarkin's work.
Among the main ingredients of our proof is  the description of the structure of flat meromorphic connections due to Mochizuki and Kedlaya.
\end{abstract}

\maketitle

\tableofcontents

\section{Introduction}
\numberwithin{equation}{section}

\subsection{}
On a complex manifold, the classical Riemann-Hilbert correspondence establishes an equivalence between the triangulated category of regular holonomic $\D$-modules and that of constructible sheaves (see~\cite{Kas84}).
Here $\D$ denotes the sheaf of differential operators.

In particular, flat meromorphic connections with regular singularities correspond to local systems on the complementary of the singular locus (see~\cite{Del70}).

\subsection{}
The problem of extending the Riemann-Hilbert correspondence to cover the case of holonomic $\D$-modules with irregular singularities has been open for 30 years. Some results in this direction have appeared in the literature.

In the one-dimensional case, classical results of Levelt-Turittin and Hukuhara-Turittin describe the formal structure and the asymptotic expansion on sectors of flat meromorphic connections which are not necessarily regular.
Using these descriptions, Deligne and Malgrange established a Riemann-Hilbert correspondence on a complex curve for holonomic $\D$-modules with a fixed set of singular points (see~\cite{DMR07}). See also the work of Babbitt-Varadarajan~\cite{BB89}.

Recently, Mochizuki~\cite{Moc09,Moc11}
and Kedlaya~\cite{Ked10,Ked11} extended the results of Levelt-Turittin and Hukuhara-Turittin to higher dimensions. Namely, they proved that
any flat meromorphic connection becomes ``good'' after
blowing-ups. Sabbah~\cite{Sab13}
obtained an analogue of the construction by Deligne and Malgrange 
on a complex manifold for ``good'' flat meromorphic connections 
with a fixed singular locus.

\subsection{}
In this paper, we prove a Riemann-Hilbert correspondence for holonomic $\D$-modules on a complex manifold. 
The construction of our target category is based 
on the theory of ind-sheaves by Kashiwara-Schapira~\cite{KS01} 
and influenced by the work of Tamarkin~\cite{Tam08}.
The description of the structure of flat meromorphic connections by
Mochizuki and Kedlaya is one of the key ingredients of our proof.

Let us explain our results in greater detail.

\subsection{}
Let $X$ be a complex manifold.
As we have already mentioned, the Riemann-Hilbert correspondence of~\cite{Kas84} establishes an equivalence
between the triangulated category $\BDC_\reghol(\D_X)$
of regular holonomic $\D_X$-modules and
the triangulated category $\BDC_{\Cc}(\Cfield_X)$
of $\C$-constructible sheaves on $X$.
More precisely, there are functors
\begin{equation}
\label{eq:introRHclass}
\xymatrix@C=10ex{
\BDC_\reghol(\D_X) \ar@<.5ex>[r]^-{\dr_X} 
& \BDC_{\Cc}(\Cfield_X) \ar@<.5ex>[l]^-{\Psi_X}
}
\end{equation}
quasi-inverse to each other.
Here, $\dr_X(\shl) \defeq \Omega_X \ltens[\D_X] \shl$ is the holomorphic de Rham complex with $\Omega_X$ the sheaf of holomorphic differential forms of highest degree,
and $\Psi_X(L) \defeq \thom(\dual_X L,\O_X)[d_X]$ is the complex of holomorphic functions tempered along the dual $\dual_X L$ of $L$.

In particular, a regular holonomic $\D_X$-module $\shl$ can be reconstructed from $\dr_X(\shl)$.

Let $\shm$ be an irregular holonomic $\D_X$-module, and consider the regular holonomic $\D_X$-module $\shm_\reg \defeq \Psi_X(\dr_X(\shm))$. Since $\dr_X(\shm) \simeq \dr_X(\shm_\reg)$, it follows that $\shm$ cannot be reconstructed from $\dr_X(\shm)$.

\subsection{}
The theory of ind-sheaves, that is, of ind-objects in the category of sheaves with compact support, was initiated and developed by Kashiwara-Schapira~\cite{KS01}. In such a framework, one can consider the complex $\Ot_X$ of tempered holomorphic functions, which is an object of the derived category of ind-sheaves $\BDC(\iCfield_X)$.
This is related to the functor $\Psi_X$ in \eqref{eq:introRHclass}, since one has $\rhom(F,\Ot_X) \simeq \thom(F,\O_X)$ for any $\R$-constructible sheaf $F$.

Set $\Ovt_X = \Omega_X\ltens[\O_X]\Ot_X$.
For a holonomic $\D_X$-module $\shm$, the tempered de Rham complex $\drt_X(\shm) \defeq \Ovt_X\ltens[\D_X]\shm$ has been introduced and studied in \cite{KS03} and studied further in~\cite{Mor07,Mor10}.
This complex retains some information on the irregularity of $\shm$.
For example, let $\varphi\in\O_X(*Y)$ be a meromorphic function with poles at a hypersurface $Y$, and denote by $\she^\varphi_{X\setminus Y|X}$ the exponential $\D_X$-module generated by $e^\varphi$ (see Definition~\ref{def:expY}).
Then one has
\begin{equation}
\label{eq:drtintro}
\drt_X(\she^\varphi_{X\setminus Y|X}) \simeq \rihom(\Cfield_{X\setminus Y}, \indlim[a\rightarrow+\infty]\Cfield_{\{x\in X\setminus Y\semicolon -\Re \varphi(x) < a\}})[\dim X],
\end{equation}
where $\ihom$ denotes the inner-hom functor of ind-sheaves and $\Cfield_{X\setminus Y}$ denotes the extension by zero to $X$ of the constant sheaf on $X\setminus Y$.

Since $\drt_X(\she^\varphi_{X\setminus Y|X}) \simeq \drt_X(\she^{2\varphi}_{X\setminus Y|X})$, one cannot reconstruct $\shm$ from $\drt_X(\shm)$.

\subsection{}
Denote by $\tau\in\C\subset \PP$ the affine variable in the complex projective line $\PP$.
In this paper, we will show that $\shm$ can be reconstructed from the tempered de Rham complex $\drt_{X\times \PP}(\shm \detens \she_{\C|\PP}^{-\tau})$, an object of $\BDC(\iCfield_{X\times\PP})$.
In the case where $X$ is a complex curve, we outlined a proof of this fact in \cite{DK12}.
The proof in the general case follows from the arguments in the present paper.
However, in this paper we take as target category a modification of $\BDC(\iCfield_{X\times\PP})$. As we now explain, this is related to a construction by Tamarkin~\cite{Tam08} (see also Guillermou-Schapira~\cite{GS12} for an exposition and some complementary results).

\subsection{}\label{sse:introTamSheaf}
On a real analytic manifold $M$, the microlocal theory of sheaves by Kashiwara-Schapira~\cite{KS90} associates to an object of $\BDC(\Cfield_M)$ its microsupport, a closed conic involutive subset of the cotangent bundle $T^*M$.
In his study of symplectic topology, Tamarkin~\cite{Tam08} uses the techniques of~\cite{KS90} in order to treat involutive subsets of $T^*M$ which are not necessarily conic. To this end, he adds a real variable $t\in\R$ and, denoting by $(t;t^*)\in T^*\R$ the associated symplectic coordinates, considers the quotient category $\BDC(\Cfield_{M\times\R})/\shc_{\{t^*\leq 0\}}$ by the 
category $\shc_{\{t^*\leq 0\}}$ consisting of
objects microsupported on $\{t^*\leq 0\}$. 

An important observation in  \cite{Tam08} is that there are equivalences
\eq
&&{}^\bot\shc_{\{t^*\leq 0\}} \simeq \BDC(\Cfield_{M\times\R})/\shc_{\{t^*\leq 0\}} \simeq\shc_{\{t^*\leq 0\}}^\bot
\label{eq:CC}
\eneq
between the quotient category and the left and right orthogonal categories.
Moreover, such categories can be described without using the notion of microsupport. For example, $\shc_{\{t^*\leq 0\}}$ is the full subcategory of $\BDC(\Cfield_{M\times\R})$ of objects whose convolution with $\Cfield_{\{t \geq 0\}}$ vanishes.

\subsection{}\label{sse:introTam}
Back to our complex manifold $X$,
recall that we aim to reconstruct a holonomic $\D_X$-module $\shm$ 
from the tempered de Rham complex $\drt_{X\times \PP}(\shm \detens \she_{\C|\PP}^{-\tau})$. As we explain in \S\ref{sse:introMoch} below, a special important case is when $\shm = \she^\varphi_{X\setminus Y|X}$ for $\varphi\in\O_X(*Y)$. 
Then, \eqref{eq:drtintro} implies that the tempered de Rham complex is 
described in terms of the ind-sheaf
\begin{equation}
\label{eq:introt-phi}
\indlim[a\rightarrow+\infty]\Cfield_{\{(x,\tau)\in (X\setminus Y)\times\C\semicolon t-\Re \varphi(x) < a\}}.
\end{equation}
Here $t=\Re\tau$ is the real part of the affine coordinate $\tau$ of 
the complex projective line $\PP$.
We are thus led to replace the target category $\BDC(\iCfield_{X\times\PP})$ with 
what we call the category of enhanced ind-sheaves 
and denote by $\BEC[\iCfield]X$.
This is a quotient category of $\BDC(\iCfield_{X\times\PR})$, where $\PR$ is the real projective line.

Let us describe the category $\BEC[\iCfield]X$ in greater detail.

\subsection{}
As a preliminary step, we introduce the notion of bordered space.
A bordered space is a pair $(M,\bM)$ of a topological space $\bM$ and an open subset $M\subset\bM$, and we associate the triangulated category $\BDC(\iCfield_{(M,\bM)}) \defeq \BDC(\iCfield_{\bM})/\BDC(\iCfield_{\bM\setminus M})$ to it. 
There is a  natural fully faithful embedding
\[
\BDC(\Cfield_M)  \subset \BDC(\iCfield_{(M,\bM)})  .
\]

The main example for us is the bordered space $\R_\infty \defeq (\R,\PR)$.
This notion  appears naturally when we deal with ind-sheaves such  
as \eqref{eq:introt-phi}.
For example, for $\varphi=0$ such an ind-sheaf becomes trivial when restricted to $\BDC(\iCfield_{X\times\R})$, but is a non trivial object of $\BDC(\iCfield_{X\times\R_\infty})$.

\subsection{}\label{sse:TDC}
We define the category $\BEC[\iCfield]X$ of enhanced ind-sheaves by
\[
\BEC[\iCfield]X = \BDC(\iCfield_{X\times\R_\infty})/\{K \semicolon K\simeq\opb\pi L \text{ for some } L\in \BDC(\iCfield_X) \}.
\]
Here $\pi\colon X\times\R_\infty\to X$ is the projection.
This is related with Tamarkin's construction as follows. We set
\[
\BECp[\iCfield]X \defeq \BDC(\iCfield_{X\times\R_\infty})/\ind\shc_{\{t^*\leq 0\}},
\]
where $\ind\shc_{\{t^*\leq 0\}}$ is the full subcategory of objects whose convolution with $\Cfield_{\{t\geq 0\}}$ vanishes. 
As in \eqref{eq:CC}, we have
\[
{}^\bot\ind\shc_{\{t^*\leq 0\}} \simeq \BECp[\iCfield]X \simeq \ind\shc_{\{t^*\leq 0\}}^\bot.
\]
Replacing $\Cfield_{\{t\geq 0\}}$ with $\Cfield_{\{t\leq 0\}}$ one obtains the category $\BECm[\iCfield]X$. It turns out that
\[
\BEC[\iCfield]X \simeq \BECp[\iCfield]X \dsum \BECm[\iCfield]X.
\]
This is the target category of our Riemann-Hilbert correspondence.
It is a triangulated tensor category whose tensor product is given 
by the convolution $\ctens$  in the $t$ variable.

\subsection{}
Set $\Cfield_X^\enh \defeq \indlim[a\rightarrow+\infty]\Cfield_{\{t\geq a\}}$.
We say that an object $K$ of $\BEC[\iCfield]X$ is stable if $K\simeq \Cfield_X^\enh\ctens K$. 

There is a natural fully faithful embedding of the category of
ind-sheaves into the category of stable enhanced ind-sheaves
\[
e\colon\BDC(\iCfield_X) \To \BEC[\iCfield]X, \quad F\mapsto \Cfield_X^\enh \tens\opb\pi F.
\]

Denote by $\BDC_\Rc(\Cfield_{X\times\PR})$ the full subcategory of $\BDC(\Cfield_{X\times\PR})$ whose objects have $\R$-constructible cohomology groups.
We say that an object $K$ of $\BEC[\iCfield]X$ is $\R$-constructible if,
for any relatively compact subanalytic open subset $U\subset X$,
there exists $F\in \BDC_\Rc(\Cfield_{X\times\PR})$ such that
\[
\opb\pi\Cfield_U \tens K \simeq \Cfield_X^\enh \ctens F.
\]
Note that such a $K$ is a stable object, and that $\R$-constructibility is a local property on $X$.
We denote by $\BECRc[\iCfield]X$ the full subcategory of
$\BEC[\iCfield]X$ consisting of $\R$-constructible objects.
 
\subsection{}\label{sse:introOT}
We can now state our Riemann-Hilbert correspondence.

The objects of $\BEC[\iCfield]X$ which play a role analogous to the objects $\Ot_X$ and $\Ovt_X$ of $\BDC(\iCfield_X)$ are 
\[
\OEn_X \defeq \epb i\rhom[\D_\PP](\she_{\C|\PP}^\tau,\Ot_{X\times\PP})[2],
\quad \OvE_X \defeq \Omega_X \ltens[\O_X] \OEn_X.
\]
where $i\colon X\times\R_\infty\to X\times\PP$ is the embedding.
It turns out that $\OEn_X$ and $\OvE_X$ are stable objects endowed with a natural $\D_X$-module structure.

\smallskip

Denote by $\BDC_\hol(\D_X)$ the full
subcategory of $\BDC(\D_X)$ consisting of
objects with holonomic cohomologies.
We define the enhanced de Rham functor
\[
\drE_X\colon\BDC_\hol(\D_X)\To \BEC[\iCfield]X,\quad \shm\mapsto \OvE_X \ltens[\D_X] \shm
\]
and the reconstruction functor
\[
\Psi_X^\enh\colon\BECRc[\iCfield]X\To\BDC(\D_X),\quad K\mapsto \fhom(\Edual_X K,\OEn_X)[d_X],
\]
where $\fhom$ is the hom-functor between enhanced ind-sheaves, 
with values in sheaves on $X$, and $\Edual_X$ is a natural duality functor of $\BECRc[\iCfield]X$.

Our main result can be stated as follows. 
\begin{theorem*}
\bnum
\item The functor $\drE_X$ is fully faithful and takes values in $\BECRc[\iCfield]X$.
\item there is an isomorphism
$$\shm\isoto\Psi_X^\enh\bl\drE_X(\shm)\br$$ 
functorial in $\shm\in \BDC_\hol(\D_X)$.
In particular, one can reconstruct $\shm$ from $\drE_X(\shm)$.
\ee
\end{theorem*}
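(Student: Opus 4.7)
My plan is to prove the two statements by dévissage, with the Mochizuki-Kedlaya structure theorem providing the geometric reduction and a six-functor formalism for enhanced ind-sheaves providing the technical bookkeeping. The first step is to establish the operations $\Eopb{f}$, $\Eeeim{f}$, $\Eepb{f}$, $\Eoim{f}$, $\ctens$, and $\fhom$ on $\BEC[\iCfield]X$ and then, on the $\D$-module side, to prove compatibility of $\drE_X$ with them. The crucial compatibility is with proper direct images: for a projective morphism $f\colon\tX\to X$ and $\shm\in\BDC_\hol(\D_\tX)$, one wants a natural isomorphism $\drE_X\bigl(\doim{f}\shm\bigr)\simeq\Eeeim{f}\drE_\tX(\shm)$, together with dual statements for $\Psi_X^\enh$, and analogous isomorphisms for twists by regular holonomic modules and for the external product $\detens$. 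These compatibilities are what allow the reduction steps to transport the theorem from one space to another.

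Next, I would verify the theorem on the base cases. For a regular holonomic $\shm$, the object $\drE_X(\shm)$ should reduce, via the embedding $e$, to the classical de Rham complex $\dr_X(\shm)$, so that both statements follow from the classical Riemann-Hilbert correspondence of \cite{Kas84}. For an exponential-times-regular module $\she^\varphi_{X\setminus Y|X}\tens\shr$ with $\varphi\in\O_X(*Y)$ and $\shr$ regular holonomic, formula \eqref{eq:drtintro} together with the definition of $\OEn_X$ via the twist by $\she^\tau_{\C|\PP}$ yields an explicit description of $\drE_X(\she^\varphi_{X\setminus Y|X}\tens\shr)$ as an enhanced ind-sheaf of the form \eqref{eq:introt-phi} tensored with $\dr_X(\shr)$. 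From this explicit form one reads off $\R$-constructibility, computes the enhanced dual $\Edual_X$, and verifies the reconstruction isomorphism by direct inspection.

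The core dévissage then runs as follows. By Mochizuki-Kedlaya, for every holonomic $\shm$ there exist, locally on $X$, a ramified projective modification $f\colon\tX\to X$ along a hypersurface $Y$ such that $\tY\defeq f^{-1}(Y)$ is a normal crossing divisor and the pull-back of $\shm$, restricted to $\tX\setminus\tY$, admits a good direct-sum decomposition into summands of the form $\she^{\varphi_i}_{\tX\setminus\tY|\tX}\tens\shr_i$. Using the compatibility of $\drE$ and $\Psi^\enh$ with $\doim{f}$ and with tensor by regular holonomic modules, together with the base cases, the theorem for $\shm$ is deduced from the theorem for these good summands. The $\R$-constructibility in (i) propagates through the dévissage because $\BECRc[\iCfield]X$ is a triangulated subcategory stable under the enhanced six operations. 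Once reconstruction is established, full faithfulness is formal: $\RHom[\D_X](\shm,\shn)$ is recovered as the global sections of $\Psi_X^\enh\fhom\bigl(\drE_X(\shm),\drE_X(\shn)\bigr)$, which by the reconstruction isomorphism is identified with $\RHom[\D_X](\shm,\shn)$.

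The step I expect to be the main obstacle is the proper direct image compatibility $\drE_X\doim{f}\simeq\Eeeim{f}\drE_\tX$ for the projective modifications produced by Mochizuki-Kedlaya. This reduces to a tempered Grauert-type direct image theorem for $\OEn$, which in turn rests on a tempered direct image theorem for $\Ot_{X\times\PP}$ with sufficiently uniform control of growth along the exceptional divisor, and on a careful comparison of the classical proper direct image on $\BDC(\iCfield_{X\times\R_\infty})$ with the enhanced $\Eeeim{f}$. A second subtle point is the passage from $\tX\setminus\tY$ to all of $\tX$: the good decomposition is only given generically, and the correct extension across $\tY$ encodes the Stokes structure. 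The fit between this Stokes data and the $t-\Re\varphi$ cutoff that governs the enhanced de Rham complex is the heart of the matter, and is where the Mochizuki-Kedlaya input is used in its strongest form.
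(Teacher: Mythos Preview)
Your overall architecture---six-functor formalism on $\BEC[\iCfield]X$, compatibility of $\drE_X$ with proper pushforward, Mochizuki--Kedlaya reduction, and deducing full faithfulness from reconstruction---is the same as the paper's. The formal dévissage is packaged in the paper as Lemma~\ref{lem:redux}, and full faithfulness is derived from reconstruction essentially as you sketch (via duality: $\fhom(\drE_X\shm,\drE_X\shn)\simeq\fhom(\solE_X\shn,\solE_X\shm)\simeq\rhom[\D_X](\shm,\shn)$).

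However, you have misplaced the main difficulty and, correspondingly, the key missing device. The proper direct image compatibility $\drE_Y(\doim f\shm)\simeq\Eeeim f\drE_X(\shm)$ is \emph{not} an obstacle: it follows immediately from the tempered direct image theorem for $\Ot$ already in \cite[Theorem~7.4.6]{KS01} (see Theorem~\ref{thm:ifunct}~(iii) and Theorem~\ref{thm:Tfunct}~(iii) in the paper). No new Grauert-type theorem is needed.

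The genuine gap is in your description of the Mochizuki--Kedlaya output. The good decomposition is \emph{not} a direct-sum decomposition of $\dopb f\shm$ on $\tX\setminus\tY$ (there $\shm$ is just a flat connection, and it does not split---that is precisely the Stokes phenomenon). Nor is it a decomposition on $\tX$. What one actually obtains is a local isomorphism of $\shm^\sha\defeq\D_{\widetilde X}^\sha\tens[\opb\varpi\D_X]\opb\varpi\shm$ with a direct sum $\bigoplus_i(\she^{\varphi_i})^\sha$ on the \emph{real blow-up} $\varpi\colon\widetilde X\to X$ along the normal crossing divisor $D=\tY$, locally along $\opb\varpi(D)$ (this is the content of ``normal form'' in Definition~\ref{def:normal}). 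To exploit this, the paper builds a parallel theory on $\widetilde X$: the sheaf $\sha_{\widetilde X}$ of tempered holomorphic functions on sectors, the ring $\D_{\widetilde X}^\sha$, the object $\Ot_{\widetilde X}$, its enhancement $\OEn_{\widetilde X}$, and the key compatibility $\OEn_{\widetilde X}\simeq\Eepb\varpi\rihom(\opb\pi\Cfield_{X\setminus D},\OEn_X)$ (Theorems~\ref{thm:forOt} and \ref{thm:forOTtilde}, Corollary~\ref{cor:drTXtilde}). This lets one verify both $\R$-constructibility and reconstruction for $\shm^\sha$ \emph{locally on $\widetilde X$}, where the decomposition into exponentials genuinely holds, and then push forward along $\varpi$. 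Your proposal has no analogue of this mechanism; the phrase ``the correct extension across $\tY$ encodes the Stokes structure'' names the problem but does not solve it. Without the real blow-up and the $\sha$-module formalism, the step from the exponential base case to the normal-form case cannot be carried out.
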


We prove the compatibility of $\drE_X$ with duality. We also prove compatibility with the classical Riemann-Hilbert correspondence \eqref{eq:introRHclass}. 
More precisely, there is a quasi-commutative diagram: 
\[
\xymatrix@C=8ex{
\BDC_\reghol(\D_X) \ar[r]^{\dr_X} \ar[d]
& \BDC_\Cc(\Cfield_X) \ar[r]^{\Psi_X} \ar[d]^e
& \BDC_\reghol(\D_X) \ar[d] \\
\BDC_\hol(\D_X) \ar[r]^{\drE_X} 
& \BECRc[\iCfield]X \ar[r]^{\Psi_X^\enh}
& \BDC(\D_X) .
}
\]

\subsection{}\label{sse:introMoch}
A key ingredient in our proofs is the following (see Lemma~\ref{lem:redux}).
Let $P_X(\shm)$ be a statement concerning a complex manifold $X$ and a holonomic $\D_X$-module $\shm$. 
For example,
\[
P_X(\shm)= \text{``$\shm \isoto \Psi_X^\enh(\drE_X(\shm))$''}.
\]
In order to prove $P_X(\shm)$, 
the results of Mochizuki~\cite{Moc09,Moc11} and
Kedlaya~\cite{Ked10,Ked11} allow one, heuristically speaking, to reduce to the case 
when  $\shm = \she^\varphi_{X\setminus Y|X}$ for $\varphi\in\O_X(*Y)$.

\subsection{}
Recall that irregular holonomic modules are subjected to the Stokes phenomenon. In \S\ref{sse:Stokes} we describe
with an example how the Stokes data are encoded topologically in our construction.

\subsection{}
The contents of this paper are as follows.

Section~\ref{se:notations} fixes notations regarding sheaves, ind-sheaves and $\D$-modules. References are made to \cite{KS90,KS01,Kas03}. We also state some complementary results which are of use in later sections.

In Section~\ref{se:bordered}, we introduce the notion of bordered space and of ind-sheaves on it, and develop the formalism of operations in this context.
We also discuss a natural $t$-structure in the triangulated category of ind-sheaves on a bordered space.

In Section~\ref{se:enhcdind}, we introduce the category $\BEC[\iCfield]M$  of enhanced ind-sheaves, mentioned in \S\ref{sse:TDC}, and develop the formalism of operations in this framework.  We also introduce the notion of $\R$-constructible objects in $\BEC[\iCfield]M$.

Section~\ref{se:tempered} recalls from \cite{KS96,KS01} the construction and main properties of the ind-sheaves of tempered distributions $\Dbt_M$ on a real analytic manifold $M$, and of tempered holomorphic functions $\Ot_X$ on a complex manifold $X$. As explained above, this is a fundamental ingredient of our construction.

In Section~\ref{se:expo}, we prove the isomorphism \eqref{eq:drtintro}. The fundamental example where $X=\C\owns z$ and $\varphi(z)=1/z$ 
has been already treated in \cite{KS03}.

Mochizuki and Kedlaya's results  
on the structure of flat meromorphic connections are recalled in Section~\ref{se:normal}. There, we give a precise formulation of the heuristic argument mentioned in \S\ref{sse:introMoch}.

Section~\ref{se:enhanced} introduces and studies the enhancement $\OEn_X$ of $\Ot_X$ mentioned in \S\ref{sse:introOT}, along with the enhancement $\DbE_M$ of $\Dbt_M$.

Our main results, mentioned in \S\ref{sse:introOT}, are stated and proved in Section~\ref{se:RH}.

\subsection*{Acknowledgments} 
We thank Pierre Schapira, who taught us that the ind-sheaf $\Ot_X$ of tempered holomorphic functions is an appropriate language for the study of irregular holonomic $\D$-modules.

We also thank Takuro Mochizuki for his explanations on the structure of irregular holonomic $\D$-modules.

The first author acknowledges the kind hospitality at RIMS, 
Kyoto University, during the preparation of this paper.

Finally, we wish to thank the anonymous referee for his/her careful  
reading of our manuscript and his/her suggestions to simplify the proof of  
Proposition~\ref{pro:summand}.

\numberwithin{equation}{subsection}

\section{Notations and complements}\label{se:notations}

We fix here some notations regarding sheaves, ind-sheaves and $\D$-modules, and state some complementary results that we will need in later sections.
Our notations follow those in \cite{KS90,KS01,Kas03}, to which we refer for further detail.

\medskip
Let us say that a topological space is \emph{good} if it is Hausdorff, locally compact, countable at infinity and has finite flabby dimension.

In this paper, we take a field $\field$ as base ring. However, after minor modifications, one can take any regular ring as base ring.

For a category $\shc$, we denote by $\shc^\op$ 
the opposite category of $\shc$.
For a ring $A$, we denote by $A^\op$ the opposite ring of $A$.

\subsection{Sheaves}

Let $M$ be a good topological space.
Denote by $\Mod(\field_M)$ the abelian category of sheaves of $\field$-vector spaces on $M$, and by $\BDC(\field_M)$ its bounded derived category.

For a locally closed subset $S\subset M$, denote by $\field_S$ the 
extension by zero to $M$ of the constant sheaf on $S$.

For $f\colon M\to N$ a morphism of good topological spaces,
denote by $\tens$, $\rhom$, $\opb f$, $\roim f$, $\reim f$, $\epb f$ the six
Grothendieck operations for sheaves.
Denote by $\etens$ the exterior product.

We define the duality functor $\dual_M$ of $\BDC(\field_M)$ by
\[
\dual_M F = \rhom(F,\omega_M) \quad\text{for $F\in\BDC(\field_M)$,}
\]
where $\omega_M$ denotes the dualizing complex. If $M$ is a $C^0$-manifold of dimension $d_M$, one has $\omega_M \simeq \ori_M[d_M]$, where $\ori_M$ denotes the orientation sheaf.

\subsection{Ind-sheaves}

The theory of ind-sheaves has been introduced and developed in~\cite{KS01}.

\smallskip 

Let $\shc$ be a category and denote by $\shc^\wedge$ the category of contravariant functors from $\shc$ to the category of sets. Consider the Yoneda embedding $h\colon \shc \to \shc^\wedge$, $X\mapsto\Hom[\shc](\ast,X)$.
The category $\shc^\wedge$ admits small inductive limits. Since $h$ does not commute with inductive limits, one denotes by $\indlim$ instead of $\varinjlim$ the inductive limits taken in $\shc^\wedge$.

An ind-object in $\shc$ is an object of $\shc^\wedge$ isomorphic to 
$\indlim[i\in I] X(i)$  for some functor $X\colon I\to\shc$ with a small filtrant category $I$.
Denote by $\operatorname{Ind}(\shc)$ the full subcategory of $\shc^\wedge$ consisting of ind-objects in $\shc^\wedge$.

\medskip

Let $M$ be a good topological space. The category of ind-sheaves on $M$ is the category $\ind(\field_M) \defeq \operatorname{Ind}(\Mod_c(\field_M))$ of ind-objects in the category $\Mod_c(\field_M)$ of sheaves with
compact support. 
Denote by $\BDC(\ifield_M)$ the bounded derived category of $\ind(\field_M)$.

There is a natural exact embedding $\iota_M\colon\Mod(\field_M) \to \ind(\field_M)$  given by $F\mapsto\indlim (\field_U \tens F)$, for $U$ running over the relatively compact open subsets of $M$.
The functor $\iota_M$ has an exact left adjoint
$\alpha_M\colon\ind(\field_M) \to \Mod(\field_M)$ given by $\alpha_M(\indlim
F_i) = \ilim F_i$. The functor $\alpha_M$ has an exact fully faithful left adjoint  $\beta_M\colon\Mod(\field_M) \to \ind(\field_M)$.
For example, if $Z\subset M$ is a closed subset, one has
\[
\beta_M \field_Z \simeq \indlim[U]\field_{\overline U}\,,
\]
where $U$ ranges over the family of open subsets of $M$ containing $Z$.

For $f\colon M\to N$ a morphism of good topological spaces, denote by $\tens$,
$\rihom$, $\opb f$, $\roim f$, $\reeim f$, $\epb f$ the six Grothendieck
operations for ind-sheaves.
Denote by $\etens$ the exterior product.

Since ind-sheaves form a stack, they have a sheaf-valued hom-functor $\hom$. One has $\rhom \simeq \alpha_M\rihom$.

\smallskip
We will need the following proposition to calculate $\rihom$.

For $a\leq b$ in $\Z$, denote by $\mathsf{C}^{[a,b]}(\Mod(\field_M))$ the category of complexes of sheaves $F^\bullet$ such that $F^k=0$ unless $a\leq k \leq b$.

\begin{proposition}
\label{pro:Prolim}
Let $f\colon M\to N$ be a morphism of good topological spaces.
Let $G\in\BDC(\ifield_M)$ and let $\{F^\bullet_n\}_{n\in\Z_{\geq 0}}$ be an inductive system in $\mathsf{C}^{[a,b]}(\Mod(\field_M))$ for some $a\leq b$ in $\Z$.
Assume that the pro-object
\[
\prolim[n] \roim f \rihom(F^\bullet_n,G) \in \operatorname{Pro}(\BDC(\ifield_N))
\]
is represented by an object of $\BDC(\ifield_N)$.
Then
\[
\roim f \rihom(\indlim[n] F^\bullet_n,G) \simeq
\prolim[n] \roim f \rihom(F^\bullet_n,G).
\]
\end{proposition}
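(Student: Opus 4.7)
My plan is to reduce the statement to the universal property that morphisms from an inductive limit of sheaves to an injective ind-sheaf turn into a projective limit. The starting point is the natural morphism
\[
\varphi\colon \roim f \rihom(\indlim[n] F^\bullet_n, G) \to \prolim[n] \roim f \rihom(F^\bullet_n, G)
\]
in the pro-category of $\BDC(\ifield_N)$, induced by the canonical maps $F^\bullet_n \to \indlim[n] F^\bullet_n$ together with the functoriality of $\rihom(-, G)$ and $\roim f$. Under the representability hypothesis the right-hand side is an object $L$ of $\BDC(\ifield_N)$, so $\varphi$ becomes an honest morphism in $\BDC(\ifield_N)$. To show it is an isomorphism I would, by Yoneda, apply $\RHom(H, -)$ for arbitrary $H \in \BDC(\ifield_N)$ and check that the induced map is a bijection.

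Fix such an $H$. Combining the $(\opb f, \roim f)$ and $(\tens, \rihom)$ adjunctions with the fact that the tensor product commutes with inductive limits of ind-sheaves rewrites the left-hand side as $\RHom\bl\indlim[n](\opb f H \tens F^\bullet_n), G\br$. On the right-hand side, representability of the pro-object gives
\[
\RHom(H, L) \simeq \plim[n] \RHom\bl H, \roim f \rihom(F^\bullet_n, G)\br \simeq \plim[n] \RHom(\opb f H \tens F^\bullet_n, G).
\]
Writing $K^\bullet_n \defeq \opb f H \tens F^\bullet_n$, I am reduced to proving, for this uniformly bounded inductive system of complexes of ind-sheaves,
\[
\RHom(\indlim[n] K^\bullet_n, G) \isoto \plim[n] \RHom(K^\bullet_n, G).
\]

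For this last step I would choose an injective resolution $G \simeq I^\bullet$ in $\ind(\field_M)$ and compute both sides as total $\Hom$-complexes. The $p$-th term of either side is a product $\prod_{i+j=p} \Hom(K^i_n, I^j)$, finite thanks to the uniform bound $a\leq k\leq b$; and the universal property of $\indlim$ against each injective $I^j$ provides the termwise isomorphism $\Hom(\indlim[n] K^i_n, I^j) \simeq \plim[n] \Hom(K^i_n, I^j)$. Since $\plim$ commutes with finite products, the two total complexes are isomorphic term by term and compatibly with differentials, hence in $\BDC(\field)$. The main obstacle I expect is making sure this termwise comparison survives passage to the derived category; the uniform boundedness of $\{F^\bullet_n\}$ plays a crucial role here, keeping both $\Hom$-complexes bounded and preventing spurious $\plim^1$-contributions from entering the picture.
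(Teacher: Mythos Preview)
Your overall strategy—construct the comparison morphism, test by Yoneda, and use the adjunctions to reduce to a statement about $\RHom(\indlim K^\bullet_n, G)$—is exactly the shape of the paper's argument. The genuine gap is in the last paragraph.

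You correctly observe that, with an injective resolution $I^\bullet$ of $G$, the complex computing $\RHom(\indlim_n K^\bullet_n, G)$ is termwise the projective limit $\varprojlim_n \Hom^\bullet(K^\bullet_n, I^\bullet) =: \varprojlim_n E^\bullet_n$. But this is a statement about \emph{complexes}, not about cohomology. What you actually need is
\[
H^k\bigl(\varprojlim_n E^\bullet_n\bigr) \;\simeq\; \varprojlim_n H^k(E^\bullet_n),
\]
because the right-hand side is what the representability hypothesis identifies with $H^k\RHom(H,L)$ (representability gives $\Hom_{\BDC}(H[-k],L)\simeq\varprojlim_n\Hom_{\BDC}(H[-k],L_n)$ for each $k$, nothing more). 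Uniform boundedness of the $F^\bullet_n$ does \emph{not} force this: bounded projective systems of complexes can still have nontrivial $\varprojlim^1$ on intermediate cocycle or coboundary groups, so $H^k(\varprojlim)\to\varprojlim H^k$ can fail to be an isomorphism.

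The paper closes this gap by a preliminary d\'evissage you omit: it replaces $\{F^\bullet_n\}$ by the mapping cones $\tilde F^\bullet_n$ of $\bigoplus_{k<n}F^\bullet_k\to\bigoplus_{k\le n}F^\bullet_k$, arranged so that every transition map $\tilde F^k_n\to\tilde F^k_{n+1}$ has a cosection. After tensoring and taking $\Hom$ into quasi-injectives, this makes each $E^\bullet_{n+1}\to E^\bullet_n$ a termwise epimorphism, so $\varprojlim^1$ of the terms vanishes; combined with the Mittag-Leffler condition on $\{H^k(E^\bullet_n)\}_n$ (which is what the representability hypothesis actually buys you), one gets the desired commutation of $H^k$ with $\varprojlim$. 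Without this cosection trick or some substitute for it, your argument does not go through.
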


\begin{proof}
Set $S^\bullet_n = \DSum_{k\leq n}F^\bullet_k$ and denote by $\tilde F^\bullet_n$ the mapping cone of the morphism $S^\bullet_{n-1} \to S^\bullet_n$. 
Note that the morphism $\tilde F^\bullet_n \to F^\bullet_n$ induced by the projection $S^\bullet_n\to F^\bullet_n$ is a quasi-isomorphism.
Consider the morphism $S^\bullet_n \to S^\bullet_n \dsum F^\bullet_{n+1} = S^\bullet_{n+1}$ obtained by $\id_{S^\bullet_n}$ and $S^\bullet_n\to F^\bullet_n\to F^\bullet_{n+1}$. This induces a morphism $\tilde F^k_n \to \tilde F^k_{n+1}$ which has a cosection for any $k$ and $n$. Hence, replacing $F^\bullet_n$ with $\tilde F^\bullet_n$, we may assume from the beginning that the morphism $F^k_n \to F^k_{n+1}$ has a cosection for any $k$ and $n$.

We may also assume that $G^\bullet$ is a complex of quasi-injective sheaves, i.e.\ that the functor $\Hom(\ast, G^n)$ is exact in $\Mod(\field_M)$ for any $n\in\Z$.

In order to prove that the morphism
\[
\roim f \rihom(\indlim[n] F^\bullet_n,G^\bullet) \To[u]
\prolim[n] \roim f \rihom(F^\bullet_n,G^\bullet)
\]
is an isomorphism, it is enough to show that $\RHom(H,u)$ is a quasi-isomorphism for any $H\in\Mod(\field_N)$. 

Set $E^\bullet_n = \Hom(F^\bullet_n\tens\opb f H, G^\bullet)$.
Then
\begin{align*}
\varprojlim\limits_n E^\bullet_n &\simeq
\RHom(H,\roim f \rihom(\indlim[n] F^\bullet_n,G^\bullet)), \\
\prolim[n] H^k(E^\bullet_n) &\simeq 
H^k \RHom(H, \prolim[n]\roim f\rihom(F^\bullet_n, G^\bullet)).
\end{align*}
Hence we have to show that
\begin{equation}
\label{eq:Hkprolim}
H^k(\varprojlim\limits_n E^\bullet_n) \to \prolim[n] H^k(E^\bullet_n) \simeq \varprojlim\limits_n H^k(E^\bullet_n)
\end{equation}
is an isomorphism for any $k$. 
Since $E^\bullet_{n+1} \to E^\bullet_n$ is an epimorphism and
$\{H^k(E^\bullet_n)\}_n$ satisfies the Mittag-Leffler condition, we conclude that \eqref{eq:Hkprolim} is an isomorphism.
\end{proof}

Let us recall the results of \cite[\S15.4]{KS06}. These provide useful tools to reduce proofs of many results in the framework of ind-sheaves to analogous results in sheaf theory.

Recall that $\Mod_c(\field_M)$ denotes the category of sheaves with compact support. Then $\BDC(\Mod_c(\field_M))$ is equivalent to the full triangulated subcategory of $\BDC(\field_M)$ consisting of objects with compact support.

\begin{proposition}[{cf.~\cite[\S15.4]{KS06}}]
\label{pro:J}
There exists a canonical functor
\[
J_M\colon \BDC(\ifield_M) \To \operatorname{Ind}\bl\BDC(\Mod_c(\field_M))\br\]
which satisfies the following properties:
\bnum
\item 
For $F\in \BDC(\Mod_c(\field_M))$, and $K\in \BDC(\ifield_M)$,
we have
$$\Hom[\BDC(\ifield_M)](F,K)
\isoto \Hom[{\operatorname{Ind}(\BDC(\Mod_c(\field_M)))}]
\bl J_M(F),J_M(K)\br.$$
\label{J:def}
\item
The functor $J_M$ is conservative, i.e.~a morphism $u$ in $\BDC(\ifield_M)$ is an isomorphism as soon as $J_M(u)$ is an isomorphism.
\label{J:cons}
\item
$J_M(F) \simeq F$ for any $F\in \BDC(\Mod_c(\field_M))$.
\item
$J_M(\indlim F_i) \simeq \ilim J_M(F_i)$ for any filtrant inductive system $\{F_i\}$ in $\Mod(\field_M)$. Here, $\ilim$ denotes the inductive limit in the category  $\operatorname{Ind}(\BDC(\Mod_c(\field_M)))$.
\label{J:ind}
\item
$J_M$ commutes with $\tens$ and $J_M\rihom(F,G) \simeq \rhom(F,J_M(G))$ for $F\in\BDC(\field_M)$ and $G\in\BDC(\ifield_M)$.
Here, $\rhom(F,\ast)$ denotes the endofunctor of $\operatorname{Ind}(\BDC(\Mod_c(\field_M)))$ induced by the endofunctor $\rhom(F,\ast)$ of $\BDC(\Mod_c(\field_M))$.
\label{J:tens}
\item
$H^n J_M(F) \simeq H^n F$ for any $n\in\Z$ and $F\in\BDC(\ifield_M)$.
Here, $H^n$ on the right hand side  is the cohomology functor $\BDC(\ifield_M) \to \ind(\field_M)$, and $H^n$ on the left hand side  
is the functor $\operatorname{Ind}(\BDC(\Mod_c(\field_M))) \to \operatorname{Ind}(\Mod_c(\field_M)) = \ind(\field_M)$ induced by the cohomology functor 
$\BDC(\Mod_c(\field_M)) \to \Mod_c(\field_M)$.
\item
Let $f\colon M\to N$ be a continuous map. Then
\bna
\item
$J_N \circ \reeim f \simeq \reim f \circ J_M$.
\item\label{J:opbepb}
$J_M \circ \opb f \simeq \opb f \circ J_N$ and 
$J_M\circ \epb f \simeq\epb f \circ J_N$.
Here, for $u=\opb f,\epb f$, we denote by the same letter the composition
\[
\operatorname{Ind}(\BDC(\Mod_c(\field_N))) \To[u] \operatorname{Ind}(\BDC(\field_M)) \To
\operatorname{Ind}(\BDC(\Mod_c(\field_M))).
\]
\ee
The last arrow is given by $\indlim[i] F_i\mapsto \indlim[i,U] (F_i)_U$,
where $U$ ranges over the relatively compact open subsets of $M$.
\ee
\end{proposition}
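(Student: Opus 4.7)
The plan is to construct $J_M$ via the defining adjunction property~(i), which will determine $J_M$ uniquely by Yoneda once we show that $J_M(K)$ is an ind-object. For $K\in\BDC(\ifield_M)$, define provisionally $J_M(K)$ to be the functor
\[
\BDC(\Mod_c(\field_M))^{\op}\To \Mod(\Z),\quad F\longmapsto\Hom[\BDC(\ifield_M)](F,K).
\]
The main representability step is to show that this functor is actually a small filtrant inductive limit of representables. For this, I would represent $K$ by a bounded complex $K^\bullet$ of ind-sheaves, write each $K^n=\indlim_{i\in I_n} F^n_i$ with $F^n_i\in\Mod_c(\field_M)$, and then, after refining the indexing categories (taking a cofinal filtrant subsystem that respects the differentials), produce a filtrant system $\{F^\bullet_i\}_{i\in I}$ in $\mathsf{C}^{\mathrm{b}}(\Mod_c(\field_M))$ with $K^\bullet\simeq\indlim_i F^\bullet_i$ in $\BDC(\ifield_M)$. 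Then set $J_M(K):=\ilim_i F^\bullet_i\in\operatorname{Ind}(\BDC(\Mod_c(\field_M)))$.

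Once $J_M$ is constructed, properties (i), (iii) and (iv) are essentially tautological: (iii) holds because $F\in\BDC(\Mod_c(\field_M))$ is already representable in the ind-category, and (iv) follows directly from the defining inductive presentation. Property~(v) is verified termwise: for $F\in\BDC(\field_M)$ and $G=\indlim_i G^\bullet_i$, the endofunctor $\rhom(F,\dummy)$ commutes with filtrant inductive limits in $\operatorname{Ind}(\BDC(\Mod_c(\field_M)))$ (taken index-wise), and matches $\rihom(F,\dummy)$ on each $G^\bullet_i$ because ind-objects with compact support coincide with ordinary sheaves there; for $\tens$ one argues similarly. Property~(vi) then drops out by observing that cohomology commutes with filtrant inductive limits of complexes in $\mathsf{C}^{\mathrm{b}}(\Mod_c(\field_M))$.

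Conservativity~(ii) follows formally from~(vi): a morphism $u$ in $\BDC(\ifield_M)$ such that $J_M(u)$ is invertible induces isomorphisms $H^n(u)$ for all $n$ in $\ind(\field_M)$, hence $u$ is an isomorphism in the bounded derived category. For the operations~(vii), the inverse images $\opb f$ and $\epb f$ commute with filtrant $\indlim$ on both sides, so one checks that they pass through the inductive presentation termwise; note that after applying $\opb f$ or $\epb f$ the resulting ind-sheaves need no longer have compact support, which is why the last composition with the functor $\indlim_i F_i\mapsto\indlim_{i,U}(F_i)_U$ (indexed over relatively compact open $U\subset M$) appears in the statement. For $\reeim f$ one reduces to the case of constant maps and uses that $\reeim f$ is defined precisely so as to commute with filtrant inductive limits of compactly supported sheaves.

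The main technical obstacle is the construction step itself: producing a filtrant system $\{F^\bullet_i\}$ of \emph{bounded complexes} of compactly supported sheaves whose inductive limit represents a given $K^\bullet\in\BDC(\ifield_M)$ up to quasi-isomorphism. Naively taking termwise presentations of a representative complex does not yield a well-defined ind-complex, because one must coherently match differentials across indices and handle quasi-isomorphisms. The cleanest way is to work with a quasi-injective (or $\alpha$-acyclic) resolution, exploit the fact that the category of ind-sheaves is a Grothendieck category, and invoke a compact-generator argument to control the passage from termwise inductive systems to inductive systems of complexes; once this combinatorial input is in place, all the remaining properties follow from the compatibility of the six operations with filtrant inductive limits of sheaves, in exactly the manner spelled out in~\cite[\S15.4]{KS06}.
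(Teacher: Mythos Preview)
The paper does not supply its own proof of this proposition: it is stated with the citation ``cf.~\cite[\S15.4]{KS06}'' and no argument is given in the text. So there is nothing in the paper to compare your proposal against directly.

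That said, your outline is the right shape and matches the strategy of the cited reference. The defining adjunction~(i) forces $J_M(K)$ to be the functor $F\mapsto\Hom[\BDC(\ifield_M)](F,K)$ on $\BDC(\Mod_c(\field_M))^\op$, and the substantive content is exactly the representability step you isolate: exhibiting each $K$ as a filtrant ind-object $\indlim_i F^\bullet_i$ with $F^\bullet_i\in\mathsf{C}^{\mathrm{b}}(\Mod_c(\field_M))$. You are honest that this is the technical crux and that a naive termwise presentation does not suffice; the reference \cite[\S15.4]{KS06} handles this via the machinery of Grothendieck categories with small generators, essentially as you indicate. Once the construction is in place, your derivations of (ii)--(vii) are correct: conservativity from~(vi), the tensor/hom compatibilities from termwise verification plus commutation with filtrant limits, and the operations in~(vii) from their behaviour on filtrant systems of compactly supported sheaves (with the compact-support truncation you note for $\opb f$, $\epb f$).
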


Note that $J_N \circ \roim f \simeq \roim f \circ J_M$ \emph{does not} hold in general.

\medskip

As an example of application of Proposition~\ref{pro:J}, one has the following result.

\begin{corollary}
\label{cor:tensindlim}
Let $G\in\BDC(\field_M)$, $K\in\BDC(\ifield_M)$ and $\{F_i\}$ a filtrant inductive system in $\Mod(\field_M)$. If $\supp G$ is compact, then
\[
\Hom[\BDC(\ifield_M)](G,K\tens\indlim[i] F_i) \simeq
\ilim[i]\Hom[\BDC(\ifield_M)](G,K\tens F_i).
\]
\end{corollary}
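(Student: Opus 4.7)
The plan is to apply the functor $J_M$ from Proposition~\ref{pro:J} in order to transfer the question from $\BDC(\ifield_M)$ to $\operatorname{Ind}(\BDC(\Mod_c(\field_M)))$, where the commutation of $\Hom$ with filtrant inductive limits is a formal consequence of the universal property of ind-categories.

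First I would use the hypothesis that $\supp G$ is compact to regard $G$ as an object of $\BDC(\Mod_c(\field_M))$, so that Proposition~\ref{pro:J}\eqref{J:def} supplies a natural isomorphism
\[
\Hom[\BDC(\ifield_M)](G, H) \isoto \Hom[{\operatorname{Ind}(\BDC(\Mod_c(\field_M)))}](G, J_M(H))
\]
for every $H\in\BDC(\ifield_M)$. This is the only step in which the compactness of $\supp G$ is used; it is what allows $G$ to be treated as a ``base object'' in the ind-category on the right, which is crucial for the subsequent step.

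Next I would combine properties \eqref{J:tens} and \eqref{J:ind} of Proposition~\ref{pro:J} with the fact that the tensor product of $\operatorname{Ind}(\BDC(\Mod_c(\field_M)))$ commutes with filtrant inductive limits in each argument, to rewrite
\[
J_M(K \tens \indlim[i] F_i) \simeq J_M(K) \tens \ilim[i] J_M(F_i) \simeq \ilim[i] J_M(K \tens F_i).
\]

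Finally, $\Hom[\operatorname{Ind}(\shc)](X,\dummy)$ commutes with filtrant inductive limits whenever $X$ is a base object of a category $\shc$, so applying this with $\shc = \BDC(\Mod_c(\field_M))$, $X = G$ and to the system $\{J_M(K \tens F_i)\}$ and combining with the previous two steps yields the desired isomorphism. I do not expect any genuine obstacle; the statement is essentially a direct corollary of the formalism encapsulated by Proposition~\ref{pro:J}. The only point requiring a little care is the interaction of the ind-tensor product with filtrant colimits, which follows from the standard description of ind-tensor products as filtrant colimits of tensor products in the base category.
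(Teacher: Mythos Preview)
Your proof is correct and follows essentially the same route as the paper: the paper's two-line argument invokes Proposition~\ref{pro:J}\eqref{J:ind} and \eqref{J:tens} to obtain $J_M(K\tens\indlim[i] F_i)\simeq \ilim[i] J_M(K\tens F_i)$, and then concludes by Proposition~\ref{pro:J}\eqref{J:def}. Your write-up just unpacks these steps with a bit more detail, including the (standard) observation that the ind-tensor product commutes with filtrant colimits.
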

\begin{proof}
One has
$J_M(K\tens\indlim[i] F_i)\simeq \ilim[i] J_M(K\tens F_i)$ 
by Proposition~\ref{pro:J} \eqref{J:ind} and  \eqref{J:tens}.
Then the assertion follows from Proposition~\ref{pro:J} \eqref{J:def}.
\end{proof}
Here is another application of Proposition~\ref{pro:J}.

\begin{proposition}\label{pro:opbepb}
Let $f\colon M\to N$ be a continuous map of good topological spaces and $K\in\BDC(\ifield_N)$.
Let $U$ be an open subset of $M$ and $\{V_n\}_{n\in\Z_{\geq 0}}$ an increasing sequence of open subsets of $N$. Assume that
\[
U\cap \overline{\opb f(V_n)} \subset \opb f(V_{n+1})
\quad\text{for any $n\in\Z_{\geq 0}$.}
\]
Then, setting $L=\indlim[n]\field_{V_n}$, there is an isomorphism
\[
\field_U \tens \epb f K \tens \opb f L \isoto \field_U \tens \epb f (K\tens L).
\]
\end{proposition}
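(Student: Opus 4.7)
The plan is to analyze the canonical comparison morphism
\[
\psi\colon \epb f K \tens \opb f L \to \epb f(K \tens L),
\]
coming from the projection formula for the adjunction $(\reeim f, \epb f)$, and to show that $\field_U \tens \psi$ is an isomorphism.

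The first step is to reduce the proposition to a question about commuting $\epb f$ with a filtered $\indlim$. Setting $W_n\defeq\opb f V_n$, for each $n$ the Cartesian square with vertical maps $f\colon M\to N$ and $f|_{W_n}\colon W_n\to V_n$, and horizontal open embeddings $W_n\hookrightarrow M$ and $V_n\hookrightarrow N$, yields by base change (together with the identity $\epb{j} = \opb{j}$ for an open embedding $j$) an isomorphism $\epb f(K \tens \field_{V_n}) \isoto \epb f K \tens \field_{W_n}$. Since $\tens$ and $\opb f$ commute with filtrant $\indlim$, passing to the limit gives $\indlim[n]\epb f(K\tens\field_{V_n}) \simeq \epb f K \tens \opb f L$. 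Hence the proposition reduces to showing that the canonical morphism
\[
\field_U\tens\indlim[n]\epb f(K\tens\field_{V_n}) \to \field_U \tens \epb f(K\tens L)
\]
is an isomorphism, i.e.\ that $\field_U\tens\epb f(\dummy)$ commutes with the specific inductive limit $K\tens L = \indlim[n](K\tens\field_{V_n})$.

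The second step invokes Proposition~\ref{pro:J}: by conservativity of $J_M$ and its compatibility with $\opb f$, $\epb f$, $\tens$, and filtrant $\indlim$, it is enough to verify the iso after applying $J_M$, equivalently to check that for all compactly supported $G\in\BDC(\field_M)$ (and all degree shifts) the induced map of $\Hom$-groups is bijective. Using the $(\reeim f, \epb f)$-adjunction, the level-$n$ base change, and Corollary~\ref{cor:tensindlim} applied on both $M$ and $N$ (noting that $\reeim f G$ is compactly supported since the restriction of $f$ to $\supp G$ is proper), both sides identify with the ind-limit $\ilim[n]\Hom(G, \epb f K \tens \field_{W_n})$, matched via the level-$n$ isomorphisms above.

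The role of the hypothesis $U\cap\overline{\opb f V_n}\subset\opb f V_{n+1}$ is to ensure that on $U$ the increasing family $\{U\cap W_n\}$ is a proper exhaustion of its union, each closure in $U$ being contained in the next open. This controls the ind-limit after the localization $\field_U\tens\dummy$ and guarantees that the abstract matching of $\Hom$-groups is realized by the canonical morphism $\field_U\tens\psi$ itself, not merely by some abstract bijection. The main obstacle is exactly this last convergence/naturality check: making rigorous the passage from an identification of ind-limits to an isomorphism of objects in $\BDC(\ifield_M)$, and ensuring that the morphism induced by $\psi$ is the one that realizes this identification; the proper exhaustion supplied by the hypothesis is precisely what makes this step work.
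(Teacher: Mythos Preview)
Your ``level-$n$ base change'' claim $\epb f(K\tens\field_{V_n}) \simeq \epb f K\tens\field_{W_n}$ is false in general. The Cartesian square with the open embeddings $j_n\colon V_n\hookrightarrow N$ and $j'_n\colon W_n\hookrightarrow M$ only gives $\opb{j'_n}\epb f \simeq \epb{f'}\opb{j_n}$, i.e.\ agreement after restriction to $W_n=\opb f V_n$; it says nothing on $M\setminus W_n$, where the right-hand side vanishes but the left-hand side need not. Concretely, for $f\colon\{0\}\hookrightarrow\R$, $K=\field_\R$, $V_n=(0,1)$ one has $W_n=\emptyset$, hence $\epb f K\tens\field_{W_n}=0$, while $\epb f\field_{(0,1)}\simeq\field[-1]$. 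Since your subsequent reduction (identifying $\epb f K\tens\opb f L$ with $\indlim[n]\epb f(K\tens\field_{V_n})$ and then comparing Hom-groups) rests on this identification, the argument breaks here; the hypothesis is not a mere ``convergence/naturality'' issue as you suggest, but is needed to produce any comparison at all.

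The paper proceeds differently and does not attempt a level-$n$ isomorphism. Instead it uses the hypothesis to obtain the support bound $U\cap\supp\bigl(\epb f(K\tens\field_{V_n})\bigr)\subset\opb f(V_{n+1})$, which yields a zigzag
\[
\field_U\tens\epb f(K\tens\field_{V_n})
\;\isofrom\;
\field_U\tens\epb f(K\tens\field_{V_n})\tens\opb f\field_{V_{n+1}}
\;\To\;
\field_U\tens\epb f K\tens\opb f\field_{V_{n+1}},
\]
the second arrow coming from $K\tens\field_{V_n}\to K$. Note the essential index shift $n\mapsto n+1$. One then applies $J_M$, passes to the inductive limit using Proposition~\ref{pro:J}~\eqref{J:ind},~\eqref{J:tens},~\eqref{J:opbepb}, and checks that the resulting morphism is inverse to $J_M$ applied to the canonical comparison map; conservativity of $J_M$ (Proposition~\ref{pro:J}~\eqref{J:cons}) concludes.
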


\begin{proof}
Since the question is local on $M$, we may assume that $U$ is relatively compact.
By the assumption,
$U\cap\supp(\epb f (K\tens\field_{V_{n}})) \subset \opb f(V_{n+1})$. Thus we have
\begin{align*}
\field_U \tens \epb f (K\tens\field_{V_n})
&\isofrom \field_U \tens \epb f (K\tens\field_{V_n}) \tens \opb f \field_{V_{n+1}}, \\
&\To \field_U \tens \epb f K \tens \opb f \field_{V_{n+1}}.
\end{align*}
By applying $J_M$ and taking the inductive limit with respect to $n$ in $\operatorname{Ind}(\BDC(\Mod_c(\field_M)))$, we obtain a morphism
\[
\varinjlim\limits_n J_M(\field_U \tens \epb f (K\tens\field_{V_n}))
\To
\varinjlim\limits_n J_M(\field_U \tens \epb f K \tens \opb f \field_{V_n}).
\]
By Proposition~\ref{pro:J}~\eqref{J:ind}, \eqref{J:tens} and \eqref{J:opbepb}, 
this gives a morphism
\[
J_M(\field_U \tens \epb f (K\tens L)) \To J_M(\field_U \tens \epb f K \tens \opb f L).
\]
We can easily see  that this is an inverse to the natural morphism
\[
J_M(\field_U \tens \epb f K \tens \opb f L) \To J_M(\field_U \tens \epb f (K\tens L)).
\]
Hence, the statement follows from Proposition~\ref{pro:J}~\eqref{J:cons}.
\end{proof}

We will use the following lemma only in Remark~\ref{rem:vanish}.

\begin{lemma}
\label{lem:Indlim}
Let $M$ be a good topological space and
$\{F^\bullet_n\}_{n\in\Z_{\geq 0}}$ an inductive system in $\mathsf{C}^{[a,b]}(\Mod(\field_M))$ for some $a\leq b$ in $\Z$.
Then
\[
\rihom(\indlim[n] F^\bullet_n, \omega_M) \isofrom
\rihom(\varinjlim\limits_{n} F^\bullet_n, \omega_M).
\]
Here, $\varinjlim\limits_{n} F^\bullet_n$ is the inductive limit of $\{F^\bullet_n\}_{n\in\Z_{\geq 0}}$ in $\mathsf{C}^{[a,b]}(\Mod(\field_M))$.
\end{lemma}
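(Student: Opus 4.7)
The plan is to deduce the lemma from Proposition~\ref{pro:Prolim} applied with $f=\id_M$ and $G=\omega_M$, after identifying the pro-object $\prolim[n]\rihom(F^\bullet_n,\omega_M)$ with $\rihom(\varinjlim\limits_n F^\bullet_n,\omega_M)$ in $\BDC(\ifield_M)$.

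First, I would perform the telescope replacement used in the proof of Proposition~\ref{pro:Prolim}: setting $S^\bullet_n = \DSum_{k\leq n} F^\bullet_k$ and taking $\tilde F^\bullet_n$ to be the mapping cone of $S^\bullet_{n-1}\to S^\bullet_n$, one obtains a quasi-isomorphic inductive system whose transition morphisms $\tilde F^k_n\to\tilde F^k_{n+1}$ admit a cosection termwise. Neither $\indlim[n] F^\bullet_n$ in $\BDC(\ifield_M)$ nor $\varinjlim\limits_n F^\bullet_n$ in $\BDC(\field_M)$ is affected, up to quasi-isomorphism, by this substitution; hence I may assume from the outset that every $F^k_n\to F^k_{n+1}$ admits a cosection.

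Second, under this cosection hypothesis, the standard cofiber presentation
\[
\DSum_n F^\bullet_n \To[\id-\sigma] \DSum_n F^\bullet_n \To \varinjlim\limits_n F^\bullet_n \To[+1]
\]
in $\BDC(\field_M)$ is dualized by $\rihom(-,\omega_M)$ to a Milnor-type distinguished triangle in $\BDC(\ifield_M)$ expressing $\rihom(\varinjlim\limits_n F^\bullet_n,\omega_M)$ as a homotopy limit of the projective system $\{\rihom(F^\bullet_n,\omega_M)\}_n$. The cosections ensure that this projective system is strongly Mittag-Leffler, so the same homotopy-limit description realizes the pro-object $\prolim[n]\rihom(F^\bullet_n,\omega_M)\in\operatorname{Pro}(\BDC(\ifield_M))$; this identifies it with $\rihom(\varinjlim\limits_n F^\bullet_n,\omega_M)$, and in particular shows that the pro-object is representable in $\BDC(\ifield_M)$.

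Finally, Proposition~\ref{pro:Prolim} (with $f=\id_M$ and $G=\omega_M$) then gives
\[
\rihom(\indlim[n] F^\bullet_n,\omega_M) \simeq \prolim[n]\rihom(F^\bullet_n,\omega_M) \simeq \rihom(\varinjlim\limits_n F^\bullet_n,\omega_M),
\]
which is the claimed isomorphism. The main obstacle is the second step: products in $\BDC(\ifield_M)$ do not exist in general, so the Milnor triangle must be handled with care. The most robust way to overcome this, following the template of the proof of Proposition~\ref{pro:Prolim}, is to resolve $\omega_M$ by quasi-injective objects and work at the level of complexes; the cosection property then ensures that the projective system is essentially telescopic, so that only cofinally locally finite products are needed and the Mittag-Leffler argument goes through.
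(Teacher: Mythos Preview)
Your approach diverges from the paper's and has a genuine gap in the second step. The Milnor triangle (together with the cosection-induced Mittag--Leffler vanishing of $\varprojlim^1$) only identifies $L=\rihom(\varinjlim_n F^\bullet_n,\omega_M)$ as the \emph{homotopy limit} of the tower $X_n=\rihom(F^\bullet_n,\omega_M)$: it yields $\Hom(Z,L)\simeq\varprojlim_n\Hom(Z,X_n)$ for every test object $Z$. Representability of the pro-object $\prolim[n]X_n$ in $\operatorname{Pro}(\BDC(\ifield_M))$ is the dual condition $\varinjlim_n\Hom(X_n,Z)\simeq\Hom(L,Z)$, and this fails already for towers of split surjections: if $X_n=\DSum_{i\leq n}K_i$ with the obvious projections, the holim is $\prod_i K_i$, but the identity of $\prod_i K_i$ does not factor through any finite stage, so $\prolim[n]X_n$ is not isomorphic to $\prod_i K_i$ in $\operatorname{Pro}$. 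Hence the hypothesis of Proposition~\ref{pro:Prolim} is not verified by your argument, and you cannot invoke it as a black box. Your final paragraph senses a difficulty but locates it in the wrong place: the products needed for the Milnor triangle do exist here (everything lives in $\BDC(\field_M)$), the real issue is the holim/pro-object mismatch.

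The paper does not route through Proposition~\ref{pro:Prolim}. After the same telescope replacement (and a further reduction to soft $F^k_n$, which you omit), it exploits that the second argument is the dualizing complex: for $G\in\Mod_c(\field_M)$ one has
\[
\RHom\bigl(G,\rihom(F,\omega_M)\bigr)\ \simeq\ \RHom\bigl(\rsect_c(M;G\tens F),\field\bigr),
\]
so both sides of the lemma are compared, after applying $\RHom(G,-)$, through the behaviour of $\rsect_c(M;G\tens-)$ on $\indlim$ versus $\varinjlim$. Softness makes $\sect_c$ compute $\rsect_c$ and commute with filtered colimits; dualizing turns the colimit into a derived $\varprojlim$ of vector spaces, and the cosections give Mittag--Leffler there, so $\derr^i\pi\prolim$ vanishes for $i\neq 0$ and the two sides agree. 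This direct use of Verdier duality is what singles out $\omega_M$ and is the ingredient your sketch is missing.
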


\begin{proof}
By d\'evissage, we may assume that the morphism $F^k_n \to F^k_{n+1}$ has a cosection for each $k$ and $n$, as in the proof of Proposition~\ref{pro:Prolim}, and that all the sheaves $F^k_n$ are soft sheaves.

Then, for any $G\in\Mod_c(\field_M)$,
\begin{align*}
\RHom[\BDC(\ifield_M)]&(G,\rihom(\indlim[n] F^\bullet_n, \omega_M)) \\
&\simeq
\RHom[\BDC(\ifield_M)](G\tens \indlim[n] F^\bullet_n, \omega_M) \\
&\simeq
\RHom[\BDC(\field)](\rsect_c(M; G\tens \indlim[n] F^\bullet_n), \field).
\end{align*}
Since $G\tens F^k_n$ are soft sheaves (see \cite[Lemma 3.1.2]{KS90}),
\[
\rsect_c(M; G\tens \indlim[n] F^\bullet_n)
\simeq
\indlim[n] \sect_c(M; G\tens F^\bullet_n).
\]
Hence
\[
\RHom[\BDC(\field)](\rsect_c(M; G\tens \indlim[n] F^\bullet_n), \field)
\simeq \derr\pi\prolim[n]\sect_c(M; G\tens F^\bullet_n)^*,
\]
where $\pi\colon\operatorname{Pro(\Mod(\field))} \to \Mod(\field)$ is the functor of taking the projective limit (see \cite[Corollary 13.3.16]{KS06}).
Since $\sect_c(M; G\tens F^\bullet_n)^*$ satisfies the Mittag-Leffler condition,
one has
\[
\derr^i\pi\prolim[n]\sect_c(M; G\tens F^\bullet_n)^* \simeq0
\quad\text{for any $i\neq 0$.}
\]
Hence
\begin{align*}
\derr\pi\prolim[n]\sect_c(M; G\tens F^\bullet_n)^*
&\simeq \varprojlim\limits_{n}\sect_c(M; G\tens F^\bullet_n)^* \\
&\simeq (\varinjlim\limits_{n}\sect_c(M; G\tens F^\bullet_n))^* \\
&\simeq \sect_c(M; G\tens \varinjlim\limits_{n}F^\bullet_n)^* \\
&\simeq \RHom(G,\rihom(\varinjlim\limits_{n}F^\bullet_n,\omega_M)).
\end{align*}
This implies that
\[
\RHom(G,\rihom(\varinjlim\limits_{n}F^\bullet_n,\omega_M))
\isoto
\RHom(G,\rihom(\indlim[n]F^\bullet_n,\omega_M))
\]
for any $G\in\Mod_c(\field_M)$, and hence we obtain the desired result.
\end{proof}

\subsection{$\R$-constructible sheaves}

The notion of subanalytic subset and of $\R$-constructible sheaf, usually defined on real analytic manifolds, naturally extend to subanalytic spaces (cf.~\cite[Exercise IX.2]{KS90}).

\begin{definition}
A \emph{subanalytic space} $(M,\shs_M)$ is an $\R$-ringed space which is locally isomorphic to $(Z,\shs_Z)$, where $Z$ is a closed subanalytic subset 
of a real analytic  manifold, and $\shs_Z$ is the sheaf of $\R$-algebras  of real valued subanalytic continuous functions. In this paper, we assume that subanalytic spaces are good topological spaces.

One naturally defines the category of subanalytic spaces. The morphisms are morphisms of $\R$-ringed spaces.
\end{definition}

Let $M$ be a subanalytic space. One says that an object of $\BDC(\field_M)$ is $\R$-constructible if all of its cohomologies are $\R$-constructible.
Denote by $\BDC_\Rc(\field_M)$ the full subcategory of $\R$-constructible objects of $\BDC(\field_M)$. The category $\BDC_\Rc(\field_M)$ is triangulated and is closed under $\tens$, $\rhom$ and the duality functor $\dual_M$.

The following two propositions are classical results (see e.g.~\cite[Propositions 3.4.3, 3.4.4, 8.4.9]{KS90}).

\begin{proposition}
Let $M$ be a subanalytic space and $F\in\BDC_\Rc(\field_M)$. Then the natural morphism
\[
F \to \dual_M\dual_M F
\]
is an isomorphism.
\end{proposition}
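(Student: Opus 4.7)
The plan is to reduce this biduality statement on subanalytic spaces to the classical one for $\R$-constructible sheaves on real analytic manifolds, which is cited immediately after the proposition. The reduction proceeds in two moves: localisation, and transport along a closed embedding into an ambient manifold.

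First, whether $F\to\dual_M\dual_M F$ is an isomorphism is a local question on $M$ (a morphism of bounded complexes of sheaves is an isomorphism iff it induces isomorphisms on every stalk). By the very definition of a subanalytic space, I can then cover $M$ by open subsets each isomorphic, as a subanalytic space, to a closed subanalytic subset $Z$ of some real analytic manifold $N$; so I may assume from the outset that $M=Z$ and work with the closed embedding $i\cl Z\hookrightarrow N$.

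The next move is to push everything up to $N$. Since $i$ is a closed embedding, $\roim i=\reim i$ is fully faithful (one has $\opb i\,\roim i\simeq\id$), so it suffices to prove that the induced morphism
\[
\roim i F \To \roim i\bl\dual_Z\dual_Z F\br
\]
is an isomorphism in $\BDC(\field_N)$. The essential identity is
\[
\roim i\sep\dual_Z F \isoto \dual_N\sep\roim i F,
\]
which I would obtain from $\omega_Z\simeq\epb i\omega_N$, the general formula $\rhom(\opb i G,\epb i H)\simeq\epb i\rhom(G,H)$ and the fact that $\roim i F$ is supported on $Z$, so that the functor $\roim i\epb i\simeq\rsect_Z$ acts as the identity on $\dual_N\roim i F$. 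Iterating this identity gives $\roim i\bl\dual_Z\dual_Z F\br\simeq\dual_N\dual_N\bl\roim i F\br$, in a way compatible with the natural biduality morphisms.

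Finally, $\roim i F$ is $\R$-constructible on $N$: the $\R$-constructibility of $F$ is witnessed by a subanalytic stratification of $Z$, which extends to a subanalytic stratification of $N$ (by adjoining $N\setminus Z$ and refining as needed). The classical biduality theorem for $\R$-constructible sheaves on real analytic manifolds, namely \cite[Prop.~3.4.3, 8.4.9]{KS90}, then produces an isomorphism $\roim i F\isoto\dual_N\dual_N\roim i F$, which matches via Step 2 the morphism $\roim i F\to\roim i\dual_Z\dual_Z F$, and the conclusion follows.

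\textbf{Main obstacle.} The only nontrivial point is the formula $\roim i\dual_Z F\simeq\dual_N\roim i F$ and, above all, the verification that, under this isomorphism, the canonical biduality morphism $F\to\dual_Z\dual_Z F$ is carried to the canonical morphism $\roim i F\to\dual_N\dual_N\roim i F$. This is a bookkeeping exercise with the units and counits of the adjunctions $(\opb i,\roim i)$ and $(\reim i,\epb i)$, combined with the support condition on $\roim i F$; once settled, the rest of the argument is purely formal.
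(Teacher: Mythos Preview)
Your proposal is correct. The paper does not actually give a proof of this proposition: it simply labels it a ``classical result'' and refers to \cite[Propositions 3.4.3, 3.4.4, 8.4.9]{KS90}, so there is nothing to compare against beyond the citation.

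What you have written is precisely the standard reduction that justifies invoking \cite{KS90} in the subanalytic-space setting: localise, embed into an ambient real analytic manifold via the defining closed embedding $i\colon Z\hookrightarrow N$, use $\roim i\,\dual_Z\simeq\dual_N\,\roim i$ (from $\omega_Z\simeq\epb i\omega_N$ and the adjunctions), and then apply the biduality theorem on $N$. The compatibility you flag as the ``main obstacle'' is indeed routine adjunction bookkeeping. So your argument fills in exactly the gap the paper leaves to the reader.
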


\begin{proposition}
\label{pro:Rcdtens}
Let $M$ be a subanalytic space and $N$ a good topological space. Let $p_1\colon M\times N\to M$ and $p_2\colon M\times N\to N$ be the projections. Then for any $F\in\BDC_\Rc(\field_M)$ and $G\in\BDC(\field_N)$ the natural morphism
\[
\opb{p_1}\dual_M F \tens \opb{p_2} G \To \rhom(\opb{p_1}F,\epb{p_2} G)
\]
is an isomorphism.
\end{proposition}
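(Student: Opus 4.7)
This is a K\"unneth-type duality isomorphism, classical in the manifold case (compare \cite[Proposition~3.4.4]{KS90}). The plan is to first construct the morphism, then verify it is an isomorphism by d\'evissage on $F \in \BDC_\Rc(\field_M)$.

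For the construction, by the $(\tens, \rhom)$ adjunction, the desired map is equivalent to a map
\[
\opb{p_1}(F \tens \dual_M F) \tens \opb{p_2} G \to \epb{p_2} G.
\]
This is obtained by composing the $p_1$-pullback of the evaluation pairing $F \tens \dual_M F \to \omega_M$ with the canonical K\"unneth morphism $\opb{p_1}\omega_M \tens \opb{p_2} G \to \epb{p_2} G$. The latter comes from the identification $\omega_M \simeq \epb{p_M}\field_\point$ for $p_M\colon M \to \point$, combined with base change for the projections from $M \times N$.

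To show the morphism is an isomorphism, I would proceed by d\'evissage on $F$. Since the question is local on $M \times N$ and both sides are exact functors of $F \in \BDC_\Rc(\field_M)$ preserving distinguished triangles, by the local structure of $\R$-constructible sheaves on a subanalytic space it suffices to verify the assertion when $F = \field_U$ for $U \subset M$ an open subanalytic subset. Setting $j \defeq j_U \times \id_N \colon U\times N \to M\times N$ and writing $p_1^U, p_2^U$ for the projections from $U\times N$, base change gives $\opb{p_1}\dual_M \field_U \simeq \roim{j}\opb{p_1^U}\omega_U$, while $\rhom(\opb{p_1}\field_U, \epb{p_2} G) \simeq \roim{j}\epb{p_2^U} G$. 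The claim thus reduces to the K\"unneth isomorphism $\opb{p_1^U}\omega_U \tens \opb{p_2^U} G \isoto \epb{p_2^U} G$ for the product projection $p_2^U \colon U \times N \to N$.

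The main obstacle is justifying this final K\"unneth isomorphism when $N$ is merely a good topological space rather than a manifold. It follows from the standard formula $\omega_{U \times N / N} \simeq \opb{p_1^U} \omega_U$ for the relative dualizing complex of a product projection, combined with the projection formula $\opb{p_2^U} G \tens \epb{p_2^U}\field_N \isoto \epb{p_2^U} G$. The overall argument parallels \cite[Proposition~3.4.4]{KS90}; the only additional point is the extension from real analytic manifolds to subanalytic spaces $M$, which is harmless since the d\'evissage of $\R$-constructible sheaves proceeds identically in this slightly broader framework.
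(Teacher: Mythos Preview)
The paper does not actually prove this proposition: it is stated as a classical result with a reference to \cite[Propositions~3.4.3, 3.4.4, 8.4.9]{KS90}. So there is no ``paper's own proof'' to compare against; your outline is to be judged on its own merits.

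Your construction of the morphism and the d\'evissage strategy are correct and standard. There is, however, a genuine gap in the reduction step. After writing
\[
\opb{p_1}\dual_M\field_U \tens \opb{p_2}G \;\simeq\; \bigl(\roim{j}\,(p_1^U)^{-1}\omega_U\bigr)\tens \opb{p_2}G
\quad\text{and}\quad
\rhom(\opb{p_1}\field_U,\epb{p_2}G)\;\simeq\;\roim{j}\,(p_2^U)^!G,
\]
you assert that the claim ``reduces to'' the K\"unneth isomorphism on $U\times N$. But passing from the isomorphism on $U\times N$ to the one on $M\times N$ requires moving $\tens\,\opb{p_2}G$ inside $\roim{j}$, i.e.\ a projection formula
\[
\bigl(\roim{j}A\bigr)\tens \opb{p_2}G \;\isoto\; \roim{j}\bigl(A\tens \opb{(p_2^U)}G\bigr)
\]
for the open embedding $j=j_U\times\id_N$. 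This fails for general open immersions and general $A$. It does hold here, but only because of the product structure: with $A=(p_1^U)^{-1}\omega_U=\omega_U\etens\field_N$ one has $A\tens\opb{(p_2^U)}G=\omega_U\etens G$, and what is needed is the K\"unneth-type identity $\roim{(j_U\times\id_N)}(\omega_U\etens G)\simeq(\roimv{j_{U\sep*}}\omega_U)\etens G$. This follows over a field from the fact that for relatively compact subanalytic $V\subset M$ the complex $\rsect(V\cap U;\omega_U)$ has finite-dimensional cohomology (Poincar\'e--Verdier duality plus finiteness of subanalytic $\rsect_c$), so the K\"unneth formula for sections applies. You should state and justify this step explicitly; as written, the ``reduces to'' hides exactly the non-formal content.

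A cleaner route that avoids this issue is to first establish the case $F=\field_M$ directly, i.e.\ the isomorphism $\epb{p_2}G\simeq\opb{p_1}\omega_M\tens\opb{p_2}G$ (which is precisely what \cite[Proposition~3.4.4]{KS90} provides, and which you invoke anyway at the end), and then run the d\'evissage on $F$ via the identity
\[
\rhom(\opb{p_1}F,\epb{p_2}G)\simeq\rhom(\opb{p_1}F,\opb{p_1}\omega_M)\tens\opb{p_2}G\simeq\opb{p_1}\dual_MF\tens\opb{p_2}G,
\]
where the second isomorphism is the step that genuinely uses $\R$-constructibility of $F$ (perfectness of the stalks) and is checked stalkwise without any $\roim{j}$-gymnastics.
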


Hence, by applying Corollary~\ref{cor:tensindlim}, we obtain the following proposition.

\begin{proposition}
\label{pro:const}
Let $M$, $N$, $p_1$ and $p_2$ be as in {\rm Proposition~\ref{pro:Rcdtens}}.
Let $F\in\BDC_\Rc(\field_M)$ and $G\in\BDC(\ifield_N)$.
Then there are isomorphisms
\begin{align*}
\epb{p_2}\field_N\tens\opb{p_2}G &\isoto \epb{p_2}G, \\
\opb{p_1}\dual_M F \tens \opb{p_2}G &\isoto \rihom(\opb{p_1}F,\epb{p_2} G).
\end{align*}
\end{proposition}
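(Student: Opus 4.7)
The plan is to reduce both isomorphisms to their sheaf-theoretic counterparts (Proposition~\ref{pro:Rcdtens}), using Corollary~\ref{cor:tensindlim} to bridge the sheaf and ind-sheaf settings. First I would write down the natural morphisms: for (i), the projection-formula arrow $\epb{p_2}\field_N\tens\opb{p_2}G\to\epb{p_2}(\field_N\tens G)=\epb{p_2}G$; for (ii), the morphism obtained by adjunction from the composite
\[
\opb{p_1}F\tens\opb{p_1}\dual_M F\tens\opb{p_2}G\to\opb{p_1}\omega_M\tens\opb{p_2}G\isoto\epb{p_2}\field_N\tens\opb{p_2}G\to\epb{p_2}G,
\]
in which the middle isomorphism $\opb{p_1}\omega_M\isoto\epb{p_2}\field_N$ is the special case $F=\field_M$, $G=\field_N$ of Proposition~\ref{pro:Rcdtens} and the last arrow is (i) itself.

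To prove these morphisms are isomorphisms, I would test against the generating family of compactly supported sheaves $H\in\BDC(\Mod_c(\field_{M\times N}))$. Using Proposition~\ref{pro:J}, represent $G$ via $J_N(G)\simeq\ilim[i] G_i$ with $G_i\in\BDC(\Mod_c(\field_N))$. Since $\opb{p_2}$ commutes with filtrant inductive limits one has $\opb{p_2}G\simeq\indlim[i]\opb{p_2}G_i$, and Corollary~\ref{cor:tensindlim} gives
\[
\Hom(H,\epb{p_2}\field_N\tens\opb{p_2}G)\simeq\ilim[i]\Hom(H,\epb{p_2}\field_N\tens\opb{p_2}G_i)\simeq\ilim[i]\Hom(H,\epb{p_2}G_i),
\]
the last identification being Proposition~\ref{pro:Rcdtens} with $F=\field_M$. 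On the other hand, the adjunction $\reim{p_2}\dashv\epb{p_2}$ gives
\[
\Hom(H,\epb{p_2}G)\simeq\Hom(\reim{p_2}H,G)\simeq\ilim[i]\Hom(\reim{p_2}H,G_i)\simeq\ilim[i]\Hom(H,\epb{p_2}G_i),
\]
where the middle step uses that $\reim{p_2}H$ has compact support so Hom out of it commutes with filtrant inductive limits. This proves (i). A parallel computation for (ii), using $H\tens\opb{p_1}F$ (still of compact support, since $H$ is) in place of $H$ and invoking Proposition~\ref{pro:Rcdtens} for each $G_i$, yields the second isomorphism.

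The main obstacle is the passage from an arbitrary object of $\BDC(\ifield_N)$ to a usable filtrant inductive-limit presentation by honest sheaves: a general object of this derived category is not literally an ind-object of $\BDC(\field_N)$. This is precisely the role of Proposition~\ref{pro:J}, which embeds $\BDC(\ifield_N)$ conservatively into $\operatorname{Ind}(\BDC(\Mod_c(\field_N)))$, where all the functors in play commute with filtrant inductive limits and the required iso reduces to the sheaf-theoretic Proposition~\ref{pro:Rcdtens} applied to each $G_i$.
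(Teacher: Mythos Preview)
Your strategy matches the paper's: reduce to Proposition~\ref{pro:Rcdtens} via an ind-limit presentation of $G$. The construction of the natural morphisms is fine, and your final paragraph correctly identifies $J$ and its conservativity as the engine.

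The middle computation, however, has a technical slip. You write $\opb{p_2}G\simeq\indlim[i]\opb{p_2}G_i$ in $\BDC(\ifield_{M\times N})$ and then invoke Corollary~\ref{cor:tensindlim}. Neither step works as stated: the ind-presentation $J_N(G)\simeq\ilim[i]G_i$ lives in $\operatorname{Ind}(\BDC(\Mod_c(\field_N)))$, not in $\BDC(\ifield_N)$ itself, so it cannot be pulled back to a literal $\indlim$ of complexes; and Corollary~\ref{cor:tensindlim} is stated for a filtrant system in $\Mod(\field_M)$, not in $\BDC(\Mod_c(\field_M))$. Your $\Hom(H,\,\cdot\,)$ computations are correct \emph{after} passing through $J$ and Proposition~\ref{pro:J}\,(i), but that is not what you wrote.

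The clean implementation of what you describe in your last paragraph bypasses the $\Hom$-testing entirely. Apply $J_{M\times N}$ directly to each of the two morphisms. By Proposition~\ref{pro:J}\,(v) and (vii)(b), $J$ commutes with $\tens$, with $\opb{p_2}$, with $\epb{p_2}$, and with $\rihom(\opb{p_1}F,-)$ since $\opb{p_1}F$ is an honest sheaf. Writing $J_N(G)\simeq\ilim[i]G_i$, both sides become the $\ilim$ over $i$ of the corresponding sheaf-theoretic morphisms with $G_i$ in place of $G$, and these are isomorphisms by Proposition~\ref{pro:Rcdtens}. Conservativity of $J$ (Proposition~\ref{pro:J}\,(ii)) finishes the argument. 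This is the content behind the paper's one-line reference to Corollary~\ref{cor:tensindlim}.
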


\begin{corollary}
\label{cor:exthom}
Let $M$ be a subanalytic space and $N$ a good topological space. 
Let $F_1,F_2\in\BDC_\Rc(\field_M)$ and $G_1,G_2\in\BDC(\ifield_N)$.
Then the canonical morphism
\[
\rhom(F_1,F_2) \etens \rihom(G_1,G_2)
\To \rihom(F_1 \etens G_1, F_2\etens G_2)
\]
is an isomorphism.
\end{corollary}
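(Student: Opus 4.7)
I will build an explicit chain of canonical isomorphisms from the target $\rihom(F_1 \etens G_1, F_2\etens G_2)$ back to the source $\rhom(F_1,F_2)\etens \rihom(G_1,G_2)$, using Proposition~\ref{pro:const} three times. Let $p_1\colon M\times N\to M$ and $p_2\colon M\times N\to N$ denote the projections.

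A first tensor–hom adjunction rewrites the target as
\[
\rihom\bl\opb{p_1}F_1,\sep \rihom(\opb{p_2}G_1,\sep \opb{p_1}F_2\tens\opb{p_2}G_2)\br.
\]
Since $F_2$ is $\R$-constructible, biduality gives $F_2\simeq\dual_M\dual_M F_2$, and the second isomorphism of Proposition~\ref{pro:const} applied with ``$F$''$=\dual_M F_2$ yields
\[
\opb{p_1}F_2\tens\opb{p_2}G_2\simeq\rihom(\opb{p_1}\dual_M F_2,\sep \epb{p_2}G_2).
\]
Substituting, applying tensor–hom adjunction a second time, and using the standard identity $\rihom(\opb{p_2}G_1,\epb{p_2}G_2)\simeq \epb{p_2}\rihom(G_1,G_2)$, the target collapses to
\[
\rihom\bl\opb{p_1}(F_1\tens\dual_M F_2),\sep \epb{p_2}\rihom(G_1,G_2)\br.
\]

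Since $\BDC_\Rc(\field_M)$ is closed under $\tens$ and $\dual_M$, the sheaf $F_1\tens\dual_M F_2$ is again $\R$-constructible. A third application of Proposition~\ref{pro:const} therefore produces
\[
\opb{p_1}\dual_M(F_1\tens\dual_M F_2)\tens\opb{p_2}\rihom(G_1,G_2),
\]
and the classical identity $\dual_M(F_1\tens\dual_M F_2)\simeq \rhom(F_1,\dual_M\dual_M F_2)\simeq \rhom(F_1,F_2)$ then gives the source $\rhom(F_1,F_2)\etens\rihom(G_1,G_2)$.

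The only delicate point I anticipate is verifying that the composite of this chain coincides with the \emph{canonical} morphism of the statement, and not merely with some abstract isomorphism between source and target. This reduces to a naturality check: each step above is intrinsic (tensor–hom adjunction, Proposition~\ref{pro:const}, the standard identity relating $\rihom$ and $\epb{p_2}$, or biduality for $\R$-constructible sheaves), so the composite is the unique morphism adjoint to the pairing
\[
(F_1\etens G_1)\tens\bl\rhom(F_1,F_2)\etens\rihom(G_1,G_2)\br\To F_2\etens G_2
\]
obtained by distributing $\tens$ across $\etens$ and applying the two evaluation morphisms componentwise, which is precisely how the canonical morphism in the statement is defined.
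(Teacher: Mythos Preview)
Your proof is correct and follows essentially the same route as the paper: rewrite $\opb{p_1}F_2\tens\opb{p_2}G_2$ as $\rihom(\opb{p_1}\dual_M F_2,\epb{p_2}G_2)$ via Proposition~\ref{pro:const}, use tensor--hom adjunction to collect the $M$-variables, invoke $\dual_M(F_1\tens\dual_M F_2)\simeq\rhom(F_1,F_2)$, and apply Proposition~\ref{pro:const} once more to recover the exterior tensor product. The paper's proof differs only in the order of the currying steps and omits your closing naturality remark.
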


\begin{proof}
Let $p_1\colon M\times N\to M$ and $p_2\colon M\times N\to N$ be the projections.
We have
\[
\opb{p_1} F_2\tens \opb{p_2}G_2 \simeq \rihom(\opb{p_1} \dual_M F_2, \epb{p_2}G_2).
\]
Hence
\begin{align*}
\rihom&(\opb{p_1} F_1 \tens \opb{p_2}G_1,  \opb{p_1} F_2\tens \opb{p_2}G_2) \\
&\simeq \rihom(\opb{p_1} F_1 \tens \opb{p_2}G_1 \tens \opb{p_1} \dual_M F_2, \epb{p_2}G_2) \\
&\simeq\rihom(\opb{p_1} (F_1 \tens \dual_M F_2)  , \rihom(\opb{p_2}G_1, \epb{p_2}G_2)) \\
&\underset{(*)}\simeq \rihom(\opb{p_1} \dual_M\rhom(F_1, F_2)  , \epb{p_2}\rihom(G_1,G_2)) \\
&\simeq \opb{p_1}\rhom(F_1, F_2) \tens \opb{p_2}\rihom(G_1,G_2).
\end{align*}
Here, in $(*)$ we have used
\[
F_1 \tens \dual_M F_2 \simeq \dual_M\rhom(F_1, F_2),
\]
which follows from
\begin{align*}
\dual_M(F_1 \tens \dual_M F_2)
&= \rhom(F_1 \tens \dual_M F_2,\omega_M) \\
&\simeq \rhom(F_1 , \rhom(\dual_M F_2,\omega_M)) \\
&\simeq \rhom(F_1 , F_2) .
\end{align*}
\end{proof}

\subsection{Subanalytic ind-sheaves}

Let $M$ be a subanalytic space.
An ind-sheaf on $M$ is called subanalytic if it is isomorphic to a small filtrant ind-limit of $\R$-constructible sheaves. Let us denote by $\ind_\suban(\field_M)$
the category of subanalytic ind-sheaves. Note that it is stable by kernels, cokernels and extensions in $\ind(\field_M)$.
An object of $\BDC(\ifield_M)$ is called subanalytic if all of its cohomologies are subanalytic.
Denote by $\BDC_\suban(\ifield_M)$ the full subcategory of subanalytic objects in
$\BDC(\ifield_M)$.  It is a triangulated category.%
\footnote{\,In \cite{KS01}, subanalytic ind-sheaves are called ind-$\R$-constructible sheaves, and $\ind_\suban(\field_M)$ and $\BDC_\suban(\ifield_M)$ are denoted by $\ind_\Rc(\field_M)$ and $\BDC_{\ind\Rc}(\ifield_M)$, respectively.}

Let $\operatorname{Op}_{M_\sa}$ be the category of relatively compact subanalytic open subsets of $M$, whose morphisms are inclusions.

\begin{definition}[cf.~\cite{KS96,KS01}]
A \emph{subanalytic sheaf} $F$ is a functor $\operatorname{Op}_{M_\sa}^\op\to\Mod(\field)$ which satisfies
\bnum
\item
$F(\emptyset) = 0$,
\item
For $U,V\in\operatorname{Op}_{M_\sa}$, the sequence
\[
0 \To F(U\cup V) \To[r_1] F(U) \dsum F(V) \To[r_2] F(U\cap V)
\]
is exact. Here $r_1$ is given by the restriction maps and $r_2$ is given by the restriction $F(U) \to F(U\cap V)$ and the opposite of the restriction $F(V) \to F(U\cap V)$.
\ee
Denote by $\Mod(\field_{M_\sa})$ the category of subanalytic sheaves.
\end{definition}

The following result is proved in \cite{KS01}.

\begin{proposition}
The category $\ind_\suban(\field_M)$ of subanalytic ind-sheaves and the category $\Mod(\field_{M_\sa})$ of subanalytic sheaves are equivalent by the functor associating with $F\in\ind_\suban(\field_M)$ the subanalytic sheaf
\[
\operatorname{Op}_{M_\sa} \owns U \longmapsto \Hom[\ind(\field_M)](\field_U,F).
\]
\end{proposition}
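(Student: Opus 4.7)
The plan is to exhibit a quasi-inverse $\Psi\colon \Mod(\field_{M_\sa}) \to \ind_\suban(\field_M)$ to the proposed functor $\Phi\colon F \mapsto \bl U\mapsto \Hom[\ind(\field_M)](\field_U,F)\br$. First I would check that $\Phi$ is well-defined: for $U,V\in\operatorname{Op}_{M_\sa}$, the Mayer-Vietoris short exact sequence
\[
0 \To \field_{U\cap V} \To \field_U \dsum \field_V \To \field_{U\cup V} \To 0
\]
in $\Mod(\field_M)\subset \ind(\field_M)$, after application of the left exact functor $\Hom[\ind(\field_M)](\dummy,F)$, becomes precisely the exactness condition in the definition of a subanalytic sheaf. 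The vanishing at $\emptyset$ is clear.

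Next I would establish full faithfulness. Every object of $\Mod_{\Rc,c}(\field_M)$ admits a finite presentation $\DSum_j\field_{V_j}\to\DSum_i\field_{U_i}\to F\to 0$ with $U_i,V_j\in\operatorname{Op}_{M_\sa}$, so $\{\field_U\}_{U\in\operatorname{Op}_{M_\sa}}$ is a generating family for $\Mod_{\Rc,c}(\field_M)$. For $F\simeq\indlim[i] F_i\in\ind_\suban(\field_M)$ with $F_i\in\Mod_{\Rc,c}(\field_M)$, the compactness of $\supp\field_U$ combined with the inductive-limit computation in the style of Corollary~\ref{cor:tensindlim} yields
\[
\Hom[\ind(\field_M)](\field_U,F) \simeq \ilim[i]\Hom[\ind(\field_M)](\field_U,F_i).
\]
Full faithfulness on morphisms between two small filtrant ind-limits then follows by first reducing to morphisms out of each $F_i$, then writing each $F_i$ as a finite colimit of $\field_U$'s and observing that $\Phi$ is tautologically compatible with both operations.

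For essential surjectivity, given a subanalytic sheaf $G$, I would define
\[
\Psi(G) \defeq \indlim_{(F,\eta)} F,
\]
where $(F,\eta)$ ranges over pairs with $F\in\Mod_{\Rc,c}(\field_M)$ and $\eta\colon\Phi(F)\to G$ a morphism in $\Mod(\field_{M_\sa})$. The indexing category is filtrant: given $(F_1,\eta_1),(F_2,\eta_2)$, the sum $(F_1\dsum F_2,\eta_1\dsum\eta_2)$ dominates both, and parallel arrows are coequalized using the closure of $\Mod_{\Rc,c}(\field_M)$ under cokernels. Hence $\Psi(G)\in\ind_\suban(\field_M)$. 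Applying $\Phi$ and using the fully-faithful computation above, a Yoneda-style argument identifies $\Phi(\Psi(G))(U) = \Hom(\field_U,\Psi(G))$ with $G(U)$: every section $s\in G(U)$ determines a morphism $\eta_s\colon \Phi(\field_U) = \field_{\operatorname{Op}_{M_\sa}/U}\to G$, hence a point of $\Psi(G)$ over $\field_U$.

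The main obstacle will be verifying essential surjectivity, and specifically that $\Phi\circ\Psi\simeq\id$. The technical crux is to show that every morphism $\Phi(F)\to G$ out of the representable $\Phi(\field_U)$ corresponds precisely to a section of $G$ over $U$, and that compatible systems of such sections glue in $\Psi(G)$ exactly as they do in $G$. Here the Mayer-Vietoris sheaf axiom for $G$ is used in an essential way: it provides the gluing needed so that maps $\DSum\field_{U_i}\to\Psi(G)$ factoring through $\field_{\bigcup U_i}$ (when the restrictions agree) genuinely land in $\Psi(G)$ in a unique way. Once this compatibility is established, both composites $\Phi\Psi$ and $\Psi\Phi$ are identified with the identity by reducing to the generators $\field_U$ and applying Step~2.
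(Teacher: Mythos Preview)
The paper does not prove this proposition; the sentence immediately preceding it reads ``The following result is proved in \cite{KS01}.'' So there is no in-paper proof to compare against.

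Your outline is the standard one and is essentially correct: Mayer--Vietoris gives well-definedness of $\Phi$, the generators $\field_U$ reduce full faithfulness to a tautology, and the quasi-inverse $\Psi$ is the left Kan extension along $\Phi$ restricted to compact $\R$-constructible sheaves. The one substantive input you invoke without comment is that every object of $\Mod_{\Rc,c}(\field_M)$ admits a finite presentation $\bigoplus_j\field_{V_j}\to\bigoplus_i\field_{U_i}\to F\to 0$ with $U_i,V_j\in\operatorname{Op}_{M_\sa}$. This is true, but it is not formal: it requires the triangulability of subanalytic sets (so that an $\R$-constructible sheaf with compact support is built from finitely many cells), and is itself one of the structural results established in \cite{KS01}. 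You should cite it as an input rather than assert it. Once that is granted, your verification that $\Phi\Psi\simeq\id$ and $\Psi\Phi\simeq\id$ goes through as you describe; the Yoneda argument identifying $\Hom(\field_U,\Psi(G))$ with $G(U)$ is correct, and $\Psi\Phi\simeq\id$ then follows from full faithfulness since the comma category indexing $\Psi(\Phi(F))$ is cofinal in any presentation of $F$ as a filtrant ind-limit of compact $\R$-constructibles.
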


In particular, we have

\begin{proposition}
\label{pro:suban0}
Let $K\in\BDC_\suban(\ifield_M)$.
Then $K \simeq 0$ if and only if
\[
\Hom[\BDC(\ifield_M)](\field_U[n],K) \simeq 0
\] 
for any $n\in\Z$ and any relatively compact subanalytic open subset $U\subset M$.
\end{proposition}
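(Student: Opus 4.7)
The plan is to leverage the equivalence $\ind_\suban(\field_M) \simeq \Mod(\field_{M_\sa})$ of the preceding proposition, together with the standard t-structure on $\BDC(\ifield_M)$, in order to reduce the global vanishing assertion to a statement about a single cohomology sheaf. The ``only if'' direction is immediate, and I focus on the ``if'' direction.

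I argue by contradiction. Since $K \in \BDC_\suban(\ifield_M)$ is bounded, if $K \not\simeq 0$ there is a largest integer $b$ with $H^b(K) \neq 0$; by assumption this $H^b(K)$ is a subanalytic ind-sheaf. I apply $\Hom[\BDC(\ifield_M)](\field_U[-b], \dummy)$, for $U \in \operatorname{Op}_{M_\sa}$, to the truncation triangle
\[
\tau_{<b} K \To K \To H^b(K)[-b] \tone
\]
and look at the resulting long exact sequence. The relevant portion reads
\[
\Hom[\BDC(\ifield_M)](\field_U[-b], K) \To \Hom[\ind(\field_M)](\field_U, H^b(K)) \To \Hom[\BDC(\ifield_M)](\field_U, \tau_{<b} K[b+1]).
\]
The rightmost term vanishes by the standard t-structure on $\BDC(\ifield_M)$, since $\tau_{<b} K[b+1] \in \BDC^{\leq -2}(\ifield_M)$ whereas $\field_U \in \BDC^{\geq 0}(\ifield_M)$. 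Consequently the leftmost arrow surjects onto $\Hom[\ind(\field_M)](\field_U, H^b(K))$; applying the hypothesis with $n = -b$ yields $\Hom[\ind(\field_M)](\field_U, H^b(K)) = 0$ for every relatively compact subanalytic open $U \subset M$.

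By the equivalence of the preceding proposition, this is precisely the vanishing of the subanalytic sheaf associated to $H^b(K)$, hence $H^b(K) = 0$, contradicting the choice of $b$. I therefore conclude $K \simeq 0$. The argument is essentially formal once the equivalence with subanalytic sheaves is in hand; the one point deserving attention is the t-structure vanishing for $\tau_{<b} K[b+1]$, and the boundedness of $K$ is used to ensure that a top cohomological degree exists. I do not anticipate any serious obstacle.
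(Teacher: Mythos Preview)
Your argument is correct and is exactly the standard unpacking of what the paper leaves implicit: the paper states this proposition with the phrase ``In particular, we have'' immediately after the equivalence $\ind_\suban(\field_M)\simeq\Mod(\field_{M_\sa})$ and gives no further proof, so the intended argument is precisely the one you wrote---pass to the top cohomology via the truncation triangle, use the $t$-structure to kill the $\tau_{<b}$ term, and conclude from the equivalence that the associated subanalytic sheaf of $H^b(K)$ vanishes.
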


We will need the following result.

\begin{lemma}\label{lem:vanrelsuban}
Let $M$ be a subanalytic space and $K\in\BDC_\suban(\ifield_{M\times [0,1]})$.
Then $K \simeq 0$ if and only if $\Hom[\BDC(\ifield_{M\times[0,1]})](\field_U[n],K) \simeq 0$ for any $n\in\Z$ and any relatively compact subanalytic open subset $U\subset M\times [0,1]$ such that each fiber of $U\to M$ is either empty or connected.
\end{lemma}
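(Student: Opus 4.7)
The ``only if'' direction is trivial, since open subsets with connected or empty fibers form a subclass of the relatively compact subanalytic open subsets of $M\times[0,1]$. For the converse, by Proposition~\ref{pro:suban0} applied to the subanalytic space $M\times[0,1]$, it suffices to show
\[
\Hom[\BDC(\ifield_{M\times[0,1]})](\field_V[n],K) = 0
\]
for every $n\in\Z$ and every relatively compact subanalytic open subset $V\subset M\times[0,1]$.

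The strategy is to induct on the fiber complexity
\[
N(V) \defeq \sup_{m\in M} \#\pi_0(V_m),
\qquad V_m \defeq V \cap (\{m\}\times[0,1]).
\]
Because $V$ is relatively compact subanalytic, each $V_m$ is a finite disjoint union of open intervals, and the number of components admits a uniform bound (this is one of the standard finiteness properties of subanalytic families, obtainable from a subanalytic cell decomposition of $V$ over $M$); thus $N(V)$ is a finite non-negative integer. The base case $N(V)\leq 1$ is exactly the hypothesis of the lemma.

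For the inductive step, suppose $N(V)=k\geq 2$. Using the subanalytic cell decomposition of $V$, I will produce two relatively compact subanalytic open subsets $V_1,V_2\subset V$ with $V=V_1\cup V_2$ and
\[
N(V_1),\ N(V_2),\ N(V_1\cap V_2) \;<\; k.
\]
The idea is to choose subanalytic functions $\varphi,\varepsilon\colon M\to[0,1]$ with $\varepsilon>0$ such that, on each fiber where $V_m$ has at least two components, the interval $(\varphi(m)-\varepsilon(m),\varphi(m)+\varepsilon(m))$ sits strictly inside the first ``gap'' of $V_m$, and then set $V_1 \defeq V\cap\{t<\varphi(m)+\varepsilon(m)\}$, $V_2 \defeq V\cap\{t>\varphi(m)-\varepsilon(m)\}$. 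Then $V_1$ loses the components of $V_m$ to the right of the first gap, $V_2$ loses the first component, and $V_1\cap V_2$ lies in a thin slab avoiding the first gap's interior, so each of the three sets has fewer connected components per fiber. Given $V_1$ and $V_2$, apply $\RHom(-,K)$ to the Mayer-Vietoris distinguished triangle
\[
\field_{V_1\cap V_2} \to \field_{V_1}\oplus\field_{V_2} \to \field_V \xrightarrow{+1}
\]
to obtain a long exact sequence in which the three outer terms vanish by the inductive hypothesis; hence $\Hom[\BDC(\ifield_{M\times[0,1]})](\field_V[n],K)=0$ for all $n$, closing the induction.

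The main obstacle is the geometric construction of $\varphi$ and $\varepsilon$: the function $\varphi$ cannot be chosen continuous globally because the number of components of $V_m$ jumps as $m$ crosses certain strata of $M$. This is handled by first subanalytically stratifying $M$ into finitely many pieces on which the combinatorial structure of $V_m$ (the ordered sequence of endpoints of its component intervals) is given by continuous subanalytic functions, defining $\varphi$ and $\varepsilon$ on each stratum, and then verifying that the resulting $V_1,V_2$ are subanalytic and open (which is automatic since they are defined by strict inequalities involving subanalytic functions, intersected with the open set $V$). This is the only step where the one-dimensional fiber $[0,1]$ is used in an essential way.
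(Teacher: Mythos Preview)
Your overall strategy matches the paper's: reduce to Proposition~\ref{pro:suban0} and then use a Mayer--Vietoris argument to pass from arbitrary relatively compact subanalytic opens to those with connected or empty fibers. The paper isolates the geometric input as a separate statement (Lemma~\ref{lem:relsuban}: every such $V$ is a \emph{finite union} of subanalytic opens with connected or empty fibers) and then applies Mayer--Vietoris, noting that intersections of such opens again have connected or empty fibers. Your induction on $N(V)$ is an attempt to prove that geometric lemma inline.

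There is, however, a genuine gap at the point you label ``automatic''. You acknowledge that the first-gap location $\varphi$ cannot be chosen continuous on all of $M$, only on each stratum of a subanalytic stratification. But then the set $\{(m,t):t<\varphi(m)+\varepsilon(m)\}$ is \emph{not} open in general: strict inequalities against a discontinuous function do not cut out open sets. Concretely, take $M=(-1,1)$ and
\[
V=\bigl((-1,1)\times(0,\tfrac13)\bigr)\ \cup\ \bigl((0,1)\times(\tfrac23,1)\bigr),
\]
so $N(V)=2$. On the stratum $\{m\le 0\}$ there is no gap, so you must push $\varphi-\varepsilon$ below $0$ (else $V_1\cup V_2\ne V$); on $\{m>0\}$ you need $\varphi-\varepsilon\ge\tfrac13$. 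With any such piecewise choice, $V_2=V\cap\{t>\varphi(m)-\varepsilon(m)\}$ contains $(0,\tfrac15)$ but not $(\epsilon,\tfrac15)$ for small $\epsilon>0$, hence is not open. A correct choice exists here (e.g.\ $\varphi\equiv\tfrac12$, $\varepsilon\equiv\tfrac16$ works because the gap location is constant), but producing open $V_1,V_2$ in general requires a real argument, not just stratifying $M$; this is exactly the content of Lemma~\ref{lem:relsuban}, which the paper states with a reference to \cite[Lemma~3.6]{KS96}.

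So your plan is correct in outline, but the sentence ``which is automatic since they are defined by strict inequalities involving subanalytic functions'' is false as written, and the missing argument is the whole difficulty.
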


This follows from Proposition~\ref{pro:suban0} and the following lemma.

\begin{lemma}
\label{lem:relsuban}
Any relatively compact subanalytic open subset of $M\times[0,1]$ is a finite union of subanalytic open sets $U$ such that each fiber of $U\to M$ is either empty or connected.
\end{lemma}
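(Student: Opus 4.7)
The plan is to combine Hardt's trivialization theorem for subanalytic maps with an induction on the number of fiber components. First, since $V$ is relatively compact in $M\times[0,1]$, the projection $p\colon M\times[0,1]\to M$ restricts to a proper subanalytic map $\overline V\to M$. Hardt's trivialization theorem (subanalytic version) then yields a finite subanalytic stratification $M=\bigsqcup_\alpha M_\alpha$ and subanalytic trivializations $\overline V\cap p^{-1}(M_\alpha)\simeq M_\alpha\times F_\alpha$ over $M_\alpha$ carrying $V\cap p^{-1}(M_\alpha)$ onto $M_\alpha\times V_\alpha$ for a fixed open subanalytic $V_\alpha\subset F_\alpha\subset[0,1]$. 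Each $V_\alpha$ is a finite union of open intervals, so there is a uniform bound $N$ on the number of connected components of the fibers $V_m=V\cap(\{m\}\times[0,1])$.

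I would then argue by induction on $N$. For $N\leq 1$ take $U_1=V$. For $N\geq 2$, the idea is to peel off the leftmost connected component of each fiber: construct subanalytic open subsets $U,V'\subset V$ such that $V=U\cup V'$, all fibers of $U$ are empty or connected, and the fibers of $V'$ have at most $N-1$ connected components. The induction hypothesis applied to $V'$ then gives the required decomposition.

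To construct $U$ and $V'$, the naive choice of $U$ as the union of the leftmost components of the fibers is not open in $M\times[0,1]$, because the right endpoint of the leftmost component is only piecewise continuous on the Hardt strata and can jump at strata where two adjacent components merge. I would instead build a continuous subanalytic function $b\colon M\to[0,1]$ which, on each stratum $M_\alpha$, lies strictly between the leftmost component of $V_m$ and the next one when both exist, and lies above all of $V_m$ otherwise. Such a $b$ can be obtained from the piecewise data via a subanalytic partition of unity together with the continuous extension property for subanalytic functions. Setting $U=\{(m,t)\in V:t<b(m)\}$ and $V'=\{(m,t)\in V:t>b'(m)\}$ for a suitable continuous subanalytic $b'<b$ then yields open subanalytic sets with the required properties.

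The main obstacle is precisely this globalization step: producing $b$ and $b'$ continuous and subanalytic on all of $M$ while respecting the combinatorial structure of $V$ across the Hardt strata and keeping $U$ and $V'$ inside $V$. Once this is achieved, the rest of the argument is routine.
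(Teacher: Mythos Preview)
The paper does not give its own proof of this lemma; it simply refers the reader to \cite[Lemma~3.6]{KS96}. So there is no paper proof to compare your argument against, and the question reduces to whether your sketch is complete.

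Your strategy---Hardt trivialization to bound the number of fiber components, then induction on that bound by peeling off the leftmost component---is reasonable, and you correctly isolate the globalization of $b$ and $b'$ as the crux. But the construction you describe cannot work as stated. Consider a point $m_0\in M$ where two adjacent components of $V_m$ merge: on a nearby stratum with two components $(a_1,c_1)\cup(a_2,c_2)$, $c_1<a_2$, your $b'$ and $b$ must both lie in the gap $(c_1,a_2)$, which shrinks to a single point $p$ as $m\to m_0$. Continuity forces $b'(m_0)=b(m_0)=p$. But at $m_0$ the fiber is the merged interval $(a_1,c_2)\ni p$, and your sets $U=\{t<b\}\cap V$ and $V'=\{t>b'\}\cap V$ then both miss the point $(m_0,p)\in V$. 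So $U\cup V'\ne V$. A partition-of-unity interpolation between the stratum-wise choices does not resolve this: the obstruction is topological, not a matter of smoothing. Whatever continuous $b',b$ you produce with $b'\le b$, the merge points are lost; and if you allow $b'>b$ somewhere to recover them, the fiber-connectivity of $U$ or the component count of $V'$ breaks.

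One way to repair the argument is to abandon the global functions $b,b'$ and instead thicken each band over each Hardt stratum to an open subanalytic set with connected fibers still contained in $V$, using that $V$ is open; this gives a finite cover directly, without induction. Alternatively one can follow the o-minimal/subanalytic cell-decomposition route as in \cite{KS96}.
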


For a similar statement, see \cite[Lemma~3.6]{KS96}.

\subsection{$\D$-modules}

Let $X$ be a complex manifold.
We denote by $d_X$ its (complex) dimension.
Denote by $\O_X$ and $\D_X$ the sheaves of algebras  of holomorphic functions and of
differential operators, respectively. Denote by $\Omega_X$ the invertible sheaf of
differential forms of top degree.

Denote by $\Mod(\D_X)$ the category of left $\D_X$-modules, and by $\BDC(\D_X)$ its bounded derived category.
For $f\colon X\to Y$ a morphism of complex manifolds, denote by $\dtens$,
$\dopb f$, $\doim f$ the operations for $\D$-modules.
Denote by $\detens$ the exterior product.

Let us denote by
\eq
\mop\colon \BDC(\D_X) \isoto \BDC(\D_X^\op)\label{def:r}
\eneq
the equivalence of categories given by the functor $\shm^\mop = \Omega_X\ltens[\O_X]\shm$.
Consider the dual of $\shm\in\BDC(\D_X)$ given by
\[
\ddual_X\shm = \rhom[\D_X](\shm,\D_X\tens[\O_X]\Omega_X^{\otimes-1})[d_X],
\]
where the shift is chosen so that $\ddual_X\O_X\simeq\O_X$.

Denote by $\BDC_\coh(\D_X)$, $\BDC_\qgood(\D_X)$ and $\BDC_\good(\D_X)$ the full
subcategories of $\BDC(\D_X)$ whose objects have coherent, quasi-good and good cohomologies, respectively.
Here, a $\D_X$-module $\shm$ is called \emph{quasi-good} if, for any relatively compact open subset $U\subset X$, $\shm\vert_U$
is the sum of a filtrant family of
coherent $(\O_X\vert_U)$-submodules. 
A $\D_X$-module $\shm$ is called \emph{good} if it is quasi-good and coherent.

Recall that to a coherent $\D_X$-module $\shm$ one associates its characteristic variety $\chv(\shm)$, a closed conic involutive subset of the cotangent bundle $T^*X$. If $\chv(\shm)$ is Lagrangian, $\shm$ is called holonomic. For the notion of regular holonomic $\D_X$-module, refer e.g.\ to \cite[\S5.2]{Kas03}.

Denote by $\BDC_{\hol}(\D_X)$ and $\BDC_{\reghol}(\D_X)$ the full 
subcategories of $\BDC(\D_X)$ whose objects have holonomic and regular
holonomic cohomologies, respectively. 

Note that $\BDC_\coh(\D_X)$, $\BDC_\qgood(\D_X)$, $\BDC_\good(\D_X)$,  $\BDC_{\hol}(\D_X)$ and $\BDC_{\reghol}(\D_X)$ are triangulated categories. 

If $Y\subset X$ is a closed hypersurface, denote by $\O_X(*Y)$ the sheaf of meromorphic functions with poles at $Y$. It is a regular holonomic $\D_X$-module. For $\shm\in\BDC(\D_X)$, set
\[
\shm(*Y) = \shm \dtens \O_X(*Y).
\]
If $Y$ is a closed submanifold of $X$, denoting by $i\colon Y\to X$ the inclusion morphism, one sets
\eq
&&\shb_Y = \doim i\O_Y.
\label{eq:B}
\eneq
Then $\shb_Y$ is concentrated in degree zero, and is a regular holonomic $\D_X$-module.

For $\shm\in\BDC_\coh(\D_X)$, denote by $\ss(\shm)\subset X$ its singular support, that is the set
of points where $\chv(\shm) \defeq \Union\nolimits_{i\in\Z} \chv(H^i\shm)$ is not contained in the zero-section of $T^*X$.

\begin{proposition}[{\cite[Theorem~4.33]{Kas03}}]
\label{pro:Dadj} 
Let $f\colon X\to Y$ be a morphism of complex manifolds. Let $\shm\in\BDC_\good(\D_X)$ and $\shn\in\BDC(\D_Y)$. If $\supp(\shm)$ is proper over $Y$, then $\doim f\shm \in \BDC_\good(\D_Y)$ and there is an isomorphism
\[
\roim f\rhom[\D_X](\shm,\dopb f\shn)[d_X] \simeq
\rhom[\D_Y](\doim f\shm,\shn)[d_Y].
\]
In particular,
\[
\Hom[\BDC(\D_X)](\shm,\dopb f\shn[d_X]) \simeq
\Hom[\BDC(\D_Y)](\doim f\shm,\shn[d_Y]).
\]
\end{proposition}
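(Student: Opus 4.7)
This is \cite[Theorem~4.33]{Kas03}, so I would sketch the standard argument in two parts.

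\emph{Goodness of $\doim f \shm$.} Working locally on $Y$ and reducing to the case of a single good coherent $\D_X$-module via stupid truncation, I would pick a good filtration $F_\bullet \shm$ by coherent $\O_X$-submodules; by goodness such a filtration exists on the preimage of any relatively compact open of $Y$, and its support is proper over $Y$ by hypothesis. Applying Grauert's proper direct image theorem (via the commutative diagram $T^*X \leftarrow X \times_Y T^*Y \to T^*Y$) to the coherent $\O_{T^*X}$-module associated with $F_\bullet\shm$ produces a good filtration on $\doim f \shm$, so $\doim f \shm \in \BDC_\good(\D_Y)$.

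\emph{The adjunction isomorphism.} I would work with the transfer bimodules $\D_{X \to Y} \defeq \O_X \tens[\opb f \O_Y] \opb f \D_Y$ and $\D_{Y \leftarrow X} \defeq \Omega_X \tens[\O_X] \D_{X \to Y} \tens[\opb f \O_Y] \opb f \Omega_Y^{\otimes -1}$, in terms of which
$$\dopb f \shn = \D_{X\to Y} \ltens[\opb f \D_Y] \opb f \shn, \qquad \doim f \shm = \roim f \bl \D_{Y \leftarrow X} \ltens[\D_X] \shm \br.$$
Properness of $\supp \shm$ over $Y$ gives $\roim f \simeq \reim f$ on the latter object. The sheaf-theoretic Grothendieck duality adjunction $(\reim f, \epb f)$ combined with tensor-hom adjunction then yields
$$\rhom[\D_Y](\doim f \shm, \shn) \simeq \roim f \rhom[\D_X]\bl \shm, \rhom[\opb f \D_Y](\D_{Y \leftarrow X}, \epb f \shn)\br.$$

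The heart of the proof, and the step where the shift enters, is the relative duality isomorphism
$$\rhom[\opb f \D_Y](\D_{Y \leftarrow X}, \epb f \shn) \simeq \dopb f \shn \, [d_X - d_Y]$$
of complexes of left $\D_X$-modules. By tensor-hom this reduces to the case $\shn = \D_Y$, where the shift $[d_X - d_Y]$ comes from the relative dualizing complex of the morphism $f$ of complex manifolds, and the twist $\Omega_X \tens[\O_X] \opb f \Omega_Y^{\otimes -1}$ in $\D_{Y \leftarrow X}$ cancels the Jacobian factor appearing in $\epb f \O_Y$. The principal obstacle is the careful bookkeeping of the left/right $\D_X$- and $\opb f \D_Y$-module structures through these identifications. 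Substituting into the previous display and shifting by $d_Y$ then yields the stated formula.
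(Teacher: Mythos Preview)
The paper does not give its own proof of this proposition: it is stated with a citation to \cite[Theorem~4.33]{Kas03} and used as a black box. Your sketch is the standard argument behind that reference, and it is correct in outline; there is nothing to compare against in the paper itself.
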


\begin{proposition}[{\cite[Theorem~4.40]{Kas03}}]
\label{pro:Dadj2}
If $f\colon X\to Y$ is a smooth morphism of complex manifolds, then for $\shm\in\BDC(\D_X)$ and $\shn\in\BDC_\coh(\D_Y)$ we have
\[
\roim f\rhom[\D_X](\dopb f\shn,\shm)[d_X] \simeq
\rhom[\D_Y](\shn,\doim f\shm)[d_Y].
\]
In particular,
\[
\Hom[\BDC(\D_X)](\dopb f\shn,\shm[d_X]) \simeq
\Hom[\BDC(\D_Y)](\shn,\doim f\shm[d_Y]).
\]
\end{proposition}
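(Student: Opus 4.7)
The plan is to reduce to the universal case $\shn = \D_Y$, verify the corresponding transfer-bimodule identity, and conclude by a projection-formula argument for $\roim f$.

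First, the coherence of $\shn$ allows a local reduction to a bounded complex of free $\D_Y$-modules of finite rank. Since both sides of the claimed isomorphism depend triangulatedly on $\shn$ and commute with finite direct sums, one further reduces to the case $\shn = \D_Y$.

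For $\shn = \D_Y$, one has $\dopb f \D_Y \simeq \D_{X\rightarrow Y}$, where $\D_{X\rightarrow Y} \defeq \O_X \tens[\opb f \O_Y] \opb f \D_Y$ is the transfer $(\D_X,\opb f \D_Y)$-bimodule, and the statement becomes
\[
\roim f \rhom[\D_X](\D_{X\rightarrow Y}, \shm)[d_X] \simeq \doim f \shm[d_Y].
\]
The key input is the bimodule identity, valid for smooth $f$ of complex relative dimension $d_{X/Y} \defeq d_X - d_Y$,
\[
\rhom[\D_X](\D_{X\rightarrow Y}, \D_X)[d_{X/Y}] \simeq \D_{Y\leftarrow X}
\]
as $(\opb f \D_Y, \D_X)$-bimodules. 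This is proved via the relative Spencer resolution of $\D_{X\rightarrow Y}$ as a left $\D_X$-module, whose length equals $d_{X/Y}$ precisely because $f$ is smooth (so that the relative tangent sheaf $\Theta_{X/Y}$ is locally free of that rank). Tensoring this identity with $\shm$ over $\D_X$ and applying $\roim f$ yields
\[
\roim f \rhom[\D_X](\D_{X\rightarrow Y}, \shm) \simeq \roim f(\D_{Y\leftarrow X} \ltens[\D_X] \shm)[-d_{X/Y}] \simeq \doim f \shm[-d_{X/Y}],
\]
and the shifts combine as $d_X - d_{X/Y} = d_Y$ to give the desired formula. The ``In particular'' statement follows by applying $\rsect(Y;-)$ and taking $H^0$, using $\rsect(Y; \roim f\, G) \simeq \rsect(X; G)$.

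The main obstacle is establishing the bimodule identity while tracking shifts consistently; smoothness of $f$ is essential here, both because it ensures the relative Spencer complex is a finite resolution of length exactly $d_{X/Y}$ and because it makes $\D_{X\rightarrow Y}$ behave well as an $\opb f \D_Y$-module. One must also be careful that the tensor-Hom computations are carried out in the derived category with the correct bimodule structures, so that the left $\opb f \D_Y$-action on the right-hand side of the bimodule identity corresponds to the $\shn$-variable of the adjunction after one passes back through the reduction to $\shn = \D_Y$.
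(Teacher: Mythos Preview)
The paper does not prove this proposition; it merely cites it from \cite[Theorem~4.40]{Kas03}. Your argument is the standard one and is essentially what one finds in that reference: reduce to $\shn=\D_Y$ by coherence and a local free resolution, then use the relative Spencer resolution of $\D_{X\to Y}$ (available precisely because $f$ is smooth) to obtain the bimodule identity $\rhom[\D_X](\D_{X\to Y},\D_X)[d_{X/Y}]\simeq\D_{Y\leftarrow X}$, and finish by tensoring with $\shm$ and pushing forward. The only point to make explicit is that the reduction to $\shn=\D_Y$ is local on $Y$, which is enough since both sides are objects of $\BDC(\D_Y)$ and the morphism between them is globally defined; also, the step $\rhom[\D_X](\D_{X\to Y},\shm)\simeq\rhom[\D_X](\D_{X\to Y},\D_X)\ltens[\D_X]\shm$ is justified because $\D_{X\to Y}$ admits a bounded resolution by finite free left $\D_X$-modules (again the Spencer resolution). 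With those remarks, your sketch is correct.
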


A \emph{transversal Cartesian diagram} is a commutative diagram
\begin{equation}
\label{eq:transCart}
\ba{c}\xymatrix@C=8ex{
X' \ar[r]^{f'} \ar[d]^{g'} & Y' \ar[d]^{g} \\
X \ar[r]^{f}\ar@{}[ur]|-\square & Y
}\ea
\end{equation}
with $X'\simeq X\times_Y Y'$ and such that the map of tangent spaces
\[
T_{g'(x)}X \dsum T_{f'(x)}Y' \to T_{f(g'(x))}Y
\]
is surjective for any $x\in X'$.

\begin{proposition}\label{pro:transCart}
Consider the transversal Cartesian diagram \eqref{eq:transCart}.
Then, for any $\shm\in\BDC_\good(\D_X)$ such that $\supp(\shm)$ is proper over $Y$,
\[
\dopb g \doim f \shm \simeq \doim {f'}\dopbv{g^{\prime\sep*}}\shm.
\]
\end{proposition}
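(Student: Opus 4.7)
The natural strategy is to factorize $f$ through its graph: write $f = p_Y \circ \Gamma_f$, where $\Gamma_f\colon X \hookrightarrow X\times Y$, $x\mapsto (x,f(x))$, is the graph embedding (a closed immersion) and $p_Y\colon X\times Y \to Y$ is the projection (a smooth submersion). Base-changing along $g$ produces a diagram
\[
\xymatrix@C=6ex@R=4ex{
X' \ar[r]^{\Gamma_{f'}} \ar[d]^{g'} & X\times Y' \ar[r]^{p_{Y'}} \ar[d]^{\id_X\times g} & Y' \ar[d]^{g} \\
X \ar[r]^{\Gamma_f} & X\times Y \ar[r]^{p_Y} & Y
}
\]
in which both squares are Cartesian. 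The right square is automatically transversal (since $p_Y$ is smooth), and transversality of the outer square forces the left square to be transversal as well; in particular $\Gamma_{f'}$ is a closed immersion of the same codimension as $\Gamma_f$. By functoriality of $\doim{(-)}$ and $\dopb{(-)}$, it is then enough to establish the base change isomorphism in the two cases (i) $f$ is a smooth submersion and (ii) $f$ is a closed immersion transversal to $g$.

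For case (i), one can argue via adjunction using Proposition~\ref{pro:Dadj2}. Testing both sides against an arbitrary coherent $\shn$ on $Y'$ and unwinding the adjunctions, the comparison morphism reduces to an assertion of the form $\derr(p_{Y'})_* \opb{(\id_X\times g)} \isoto \opb g \derr(p_Y)_*$ applied to the underlying sheaf-theoretic complexes, which is exactly proper base change (the properness hypothesis on $\supp(\shm)$ over $Y$ being used to ensure the relevant pushforwards are proper).

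Case (ii) is where the real content lies. Working locally in a neighborhood of a point of $X'$, one may choose coordinates so that $f$ is a linear embedding, and then transversality identifies the conormal bundle of $X'\subset Y'$ with the pullback of the conormal bundle of $X\subset Y$. One computes $\dopb g \doim f \shm$ and $\doim{f'} \dopbv{g^{\prime *}} \shm$ using the transfer bimodule description (equivalently, resolutions in terms of the $\shb$-modules from \eqref{eq:B} together with a Koszul resolution), and checks that the canonical morphism between them is an isomorphism. The transversality hypothesis enters precisely to guarantee that the derived tensor products reduce to ordinary tensor products, i.e.\ the higher $\operatorname{Tor}$ terms that would obstruct base change all vanish.

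The main obstacle is this last point in case (ii): one must verify that the transversality condition on tangent spaces translates into the vanishing of the relevant $\operatorname{Tor}$ obstructions, so that the natural morphism between the two sides is indeed an isomorphism rather than merely a map with controlled cone.
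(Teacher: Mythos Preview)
The paper does not prove this proposition; it is recorded (like Propositions~\ref{pro:Dadj} and~\ref{pro:Dadj2}) as a standard fact from $\D$-module theory, in the spirit of the results quoted from \cite{Kas03}. So there is no ``paper's proof'' to compare against, and the question is simply whether your sketch is a valid argument.

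Your graph-factorization strategy is the classical route, and the overall architecture is correct: reduce to the two cases ``$f$ a smooth projection'' and ``$f$ a closed immersion transversal to $g$'', and note that transversality of the outer square forces transversality of the left (closed-immersion) square. Case~(ii) is fine as sketched: for a closed immersion $\iota\colon X\hookrightarrow Z$ transversal to $h\colon Z'\to Z$, Kashiwara's equivalence plus the Koszul description of $\shb_X$ show that the comparison map is an isomorphism, transversality being exactly what makes the relevant $\operatorname{Tor}$'s vanish.

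Case~(i), however, is argued confusingly. Invoking Proposition~\ref{pro:Dadj2} and then ``testing both sides against an arbitrary coherent $\shn$'' is not a proof: in $\BDC(\D_{Y'})$ coherent objects do not form a generating class, so a Yoneda argument over coherent test objects does not detect isomorphisms. The correct argument is the one you allude to in the second half of that paragraph and should be stated directly: for a smooth projection $p_Y\colon X\times Y\to Y$, the direct image $\doim{p_Y}$ is computed by the relative de Rham complex $\roim{(p_Y)}(\Omega^\bullet_{X\times Y/Y}\tens[\O_{X\times Y}]\dummy)$, which is $\opb{p_Y}\O_Y$-linear; hence $\dopb g$ commutes with forming this complex, and then sheaf-theoretic proper base change (using that $\supp(\doim{\Gamma_f}\shm)=\Gamma_f(\supp\shm)$ is proper over $Y$) gives $\opb g\roim{(p_Y)}\simeq\roim{(p_{Y'})}\opb{(\id_X\times g)}$. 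Drop the adjunction detour and say this; then the sketch is complete.
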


\section{Bordered spaces}\label{se:bordered}

Let $\bM$ be a good topological space, and $M\subset \bM$ an open subset.
For usual sheaves, the restriction functor $F\mapsto F|_M$ induces an equivalence
\[
\BDC(\field_{\bM}) / \BDC(\field_{\bM\setminus M})
\isoto
\BDC(\field_M).
\]
This is no longer true for ind-sheaves, as seen by the following example.

\begin{example*}
Let $\bM=\R$ and $M = \ooint{0,1}$. Consider the ind-sheaf on $\bM$
\[
\beta_{\bM}\field_{\{0\}} = \indlim[U\owns 0]\field_{\overline U},
\]
where $U$ ranges over the family of open neighborhoods of $0\in\bM$.
Then $\beta_{\bM}\field_{\{0\}}|_M \simeq 0$, but $\beta_{\bM}\field_{\{0\}}\notin\BDC(\ifield_{\bM\setminus M})$.
\end{example*}

Therefore, in the framework of ind-sheaves one should consider the quotient category $\BDC(\ifield_{\bM}) / \BDC(\ifield_{\bM\setminus M})$ attached to the pair $(M,\bM)$. We will call such a pair a \emph{bordered space}.

In this section, we define the category of bordered spaces, develop the formalism of external operations, and define the natural $t$-structure on the derived category of ind-sheaves on a bordered space.

\subsection{Quotient categories}

Let $\shd$ be a triangulated category and $\shn\subset\shd$ a full triangulated subcategory. The quotient category $\shd/\shn$ is defined as the localization $\shd_\Sigma$ of $\shd$ with respect to the multiplicative system $\Sigma$ of morphisms $u$ fitting into a distinguished triangle
\[
X\To[u]Y\To Z\To[+1]
\]
with $Z\in\shn$.

The right orthogonal $\shn^\bot$  and the left orthogonal ${}^\bot\shn$ are the full subcategories of $\shd$
\begin{align*}
\shn^\bot & = \{X\in\shd\semicolon \Hom[\shd](Y,X) \simeq 0\text{ for any }Y\in\shn \}, \\
{}^\bot\shn & = \{X\in\shd\semicolon \Hom[\shd](X,Y) \simeq 0\text{ for any }Y\in\shn \}.
\end{align*}

The following result is elementary (cf.~\cite[Exercise 10.15]{KS06}).

\begin{proposition}
\label{pro:D/N}
Assume that
\eqn
&\text{if $X,Y\in\shd$,  $Z\in\shn$ and
$Z\simeq X\oplus Y$, then one has $X\in \shn$.}
\eneqn
Then the following conditions are equivalent:
\bnum
\item  the composition $\shn^\bot \to \shd \to \shd/\shn$ is an equivalence of categories,
\item the embedding $\shn\to\shd$ has a right adjoint,
\item the quotient functor $\shd\to\shd/\shn$ has a right adjoint,
\item for any $X\in\shd$ there is a distinguished triangle $X'\to X\to X''\to[+1]$ with $X'\in\shn$ and $X''\in\shn^\bot$.
\ee
Similar results hold for the left orthogonal.
\end{proposition}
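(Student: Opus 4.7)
My plan is to establish the cycle (iv) $\Rightarrow$ (ii) $\Rightarrow$ (iv), then (iv) $\Rightarrow$ (i) $\Rightarrow$ (iii) $\Rightarrow$ (iv); the thickness hypothesis on $\shn$ is only needed in the step (iii) $\Rightarrow$ (iv).

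\textbf{First paragraph: (iv) $\Leftrightarrow$ (ii).} Assume (iv). For $X\in\shd$, pick a distinguished triangle $X'\to X\to X''\to[+1]$ with $X'\in\shn$ and $X''\in\shn^\bot$ and set $\rho(X)\defeq X'$. Given $f\colon X\to Y$ in $\shd$, the composition $X'\to X\to Y\to Y''$ vanishes (its source lies in $\shn$ and its target in $\shn^\bot$), so $X'\to Y$ factors uniquely through $Y'$; the uniqueness comes from $\Hom[\shd](X',Y''[-1])=0$, which holds because $\shn^\bot$ is stable by shift. Hence $\rho$ is a functor and, using $\Hom[\shd](N,X'')=0=\Hom[\shd](N,X''[-1])$ for $N\in\shn$, applying $\Hom[\shd](N,\cdot)$ to the triangle yields $\Hom[\shd](N,\rho X)\isoto\Hom[\shd](N,X)$, so $\rho$ is right adjoint to the inclusion. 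Conversely, given (ii) with right adjoint $\rho$, complete the counit $\rho X\to X$ to a triangle $\rho X\to X\to X''\to[+1]$. For $N\in\shn$ the long exact sequence of $\Hom[\shd](N,\cdot)$ sandwiches $\Hom[\shd](N,X'')$ between the adjunction isomorphisms $\Hom[\shd](N,\rho X)\isoto\Hom[\shd](N,X)$ and $\Hom[\shd](N,\rho X[1])\isoto\Hom[\shd](N,X[1])$ (using that the right adjoint is triangulated, so $\rho(X[1])\simeq(\rho X)[1]$). Hence $X''\in\shn^\bot$, giving (iv).

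\textbf{Second paragraph: (iv) $\Leftrightarrow$ (i), and (i) $\Rightarrow$ (iii).} The composition $\shn^\bot\to\shd\to\shd/\shn$ is always fully faithful: a morphism in $\shd/\shn$ from $Y\in\shn^\bot$ to $Y'\in\shn^\bot$ is a roof $Y\isofrom[s]Z\to Y'$ with cone of $s$ in $\shn$; applying $\Hom[\shd](\cdot,Y')$ to the triangle defining $s$ and using $Y'\in\shn^\bot$ shows $\Hom[\shd](Y,Y')\isoto\Hom[\shd](Z,Y')$, so every roof has a unique left-inverse lift. Essential surjectivity under (iv) is immediate: the triangle $X'\to X\to X''\to[+1]$ becomes an isomorphism $X\isoto X''$ in the quotient because $X'\in\shn$. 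Thus (iv) $\Rightarrow$ (i); the converse is easy since for any $X$ one can take $X''$ to be a representative in $\shn^\bot$ of $Q(X)$ and $X'$ the cone (shifted) of $X\to X''$, then check $X'\in\shn$ from $Q(X')\simeq 0$ plus the thickness hypothesis. Assuming (i), the composition $\shd/\shn\isoto\shn^\bot\hookrightarrow\shd$ provides a right adjoint to $Q$: for $X\in\shd$ and $Y\in\shn^\bot$, applying $\Hom[\shd](\cdot,Y)$ to $X'\to X\to X''$ gives $\Hom[\shd](X,Y)\isofrom\Hom[\shd](X'',Y)\simeq\Hom[\shd/\shn](QX,Y)$ by the full faithfulness already established.

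\textbf{Third paragraph: (iii) $\Rightarrow$ (iv), where the thickness hypothesis is needed.} Let $R$ be a right adjoint to $Q$ and form the triangle $X'\to X\to RQX\to[+1]$ given by the unit $X\to RQX$. Set $X''\defeq RQX$. For $N\in\shn$ one has $Q(N)\simeq 0$, so
\[
\Hom[\shd](N,X'')=\Hom[\shd](N,RQX)\simeq\Hom[\shd/\shn](QN,QX)\simeq 0,
\]
hence $X''\in\shn^\bot$. The genuinely subtle point is $X'\in\shn$: applying $Q$ to the triangle and using that $Q\eta$ is an isomorphism (equivalently, that the adjunction is a Bousfield localization, which follows from $R$ being fully faithful on its essential image $\shn^\bot$), we get $Q(X')\simeq 0$. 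A general $Z\in\shd$ with $Q(Z)\simeq 0$ need not lie in $\shn$, but the standard Verdier calculus gives an object $N\in\shn$ such that $Z$ is a direct summand of $N$ (or a shift thereof); the closure of $\shn$ under direct summands assumed in the statement then forces $X'\in\shn$, completing (iv).

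\textbf{Main obstacle.} The only genuine difficulty is the step (iii) $\Rightarrow$ (iv), where one must pass from $Q(X')\simeq 0$ to $X'\in\shn$; this is exactly what the summand-closure hypothesis on $\shn$ is designed for. All other implications are essentially bookkeeping with the orthogonality conditions applied to the triangle $X'\to X\to X''\to[+1]$.
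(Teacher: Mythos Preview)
The paper gives no proof of this proposition; it merely cites \cite[Exercise~10.15]{KS06} and calls the result elementary. So there is nothing in the paper to compare against, and your write-up stands on its own.

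Your argument is correct and well-organized. The one place that deserves a word more of justification is in (iii)~$\Rightarrow$~(iv), where you assert that $R$ is fully faithful (equivalently, that $Q\eta$ is an isomorphism). This does hold for any right adjoint to a Verdier quotient, but it is not entirely automatic: one way to see it is to note first that $R$ takes values in $\shn^\bot$ (which you do prove), then that for $Y\in\shn^\bot$ the unit $\eta_Y\colon Y\to RQY$ is an isomorphism (this follows from the full faithfulness of $\shn^\bot\to\shd/\shn$ you already established, combined with adjunction), and finally that the triangle identity forces $R(\epsilon_W)$ to be an isomorphism, whence the cone of $\epsilon_W$ is killed by $R$ and therefore zero. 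A sentence along these lines, or a reference to a standard source (e.g.\ Neeman's book on triangulated categories), would close the gap.

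The sketch of (i)~$\Rightarrow$~(iv) is also slightly compressed: to get a morphism $X\to X''$ in $\shd$ lifting the isomorphism $Q(X)\simeq X''$ in $\shd/\shn$, you implicitly use that $\Hom_\shd(X,X'')\to\Hom_{\shd/\shn}(QX,QX'')$ is bijective whenever $X''\in\shn^\bot$, which is the same roof calculation you used for full faithfulness. This is fine once made explicit.
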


\subsection{Bordered spaces}

Let $M\subset \bM$ and $N\subset \bN$ be open embeddings of good topological spaces. 
For a continuous map $f\colon M\to N$,
denote by $\Gamma_f$ its graph in $M\times N$, and by $\overline\Gamma_f$ the closure of $\Gamma_f$ in $\bM\times\bN$.

\begin{definition}
The category of \emph{bordered spaces} is the category whose objects are pairs $(M,\bM)$ with $M\subset \bM$ an open embedding of good topological spaces.
Morphisms $f\colon (M,\bM) \to (N,\bN)$ are continuous maps $f\colon M\to N$ such that
\begin{equation}
\label{eq:Hbord}
\overline\Gamma_f \to \bM \text{ is proper}. 
\end{equation}
The composition of $(L,\bL) \to[g] (M,\bM) \to[f] (N,\bN)$ is given by $f\circ g\colon L\to N$ (see Lemma~\ref{lem:bordcom} below), and the identity $\id_{(M,\bM)}$ is given by $\id_{M}$.
\end{definition}

\begin{remark}
The properness assumption \eqref{eq:Hbord} is used in
Lemma~\ref{lem:bcomp} below  
to prove the functoriality of external operations. It is satisfied in particular if either $M=\bM$ or $\bN$ is compact.
\end{remark}

\begin{lemma}\label{lem:bordcom}
Let $f\colon (M,\bM) \to (N,\bN)$ and $g\colon (L,\bL) \to (M,\bM)$ be morphisms of bordered spaces. Then the composition $f\circ g$ is a morphism of bordered spaces.
\end{lemma}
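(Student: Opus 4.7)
The plan is to build an explicit closed set in $\bL\times\bM\times\bN$ that ``realizes'' the composition, and then project it down to $\bL\times\bN$ to control $\overline{\Gamma_{f\comp g}}$.

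First I would introduce
\[
Z \defeq \bigl(\overline\Gamma_g \times \bN\bigr) \cap \bigl(\bL\times \overline\Gamma_f\bigr) \;\subset\; \bL\times\bM\times\bN,
\]
which is nothing but the fiber product $\overline\Gamma_g \times_{\bM} \overline\Gamma_f$. It is closed in $\bL\times\bM\times\bN$. Note that for every $l\in L$ the point $(l,g(l),f(g(l)))$ lies in $Z$, so the image of $Z$ under the projection $p_{13}\colon \bL\times\bM\times\bN\to \bL\times\bN$ contains $\Gamma_{f\comp g}$.

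Next I would check that the projection $Z\to \bL$ is proper. The map $Z\to \overline\Gamma_g$ is the base change of the proper map $\overline\Gamma_f\to \bM$ along $\overline\Gamma_g\to \bM$, hence is proper; composing with the proper map $\overline\Gamma_g\to \bL$ (which is proper by the hypothesis on $g$) yields properness of $Z\to\bL$. From this I deduce that $p_{13}|_Z\colon Z\to \bL\times\bN$ is itself proper: given a compact $K\subset \bL\times\bN$, its preimage in $Z$ is a closed subset (as $\bL\times\bN$ is Hausdorff) of the preimage of the compact set $\mathrm{pr}_\bL(K)\subset \bL$ under the proper map $Z\to \bL$, hence compact. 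Consequently $p_{13}(Z)$ is closed in $\bL\times\bN$, and since $Z\to\bL$ factors through $p_{13}(Z)\to\bL$ via the surjection $Z\to p_{13}(Z)$, the map $p_{13}(Z)\to \bL$ is also proper.

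Finally, since $\Gamma_{f\comp g}\subset p_{13}(Z)$ and $p_{13}(Z)$ is closed, we have $\overline{\Gamma_{f\comp g}}\subset p_{13}(Z)$. Thus $\overline{\Gamma_{f\comp g}}\to \bL$ is the restriction of a proper map to a closed subset, and is therefore proper, which is precisely \eqref{eq:Hbord} for $f\comp g$. The only delicate point is the topological base-change/composition statement for proper maps (``closed with compact fibers'', used implicitly in asserting properness of $Z\to\overline\Gamma_g$ and of $p_{13}|_Z$); everything else is formal diagram chasing, so I expect no substantial obstacle.
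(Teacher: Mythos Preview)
Your proof is correct and follows essentially the same approach as the paper's: both introduce the fiber product $Z=\overline\Gamma_g\times_{\bM}\overline\Gamma_f$, show that its projection to $\bL$ (equivalently, to $\bL\times\bN$) is proper via the base-change and composition properties of proper maps, and then use that $\overline{\Gamma_{f\comp g}}$ lies in the closed image $p_{13}(Z)$. The only cosmetic difference is in the final step: you restrict the proper map $p_{13}(Z)\to\bL$ to the closed subset $\overline{\Gamma_{f\comp g}}$, while the paper phrases this via the pullback $\overline{\Gamma_{f\comp g}}\times_{\bL\times\bN}Z\to\bL$.
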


\begin{proof}
Note that $\overline\Gamma_g \times_{\bM} \overline \Gamma_f \to \overline\Gamma_g \times_{\bM} \bM \to \bL$ is proper.
Hence $\overline\Gamma_g \times_{\bM} \overline \Gamma_f \to \bL \times \bN$ is proper. In particular, $\Im(\overline\Gamma_g \times_{\bM} \overline \Gamma_f \to \bL \times \bN)$ is a closed subset of $\bL \times \bN$. Since it contains $\Gamma_{f\circ g}$, it also contains $\overline\Gamma_{f\circ g}$.
Since $\overline\Gamma_{f\circ g} \times_{\bL\times\bN} (\overline\Gamma_g \times_{\bM} \overline \Gamma_f) \to \bL$ is proper, $\overline\Gamma_{f\circ g} \to \bL$ is proper.
\end{proof}

Note that the category of bordered spaces has
\bnum
\item a final object $(\point,\point)$,
\item fiber products. 
\ee
In fact, the fiber product of $f\colon (M,\bM) \to (L,\bL)$ and $g\colon (N,\bN)\to(L,\bL)$ is represented by $(M\times_L N, \overline\Gamma_f \times_{\bL} \overline\Gamma_g)$.

\smallskip
Regarding a space $M$ as the bordered space $(M,M)$, one gets a fully faithful embedding of the category of good topological spaces into that of bordered spaces.

\begin{remark}
For any bordered space $(M,\bM)$, using the identifications $M=(M,M)$ and $\bM = (\bM,\bM)$, there are natural morphisms
\[
M \To (M,\bM) \To \bM.
\]
Note however that $\id_M$ does not necessarily induce a morphism $(M,\bM) \to M$ of bordered spaces.
\end{remark}

If a continuous map $f\cl M\to N$ extends to a continuous map $\check f\cl \bM\to\bN$,
then $f$ induces a morphism of bordered spaces
$(M,\bM) \to (N,\bN)$. However the converse is not true. 
If $f\colon (M,\bM) \to (N,\bN)$ is a morphism of bordered spaces, the map $f\colon M\to N$ does not extend to a continuous map $\check f\colon\bM\to\bN$, in general. 
However, the next lemma shows how one can always reduce to this case.

\begin{lemma}
\label{lem:fborddecomp}
Any morphism of bordered spaces $f\colon (M,\bM) \to (N,\bN)$ decomposes as
\[
(M,\bM) \isofrom[q_1] (\Gamma_f,\overline\Gamma_f) \xrightarrow[\;q_2\;]{} (N,\bN),
\]
where the first arrow is an isomorphism and the maps $q_1\colon \Gamma_f \to M$ and $q_2\colon \Gamma_f \to N$ extend to maps $\check q_1\colon \overline\Gamma_f \to \bM$ and $\check q_2\colon \overline\Gamma_f \to \bN$.
\end{lemma}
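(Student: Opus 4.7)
The plan is to take the obvious candidates for $q_1$ and $q_2$, namely the restrictions to $\Gamma_f$ of the two projections $p_1\colon \bM\times\bN\to\bM$ and $p_2\colon \bM\times\bN\to\bN$, and to check the three things that need checking: that they extend continuously to $\overline\Gamma_f$, that they are morphisms of bordered spaces, and that $q_1$ is invertible in the category of bordered spaces with inverse the ``graph section'' $\sigma\cl x\mapsto(x,f(x))$. The identity $q_2\comp\sigma=f$ is then immediate from the definitions.

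First, I would define $\check q_i$ as $p_i|_{\overline\Gamma_f}$ for $i=1,2$; these are continuous by construction, and their restrictions to $\Gamma_f$ are $q_1$ and $q_2$ respectively. Next, to see that $q_i\cl(\Gamma_f,\overline\Gamma_f)\to(M,\bM)$ (resp.\ $(N,\bN)$) is a morphism of bordered spaces, observe that whenever a map between the open parts extends continuously to the ambient spaces, the closure of its graph in the product of the ambient spaces equals the graph of the continuous extension; hence $\overline{\Gamma_{q_i}}=\Gamma_{\check q_i}$, and the projection $\Gamma_{\check q_i}\to\overline\Gamma_f$ is a homeomorphism, which is in particular proper. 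So condition \eqref{eq:Hbord} holds for both $q_1$ and $q_2$.

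It remains to show that $q_1$ is an isomorphism. The set-theoretic inverse of $q_1\cl\Gamma_f\to M$ is the map $\sigma\cl M\to\Gamma_f$ given by $\sigma(x)=(x,f(x))$, which is a homeomorphism of $M$ onto $\Gamma_f$. I need to verify that $\sigma$ is a morphism of bordered spaces $(M,\bM)\to(\Gamma_f,\overline\Gamma_f)$, that is, that $\overline{\Gamma_\sigma}\to\bM$ is proper, where the closure is taken in $\bM\times\overline\Gamma_f\subset\bM\times\bM\times\bN$. A direct inspection identifies
\[
\overline{\Gamma_\sigma}=\setof{\bl x,(x,y)\br\in\bM\times\overline\Gamma_f}{(x,y)\in\overline\Gamma_f},
\]
which is closed in $\bM\times\overline\Gamma_f$ and homeomorphic to $\overline\Gamma_f$ via projection onto the second factor. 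Under this identification the projection $\overline{\Gamma_\sigma}\to\bM$ becomes the map $\overline\Gamma_f\to\bM$, which is proper precisely because $f\cl(M,\bM)\to(N,\bN)$ was assumed to be a morphism of bordered spaces.

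Finally, $q_1\comp\sigma=\id_M$ and $\sigma\comp q_1=\id_{\Gamma_f}$, so $q_1$ is indeed an isomorphism, and $q_2\comp\sigma=f$ shows that the displayed diagram recovers $f$. I expect no serious obstacle; the only subtlety is the bookkeeping in identifying $\overline{\Gamma_\sigma}$, which uses that a continuous extension of a map on a dense subset is unique and that $\Gamma_f$ is dense in $\overline\Gamma_f$.
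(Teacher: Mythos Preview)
Your proposal is correct. The paper states this lemma without proof, treating it as evident from the definitions; your argument supplies exactly the routine details one would write out, using the observation just before the lemma (that a map which extends continuously to the ambient spaces is automatically a morphism of bordered spaces) for $q_1$ and $q_2$, and then verifying condition~\eqref{eq:Hbord} for the graph section $\sigma$ by identifying $\overline{\Gamma_\sigma}$ with $\overline\Gamma_f$.
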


\begin{definition}
The derived category of ind-sheaves on a bordered space $(M,\bM)$ is the quotient category
\[
\BDC(\ifield_{(M,\bM)}) \defeq \BDC(\ifield_{\bM}) / \BDC(\ifield_{\bM\setminus M}),
\]
where $\BDC(\ifield_{\bM\setminus M})$ is identified with its essential image in $\BDC(\ifield_{\bM})$ by the fully faithful functor $\reeim i \simeq \roim i$, for $i\colon \bM\setminus M\to \bM$ the closed embedding.
\end{definition}

\begin{remark}
In the framework of subanalytic sheaves, an analogue of $\BDC(\ifield_{(M,\bM)})$ is the derived category of sheaves on some site considered in Definitions 6.1.1~(iv) and 7.1.1 of \cite{KS01}.
\end{remark}

Since the functor $\reeim i \simeq \roim i$ has both a right and a left adjoint, it follows from Proposition~\ref{pro:D/N} that there are equivalences
\[
{}^\bot \BDC(\ifield_{\bM\setminus M}) \simeq \BDC(\ifield_{(M,\bM)}) \simeq \BDC(\ifield_{\bM\setminus M})^\bot.
\]
Let us describe these equivalences more explicitly.

\begin{lemma}
For $F\in\BDC(\ifield_{\bM})$, one has
\begin{align*}
\field_M \tens \rihom(\field_M, F) &\isofrom \field_M \tens F, \\
\rihom(\field_M, \field_M \tens F) &\isoto \rihom(\field_M, F).
\end{align*}
\end{lemma}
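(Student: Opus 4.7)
The plan is to prove both isomorphisms by reducing them to elementary identities for the open embedding $j\colon M \hookrightarrow \bM$. Specifically, I will identify each of the four objects in the statement with either $\reeim{j}\opb{j} F$ or $\roim{j}\opb{j} F$.

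The first step is to establish two structural identifications. Since $\field_M \simeq \reeim{j}\field$, the projection formula for ind-sheaves gives $\field_M \tens F \simeq \reeim{j}\opb{j} F$. Next, combining the adjunction $(\reeim{j}, \epb{j})$ with the equality $\epb{j} \simeq \opb{j}$, valid for any open embedding, I obtain $\rihom(\field_M, F) \simeq \roim{j}\opb{j} F$. Finally, invoking the standard cancellation identities $\opb{j}\reeim{j} \simeq \id$ and $\opb{j}\roim{j} \simeq \id$ for the open embedding $j$, both left-hand sides in the lemma collapse:
\begin{align*}
\field_M \tens \rihom(\field_M, F) &\simeq \reeim{j}\opb{j}\roim{j}\opb{j} F \simeq \reeim{j}\opb{j} F \simeq \field_M \tens F, \\
\rihom(\field_M, \field_M \tens F) &\simeq \roim{j}\opb{j}\reeim{j}\opb{j} F \simeq \roim{j}\opb{j} F \simeq \rihom(\field_M, F).
\end{align*}

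The main remaining point, and the one requiring some care, is to verify that the abstractly constructed isomorphisms coincide with the canonical morphisms implicit in the statement: the map $\field_M \tens F \to \field_M \tens \rihom(\field_M, F)$ obtained by tensoring the natural morphism $F \to \rihom(\field_M, F)$ with $\field_M$, and the map $\rihom(\field_M, \field_M \tens F) \to \rihom(\field_M, F)$ obtained by applying $\rihom(\field_M, -)$ to the projection $\field_M \tens F \to F$. This compatibility should follow by diagram chasing from the naturality of the adjunctions and of the projection formula, using that the restriction $\opb{j}$ of the natural morphism $F \to \rihom(\field_M, F)$ is the identity on $\opb{j} F$; the bookkeeping is the only obstacle, but it is routine.
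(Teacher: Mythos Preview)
Your proof is correct. The paper states this lemma without proof, treating it as an elementary verification, so there is nothing to compare against directly. Your approach via the open embedding $j\colon M\hookrightarrow\bM$ is exactly the intended one: the identifications $\field_M\tens F\simeq\reeim{j}\opb{j}F$ and $\rihom(\field_M,F)\simeq\roim{j}\epb{j}F\simeq\roim{j}\opb{j}F$ are precisely those recorded later in the paper as Lemma~\ref{lem:jM}(ii) (in the bordered-space language), and the cancellations $\opb{j}\reeim{j}\simeq\id$, $\opb{j}\roim{j}\simeq\id$ for an open embedding are standard for ind-sheaves. Your attention to checking that the abstractly obtained isomorphisms agree with the canonical maps is appropriate and the bookkeeping is indeed routine.
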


\begin{proposition}\label{pro:bord}
Let $(M,\bM)$ be a bordered space.
\bnum
\item
One has
\begin{align*}
\BDC(\ifield_{\bM\setminus M}) 
&= \{F\in\BDC(\ifield_{\bM})\semicolon F \isoto \field_{\bM\setminus M} \tens F \}\\
&= \{F\in\BDC(\ifield_{\bM})\semicolon \field_M \tens F \simeq 0 \}\\
&= \{F\in\BDC(\ifield_{\bM})\semicolon \rihom(\field_{\bM\setminus M}, F) \isoto F \} \\
&= \{F\in\BDC(\ifield_{\bM})\semicolon \rihom(\field_M, F) \simeq 0 \} .
\end{align*}
\item
One has
\begin{align*}
{}^\bot \BDC(\ifield_{\bM\setminus M}) 
&= \{F\in\BDC(\ifield_{\bM})\semicolon \field_M \tens F \isoto F \}\\
&= \{F\in\BDC(\ifield_{\bM})\semicolon \field_{\bM\setminus M} \tens F \simeq 0 \},
\end{align*}
and there is an equivalence
\[
\BDC(\ifield_{(M,\bM)}) \isoto {}^\bot \BDC(\ifield_{\bM\setminus M}), \quad F\mapsto \field_M \tens F,
\]
with quasi-inverse induced by the quotient functor.
\item
One has
\begin{align*}
\BDC(\ifield_{\bM\setminus M})^\bot 
&= \{F\in\BDC(\ifield_{\bM})\semicolon F \isoto \rihom(\field_M, F) \} \\
&= \{F\in\BDC(\ifield_{\bM})\semicolon \rihom(\field_{\bM\setminus M}, F) \simeq 0 \},
\end{align*}
and there is an equivalence
\[
\BDC(\ifield_{(M,\bM)}) \isoto \BDC(\ifield_{\bM\setminus M})^\bot, \quad F \mapsto \rihom(\field_M, F),
\]
with quasi-inverse induced by the quotient functor.
\ee
\end{proposition}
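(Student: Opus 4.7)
The plan is to first establish the four equivalent characterizations in part~(i) from the canonical distinguished triangles attached to the short exact sequence $0\to\field_M\to\field_{\bM}\to\field_{\bM\setminus M}\to 0$, and then deduce parts~(ii) and~(iii) by invoking Proposition~\ref{pro:D/N} with $\shn=\BDC(\ifield_{\bM\setminus M})\subset\shd=\BDC(\ifield_{\bM})$. The distinguished triangles produced in part~(i) will furnish the concrete realization, demanded in the statements of~(ii) and~(iii), of the abstract adjoint projections produced by Proposition~\ref{pro:D/N}.

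For part~(i), applying $(-)\tens F$ and $\rihom(-,F)$ to the short exact sequence produces the two canonical distinguished triangles
\[
\field_M\tens F\to F\to\field_{\bM\setminus M}\tens F\tone\quad\text{and}\quad
\rihom(\field_{\bM\setminus M},F)\to F\to\rihom(\field_M,F)\tone.
\]
Writing $i\colon\bM\setminus M\hookrightarrow\bM$ for the closed embedding and $j\colon M\hookrightarrow\bM$ for its open complement, $\BDC(\ifield_{\bM\setminus M})$ is identified with the essential image of $\reeim i\simeq\roim i$, i.e.\ with ind-sheaves supported on $\bM\setminus M$. For any such $F$ one has $\field_M\tens F=0$ directly, and $\rihom(\field_M,F)\simeq\roim j\rihom(\field_M,\opb j F)=0$ by adjunction since $\opb j F=0$. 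The four reverse implications then follow from the two distinguished triangles, since $\field_{\bM\setminus M}\tens F$ and $\rihom(\field_{\bM\setminus M},F)$ are themselves automatically supported on $\bM\setminus M$.

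For parts~(ii) and~(iii), the embedding $\reeim i\colon\BDC(\ifield_{\bM\setminus M})\hookrightarrow\BDC(\ifield_{\bM})$ admits both a left adjoint $\opb i$ and a right adjoint $\epb i$; composed back with $\reeim i$, these are concretely realized on $\shd$ as $F\mapsto\field_{\bM\setminus M}\tens F$ and $F\mapsto\rihom(\field_{\bM\setminus M},F)$. The stability hypothesis of Proposition~\ref{pro:D/N} holds because the defining condition $\field_M\tens F=0$ for $\shn$ passes to direct summands. Its right-adjoint version therefore yields an equivalence $\shn^\bot\isoto\BDC(\ifield_{(M,\bM)})$, and its symmetric left-orthogonal variant yields ${}^\bot\shn\isoto\BDC(\ifield_{(M,\bM)})$; the two distinguished triangles of part~(i) identify the corresponding projections $\shd\to\shn^\bot$ and $\shd\to{}^\bot\shn$ with the functors $F\mapsto\rihom(\field_M,F)$ and $F\mapsto\field_M\tens F$, respectively.

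It remains to verify the explicit characterizations of ${}^\bot\shn$ and $\shn^\bot$ displayed in~(ii) and~(iii). In one direction, if $\field_M\tens F\isoto F$ (equivalently $\field_{\bM\setminus M}\tens F\simeq 0$) and $G\in\shn$, then $\Hom(F,G)\simeq\Hom(\field_M\tens F,G)\simeq\Hom(F,\rihom(\field_M,G))=0$ by part~(i). Conversely, if $F\in{}^\bot\shn$, the canonical morphism $F\to G\defeq\field_{\bM\setminus M}\tens F\in\shn$ lies in a vanishing Hom, so the triangle $\field_M\tens F\to F\to G\tone$ has zero middle arrow and splits as $\field_M\tens F\simeq F\oplus G[-1]$; applying $\field_{\bM\setminus M}\tens(-)$ then forces $0\simeq G\oplus G[-1]$, whence $G\simeq 0$. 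The right orthogonal is handled symmetrically. The main bookkeeping obstacle is precisely the identification carried out in paragraph three of the abstract equivalences produced by Proposition~\ref{pro:D/N} with the concrete projection functors appearing in~(ii) and~(iii); everything else is formal once the two distinguished triangles of part~(i) are in place.
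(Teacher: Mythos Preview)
Your proof is correct and follows the approach the paper itself indicates: the paper states this proposition without proof, having already observed just before it that the embedding $\reeim i\simeq\roim i$ admits both adjoints, so Proposition~\ref{pro:D/N} applies to give the abstract equivalences ${}^\bot\shn\simeq\shd/\shn\simeq\shn^\bot$. Your argument simply fills in the details the paper leaves implicit, using the two canonical distinguished triangles to realize condition~(iv) of Proposition~\ref{pro:D/N} concretely and thereby identify the projections; this is exactly what the paper has in mind.
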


\begin{corollary}\label{cor:bord}
For $F,G\in\BDC(\ifield_{\bM})$ one has
\begin{align*}
\Hom[\BDC(\ifield_{(M,\bM)})](F,G)
&\simeq \Hom[\BDC(\ifield_{\bM})](\field_M\tens F,G) \\
&\simeq \Hom[\BDC(\ifield_{\bM})](F,\rihom(\field_M,G)) \\
&\simeq \Hom[\BDC(\ifield_{\bM})](\field_M\tens F,\field_M\tens G) \\
&\simeq \Hom[\BDC(\ifield_{\bM})](\rihom(\field_M,F),\rihom(\field_M,G)).
\end{align*}
\end{corollary}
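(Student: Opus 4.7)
The plan is to deduce all four isomorphisms formally from Proposition~\ref{pro:bord} together with the two distinguished triangles
\[
\field_M\tens F \To F \To \field_{\bM\setminus M}\tens F \to[+1], \qquad
\rihom(\field_M,G)\To G\To \rihom(\field_{\bM\setminus M},G)\to[+1].
\]

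First I would use the equivalence $\BDC(\ifield_{(M,\bM)}) \isoto {}^\bot\BDC(\ifield_{\bM\setminus M})$ of Proposition~\ref{pro:bord}, which sends $F\mapsto \field_M\tens F$. Since equivalences preserve Hom sets, this immediately gives
\[
\Hom[\BDC(\ifield_{(M,\bM)})](F,G)\simeq \Hom[\BDC(\ifield_{\bM})](\field_M\tens F,\field_M\tens G),
\]
which is the third isomorphism. The fourth isomorphism follows by the same argument using the dual equivalence $\BDC(\ifield_{(M,\bM)})\isoto \BDC(\ifield_{\bM\setminus M})^\bot$, $F\mapsto\rihom(\field_M,F)$.

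To obtain the first isomorphism, I would apply the functor $\Hom[\BDC(\ifield_{\bM})](\field_M\tens F,\dummy)$ to the distinguished triangle for $G$ above, obtaining the long exact sequence
\[
\cdots\to \Hom(\field_M\tens F,(\field_{\bM\setminus M}\tens G)[-1])\to \Hom(\field_M\tens F,\field_M\tens G)\to \Hom(\field_M\tens F,G)\to \Hom(\field_M\tens F,\field_{\bM\setminus M}\tens G)\to\cdots
\]
By Proposition~\ref{pro:bord}(i), the object $\field_{\bM\setminus M}\tens G$ lies in $\BDC(\ifield_{\bM\setminus M})$, while $\field_M\tens F$ lies in ${}^\bot\BDC(\ifield_{\bM\setminus M})$ by Proposition~\ref{pro:bord}(ii); hence the outer two terms vanish (including after shift), and the middle map is an isomorphism. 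Combined with the third isomorphism already established, this yields the first. The second isomorphism is obtained symmetrically by applying $\Hom[\BDC(\ifield_{\bM})](\dummy,\rihom(\field_M,G))$ to the triangle for $F$ and invoking Proposition~\ref{pro:bord}(iii) for the vanishing of the extremal terms.

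There is no real obstacle here: the corollary is a purely formal consequence of the fact that $\BDC(\ifield_{\bM\setminus M})\subset \BDC(\ifield_{\bM})$ admits both adjoints, so that the localization $\BDC(\ifield_{(M,\bM)})$ is simultaneously reflective and coreflective, with reflector $\field_M\tens(\dummy)$ and coreflector $\rihom(\field_M,\dummy)$. All the genuine work has been carried out in Proposition~\ref{pro:bord}; the present statement merely repackages it as a comparison of Hom groups.
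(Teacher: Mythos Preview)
Your proposal is correct. The paper gives no proof for this corollary, treating it as an immediate consequence of Proposition~\ref{pro:bord}; your argument via the two equivalences (for the third and fourth isomorphisms) and the orthogonality vanishing applied to the tensor triangle (for the first and second) is exactly the intended formal unpacking. One cosmetic slip: the $\rihom$ triangle you display for $G$ has its outer terms swapped (the correct order from $0\to\field_M\to\field_{\bM}\to\field_{\bM\setminus M}\to0$ is $\rihom(\field_{\bM\setminus M},G)\to G\to\rihom(\field_M,G)\tone$), and the long exact sequence you actually write uses the tensor triangle for $G$, which you only displayed for $F$; neither affects the argument.
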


There is a quasi-commutative diagram of natural functors
\[
\xymatrix@C=10ex{
\BDC(\field_{\bM}) \ar@{^(->}[r]^-{\iota_{\bM}} \ar[d] & \BDC(\ifield_{\bM}) \ar[d] \\
\BDC(\field_M) \ar@{^(->}[r]^-{\iota_{(M,\bM)}} & \BDC(\ifield_{(M,\bM)}),
}
\]
where the left vertical arrow is the functor of restriction to $M$, the right vertical arrow is the quotient functor, and the bottom arrow is the composition
\[
\BDC(\field_M) \simeq  \BDC(\field_{\bM})/\BDC(\field_{\bM\setminus M}) \To \BDC(\ifield_{(M,\bM)}) .
\]

\begin{notation}
\label{not:fieldNbN}
We sometimes write $\BDC(\field_{(M,\bM)})$ for $\BDC(\field_M)$, when considered as a full subcategory of $\BDC(\ifield_{(M,\bM)})$ by $\iota_{(M,\bM)}$.
\end{notation}

\subsection{Operations}
Let us discuss internal and external operations in the category of bordered spaces.

\begin{definition}
The functors $\tens$ and $\rihom$ in $\BDC(\ifield_{\bM})$ induce well defined functors
\begin{align*}
\tens\; &\colon \BDC(\ifield_{(M,\bM)}) \times \BDC(\ifield_{(M,\bM)}) \to \BDC(\ifield_{(M,\bM)}), \\
\rihom\, &\colon \BDC(\ifield_{(M,\bM)})^\op \times \BDC(\ifield_{(M,\bM)}) \to \BDC(\ifield_{(M,\bM)}).
\end{align*}
\end{definition}

\begin{lemma}
For $F_1,F_2\in\BDC(\ifield_{(M,\bM)})$ one has
\[
\Hom[\BDC(\ifield_{(M,\bM)})](\field_M, \rihom(F_1,F_2)) \simeq \Hom[\BDC(\ifield_{(M,\bM)})](F_1,F_2).
\]
\end{lemma}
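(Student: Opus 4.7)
The plan is to reduce the statement to the standard tensor-hom adjunction in $\BDC(\ifield_{\bM})$ via Corollary~\ref{cor:bord}. Concretely, I pick representatives of $F_1, F_2$ in $\BDC(\ifield_{\bM})$ (for instance their images under the quasi-inverse equivalence with ${}^\bot\BDC(\ifield_{\bM\setminus M})$, so that $\field_M\tens F_i\isoto F_i$), and then compute the left-hand side by unfolding the quotient hom and applying the adjunction.

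First, by Corollary~\ref{cor:bord} applied to $F=\field_M$ and $G=\rihom(F_1,F_2)$, and using that $\field_M\tens\field_M\simeq\field_M$, I would get
\[
\Hom[\BDC(\ifield_{(M,\bM)})](\field_M,\rihom(F_1,F_2))
\simeq \Hom[\BDC(\ifield_{\bM})](\field_M,\rihom(F_1,F_2)).
\]
Next, using the classical $(\tens,\rihom)$ adjunction in $\BDC(\ifield_{\bM})$ (available since the functors $\tens$ and $\rihom$ on $\BDC(\ifield_{(M,\bM)})$ are defined by descending the corresponding functors on $\BDC(\ifield_{\bM})$), this rewrites as
\[
\Hom[\BDC(\ifield_{\bM})](\field_M\tens F_1,F_2).
\]
Finally, a second application of Corollary~\ref{cor:bord}, this time with $F=F_1$ and $G=F_2$, identifies this with $\Hom[\BDC(\ifield_{(M,\bM)})](F_1,F_2)$, yielding the desired isomorphism. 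Naturality in $F_1,F_2$ follows from the naturality of each of these three identifications.

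The only delicate point is to check that the object $\rihom(F_1,F_2)$, computed a priori in $\BDC(\ifield_{\bM})$ on representatives, is indeed a legitimate representative of the $\rihom$ descended to the quotient in the sense used by Corollary~\ref{cor:bord}; this is exactly the content of the preceding definition (the functor $\rihom$ is well defined on $\BDC(\ifield_{(M,\bM)})$), so no genuine work is required there. I do not expect any real obstacle; the lemma is essentially a repackaging of the tensor-hom adjunction through the localization formulas of Corollary~\ref{cor:bord}.
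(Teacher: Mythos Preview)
Your proposal is correct and is exactly the argument the paper has in mind: the lemma is stated without proof, immediately after the definition of $\tens$ and $\rihom$ on $\BDC(\ifield_{(M,\bM)})$ and Corollary~\ref{cor:bord}, as an easy consequence of those two ingredients together with the standard $(\tens,\rihom)$ adjunction in $\BDC(\ifield_{\bM})$. Your three-step reduction via Corollary~\ref{cor:bord}, adjunction, and Corollary~\ref{cor:bord} again is the intended verification.
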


\begin{lemma}
\label{lem:adjbord}
For $F_1,F_2,F_3\in\BDC(\ifield_{(M,\bM)})$ one has
\begin{align*}
\rihom(F_1\tens F_2,F_3) &\simeq \rihom(F_1,\rihom(F_2,F_3)), \\
\Hom[\BDC(\ifield_{(M,\bM)})](F_1\tens F_2,F_3) &\simeq \Hom[\BDC(\ifield_{(M,\bM)})](F_1,\rihom(F_2,F_3)).
\end{align*}
\end{lemma}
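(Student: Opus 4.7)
The plan is to reduce both isomorphisms to the known adjunction for ind-sheaves on the ambient space $\bM$, which is established in~\cite{KS01}.

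\textbf{First isomorphism.} By the preceding definition, the bifunctors $\tens$ and $\rihom$ on $\BDC(\ifield_{(M,\bM)})$ are induced by the corresponding bifunctors on $\BDC(\ifield_{\bM})$, meaning they descend along the quotient functor $Q\colon \BDC(\ifield_{\bM}) \to \BDC(\ifield_{(M,\bM)})$. Lifting $F_1,F_2,F_3$ to representatives in $\BDC(\ifield_{\bM})$ (still denoted by the same symbols), the classical adjunction
\[
\rihom(F_1\tens F_2,F_3) \simeq \rihom(F_1,\rihom(F_2,F_3))
\]
holds in $\BDC(\ifield_{\bM})$. Since $Q$ is an exact functor of triangulated categories, it sends isomorphisms to isomorphisms, and by the very definition of the induced functors the image under $Q$ is exactly the desired isomorphism in $\BDC(\ifield_{(M,\bM)})$.

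\textbf{Second isomorphism.} Here the plan is to reduce the Hom-adjunction to the internal-hom adjunction just proved, via the preceding lemma which states that
\[
\Hom[\BDC(\ifield_{(M,\bM)})](\field_M,\rihom(G_1,G_2)) \simeq \Hom[\BDC(\ifield_{(M,\bM)})](G_1,G_2)
\]
for all $G_1,G_2\in\BDC(\ifield_{(M,\bM)})$. Applying this twice, once with $(G_1,G_2)=(F_1\tens F_2,F_3)$ and once with $(G_1,G_2)=(F_1,\rihom(F_2,F_3))$, yields the chain
\begin{align*}
\Hom(F_1\tens F_2,F_3)
&\simeq \Hom(\field_M,\rihom(F_1\tens F_2,F_3)) \\
&\simeq \Hom(\field_M,\rihom(F_1,\rihom(F_2,F_3))) \\
&\simeq \Hom(F_1,\rihom(F_2,F_3)),
\end{align*}
where the middle isomorphism applies the first part of the lemma.

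\textbf{Main obstacle.} The only non-formal point is verifying that the descent of the internal hom adjunction along $Q$ is compatible in the bifunctorial sense, i.e.~that the well-definedness asserted in the preceding definition really yields an isomorphism of the induced bifunctors, not merely an object-wise isomorphism. However, this is automatic: the adjunction isomorphism in $\BDC(\ifield_{\bM})$ is natural in all three arguments, and naturality passes to the localization $Q$ by the universal property of quotient triangulated categories. So no further work beyond citing~\cite{KS01} and the preceding definition is required.
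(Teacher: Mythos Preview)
Your proof is correct. The paper states this lemma without proof, treating it as immediate from the definition of $\tens$ and $\rihom$ on $\BDC(\ifield_{(M,\bM)})$ together with the standard adjunction in $\BDC(\ifield_{\bM})$; your argument makes this explicit and uses the preceding lemma exactly as intended.
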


Let $f\colon (M,\bM) \to (N,\bN)$ be a morphism of bordered spaces, and recall that $\Gamma_f$ denotes the graph of the associated map $f\colon M \to N$.
Since $\Gamma_f$ is closed in $M\times N$, it is locally closed in $\bM\times \bN$. One can then consider
the sheaf $\field_{\Gamma_f}$ on $\bM\times \bN$.

\begin{definition}
\label{def:fbordered}
Let $f\colon (M,\bM) \to (N,\bN)$ be a morphism of bordered spaces.
For $F\in\BDC(\ifield_{\bM})$ and $G\in\BDC(\ifield_{\bN})$, we set
\begin{align*}
\reeim f F &= \reeimv{q_{2\sep!!}}(\field_{\Gamma_f}\tens\opb{q_1}F), &
\roim f F &= \roimv{q_{2\sep*}}\rihom(\field_{\Gamma_f},\epb{q_1}F), \\
\opb f G &= \reeimv{q_{1\sep!!}}(\field_{\Gamma_f}\tens\opb{q_2}G), &
\epb f G &= \roimv{q_{1\sep*}}\rihom(\field_{\Gamma_f},\epb{q_2}G),
\end{align*}
where $q_1\colon\bM\times\bN\to\bM$ and $q_2\colon\bM\times\bN\to\bN$ are the projections.
\end{definition}

\begin{remark}
Considering a continuous map $f\colon M\to N$ as a morphism of bordered spaces with $\bM=M$ and $\bN=N$,  the above functors are isomorphic to the usual external operations for ind-sheaves.
\end{remark}

\begin{lemma}
\label{lem:coper}
The above definition induces well-defined functors
\begin{align*}
\reeim f &\colon \BDC(\ifield_{(M,\bM)}) \To {}^\bot\BDC(\ifield_{\bN\setminus N}) \simeq \BDC(\ifield_{(N,\bN)}), \\
\roim f &\colon \BDC(\ifield_{(M,\bM)}) \To \BDC(\ifield_{\bN\setminus N})^\bot \simeq \BDC(\ifield_{(N,\bN)}), \\
\opb f &\colon \BDC(\ifield_{(N,\bN)}) \To {}^\bot\BDC(\ifield_{\bM\setminus M}) \simeq \BDC(\ifield_{(M,\bM)}), \\
\epb f &\colon \BDC(\ifield_{(N,\bN)}) \To \BDC(\ifield_{\bM\setminus M})^\bot \simeq \BDC(\ifield_{(M,\bM)}).
\end{align*}
\end{lemma}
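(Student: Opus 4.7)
The plan is to prove all four statements in parallel, reducing everything to a single geometric observation: since $\Gamma_f\subset M\times N\subset \bM\times\bN$, one has the idempotency identities
\[
\field_{\Gamma_f}\simeq \field_{\Gamma_f}\tens\opb{q_1}\field_M\simeq \field_{\Gamma_f}\tens\opb{q_2}\field_N.
\]
Combined with the projection formula for $\reeimv{q_{i!!}}$, the $(\opb{q_i},\roimv{q_{i*}})$-adjunction, and the tensor-hom adjunction, these will give every isomorphism needed. The well-definedness on the quotient categories then follows formally from Proposition~\ref{pro:bord}.

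For the ``lands in the expected orthogonal'' part, take $\reeim f$: inserting $\opb{q_2}\field_N$ via the second idempotency and applying the projection formula for $\reeimv{q_{2!!}}$ yields $\reeim f F \simeq \field_N\tens \reeim f F$, placing it in ${}^\bot\BDC(\ifield_{\bN\setminus N})$ by Proposition~\ref{pro:bord}. Dually, for $\roim f$ I will use the isomorphisms $\rihom(A\tens B,C)\simeq\rihom(A,\rihom(B,C))$ and $\roimv{q_{2*}}\rihom(\opb{q_2}\field_N,{-})\simeq\rihom(\field_N,\roimv{q_{2*}}{-})$ to obtain $\roim f F\simeq\rihom(\field_N,\roim f F)\in\BDC(\ifield_{\bN\setminus N})^\bot$. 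The corresponding statements for $\opb f$ and $\epb f$ are handled identically, with $q_1$ playing the role of $q_2$.

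For the ``descends to the quotient'' part, I must verify that each functor annihilates the subcategory $\BDC(\ifield_{\bM\setminus M})$, again characterized by Proposition~\ref{pro:bord} as $\{F \semicolon \field_M\tens F\simeq 0\}=\{F\semicolon \rihom(\field_M,F)\simeq 0\}$. For $\reeim f$, if $\field_M\tens F\simeq 0$, the first idempotency gives $\field_{\Gamma_f}\tens\opb{q_1}F\simeq \field_{\Gamma_f}\tens\opb{q_1}(\field_M\tens F)\simeq 0$, hence $\reeim f F\simeq 0$. For $\roim f$, if $\rihom(\field_M,F)\simeq 0$, then $\epb{q_1}\rihom(\field_M,F)\simeq \rihom(\opb{q_1}\field_M,\epb{q_1}F)\simeq 0$, and the first idempotency gives $\rihom(\field_{\Gamma_f},\epb{q_1}F)\simeq\rihom(\field_{\Gamma_f},\rihom(\opb{q_1}\field_M,\epb{q_1}F))\simeq 0$, whence $\roim f F\simeq 0$. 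The cases of $\opb f$ and $\epb f$ are the symmetric versions on the $q_2$ side, using $\field_N\tens G\simeq 0$ and $\rihom(\field_N,G)\simeq 0$ respectively.

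The whole argument is formal once one has the geometric input $\Gamma_f\subset M\times N$, so no substantive obstacle is expected. The main thing to keep track of is the bookkeeping: left orthogonal for the $!$-type functors and right orthogonal for the $*$-type ones, together with the correct direction of each projection formula or tensor-hom adjunction used; the properness assumption \eqref{eq:Hbord} on $\overline\Gamma_f\to\bM$ does not enter the present lemma, but will be needed later (in the proof of functoriality, cf.\ Lemma~\ref{lem:bcomp}).
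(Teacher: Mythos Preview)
Your proof is correct and follows essentially the same approach as the paper's. The only cosmetic difference is that the paper phrases the key geometric input as $\Gamma_f\cap\opb{q_1}(\bM\setminus M)=\emptyset$ and $\opb{q_2}(\bN\setminus N)\cap\Gamma_f=\emptyset$ (and accordingly uses the characterizations of Proposition~\ref{pro:bord} via $\field_{\bM\setminus M}$, $\field_{\bN\setminus N}$), whereas you use the equivalent idempotency formulation $\field_{\Gamma_f}\simeq\field_{\Gamma_f}\tens\opb{q_i}\field_{M\text{ or }N}$ and the complementary characterizations via $\field_M$, $\field_N$; the underlying computations are identical.
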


\begin{proof}
Since the arguments are similar for all functors, let us only discuss $\roim f$. Let $F\in\BDC(\ifield_{\bM})$.

\smallskip\noindent(i)
Assume that $F\simeq\rihom(\field_{\bM\setminus M},F)$. Since $\Gamma_f\cap\opb{q_1}(\bM\setminus M) = \emptyset$, we have
\begin{align*}
\roim f F
&\simeq\roimv{q_{2\sep*}}\rihom(\field_{\Gamma_f},\epb{q_1}F) \\
&\simeq \roimv{q_{2\sep*}}\rihom\bl\field_{\Gamma_f},\epb{q_1}\rihom(\field_{\bM\setminus M},F)\br \\
&\simeq \roimv{q_{2\sep*}}\rihom(\field_{\Gamma_f}\tens\opb{q_1}\field_{\bM\setminus M},\epb{q_1}F) \\
&\simeq \roimv{q_{2\sep*}}\rihom(\field_{\Gamma_f\cap\opb{q_1}(\bM\setminus M)},\epb{q_1}F) \simeq 0.
\end{align*}
This shows that the functor $\roim f\colon\BDC(\ifield_{\bM})\to\BDC(\ifield_{\bN})$ factors through $\BDC(\ifield_{(M,\bM)})$.

\smallskip\noindent(ii)
Since $\opb{q_2}(\bN\setminus N) \cap \Gamma_f = \emptyset$, we have
\begin{align*}
\rihom(\field_{\bN\setminus N},\roim f F)
&\simeq\rihom\bl\field_{\bN\setminus N},\roimv{q_{2\sep*}}\rihom(\field_{\Gamma_f},\epb{q_1}F)\br \\
&\simeq \roimv{q_{2\sep*}}\rihom(\opb{q_2}\field_{\bN\setminus N} \tens \field_{\Gamma_f},\epb{q_1}F) \\
&\simeq \roimv{q_{2\sep*}}\rihom(\field_{\opb{q_2}(\bN\setminus N) \cap \Gamma_f},\epb{q_1}F) \simeq 0.
\end{align*}
This shows that $\roim f F \in \BDC(\ifield_{\bN\setminus N})^\bot$.
\end{proof}

The following lemma is easy to prove.

\begin{lemma}
\label{lem:jM}
Let $j_M\colon(M,\bM)\to \bM$ be the morphism given by the open embedding $M\subset \bM$. Then
\bnum
\item
The functors
\[
\opb{j_M} \simeq \epb{j_M} \colon \BDC(\ifield_{\bM}) \to \BDC(\ifield_{(M,\bM)})
\]
are isomorphic to the quotient functor.
\item
For $F\in\BDC(\ifield_{\bM})$ one has the isomorphisms in $\BDC(\ifield_{\bM})$
\[
\reeimv{j_{M\sep!!}}\opb{j_M} F \simeq \field_M \tens F, \quad
\roimv{j_{M\sep*}}\epb{j_M} F \simeq \rihom(\field_M, F).
\]
\item
The functors $\tens$ and $\rihom$ commute with $\opb{j_M} \simeq \epb{j_M}$.
\item
The functor $\tens$ commutes with $\reeimv{j_{M\sep!!}}$ and
the functor $\rihom$ commutes with $\roimv{j_{M\sep*}}$.
More precisely, for $F_1,F_2\in\BDC(\ifield_{(M,\bM)})$ one has 
\begin{align*}
\reeimv{j_{M\sep!!}}(F_1 \tens F_2) 
&\simeq \reeimv{j_{M\sep!!}}F_1 \tens \reeimv{j_{M\sep!!}}F_2 \\
&\simeq \reeimv{j_{M\sep!!}}F_1 \tens \roimv{j_{M\sep*}}F_2, \\
\roimv{j_{M\sep*}}\rihom(F_1,F_2) 
&\simeq \rihom(\reeimv{j_{M\sep!!}}F_1,\reeimv{j_{M\sep!!}}F_2) \\
&\simeq \rihom(\reeimv{j_{M\sep!!}}F_1,\roimv{j_{M\sep*}}F_2) \\
&\simeq \rihom(\roimv{j_{M\sep*}}F_1,\roimv{j_{M\sep*}}F_2).
\end{align*}
\ee
\end{lemma}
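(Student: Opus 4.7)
The plan is to unpack Definition~\ref{def:fbordered} in the special case of the open embedding $j_M\cl(M,\bM)\to\bM$, then pass to the quotient using orthogonality with respect to $\BDC(\ifield_{\bM\setminus M})$. Write $q_1,q_2\cl\bM\times\bM\to\bM$ for the projections and $\Delta\cl\bM\to\bM\times\bM$ for the diagonal closed embedding, so that $q_i\circ\Delta=\id_\bM$ for $i=1,2$. The graph $\Gamma_{j_M}=\Delta(M)$ equals $\Delta(\bM)\cap\opb{q_1}(M)$, whence $\field_{\Gamma_{j_M}}\simeq\roim\Delta\field_M$.

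Plugging this into the formulas of Definition~\ref{def:fbordered}, applying the projection formula and the standard adjunction identity $\rihom(\roim\Delta G,\epb{q_i}F)\simeq\roim\Delta\rihom(G,F)$ (valid for the closed embedding $\Delta$ thanks to $q_i\circ\Delta=\id$), one computes directly, for $F\in\BDC(\ifield_\bM)$,
\begin{align*}
\opb{j_M}F &\simeq \field_M\tens F, & \epb{j_M}F &\simeq \rihom(\field_M,F), \\
\reeim{j_M}\opb{j_M}F &\simeq \field_M\tens F, & \roim{j_M}\epb{j_M}F &\simeq \rihom(\field_M,F).
\end{align*}
The last two identities give~(2). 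For~(1), apply $(\dummy)\tens F$ and $\rihom(\dummy,F)$ to the distinguished triangle $\field_M\to\field_\bM\to\field_{\bM\setminus M}\tone$; the third terms lie in $\BDC(\ifield_{\bM\setminus M})$, hence vanish in $\BDC(\ifield_{(M,\bM)})$, so $\opb{j_M}F\simeq F\simeq\epb{j_M}F$ in the quotient, i.e.\ both functors coincide with the quotient functor.

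Statement~(3) is then automatic, since $\tens,\rihom$ on $\BDC(\ifield_{(M,\bM)})$ are by definition induced from the corresponding functors on $\BDC(\ifield_\bM)$ and $\opb{j_M}\simeq\epb{j_M}$ is the quotient functor. For~(4), I would use Proposition~\ref{pro:bord} to pick representatives $\tilde F_i\in{}^\bot\BDC(\ifield_{\bM\setminus M})$ of $F_i$, so that $\tilde F_i\simeq\field_M\tens\tilde F_i$; then~(2) yields $\reeim{j_M}F_i\simeq\field_M\tens\tilde F_i\simeq\tilde F_i$, while $\tilde F_1\tens\tilde F_2$ represents $F_1\tens F_2$ by definition of the tensor on the quotient, whence $\reeim{j_M}(F_1\tens F_2)\simeq\field_M\tens(\tilde F_1\tens\tilde F_2)\simeq\reeim{j_M}F_1\tens\reeim{j_M}F_2$. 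For the mixed identity involving $\roim{j_M}F_2\simeq\rihom(\field_M,\tilde F_2)$, I would invoke the lemma preceding Proposition~\ref{pro:bord} in the form $\field_M\tens\rihom(\field_M,G)\simeq\field_M\tens G$ applied to $G=\tilde F_2$, collapsing $\field_M\tens(\tilde F_1\tens\rihom(\field_M,\tilde F_2))$ to $\field_M\tens(\tilde F_1\tens\tilde F_2)$. The three $\rihom$/$\roim{j_M}$ identities follow symmetrically by tensor-hom adjunction, using in addition the dual formula $\rihom(\field_M,\field_M\tens G)\simeq\rihom(\field_M,G)$ from the same lemma. No single step is a genuine obstacle; the only care required is in bookkeeping between representatives in $\BDC(\ifield_\bM)$ and their images in $\BDC(\ifield_{(M,\bM)})$, and in correctly invoking the two auxiliary identities of the lemma preceding Proposition~\ref{pro:bord}.
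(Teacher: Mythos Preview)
Your proposal is correct. The paper does not give a proof of this lemma at all; it simply introduces it with the sentence ``The following lemma is easy to prove.'' Your argument supplies exactly the kind of unpacking of Definition~\ref{def:fbordered} via the diagonal embedding that one would expect, and the use of Proposition~\ref{pro:bord} together with the auxiliary identities $\field_M\tens\rihom(\field_M,G)\simeq\field_M\tens G$ and $\rihom(\field_M,\field_M\tens G)\simeq\rihom(\field_M,G)$ is the right way to handle the mixed $\reeimv{j_{M\sep!!}}/\roimv{j_{M\sep*}}$ cases in~(4). One small point worth making explicit in the last $\rihom$ identity: to pass from $\rihom(\roimv{j_{M\sep*}}F_1,\roimv{j_{M\sep*}}F_2)$ to $\rihom(\reeimv{j_{M\sep!!}}F_1,\roimv{j_{M\sep*}}F_2)$ you use that the cone of $\reeimv{j_{M\sep!!}}F_1\to\roimv{j_{M\sep*}}F_1$ lies in $\BDC(\ifield_{\bM\setminus M})$ and that $\rihom(C,\roimv{j_{M\sep*}}F_2)\simeq\rihom(\field_M\tens C,\tilde F_2)\simeq 0$ for such $C$; this is implicit in your appeal to the dual formula but deserves a sentence.
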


\begin{convention}
In the sequel, to avoid confusion, we distinguish between the objects of $\BDC(\ifield_{\bM})$ and the objects of $\BDC(\ifield_{(M,\bM)})$. In other words,
if $F\in\BDC(\ifield_{\bM})$, we avoid to denote by $F$ its image in the quotient category $\BDC(\ifield_{(M,\bM)})$, and write instead $\opb{j_M}F$ or $\epb{j_M}F$.
\end{convention}

Let us now show that the external operations for bordered spaces satisfy similar properties to the external operations for usual spaces.

\begin{lemma}
\label{lem:badj}
Let $f\colon (M,\bM) \to (N,\bN)$ be a morphism of bordered spaces.
\bnum
\item
The functor $\reeim f$ is left adjoint to $\epb f$.
\item
The functor $\opb f$ is left adjoint to $\roim f$.
\ee
\end{lemma}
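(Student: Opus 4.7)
The plan is to deduce both adjunctions from the standard six-functor adjunctions for ind-sheaves on the ambient good topological spaces $\bM$, $\bN$, $\bM\times\bN$. Corollary~\ref{cor:bord} allows one to pass between hom-sets in the quotient categories $\BDC(\ifield_{(M,\bM)})$, $\BDC(\ifield_{(N,\bN)})$ and in the ambient categories $\BDC(\ifield_{\bM})$, $\BDC(\ifield_{\bN})$, while Lemma~\ref{lem:coper} identifies the ranges of $\reeim f$, $\roim f$, $\opb f$, $\epb f$ with the orthogonal subcategories described by Proposition~\ref{pro:bord}.

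For part (i), I would fix lifts of $F$ and $G$ to $\BDC(\ifield_{\bM})$ and $\BDC(\ifield_{\bN})$ respectively. Corollary~\ref{cor:bord}, combined with the orthogonality $\field_N\tens\reeim f F\simeq\reeim f F$ from Lemma~\ref{lem:coper}, gives
\[
\Hom[\BDC(\ifield_{(N,\bN)})](\reeim f F, G) \simeq \Hom[\BDC(\ifield_{\bN})](\reeim f F, G).
\]
Unwinding $\reeim f F = \reeimv{q_{2\sep!!}}(\field_{\Gamma_f}\tens\opb{q_1}F)$ and applying in succession the $(\reeimv{q_{2\sep!!}},\epb{q_2})$ adjunction, the $(\tens,\rihom)$ adjunction, and the $(\opb{q_1},\roimv{q_{1\sep*}})$ adjunction on the ambient ind-sheaves, the right-hand side becomes
\[
\Hom[\BDC(\ifield_{\bM})]\bigl(F,\;\roimv{q_{1\sep*}}\rihom(\field_{\Gamma_f},\epb{q_2}G)\bigr) = \Hom[\BDC(\ifield_{\bM})](F,\epb f G).
\]
Since $\epb f G\in\BDC(\ifield_{\bM\setminus M})^\bot$ by Lemma~\ref{lem:coper}, a final application of Corollary~\ref{cor:bord} identifies this with $\Hom[\BDC(\ifield_{(M,\bM)})](F,\epb f G)$.

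Part (ii) is entirely analogous, with source and target exchanged: the same three ambient adjunctions, now in the order $(\reeimv{q_{1\sep!!}},\epb{q_1})$, $(\tens,\rihom)$, $(\opb{q_2},\roimv{q_{2\sep*}})$, together with the orthogonalities $\field_M\tens\opb f G\simeq\opb f G$ and $\rihom(\field_N,\roim f F)\simeq\roim f F$ from Lemma~\ref{lem:coper}, produce
\[
\Hom[\BDC(\ifield_{(M,\bM)})](\opb f G, F) \simeq \Hom[\BDC(\ifield_{(N,\bN)})](G, \roim f F).
\]

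The argument is essentially bookkeeping and no new computation is required beyond the ambient six-functor adjunctions. The one point deserving care is ensuring that, at each invocation of Corollary~\ref{cor:bord}, the object appearing already lies in the correct orthogonal subcategory of $\BDC(\ifield_{\bM})$ or $\BDC(\ifield_{\bN})$, so that the ambient hom-set genuinely computes the hom-set in the quotient; this is exactly what Lemma~\ref{lem:coper} guarantees.
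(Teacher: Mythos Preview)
Your proof is correct and is exactly the natural argument from the definitions: the paper states this lemma without proof, and the intended verification is precisely the chain of ambient six-functor adjunctions you give, bracketed by Corollary~\ref{cor:bord} and the orthogonality properties from Lemma~\ref{lem:coper}. There is nothing to add.
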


\begin{lemma}\label{lem:bcomp}
Let $g\colon (L,\bL) \to (M,\bM)$ and $f\colon (M,\bM) \to (N,\bN)$ be morphisms of bordered spaces. One has
\[
\reeim{(f\circ g)} \simeq \reeim f \circ \reeim g, \qquad
\roim{(f\circ g)} \simeq \roim f \circ \roim g
\]
and
\[
\opb{(f\circ g)} \simeq \opb g \circ \opb f, \qquad
\epb{(f\circ g)} \simeq \epb g \circ \epb f.
\]
\end{lemma}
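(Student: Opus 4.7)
The plan is to proceed by direct computation from Definition~\ref{def:fbordered}, aided by base change and the projection formula for ind-sheaves on good topological spaces. The first observation is that, by Lemma~\ref{lem:badj}, $\reeim{}$ and $\opb{}$ are left adjoints to $\epb{}$ and $\roim{}$ respectively. Hence, by uniqueness of adjoints, it suffices to establish the two composition laws $\reeim{(f\circ g)}\simeq\reeim f\circ\reeim g$ and $\opb{(f\circ g)}\simeq\opb g\circ\opb f$; the analogous statements for $\roim{}$ and $\epb{}$ then follow formally.

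Next, I will introduce the projections $\pi_1,\pi_2,\pi_3\colon\bL\times\bM\times\bN\to\bL,\bM,\bN$, the two-factor projections $\pi_{12},\pi_{13},\pi_{23}$, and the projections $p_1,p_2$, $q_1,q_2$, $r_1,r_2$ from $\bL\times\bM$, $\bM\times\bN$, $\bL\times\bN$ respectively. Set
\[
Z\defeq\pi_{12}^{-1}(\Gamma_g)\cap\pi_{23}^{-1}(\Gamma_f)\subset\bL\times\bM\times\bN.
\]
Unfolding Definition~\ref{def:fbordered}, applying base change to the Cartesian square with vertices $\bL\times\bM\times\bN,\bL\times\bM,\bM\times\bN,\bM$ to commute $\opb{q_1}$ with $\reeimv{p_{2\sep!!}}$, and then using the projection formula, the computation should yield
\[
\reeim f(\reeim g F)\simeq\reeimv{r_{2\sep!!}}\bigl(\reeimv{\pi_{13\sep!!}}\field_Z\tens\opb{r_1}F\bigr),\quad \opb g(\opb f G)\simeq\reeimv{r_{1\sep!!}}\bigl(\reeimv{\pi_{13\sep!!}}\field_Z\tens\opb{r_2}G\bigr).
\]
Both composition laws will then reduce to the single geometric identity
\[
\reeimv{\pi_{13\sep!!}}\field_Z\simeq\field_{\Gamma_{f\circ g}}.
\]

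The hard part will be this last identity, and it is precisely where the properness assumption \eqref{eq:Hbord} must be invoked. The plan is to mimic the argument from the proof of Lemma~\ref{lem:bordcom}: $Z$ lies inside the closed subset $\overline\Gamma_g\times_{\bM}\overline\Gamma_f\subset\bL\times\bM\times\bN$, and the map $\overline\Gamma_g\times_{\bM}\overline\Gamma_f\to\bL\times\bN$ is proper. Factoring $\pi_{13}$ through the closed embedding $\overline\Gamma_g\times_{\bM}\overline\Gamma_f\hookrightarrow\bL\times\bM\times\bN$ converts $\reeimv{\pi_{13\sep!!}}$ into an honest proper direct image. Since the restriction $\pi_{13}|_Z$ is a homeomorphism of $Z$ onto the locally closed subset $\Gamma_{f\circ g}\subset\bL\times\bN$ (with continuous inverse $(l,n)\mapsto(l,g(l),n)$), the proper pushforward of $\field_Z$ is exactly $\field_{\Gamma_{f\circ g}}$. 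Once this is in place, the rest of the argument is a formal manipulation of base change, projection formula, and adjunction, with the only residual care being to verify local closedness of the relevant subsets so that the constant-sheaf notations $\field_Z$ and $\field_{\Gamma_{f\circ g}}$ behave as expected under proper direct image.
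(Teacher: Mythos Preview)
Your proposal is correct and follows essentially the same route as the paper: both reduce the statement to the kernel identity $\reeimv{\pi_{13\sep!!}}\field_Z\simeq\field_{\Gamma_{f\circ g}}$ (the paper writes this as $\field_{\Gamma_g}\circ\field_{\Gamma_f}\simeq\field_{\Gamma_{f\circ g}}$), and both establish it by invoking the properness of $\overline\Gamma_g\times_{\bM}\overline\Gamma_f\to\bL\times\bN$ coming from~\eqref{eq:Hbord} to replace $\reeim{}$ by $\roim{}$ and thereby pass to ordinary sheaves. Your use of adjunction to cut the four statements down to two is a minor economy the paper does not bother with (it simply says the proofs are similar), but the substance is the same.
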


\begin{proof}
Since the proofs are similar, we treat only the first isomorphism.

For $F\in\BDC(\ifield_{\bL})$, one has
\[
\reeim{(f\circ g)}\opb{j_L} F \simeq \opb{j_N} \reeimv{q_{2\sep!!}}(\field_{\Gamma_{f\circ g}}\tens\opb{q_1}F),
\]
where $q_1$ and $q_2$ are the projections from $\bL\times\bN$ to the corresponding factors. Using the projection formula, one easily checks the isomorphism
\[
\reeim f \reeim g \opb{j_L} F \simeq
\opb{j_N} \reeimv{q_{2\sep!!}}((\field_{\Gamma_g}\circ\field_{\Gamma_f})\tens\opb{q_1}F),
\]
where
\[
\field_{\Gamma_g}\circ\field_{\Gamma_f} \defeq \reeimv{q_{13\sep!!}}(\opb{q_{12}} \field_{\Gamma_g} \tens \opb{q_{23}}\field_{\Gamma_f}),
\]
and $q_{12}$, $q_{23}$ and $q_{13}$ denote the projections from $\bL\times\bM\times\bN$ to the corresponding factors. 
For example, $q_{13}(x,y,z) = (x,z)$. 

Hence, writing explicitly the embedding functor  $\iota$ of sheaves into ind-sheaves, it is enough to show
\[
\iota_{\bL\times\bM}\,\field_{\Gamma_g} \comp \iota_{\bM\times\bN}\,\field_{\Gamma_f} \simeq \iota_{\bL\times\bN}\,\field_{\Gamma_{f\circ g}}.
\]
Recalling that $\iota$ commutes with tensor product, ordinary inverse image, and ordinary direct image, we have
\begin{align*}
\notag
\iota_{\bL\times\bM}\,\field_{\Gamma_g} \comp \iota_{\bM\times\bN}\,\field_{\Gamma_f} 
&\defeq \reeimv{q_{13\sep!!}}(\opb{q_{12}}\iota_{\bL\times\bM}\,\field_{\Gamma_g} \tens \opb{q_{23}}\iota_{\bM\times\bN}\,\field_{\Gamma_f}) \\
&\simeq \reeimv{q_{13\sep!!}}\iota_{\bL\times\bM\times\bN}(\opb{q_{12}}\field_{\Gamma_g} \tens \opb{q_{23}}\field_{\Gamma_f}) \\
&\underset{(*)}\simeq \roimv{q_{13\sep*}}\iota_{\bL\times\bM\times\bN}(\opb{q_{12}}\field_{\Gamma_g} \tens \opb{q_{23}}\field_{\Gamma_f}) \\
&\simeq \iota_{\bL\times\bN}\,\roimv{q_{13\sep*}}(\opb{q_{12}}\field_{\Gamma_g} \tens \opb{q_{23}}\field_{\Gamma_f}) \\
&\underset{(*)}\simeq \iota_{\bL\times\bN}\,\reimv{q_{13\sep!}}(\opb{q_{12}}\field_{\Gamma_g} \tens \opb{q_{23}}\field_{\Gamma_f})\\
&\simeq \iota_{\bL\times\bN}\,\field_{\Gamma_{f\circ g}}.
\end{align*}
Here, in $(*)$, we used the fact that
$\supp(\opb{q_{12}}\field_{\Gamma_g} \tens \opb{q_{23}}\field_{\Gamma_f}) \subset \overline\Gamma_g \times_{\bM} \overline\Gamma_f$ is proper over $\bL\times\bN$, 
which  follows from the same arguments as in the proof of Lemma~\ref{lem:bordcom}.
\end{proof}

\begin{corollary}
If $f\colon (M,\bM) \to (N,\bN)$ is an isomorphism of bordered spaces, then $\roim f \simeq \reeim f$ and $\opb f \simeq \epb f$. Moreover, $\roim f$ and $\opb f$ are quasi-inverse to each other.
\end{corollary}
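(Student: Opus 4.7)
My plan is to set $g \defeq f^{-1} \colon (N,\bN) \to (M,\bM)$ and proceed in three steps.

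First, I would invoke Lemma~\ref{lem:bcomp} applied to $f\circ g = \id_{(N,\bN)}$ and $g\circ f = \id_{(M,\bM)}$. This immediately shows that $\reeim f$, $\roim f$, $\opb f$, and $\epb f$ are equivalences of triangulated categories with respective quasi-inverses $\reeim g$, $\roim g$, $\opb g$, and $\epb g$.

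Second, I would combine the adjunctions $\reeim f \dashv \epb f$ and $\opb f \dashv \roim f$ from Lemma~\ref{lem:badj} with the fact that an equivalence is both left and right adjoint to its quasi-inverse. By uniqueness of adjoints this yields $\epb f \simeq \reeim g$ and $\roim f \simeq \opb g$ (and symmetrically $\epb g \simeq \reeim f$, $\roim g \simeq \opb f$). Consequently, the proposition reduces to establishing the pair of identities $\reeim f \simeq \opb g$ and $\reeim g \simeq \opb f$.

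Third, and this is the main substance, I would prove $\reeim f \simeq \opb g$ directly from Definition~\ref{def:fbordered} (the companion identity is symmetric). The key ingredient is the swap homeomorphism $s \colon \bM\times\bN \to \bN\times\bM$, $(x,y)\mapsto (y,x)$. It satisfies $s(\Gamma_f) = \Gamma_g$ (hence $\opb s \field_{\Gamma_g} \simeq \field_{\Gamma_f}$) and intertwines the projections in the sense that $p_1\circ s = q_2$ and $p_2 \circ s = q_1$, where $q_i$ are the projections from $\bM\times\bN$ and $p_i$ those from $\bN\times\bM$. Substituting these relations into the defining formula for $\opb g$ and using $\reeimv{s_{!!}}\opb s \simeq \id$ (valid because $s$ is a homeomorphism) transforms the formula defining $\opb g F$ into the one defining $\reeim f F$.

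Combining the three steps gives $\reeim f \simeq \opb g \simeq \roim f$ and $\opb f \simeq \reeim g \simeq \epb f$, with the quasi-inverse assertion following from step~1. The main obstacle is step~3: the adjunction argument alone cannot conclude, because it merely reduces $\reeim f \simeq \roim f$ to the equally opaque $\opb g \simeq \epb g$. The symmetry of the graph under the swap is what is needed to cross from $\reeim$-type to $\opb$-type functors.
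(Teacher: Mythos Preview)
Your proof is correct. The paper states this as a corollary of Lemma~\ref{lem:bcomp} without giving any argument, so there is nothing to compare against; your three steps supply precisely the details the paper omits. In particular, you are right to single out step~3 as essential: the adjunction argument from step~2 yields $\roim f \simeq \opb g$ and $\epb f \simeq \reeim g$ (and symmetrically with $f$ and $g$ swapped), which already gives the ``moreover'' clause, but the identification $\reeim f \simeq \roim f$ genuinely requires the swap observation $s(\Gamma_f) = \Gamma_g$ to break the circularity. Your execution of step~3 via $\reeimv{s_{!!}}\opb s \simeq \id$ for the homeomorphism $s$ is clean and rigorous.
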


\begin{lemma}
\label{lem:f=jfj}
Let $f\colon (M,\bM) \to (N,\bN)$ be the morphism of bordered spaces associated with a continuous map $\check f\colon \bM\to \bN$ such that $\check f(M) \subset N$. Then
\bnum
\item
For $F\in\BDC(\ifield_{(M,\bM)})$ there are isomorphisms in $\BDC(\ifield_{(N,\bN)})$
\[
\reeim f F \simeq \opb{j_N}\reeim{\check f}\reeimv{j_{M\sep!!}}F,\quad
\roim f F \simeq \opb{j_N}\roim{\check f}\roimv{j_{M\sep*}}F.
\]
\item
For $G\in\BDC(\ifield_{(N,\bN)})$ there are isomorphisms in $\BDC(\ifield_{(M,\bM)})$
\begin{align*}
\opb f G &\simeq \opb{j_M}\opb{\check f}\reeimv{j_{N\sep!!}}G \simeq \opb{j_M}\opb{\check f}\roimv{j_{N\sep*}}G,\\
\epb f G &\simeq \opb{j_M}\epb{\check f}\roimv{j_{N\sep*}}G \simeq \opb{j_M}\epb{\check f}\reeimv{j_{N\sep!!}}G.
\end{align*}
\ee
\end{lemma}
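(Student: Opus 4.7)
The plan is to fix lifts $\tilde F\in\BDC(\ifield_\bM)$ and $\tilde G\in\BDC(\ifield_\bN)$ of $F$ and $G$, unfold Definition~\ref{def:fbordered}, and reduce everything to the operations for the underlying continuous map $\check f$ via the closed embedding $i\cl\bM\hookrightarrow\bM\times\bN$, $x\mapsto(x,\check f(x))$. This embedding has image $\Gamma_{\check f}$ and satisfies $q_1\circ i=\id_\bM$ and $q_2\circ i=\check f$. The hypothesis $\check f(M)\subset N$ yields the two key identities
\[
\field_{\Gamma_f}\simeq\field_{\Gamma_{\check f}}\tens\opb{q_1}\field_M\simeq\reim i\field_M,\qquad \opb{\check f}(\bN\setminus N)\subset\bM\setminus M,
\]
where in the first, $\field_M$ is viewed as the extension by zero to $\bM$.

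For part (i), the formula for $\reeim f$ follows from the projection formula applied to $i$ together with the identification $\reeimv{j_{M\sep!!}}F\simeq\field_M\tens\tilde F$ from Lemma~\ref{lem:jM}\,(ii), using $q_2\circ i=\check f$:
\[
\reeim f F\simeq\opb{j_N}\reeim{\check f}(\field_M\tens\tilde F)\simeq\opb{j_N}\reeim{\check f}\reeimv{j_{M\sep!!}}F.
\]
For $\roim f$, the same strategy, combined with the adjoint identity $\rihom(\reim i A,B)\simeq\roim i\rihom(A,\epb i B)$ and the base change isomorphism $\rihom(\opb{q_1}\field_M,\epb{q_1}\tilde F)\simeq\epb{q_1}\rihom(\field_M,\tilde F)$ (derived from the Cartesian square relating $q_1$ to the open embedding $j_M$), yields the analogous formula $\roim f F\simeq\opb{j_N}\roim{\check f}\roimv{j_{M\sep*}}F$ via $\roimv{j_{M\sep*}}F\simeq\rihom(\field_M,\tilde F)$.

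For part (ii), an entirely parallel computation gives
\[
\opb f G\simeq\opb{j_M}(\field_M\tens\opb{\check f}\tilde G)\simeq\opb{j_M}\opb{\check f}\tilde G,\qquad \epb f G\simeq\opb{j_M}\rihom(\field_M,\epb{\check f}\tilde G)\simeq\opb{j_M}\epb{\check f}\tilde G,
\]
where the second isomorphisms use that $\opb{j_M}$ annihilates any object supported on $\bM\setminus M$, applied to the triangles $\field_M\tens H\to H\to\field_{\bM\setminus M}\tens H\tone$ and $\rihom(\field_{\bM\setminus M},H)\to H\to\rihom(\field_M,H)\tone$. To substitute $\tilde G$ by $\reeimv{j_{N\sep!!}}G\simeq\field_N\tens\tilde G$, respectively $\roimv{j_{N\sep*}}G\simeq\rihom(\field_N,\tilde G)$, I apply $\opb{j_M}\opb{\check f}$, respectively $\opb{j_M}\epb{\check f}$, to the triangles coming from $\field_N\to\field_\bN\to\field_{\bN\setminus N}\tone$; the contribution of the $\bN\setminus N$ term vanishes because $\opb{\check f}$ and $\epb{\check f}$ send objects supported on $\bN\setminus N$ to objects supported on $\opb{\check f}(\bN\setminus N)\subset\bM\setminus M$. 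The only step requiring some care is the base change isomorphism for $\rihom$ in the ind-sheaf setting along the open immersion $j_M$, a standard consequence of the six-operation formalism of \cite{KS01}; the remainder of the proof is a direct manipulation of adjunctions, projection formulas, and support considerations.
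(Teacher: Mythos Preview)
Your proof is correct but takes a different route from the paper's. The paper exploits the commutative square
\[
\xymatrix{
(M,\bM) \ar[r]^-{j_M} \ar[d]^f & \bM \ar[d]^{\check f} \\
(N,\bN) \ar[r]^-{j_N} & \bN
}
\]
and invokes the composition lemma (Lemma~\ref{lem:bcomp}) to obtain, for instance, $\reeimv{j_{N\sep!!}}\reeim f F \simeq \reeim{\check f}\reeimv{j_{M\sep!!}} F$; then applying $\opb{j_N}$ and using $\opb{j_N}\reeimv{j_{N\sep!!}}\simeq\id$ from Lemma~\ref{lem:jM} gives the result immediately. Your approach instead unfolds Definition~\ref{def:fbordered} directly via the graph embedding $i\colon\bM\hookrightarrow\bM\times\bN$ and the projection formula, avoiding any appeal to Lemma~\ref{lem:bcomp}. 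This is more hands-on and self-contained, though the paper's argument is shorter and makes the functoriality manifest. For the ``cross'' isomorphisms in (ii), both arguments are essentially the same: your support observation $\opb{\check f}(\bN\setminus N)\subset\bM\setminus M$ is equivalent to the paper's identity $\field_M\tens\opb{\check f}\field_N\simeq\field_M$, and the paper proceeds by the computation $\rihom(\field_M,\epb{\check f}\rihom(\field_N,K))\simeq\rihom(\field_M,\epb{\check f}K)$ rather than the triangle argument, but the content is identical. One minor remark: what you call a ``base change isomorphism'' for $\rihom$ is simply the standard formula $\epb{q_1}\rihom(A,B)\simeq\rihom(\opb{q_1}A,\epb{q_1}B)$, no Cartesian square needed.
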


\begin{proof}
We have a commutative diagram
\[
\xymatrix{
(M,\bM) \ar[r]^-{j_M} \ar[d]^f & \bM \ar[d]^{\check f} \\
(N,\bN) \ar[r]^-{j_N} & \bN.
}
\]
Hence Lemma~\ref{lem:bcomp} implies
\[
\reeimv{j_{N\sep!!}}\reeim f F \simeq \reeim{\check f}\reeimv{j_{M\sep!!}} F.
\]
Then, by Lemma~\ref{lem:jM} we have
\[
\reeim f F \simeq \opb{j_N}\reeimv{j_{N\sep!!}}\reeim f F \simeq \opb{j_N}\reeim{\check f}\reeimv{j_{M\sep!!}} F .
\]
We can similarly obtain the other statements, except
\begin{align*}
\opb{j_M}\opb{\check f}\reeimv{j_{N\sep!!}}G &\simeq \opb{j_M}\opb{\check f}\roimv{j_{N\sep*}}G,\\
\opb{j_M}\epb{\check f}\roimv{j_{N\sep*}}G &\simeq \opb{j_M}\epb{\check f}\reeimv{j_{N\sep!!}}G.
\end{align*}
Since the proofs are similar, let us check only the last isomorphism.

For $K\in\BDC(\ifield_{\bN})$, we have
\begin{align*}
\rihom(\field_M, \epb{\check f}\rihom(\field_N,K))
&\simeq \rihom(\field_M, \rihom(\opb{\check f}\field_N,\epb{\check f}K)) \\
&\simeq \rihom(\field_M \tens \opb{\check f}\field_N,\epb{\check f}K) \\
&\simeq \rihom(\field_M,\epb{\check f}K).
\end{align*}
Hence, applying this for $K = \reeimv{j_{N\sep!!}}G, \roimv{j_{N\sep*}}G$, we obtain
\begin{align*}
\opb{j_M}\epb{\check f}\roimv{j_{N\sep*}}G 
&\simeq \opb{j_M}\rihom(\field_M,\epb{\check f}\roimv{j_{N\sep*}}G ) \\
&\simeq \opb{j_M}\rihom(\field_M,\epb{\check f}\rihom(\field_N,\roimv{j_{N\sep*}}G) ) \\
&\simeq \opb{j_M}\rihom(\field_M,\epb{\check f}\rihom(\field_N,\reeimv{j_{N\sep!!}}G) ) \\
&\simeq \opb{j_M}\rihom(\field_M,\epb{\check f}\reeimv{j_{N\sep!!}}G ) \\
&\simeq \opb{j_M}\epb{\check f}\reeimv{j_{N\sep!!}}G .
\end{align*}
\end{proof}

\begin{proposition}
\label{pro:bproj}
Let $f\colon (M,\bM) \to (N,\bN)$ be a morphism of bordered spaces.
For $F\in\BDC(\ifield_{(M,\bM)})$ and $G,G_1,G_2\in\BDC(\ifield_{(N,\bN)})$, one has isomorphisms
\begin{align*}
\reeim f(\opb f G \tens F) & \simeq G \tens \reeim f F, \\
\opb f (G_1\tens G_2) &\simeq \opb f G_1 \tens \opb f G_2, \\
\rihom(G,\roim f F) & \simeq \roim f \rihom(\opb f G,F), \\
\rihom(\reeim f F, G) & \simeq \roim f \rihom(F, \epb f G), \\
\epb f \rihom(G_1,G_2) & \simeq \rihom(\opb f G_1, \epb f G_2),
\end{align*}
and a morphism
\[
\opb f \rihom(G_1,G_2) \to \rihom(\opb f G_1,\opb f G_2).
\]
\end{proposition}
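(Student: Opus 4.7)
The plan is to reduce each displayed identity to the corresponding projection or adjunction formula for ind-sheaves on ordinary good topological spaces. The first move is to apply Lemma~\ref{lem:fborddecomp} and factor $f$ as $q_2 \circ q_1^{-1}$, where $q_1\colon(\Gamma_f,\overline\Gamma_f)\to(M,\bM)$ is an isomorphism of bordered spaces and $q_2\colon(\Gamma_f,\overline\Gamma_f)\to(N,\bN)$ is induced by a continuous map of the ambient spaces. Since the external operations under an isomorphism are quasi-inverse to each other, we are reduced to the situation in which $f$ itself is associated to a continuous map $\check f\colon\bM\to\bN$ with $\check f(M)\subset N$, so that Lemma~\ref{lem:f=jfj} applies.

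In this setting, Lemma~\ref{lem:f=jfj} expresses the four bordered operations as compositions of the classical operations $\opb{\check f},\epb{\check f},\reim{\check f},\roim{\check f}$ on $\BDC(\ifield_{\bM}),\BDC(\ifield_{\bN})$ with the fully faithful embeddings $\reeimv{j_{M\sep!!}},\roimv{j_{M\sep*}}$ and their adjoints $\opb{j_M}\simeq\epb{j_M}$. For each desired isomorphism one chooses a suitable representative of $F$ and $G$ in the ambient ind-sheaf categories (the $!!$-extension for the tensor-product identities, the $*$-extension for the $\rihom$ identities), applies the classical projection formula or adjunction isomorphism from \cite{KS01} (for instance $\reim{\check f}(\opb{\check f}\tilde G\tens\tilde F)\simeq\tilde G\tens\reim{\check f}\tilde F$), and then descends back to $\BDC(\ifield_{(N,\bN)})$ by applying $\opb{j_N}$. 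The requisite compatibilities of $\opb{j_M}$, $\reeimv{j_{M\sep!!}}$ and $\roimv{j_{M\sep*}}$ with $\tens$ and $\rihom$ are precisely parts (iii) and (iv) of Lemma~\ref{lem:jM}.

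The final (non-isomorphism) morphism $\opb f\rihom(G_1,G_2)\to\rihom(\opb f G_1,\opb f G_2)$ is obtained formally: from the evaluation morphism $\rihom(G_1,G_2)\tens G_1\to G_2$ one applies $\opb f$, uses the already-established monoidality $\opb f(\rihom(G_1,G_2)\tens G_1)\simeq \opb f\rihom(G_1,G_2)\tens \opb f G_1$, and then takes the adjoint through the tensor-hom adjunction in $\BDC(\ifield_{(M,\bM)})$ (Lemma~\ref{lem:adjbord}).

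The technical heart of the argument is essentially bookkeeping: making sure that in each case one picks the correct representative among $\reeimv{j_{M\sep!!}}$ or $\roimv{j_{M\sep*}}$ so that Lemma~\ref{lem:jM}(iv) applies on the nose, and verifying that the classical isomorphism pulled back from $\BDC(\ifield_{\bM})$ or $\BDC(\ifield_{\bN})$ is independent of that choice after descending to the quotient category. I expect no genuine obstacle beyond this: once the $j$-functors are threaded correctly, each identity is a direct consequence of the ind-sheaf calculus already developed in \cite{KS01} combined with Lemmas~\ref{lem:jM}--\ref{lem:f=jfj}.
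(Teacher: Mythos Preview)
Your proposal is correct and follows essentially the same route as the paper: reduce via Lemma~\ref{lem:fborddecomp} to the case where $f$ is induced by a map $\check f\colon\bM\to\bN$, then thread through the $j$-functors using Lemmas~\ref{lem:f=jfj} and~\ref{lem:jM} so that the classical projection and adjunction formulas for ind-sheaves apply. The paper writes out the chain for the first isomorphism and leaves the rest as ``along the same lines''; your treatment of the final non-isomorphism morphism via evaluation and tensor--hom adjunction is also the standard one.
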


\begin{proof}
By Lemma~\ref{lem:fborddecomp}, replacing $(M,\bM)$ with $(\Gamma_f,\overline\Gamma_f)$, we may assume that there is a commutative diagram
\[
\xymatrix@C=8ex{
(M,\bM) \ar@{^(->}[r]^-{j_M} \ar[d]^f & \bM \ar[d]^{\check f} \\
(N,\bN) \ar@{^(->}[r]^-{j_N} & \bN.
}
\]
Then, by Lemmas~\ref{lem:f=jfj} and \ref{lem:jM} one has
\begin{align*}
\reeim f(\opb f G \tens F) 
&\simeq \opb{j_N}\reeim{\check f}\reeimv{j_{M\sep!!}}(\opb{j_M}\opb{\check f}\reeimv{j_{N\sep!!}} G \tens F)  \\
&\simeq \opb{j_N}\reeim{\check f}(\reeimv{j_{M\sep!!}}\opb{j_M}\opb{\check f}\reeimv{j_{N\sep!!}} G \tens \reeimv{j_{M\sep!!}} F) \\
&\simeq \opb{j_N}\reeim{\check f}(\opb{\check f}\reeimv{j_{N\sep!!}} G \tens \field_M \tens \reeimv{j_{M\sep!!}} F) \\
&\simeq \opb{j_N}\reeim{\check f}(\opb {\check f}\reeimv{j_{N\sep!!}} G \tens \reeimv{j_{M\sep!!}} F)  \\
&\simeq \opb{j_N}(\reeimv{j_{N\sep!!}} G \tens \reeim{\check f}\reeimv{j_{M\sep!!}} F)  \\
&\simeq G \tens \opb{j_N}\reeim{\check f}\reeimv{j_{M\sep!!}} F  \\
&\simeq G \tens \reeim f F.
\end{align*}
This proves the first isomorphism in the statement. The other isomorphisms can be proved along the same lines.
\end{proof}

\begin{lemma}
\label{lem:bcart}
Consider a Cartesian diagram in the category of bordered spaces
\begin{equation*}
\xymatrix@C=8ex{
(M',\bM') \ar[r]^{f'} \ar[d]^{g'} & (N',\bN') \ar[d]^{g} \\
(M,\bM) \ar[r]^{f}\ar@{}[ur]|-\square & (N,\bN).
}
\end{equation*}
Then there are isomorphisms of functors $\BDC(\ifield_{(M',\bM')}) \to \BDC(\ifield_{(N,\bN)})$
\[
\opb g\reeim f \simeq \reeim {f'} g^{\prime-1}, \qquad
\epb g\roim f \simeq \roim f' g^{\prime!}.
\]
\end{lemma}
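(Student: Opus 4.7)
The plan is to reduce the base change formula for bordered spaces to the classical base change formula for ind-sheaves on good topological spaces from \cite{KS01}. Since the two claimed isomorphisms are parallel, I would focus on proving the first, $\opb g\reeim f\simeq \reeim{f'}\opb{g'}$, and then establish the second by an entirely analogous argument using $\roim{}$ and $\epb{}$ in place of $\reeim{}$ and $\opb{}$.

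For the reduction, I would apply Lemma~\ref{lem:fborddecomp} to each of the morphisms $f$ and $g$, writing each as the composition of an isomorphism of bordered spaces with a morphism induced by a genuine continuous map of the compactifications. Isomorphisms of bordered spaces trivially satisfy base change, because the associated functors $\reeim{}$, $\opb{}$, $\roim{}$, $\epb{}$ are quasi-inverses of one another (as noted just after Lemma~\ref{lem:bcomp}), and such isomorphisms preserve Cartesian squares in the category of bordered spaces (by construction of the fiber product). Hence we may assume from the outset that $f$ is induced by a continuous map $\check f\colon \bM\to\bN$ with $\check f(M)\subset N$ and $g$ is induced by a continuous map $\check g\colon \bN'\to\bN$ with $\check g(N')\subset N$.

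In this reduced situation, form the honest fiber product $\bM\times_\bN\bN'$ of good topological spaces together with its open subspace $M\times_N N'$. Using Lemma~\ref{lem:f=jfj}, each of $\opb g$, $\reeim f$, $\reeim{f'}$, $\opb{g'}$ rewrites as the usual ind-sheaf operation for $\check f$, $\check g$, $\check f'$ or $\check g'$ conjugated by the functors $\reeimv{j_{\bullet\sep!!}}$, $\roimv{j_{\bullet\sep*}}$, $\opb{j_\bullet}$ from Lemma~\ref{lem:jM}. The ordinary base change formula for ind-sheaves on good topological spaces provided by \cite{KS01} then yields the identity $\opb{\check g}\reeim{\check f}\simeq\reeim{\check f'}\opb{\check g'}$ in $\BDC(\ifield_\bN)$, and applying $\opb{j_N}$ to this identity, together with the compatibilities of Proposition~\ref{pro:bproj} and Lemma~\ref{lem:jM}, gives the desired isomorphism in $\BDC(\ifield_{(N,\bN)})$.

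The main obstacle is a bookkeeping one: the bordered-space fiber product is $(M\times_N N', \overline\Gamma_f\times_\bN\overline\Gamma_g)$, which is in general not literally the fiber product of the compactifications but involves closures of graphs. After the reduction step, however, the natural proper map $\overline\Gamma_f\times_\bN\overline\Gamma_g\to\bM\times_\bN\bN'$ induces the same operations on bordered-space ind-sheaves, exactly as in the proof of Lemma~\ref{lem:bcomp} where the analogous composition identity $\field_{\Gamma_g}\comp\field_{\Gamma_f}\simeq\field_{\Gamma_{f\comp g}}$ was checked via the properness of the relevant supports. Verifying that this identification is compatible with the four functors in the statement is the only genuinely delicate step; once it is in hand, the base change formula propagates automatically from the classical setting.
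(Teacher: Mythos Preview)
Your approach is essentially the same as the paper's. The paper's proof is extremely terse: it simply says that by an argument similar to the proof of Proposition~\ref{pro:bproj}, one reduces to a Cartesian diagram of honest good topological spaces $\bM' \to \bN'$, $\bM' \to \bM$, etc., where the classical base change for ind-sheaves applies. Your proposal spells out exactly this reduction, invoking Lemma~\ref{lem:fborddecomp} for both $f$ and $g$, then Lemma~\ref{lem:f=jfj} and Lemma~\ref{lem:jM} to translate the bordered operations into ordinary ones, which is precisely what the proof of Proposition~\ref{pro:bproj} does for a single morphism. Your observation about the discrepancy between $\overline\Gamma_f\times_{\bN}\overline\Gamma_g$ and $\bM\times_{\bN}\bN'$ is a genuine bookkeeping point that the paper glosses over entirely; it is handled, as you say, by noting that after the reduction both bordered spaces have the same open part $M\times_N N'$ and the comparison map between the ambient spaces is proper, so they are isomorphic as bordered spaces.
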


\begin{proof}
By a similar argument as in the proof of the Proposition~\ref{pro:bproj}, the statement can be reduced to the corresponding statement for a Cartesian diagram
\[
\xymatrix@C=7ex@R=4ex{
\bM' \ar[r]^{\check f'} \ar[d]^{\check g'} & \bN' \ar[d]^{\check g} \\
\bM \ar[r]_{\check f}\ar@{}[ur]|-\square & \bN.
}
\]
\end{proof}

\begin{definition}\label{def:proper}
We say that a morphism of bordered spaces $f\colon (M,\bM) \to (N,\bN)$ is \emph{proper} if the following two conditions hold:
\bnum
\item $f\colon M \to N$ is proper,
\item the projection $\overline\Gamma_f \to \bN$ is proper.
\ee
\end{definition}

\begin{lemma}
\label{lem:proper}
A morphism $f\colon (M,\bM) \to (N,\bN)$ is proper if and only  if the following two conditions hold:
\bna
\item $\overline\Gamma_f \times_{\bN} N \subset \Gamma_f$.
\item the projection $\overline\Gamma_f \to \bN$ is proper.
\ee
\end{lemma}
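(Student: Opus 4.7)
Since (ii) and (b) are literally the same statement, the content of the lemma is the equivalence of (i) with (a) under the standing assumption that $q_2\colon\overline\Gamma_f\to\bN$ is proper. I will prove the two implications separately.

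For the direction (i)$\Rightarrow$(a), the heart of the matter is the claim that $\Gamma_f$ is closed in $\bM\times N$ as soon as $f\colon M\to N$ is proper. Given a point $(x,y)\in(\bM\times N)\setminus\Gamma_f$, I would choose a compact neighborhood $V$ of $y$ in $N$ (possible since $\bN$ is locally compact Hausdorff and $N$ open), so that $f^{-1}(V)$ is compact in $M$, hence closed in $\bM$. If $x\notin f^{-1}(V)$, then $(\bM\setminus f^{-1}(V))\times\operatorname{int}(V)$ is a neighborhood of $(x,y)$ disjoint from $\Gamma_f$; otherwise $x\in M$ and one uses the Hausdorff property of $N$ to separate $y$ and $f(x)$ by open sets $V_1\ni y$, $V_2\ni f(x)$ with $\overline{V_1}$ compact in $N$ and disjoint from $V_2$, and replaces $V$ by $\overline{V_1}$ in the previous construction. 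Combined with the fact that $\Gamma_f$ is dense in $\overline\Gamma_f$, this closedness gives
\[
\overline\Gamma_f\times_{\bN}N \;=\; \overline\Gamma_f\cap(\bM\times N) \;=\; \Gamma_f,
\]
which is even stronger than (a).

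For the direction (a)+(b)$\Rightarrow$(i), let $K\subset N$ be compact. By (b) the set $q_2^{-1}(K)\cap\overline\Gamma_f$ is compact in $\bM\times\bN$. Since $K\subset N$, it lies in $\overline\Gamma_f\times_{\bN}N$, which by (a) is contained in $\Gamma_f$. Via the homeomorphism $\Gamma_f\cong M$ given by the first projection, this compact subset corresponds precisely to $f^{-1}(K)$, which is therefore compact. Hence $f\colon M\to N$ is proper.

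The only step that requires some care is the closedness of $\Gamma_f$ in $\bM\times N$ in the first implication; everything else is a direct unwinding of definitions. No compactness or separation hypothesis beyond what is built into the definition of a good topological space is needed.
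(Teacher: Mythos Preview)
Your proof is correct. For the implication (a)+(b)$\Rightarrow$(i), your check on preimages of compacta is just an unwinding of the paper's one-line observation that $M\simeq\Gamma_f=\overline\Gamma_f\times_{\bN}N\to N$ is the base change of the proper map $\overline\Gamma_f\to\bN$ along the open inclusion $N\hookrightarrow\bN$, hence proper. For (i)$\Rightarrow$(a) the routes genuinely diverge: you prove by a hands-on point-set argument that $\Gamma_f$ is closed in $\bM\times N$, whereas the paper factors $f$ as $M\to\overline\Gamma_f\times_{\bN}N\to N$ and invokes the standard fact that if a composite is proper and the second map is separated then the first map is proper, so its image $\Gamma_f$ is closed in $\overline\Gamma_f\times_{\bN}N$. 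Combined with density of $\Gamma_f$ this gives equality. The paper's argument is shorter and more portable to other categories; yours has the virtue of being entirely self-contained and avoiding the cancellation property for proper maps.
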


\begin{proof}
Assume (a) and (b). Then $M \simeq \overline\Gamma_f \times_{\bN} N \to N$ is proper. Hence $f$ is proper.

Conversely, assume that $f\colon (M,\bM) \to (N,\bN)$ is proper.
Since the composite $f\colon M\to \overline\Gamma_f \times_{\bN} N\to N$ is proper,
it follows that $M\to \overline\Gamma_f \times_{\bN} N$ is proper. Hence
$\Gamma_f$ is a closed subset of $\overline\Gamma_f \times_{\bN} N$. It follows that
\[
\overline\Gamma_f \cap (\overline\Gamma_f \times_{\bN} N) = \Gamma_f.
\]
\end{proof}

\begin{proposition}
Assume that $f\colon (M,\bM) \to (N,\bN)$ is proper. Then $\reeim f \simeq \roim f$ as functors $\BDC(\ifield_{(M,\bM)}) \to \BDC(\ifield_{(N,\bN)})$.
\end{proposition}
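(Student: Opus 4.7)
The plan is to reduce to the case of a proper map between ordinary topological spaces and then argue via the support of a certain cone. First, by Lemma~\ref{lem:fborddecomp} we factor $f$ through $(\Gamma_f,\overline\Gamma_f)$, and since the first factor is an isomorphism of bordered spaces we may assume from the outset that $f$ comes from a continuous $\check f\colon \bM\to\bN$ with $\check f(M)\subset N$. Under this identification (where $\bM$ plays the role of $\overline\Gamma_f$), the characterization of properness in Lemma~\ref{lem:proper} translates into two concrete facts: $\check f\colon\bM\to\bN$ is proper in the usual sense, and $\check f^{-1}(N)\subset M$.

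Next I would invoke Lemma~\ref{lem:f=jfj} to rewrite
\[
\reeim f F \simeq \opb{j_N}\reeim{\check f}\reeimv{j_{M\sep!!}}F, \qquad \roim f F \simeq \opb{j_N}\roim{\check f}\roimv{j_{M\sep*}}F.
\]
Since $\check f$ is proper as a map of topological spaces, the standard fact $\reeim{\check f}\simeq\roim{\check f}$ holds on $\BDC(\ifield_{\bM})$, so it suffices to show
\[
\opb{j_N}\roim{\check f}\reeimv{j_{M\sep!!}}F \isoto \opb{j_N}\roim{\check f}\roimv{j_{M\sep*}}F.
\]

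For this, consider the canonical morphism $\reeimv{j_{M\sep!!}}F \to \roimv{j_{M\sep*}}F$ in $\BDC(\ifield_{\bM})$. Its cone $C$ satisfies $\opb{j_M}C\simeq 0$ (as both sides restrict to $F$ on $M$ via Lemma~\ref{lem:jM}), so $C\in\BDC(\ifield_{\bM\setminus M})$. Applying $\roim{\check f}\simeq\reeim{\check f}$ to the distinguished triangle
\[
\reeimv{j_{M\sep!!}}F \To \roimv{j_{M\sep*}}F \To C \To[+1]
\]
and then $\opb{j_N}$, it remains to check that $\opb{j_N}\reeim{\check f}C\simeq 0$. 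Because $\check f$ is proper and $\supp C\subset \bM\setminus M$, one has $\supp(\reeim{\check f}C)\subset \check f(\bM\setminus M)$. The inclusion $\check f^{-1}(N)\subset M$ is equivalent to $\check f(\bM\setminus M)\subset \bN\setminus N$, so $\reeim{\check f}C \in \BDC(\ifield_{\bN\setminus N})$ and $\opb{j_N}\reeim{\check f}C\simeq 0$, as desired.

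The main subtlety I expect is the bookkeeping in the reduction to $\check f\colon\bM\to\bN$: one must verify that after replacing $(M,\bM)$ by $(\Gamma_f,\overline\Gamma_f)$ both clauses of Lemma~\ref{lem:proper} really yield the two ingredients used above (usual properness of $\check f$ and the set-theoretic inclusion $\check f^{-1}(N)\subset M$). Once that is in place, the rest of the argument is a short support-of-cone calculation combined with the projection formula already embedded in the properness statement $\reeim{\check f}\simeq\roim{\check f}$.
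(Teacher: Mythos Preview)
Your proof is correct and follows essentially the same route as the paper's. The paper does not replace $(M,\bM)$ by $(\Gamma_f,\overline\Gamma_f)$ but works directly with the projections $p_1,p_2\colon\overline\Gamma_f\to\bM,\bN$ and takes the cone of $\opb{p_1}\reeimv{j_{M\sep!!}}F\to\epb{p_1}\roimv{j_{M\sep*}}F$; after your reduction this is exactly your cone $C$, and the paper's key step $\field_N\tens\reeimv{p_{2\sep!!}}L\simeq\reeimv{p_{2\sep!!}}(\field_{\opb{p_2}N}\tens\field_{\opb{p_1}M}\tens L)\simeq 0$ (using $\opb{p_2}N\subset\opb{p_1}M$ from Lemma~\ref{lem:proper}(a)) is precisely your support argument that $\check f(\bM\setminus M)\subset\bN\setminus N$, rephrased via the projection formula rather than via supports.
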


\begin{proof}
Consider the projections
$\bM \from[\,\;p_1\,\;] \overline\Gamma_f \To[p_2] \bN$.
For $F\in\BDC(\ifield_{(M,\bM)})$,
we have the isomorphisms (cf.\ Lemma~\ref{lem:fborddecomp})
\begin{align*}
\reeim f F &\simeq \opb{j_N}\reeimv{p_{2\sep!!}}\opb{p_1}\reeimv{j_{M\sep!!}}F, \\
\roim f F &\simeq \opb{j_N}\roimv{p_{2\sep*}}\epb{p_1}\roimv{j_{M\sep*}} F \\
&\simeq \opb{j_N}\reeimv{p_{2\sep!!}}\epb{p_1}\roimv{j_{M\sep*}} F,
\end{align*}
where the last isomorphism follows from Definition~\ref{def:proper}~(ii).
Hence, it is enough to prove that
\[
\field_N \tens \reeimv{p_{2\sep!!}}L \simeq 0,
\]
where $L$ enters the distinguished triangle
\[
L \To \opb{p_1}\reeimv{j_{M\sep!!}} F \To \epb{p_1}\roimv{j_{M\sep*}} F \To[+1].
\]
Since $\opb{p_1}M\to M$ is an isomorphism, one has
\[
\field_{\opb{p_1}M} \tens \epb{p_1}\roimv{j_{M\sep*}} F \simeq
\field_{\opb{p_1}M} \tens \opb{p_1} \roimv{j_{M\sep*}}F\simeq
\opb{p_1} \reeimv{j_{M\sep!!}}F.
\]
Hence $\field_{\opb{p_1}M} \tens L \simeq 0$. Then one has
\begin{align*}
\field_N \tens \reeimv{p_{2\sep!!}} L 
&\simeq \reeimv{p_{2\sep!!}}(\field_{\opb{p_2}N} \tens L) \\
&\simeq \reeimv{p_{2\sep!!}}(\field_{\opb{p_2}N} \tens \field_{\opb{p_1}M} \tens L) 
\simeq 0,
\end{align*}
where the second isomorphism follows from the inclusion $\opb{p_2}N \subset \opb{p_1}M$ due to Lemma~\ref{lem:proper}~(a).
\end{proof}

\begin{definition}
Let $f\colon M \to N$ be a continuous map of good spaces. We say that $f$ is \emph{topologically submersive} if, for any point $x\in M$, there exist an open neighborhood $U$ of $x$ and a commutative diagram
\[
\xymatrix@C=6ex{
\db{U} \ar[r]^{f|_U} \ar@{^(->}[d]^{i} & N \\
S\times N \ar[ur]_{q_2},
}
\]
where $S$ is a subanalytic space, $q_2$ is the projection, and $i$ is an open embedding.
\end{definition}

The following proposition follows from Proposition~\ref{pro:const} and Corollary~\ref{cor:exthom}.

\begin{proposition}
\label{pro:topsub}
Let $f\colon (M,\bM) \to (N,\bN)$ be a morphism of bordered spaces. Assume that $f\colon M\to N$ is topologically submersive. Then, for any $L,G\in\BDC(\ifield_{(N,\bN)})$ there are isomorphisms in $\BDC(\ifield_{(M,\bM)})$
\begin{align*}
\opb f\rihom(L,G)
&\isoto \rihom(\opb f L,\opb f G), \\
\epb f\field_N\tens\opb f G
&\isoto \epb f G.
\end{align*}
\end{proposition}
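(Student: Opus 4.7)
The plan is to reduce the statement, by localization on $M$, to the case of an ordinary projection of good topological spaces, where the two isomorphisms follow from Proposition~\ref{pro:const} and Corollary~\ref{cor:exthom}.

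First, both claims are local on $M$: by Proposition~\ref{pro:bord}, a morphism $u$ in $\BDC(\ifield_{(M,\bM)})$ is an isomorphism if and only if $\reeimv{j_{M\sep!!}}u$ is so in $\BDC(\ifield_\bM)$, and this may be detected after restriction to a covering of $M$ by relatively compact open subsets. Combining the topological submersiveness of $f$ with Lemma~\ref{lem:fborddecomp} (which allows us to replace $(M,\bM)$ by $(\Gamma_f,\overline{\Gamma_f})$, and to further shrink the ambient space), one reduces the problem to the case where $M$ is an open subset of $S\times N$ with $S$ a subanalytic space, and $f$ is the bordered space morphism associated with the restriction of the ordinary projection $q_2\colon S\times N\to N$.

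In this situation I would factor $f$ as
\[
(M,\bM)\xrightarrow{\,i\,}(S\times N, S\times \bN)\xrightarrow{\,q_2\,}(N,\bN),
\]
where $i$ is a bordered open embedding and the second arrow is induced by the genuine projection $\check q_2\colon S\times\bN\to\bN$. By Lemmas~\ref{lem:bcomp} and \ref{lem:jM} applied to $i$, it suffices to establish the two isomorphisms with $q_2$ in place of $f$. Using the formulas for $\opb{q_2}$ and $\epb{q_2}$ from Lemma~\ref{lem:f=jfj}, together with the compatibility of $\tens$ and $\rihom$ with $\opb{j_{S\times N}}$, $\reeimv{j_{N\sep!!}}$, and $\roimv{j_{N\sep*}}$ from Lemma~\ref{lem:jM}, this reduces further to proving, for the ordinary projection $\check q_2\colon S\times\bN\to\bN$ and arbitrary $L',G'\in\BDC(\ifield_\bN)$, the two isomorphisms
\begin{align*}
\opb{\check q_2}\rihom(L',G') &\isoto \rihom(\opb{\check q_2}L',\opb{\check q_2}G'), \\
\epb{\check q_2}\field_\bN\tens \opb{\check q_2}G' &\isoto \epb{\check q_2}G'.
\end{align*}

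The second of these is Proposition~\ref{pro:const} directly. For the first, I would use that $\opb{\check q_2}H\simeq \field_S\etens H$ for any $H\in\BDC(\ifield_\bN)$; since $S$ is subanalytic one has $\field_S\in\BDC_\Rc(\field_S)$, so the desired isomorphism is a special case of Corollary~\ref{cor:exthom} with $F_1=F_2=\field_S$, yielding
\[
\field_S\etens \rihom(L',G')\isoto \rihom(\field_S\etens L',\field_S\etens G').
\]
The main obstacle is the bookkeeping of the first reduction step: verifying that the local factorization $f|_U=q_2\circ i$ provided by topological submersion really lifts to a factorization in the bordered category through $(S\times N, S\times\bN)$, and that locality on $M$ genuinely detects isomorphisms in $\BDC(\ifield_{(M,\bM)})$. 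Both are notationally fiddly but conceptually routine consequences of Lemma~\ref{lem:fborddecomp} and Proposition~\ref{pro:bord}, respectively, rather than any deep input.
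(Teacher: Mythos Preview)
Your proposal is correct and follows essentially the same approach as the paper, which simply states that the proposition follows from Proposition~\ref{pro:const} and Corollary~\ref{cor:exthom}. You have merely supplied the bookkeeping for the localization and bordered-space reduction that the paper leaves implicit.
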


\begin{lemma}\label{lem:epbetens}
For $k=1,2$, let $f_k\colon(M_k,\bM_k)\to(N_k,\bN_k)$ be a morphism of bordered spaces and $L_k\in\BDC(\ifield_{(N_k,\bN_k)})$. Set $f=f_1\times f_2$. Then there is a canonical morphism
\begin{equation}
\label{eq:epbetens}
\epb{f_1}L_1 \etens \epb{f_2}L_2
 \to \epb f ( L_1\etens L_2).
\end{equation}
\end{lemma}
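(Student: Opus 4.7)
The strategy is to construct the morphism by adjunction, using a Künneth-type formula for $\reeim{f}$. Granting the isomorphism
\[
\reeim{f}(F_1 \etens F_2) \isoto \reeim{f_1}F_1 \etens \reeim{f_2}F_2
\]
for $F_k \in \BDC(\ifield_{(M_k,\bM_k)})$, the morphism \eqref{eq:epbetens} is defined as the image, under the adjunction $(\reeim{f},\epb{f})$ of Lemma~\ref{lem:badj}, of the composition
\[
\reeim{f}\bl\epb{f_1}L_1 \etens \epb{f_2}L_2\br
\isoto \reeim{f_1}\epb{f_1}L_1 \etens \reeim{f_2}\epb{f_2}L_2
\To L_1 \etens L_2,
\]
where the first isomorphism is the Künneth formula and the second arrow applies the counit $\reeim{f_k}\epb{f_k}L_k \to L_k$ on each factor.

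For the Künneth formula, I would factor $f = (f_1 \times \id_{N_2}) \comp (\id_{M_1} \times f_2)$ and handle each factor separately. For $\id_{M_1} \times f_2$, consider the cartesian square of bordered spaces
\[
\xymatrix@C=9ex{
(M_1\times M_2,\bM_1\times\bM_2) \ar[r]^-{\id_{M_1}\times f_2} \ar[d]_{p_2} & (M_1\times N_2,\bM_1\times\bN_2) \ar[d]^{q_2} \\
(M_2,\bM_2) \ar[r]^-{f_2}\ar@{}[ur]|-\square & (N_2,\bN_2),
}
\]
and write $F_1 \etens F_2 = \opb{p_1}F_1 \tens \opb{p_2}F_2$. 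Applying the projection formula of Proposition~\ref{pro:bproj} gives
\[
\reeim{(\id_{M_1}\times f_2)}\bl\opb{p_1}F_1 \tens \opb{p_2}F_2\br
\simeq \opb{p_1'}F_1 \tens \reeim{(\id_{M_1}\times f_2)}\opb{p_2}F_2,
\]
where $p_1'$ denotes the projection from $M_1\times N_2$, and the base-change isomorphism of Lemma~\ref{lem:bcart} then identifies $\reeim{(\id_{M_1}\times f_2)}\opb{p_2}F_2$ with $\opb{q_2}\reeim{f_2}F_2$. This yields $\reeim{(\id_{M_1}\times f_2)}(F_1\etens F_2) \simeq F_1\etens \reeim{f_2}F_2$. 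An analogous argument for $f_1\times\id_{N_2}$, followed by composition via Lemma~\ref{lem:bcomp}, produces the Künneth isomorphism.

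The main technical point is thus the Künneth formula for $\reeim{f}$, but since Proposition~\ref{pro:bproj} and Lemma~\ref{lem:bcart} are already in place, the argument reduces entirely to the combinatorics of projection formula and base change in the bordered setting. I expect no substantial obstacle: the construction is completely formal once the Künneth step is available, and the naturality of both the adjunction counits and the Künneth isomorphism makes \eqref{eq:epbetens} canonical.
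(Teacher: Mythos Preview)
Your proposal is correct and follows exactly the same approach as the paper's proof: construct the Künneth isomorphism $\reeim f(\epb{f_1}L_1 \etens \epb{f_2}L_2) \simeq \reeimv{f_{1\sep!!}}\epb{f_1}L_1 \etens \reeimv{f_{2\sep!!}}\epb{f_2}L_2$, compose with the counits, and take adjoints. The paper simply states the Künneth step without justification, whereas you spell out its derivation from the projection formula and base change.
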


\begin{proof}
There are morphisms
\[
\reeim f(\epb{f_1} L_1 \etens \epb{f_2} L_2) \simeq \reeimv{f_{1\sep!!}}\epb{f_1} L_1 \etens \reeimv{f_{2\sep!!}}\epb{f_2}L_2 \to L_1\etens L_2,
\]
and the desired morphism follows by adjunction.
\end{proof}

Note that the morphism \eqref{eq:epbetens} is not an isomorphism in general.

\begin{remark}
\label{rem:coperM}
For a bordered space $(M,\bM)$, consider the natural functor
\[
\iota_{(M,\bM)}\colon \BDC(\field_M) \hookrightarrow \BDC(\ifield_{(M,\bM)}).
\]
Then, for $f\colon (M,\bM) \to (N,\bN)$ a morphism of bordered spaces, one has
\begin{align*}
\iota_{(M,\bM)}\circ\opb f &\simeq \opb f \circ\iota_{(N,\bN)}, &
\iota_{(M,\bM)}\circ\epb f &\simeq \epb f \circ\iota_{(N,\bN)}, \\
\roim f \circ \iota_{(M,\bM)} &\simeq \iota_{(N,\bN)} \circ \roim f.
\end{align*}
Moreover, if the projection $\overline\Gamma_f\to\bN$ is proper, then
\[
\reeim f \circ \iota_{(M,\bM)} \simeq \iota_{(N,\bN)} \circ \reim f.
\]
\end{remark}

\subsection{$t$-structure}\label{subse:t-str}

Let $(M,\bM)$ be a bordered space and let $j\colon (M,\bM) \to \bM$ be the natural morphism. 

\begin{notation}
\bnum
\item
Let $\ind_M(\field_{\bM})$ be the full subcategory of $\ind(\field_{\bM})$ consisting of ind-sheaves $F$ on $\bM$ such that $\field_M\tens F \simeq F$.
\item
Let $\ind(\field_{(M,\bM)})$ be the quotient category $\ind(\field_{\bM})/\ind(\field_{\bM\setminus M})$.
\ee
\end{notation}

Note that $\ind_M(\field_{\bM})$ is an abelian category.

\begin{lemma}
\bnum
\item
The composition $\ind_M(\field_{\bM}) \to \ind(\field_{\bM}) \to \ind(\field_{(M,\bM)})$ is an equivalence of categories.
\item
There is an equivalence $\BDC(\ind_M(\field_{\bM}))\simeq \BDC(\ifield_{(M,\bM)})$.
\ee
\end{lemma}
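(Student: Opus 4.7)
For (i), the plan is to construct the quasi-inverse explicitly. The functor
$\Phi\colon\ind_M(\field_{\bM})\to\ind(\field_{(M,\bM)})$ of the statement is
induced by the inclusion $\ind_M(\field_{\bM})\hookrightarrow\ind(\field_{\bM})$
followed by the quotient functor. In the other direction, I would use
$F\mapsto\field_M\tens F$: since tensoring with $\field_M$ is exact on
$\ind(\field_{\bM})$ and kills any object in $\ind(\field_{\bM\setminus M})$
(because $\field_M\tens\field_{\bM\setminus M}=0$), this descends to an exact
functor $\Psi\colon\ind(\field_{(M,\bM)})\to\ind_M(\field_{\bM})$. For
$F\in\ind_M(\field_{\bM})$ one has $\field_M\tens F\isoto F$ by the definition
of $\ind_M$, so $\Psi\Phi\simeq\id$. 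For a representative $G\in\ind(\field_{\bM})$
of an object in the quotient, the natural morphism $\field_M\tens G\to G$ has
cone $\field_{\bM\setminus M}\tens G\in\ind(\field_{\bM\setminus M})$, hence is
an isomorphism in $\ind(\field_{(M,\bM)})$, giving $\Phi\Psi\simeq\id$.

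For (ii), the plan is to bootstrap part~(i) to the derived level via
Proposition~\ref{pro:bord}. That proposition identifies
$\BDC(\ifield_{(M,\bM)})$ with the full subcategory
${}^\bot\BDC(\ifield_{\bM\setminus M})$ of $\BDC(\ifield_{\bM})$ consisting of
those $F$ with $\field_M\tens F\isoto F$. Since $\field_M\tens\cdot$ is an
exact endofunctor, such $F$ are characterized by the condition
$\field_M\tens H^n(F)\isoto H^n(F)$ for every $n$, i.e.\ $H^n(F)\in\ind_M(\field_{\bM})$
for every $n$. Thus
${}^\bot\BDC(\ifield_{\bM\setminus M})$ coincides with the full subcategory
$\BDC_{\ind_M}(\ifield_{\bM})\subset\BDC(\ifield_{\bM})$ of complexes with
cohomology in $\ind_M(\field_{\bM})$.

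It remains to show that the natural functor
$\BDC(\ind_M(\field_{\bM}))\to\BDC_{\ind_M}(\ifield_{\bM})$ is an equivalence.
The quasi-inverse is given termwise by $F^\bullet\mapsto\field_M\tens F^\bullet$:
since $\field_M\tens\cdot$ is exact and by part~(i) restricts to the identity on
$\ind_M(\field_{\bM})$, this descends to a functor
$\BDC(\ifield_{\bM})\to\BDC(\ind_M(\field_{\bM}))$ whose composition with the
inclusion is the projector $\field_M\tens\cdot$ on $\BDC(\ifield_{\bM})$. On
$\BDC_{\ind_M}(\ifield_{\bM})$ this projector is isomorphic to the identity, and
on $\BDC(\ind_M(\field_{\bM}))$ the composition the other way is clearly the
identity since each term is already in $\ind_M$.

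The main conceptual step---and the only place where genuine content is needed
beyond bookkeeping---is the identification
${}^\bot\BDC(\ifield_{\bM\setminus M})=\BDC_{\ind_M}(\ifield_{\bM})$, which
relies on the exactness of $\field_M\tens\cdot$ (so that ${-\tens\field_M}$
commutes with taking cohomology) and on the fact that the natural map
$\field_M\tens F\to F$ is injective at the level of ind-sheaves, as $M$ is open
in $\bM$. Once this is in place, both parts of the lemma follow formally from
Proposition~\ref{pro:bord} and part~(i).
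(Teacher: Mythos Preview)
The paper states this lemma without proof, treating it as routine. Your argument is correct and supplies exactly the details one would expect: part~(i) via the exact projector $\field_M\tens{-}$, and part~(ii) by combining Proposition~\ref{pro:bord} with the observation that exactness of $\field_M\tens{-}$ reduces the condition $\field_M\tens F\isoto F$ to a cohomology-by-cohomology check.

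Two small points of phrasing, neither a real gap. In part~(i) you speak of the ``cone'' of $\field_M\tens G\to G$ in an abelian category; more precisely, this map is injective (since $\field_M\hookrightarrow\field_{\bM}$ and tensoring with an ind-sheaf over a field is exact) with cokernel $\field_{\bM\setminus M}\tens G$, hence becomes an isomorphism in the quotient. In part~(ii), ``each term is already in $\ind_M$'' is informal since derived-category objects have no canonical terms; the clean statement is that $(\field_M\tens{-})\circ(\text{inclusion})=\id$ already as exact functors of abelian categories, hence on bounded derived categories. Finally, the injectivity of $\field_M\tens F\to F$ that you flag in the last paragraph is what makes part~(i) work; the identification ${}^\bot\BDC(\ifield_{\bM\setminus M})=\BDC_{\ind_M}(\ifield_{\bM})$ in part~(ii) needs only exactness.
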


Let us denote by $(\derd^{\leq 0}(\ifield_{(M,\bM)}), \derd^{\geq 0}(\ifield_{(M,\bM)}))$ the $t$-structure of $\BDC(\ifield_{(M,\bM)})$ induced by the canonical $t$-structure of $\BDC(\ind_M(\field_{\bM}))$.
By the definition, we have
\begin{align*}
\derd^{\leq 0}(\ifield_{(M,\bM)}) 
&= \{ F\in\BDC(\ifield_{(M,\bM)}) \semicolon H^n(\reeimv{j_{M\sep!!}}F)=0 \text{ for }n>0 \}, \\
\derd^{\geq 0}(\ifield_{(M,\bM)}) 
&= \{ F\in\BDC(\ifield_{(M,\bM)}) \semicolon H^n(\reeimv{j_{M\sep!!}}F)=0 \text{ for }n<0 \}.
\end{align*}

The following two propositions are easily obtained.

\begin{proposition}
\label{pro:t-str0}
\bnum
\item
The functor $\tens$ is exact, i.e.\ it induces functors
\begin{align*}
\tens &\colon \derd^{\leq 0}(\ifield_{(M,\bM)})\times \derd^{\leq 0}(\ifield_{(M,\bM)}) \to \derd^{\leq 0}(\ifield_{(M,\bM)}), \\
\tens &\colon \derd^{\geq 0}(\ifield_{(M,\bM)})\times\derd^{\geq 0}(\ifield_{(M,\bM)}) \to \derd^{\geq 0}(\ifield_{(M,\bM)}).
\end{align*}
\item
The functor $\rihom$ is left exact, i.e.\ it induces a functor
\[
\rihom \colon \derd^{\leq 0}(\ifield_{(M,\bM)})^\op \times \derd^{\geq 0}(\ifield_{(M,\bM)}) \to \derd^{\geq 0}(\ifield_{(M,\bM)}).
\]
\ee
\end{proposition}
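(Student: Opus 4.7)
The plan is to exploit the very definition of the $t$-structure on $\BDC(\ifield_{(M,\bM)})$, which is transferred from the standard $t$-structure on $\BDC(\ifield_{\bM})$ via the equivalence $\reeimv{j_{M\sep!!}}\colon\BDC(\ifield_{(M,\bM)})\isoto\BDC(\ind_M(\field_{\bM}))$. Combined with the compatibility of $\reeimv{j_{M\sep!!}}$ with $\tens$ and $\rihom$ recorded in Lemma~\ref{lem:jM}(iv), and with the adjunction in Lemma~\ref{lem:adjbord}, the two statements reduce to two elementary observations: the tensor product is exact over a field, and $\Hom$ from $\derd^{<0}$ to $\derd^{\geq 0}$ vanishes in any $t$-structure.

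For part (i), given $F_1,F_2\in\BDC(\ifield_{(M,\bM)})$, Lemma~\ref{lem:jM}(iv) gives
\[
\reeimv{j_{M\sep!!}}(F_1\tens F_2)\simeq\reeimv{j_{M\sep!!}}F_1\tens\reeimv{j_{M\sep!!}}F_2
\]
in $\BDC(\ifield_{\bM})$. Since the base ring $\field$ is a field, every sheaf of $\field$-vector spaces is flat (flatness can be checked on stalks, where tensoring is over a field), so $\tens$ is an exact bifunctor on $\Mod(\field_{\bM})$. Passing to filtered ind-limits, which are exact, one obtains exactness of $\tens$ on $\ind(\field_{\bM})$, hence on $\BDC(\ifield_{\bM})$. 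Thus $\tens$ preserves $\derd^{\leq 0}$ and $\derd^{\geq 0}$ on $\BDC(\ifield_{\bM})$, and by the defining formula of the $t$-structure on $\BDC(\ifield_{(M,\bM)})$ the same holds there.

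For part (ii), rather than trying to compute $\reeimv{j_{M\sep!!}}\rihom(F_1,F_2)$ directly, I would argue via adjunction. Suppose $F_1\in\derd^{\leq 0}(\ifield_{(M,\bM)})$ and $F_2\in\derd^{\geq 0}(\ifield_{(M,\bM)})$. To show that $\rihom(F_1,F_2)\in\derd^{\geq 0}(\ifield_{(M,\bM)})$, it is enough to check that
\[
\Hom[\BDC(\ifield_{(M,\bM)})](F,\rihom(F_1,F_2))=0\quad\text{for every }F\in\derd^{<0}(\ifield_{(M,\bM)}),
\]
since the left orthogonal of $\derd^{\geq 0}$ is $\derd^{<0}$. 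By Lemma~\ref{lem:adjbord}, this group is isomorphic to $\Hom(F\tens F_1,F_2)$. Part (i) then gives $F\tens F_1\in\derd^{<0}$, and the vanishing follows from $F_2\in\derd^{\geq 0}$.

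There is no genuine obstacle here: the substantive content is packaged in Lemma~\ref{lem:jM}(iv) (the commutation of $\reeimv{j_{M\sep!!}}$ with $\tens$) and the adjunction of Lemma~\ref{lem:adjbord}. The only point to verify carefully is that working over a field really yields exactness of $\tens$ on ind-sheaves, which reduces to the stalkwise statement for $\field$-vector spaces and the exactness of filtered ind-limits.
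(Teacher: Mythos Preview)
Your argument is correct. The paper itself omits the proof entirely, stating only that the proposition is ``easily obtained,'' and your write-up is a natural and complete way to supply the details: part~(i) follows from Lemma~\ref{lem:jM}(iv) together with the flatness of all objects over a field, and part~(ii) follows from the adjunction of Lemma~\ref{lem:adjbord} combined with part~(i) and the standard characterization of $\derd^{\geq 0}$ as the right orthogonal of $\derd^{<0}$.
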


\begin{proposition}
\label{pro:t-str}
Let $f\colon(M,\bM) \to (N,\bN)$ be a morphism of bordered spaces.
\bnum
\item
$\reeim f$ and $\roim f$ are left exact, i.e.\ they induce functors
\[
\reeim f,\roim f \colon \derd^{\geq 0}(\ifield_{(M,\bM)}) \to \derd^{\geq 0}(\ifield_{(N,\bN)}).
\]
\item
$\opb f$ is exact, i.e.\ it induces functors
\begin{align*}
\opb f &\colon \derd^{\leq 0}(\ifield_{(N,\bN)}) \to \derd^{\leq 0}(\ifield_{(M,\bM)}), \\
\opb f &\colon \derd^{\geq 0}(\ifield_{(N,\bN)}) \to \derd^{\geq 0}(\ifield_{(M,\bM)}).
\end{align*}
\item
Let $d\in\Z_{\geq 0}$ and assume that $\opb f(y)\subset M$ has soft-dimension $\leq d$ for any $y\in N$. Then 
\bna
\item
$\reeim f(\ast)[d]$ is right exact, i.e.\ $\reeim f$ induces a functor
\[
\reeim f \colon \derd^{\leq 0}(\ifield_{(M,\bM)}) \to \derd^{\leq d}(\ifield_{(N,\bN)}).
\]
\item
$\epb f(\ast)[-d]$ is left exact, i.e.\ $\epb f$ induces a functor
\[
\epb f \colon \derd^{\geq 0}(\ifield_{(N,\bN)}) \to \derd^{\geq -d}(\ifield_{(M,\bM)}).
\]
\ee\ee
\end{proposition}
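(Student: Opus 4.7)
My plan is to reduce the $t$-exactness verifications to classical exactness properties of the six operations for ind-sheaves on good topological spaces. By Lemma~\ref{lem:fborddecomp}, any $f\colon(M,\bM)\to(N,\bN)$ factors through the isomorphism $(\Gamma_f,\overline\Gamma_f)\isoto(M,\bM)$ followed by a morphism extending to a continuous map $\overline\Gamma_f\to\bN$, and isomorphisms of bordered spaces are $t$-exact for free (since $\reeim{\cdot}\simeq\roim{\cdot}$ and $\opb{\cdot}\simeq\epb{\cdot}$ are mutually quasi-inverse). I would therefore assume from the start that $f$ is induced by a map $\check f\colon\bM\to\bN$ with $\check f(M)\subset N$ and invoke Lemma~\ref{lem:f=jfj} to express each operation on $(M,\bM),(N,\bN)$ in terms of the classical one on $\bM,\bN$. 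A useful preliminary observation is that $F\in\derd^{\geq 0}(\ifield_{(M,\bM)})$ iff $\roimv{j_{M\sep*}}F\in\BDC^{\geq 0}(\ifield_{\bM})$: indeed $\reeimv{j_{M\sep!!}}F\simeq\field_M\tens\roimv{j_{M\sep*}}F$ and $\roimv{j_{M\sep*}}F\simeq\rihom(\field_M,\reeimv{j_{M\sep!!}}F)$ by Lemma~\ref{lem:jM} and the lemma preceding Proposition~\ref{pro:bord}, so the equivalence follows from exactness of $\field_M\tens(\cdot)$ and left-exactness of $\rihom(\field_M,\cdot)$.

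For part (ii), applying $\reeimv{j_{M\sep!!}}$ to $\opb f G\simeq\opb{j_M}\opb{\check f}\reeimv{j_{N\sep!!}}G$ gives $\field_M\tens\opb{\check f}\reeimv{j_{N\sep!!}}G$, and $\opb{\check f}$ together with $\field_M\tens(\cdot)$ are exact, so $\opb f$ is $t$-exact. Part (i) is similar in spirit: for $\reeim f$, the identity $\reeimv{j_{N\sep!!}}\reeim f F\simeq\field_N\tens\reeim{\check f}\reeimv{j_{M\sep!!}}F$ combined with the left-exactness of $\reeim{\check f}$ (since $\eeim{\check f}$ is a subfunctor of the left-exact $\oim{\check f}$) gives the claim; for $\roim f$, the identity $\reeimv{j_{N\sep!!}}\roim f F\simeq\field_N\tens\roim{\check f}\roimv{j_{M\sep*}}F$ combined with the left-exactness of $\roim{\check f}$ and the alternative characterization of $\derd^{\geq 0}$ via $\roimv{j_{M\sep*}}$ does the job.

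Part (iii) is the main point. Using Lemma~\ref{lem:f=jfj} and the projection formula (Proposition~\ref{pro:bproj}) one rewrites
\[
\reeimv{j_{N\sep!!}}\reeim f F \simeq \field_N\tens\reeim{\check f}\reeimv{j_{M\sep!!}}F \simeq \reeim{\check f}(\opb{\check f}\field_N\tens\reeimv{j_{M\sep!!}}F),
\]
and the right-hand side is supported on $M\cap\check f^{-1}(N)$. For any $y\in N$ the fiber of $\check f$ restricted to this set is exactly $\opb f(y)$, of soft-dimension $\leq d$ by hypothesis, so the classical cohomological-dimension bound for $\reeim{\check f}$ on ind-sheaves yields $\reeim f F\in\derd^{\leq d}(\ifield_{(N,\bN)})$ whenever $F\in\derd^{\leq 0}(\ifield_{(M,\bM)})$. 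The left-exactness of $\epb f(\cdot)[-d]$ then follows by adjunction (Lemma~\ref{lem:badj}) from the right-exactness of $\reeim f(\cdot)[d]$. The principal obstacle is precisely this last part: one must argue that although the soft-dimension hypothesis only constrains fibers over points of $N$ (not over all of $\bN$), the projection-formula rewriting above localizes everything to $\check f^{-1}(N)$, so only the fibers over $N$ matter, making the classical bound applicable.
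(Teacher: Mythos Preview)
Your approach is correct and is the natural one the paper has in mind (the paper gives no proof, saying only that the proposition is ``easily obtained''). The reduction via Lemma~\ref{lem:fborddecomp} to the case where $f$ extends to $\check f\colon\bM\to\bN$, followed by the identities of Lemma~\ref{lem:f=jfj}, is exactly the intended route, and your treatment of (iii)(a) via the projection formula to localize the support to $\check f^{-1}(N)\cap M$ is the right way to see that only the fibers over $N$ matter.

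One small imprecision: the claim that isomorphisms of bordered spaces are $t$-exact ``for free'' because the four operations collapse to a pair of quasi-inverse equivalences is not quite self-contained---an equivalence of triangulated categories need not respect a given $t$-structure. What actually happens is that the specific isomorphism $q_1\colon(\Gamma_f,\overline\Gamma_f)\to(M,\bM)$ \emph{does} extend to $\check q_1\colon\overline\Gamma_f\to\bM$, so your direct arguments for (i), (ii), and (iii)(a) with $d=0$ (fibers of $q_1$ are singletons) apply to $q_1$ itself and show $\reeim{q_1}$ and $\opb{q_1}$ are both $t$-exact; then their quasi-inverses are too. So the factorization step is justified, just not quite ``for free''---it uses the extendable case already established. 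With that clarification your argument is complete.
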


We denote by
\begin{equation}
\label{eq:Hn}
H^n \colon \BDC(\ifield_{(M,\bM)}) \to \derd^0(\ifield_{(M,\bM)})
\end{equation}
the cohomology functor, where we set
\[
\derd^0(\ifield_{(M,\bM)})=\derd^{\leq 0}(\ifield_{(M,\bM)}) \cap \derd^{\geq 0}(\ifield_{(M,\bM)}) \simeq \ind(\field_{(M,\bM)}).
\]

\section{Enhanced ind-sheaves}\label{se:enhcdind}

In this section we start by adapting Tamarkin's construction to the ind-sheaf framework, introducing the category of enhanced ind-sheaves $\BEC M$. This is   a quotient category of $\BDC(\ifield_{M\times\R_\infty})$, where we consider the bordered space $\R_\infty = (\R,\R\dunion\{+\infty,-\infty\})$ instead of the real line $\R$. We show that $\BEC M$ has a structure of tensor category by convolution.
We then go on to discuss internal and external operations for enhanced ind-sheaves. In $\BEC M$ we also introduce the notions of stable object 
and of $\R$-constructible object.

\subsection{Convolution}\label{sse:bordconv}

Consider the 2-point compactification of the real line 
$\overline\R \defeq \R\dunion\{+\infty,-\infty\}$. Denote by $\PR=\R\dunion\{\infty\}$ the real projective line. Then $\overline\R$ has a structure of subanalytic space such that the natural map $\overline\R\to\PR$ is a subanalytic map.

\begin{notation}
\label{not:Rinfty}
Instead of the real line, we will consider the bordered space
\[
\R_\infty \defeq (\R,\overline\R).
\]
\end{notation}

Note that $\R_\infty$ is isomorphic to $(\R,\PR)$ as a bordered space.

Consider the morphisms of bordered spaces
\eq
&&\ba{rl}
a &\colon \R_\infty \to \R_\infty, \\[1ex]
\mu,\sigma,q_1,q_2 &\colon \R_\infty^2 \to \R_\infty,
\ea
\label{eq:muq1q2}
\eneq
where $a(t) = -t$, $\mu(t_1,t_2) = t_1+t_2$, $\sigma(t_1,t_2) = t_2 - t_1$ and $q_1,q_2$ are the natural projections.
For a good topological space $M$, we will use the same notations for the associated morphisms
\begin{align*}
a &\colon M\times\R_\infty \to M\times\R_\infty, \\
\mu,\sigma,q_1,q_2 &\colon M\times\R_\infty^2 \to M\times\R_\infty.
\end{align*}
Consider also the natural morphisms
\begin{align*}
\xymatrix@C=.5em{
M\times\R_\infty \ar[rr]^j \ar[dr]_\pi && M\times\overline\R \ar[dl]^{\overline\pi} \\
&M.
}
\end{align*}

When we want to emphasize $M$, we write $\pi_M$, $\overline\pi_M$, $j_M$, $\mu_M$, etc., instead of $\pi$, $\overline\pi$, $j$, $\mu$, etc.

\begin{definition}
The functors
\begin{align*}
\ctens &\colon \BDC(\ifield_{M\times\R_\infty}) \times \BDC(\ifield_{M\times\R_\infty}) \to \BDC(\ifield_{M\times\R_\infty}), \\
\cihom &\colon \BDC(\ifield_{M\times\R_\infty})^\op \times \BDC(\ifield_{M\times\R_\infty}) \to \BDC(\ifield_{M\times\R_\infty})
\end{align*}
are defined by
\begin{align*}
K_1\ctens K_2 &= \reeim \mu (\opb{q_1} K_1 \tens \opb{q_2} K_2), \\
\cihom(K_1,K_2)&= \roimv{q_{1\sep*}} \rihom(\opb{q_2} K_1, \epb\mu K_2).
\end{align*}
\end{definition}

\begin{remark}
As in Remark~\ref{rem:coperM}, let
\[
\iota_{M\times\R_\infty} \colon \BDC(\field_{M\times\R}) \to \BDC(\ifield_{M\times\R_\infty})
\]
be the natural functor. Then, for $F_1,F_2\in\BDC(\field_{M\times\R})$ we have
\begin{align*}
\iota_{M\times\R_\infty}(F_1)\ctens \iota_{M\times\R_\infty}(F_2) &\simeq \iota_{M\times\R_\infty}(\reim\mu(\opb{q_1} F_1\tens\opb{q_2} F_2)), \\
\cihom(\iota_{M\times\R_\infty}(F_1), \iota_{M\times\R_\infty}(F_2)) &\simeq \iota_{M\times\R_\infty}(\roimv{q_{1\sep*}} \rhom(\opb{q_2} F_2, \epb\mu F_1)).
\end{align*}
\end{remark}

The following lemma is obvious.

\begin{lemma}
\label{lem:musigma}
Let $K_1,K_2\in\BDC(\ifield_{M\times\R_\infty})$. Then one has
\begin{align*}
K_1\ctens K_2 &\simeq K_2\ctens K_1 \\
&\simeq \reeimv{q_{2\sep!!}} (\opb{q_1} K_1 \tens \opb\sigma K_2)  \\
&\simeq \reeimv{q_{1\sep!!}} (\opb{q_2}\opb a K_1 \tens \opb \mu K_2), \\
\cihom(K_1,K_2) 
&\simeq \roim \mu \rihom(\opb{q_2}\opb a K_1, \epb{q_1}K_2) \\
&\simeq \roimv{q_{1\sep*}} \rihom(\opb \sigma K_1, \epb{q_2} K_2).
\end{align*}
\end{lemma}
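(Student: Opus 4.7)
My plan is to realize each of the five claimed isomorphisms by a change of variables on the factor $\R_\infty^2$ of $M\times\R_\infty^2$ implemented by a linear automorphism. A preliminary observation is that any linear automorphism $\phi_0$ of $\R^2$ induces, via $\id_M\times\phi_0$, an isomorphism of $M\times\R_\infty^2$ in the category of bordered spaces. Although $\phi_0$ need not extend continuously to $\overline\R^2$, the closure of its graph in $\overline\R^2\times\overline\R^2$ is compact (since $\overline\R^2\times\overline\R^2$ is), hence proper over either factor; the same holds for $\phi_0^{-1}$, so both define morphisms of bordered spaces. Consequently, for $\phi=\id_M\times\phi_0$ one has $\reeim\phi\simeq\roim\phi$ and $\opb\phi\simeq\epb\phi$, and these are quasi-inverse to each other.

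For the commutativity, I will use the swap $\tau(x,s_1,s_2)=(x,s_2,s_1)$, which satisfies $\mu\tau=\mu$ and $q_i\tau=q_{3-i}$; then, by Lemma~\ref{lem:bcomp} and the fact that $\reeim\tau\opb\tau\simeq\id$,
\[
K_2\ctens K_1 = \reeim\mu(\opb{q_1}K_2\tens\opb{q_2}K_1) \simeq \reeim\mu\reeim\tau\opb\tau(\opb{q_1}K_2\tens\opb{q_2}K_1) \simeq \reeim\mu(\opb{q_1}K_1\tens\opb{q_2}K_2).
\]
The four remaining identities will come from three shears
\[
\phi(s_1,s_2)=(s_1,s_2-s_1),\quad \xi(s_1,s_2)=(-s_2,s_1+s_2),\quad \eta(s_1,s_2)=(s_1+s_2,-s_2),
\]
satisfying respectively $\{\mu\phi=q_2,\,q_1\phi=q_1,\,q_2\phi=\sigma\}$, $\{\mu\xi=q_1,\,q_1\xi=a\circ q_2,\,q_2\xi=\mu\}$, and $\{\mu\eta=q_1,\,q_1\eta=\mu,\,q_2\eta=a\circ q_2\}$. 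For instance, the second $\ctens$ identity follows from
\[
K_1\ctens K_2 \simeq \reeim{(\mu\phi)}\opb\phi(\opb{q_1}K_1\tens\opb{q_2}K_2) = \reeimv{q_{2\sep!!}}(\opb{q_1}K_1\tens\opb\sigma K_2),
\]
and the third is obtained in the same way with $\xi$ in place of $\phi$.

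The two $\cihom$ identities are derived by exactly the same recipe, applied to the definition $\cihom(K_1,K_2)=\roimv{q_{1\sep*}}\rihom(\opb{q_2}K_1,\epb\mu K_2)$: the first identity uses $\eta$ and the second uses $\phi$. The only extra ingredient is the identity $\opb\psi\epb\mu\simeq\epb{(\mu\psi)}$, valid whenever $\psi$ is an isomorphism of bordered spaces (since then $\opb\psi\simeq\epb\psi$), which is needed in order to migrate $\epb\mu$ through the change of variables.

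The only non-routine step in this plan is the preliminary verification that the shears $\phi,\xi,\eta$ — none of which extend continuously to the compactification $\overline\R^2$ — nevertheless define isomorphisms of the bordered space $M\times\R_\infty^2$. This is the main subtlety, and it was handled in the first paragraph by appealing to compactness of $\overline\R^2\times\overline\R^2$. Once this is granted, each of the five isomorphisms is a formal consequence of Lemma~\ref{lem:bcomp}, Proposition~\ref{pro:bproj}, and the self-duality of the inverse-image functors for isomorphisms of bordered spaces.
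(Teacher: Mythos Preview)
Your proposal is correct and takes essentially the same approach as the paper, which simply declares the lemma ``obvious'' and gives no proof. You have correctly spelled out the change-of-variables argument implicit in that claim, including the one genuine subtlety---that the linear shears $\phi,\xi,\eta$ (which do not extend to $\overline\R^2$) nonetheless define isomorphisms of the bordered space $M\times\R_\infty^2$ because $\overline\R^2$ is compact.
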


\begin{proposition}
\label{pro:ctenscihom}
For $K_1,K_2,K_3\in\BDC(\ifield_{M\times\R_\infty})$ one has
\begin{align*}
(K_1\ctens K_2) \ctens K_3 &\simeq
K_1 \ctens (K_2 \ctens K_3), \\
\Hom[\BDC(\ifield_{M\times\R_\infty})](K_1\ctens K_2,K_3) 
&\simeq \Hom[\BDC(\ifield_{M\times\R_\infty})](K_1,\cihom(K_2,K_3)), \\
\cihom(K_1\ctens K_2,K_3) &\simeq
\cihom(K_1,\cihom(K_2,K_3)).
\end{align*}
In particular, for $K\in\BDC(\ifield_{M\times\R_\infty})$, the functor $K\ctens\ast$ is left adjoint to $\cihom(K,\ast)$.
\end{proposition}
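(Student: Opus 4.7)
The plan is to establish associativity of $\ctens$ first, then the $(\ctens,\cihom)$-adjunction by chaining three standard adjunctions, and finally to deduce the hom-swap formula formally from these via uniqueness of right adjoints.

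For associativity, I would introduce the triple-sum map $\mu_3\colon M\times\R_\infty^3\to M\times\R_\infty$, $(m,t_1,t_2,t_3)\mapsto(m,t_1+t_2+t_3)$, together with the projections $p_i\colon M\times\R_\infty^3\to M\times\R_\infty$ onto the $i$-th $\R_\infty$-factor. The strategy is to show that both $(K_1\ctens K_2)\ctens K_3$ and $K_1\ctens(K_2\ctens K_3)$ are canonically isomorphic to
\[
\reeim{\mu_3}\bl \opb{p_1}K_1\tens\opb{p_2}K_2\tens\opb{p_3}K_3\br.
\]
For the first isomorphism, unfold the outer convolution, rewrite $\opb{q_1}\reeim\mu$ by base change along the square whose top and left edges are $\alpha(m,t_1,t_2,t_3)=(m,t_1+t_2,t_3)$ and $p_{12}(m,t_1,t_2,t_3)=(m,t_1,t_2)$, absorb $\opb{q_2}K_3$ inside $\reeim\alpha$ via the projection formula (Proposition~\ref{pro:bproj}), and collapse $\reeim\mu\circ\reeim\alpha$ to $\reeim{\mu_3}$ using composition of direct images (Lemma~\ref{lem:bcomp}) together with the identities $q_1\circ p_{12}=p_1$, $q_2\circ p_{12}=p_2$, $q_2\circ\alpha=p_3$. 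The second isomorphism is entirely analogous, with $\beta(m,t_1,t_2,t_3)=(m,t_1,t_2+t_3)$ and $p_{23}$ playing the roles of $\alpha$ and $p_{12}$.

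For the adjunction, I would chain the bordered-space adjunctions $(\reeim\mu,\epb\mu)$, $(\tens,\rihom)$, and $(\opb{q_1},\roim{q_1})$ from Lemmas~\ref{lem:badj} and~\ref{lem:adjbord}:
\begin{align*}
\Hom(K_1\ctens K_2,K_3)
 &\simeq \Hom\bl\opb{q_1}K_1\tens\opb{q_2}K_2,\epb\mu K_3\br \\
 &\simeq \Hom\bl\opb{q_1}K_1,\rihom(\opb{q_2}K_2,\epb\mu K_3)\br \\
 &\simeq \Hom\bl K_1,\roim{q_1}\rihom(\opb{q_2}K_2,\epb\mu K_3)\br \\
 &= \Hom\bl K_1,\cihom(K_2,K_3)\br.
\end{align*}
The hom-swap formula then follows from uniqueness of right adjoints: associativity identifies the endofunctors $(K_1\ctens K_2)\ctens(-)\simeq K_1\ctens\bl K_2\ctens(-)\br$, whose right adjoints are respectively $\cihom(K_1\ctens K_2,-)$ and $\cihom(K_1,\cihom(K_2,-))$.

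The main technical point I expect is the base change step in the proof of associativity, because the ``naive'' commutative square of bordered spaces above need not be Cartesian in the bordered-space category: the graph-closure of $\mu\colon\R_\infty^2\to\R_\infty$ strictly contains its diagonal continuous extension, due to limits at $(\pm\infty,\mp\infty)$. A clean way around this is to pull all objects back to representatives in $\BDC(\ifield_{M\times\overline\R^3})$ and compute on honest topological spaces using proper direct image, the projection formula, and composition, in the style of the proof of Lemma~\ref{lem:bcomp}, before descending to the bordered quotient. Modulo that bookkeeping, the argument is entirely formal.
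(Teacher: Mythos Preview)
Your proposal is correct and matches the paper's proof essentially line for line: the paper establishes associativity by identifying both bracketings with $\reeim{\mu'}(\opbv{q^{\prime\sep-1}_1}K_1\tens\opbv{q^{\prime\sep-1}_2}K_2\tens\opbv{q^{\prime\sep-1}_3}K_3)$, proves the adjunction by exactly your chain of isomorphisms, and deduces the hom-swap by Yoneda (equivalent to your uniqueness-of-right-adjoints). Your discussion of the base-change subtlety in part~(i) is more careful than the paper, which simply asserts the triple-product identification without further comment.
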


\begin{proof}
(i)
Consider the morphisms of bordered spaces
\[
q'_1,q'_2,q'_3,\mu'\colon M\times\R^3_\infty\to M\times\R_\infty
\]
where $q'_1,q'_2,q'_3$ are induced by the projections $\R^3\to\R$ and $\mu'$ is induced by $\R^3\owns(t_1,t_2,t_3)\mapsto t_1+t_2+t_3\in\R$. Then one can easily prove that both $(K_1\ctens K_2) \ctens K_3$ and $K_1 \ctens (K_2 \ctens K_3)$ are isomorphic to
\[
\reeim{\mu'}(\opbv{q^{\prime\sep-1}_1}K_1\tens\opbv{q^{\prime\sep-1}_2}K_2\tens\opbv{q^{\prime\sep-1}_3}K_3).
\]

\smallskip\noindent (ii)
Writing $\Hom$ instead of $\Hom[\BDC(\ifield_{M\times\R_\infty})]$, one has
\begin{align*}
\Hom(K_1\ctens K_2,K_3) 
&= \Hom(\reeim\mu(\opb{q_1}K_1\tens\opb{q_2}K_2),K_3) \\
&\simeq \Hom(\opb{q_1}K_1\tens\opb{q_2}K_2,\epb\mu K_3) \\
&\simeq \Hom(\opb{q_1}K_1,\rihom(\opb{q_2}K_2,\epb\mu K_3)) \\
&\simeq \Hom(K_1,\roimv{q_{1\sep*}}\rihom(\opb{q_2}K_2,\epb\mu K_3)) \\
&= \Hom(K_1,\cihom(K_2,K_3)).
\end{align*}

\smallskip\noindent (iii)
Writing again $\Hom$ instead of $\Hom[\BDC(\ifield_{M\times\R_\infty})]$, one has for any $K\in\BDC(\ifield_{M\times\R_\infty})$
\begin{align*}
\Hom(K, \cihom(K_1\ctens K_2,K_3))
&\simeq \Hom(K \ctens (K_1\ctens K_2) ,K_3) \\
&\simeq \Hom((K \ctens K_1)\ctens K_2 ,K_3) \\
&\simeq \Hom(K \ctens K_1, \cihom (K_2 ,K_3)) \\
&\simeq \Hom\bl K, \cihom (K_1, \cihom (K_2 ,K_3))\br.
\end{align*}
Hence, by Yoneda, one obtains
\[
\cihom(K_1\ctens K_2,K_3) \simeq \cihom (K_1, \cihom (K_2 ,K_3)).
\]
\end{proof}

\subsection{Idempotent objects}

We set
\begin{align*}
\field_{\{t\geq 0\}} &= \field_{\{(x,t)\in M\times\overline\R\semicolon t\in\R,\ t \geq 0\}}, \\
\field_{\{t = 0\}} &= \field_{\{(x,t)\in M\times\overline\R\semicolon t = 0 \}}, 
\end{align*}
and we use similar notation for $\field_{\{t> 0\}}$, $\field_{\{t\leq 0\}}$ and $\field_{\{t< 0\}}$.
These are sheaves on $M\times\overline\R$ whose stalk vanishes at points of $M\times(\overline\R\setminus\R)$. 
We also regard them as objects of $\BDC(\ifield_{M\times\R_\infty})$.

\begin{lemma}
For $K\in\BDC(\ifield_{M\times\R_\infty})$ there are isomorphisms
\[
\field_{\{t= 0\}} \ctens K \simeq K \simeq \cihom(\field_{\{t= 0\}}, K).
\]
More generally, for $a\in\R$, we have
\[
\field_{\{t= a\}} \ctens K \simeq \roimv{\mu_{a\sep*}} K \simeq \cihom(\field_{\{t= -a\}}, K),
\]
where $\mu_a\colon M\times\R_\infty\to M\times\R_\infty$ is the morphism induced by the translation $t\mapsto t+a$.
\end{lemma}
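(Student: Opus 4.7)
The plan is to compute both sides by pulling back along the closed inclusion of the slice $\{t_1=a\}$ (resp. $\{t_2=-a\}$) and using the projection formula. Throughout I will reduce the general $a$ case to the translation morphism $\mu_a$, and then deduce the $a=0$ case as a corollary.

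First I would treat $\field_{\{t=a\}}\ctens K$. Let $i_a\colon M\times\R_\infty\to M\times\R_\infty^2$ be the closed embedding $(x,t_2)\mapsto (x,a,t_2)$, so that $q_2\circ i_a=\id$ and $\mu\circ i_a=\mu_a$. Since $\{t_1=a\}$ is the image of $i_a$, one has $\opb{q_1}\field_{\{t=a\}}\simeq \reeim{i_a}\field$. By the projection formula (Proposition~\ref{pro:bproj}),
\[
\opb{q_1}\field_{\{t=a\}}\tens \opb{q_2}K
\simeq \reeim{i_a}\bl\field\tens \opb{(q_2\circ i_a)}K\br
\simeq \reeim{i_a}K.
\]
Applying $\reeim\mu$ and using functoriality of $\reeim{(\dummy)}$ (Lemma~\ref{lem:bcomp}), I get
\[
\field_{\{t=a\}}\ctens K\;\simeq\;\reeim{(\mu\circ i_a)}K\;\simeq\;\reeim{\mu_a}K.
\]
Next I would observe that $\mu_a\colon\R_\infty\to\R_\infty$ is an isomorphism of bordered spaces: the translation $t\mapsto t+a$ extends continuously to $\overline\R$ fixing $\pm\infty$, and the closure of its graph in $\overline\R\times\overline\R$ projects properly to both factors. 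Hence $\reeim{\mu_a}\simeq\roim{\mu_a}$, giving the left-hand isomorphism.

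For the right-hand side I proceed dually. Let $i'_{-a}\colon M\times\R_\infty\to M\times\R_\infty^2$ be $(x,t_1)\mapsto(x,t_1,-a)$, so that $q_1\circ i'_{-a}=\id$ and $\mu\circ i'_{-a}=\mu_{-a}$. Again $\opb{q_2}\field_{\{t=-a\}}\simeq \reeim{i'_{-a}}\field$. Using the adjunction $(\reeim{i'_{-a}},\epb{i'_{-a}})$ together with Proposition~\ref{pro:bproj},
\[
\rihom\bl\opb{q_2}\field_{\{t=-a\}},\,\epb\mu K\br
\;\simeq\;\roim{i'_{-a}}\,\epb{i'_{-a}}\epb\mu K
\;\simeq\;\roim{i'_{-a}}\,\epb{\mu_{-a}}K.
\]
Applying $\roimv{q_{1*}}$ and using Lemma~\ref{lem:bcomp},
\[
\cihom(\field_{\{t=-a\}},K)\;\simeq\;\roim{(q_1\circ i'_{-a})}\,\epb{\mu_{-a}}K
\;\simeq\;\epb{\mu_{-a}}K.
\]
Since $\mu_{-a}=\mu_a^{-1}$ is an isomorphism of bordered spaces, the Corollary after Lemma~\ref{lem:bcomp} gives $\epb{\mu_{-a}}\simeq\opb{\mu_{-a}}\simeq\roim{\mu_a}$, and the chain of isomorphisms is complete. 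Setting $a=0$, one has $\mu_0=\id$, which yields the first displayed isomorphisms.

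The only subtlety I anticipate is verifying that translation by $a$ is genuinely an isomorphism of bordered spaces (so that $\reeim{\mu_a}\simeq\roim{\mu_a}$ and the inverse image/direct image are mutually inverse), but this reduces to the elementary properness check on the closure of the graph in $\overline\R\times\overline\R$. Once that is granted, the rest is a routine chain of adjunctions and projection formulas.
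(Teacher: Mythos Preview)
The paper states this lemma without proof, treating it as straightforward. Your argument is correct and supplies exactly the kind of routine verification the authors omit: you identify $\field_{\{t=a\}}$ with the proper pushforward of the constant sheaf along the slice embedding $i_a$, then apply the projection formula and functoriality of direct images to reduce both convolution and $\cihom$ to the translation $\mu_a$ (or its inverse), which is an isomorphism of bordered spaces. The only point worth a remark is that your identification $\opb{q_1}\field_{\{t=a\}}\simeq \reeim{i_a}\field$ uses that $i_a$ is proper as a morphism of bordered spaces (the closure of its graph in $M\times\overline\R\times M\times\overline\R^2$ projects properly to both sides), which is immediate since the underlying map extends to the compactifications; once that is noted, every step is a direct application of Proposition~\ref{pro:bproj} and Lemma~\ref{lem:bcomp}.
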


\begin{corollary}
The category $\BDC(\ifield_{M\times\R_\infty})$ has a structure of commutative tensor category with $\ctens$ as tensor product bifunctor and $\field_{\{t= 0\}}$ as unit object.
\end{corollary}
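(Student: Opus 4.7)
The corollary is essentially a bookkeeping statement: all the structural data of a commutative (symmetric) tensor category have already been produced in the preceding results, and the only remaining task is to observe that the various associativity, commutativity, and unit isomorphisms are compatible with each other.

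My plan is to assemble the following ingredients. Associativity of $\ctens$ is Proposition~\ref{pro:ctenscihom}\,(i); more canonically, the proof of that proposition already exhibits both iterated tensor products as the same functor
\[
\reeim{\mu'}(\opbv{q^{\prime\sep-1}_1}K_1\tens\opbv{q^{\prime\sep-1}_2}K_2\tens\opbv{q^{\prime\sep-1}_3}K_3)
\]
on $M\times\R_\infty^3$, with $\mu'(t_1,t_2,t_3)=t_1+t_2+t_3$, which makes the associator manifestly natural. Commutativity $K_1\ctens K_2\simeq K_2\ctens K_1$ is the first isomorphism of Lemma~\ref{lem:musigma}, and the symmetry isomorphism comes from the involution of $\R_\infty^2$ swapping the two factors, under which $\mu$ is invariant. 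Finally, the unit isomorphism $\field_{\{t=0\}}\ctens K\simeq K$ is the special case $a=0$ of the lemma immediately preceding the corollary.

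To conclude, I would check the three coherence conditions. Pentagon: both composites in the pentagon are computed by pulling back to $M\times\R_\infty^4$ along the four projections, tensoring, and pushing forward along $\mu''(t_1,t_2,t_3,t_4)=t_1+t_2+t_3+t_4$; since both paths produce the \emph{same} functor (up to canonical base-change isomorphisms coming from Lemma~\ref{lem:bcart} applied to the evident Cartesian squares over the addition map $\R^3\to\R$), they agree. Hexagon: this reduces, in the same way, to the $S_3$-equivariance of the triple-sum construction on $M\times\R_\infty^3$, i.e.\ to the fact that $\mu'(t_1,t_2,t_3)$ is invariant under permutations. Triangle (unit coherence): this follows because insertion of the unit $\field_{\{t=0\}}$ in either slot of $K_1\ctens K_2$ corresponds, via the pull/push formula, to restricting one of the variables to $0$, which produces the identity functor by the preceding lemma; the triangle then commutes because both composites are induced by the same diagonal embedding.

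I do not expect any real obstacle: once the external operations of Section~\ref{se:bordered} and the explicit descriptions in Lemma~\ref{lem:musigma} are in hand, all coherence diagrams reduce to functoriality of $\reeim{(\dummy)}$, $\opb{(\dummy)}$ and $\tens$ and to elementary identities satisfied by addition of real numbers (associativity, commutativity, neutrality of $0$). The one place requiring a moment of attention is the use of base change (Lemma~\ref{lem:bcart}) to identify iterated pushforwards along $\mu$ with a single pushforward along $\mu'$ on $M\times\R_\infty^3$; but this Cartesian square is exactly the one used already in the proof of Proposition~\ref{pro:ctenscihom}\,(i), so nothing new is required.
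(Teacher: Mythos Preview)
Your proposal is correct and is essentially the intended argument: the paper gives no proof for this corollary, treating it as immediate from the preceding unit lemma, the associativity in Proposition~\ref{pro:ctenscihom}\,(i), and the commutativity in Lemma~\ref{lem:musigma}. Your explicit verification of the coherence conditions (pentagon, hexagon, triangle) via the iterated pull--push description on $M\times\R_\infty^n$ is exactly the routine check one would carry out if asked to spell it out.
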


There are distinguished triangles in $\BDC(\ifield_{M\times\R_\infty})$
\begin{equation}
\label{eq:dtpm0}
\begin{cases}
\field_{\{t\geq 0\}} \To \field_{\{t = 0\}} \To \field_{\{t> 0\}}[1] \To[+1], \\
\field_{\{t\geq 0\}} \To \field_{\{t< 0\}}[1] \To \field_{M\times\R}[1] \To[+1], \\
\field_{\{t\geq 0\}} \dsum \field_{\{t\leq 0\}} \To \field_{\{t= 0\}} \To \field_{M\times\R}[1] \To[+1].
\end{cases}
\end{equation}

The following lemma is easily verified.

\begin{lemma}\label{lem:nilpo}
There are isomorphisms in $\BDC(\ifield_{M\times\R_\infty})$
\begin{align*}
\field_{\{t\geq 0\}} \ctens \field_{\{t\geq 0\}} &\isoto \field_{\{t\geq 0\}}, &
\field_{\{t\geq 0\}} \ctens \field_{\{t > 0\}}[1] &\simeq 0,\\
\field_{\{t> 0\}}[1] \ctens \field_{\{t> 0\}}[1] &\isofrom \field_{\{t > 0\}}[1], &
\field_{\{t\geq 0\}} \ctens \field_{M\times\R}[1] &\simeq 0,\\
\field_{M\times\R}[1] \ctens \field_{M\times\R}[1] &\isofrom \field_{M\times\R}[1], &
\field_{\{t > 0\}}[1] \ctens \field_{M\times\R}[1] &\isofrom \field_{M\times\R}[1],\\
\field_{\{t\geq 0\}} \ctens \field_{\{t \leq 0\}} &\simeq 0,&
\field_{\{t\geq 0\}} \ctens \field_{\{t < 0\}}[1] &\isofrom \field_{\{t\geq 0\}},\\
\field_{\{t > 0\}}[1] \ctens \field_{\{t < 0\}}[1] &\simeq \field_{M\times\R}[1].
\end{align*}
Hence, the objects $\field_{\{t\geq 0\}}$, $\field_{\{t > 0\}}[1]$, $\field_{\{t\geq 0\}} \dsum \field_{\{t\leq 0\}}$ and $\field_{M\times\R}[1]$ are idempotents in $\BDC(\ifield_{M\times\R_\infty})$.
\end{lemma}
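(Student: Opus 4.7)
My plan is to verify each isomorphism by direct computation from the definition
\[
K_1 \ctens K_2 = \reeim\mu(\opb{q_1} K_1 \tens \opb{q_2} K_2),
\]
reducing everything to the ordinary proper pushforward along the addition map on the real line. By the remark preceding Lemma~\ref{lem:musigma}, all objects in the statement come from $\iota_{M\times\R_\infty}$ applied to sheaves on $M\times\R$, so for each pair the convolution is the image under $\iota$ of $\reim\mu(\field_{S_1}\tens\field_{S_2})[k]=\reim\mu(\field_{S_1\cap S_2})[k]$ for locally closed $S_1,S_2\subset\R$ (transported to $\R^2$ via $q_1,q_2$) and a shift~$k$. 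In particular, the bordered structure plays no active role here because none of the sheaves involved reaches $\pm\infty$.

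The key computational input is proper base change: the stalk at $t\in\R$ of $\reim\mu(\field_{S_1\times S_2})$ is $\rsect_c\bl\mu^{-1}(t)\cap (S_1\times S_2);\field\br$, and the fiber $\mu^{-1}(t)\cap(S_1\times S_2)\subset\R^2$ is homeomorphic to the subset of $\R$ of those $t_1\in S_1$ with $t-t_1\in S_2$. In every case of the lemma this fiber is one of: $\emptyset$, a point, a compact segment, a half-open interval, an open interval, a closed ray, an open ray, or the whole line, whose compactly supported cohomologies are respectively $0$, $\field$, $\field[0]$, $0$, $\field[-1]$, $0$, $\field[-1]$, $\field[-1]$. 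Assembling these stalks (and checking how they vary with $t$, which is straightforward since the fibers are piecewise-constant in $t$) yields in each case a sheaf of the form $\field_{\{t\geq 0\}}$, $\field_{\{t>0\}}$, $\field_{M\times\R}$, or $0$, possibly shifted.

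For instance, for $\field_{\{t\geq 0\}} \ctens \field_{\{t\geq 0\}}$ the fiber is $[0,t]$ when $t\geq 0$ and empty otherwise, so $\rsect_c$ is $\field$ in degree $0$ and the answer is $\field_{\{t\geq 0\}}$; for $\field_{\{t>0\}}[1]\ctens\field_{\{t<0\}}[1]$ the fiber is $(\max(0,t),\infty)$ which gives $\field[-1]$ in every $t$, so after the shift $[2]$ one recovers $\field_{M\times\R}[1]$. Each of the nine identities reduces to such an elementary calculation.

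I do not anticipate a genuine obstacle: the only thing to be careful about is keeping track of shifts and of the cohomological degree in which $\rsect_c$ of the fiber lives. As a sanity check — and a way to shorten the writeup — one can derive several identities from a few base cases by convolving the distinguished triangles in \eqref{eq:dtpm0} with the idempotents and invoking associativity of $\ctens$ (Proposition~\ref{pro:ctenscihom}); for example, convolving $\field_{\{t\geq 0\}}\to\field_{\{t=0\}}\to\field_{\{t>0\}}[1]\tone$ with $\field_{\{t\geq 0\}}$ and using $\field_{\{t\geq 0\}}\ctens\field_{\{t\geq 0\}}\simeq\field_{\{t\geq 0\}}$ together with the unit property of $\field_{\{t=0\}}$ forces $\field_{\{t\geq 0\}}\ctens\field_{\{t>0\}}[1]\simeq 0$, and analogous manipulations handle the remaining identities.
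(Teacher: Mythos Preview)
Your proposal is correct and is exactly the kind of verification the paper has in mind: the paper offers no proof beyond the phrase ``easily verified,'' and your direct stalk computation via proper base change (legitimized by the remark before Lemma~\ref{lem:musigma}) together with the distinguished-triangle shortcuts from \eqref{eq:dtpm0} is the natural way to carry it out.
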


Recall that an idempotent in a tensor category is a pair $(P,\xi)$ of an object $P$ and an isomorphism $\xi\colon P\tens P\to P$ such that $\xi\tens P = P \tens \xi$ as morphisms $P\tens P\tens P\to P\tens P$ (cf.~\cite[Lemma 4.1.2]{KS06}). Note that in each distinguished triangle $P'\to P\to P''\to[+1]$ in \eqref{eq:dtpm0}, $P$, $P'$, $P''$ are idempotents and $P'\ctens P''\simeq 0$, $P\ctens P'\simeq P'$, $P\ctens P''\simeq P''$.

\begin{corollary}
Let $K\in\BDC(\ifield_{M\times\R_\infty})$. Then
\begin{align*}
\field_{\{t\geq 0\}} \ctens K  \isoto K
&\iff \field_{\{t> 0\}}[1] \ctens K \simeq 0 \\
&\iff \field_{\{t\leq 0\}} \ctens K \simeq 0 \text{ and } \field_{M\times\R}[1] \ctens K \simeq 0 .
\end{align*}
Moreover,
\[
\field_{\{t\geq 0\}} \ctens K \simeq 0 \iff \field_{\{t> 0\}}[1] \ctens K  \isofrom K.
\]
Similar results hold when replacing the functor $\ast\ctens K$ with the functor $\cihom(\ast,K)$.
\end{corollary}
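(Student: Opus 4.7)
\medskip\noindent\textbf{Proof plan.}
The strategy is to convolve $K$ against each of the three distinguished triangles in \eqref{eq:dtpm0} and read off the equivalences, using that $\ctens$ is a triangulated bifunctor in each variable and that $\field_{\{t=0\}}$ is a unit.

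First I would apply $\ast\ctens K$ to the first triangle of \eqref{eq:dtpm0} to get a distinguished triangle
\[
\field_{\{t\geq 0\}}\ctens K \To K \To \field_{\{t>0\}}[1]\ctens K \To[+1].
\]
From this single triangle, both the equivalence $\field_{\{t\geq 0\}}\ctens K\isoto K \iff \field_{\{t>0\}}[1]\ctens K\simeq 0$ and the final equivalence $\field_{\{t\geq 0\}}\ctens K\simeq 0 \iff K\isoto \field_{\{t>0\}}[1]\ctens K$ are immediate, since in a distinguished triangle $A\to B\to C\to[+1]$ the middle map is an isomorphism iff $A\simeq 0$, and the left map is an isomorphism iff $C\simeq 0$.

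Next I would treat the second equivalence in the corollary. The easy direction is the ``$\Leftarrow$'': convolving the third triangle of \eqref{eq:dtpm0} with $K$ yields
\[
(\field_{\{t\geq 0\}}\ctens K)\dsum(\field_{\{t\leq 0\}}\ctens K) \To K \To \field_{M\times\R}[1]\ctens K \To[+1],
\]
so if $\field_{\{t\leq 0\}}\ctens K$ and $\field_{M\times\R}[1]\ctens K$ vanish, the leftmost arrow descends to an isomorphism $\field_{\{t\geq 0\}}\ctens K\isoto K$. For ``$\Rightarrow$'', assuming $\field_{\{t\geq 0\}}\ctens K\isoto K$, I would convolve this isomorphism with $\field_{\{t\leq 0\}}$ and with $\field_{M\times\R}[1]$ respectively; by the associativity of $\ctens$ (Proposition~\ref{pro:ctenscihom}) and the vanishings
\[
\field_{\{t\leq 0\}}\ctens\field_{\{t\geq 0\}}\simeq 0, \qquad \field_{M\times\R}[1]\ctens\field_{\{t\geq 0\}}\simeq 0
\]
supplied by Lemma~\ref{lem:nilpo}, the two objects in question vanish.

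Finally, for the parenthetical assertion about $\cihom$, I would apply the contravariant left-exact functor $\cihom(\ast,K)$ to the same three triangles, noting that $\cihom(\field_{\{t=0\}},K)\simeq K$. The first triangle produces the distinguished triangle
\[
\cihom(\field_{\{t>0\}}[1],K) \To K \To \cihom(\field_{\{t\geq 0\}},K) \To[+1],
\]
and the third triangle produces the corresponding distinguished triangle linking the terms involving $\field_{\{t\geq 0\}}$, $\field_{\{t\leq 0\}}$ and $\field_{M\times\R}[1]$. The identical two-isomorphisms-in-a-triangle argument, combined again with Lemma~\ref{lem:nilpo} and the adjunction of Proposition~\ref{pro:ctenscihom}, then yields the $\cihom$ analogues with the arrows reversed in the appropriate places. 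No step here is a real obstacle; the only care needed is to track the direction of the morphisms produced by the triangles so that the isomorphisms $\isoto$ and $\isofrom$ match the statement.
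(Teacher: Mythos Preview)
Your proposal is correct and is exactly the argument the paper intends: the corollary is stated immediately after Lemma~\ref{lem:nilpo} with no proof, and the obvious derivation is to convolve (respectively, apply $\cihom(\ast,K)$ to) the distinguished triangles \eqref{eq:dtpm0} and invoke the idempotent vanishings of that lemma, just as you do. The only thing worth noting explicitly is that the map $\field_{\{t\geq 0\}}\ctens K\to K$ arising from the first summand in the third triangle coincides with the one from the first triangle (both come from the same restriction morphism $\field_{\{t\geq 0\}}\to\field_{\{t=0\}}$), which you implicitly use in the ``$\Leftarrow$'' step of the second equivalence.
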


\subsection{Properties of  convolution}

\begin{lemma}
\label{lem:cihomrihompi}
For $K_1,K_2\in\BDC(\ifield_{M\times\R_\infty})$ and $L\in\BDC(\ifield_M)$ one has
\begin{align*}
\opb\pi L \tens (K_1\ctens K_2) & \simeq (\opb\pi L \tens K_1)\ctens K_2, \\
\rihom(\opb\pi L, \cihom(K_1,K_2)) & \simeq \cihom(\opb\pi L \tens K_1,K_2) \\
& \simeq \cihom(K_1,\rihom(\opb\pi L, K_2)).
\end{align*}
\end{lemma}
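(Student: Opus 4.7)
The plan is to establish the first isomorphism directly from the projection formula for $\reeim\mu$, and then deduce the remaining two via the adjunctions $(\ctens,\cihom)$ (Proposition~\ref{pro:ctenscihom}) and $(\tens,\rihom)$ combined with Yoneda.

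First I would observe that, writing $\pi_{M\times\R_\infty^2}\colon M\times\R_\infty^2 \to M$ for the projection, one has
\[
\pi \comp \mu \;=\; \pi \comp q_1 \;=\; \pi \comp q_2 \;=\; \pi_{M\times\R_\infty^2},
\]
so $\opb\mu\opb\pi L \simeq \opb{q_1}\opb\pi L \simeq \opb{q_2}\opb\pi L$. Applying the projection formula for $\reeim\mu$ (Proposition~\ref{pro:bproj}) then gives
\begin{align*}
\opb\pi L \tens (K_1\ctens K_2)
&\simeq \opb\pi L \tens \reeim\mu(\opb{q_1}K_1 \tens \opb{q_2}K_2) \\
&\simeq \reeim\mu\bl \opb\mu\opb\pi L \tens \opb{q_1}K_1 \tens \opb{q_2}K_2 \br \\
&\simeq \reeim\mu\bl \opb{q_1}(\opb\pi L \tens K_1) \tens \opb{q_2}K_2 \br \\
&= (\opb\pi L \tens K_1) \ctens K_2,
\end{align*}
which is the first claim.

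For the second line of the lemma, I would test against an arbitrary $K\in\BDC(\ifield_{M\times\R_\infty})$ and compute
\begin{align*}
\Hom(K,\rihom(\opb\pi L,\cihom(K_1,K_2)))
&\simeq \Hom(K\tens\opb\pi L,\cihom(K_1,K_2)) \\
&\simeq \Hom((K\tens\opb\pi L)\ctens K_1, K_2) \\
&\simeq \Hom(K\ctens(\opb\pi L\tens K_1),K_2) \\
&\simeq \Hom(K,\cihom(\opb\pi L\tens K_1,K_2)),
\end{align*}
where the key third step is the first isomorphism already proved (applied to $K$ and $K_1$). Yoneda then yields $\rihom(\opb\pi L,\cihom(K_1,K_2))\simeq\cihom(\opb\pi L\tens K_1,K_2)$. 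For the remaining isomorphism, the same game with commutativity $K_1\ctens K_2\simeq K_2\ctens K_1$ (Lemma~\ref{lem:musigma}) gives
\[
\cihom(\opb\pi L\tens K_1,K_2)\simeq\cihom(K_1,\rihom(\opb\pi L,K_2)),
\]
again by Yoneda after rewriting the right-hand side via adjunction as $\Hom(K\ctens K_1,\rihom(\opb\pi L,K_2))\simeq\Hom((K\ctens K_1)\tens\opb\pi L,K_2)$ and using commutativity of $\ctens$ together with the first isomorphism.

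There is no real obstacle here: the only substantive input is the projection formula combined with the identification $\pi\mu=\pi q_i$, after which the lemma is a formal consequence of the tensor-hom and convolution-$\cihom$ adjunctions. The only point requiring mild care is to verify that the projection formula from Proposition~\ref{pro:bproj} applies to the morphism $\mu\colon M\times\R_\infty^2\to M\times\R_\infty$ of bordered spaces in the form needed, but this is immediate since the formula is stated in exactly this generality.
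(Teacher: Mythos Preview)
Your proof is correct and rests on the same key observation as the paper's, namely $\pi\comp\mu=\pi\comp q_1=\pi\comp q_2$. The organization differs slightly: the paper proves the second isomorphism directly by unwinding the definition of $\cihom$ (pushing $\rihom(\opb\pi L,-)$ inside $\roimv{q_{1\sep*}}$ via Proposition~\ref{pro:bproj} and then swapping $\opb{q_1}\opb\pi L$ for $\opb{q_2}\opb\pi L$), and declares the other cases ``similar''. You instead prove the tensor-side isomorphism directly and deduce the two $\cihom$ identities formally by the adjunctions $(\tens,\rihom)$ and $(\ctens,\cihom)$ plus Yoneda. Both routes are equally short; your reduction has the mild advantage of making explicit that the three statements are formally equivalent once one of them is known, while the paper's direct computation avoids the small bookkeeping with commutativity of $\ctens$ needed in your third step.
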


\begin{proof}
Since the proofs are similar, let us only discuss the second isomorphism.
Since $\pi\circ q_1 = \pi\circ q_2$, one has
\begin{align*}
\rihom(\opb\pi L, &\cihom(K_1,K_2)) \\
&= \rihom(\opb\pi L, \roimv{q_{1\sep*}}\rihom(\opb{q_2} K_1,\epb\mu K_2)) \\
&\simeq \roimv{q_{1\sep*}}\rihom(\opb{q_1}\opb\pi L, \rihom(\opb{q_2} K_1,\epb\mu K_2)) \\
&\simeq \roimv{q_{1\sep*}}\rihom(\opb{q_2}\opb\pi L, \rihom(\opb{q_2} K_1,\epb\mu K_2)) \\
&\simeq \roimv{q_{1\sep*}}\rihom(\opb{q_2}(\opb\pi L \tens K_1),\epb\mu K_2) \\
& = \cihom(\opb\pi L \tens K_1,K_2).
\end{align*}
\end{proof}

\begin{lemma}
\label{lem:cihomKt0}
For $K\in\BDC(\ifield_{M\times\R_\infty})$ and $L\in\BDC(\ifield_M)$ one has
\begin{align*}
\opb\pi L \tens K & \simeq (\opb\pi L \tens \field_{\{t=0\}})\ctens K, \\
\rihom(\opb\pi L, K) & \simeq \cihom(\opb\pi L \tens \field_{\{t=0\}},K), \\
\opb a \rihom(K,\epb\pi L) &\simeq
\cihom(K,\opb\pi L \tens \field_{\{t=0\}}).
\end{align*}
\end{lemma}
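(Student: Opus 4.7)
My plan is to handle (i) and (ii) as immediate consequences of Lemma~\ref{lem:cihomrihompi}, and prove (iii) by a direct computation using base change.

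For (i), I apply the first isomorphism of Lemma~\ref{lem:cihomrihompi} with $K_1=\field_{\{t=0\}}$ and $K_2=K$, obtaining $\opb\pi L\tens(\field_{\{t=0\}}\ctens K)\simeq(\opb\pi L\tens\field_{\{t=0\}})\ctens K$, and use that $\field_{\{t=0\}}$ is the unit for $\ctens$. For (ii), setting $K_1=\field_{\{t=0\}}$, $K_2=K$ in the second isomorphism of the same lemma gives
\[
\rihom(\opb\pi L,K)\simeq\rihom(\opb\pi L,\cihom(\field_{\{t=0\}},K))\simeq\cihom(\opb\pi L\tens\field_{\{t=0\}},K).
\]

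For (iii), the key observation is that $\opb\pi L\tens\field_{\{t=0\}}\simeq\roim iL$, where $i\colon M\to M\times\R_\infty$ is the closed embedding $x\mapsto(x,0)$; this follows from the projection formula together with $\opb i\opb\pi L\simeq L$ and $\opb i\field_{\{t=0\}}\simeq\field_M$. I then consider the square
\[
\xymatrix{
M\times\R_\infty \ar[r]^{\iota} \ar[d]_{\pi} & M\times\R_\infty^2 \ar[d]^{\mu} \\
M \ar[r]^{i} & M\times\R_\infty
}
\]
with $\iota(x,t)=(x,t,-t)$, which is Cartesian in the category of bordered spaces. By Lemma~\ref{lem:bcart}, $\epb\mu\roim iL\simeq\roim\iota\epb\pi L$. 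Plugging this into the definition of $\cihom$ and using $\rihom(F,\roim\iota G)\simeq\roim\iota\rihom(\opb\iota F,G)$ (valid since $\iota$ is a closed embedding), together with the identities $q_1\circ\iota=\id$ and $q_2\circ\iota=a$, I obtain
\[
\cihom(K,\opb\pi L\tens\field_{\{t=0\}})\simeq\roim{(q_1\circ\iota)_*}\rihom(\opb{(q_2\circ\iota)}K,\epb\pi L)\simeq\rihom(\opb aK,\epb\pi L).
\]
Since $a$ is an involutive diffeomorphism with $\pi\circ a=\pi$, one has $\opb a\simeq\epb a$, and hence $\rihom(\opb aK,\epb\pi L)\simeq\opb a\rihom(K,\opb a\epb\pi L)\simeq\opb a\rihom(K,\epb\pi L)$, giving (iii).

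The main obstacle is the base change step: namely, verifying that the square above is Cartesian in the category of bordered spaces and that the hypotheses of Lemma~\ref{lem:bcart} are satisfied. Once that is dispatched, everything else is formal bookkeeping with the standard adjunctions between $\opb\iota$, $\roim\iota$ and $\rihom$, and with the identities $\pi\circ a=\pi$, $q_1\circ\iota=\id$, $q_2\circ\iota=a$.
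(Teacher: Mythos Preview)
Your proof is correct and follows essentially the same approach as the paper. For (i) and (ii) you invoke Lemma~\ref{lem:cihomrihompi} exactly as the paper does; for (iii) both you and the paper use the Cartesian square with the anti-diagonal embedding into $M\times\R_\infty^2$ and base change (Lemma~\ref{lem:bcart}), the only cosmetic difference being that the paper parametrizes the anti-diagonal by $\delta^a(t)=(-t,t)$ so that $q_1\circ\delta^a=a$ and $q_2\circ\delta^a=\id$, which yields $\opb a\rihom(K,\epb\pi L)$ directly without your final step of moving $\opb a$ across $\rihom$.
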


\begin{proof}
The first two isomorphisms follow from Lemma~\ref{lem:cihomrihompi} for $K_1=\field_{\{t=0\}}$ and $K_2=K$. Let us prove the third isomorphism.

Let $\delta^a\colon M\times\R_\infty\to M\times\R_\infty^2$ be the morphism induced by the anti-diagonal map $\R\to\R^2$, $t\mapsto(-t,t)$, and $i_0\colon M\to M\times\R_\infty$ the morphism induced by the inclusion $x\mapsto(x,0)$. Note that $\pi\circ i_0 =\id_M$, $\field_{\{t=0\}} \simeq \roimv{i_{0\sep*}}\field_M$, and there is a Cartesian diagram
\[
\xymatrix{
M\times\R_\infty \ar[r]^{\delta^a} \ar[d]^\pi & M\times\R_\infty^2 \ar[d]^\mu \\
M \ar[r]^-{i_0} \ar@{}[ur]|-\square & M\times\R_\infty.
}
\]
Then we have
\begin{align*}
\cihom(K,\opb\pi L \tens \field_{\{t=0\}}) 
&\simeq
\cihom(K, \roimv{i_{0\sep*}} L) \\
&=
\roimv{q_{1\sep*}}\rihom(\opb{q_2} K, \epb\mu\roimv{i_{0\sep*}} L) .
\end{align*}
On the other hand, $\epb\mu\roimv{i_{0\sep*}} L \simeq \roim \delta^a \epb\pi L$, and hence
\begin{align*}
\roimv{q_{1\sep*}}\rihom(\opb{q_2} K, &\epb\mu\roimv{i_{0\sep*}} L) \\
&\simeq \roimv{q_{1\sep*}}\rihom(\opb{q_2} K, \roim\delta^a\epb\pi L) \\
&\simeq \roimv{q_{1\sep*}}\roim\delta^a\rihom(\opbv{\delta^{a\sep-1}}\opb{q_2} K, \epb\pi L).
\end{align*}
Then the result follows from $q_1\circ \delta^a = a$ and $q_2\circ \delta^a = \id$.
\end{proof}

\begin{lemma}
\label{lem:piihomctens}
For $K_1,K_2,K_3\in\BDC(\ifield_{M\times\R_\infty})$ one has
\[
\roim\pi\rihom(K_1\ctens K_2,K_3) \simeq
\roim\pi\rihom(K_1,\cihom(K_2,K_3)).
\]
\end{lemma}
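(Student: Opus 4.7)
The plan is to unravel the definitions of $\ctens$ and $\cihom$ and then chain together standard adjunctions (tensor--hom, $(\reeim\mu, \epb\mu)$, and a projection-formula type identity for $\rihom$), taking advantage of the factorisation $\pi\circ\mu=\pi\circ q_1=\pi\circ q_2$ of the projection $M\times\R_\infty^2\to M$.

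More precisely, starting from the definition $K_1\ctens K_2=\reeim\mu(\opb{q_1}K_1\tens\opb{q_2}K_2)$ and using the $(\reeim\mu,\epb\mu)$ adjunction from Lemma~\ref{lem:badj} in the form $\rihom(\reeim\mu(-),-)\simeq\roim\mu\rihom(-,\epb\mu(-))$, I would rewrite the left-hand side as
\[
\roim\pi\rihom(K_1\ctens K_2,K_3)\simeq \roim\pi\roim\mu\rihom\bl\opb{q_1}K_1\tens\opb{q_2}K_2,\epb\mu K_3\br.
\]
Since $\pi\circ\mu=\pi\circ q_1$ as morphisms of bordered spaces, Lemma~\ref{lem:bcomp} gives $\roim\pi\roim\mu\simeq\roim\pi\roim{q_{1\sep*}}$.

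Next, I would apply the tensor--hom adjunction of Lemma~\ref{lem:adjbord} to split off $\opb{q_1}K_1$:
\[
\rihom\bl\opb{q_1}K_1\tens\opb{q_2}K_2,\epb\mu K_3\br\simeq \rihom\bl\opb{q_1}K_1,\rihom(\opb{q_2}K_2,\epb\mu K_3)\br,
\]
and then push $K_1$ outside $\roim{q_{1\sep*}}$ using the standard identity $\roim f\rihom(\opb f A,B)\simeq \rihom(A,\roim f B)$ (which holds in the bordered setting by the $(\opb f,\roim f)$ adjunction of Lemma~\ref{lem:badj} together with the compatibility of $\opb f$ with $\tens$ from Proposition~\ref{pro:bproj}). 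Applied with $f=q_1$ and $A=K_1$, this yields
\[
\roim{q_{1\sep*}}\rihom\bl\opb{q_1}K_1,\rihom(\opb{q_2}K_2,\epb\mu K_3)\br\simeq \rihom\bl K_1,\roim{q_{1\sep*}}\rihom(\opb{q_2}K_2,\epb\mu K_3)\br,
\]
and the right-hand side is exactly $\rihom(K_1,\cihom(K_2,K_3))$ by definition of $\cihom$. Applying $\roim\pi$ gives the desired isomorphism.

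This is really just an internal-Hom enhancement of the global adjunction already recorded in Proposition~\ref{pro:ctenscihom}, so I do not expect any genuine obstacle: each individual isomorphism is a formal consequence of the six-operations formalism for ind-sheaves on bordered spaces developed earlier in the section. The only point that requires mild care is the bookkeeping of functors for bordered spaces (as opposed to ordinary topological spaces), but the needed compatibilities---$\roim\pi\roim\mu\simeq\roim{\pi\circ\mu}$, the $(\opb{},\roim{})$ and $(\reeim{},\epb{})$ adjunctions, and $\opb{q_1}(A\tens B)\simeq\opb{q_1}A\tens\opb{q_1}B$---are exactly what is provided by Lemmas~\ref{lem:badj}, \ref{lem:bcomp} and Proposition~\ref{pro:bproj}.
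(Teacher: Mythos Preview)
Your proof is correct and follows essentially the same route as the paper: the paper's one-line proof says ``similar to part (ii) in the proof of Proposition~\ref{pro:ctenscihom}, using Lemma~\ref{lem:cihomrihompi}'', and your chain of adjunctions is exactly the $\rihom$-level upgrade of that Hom-level computation. The only cosmetic difference is that the paper's hint invokes Lemma~\ref{lem:cihomrihompi} (presumably to run a Yoneda argument reducing to the already-established Hom adjunction), whereas you bypass this by computing directly with the identities of Proposition~\ref{pro:bproj}; both are equally valid unwindings of the same adjunction chain.
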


\begin{proof}
The proof is similar to part (ii) in the proof of Proposition~\ref{pro:ctenscihom}, using Lemma~\ref{lem:cihomrihompi}.
\end{proof}

\begin{lemma}
\label{lem:tenspipi}
For $K_1,K_2\in\BDC(\ifield_{M\times\R_\infty})$ there are isomorphisms
\begin{align*}
\reeim\pi(K_1\ctens K_2) &\simeq \reeim\pi K_1 \tens \reeim\pi K_2, \\
\roim\pi\cihom(K_1,K_2) &\simeq \rihom(\reeim\pi K_1,\roim\pi K_2).
\end{align*}
\end{lemma}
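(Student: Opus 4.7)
The plan is to deduce both isomorphisms from base change (Lemma~\ref{lem:bcart}) and the adjunction/projection formulas of Proposition~\ref{pro:bproj}, exploiting the identities $\pi\circ\mu=\pi\circ q_1=\pi\circ q_2$ on $M\times\R_\infty^2$. Two Cartesian squares of bordered spaces drive the argument:
\[
\xymatrix@R=3ex@C=6ex{
M\times\R_\infty^2 \ar[r]^{q_2} \ar[d]_{q_1} & M\times\R_\infty \ar[d]^\pi \\
M\times\R_\infty \ar[r]^\pi & M
}
\qquad
\xymatrix@R=3ex@C=6ex{
M\times\R_\infty^2 \ar[r]^{\mu} \ar[d]_{q_2} & M\times\R_\infty \ar[d]^\pi \\
M\times\R_\infty \ar[r]^\pi & M.
}
\]
The first is Cartesian because $M\times\R_\infty^2 \simeq (M\times\R_\infty)\times_M(M\times\R_\infty)$ via $(q_1,q_2)$; the second because the map $(x,t_1,t_2)\mapsto((x,t_1+t_2),(x,t_2))$ is an isomorphism of bordered spaces onto $(M\times\R_\infty)\times_M(M\times\R_\infty)$.

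For the first isomorphism, I would start from $\reeim\pi(K_1\ctens K_2)=\reeim\pi\reeim\mu(\opb{q_1}K_1\tens\opb{q_2}K_2)$, rewrite $\reeim\pi\reeim\mu$ as $\reeim\pi\reeim{q_1}$ using $\pi\circ\mu=\pi\circ q_1$ (and Lemma~\ref{lem:bcomp}), then apply the projection formula to get $\reeim{q_1}(\opb{q_1}K_1\tens\opb{q_2}K_2)\simeq K_1\tens\reeim{q_1}\opb{q_2}K_2$. Base change through the left square gives $\reeim{q_1}\opb{q_2}K_2\simeq\opb\pi\reeim\pi K_2$, and a final projection formula yields $\reeim\pi K_1\tens\reeim\pi K_2$.

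For the second isomorphism, begin with $\roim\pi\cihom(K_1,K_2)=\roim\pi\roimv{q_{1*}}\rihom(\opb{q_2}K_1,\epb\mu K_2)$ and use $\pi\circ q_1=\pi\circ q_2$ (Lemma~\ref{lem:bcomp}) to replace $\roim\pi\roimv{q_{1*}}$ by $\roim\pi\roimv{q_{2*}}$. The adjunction $\roim{q_2}\rihom(\opb{q_2}K_1,\cdot)\simeq\rihom(K_1,\roim{q_2}\cdot)$ from Proposition~\ref{pro:bproj} turns the expression into $\roim\pi\rihom(K_1,\roimv{q_{2*}}\epb\mu K_2)$. Base change through the right square then gives $\roimv{q_{2*}}\epb\mu K_2\simeq\epb\pi\roim\pi K_2$, and the final application of Proposition~\ref{pro:bproj} (namely $\rihom(\reeim\pi K_1,G)\simeq\roim\pi\rihom(K_1,\epb\pi G)$) with $G=\roim\pi K_2$ concludes.

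The whole argument is formal once the two Cartesian squares are in place; the only point requiring minor care is verifying the Cartesianness of the second square in the bordered-space category, which amounts to the easy check above on underlying sets together with the fact that the map $(x,t_1,t_2)\mapsto((x,t_1+t_2),(x,t_2))$ extends compatibly to the ambient compactifications.
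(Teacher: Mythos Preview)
Your proof is correct and follows essentially the same approach as the paper. The paper only writes out the first isomorphism (using $\pi\circ\mu=\pi\circ q_1$, the projection formula, and base change through the $(q_1,q_2)$ Cartesian square) and declares the second ``similar''; you spell out both, using for the second one the Cartesian square with $\mu$ and $q_2$---a square the paper itself invokes (with $q_1$ in place of $q_2$) in the proof of Lemma~\ref{lem:piRinfty}. One small presentational point: for the base change $\roimv{q_{2*}}\epb\mu\simeq\epb\pi\roim\pi$ you actually need the square with $q_2$ on top and $\mu$ on the side (the transpose of what you drew), but since the square is Cartesian in either orientation this does not affect the argument.
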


\begin{proof}
Note that $\pi\circ\mu = \pi\circ q_1$ and that there is a Cartesian diagram
\[
\xymatrix{
M\times\R^2_\infty \ar[r]^{q_1} \ar[d]^{q_2} & M\times\R_\infty \ar[d]^\pi \\
M\times\R_\infty \ar[r]^{\pi} \ar@{}[ur]|-\square & M.
}
\]
Then one has
\begin{align*}
\reeim\pi(K_1\ctens K_2) 
& = \reeim\pi\reeim\mu(\opb{q_1}K_1 \tens \opb{q_2}K_2) \\
&\simeq \reeim\pi\reeimv{q_{1\sep!!}}(\opb{q_1}K_1 \tens \opb{q_2}K_2) \\
&\simeq \reeim\pi(K_1 \tens \reeimv{q_{1\sep!!}}\opb{q_2}K_2) \\
&\simeq \reeim\pi(K_1 \tens \opb\pi\reeim\pi K_2) \\
&\simeq \reeim\pi K_1 \tens \reeim\pi K_2.
\end{align*}
The proof of the second isomorphism is similar.
\end{proof}

Since $\reeim\pi\field_{\{t\geq 0\}} \simeq 0$, we have the following result.

\begin{corollary}\label{cor:pieeim}
For any $K\in\BDC(\ifield_{M\times\R_\infty})$, one has
\begin{align*}
&\reeim\pi(\field_{\{t\geq 0\}} \ctens K) \simeq 0, \\
&\roim\pi\cihom(\field_{\{t\geq 0\}} , K) \simeq 0.
\end{align*}
\end{corollary}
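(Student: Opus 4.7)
The plan is to obtain both isomorphisms as immediate consequences of Lemma~\ref{lem:tenspipi}, once we record the preliminary fact $\reeim\pi\,\field_{\{t\geq 0\}}\simeq 0$ (which is precisely what is highlighted in the sentence preceding the statement). Specifically, setting $K_1=\field_{\{t\geq 0\}}$ and $K_2=K$ in the first isomorphism of Lemma~\ref{lem:tenspipi} gives
\[
\reeim\pi(\field_{\{t\geq 0\}}\ctens K)\simeq\reeim\pi\,\field_{\{t\geq 0\}}\tens\reeim\pi K\simeq 0\tens\reeim\pi K\simeq 0,
\]
and the second isomorphism of the same lemma, with the same choice, gives
\[
\roim\pi\cihom(\field_{\{t\geq 0\}},K)\simeq\rihom(\reeim\pi\,\field_{\{t\geq 0\}},\roim\pi K)\simeq\rihom(0,\roim\pi K)\simeq 0.
\]

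The only non-formal input is therefore $\reeim\pi\,\field_{\{t\geq 0\}}\simeq 0$. By Lemma~\ref{lem:f=jfj}, applied to the morphism $\pi\cl M\times\R_\infty\to M$ which extends to the proper map $\overline\pi\cl M\times\overline\R\to M$ (proper because $\overline\R$ is compact), the computation of $\reeim\pi\,\field_{\{t\geq 0\}}$ reduces to computing $\reim{\overline\pi}\,\field_{\{t\geq 0\}}$ on $M\times\overline\R$. Since by definition the stalk of $\field_{\{t\geq 0\}}$ vanishes at every point of $M\times\{\pm\infty\}$, base change along the Cartesian diagram $M\times\overline\R\to M$ reduces this to $\rsect_c(\overline\R;\field_{\{t\geq 0\}})\tens\field_M\simeq\rsect_c([0,+\infty);\field)\tens\field_M$, which is zero because the compactly supported cohomology of a closed half-line vanishes.

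I expect no real obstacle: once Lemma~\ref{lem:tenspipi} is in hand the corollary is purely formal, and the vanishing $\reeim\pi\,\field_{\{t\geq 0\}}\simeq 0$ is a standard base-change computation supported by the properness of $\overline\pi$.
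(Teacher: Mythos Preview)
Your proof is correct and follows exactly the paper's approach: the paper deduces the corollary from Lemma~\ref{lem:tenspipi} together with the vanishing $\reeim\pi\,\field_{\{t\geq 0\}}\simeq 0$, which is stated without proof just before the corollary. Your additional justification of this vanishing via Lemma~\ref{lem:f=jfj} and the computation of $\rsect_c([0,+\infty);\field)$ is a welcome elaboration of what the paper leaves implicit.
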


\begin{lemma}\label{lem:piRinfty} 
For $K\in\BDC(\ifield_{M\times\R_\infty})$ and $L\in\BDC(\ifield_M)$ one has
\begin{align*}
(\opb\pi L) \ctens K & \simeq \opb\pi(L\tens\reeim\pi K), \\
\cihom(\opb\pi L, K) & \simeq \epb\pi\rihom(L,\roim\pi K), \\
\cihom(K, \epb\pi L) & \simeq \epb\pi\rihom(\reeim\pi K, L).
\end{align*}
\
In particular, we have
\eqn
(\field_{\{t\geq 0\}} \dsum \field_{\{t\leq 0\}}) \ctens \opb\pi L&\simeq& 0,\\
\cihom(\field_{\{t\geq 0\}} \dsum \field_{\{t\leq 0\}},  \opb\pi L)&\simeq&0.
\eneqn
\end{lemma}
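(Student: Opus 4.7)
The plan is to establish the three displayed isomorphisms in order and then derive the two vanishings from them. I would begin with the first isomorphism, working directly from the definition $\opb\pi L\ctens K=\reeim\mu(\opb{q_1}\opb\pi L\tens\opb{q_2}K)$. Since $\pi\circ\mu=\pi\circ q_1$ (both equal the projection $M\times\R^2_\infty\to M$), one has $\opb{q_1}\opb\pi L\simeq\opb\mu\opb\pi L$; the projection formula then yields $\opb\pi L\tens\reeim\mu\opb{q_2}K$. Next, the change-of-variables isomorphism $\psi(x,t_1,t_2)=(x,t_1+t_2,t_2)$ (satisfying $q_1\circ\psi=\mu$ and $q_2\circ\psi=q_2$) identifies $\reeim\mu\opb{q_2}K\simeq\reeim{q_1}\opb{q_2}K$, and base change on the Cartesian square of Lemma~\ref{lem:tenspipi} turns this into $\opb\pi\reeim\pi K$.

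For the second and third isomorphisms, my approach is via Yoneda, chaining adjunctions. Concretely, for any $K'\in\BDC(\ifield_{M\times\R_\infty})$,
\begin{align*}
\Hom(K',\cihom(\opb\pi L,K))
&\simeq \Hom(K'\ctens\opb\pi L,K) \\
&\simeq \Hom(\opb\pi(L\tens\reeim\pi K'),K) \\
&\simeq \Hom(\reeim\pi K',\rihom(L,\roim\pi K)) \\
&\simeq \Hom(K',\epb\pi\rihom(L,\roim\pi K)),
\end{align*}
using Proposition~\ref{pro:ctenscihom}, the first isomorphism applied to $K'$, the combined adjunctions $\opb\pi\dashv\roim\pi$ and $L\tens\cdot\dashv\rihom(L,\cdot)$, and finally $\reeim\pi\dashv\epb\pi$. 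An analogous Yoneda chain, this time invoking Lemma~\ref{lem:tenspipi} in the form $\reeim\pi(K'\ctens K)\simeq\reeim\pi K'\tens\reeim\pi K$, produces the third isomorphism.

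For the vanishing statements, the key input is $\reeim\pi\field_{\{t\geq 0\}}\simeq 0\simeq\reeim\pi\field_{\{t\leq 0\}}$: by Lemma~\ref{lem:f=jfj} each reduces to $\reim{\overline\pi}\field_{M\times[0,\infty)}$, whose stalks are $\rsect_c([0,\infty);\field)=0$. The first vanishing is then immediate from the first isomorphism and the commutativity of $\ctens$. For the second, I would establish the shift identity $\opb\pi L\simeq\epb\pi L[-1]$: Proposition~\ref{pro:topsub} applied to the topologically submersive projection $\pi\colon M\times\R\to M$ gives $\epb\pi L\simeq\epb\pi\field_M\tens\opb\pi L$, while an explicit computation through Lemma~\ref{lem:f=jfj} together with the identification $\omega_{\overline\R}\simeq\field_\R[1]$ for the dualizing complex of the compact $1$-manifold with boundary $\overline\R$ yields $\epb\pi\field_M\simeq\field_{M\times\R}[1]\simeq\opb\pi\field_M[1]$. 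Combining the shift identity with the third isomorphism and $\reeim\pi(\field_{\{t\geq 0\}}\dsum\field_{\{t\leq 0\}})\simeq 0$ then gives the second vanishing. The hardest step will be this shift identity: although intuitively ``$\R_\infty$ has an oriented $1$-dimensional fiber'', making this precise in the bordered-space framework requires passing to $M\times\overline\R$ and computing $\omega_{\overline\R}$, and without it the third isomorphism produces only $\cihom(\field_{\{t\geq 0\}}\dsum\field_{\{t\leq 0\}},\epb\pi L)\simeq 0$ rather than the stated conclusion.
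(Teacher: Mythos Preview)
Your argument is correct. Your route differs from the paper's in two respects. First, the paper unwinds the second isomorphism directly from the definition of $\cihom$: using $\pi\circ q_2=\pi\circ\mu$ and the Cartesian square
\[
\xymatrix{
M\times\R^2_\infty \ar[r]^{\mu} \ar[d]^{q_1} & M\times\R_\infty \ar[d]^\pi \\
M\times\R_\infty \ar[r]^{\pi} \ar@{}[ur]|-\square & M
}
\]
it computes $\roimv{q_{1\sep*}}\rihom(\opb{q_2}\opb\pi L,\epb\mu K)\simeq\roimv{q_{1\sep*}}\epb\mu\rihom(\opb\pi L,K)\simeq\epb\pi\roim\pi\rihom(\opb\pi L,K)\simeq\epb\pi\rihom(L,\roim\pi K)$, then declares the other two isomorphisms ``similar''. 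You instead prove the first isomorphism directly (by essentially the same mechanism) and deduce the second and third by Yoneda, invoking the first isomorphism and Lemma~\ref{lem:tenspipi} as inputs. This is a clean alternative: the Yoneda chains make the adjunction structure transparent, at the cost of importing Lemma~\ref{lem:tenspipi}, whereas the paper's uniform direct method keeps each isomorphism self-contained.

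Second, the paper leaves the two vanishings entirely to the reader, whereas you supply the details, including the shift identity $\epb\pi\field_M\simeq\opb\pi\field_M[1]$. Your derivation of that identity via $\omega_{\overline\R}\simeq\field_\R[1]$ and Proposition~\ref{pro:topsub} is correct; the paper uses the same shift implicitly elsewhere (for instance, the remark ``$\epb{\overline q_2}F \simeq \opb{\overline q_2}F[1]$'' in the proof of Proposition~\ref{pro:tenshomstar}). So your worry about the ``hardest step'' is unfounded: this is a routine fact about one-dimensional fibers and needs no special care in the bordered setting.
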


\begin{proof}
Since the proofs are similar, let us only consider the second isomorphism.
Note that $\pi\circ q_2 = \pi\circ \mu$, and that there is a Cartesian diagram
\[
\xymatrix{
M\times\R^2_\infty \ar[r]^{\mu} \ar[d]^{q_1} & M\times\R_\infty \ar[d]^\pi \\
M\times\R_\infty \ar[r]^{\pi} \ar@{}[ur]|-\square & M.
}
\]
Then one has
\begin{align*}
\cihom(\opb\pi L, K) 
& = \roimv{q_{1\sep*}} \rihom(\opb{q_2}\opb\pi L,\epb\mu K) \\
& \simeq \roimv{q_{1\sep*}} \rihom(\opb\mu\opb\pi L,\epb\mu K) \\
& \simeq \roimv{q_{1\sep*}} \epb\mu\rihom(\opb\pi L,K) \\
& \simeq \epb\pi\roim\pi\rihom(\opb\pi L,K) \\
& \simeq \epb\pi\rihom(L,\roim\pi K).
\end{align*}
\end{proof}

By the above lemma, noticing that $\opb\pi\field_M \simeq \field_{M\times\R}$, we deduce

\begin{corollary} 
\label{cor:pipi}
For $K\in\BDC(\ifield_{M\times\R_\infty})$ one has
\begin{align*}
\field_{M\times\R} \ctens K &\simeq \opb\pi\reeim\pi K, \\
\cihom(\field_{M\times\R}, K) &\simeq \epb\pi\roim\pi K.
\end{align*}
\end{corollary}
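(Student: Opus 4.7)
The plan is to obtain this corollary as an immediate specialization of Lemma~\ref{lem:piRinfty} by taking $L=\field_M$. The only non-formal input is the identification
\[
\opb\pi\field_M \simeq \field_{M\times\R} \quad\text{in } \BDC(\ifield_{M\times\R_\infty}),
\]
which holds because $\pi\colon M\times\R_\infty\to M$ is the projection morphism of bordered spaces, so its inverse image sends the constant sheaf $\field_M$ to the constant sheaf on the open part $M\times\R$ of the total space $M\times\overline\R$.

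Granting this identification, the first isomorphism is obtained by substituting $L=\field_M$ into the first line of Lemma~\ref{lem:piRinfty}:
\[
\field_{M\times\R} \ctens K \;\simeq\; (\opb\pi\field_M)\ctens K \;\simeq\; \opb\pi(\field_M\tens \reeim\pi K)\;\simeq\; \opb\pi\reeim\pi K,
\]
where the last step uses that $\field_M$ is the unit for $\tens$ in $\BDC(\ifield_M)$. For the second isomorphism, the same substitution into the second line of Lemma~\ref{lem:piRinfty} gives
\[
\cihom(\field_{M\times\R},K)\;\simeq\;\cihom(\opb\pi\field_M,K)\;\simeq\;\epb\pi\rihom(\field_M,\roim\pi K)\;\simeq\;\epb\pi\roim\pi K,
\]
using that $\rihom(\field_M,-)$ is the identity functor.

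There is no genuine obstacle here; the entire content has been placed in Lemma~\ref{lem:piRinfty}, whose proof in turn relies on the base change along the Cartesian square with $\mu$ and $q_1$ against the projection $\pi$. The only tiny verification worth spelling out is the identification $\opb\pi\field_M\simeq\field_{M\times\R}$ in the bordered-space category, which one may check either directly from Definition~\ref{def:fbordered} applied to $\pi$, or by invoking Remark~\ref{rem:coperM} together with the fact that ordinary pullback of the constant sheaf along the projection $\overline\pi\colon M\times\overline\R\to M$ produces the constant sheaf on $M\times\overline\R$, which restricts to $\field_{M\times\R}$ in the quotient $\BDC(\ifield_{M\times\R_\infty})=\BDC(\ifield_{M\times\overline\R})/\BDC(\ifield_{M\times(\overline\R\setminus\R)})$.
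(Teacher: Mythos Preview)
Your proof is correct and follows exactly the paper's approach: the paper introduces the corollary with the sentence ``By the above lemma, noticing that $\opb\pi\field_M \simeq \field_{M\times\R}$, we deduce'' and gives no further argument. Your write-up simply makes explicit the substitution $L=\field_M$ in Lemma~\ref{lem:piRinfty} and the trivial simplifications $\field_M\tens(-)\simeq(-)$ and $\rihom(\field_M,-)\simeq(-)$.
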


Let us give an alternative description of the functors $\ctens$ and $\cihom$.

\begin{notation}
\label{not:sfS}
Denote by $\mathsf S$ the closure of $\{(t_1,t_2,t_3)\in\R^3\semicolon t_1+t_2+t_3=0 \}$ in $\overline\R^3$. Consider the maps $\tilde q_1,\tilde q_2,\tilde\mu\colon\mathsf S\to \overline\R$ given by $\tilde q_1(t_1,t_2,t_3) = t_1$, $\tilde q_2(t_1,t_2,t_3) = t_2$, $\tilde\mu(t_1,t_2,t_3) = -t_3 = t_1+t_2$, and denote by the same letters the corresponding maps $M\times \mathsf S\to M\times \overline\R$. This is visualized in the following picture, which shows how the three variables behave at infinity:
\begin{center}
{\small
\begin{tikzpicture}[x=1.8cm,y=1.8cm]
\draw[thick,fill= gray!30!white,shift={(0,.5)}] (0,1) -- node[above]{$t_2=-\infty$} node[below]{$t_3=+\infty$} %
(1,1) -- node[above,sloped]{$t_1=+\infty$} node[below,sloped]{$t_2=-\infty$} %
(2,2) -- node[above,sloped]{$t_1=+\infty$} node[below,sloped]{$t_3=-\infty$} %
(2,3) -- node[above,sloped]{$t_2=+\infty,\ t_3=-\infty$}%
(1,3) -- node[above,sloped]{$t_1=-\infty$} node[below,sloped]{$t_2=+\infty$}%
(0,2) -- cycle ;
\draw[shift={(0,.5)}] (0,1) -- node[above,sloped,text width=4em]{$t_1=-\infty$\\$t_3=+\infty$} %
(0,2);
\draw[thick] (0,0) node[below]{$-\infty$} node{$\bullet$} -- (1,0) node[below]{$+\infty$} node{$\bullet$};
\draw[thick,shift={(.25,.25)}] (2,0) node[below]{$-\infty$} node{$\bullet$} --  (3,1) node[right]{$+\infty$} node{$\bullet$};
\draw[thick,shift={(.5,.5)}] (3,2) node[right]{$-\infty$} node{$\bullet$} -- (3,3) node[right]{$+\infty$} node{$\bullet$};
\draw[->,shift={(.25,.5)}] (2.25,2.5) -- node[above]{$\tilde q_2$} (2.75,2.5);
\draw[->,shift={(.25,.25)}] (1.75,1.25) -- node[above right]{$\tilde\mu$} (2.25,0.75);
\draw[->,shift={(0,.25)}] (0.5,0.75) -- node[right]{$\tilde q_1$} (0.5,0.25);
\draw (1,2.5) node{$\mathsf{S}$} ;
\end{tikzpicture}
}
\end{center}
\end{notation}

There are commutative diagrams
\[
\vcenter{\vbox{
\xymatrix@C=9ex{
M\times\R^2_\infty \ar[d]_u \ar[r]^k & M\times\mathsf S \ar[d]^{\tilde u} \\
M\times\R_\infty \ar[r]^{j_M} & M\times\overline\R
}}}
\quad\text{for }u=q_1,q_2,\mu,
\]
where $k$ is the morphism associated with the embedding $\R^2\to\mathsf S$
given by  $(t_1,t_2)\mapsto(t_1,t_2,-t_1-t_2)$.

One has
\[
\opb{\tilde\mu}(\{t\neq -\infty\}) \cap \opb{\tilde q_1}(\{t=-\infty\}) \subset \opb{\tilde q_2}(\{t=+\infty\}).
\]
One also has
\begin{align}
\label{eq:q1q2R}
\opb{\tilde q_1}(M\times\R) \cap \opb{\tilde q_2}(M\times\R) &=
\opb{\tilde q_1}(M\times\R) \cap \opb{\tilde \mu}(M\times\R) \\
\notag
&= k(M\times\R^2).
\end{align}
We identify $M\times\R^2$ with an open subset of $M\times\mathsf S$ by $k$.
Then $M\times\R^2_\infty$ is isomorphic to $M\times(\R^2,\mathsf S)$ as a bordered space. 
For $F\in\BDC(\ifield_{M\times\mathsf S})$, one has
\[
\reeim k\opb k F \simeq \field_{M\times\R^2} \tens F,
\quad
\roim k \epb k F \simeq \rihom(\field_{M\times\R^2}, F).
\]
The following lemma is immediate.

\begin{lemma}
\label{lem:ctenstilde}
Let $K_1,K_2\in\BDC(\ifield_{M\times\R_\infty})$. With the above notations, one has isomorphisms
\begin{align*}
K_1\ctens K_2 
&\simeq \opb{j_M}\reeim{\tilde\mu}(\opb{\tilde q_1}\reeimv{j_{M\sep!!}}K_1 \tens \opb{\tilde q_2}\reeimv{j_{M\sep!!}}K_2) \\
&\simeq \opb{j_M}\reeimv{\tilde q_{1\sep!!}}(\opb{\tilde q_2}\reeimv{j_{M\sep!!}}\opb a K_1 \tens \opb{\tilde\mu}\reeimv{j_{M\sep!!}}K_2), \\
\cihom(K_1,K_2) 
&\simeq \opb{j_M}\roimv{\tilde q_{1\sep*}}\rihom(\opb{\tilde q_2}\reeimv{j_{M\sep!!}}K_1, \epb{\tilde\mu}\roimv{j_{M\sep*}}K_2) \\
&\simeq \opb{j_M}\roim{\tilde\mu}\rihom(\opb{\tilde q_2}\reeimv{j_{M\sep!!}}\opb a K_1, \epb{\tilde q_1}\roimv{j_{M\sep*}}K_2).
\end{align*}
\end{lemma}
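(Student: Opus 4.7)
The plan is to exploit the identification $M\times\R^2_\infty\simeq M\times(\R^2,\mathsf{S})$ given by $k$, so that $q_1=\tilde q_1\circ k$, $q_2=\tilde q_2\circ k$, $\mu=\tilde\mu\circ k$ as morphisms of bordered spaces. Since the three maps $\tilde q_1,\tilde q_2,\tilde\mu$ are continuous on $M\times\mathsf{S}$ and send the open subset $M\times\R^2$ into $M\times\R$, Lemma~\ref{lem:f=jfj} applies and rewrites each of $\reeim{q_i}$, $\opb{q_i}$, $\roimv{q_{1\sep*}}$, $\epb{q_i}$, $\reeim\mu$, $\epb\mu$, $\roim\mu$ as the corresponding operation for $\tilde q_i$ or $\tilde\mu$, composed with $\reeimv{j_{M\sep!!}}$ or $\roimv{j_{M\sep*}}$ on the source side and with $\opb{j_M}$ on the target side. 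The operations attached to the embedding $k$ itself are, by Lemma~\ref{lem:jM}, $\reeim k\opb k F\simeq\field_{M\times\R^2}\tens F$ and $\roim k\epb k F\simeq\rihom(\field_{M\times\R^2},F)$.

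For the first $\ctens$ isomorphism I would substitute the above into $K_1\ctens K_2=\reeim\mu(\opb{q_1}K_1\tens\opb{q_2}K_2)$. Using that $\tens$ commutes with $\opb k$, one arrives at
$$
\opb{j_M}\reeim{\tilde\mu}\bigl(\field_{M\times\R^2}\tens\opb{\tilde q_1}\reeimv{j_{M\sep!!}}K_1\tens\opb{\tilde q_2}\reeimv{j_{M\sep!!}}K_2\bigr).
$$
The key observation is that the tensor product of the last two factors is already supported in $\opb{\tilde q_1}(M\times\R)\cap\opb{\tilde q_2}(M\times\R)$, because each $\reeimv{j_{M\sep!!}}K_i$ is supported in $M\times\R$; by the geometric identity~\eqref{eq:q1q2R} this intersection equals $k(M\times\R^2)$, so the extra factor $\field_{M\times\R^2}$ is redundant and can be dropped. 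The second $\ctens$ isomorphism is proved the same way, starting instead from $\reeimv{q_{1\sep!!}}(\opb{q_2}\opb a K_1\tens\opb\mu K_2)$ as provided by Lemma~\ref{lem:musigma}, and invoking the permuted identity $\opb{\tilde q_2}(M\times\R)\cap\opb{\tilde\mu}(M\times\R)=k(M\times\R^2)$, which is immediate from the symmetry of $\mathsf S$ in the three coordinates $(t_1,t_2,-t_1-t_2)$.

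For the $\cihom$ formulas the same substitution produces an extra factor $\rihom(\field_{M\times\R^2},\ast)$, which I would handle as follows. Writing $\field_{M\times\R^2}\simeq\opb{\tilde q_1}\field_{M\times\R}\tens\opb{\tilde q_2}\field_{M\times\R}$ via~\eqref{eq:q1q2R}, the factor $\opb{\tilde q_2}\field_{M\times\R}$ can be absorbed into $\opb{\tilde q_2}\reeimv{j_{M\sep!!}}K_1$ by its support; as for $\opb{\tilde q_1}\field_{M\times\R}$, the projection formula yields $\roimv{\tilde q_{1\sep*}}\rihom(\opb{\tilde q_1}\field_{M\times\R},\ast)\simeq\rihom(\field_{M\times\R},\roimv{\tilde q_{1\sep*}}(\ast))$, and the outermost $\opb{j_M}$ annihilates the residual $\rihom(\field_{M\times\R},\ast)$. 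The second $\cihom$ isomorphism follows by the symmetric argument from the alternative expression in Lemma~\ref{lem:musigma}.

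The only real obstacle is the careful bookkeeping of supports required to justify these absorptions; no tool beyond Lemmas~\ref{lem:f=jfj} and~\ref{lem:jM}, together with the geometric identity~\eqref{eq:q1q2R}, is needed, which is consistent with the authors deeming the lemma immediate.
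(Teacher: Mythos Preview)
Your proposal is correct and is precisely the natural unpacking of the argument; the paper gives no proof at all (it merely says ``The following lemma is immediate''), so your approach---using the commutative diagrams $\tilde u\circ k=j_M\circ u$ for $u=q_1,q_2,\mu$ together with Lemmas~\ref{lem:f=jfj} and~\ref{lem:jM}, and then eliminating the residual $\field_{M\times\R^2}$ or $\rihom(\field_{M\times\R^2},\ast)$ via the support identity~\eqref{eq:q1q2R}---is exactly what the authors have in mind.
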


Let us now state a result which will be fundamental in the next section.

Set for short
\[
\field_{\{t\neq \pm\infty\}} = \field_{\{(x,t)\in M\times\overline\R\semicolon t\neq \pm\infty\}} \in \BDC(\ifield_{M\times\overline\R}).
\]
Recall that $\overline\pi\colon M\times\overline\R \to M$ denotes the projection.

\begin{proposition}\label{pro:tenshomstar}
For $K\in\BDC(\ifield_{M\times\R_\infty})$ there is a distinguished triangle
\begin{equation}
\label{eq:tenshomstarDT}
\opb\pi L \To \field_{\{t\geq 0\}} \ctens K \To \cihom(\field_{\{t\geq 0\}}, K) \To[+1],
\end{equation}
where the object $L\in\BDC(\ifield_M)$ is given by
\begin{align*}
L & = \roim{\overline\pi}(\field_{\{t\neq-\infty\}} \tens \roimv{j_{M\sep*}} K) \\
&\simeq \reeim\pi\cihom(\field_{\{t\geq 0\}}, K) \\
&\simeq \roim\pi(\field_{\{t\geq 0\}} \ctens K).
\end{align*}
\end{proposition}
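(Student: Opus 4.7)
The plan is to first construct a canonical morphism $\phi \colon \field_{\{t\geq 0\}}\ctens K \to \cihom(\field_{\{t\geq 0\}},K)$, then identify its fiber with $\opb\pi L$ for the object $L$ of the statement.

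To construct $\phi$, I would invoke the idempotency $\field_{\{t\geq 0\}}\ctens \field_{\{t\geq 0\}} \isoto \field_{\{t\geq 0\}}$ from Lemma~\ref{lem:nilpo}, together with the adjunction of Proposition~\ref{pro:ctenscihom}, to reduce the task to specifying a morphism $\field_{\{t\geq 0\}}\ctens K \to K$. The natural such morphism is induced by the restriction $\field_{\{t\geq 0\}}\to \field_{\{t=0\}}$ (coming from the short exact sequence $0\to \field_{\{t>0\}}\to\field_{\{t\geq 0\}}\to\field_{\{t=0\}}\to 0$ on $M\times\overline{\R}$) followed by $\field_{\{t=0\}}\ctens K \isoto K$.

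To identify the fiber of $\phi$, I plan to apply the octahedral axiom to the factorization $\field_{\{t\geq 0\}}\ctens K \to K \to \cihom(\field_{\{t\geq 0\}}, K)$, whose second arrow is the analogous cihom-dual construction. This yields a distinguished triangle $\field_{\{t>0\}}\ctens K[1] \to \operatorname{cone}(\phi) \to \cihom(\field_{\{t>0\}}, K)\to[+1]$, since the two individual cones are $\field_{\{t>0\}}\ctens K[1]$ and $\cihom(\field_{\{t>0\}}, K)$ (both coming from the rotated sequence $\field_{\{t\geq 0\}}\to\field_{\{t=0\}}\to\field_{\{t>0\}}[1]\to[+1]$). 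The hard step, and the main obstacle, is to show that $\operatorname{cone}(\phi)$ is of the form $\opb\pi L[1]$, i.e.~pulled back from $M$. I would carry this out by working on the compactification $M\times\overline{\R}$ with $\roimv{j_{M\sep*}}K$ and using the $\mathsf{S}$-model of convolution from Lemma~\ref{lem:ctenstilde} to realize both $\field_{\{t\geq 0\}}\ctens K$ and $\cihom(\field_{\{t\geq 0\}}, K)$ as pushforwards along $\tilde\mu$. A careful stalk analysis at the corners of $\mathsf{S}$ (where the coordinates $t_1,t_2,t_3$ tend to $\pm\infty$) identifies the cone with $\opb\pi L[1]$, with $L = \roim{\overline\pi}(\field_{\{t\neq-\infty\}}\tens\roimv{j_{M\sep*}}K)$.

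The remaining two descriptions of $L$ then follow by applying $\reeim\pi$ and $\roim\pi$ to the distinguished triangle $\opb\pi L\to\field_{\{t\geq 0\}}\ctens K\to\cihom(\field_{\{t\geq 0\}},K)\to[+1]$ and using Corollary~\ref{cor:pieeim}: the middle term has vanishing $\reeim\pi$ and the right term has vanishing $\roim\pi$, yielding $\reeim\pi\cihom(\field_{\{t\geq 0\}},K)\simeq\reeim\pi\opb\pi L[1]$ and $\roim\pi(\field_{\{t\geq 0\}}\ctens K)\simeq\roim\pi\opb\pi L$. The auxiliary computations $\roim{\overline\pi}\field_{M\times\R}\simeq\field_M[-1]$ (which follows from the short exact sequence $0\to\field_{M\times\R}\to\field_{M\times\overline\R}\to\field_{M\times\{\pm\infty\}}\to 0$) and $\roim{\overline\pi}\field_{M\times\overline\R}\simeq\field_M$ then give $\reeim\pi\opb\pi L\simeq L[-1]$ and $\roim\pi\opb\pi L\simeq L$, producing the desired chain $L\simeq\reeim\pi\cihom(\field_{\{t\geq 0\}},K)\simeq\roim\pi(\field_{\{t\geq 0\}}\ctens K)$.
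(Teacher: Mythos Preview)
Your last paragraph (deducing the two alternative descriptions of $L$ from the distinguished triangle via $\reeim\pi$, $\roim\pi$ and Corollary~\ref{cor:pieeim}) is correct and is exactly what the paper does.

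The main construction, however, has a real gap. Your octahedral decomposition produces
\[
\field_{\{t>0\}}\ctens K[1] \To \operatorname{cone}(\phi) \To \cihom(\field_{\{t>0\}},K) \To[+1],
\]
but neither outer term is of the form $\opb\pi(\,\cdot\,)$ in general (e.g.\ $\field_{\{t>0\}}\ctens\field_{\{t=0\}}\simeq\field_{\{t>0\}}$), so this triangle gives you no handle on why $\operatorname{cone}(\phi)$ should be pulled back from $M$. You then abandon the octahedral route and appeal to a ``careful stalk analysis at the corners of $\mathsf S$'', which is where the entire content lies and which you have not supplied.

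The paper's mechanism is different and more direct. It does \emph{not} factor through $K$; instead it rewrites both terms as pushforwards along the \emph{same} map $\overline q_1\colon M\times\R_\infty\times\overline\R\to M\times\R_\infty$. Using Lemma~\ref{lem:musigma} and Proposition~\ref{pro:const} (with $\widetilde K=\roimv{j_{M\sep*}}K$), one gets
\[
\field_{\{t\ge0\}}\ctens K \simeq \roimv{\overline q_{1\sep*}}\bigl(\field_{M\times\R^2}\tens\rihom(\field_{\{t_2<t_1\}}[1],\epb{\overline q_2}\widetilde K)\bigr),
\]
\[
\cihom(\field_{\{t\ge0\}},K)\simeq \roimv{\overline q_{1\sep*}}\rihom(\field_{\{t_1\le t_2\}},\epb{\overline q_2}\widetilde K).
\]
The key point is then the \emph{upstairs} distinguished triangle
\[
\field_{\{t_1\le t_2\}}\To\field_{\{t_2<t_1\}}[1]\To\field_{M\times\R^2}[1]\To[+1],
\]
whose third term, after $\rihom(-,\epb{\overline q_2}\widetilde K)$ and $\roimv{\overline q_{1\sep*}}$, is manifestly $\opb\pi L$. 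To make the two displayed expressions fit into a single $\roimv{\overline q_{1\sep*}}$-triangle one must show that the factor $\field_{M\times\R^2}$ can be replaced by $\field_{M\times\R\times(\overline\R\setminus\{-\infty\})}$ in both; this is a support computation at $t_2=\pm\infty$ (the paper's (4.3.4)--(4.3.7)), and it is here that the $\mathsf S$-picture is actually used. Your proposal does not identify this upstairs triangle, which is the crux of the argument.
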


\begin{proof}
Consider the Cartesian diagram,
\[
\xymatrix@C=8ex{
M\times\R_\infty\times\overline\R \ar[r]^-{\overline q_1} \ar[d]^{\overline q_2} & M\times\R_\infty \ar[d]^\pi \\
M\times\overline\R \ar[r]^{\overline\pi} \ar@{}[ur]|-\square & M.
}
\]
Remark that $\epb{\overline q_2}F \simeq \opb{\overline q_2}F[1]$ for any $F\in\BDC(\ifield_{M\times\overline\R})$.

Let $(t_1,t_2) \in \R^2 \subset\overline\R^2$ be the coordinates. 
In the sequel we will denote by $\{t_2\leq t_1\}$, $\{t_2< t_1\}$, etc., the subsets of $M\times\R^2$ described by these inequalities.
Set
\[
\widetilde K = \roimv{j_{M\sep*}}K \in \BDC(\ifield_{M\times\overline\R}).
\]
One has the isomorphisms
\begin{align*}
\field_{\{t\geq 0\}} \ctens K
&\underset{(1)}\simeq \reeimv{\overline q_{1\sep!!}}(\field_{\{t_2\leq t_1\}} \tens \opb{\overline q_2}\widetilde K) \\
&\simeq \roimv{\overline q_{1\sep*}}(\field_{M\times\R^2} \tens \rihom(\field_{\{t_2 < t_1\}},\field_{M\times\R^2}) \tens \epb{\overline q_2}\widetilde K[-1]) \\
&\underset{(2)}\simeq \roimv{\overline q_{1\sep*}} (\field_{M\times\R^2} \tens \rihom(\field_{\{t_2 < t_1\}}[1],\epb{\overline q_2}\widetilde K)) ,
\end{align*}
where $(1)$ follows from Lemma~\ref{lem:musigma} and $(2)$ from Proposition~\ref{pro:const}.
Similarly, one has the isomorphism
\[
\cihom(\field_{\{t\geq 0\}}, K)
\simeq \roimv{\overline q_{1\sep*}}\rihom(\field_{\{t_1 \leq t_2\}}, \epb{\overline q_2}\widetilde K).
\]
Now, we claim that there are the isomorphisms in $\BDC(\ifield_{M\times\R_\infty\times\overline\R})$
\begin{multline}
\label{eq:tempA}
\field_{M\times\R^2} \tens \rihom(\field_{\{t_2 < t_1\}}[1],\epb{\overline q_2}\widetilde K) \\
\isoto
\field_{M\times\R\times(\overline\R\setminus\{-\infty\})} \tens
\rihom(\field_{\{t_2 < t_1\}}[1],\epb{\overline q_2}\widetilde K)
\end{multline}
and
\begin{multline}
\label{eq:tempB}
\rihom(\field_{\{t_1 \leq t_2\}}, \epb{\overline q_2}\widetilde K) \\
\isofrom
\field_{M\times\R\times(\overline\R\setminus\{-\infty\})} \tens \rihom(\field_{\{t_1 \leq t_2\}}, \epb{\overline q_2}\widetilde K).
\end{multline}
We shall give a proof later. Admitting the above isomorphisms for the moment, let us complete the proof.

We have
\begin{align*}
\field_{\{t\geq 0\}} \ctens K
&\simeq \roimv{\overline q_{1\sep*}} (\field_{M\times\R\times(\overline\R\setminus\{-\infty\})} \tens \rihom(\field_{\{t_2 < t_1\}}[1],\epb{\overline q_2}\widetilde K)), \\
\cihom(\field_{\{t\geq 0\}}, K)
&\simeq \roimv{\overline q_{1\sep*}} (\field_{M\times\R\times(\overline\R\setminus\{-\infty\})} \tens\rihom(\field_{\{t_1 \leq t_2\}}, \epb{\overline q_2}\widetilde K)).
\end{align*}

{}From the distinguished triangle
\[
\field_{\{t_1 \leq t_2\}} \To \field_{\{t_2 < t_1\}}[1] \To \field_{M\times\R^2}[1] \To[+1],
\]
we deduce a distinguished triangle
\[
\tilde L \To \field_{\{t\geq 0\}} \ctens K \To \cihom(\field_{\{t\geq 0\}}, K) \To[+1],
\]
where
\[
\tilde L = \roimv{\overline q_{1\sep*}}(\field_{M\times\R\times(\overline\R\setminus\{-\infty\})} \tens \rihom(\field_{M\times\R^2}[1], \epb{\overline q_2}\widetilde K)).
\]
One has the isomorphisms
\begin{align*}
\rihom(\field_{M\times\R^2}, \epb{\overline q_2}\widetilde K)
&\simeq \rihom(\opb{\overline q_2}\field_{M\times\R}, \epb{\overline q_2}\widetilde K) \\
&\simeq \epb{\overline q_2}\rihom(\field_{M\times\R}, \widetilde K) \\
&\simeq \epb{\overline q_2}\widetilde K.
\end{align*}
Hence,
\begin{align*}
\field_{M\times\R\times(\overline\R\setminus\{-\infty\})} \tens {} \rihom&(\field_{M\times\R^2}[1], \epb{\overline q_2}\widetilde K) \\
&\simeq \opb{\overline q_2}\field_{\{t\neq-\infty\}} \tens \epb{\overline q_2}\widetilde K[-1] \\
&\simeq \opb{\overline q_2}\field_{\{t\neq-\infty\}} \tens \opb{\overline q_2}\widetilde K \\
&\simeq \opb{\overline q_2}(\field_{\{t\neq-\infty\}} \tens \widetilde K) \\
&\simeq \epb{\overline q_2}(\field_{\{t\neq-\infty\}} \tens \widetilde K)[-1].
\end{align*}
It follows that
\begin{align*}
\tilde L 
&\simeq \roimv{\overline q_{1\sep*}}\epb{\overline q_2}(\field_{\{t\neq-\infty\}} \tens \widetilde K)[-1] \\
&\simeq \epb\pi\roim{\overline\pi}(\field_{\{t\neq-\infty\}} \tens \widetilde  K)[-1] \\
&\simeq \opb\pi\roim{\overline\pi}(\field_{\{t\neq-\infty\}} \tens \widetilde  K) .
\end{align*}
We have thus proved \eqref{eq:tenshomstarDT} with
\[
L = \roim{\overline\pi}(\field_{\{t\neq-\infty\}} \tens \roimv{j_{M\sep*}} K).
\]
Applying $\reeim\pi$ to \eqref{eq:tenshomstarDT}, we get a distinguished triangle
\[
\reeim\pi\opb\pi L \To \reeim\pi(\field_{\{t\geq 0\}} \ctens K) \To \reeim\pi\cihom(\field_{\{t\geq 0\}}, K) \To[+1].
\]
Corollary~\ref{cor:pieeim} gives $\reeim\pi(\field_{\{t\geq 0\}} \ctens K) \simeq 0$.
Noticing that $L\simeq\reeim\pi\opb\pi L[1]$, we get
\[
L \simeq \reeim\pi\cihom(\field_{\{t\geq 0\}}, K).
\]
Similarly, applying $\roim\pi$ to \eqref{eq:tenshomstarDT}, we get
\[
L \simeq \roim\pi(\field_{\{t\geq 0\}} \ctens K).
\]

It remains to prove \eqref{eq:tempA} and \eqref{eq:tempB}.
It is enough to show that for any $F\in\BDC(\ifield_{M\times\overline\R^2})$ one has
\begin{align}
\label{eq:tempC}
& \field_{M\times\R\times\{+\infty\}} \tens
\rihom(\field_{\{t_2 < t_1\}},F) \simeq 0, \\
\label{eq:tempD}
& \field_{M\times\R\times\{-\infty\}} \tens \rihom(\field_{\{t_1 \leq t_2\}}, F) \simeq 0.
\end{align}

As in Notation~\ref{not:sfS}, let $\mathsf S$ be the closure of $\{(t_1,t_2,t_3)\in\R^3\semicolon t_1+t_2+t_3 = 0 \}$ in $\overline\R^3$. Consider the map $\tilde p\colon\mathsf S\to\overline\R^2$ given by $\tilde p(t_1,t_2,t_3) = (t_1,t_2)$. Then $\opb {\tilde p}(\R^2) \isoto\R^2$.
We shall denote by the same letter the induced map $\tilde p\colon M\times \mathsf S\to M\times \overline\R^2$.

Since $\reeim {\tilde p}(\field_{\opb {\tilde p}(\{t_2 < t_1\})}) \simeq \field_{\{t_2 < t_1\}}$ and $\reeim {\tilde p}(\field_{\opb {\tilde p}(\{t_1 \leq t_2\})}) 
\simeq \field_{\{t_1 \leq t_2\}}$, we have
\begin{align*}
\rihom(\field_{\{t_2 < t_1\}},F)
&\simeq \roim {\tilde p} \rihom(\field_{\opb {\tilde p} (\{t_2 < t_1\})},\epb {\tilde p} F), \\
\rihom(\field_{\{t_1 \leq t_2\}}, F) 
&\simeq \roim {\tilde p}\rihom(\field_{\opb {\tilde p}(\{t_1 \leq t_2\})}, \epb {\tilde p} F) .
\end{align*}
Then \eqref{eq:tempC} follows from
\begin{multline*}
\field_{M\times\R\times\{+\infty\}} \tens
\rihom(\field_{\{t_2 < t_1\}},F) \\
\simeq
\roim {\tilde p}(\field_{\opb {\tilde p}(M\times\R\times\{+\infty\})} \tens
\rihom(\field_{\opb {\tilde p}(\{t_2 < t_1\})},\epb {\tilde p} F))
\end{multline*}
and
\[
\overline{\opb {\tilde p}(\{t_2 < t_1\})} \cap \opb {\tilde p}(M\times\R\times\{+\infty\}) = \emptyset.
\]
Similarly, \eqref{eq:tempD} follows from
\[
\overline{\opb {\tilde p}(\{t_1 \leq t_2\})} \cap \opb {\tilde p}(M\times\R\times\{-\infty\}) = \emptyset.
\]
\end{proof}

\begin{corollary}
\label{cor:kkK}
For $K\in\BDC(\ifield_{M\times\R_\infty})$, there are isomorphisms
\begin{align*}
\cihom(\field_{\{t\geq 0\}}, \field_{\{t\geq 0\}}\ctens K)
&\isoto \cihom(\field_{\{t\geq 0\}}, K),\\
\field_{\{t\geq 0\}}\ctens \cihom(\field_{\{t\geq 0\}}, K)
&\isoto \field_{\{t\geq 0\}}\ctens K, \\
\cihom(\field_{\{t\leq 0\}}, \field_{\{t\geq 0\}}\ctens K)
&\simeq 0 ,\\
\field_{\{t\leq 0\}}\ctens \cihom(\field_{\{t\geq 0\}}, K)
&\simeq 0.
\end{align*}
\end{corollary}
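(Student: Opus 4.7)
The plan is to derive all four isomorphisms by applying the four functors $\cihom(\field_{\{t\geq 0\}},-)$, $\field_{\{t\geq 0\}}\ctens-$, $\cihom(\field_{\{t\leq 0\}},-)$, $\field_{\{t\leq 0\}}\ctens-$ to the distinguished triangle
\[
\opb\pi L \To \field_{\{t\geq 0\}} \ctens K \To \cihom(\field_{\{t\geq 0\}}, K) \To[+1]
\]
furnished by Proposition~\ref{pro:tenshomstar}, and then use the idempotence/orthogonality relations between $\field_{\{t\geq 0\}}$ and $\field_{\{t\leq 0\}}$ together with the vanishing of their convolution against pull-backs from $M$.

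First I will recall the two inputs needed. Lemma~\ref{lem:nilpo} gives $\field_{\{t\geq 0\}} \ctens \field_{\{t\geq 0\}} \simeq \field_{\{t\geq 0\}}$ and $\field_{\{t\geq 0\}} \ctens \field_{\{t\leq 0\}} \simeq 0$, together with Proposition~\ref{pro:ctenscihom} which converts the convolution in the first argument of $\cihom$ into a composition. Lemma~\ref{lem:piRinfty} gives the vanishings
\[
(\field_{\{t\geq 0\}} \dsum \field_{\{t\leq 0\}})\ctens \opb\pi L\simeq 0,\qquad \cihom(\field_{\{t\geq 0\}} \dsum \field_{\{t\leq 0\}},\opb\pi L)\simeq 0.
\]

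Applying $\field_{\{t\geq 0\}}\ctens-$ to the triangle, the first term vanishes by Lemma~\ref{lem:piRinfty} and the middle term reduces to $\field_{\{t\geq 0\}}\ctens K$ by idempotence, yielding the second isomorphism. Applying $\cihom(\field_{\{t\geq 0\}},-)$, the first term again vanishes, while the third term becomes $\cihom(\field_{\{t\geq 0\}}\ctens\field_{\{t\geq 0\}},K)\simeq\cihom(\field_{\{t\geq 0\}},K)$ by Proposition~\ref{pro:ctenscihom} and Lemma~\ref{lem:nilpo}, yielding the first isomorphism. Applying $\field_{\{t\leq 0\}}\ctens-$, the first term vanishes and the middle term $(\field_{\{t\leq 0\}}\ctens\field_{\{t\geq 0\}})\ctens K$ vanishes by Lemma~\ref{lem:nilpo}, hence the third term vanishes, giving the fourth isomorphism. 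Analogously, applying $\cihom(\field_{\{t\leq 0\}},-)$ gives the third isomorphism.

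There is no genuine obstacle here: the statement is a formal consequence of Proposition~\ref{pro:tenshomstar} together with the adjunction in Proposition~\ref{pro:ctenscihom} and the idempotent calculus of Lemmas~\ref{lem:nilpo} and \ref{lem:piRinfty}. The only point requiring a moment of care is the direction of the isomorphisms in the first two claims; this is fixed by reading off which arrow in the long exact sequence becomes an isomorphism after the flanking terms vanish, and one checks in both cases that it is indeed the natural one induced by $\field_{\{t\geq 0\}}\ctens\field_{\{t\geq 0\}}\isoto\field_{\{t\geq 0\}}$.
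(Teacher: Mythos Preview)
Your proof is correct and is essentially the same as the paper's: apply the four functors $\field_{\{\pm t\geq 0\}}\ctens-$ and $\cihom(\field_{\{\pm t\geq 0\}},-)$ to the distinguished triangle of Proposition~\ref{pro:tenshomstar}, kill the $\opb\pi L$ term using Lemma~\ref{lem:piRinfty}, and simplify the remaining terms via the idempotent relations of Lemma~\ref{lem:nilpo} (together with Proposition~\ref{pro:ctenscihom}). The paper's proof is just a terser statement of the same argument.
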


\begin{proof}
By Lemma~\ref{lem:piRinfty}, for any $L\in\BDC(\ifield_M)$ one has
\[
\cihom(\field_{\{t\geq 0\}}, \epb\pi L) \simeq 0,
\quad
\field_{\{t\geq 0\}} \ctens \opb\pi L \simeq 0.
\]
Recalling also Lemma~\ref{lem:nilpo}, the isomorphisms in the statement follow by applying the functors
$\cihom(\field_{\{\pm t\geq 0\}}, \ast)$ and $\field_{\{\pm t\geq 0\}} \ctens \ast$ to the distinguished triangle \eqref{eq:tenshomstarDT}.
\end{proof}

\begin{notation}
\label{not:psi}
For $K\in\BDC(\ifield_{M\times\R_\infty})$, consider the functors
\[
\psi_{M,\pm\infty}(K) = \opb{i_{M,\pm\infty}}\roimv{j_{M\sep*}}K,
\]
where $i_{M,\pm\infty}\colon M \to M\times\overline\R$ denotes the embedding $x\mapsto (x,\pm\infty)$.
\end{notation}

\begin{lemma}
\label{lem:psi}
For $K\in\BDC(\ifield_{M\times\R_\infty})$, one has the isomorphisms
\begin{align*}
\psi_{M,-\infty}(\field_{\{t\geq 0\}} \ctens K) &\simeq 0, \\
\psi_{M,+\infty}\,\cihom(\field_{\{t\geq 0\}}, K) &\simeq 0, \\
\psi_{M,+\infty}(\field_{\{t\geq 0\}} \ctens K) &\simeq L, \\ 
\psi_{M,-\infty}\,\cihom(\field_{\{t\geq 0\}}, K)
&\simeq L[1],
\end{align*}
where $L$ is the object defined in {\rm Proposition~\ref{pro:tenshomstar}}.
\end{lemma}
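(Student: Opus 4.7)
All four isomorphisms can be deduced from the distinguished triangle \eqref{eq:tenshomstarDT} of Proposition~\ref{pro:tenshomstar} combined with two separate vanishing assertions. Since $\psi_{M,\pm\infty} = \opb{i_{M,\pm\infty}}\circ\roim{j_{M\sep*}}$ is a composition of triangulated functors, applying it to \eqref{eq:tenshomstarDT} yields distinguished triangles
\[
\psi_{M,\pm\infty}(\opb\pi L)\to \psi_{M,\pm\infty}(\field_{\{t\geq 0\}}\ctens K)\to \psi_{M,\pm\infty}\cihom(\field_{\{t\geq 0\}}, K)\to[+1].
\]
My plan is to first check that $\psi_{M,\pm\infty}(\opb\pi L)\simeq L$, then prove the first two (vanishing) isomorphisms directly; the remaining two are forced by these triangles.

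For $\psi_{M,\pm\infty}(\opb\pi L)\simeq L$, write $\opb\pi L \simeq \opb{j_M}\opb{\overline\pi}L$, so that $\roim{j_{M\sep*}}\opb\pi L\simeq\rihom(\field_{M\times\R},\opb{\overline\pi}L)$. Applying $\opb{i_{M,\pm\infty}}$ to the distinguished triangle
\[
\rihom(\field_{M\times(\overline\R\setminus\R)},\opb{\overline\pi}L)\to \opb{\overline\pi}L\to \rihom(\field_{M\times\R},\opb{\overline\pi}L)\to[+1],
\]
one uses that $\opb{i_{M,\pm\infty}}\opb{\overline\pi}L\simeq L$ and that $\epb{i_{M,\pm\infty}}\opb{\overline\pi}L\simeq 0$, the latter because $\pm\infty$ is a boundary point of the one-dimensional manifold-with-boundary $\overline\R$, so that the upper-shriek of the constant sheaf vanishes there.

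For the two vanishings I will use the explicit formulas obtained in the proof of Proposition~\ref{pro:tenshomstar}: modulo $\opb{j_M}$, both $\field_{\{t\geq 0\}}\ctens K$ and $\cihom(\field_{\{t\geq 0\}},K)$ are expressed as $\roim{\overline q_{1\sep*}}$ of an integrand containing the support factor $\field_{M\times\R\times(\overline\R\setminus\{-\infty\})}$, combined respectively with $\rihom(\field_{\{t_2<t_1\}}[1],\epb{\overline q_2}\roim{j_{M\sep*}}K)$ and $\rihom(\field_{\{t_1\leq t_2\}},\epb{\overline q_2}\roim{j_{M\sep*}}K)$. Lifting the integrand to $\BDC(\ifield_{M\times\overline\R^2})$ and applying Lemma~\ref{lem:bcomp}, the composite $\roim{j_{M\sep*}}\circ\roim{\overline q_{1\sep*}}$ is identified with $\roim{\widetilde{\overline q_{1\sep*}}}\circ\roim{j_{M\times\R_\infty\times\overline\R\sep*}}$, where $\widetilde{\overline q_1}\colon M\times\overline\R^2\to M\times\overline\R$ is the ordinary projection. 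Since $\widetilde{\overline q_1}$ is proper, proper base change reduces each vanishing statement to the computation of $\opb{i''}$ and $\epb{i''}$ of the lifted integrand, where $i''\colon M\times\{\mp\infty\}\times\overline\R\hookrightarrow M\times\overline\R^2$. In the convolution case this is controlled by $\opb{i''}\field_{M\times\R\times(\overline\R\setminus\{-\infty\})} = 0$ at $t_1=-\infty$, and in the $\cihom$ case by the fact that $\{t_1\leq t_2\}$ has empty germ at $\{t_1=+\infty\}\cap\{t_2\in\R\}$.

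The main technical obstacle is that the representatives of $\field_{\{t\geq 0\}}\ctens K$ and $\cihom(\field_{\{t\geq 0\}},K)$ living in $\BDC(\ifield_{M\times\R_\infty})$ do not themselves compute $\roim{j_{M\sep*}}$: one must keep careful track of the upper-shriek boundary contribution arising when extending, and verify that at the ``vanishing'' infinity both the fiberwise stalk and the $\epb$-term vanish, while at the opposite infinity only the surviving $\epb$-term remains and recombines with $\psi_{M,\pm\infty}(\opb\pi L)\simeq L$ through the triangle to yield the expected $L$ or $L[1]$.
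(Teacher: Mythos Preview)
Your overall strategy matches the paper's: apply $\psi_{M,\pm\infty}$ to the distinguished triangle~\eqref{eq:tenshomstarDT}, establish $\psi_{M,\pm\infty}(\opb\pi L)\simeq L$, and prove the first two (vanishing) isomorphisms directly so that the last two follow. Your verification of $\psi_{M,\pm\infty}(\opb\pi L)\simeq L$ is more detailed than the paper's, which simply asserts it.

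The substantive difference is in how the two vanishings are proved. You propose to lift the explicit integrands from the proof of Proposition~\ref{pro:tenshomstar} to $M\times\overline\R^2$, interchange $\roim{j_{M\sep*}}\circ\roim{\overline q_{1\sep*}}$ with $\roim{\widetilde{\overline q_{1}}_{\sep*}}\circ\roim{j'_*}$, and base-change along the fiber $\{t_1=\mp\infty\}$. As you correctly note, this forces you to control $\opb{i''}\roim{j'_*}\hat A$, which sits in a triangle with $\epb{i''}\hat A$ and $\opb{i''}\hat A$; you claim both vanish, but for the convolution case you only exhibit $\opb{i''}\field_{M\times\R\times(\overline\R\setminus\{-\infty\})}=0$, and the $\epb{i''}$ term is not automatic (the closure of $M\times\R\times(\overline\R\setminus\{-\infty\})$ meets $\{t_1=-\infty\}$). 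This can be pushed through, but it is delicate.

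The paper avoids this entirely with an idempotency trick. For $\psi_{M,-\infty}(\field_{\{t\geq 0\}}\ctens K)\simeq 0$, set $K'=\field_{\{t\geq 0\}}\ctens K$ and observe $\field_{\{t\geq 0\}}\ctens K'\simeq K'$. Applying Proposition~\ref{pro:tenshomstar} to $K'$ gives
\[
\roim{\overline\pi}(\field_{\{t\neq-\infty\}}\tens\roim{j_{M\sep*}}K')\simeq\roim\pi(\field_{\{t\geq 0\}}\ctens K')\simeq\roim\pi K'\simeq\roim{\overline\pi}\roim{j_{M\sep*}}K',
\]
whence $\roim{\overline\pi}(\field_{\{t=-\infty\}}\tens\roim{j_{M\sep*}}K')\simeq 0$, which is exactly $\psi_{M,-\infty}(K')$. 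The second vanishing is analogous. This three-line argument replaces all of your lifting and base-change analysis; it is worth internalizing, as the same device (reapplying the proposition to its own output) recurs in the paper.
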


\begin{proof}
(i) Since the proofs of the first and second isomorphisms in the statement are similar, let us only check that
\begin{equation}
\label{eq:psi0}
\psi_{M,-\infty}(\field_{\{t\geq 0\}} \ctens K) \simeq 0.
\end{equation}

Set $K'=\field_{\{t\geq 0\}} \ctens K$.
Since $\field_{\{t\geq 0\}} \ctens K\simeq K'$,
Proposition~\ref{pro:tenshomstar} implies
\[
\roim{\overline\pi}(\field_{\{t\neq-\infty\}} \tens \roimv{j_{M\sep*}} K') \simeq \roim\pi K'.
\]
Since $\roim\pi K' \simeq \roim{\overline\pi}\roimv{j_{M\sep*}} K'$, we get
\[
\roim{\overline\pi}(\field_{\{t=-\infty\}} \tens \roimv{j_{M\sep*}} K') \simeq 0.
\]
One concludes since the above complex is isomorphic to $\psi_{M,-\infty}(\field_{\{t\geq 0\}} \ctens K) $.

\smallskip\noindent(ii)
Since the proofs of the third and fourth isomorphisms in the statement are similar, let us only check that
\begin{equation}
\label{eq:psiL}
\psi_{M,-\infty}\,\cihom(\field_{\{t\geq 0\}}, K)
\simeq L[1].
\end{equation}

Applying $\psi_{M,-\infty}$ to the distinguished triangle \eqref{eq:tenshomstarDT}, we obtain
\[
\psi_{M,-\infty}\,\cihom(\field_{\{t\geq 0\}}, K) \simeq \psi_{M,-\infty}(\opb\pi L[1]).
\]
Here we used \eqref{eq:psi0}. Then \eqref{eq:psiL} follows from $\psi_{M,-\infty}(\opb\pi L) \simeq L$.
\end{proof}

Let us state an easy lemma which will be of use later.

Consider the projections
\[
M \from[{\ \pi_M\ }] M\times\R_\infty \To[s_M] \R_\infty.
\]

\begin{lemma}
\label{lem:tildef}
Let $\tilde f\colon M\times\R_\infty \to N\times\R_\infty$ be the morphism of bordered spaces induced by a continuous map $f\colon M \to N$ of good topological spaces.
\bnum
\item
For $K\in\BDC(\ifield_{M\times\R_\infty})$ and $G\in\BDC(\ifield_{\R_\infty})$, there are isomorphisms
\begin{align*}
\reeim{\tilde f} (\opb{s_M}G \ctens K)
&\simeq \opb{s_N}G \ctens \reeim{\tilde f} K, \\
\roim{\tilde f} \cihom(\opb{s_M}G, K)
&\simeq \cihom(\opb{s_N}G, \roim{\tilde f} K).
\end{align*}
\item
For $L\in\BDC(\ifield_{N\times\R_\infty})$ and $G\in\BDC(\ifield_{\R_\infty})$, there are isomorphisms
\begin{align*}
\opb{\tilde f} (\opb{s_N}G \ctens L)
&\simeq \opb{s_M}G \ctens \opb{\tilde f} L, \\
\epb{\tilde f} \cihom(\opb{s_N}G, L)
&\simeq \cihom(\opb{s_M}G, \epb{\tilde f} L).
\end{align*}
\item
One has
\begin{align*}
\reeim{\tilde f} \circ \opb{\pi_M} &\simeq \opb{\pi_N} \circ \reeim f, &
\opb{\tilde f} \circ \opb{\pi_N} &\simeq \opb{\pi_M} \circ \opb f, \\
\roim{\tilde f} \circ \epb{\pi_M} &\simeq \epb{\pi_N} \circ \roim f, &
\epb{\tilde f} \circ \epb{\pi_N} &\simeq \epb{\pi_M} \circ \epb f.
\end{align*}
\ee
\end{lemma}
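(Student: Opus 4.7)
The three parts all follow from base change (Lemma~\ref{lem:bcart}) and the projection formula (Proposition~\ref{pro:bproj}) applied to natural Cartesian squares involving $\tilde f$. I will handle each assertion in turn, starting with (iii) since it is the most immediate and is also used implicitly in the other parts.

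For (iii), the square
\[
\xymatrix@C=8ex{
M\times\R_\infty \ar[r]^{\tilde f} \ar[d]_{\pi_M} & N\times\R_\infty \ar[d]^{\pi_N} \\
M \ar[r]_{f} & N
}
\]
is Cartesian in the category of bordered spaces (the fiber product in Section~\ref{se:bordered} is visibly represented by $M\times\R_\infty$). Applying Lemma~\ref{lem:bcart} directly to this square yields $\opb{\pi_N}\reeim f \simeq \reeim{\tilde f}\opb{\pi_M}$ and $\epb{\pi_N}\roim f \simeq \roim{\tilde f}\epb{\pi_M}$. The remaining two isomorphisms of (iii) then follow by passing to the adjoints (Lemma~\ref{lem:badj}).

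For (i), write $\hat f \defeq \tilde f \times \id_{\R_\infty} \colon M\times\R_\infty^2 \to N\times\R_\infty^2$. For each $u \in \{q_1,q_2,\mu\}$ one obtains a Cartesian square
\[
\xymatrix@C=8ex{
M\times\R_\infty^2 \ar[r]^{\hat f} \ar[d]_{u_M} & N\times\R_\infty^2 \ar[d]^{u_N} \\
M\times\R_\infty \ar[r]_{\tilde f} & N\times\R_\infty.
}
\]
Moreover, $s_M \circ q_k^M = s_N \circ q_k^N \circ \hat f$ (for $k=1,2$), which gives $\opb{(q_k^M)}\opb{s_M} \simeq \opb{\hat f}\opb{(q_k^N)}\opb{s_N}$. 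Unfolding the definition of $\ctens$ and chaining Lemma~\ref{lem:bcomp}, the projection formula (Proposition~\ref{pro:bproj}), and base change (Lemma~\ref{lem:bcart}) gives
\begin{align*}
\reeim{\tilde f}(\opb{s_M}G \ctens K)
&\simeq \reeim{\mu_N}\reeim{\hat f}\bl\opb{\hat f}\opb{(q_1^N)}\opb{s_N}G \tens \opb{(q_2^M)}K\br \\
&\simeq \reeim{\mu_N}\bl\opb{(q_1^N)}\opb{s_N}G \tens \reeim{\hat f}\opb{(q_2^M)}K\br \\
&\simeq \reeim{\mu_N}\bl\opb{(q_1^N)}\opb{s_N}G \tens \opb{(q_2^N)}\reeim{\tilde f}K\br \\
&= \opb{s_N}G \ctens \reeim{\tilde f}K.
\end{align*}
The second isomorphism of (i) follows by adjunction (Proposition~\ref{pro:ctenscihom} and Lemma~\ref{lem:badj}) from the first, applied with $K$ replaced by $\roim{\tilde f}K'$ and using $\roim{\tilde f}\epb{\tilde f}K' \to K'$ versus $\reeim{\tilde f}\opb{\tilde f}$.

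For (ii), the argument is entirely analogous: one uses the expression $\opb{\tilde f}(\opb{s_N}G \ctens L) = \opb{\tilde f}\reeim{\mu_N}(\opb{(q_1^N)}\opb{s_N}G \tens \opb{(q_2^N)}L)$, applies base change along the same Cartesian squares (now in the form $\opb{\tilde f}\reeim{\mu_N} \simeq \reeim{\mu_M}\opb{\hat f}$), and uses compatibility of $\opb{\hat f}$ with $\tens$; the $\epb{\tilde f}$ version follows by adjunction. The only point that needs care is verifying that the squares are Cartesian in the bordered-space category (which amounts to checking the graph closures are the obvious fiber products), but this is immediate from the fact that $\hat f$ acts as the identity in the $\R_\infty$-factors. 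No other obstacle is present; the entire lemma is a formal consequence of the six-operation formalism for bordered spaces developed in Section~\ref{se:bordered}.
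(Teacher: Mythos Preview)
The paper states this lemma without proof (it is introduced as ``an easy lemma''), so there is nothing to compare against; your approach via Cartesian squares, Lemma~\ref{lem:bcart}, and Proposition~\ref{pro:bproj} is the natural one and is correct in outline.

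One point needs fixing. Your claim that the second isomorphism of (i) follows by adjunction from the \emph{first} of (i) is not right, and the parenthetical explanation is garbled. The adjunction pairings actually run crosswise: the second isomorphism of (i) is the adjoint of the first isomorphism of (ii), and the second of (ii) is the adjoint of the first of (i). Concretely, for any test object $L'$ one has
\begin{align*}
\Hom\bl L',\roim{\tilde f}\cihom(\opb{s_M}G,K)\br
&\simeq \Hom\bl \opb{\tilde f}L'\ctens\opb{s_M}G,K\br,\\
\Hom\bl L',\cihom(\opb{s_N}G,\roim{\tilde f}K)\br
&\simeq \Hom\bl \opb{\tilde f}(L'\ctens\opb{s_N}G),K\br,
\end{align*}
so the second line of (i) is equivalent, via Yoneda, to $\opb{\tilde f}(L'\ctens\opb{s_N}G)\simeq \opb{\tilde f}L'\ctens\opb{s_M}G$, which is the first line of (ii). Either reorder the argument accordingly, or simply prove the second line of (i) directly by the same template as the first: write out $\cihom$ via $q_1,q_2,\mu$, use $\roim{\tilde f}\roimv{q_{1M\sep*}}\simeq\roimv{q_{1N\sep*}}\roim{\hat f}$, apply $\rihom(G,\roim{\hat f}(-))\simeq\roim{\hat f}\rihom(\opb{\hat f}G,-)$ from Proposition~\ref{pro:bproj}, and then base change $\roim{\hat f}\epb{\mu_M}\simeq\epb{\mu_N}\roim{\tilde f}$.
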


\subsection{Enhanced ind-sheaves}

\begin{definition}
\label{def:DerT}
Consider the full subcategories of $\BDC(\ifield_{M\times\R_\infty})$
\begin{align*}
\ind\shc_{\{t^*\leq 0\}} 
&= \{K \semicolon \field_{\{t\geq 0\}} \ctens K  \simeq 0\} \\
&= \{K \semicolon \cihom(\field_{\{t\geq 0\}}, K)  \simeq 0\}, \\
\ind\shc_{\{t^*\geq 0\}} 
&= \{K \semicolon \field_{\{t\leq 0\}} \ctens K  \simeq 0\} \\
&= \{K \semicolon \cihom(\field_{\{t\leq 0\}}, K)  \simeq 0\}, \\
\ind\shc_{\{t^* = 0\}} 
&= \ind\shc_{\{t^* \leq 0\}} \cap \ind\shc_{\{t^* \geq 0\}} \\
&= \{K \semicolon (\field_{\{t\geq0\}} \dsum \field_{\{t\leq0\}}) \ctens K \simeq 0  \} \\
&= \{K \semicolon \cihom(\field_{\{t\geq0\}} \dsum \field_{\{t\leq0\}}, K) \simeq 0  \},
\end{align*}
where the equalities hold by Corollary~\ref{cor:kkK}.
Consider also the corresponding quotient categories
\begin{align*}
\BECpm M 
&= \ind\shc_{\{\pm t^*\geq 0\}}/\ind\shc_{\{t^*= 0\}} , \\
\BEC M 
&= \BDC(\ifield_{M\times\R_\infty})/\ind\shc_{\{t^* = 0\}} .
\end{align*}
We call $\BEC M$ the triangulated category of {\em enhanced ind-sheaves}.
\end{definition}

\begin{proposition}
There are equivalences of triangulated categories
\begin{align*}
\BECpm M &\simeq \BDC(\ifield_{M\times\R_\infty})/\ind\shc_{\{\pm t^*\leq 0\}}, \\
\BEC M &\simeq \BECp M \dsum \BECm M.
\end{align*}
\end{proposition}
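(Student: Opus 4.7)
The plan is to exploit the idempotent decomposition built into the triangles \eqref{eq:dtpm0}. Convolving the third triangle there with any $K\in\BDC(\ifield_{M\times\R_\infty})$ and using Corollary~\ref{cor:pipi} yields
\[
(\field_{\{t\geq 0\}}\ctens K)\dsum(\field_{\{t\leq 0\}}\ctens K) \to K \to \opb\pi\reeim\pi K[1] \to[+1],
\]
whose third term belongs to $\ind\shc_{\{t^*=0\}}$ by Lemma~\ref{lem:piRinfty}. Convolving the first triangle of \eqref{eq:dtpm0} with $K$ gives
\[
\field_{\{t\geq 0\}}\ctens K \to K \to \field_{\{t>0\}}[1]\ctens K \to[+1],
\]
and Lemma~\ref{lem:nilpo} shows that $\field_{\{t>0\}}[1]\ctens K$ always lies in $\ind\shc_{\{t^*\leq 0\}}$, and in $\ind\shc_{\{t^*=0\}}$ as soon as $K\in\ind\shc_{\{t^*\geq 0\}}$.

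For the first equivalence I introduce the functor $F\colon \BECp M \to \BDC(\ifield_{M\times\R_\infty})/\ind\shc_{\{t^*\leq 0\}}$ induced by the inclusion $\ind\shc_{\{t^*\geq 0\}}\hookrightarrow\BDC(\ifield_{M\times\R_\infty})$, which descends to the quotient because $\ind\shc_{\{t^*=0\}}\subset\ind\shc_{\{t^*\leq 0\}}$. Conversely, $K\mapsto \field_{\{t\geq 0\}}\ctens K$ defines a functor $G\colon \BDC(\ifield_{M\times\R_\infty})/\ind\shc_{\{t^*\leq 0\}}\to\BECp M$, well-defined because $\field_{\{t\geq 0\}}\ctens K\simeq 0$ when $K\in\ind\shc_{\{t^*\leq 0\}}$ and because its values lie in $\ind\shc_{\{t^*\geq 0\}}$ by Lemma~\ref{lem:nilpo}. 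The natural morphism $\field_{\{t\geq 0\}}\ctens K\to K$ from the second triangle above then induces the isomorphisms $FG\simeq\id$ and $GF\simeq\id$, since the cone $\field_{\{t>0\}}[1]\ctens K$ lies in the subcategory by which we localize in each case. The equivalence for $\BECm M$ is symmetric.

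For the second equivalence, the inclusions $\ind\shc_{\{\pm t^*\geq 0\}}\hookrightarrow\BDC(\ifield_{M\times\R_\infty})$ induce functors $\bar\iota_\pm\colon\BECpm M \to \BEC M$. These are fully faithful by the general principle that, for triangulated subcategories $\shn\subset\shn'\subset\shd$, the canonical functor $\shn'/\shn\to\shd/\shn$ is fully faithful: calculus of fractions computes both Hom groups as $\varinjlim\Hom_\shd(A,B')$ over $B\to B'$ with cone in $\shn$, and any such $B'$ stays in $\shn'$ when $B\in\shn'$. Essential surjectivity of $\bar\iota_+\oplus\bar\iota_-\colon\BECp M\dsum\BECm M \to \BEC M$ follows from the first triangle above: every $K\in\BEC M$ is isomorphic to $(\field_{\{t\geq 0\}}\ctens K)\dsum(\field_{\{t\leq 0\}}\ctens K)$ since the third term vanishes in $\BEC M$.

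The main obstacle is the orthogonality $\Hom_{\BEC M}(K_+,K_-)=\Hom_{\BEC M}(K_-,K_+)=0$ for $K_\pm\in\ind\shc_{\{\pm t^*\geq 0\}}$, which finally identifies the Hom groups in $\BEC M$ with those of $\BECp M\dsum\BECm M$. A morphism $K_+\to K_-$ in $\BEC M$ is represented by a pair of morphisms $K_+\to B$ and $K_-\to B$ in $\BDC(\ifield_{M\times\R_\infty})$ with the cone of the latter in $\ind\shc_{\{t^*=0\}}$; since $\ind\shc_{\{t^*\leq 0\}}$ is triangulated, $B$ still lies in $\ind\shc_{\{t^*\leq 0\}}$. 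By adjunction, $\Hom(\field_{\{t\geq 0\}}\ctens K_+,B)\simeq\Hom(K_+,\cihom(\field_{\{t\geq 0\}},B))=0$ since $\cihom(\field_{\{t\geq 0\}},B)\simeq 0$ by Definition~\ref{def:DerT}. Hence the composite $\field_{\{t\geq 0\}}\ctens K_+\to K_+\to B$ vanishes, so $K_+\to B$ factors through the cone $\field_{\{t>0\}}[1]\ctens K_+$. This cone belongs to $\ind\shc_{\{t^*=0\}}$ by the first paragraph, so it is zero in $\BEC M$, forcing the original morphism to vanish in $\BEC M$. A symmetric argument gives $\Hom_{\BEC M}(K_-,K_+)=0$.
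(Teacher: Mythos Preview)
Your argument is correct. The first equivalence is handled essentially as the paper does: the paper packages this into Proposition~\ref{pro:tam}~(i-a), which identifies $\BECpm M$ with the left orthogonal ${}^\bot\ind\shc_{\{\pm t^*\leq 0\}}$ via $K\mapsto \field_{\{\pm t\geq 0\}}\ctens K$, and then invokes the general machinery of Proposition~\ref{pro:D/N} to pass between orthogonals and quotients. Your explicit quasi-inverses $F,G$ with cones in the null subcategory are exactly the content of that machinery, unwound.

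For the second equivalence your route genuinely differs from the paper's. The paper again works inside $\BDC(\ifield_{M\times\R_\infty})$ via orthogonals: Proposition~\ref{pro:tam}~(iii) gives the splitting ${}^\bot\ind\shc_{\{t^*\geq 0\}}\dsum{}^\bot\ind\shc_{\{t^*\leq 0\}}\simeq{}^\bot\ind\shc_{\{t^*=0\}}$, and combining this with the identifications of (i-a) and (ii-a) yields $\BEC M\simeq\BECp M\dsum\BECm M$. You instead stay at the level of quotient categories throughout: you use the standard fact that $\shn'/\shn\hookrightarrow\shd/\shn$ is fully faithful for nested triangulated subcategories, and you verify orthogonality of the two images by a direct calculus-of-fractions argument (any roof $K_+\to B\leftarrow K_-$ has $B\in\ind\shc_{\{t^*\leq 0\}}$, whence $K_+\to B$ factors through the $\ind\shc_{\{t^*=0\}}$-object $\field_{\{t>0\}}[1]\ctens K_+$). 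The paper's approach has the advantage of simultaneously producing the explicit descriptions of $\LE$ and $\RE$ used later; your approach is more self-contained and avoids appealing to the orthogonal formalism of Proposition~\ref{pro:D/N}.
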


This follows from Proposition~\ref{pro:tam} below.

The next lemma easily follows from Corollary~\ref{cor:pipi} and
the last distinguished triangle in \eqref{eq:dtpm0}.

\begin{lemma}
One has
\begin{align*}
\ind\shc_{\{t^* = 0\}} 
&= \{K \semicolon \opb\pi\roim\pi K \isoto K\} 
= \{K \semicolon K \isoto \epb\pi\reeim\pi K\} \\
&= \{K \semicolon K \simeq \opb\pi L\text{ for some }L\in\BDC(\ifield_M) \}\\
&= \{K \semicolon K \simeq \epb\pi L\text{ for some }L\in\BDC(\ifield_M) \} \\
&= \{K \semicolon K \isoto \field_{M\times\R}[1]\ctens K \} \\
&= \{K \semicolon K \isofrom \cihom(\field_{M\times\R}[1], K) \}.
\end{align*}
\end{lemma}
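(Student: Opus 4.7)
The plan is to derive all the equivalences from the third distinguished triangle of \eqref{eq:dtpm0} together with Corollaries~\ref{cor:pipi} and~\ref{cor:pieeim}, the key technical point being the identification $\epb\pi L\simeq \opb\pi L[1]$ for $L\in\BDC(\ifield_M)$.

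First I would apply the endofunctors $\ast\ctens K$ and $\cihom(\ast,K)$ to the triangle $\field_{\{t\geq 0\}}\dsum\field_{\{t\leq 0\}}\to\field_{\{t=0\}}\to\field_{M\times\R}[1]\to[+1]$. Since $\field_{\{t=0\}}$ is the unit for $\ctens$, this produces the distinguished triangles $(\field_{\{t\geq 0\}}\dsum\field_{\{t\leq 0\}})\ctens K\to K\to \field_{M\times\R}[1]\ctens K\to[+1]$ and $\cihom(\field_{M\times\R}[1],K)\to K\to \cihom(\field_{\{t\geq 0\}}\dsum\field_{\{t\leq 0\}},K)\to[+1]$. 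By the characterization of $\ind\shc_{\{t^*=0\}}$ in Definition~\ref{def:DerT}, $K\in\ind\shc_{\{t^*=0\}}$ is then equivalent to $K\isoto\field_{M\times\R}[1]\ctens K$ and to $\cihom(\field_{M\times\R}[1],K)\isoto K$, yielding the last two lines of the claimed chain of equalities.

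Next I would establish $\epb\pi L\simeq \opb\pi L[1]$ on $\BDC(\ifield_M)$. Factoring $\pi=\overline\pi\circ j$ with $j\colon M\times\R_\infty\to M\times\overline\R$ the canonical morphism and $\overline\pi\colon M\times\overline\R\to M$ the projection (which is proper, since $\overline\R$ is compact), and using that $\overline\R$ is homeomorphic to a closed interval, one has $\omega_{\overline\R}\simeq \field_\R[1]$ (extension by zero from the interior). The projection formula for the proper morphism $\overline\pi$ gives $\epb{\overline\pi}L\simeq \opb{\overline\pi}L\tens \field_{M\times\R}[1]$ in $\BDC(\ifield_{M\times\overline\R})$. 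Applying $\opb j$ and observing that in the quotient category $\BDC(\ifield_{M\times\R_\infty})$ the sheaf $\field_{M\times\R}$ is isomorphic to the unit $\opb\pi\field_M$ (they differ only on $M\times\{\pm\infty\}$), one concludes $\epb\pi L\simeq\opb\pi L[1]$.

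With this in hand, Corollary~\ref{cor:pipi} yields $\field_{M\times\R}[1]\ctens K\simeq \opb\pi\reeim\pi K[1]\simeq \epb\pi\reeim\pi K$ and $\cihom(\field_{M\times\R}[1],K)\simeq \epb\pi\roim\pi K[-1]\simeq \opb\pi\roim\pi K$. Substituting into the equivalences of the first paragraph, $K\in\ind\shc_{\{t^*=0\}}$ iff $K\isoto\epb\pi\reeim\pi K$ iff $\opb\pi\roim\pi K\isoto K$, and either condition implies that $K$ is of the form $\opb\pi L$ (take $L=\reeim\pi K[1]$), equivalently of the form $\epb\pi L'$ (take $L'=L[-1]$). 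Conversely, if $K\simeq \opb\pi L$ for some $L$, Lemma~\ref{lem:piRinfty} gives $K\ctens\field_{\{t\geq 0\}}\simeq \opb\pi(L\tens\reeim\pi\field_{\{t\geq 0\}})$, which vanishes by Corollary~\ref{cor:pieeim} applied with $K=\field_{\{t=0\}}$ (so that $\reeim\pi\field_{\{t\geq 0\}}\simeq 0$); the same argument handles $\field_{\{t\leq 0\}}$, and the case $K\simeq\epb\pi L'$ reduces to this via $\epb\pi L'\simeq\opb\pi L'[1]$. The main obstacle is precisely the identification $\epb\pi L\simeq\opb\pi L[1]$, which requires carefully combining the projection formula for the proper morphism $\overline\pi$ with the quotient identification in $\BDC(\ifield_{M\times\R_\infty})$; once this is in hand, all the equivalences follow mechanically.
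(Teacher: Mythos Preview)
Your proof is correct and follows precisely the route the paper indicates: the paper's entire proof is the one-line remark ``This follows from Corollary~\ref{cor:pipi} and the last distinguished triangle in \eqref{eq:dtpm0},'' and your argument is a faithful unpacking of that hint. The identification $\epb\pi L\simeq\opb\pi L[1]$ that you single out as the ``main obstacle'' is treated as routine in the paper (it is used, for instance, in the proof of Proposition~\ref{pro:tenshomstar} in the form $\epb{\overline q_2}F\simeq\opb{\overline q_2}F[1]$); your justification via $\epb{\overline\pi}\field_M\simeq\field_{M\times\R}[1]$ is fine, though what you call the ``projection formula'' is really the topological-submersion formula of Proposition~\ref{pro:const} (or~\ref{pro:topsub}) rather than a properness statement.
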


Let us describe the categories $\BECpm M$ and $\BEC M$ using Proposition~\ref{pro:D/N}.

\begin{proposition}\label{pro:tam}
\be
\item [{\rm(i-a)}]
The left orthogonal to $\ind\shc_{\{\pm t^*\leq 0\}}$ is given by
\begin{align*}
{}^\bot\ind\shc_{\{\pm t^*\leq 0\}} 
&= \{K \semicolon \field_{\{\pm t\geq 0\}} \ctens K  \isoto K\} \\
&= \{K \semicolon \field_{\{\pm t> 0\}} \ctens K \simeq 0\},
\end{align*}
and there is an equivalence
\[
\BECpm M \to {}^\bot\ind\shc_{\{\pm t^*\leq 0\}}, \quad
K\mapsto \field_{\{\pm t\geq 0\}} \ctens K,
\]
with quasi-inverse given by the quotient functor.
Note that
\[
{}^\bot\ind\shc_{\{\pm t^*\leq 0\}} \subset \ind\shc_{\{\mp t^*\leq 0\}}.
\]
\item [{\rm(i-b)}]
The right orthogonal to $\ind\shc_{\{\pm t^*\leq 0\}}$ is given by
\begin{align*}
\ind\shc_{\{\pm t^*\leq 0\}}^\bot 
&= \{K' \semicolon K' \isoto \cihom(\field_{\{\pm t\geq 0\}}, K')\} \\
&= \{K' \semicolon \cihom(\field_{\{\pm t> 0\}}, K') \simeq 0\},
\end{align*}
and there is an equivalence
\[
\BECpm M \to \ind\shc_{\{\pm t^*\leq 0\}}^\bot, \quad
K\mapsto\cihom(\field_{\{\pm t\geq 0\}}, K),
\]
with quasi-inverse given by the quotient functor.
Note that
\[
\ind\shc_{\{\pm t^*\leq 0\}}^\bot \subset \ind\shc_{\{\mp t^*\leq 0\}}.
\]
\item [{\rm(ii-a)}]
The left orthogonal to $\ind\shc_{\{t^* = 0\}}$ is given by
\begin{align*}
{}^\bot\ind\shc_{\{t^* = 0\}} 
&= \{K \semicolon (\field_{\{t\geq 0\}} \dsum \field_{\{t\leq 0\}}) \ctens K  \isoto K\} \\
&= \{K \semicolon \field_{M\times\R}\ctens K \simeq 0 \} \\
&= \{K \semicolon \reeim\pi K \simeq 0\} ,
\end{align*}
and there is an equivalence
\[
\BEC M \to {}^\bot\ind\shc_{\{t^* = 0\}}, \quad
K\mapsto (\field_{\{t\geq 0\}} \dsum \field_{\{t\leq 0\}}) \ctens K,
\]
with quasi-inverse given by the quotient functor.
\item [{\rm(ii-b)}]
The right orthogonal to $\ind\shc_{\{t^* = 0\}}$ is given by
\begin{align*}
\ind\shc^\bot_{\{t^* = 0\}} 
&= \{K \semicolon \cihom(\field_{\{t\geq 0\}} \dsum \field_{\{t\leq 0\}}, K)  \isoto K\} \\
&= \{K \semicolon \cihom(\field_{M\times\R}, K) \simeq 0 \} \\
&= \{K \semicolon \roim\pi K \simeq 0\} ,
\end{align*}
and there is an equivalence
\[
\BEC M \to \ind\shc_{\{t^* = 0\}}^\bot, \quad
K\mapsto \cihom(\field_{\{t\geq 0\}} \dsum \field_{\{t\leq 0\}}, K),
\]
with quasi-inverse given by the quotient functor.
\item[{\rm(iii)}]
One has
\begin{align*}
{}^\bot\ind\shc_{\{t^* \geq 0\}} \dsum {}^\bot\ind\shc_{\{t^* \leq 0\}} &\simeq {}^\bot\ind\shc_{\{t^* = 0\}}, \\
\ind\shc^\bot_{\{t^* \geq 0\}} \dsum \ind\shc^\bot_{\{t^* \leq 0\}} &\simeq \ind\shc^\bot_{\{t^* = 0\}}.
\end{align*}
\ee
\end{proposition}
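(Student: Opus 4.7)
The plan is to apply Proposition~\ref{pro:D/N} in turn to each of the subcategories $\shn=\ind\shc_{\{\pm t^*\leq 0\}}$ (yielding parts (i-a), (i-b)) and $\shn=\ind\shc_{\{t^*=0\}}$ (parts (ii-a), (ii-b)), and then to derive part (iii) from these. The summand-stability hypothesis of Proposition~\ref{pro:D/N} holds in each case, since $\shn$ is defined by the vanishing of a convolution; the required decomposition triangle of condition~(iv) is produced by applying $\ast\ctens K$ or $\cihom(\ast,K)$ to one of the three triangles of~\eqref{eq:dtpm0}, and all idempotency and cancellation relations needed to identify the terms with the appropriate one-sided left or right orthogonal are the content of Lemma~\ref{lem:nilpo}.

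For part (i-a), convolving the first triangle of~\eqref{eq:dtpm0} with $K$ yields
$$
\field_{\{t\geq 0\}}\ctens K\to K\to\field_{\{t>0\}}[1]\ctens K\to[+1].
$$
By Lemma~\ref{lem:nilpo} the first term is idempotent under $\field_{\{t\geq 0\}}\ctens(\ast)$ and the third is annihilated by it, which both delivers condition~(iv) of Proposition~\ref{pro:D/N} and makes the equality $\{K:\field_{\{t\geq 0\}}\ctens K\isoto K\}=\{K:\field_{\{t>0\}}\ctens K\simeq 0\}$ immediate from the triangle. For the identification with ${}^\bot\ind\shc_{\{t^*\leq 0\}}$, the inclusion $\supset$ is by the adjunction of Proposition~\ref{pro:ctenscihom}; for the converse, observing that $\field_{\{t>0\}}[1]\ctens K$ belongs to $\ind\shc_{\{t^*\leq 0\}}$, the middle morphism $K\to\field_{\{t>0\}}[1]\ctens K$ vanishes whenever $K\in{}^\bot\ind\shc_{\{t^*\leq 0\}}$; the triangle then splits as $\field_{\{t\geq 0\}}\ctens K\simeq K\oplus(\field_{\{t>0\}}\ctens K)$, and the projection onto the second summand is a morphism from $\field_{\{t\geq 0\}}\ctens K$ to $\field_{\{t>0\}}\ctens K\in\ind\shc_{\{t^*\leq 0\}}$ whose $\Hom$-group, by Proposition~\ref{pro:ctenscihom} and $\cihom(\field_{\{t\geq 0\}},\field_{\{t>0\}}\ctens K)\simeq 0$, vanishes, forcing $\field_{\{t>0\}}\ctens K\simeq 0$. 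Proposition~\ref{pro:D/N} then yields $\BDC(\ifield_{M\times\R_\infty})/\ind\shc_{\{t^*\leq 0\}}\isoto{}^\bot\ind\shc_{\{t^*\leq 0\}}$ via $K\mapsto\field_{\{t\geq 0\}}\ctens K$. This functor descends through $\BECp M=\ind\shc_{\{t^*\geq 0\}}/\ind\shc_{\{t^*=0\}}$: indeed, for $K\in\ind\shc_{\{t^*\geq 0\}}$ the cone $\field_{\{t>0\}}\ctens K$ of $\field_{\{t\geq 0\}}\ctens K\to K$ is isomorphic to $\opb\pi\reeim\pi K\in\ind\shc_{\{t^*=0\}}$ (use the triangle $\field_{\{t>0\}}\to\field_{M\times\R}\to\field_{\{t\leq 0\}}\to[+1]$ together with Corollary~\ref{cor:pipi}), so $\field_{\{t\geq 0\}}\ctens K\isoto K$ in $\BECp M$. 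The inclusion ${}^\bot\ind\shc_{\{t^*\leq 0\}}\subset\ind\shc_{\{t^*\geq 0\}}$ is the relation $\field_{\{t\leq 0\}}\ctens\field_{\{t\geq 0\}}\simeq 0$ from Lemma~\ref{lem:nilpo}. Part (i-b) is entirely parallel, built from the distinguished triangle $\cihom(\field_{\{t>0\}}[1],K)\to K\to\cihom(\field_{\{t\geq 0\}},K)\to[+1]$.

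For part (ii), the same scheme is carried out starting from the third triangle of~\eqref{eq:dtpm0}:
$$
(\field_{\{t\geq 0\}}\dsum\field_{\{t\leq 0\}})\ctens K\to K\to\field_{M\times\R}[1]\ctens K\to[+1],
$$
together with Corollary~\ref{cor:pipi} identifying $\field_{M\times\R}[1]\ctens K\simeq\opb\pi\reeim\pi K[1]\in\ind\shc_{\{t^*=0\}}$ (an object of the form $\opb\pi L$, by the lemma preceding the proposition). For part (iii), the inclusions ${}^\bot\ind\shc_{\{\pm t^*\leq 0\}}\subset\ind\shc_{\{\mp t^*\leq 0\}}$ from (i-a) show that ${}^\bot\ind\shc_{\{t^*\geq 0\}}$ and ${}^\bot\ind\shc_{\{t^*\leq 0\}}$ are mutually left-orthogonal inside ${}^\bot\ind\shc_{\{t^*=0\}}$, and the triangle just displayed decomposes any $K\in{}^\bot\ind\shc_{\{t^*=0\}}$ as $K\simeq(\field_{\{t\geq 0\}}\ctens K)\oplus(\field_{\{t\leq 0\}}\ctens K)$, with each summand landing in the appropriate one-sided left orthogonal thanks to $\field_{\{t>0\}}\ctens\field_{\{t\geq 0\}}\simeq 0$ and its mirror in Lemma~\ref{lem:nilpo}; the dual decomposition for right orthogonals uses the $\cihom$-version. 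The main technical subtlety is the splitting-and-vanishing step in (i-a): the splitting of the triangle with zero middle arrow is routine, but one must carefully combine it with Proposition~\ref{pro:ctenscihom} and the very hypothesis that defines the left orthogonal in order to kill the residual summand $\field_{\{t>0\}}\ctens K$.
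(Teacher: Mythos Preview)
Your proof is correct and follows the approach the paper has in mind; the paper's own proof reads in its entirety ``The proof is easy. Let us only note that the equality $\{K \semicolon \field_{M\times\R}\ctens K \simeq 0 \} = \{K \semicolon \reeim\pi K \simeq 0\}$ follows from Corollary~\ref{cor:pipi}.'' You have correctly identified and assembled the ingredients (Proposition~\ref{pro:D/N}, the triangles~\eqref{eq:dtpm0}, Lemma~\ref{lem:nilpo}, Corollary~\ref{cor:pipi}) that the authors left implicit.

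One remark: your splitting-and-vanishing argument for the converse inclusion ${}^\bot\ind\shc_{\{t^*\leq 0\}}\subset\{K:\field_{\{t\geq 0\}}\ctens K\isoto K\}$ works, but it can be shortened. In the triangle $\field_{\{t\geq 0\}}\ctens K\to K\to\field_{\{t>0\}}[1]\ctens K\to[+1]$, both the first term (by idempotency and the easy inclusion) and $K$ (by hypothesis) lie in ${}^\bot\ind\shc_{\{t^*\leq 0\}}$, which is a triangulated subcategory; hence so does the third term. But the third term is also in $\ind\shc_{\{t^*\leq 0\}}$, so $\Hom$ from it to itself vanishes and it is zero. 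This avoids the detour through the explicit splitting.
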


\begin{proof}
The proof is easy. Let us only note that the equality
\[
\{K \semicolon \field_{M\times\R}\ctens K \simeq 0 \} \\
= \{K \semicolon \reeim\pi K \simeq 0\}
\]
follows from Corollary~\ref{cor:pipi}.
\end{proof}

The functors
\eq&&\hs{3ex}\ba{rcl}
(\field_{\{t\geq 0\}} \dsum \field_{\{t\leq 0\}}) \ctens \ast&\cl&
\BDC(\ifield_{M\times\R_\infty})\To  \BDC(\ifield_{M\times\R_\infty}),  \\ 
\cihom(\field_{\{t\geq 0\}} \dsum \field_{\{t\leq 0\}},  \ast)
&\cl&\BDC(\ifield_{M\times\R_\infty})\To \BDC(\ifield_{M\times\R_\infty})
\ea\label{eq:LERE}
\eneq
factor through $\BEC M$ by Lemma~\ref{lem:piRinfty}.

\begin{notation}
\label{not:Tlr}
Denote by
\begin{align*}
\LE &\colon \BEC M \To {}^\bot\ind\shc_{\{t^*= 0\}} \subset \BDC(\ifield_{M\times\R_\infty}), \\ 
\RE &\colon \BEC M \To \ind\shc_{\{t^*= 0\}}^\bot \subset \BDC(\ifield_{M\times\R_\infty})
\end{align*}
the functors induced by \eqref{eq:LERE}.
\end{notation}

Note that the functors $\LE$ and $\RE$ are the left and right adjoint of the quotient functor $\BDC(\ifield_{M\times\R_\infty}) \to \BEC M$.

We have a morphism of functors $\LE \to \RE$.

\begin{lemma}
Let $F_1,F_2\in\BDC(\ifield_{M\times\R_\infty})$.
Let $K_1$, $K_2$ be the objects of $\BEC M$ corresponding to $F_1$, $F_2$
by the quotient functor.

\bnum
\item
There are isomorphisms in $\BDC(\ifield_{M\times\R_\infty})$
\begin{align*}
\cihom(\LE K_1, \LE K_2)
&\simeq  \cihom(\LE K_1, F_2) \\
&\simeq  \cihom(F_1, \RE K_2).
\end{align*}
\item
There are isomorphisms
\begin{align*}
\Hom[\BEC M](K_1, K_2)
&\simeq  \Hom[\BDC(\ifield_{M\times\R_\infty})](\LE K_1, F_2) \\
&\simeq  \Hom[\BDC(\ifield_{M\times\R_\infty})](F_1, \RE K_2).
\end{align*}
\ee
\end{lemma}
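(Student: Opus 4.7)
My plan is to handle (ii) by pure adjunction, and (i) by reducing each isomorphism to a vanishing inside $\BDC(\ifield_{M\times\R_\infty})$.

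For (ii), since $\LE$ (resp.~$\RE$) is the left (resp.~right) adjoint of the quotient functor $Q\colon\BDC(\ifield_{M\times\R_\infty})\to\BEC M$ and $K_i = Q F_i$, one reads off
\[
\Hom[\BEC M](K_1,K_2) \simeq \Hom[\BDC(\ifield_{M\times\R_\infty})](\LE K_1,F_2) \simeq \Hom[\BDC(\ifield_{M\times\R_\infty})](F_1,\RE K_2).
\]

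For (i), my plan is to chain together three isomorphisms passing through the intermediate object $\cihom(\LE K_1,\RE K_2)$. Each of these will come from applying $\cihom(-,-)$ to a distinguished triangle one of whose vertices lies in $\ind\shc_{\{t^*=0\}}$; vanishing of the $\cihom$ on that vertex forces the remaining two terms to agree. The triangles I would use are: (a) the triangle $\LE K_2 \to F_2 \to \field_{M\times\R}[1]\ctens F_2 \to[+1]$ obtained by $\ctens F_2$ from the last line of \eqref{eq:dtpm0}, whose third vertex is $\opb\pi\reeim\pi F_2[1]\in\ind\shc_{\{t^*=0\}}$ by Corollary~\ref{cor:pipi}; (b) its dual $\cihom(\field_{M\times\R}[1],F_2)\to F_2\to\RE K_2\to[+1]$, whose first vertex is $\epb\pi\roim\pi F_2[-1]\in\ind\shc_{\{t^*=0\}}$; and (c) the universal triangle $\LE K_1\to F_1\to A_1\to[+1]$ with $A_1\in\ind\shc_{\{t^*=0\}}$.

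The vanishings I need to check then reduce, via Lemma~\ref{lem:piRinfty}, to $\reeim\pi\LE K_1\simeq 0$ (for steps (a) and (b)) and $\roim\pi\RE K_2\simeq 0$ (for step (c)), both of which are contained in Proposition~\ref{pro:tam}(ii-a,b). The one mild obstacle I foresee is bookkeeping: Lemma~\ref{lem:piRinfty} provides formulas for $\cihom(\opb\pi L,\cdot)$ and for $\cihom(\cdot,\epb\pi L)$, but not for the two ``wrong-variance'' combinations. To apply it in step (a), for instance, one must first rewrite the cone $\opb\pi(\reeim\pi F_2)[1]$ in the form $\epb\pi L'$, which is legitimate by the equivalent characterizations of $\ind\shc_{\{t^*=0\}}$ recorded in the lemma preceding Proposition~\ref{pro:tam}. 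Once this matching of presentations to formulas is made, no further computation is required.
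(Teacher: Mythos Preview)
The paper states this lemma without proof, so there is nothing to compare against directly. Your argument is correct. Part (ii) is exactly the adjunction $\LE \dashv Q \dashv \RE$, and your three-step chain for part (i) works: in each case the cone lies in $\ind\shc_{\{t^*=0\}}$, and Lemma~\ref{lem:piRinfty} together with the vanishings $\reeim\pi\LE K_1\simeq 0$ and $\roim\pi\RE K_2\simeq 0$ from Proposition~\ref{pro:tam}(ii) kills the relevant $\cihom$. The ``bookkeeping'' point you raise is genuine but harmless: the characterization of $\ind\shc_{\{t^*=0\}}$ as $\{\opb\pi L\}=\{\epb\pi L'\}$ (or simply $\epb\pi\simeq\opb\pi[1]$) lets you put each cone in the form required by Lemma~\ref{lem:piRinfty}.

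One minor simplification: you do not actually need the intermediate object $\cihom(\LE K_1,\RE K_2)$. Step (a) alone gives the first isomorphism of the statement. For the second, apply $\cihom(-,F_2)$ to the triangle $\LE K_1\to F_1\to A_1\to[+1]$ with $A_1\simeq\opb\pi L$; then $\cihom(\opb\pi L,F_2)\simeq\epb\pi\rihom(L,\roim\pi F_2)$, and applying $\cihom(\LE K_1,-)$ to triangle (b) shows $\cihom(\LE K_1,F_2)\simeq\cihom(\LE K_1,\RE K_2)$, but equally one can go $\cihom(\LE K_1,F_2)\simeq\cihom(F_1,F_2)\simeq\cihom(F_1,\RE K_2)$ provided one first checks $\cihom(A_1,F_2)$ vanishes---which it need not in general. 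So your route through $\cihom(\LE K_1,\RE K_2)$ is in fact the cleanest; the three steps are all needed as you wrote them.
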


\subsection{Operations}\label{sse:hom}

By Lemma~\ref{lem:piRinfty} the compositions of functors
\begin{align}
\label{eq:ctenstilde}
& \BDC(\ifield_{M\times\R_\infty}) \times \BDC(\ifield_{M\times\R_\infty}) \To[\ctens] \BDC(\ifield_{M\times\R_\infty}) \To \BEC M, \\
\label{eq:cihomtilde}
& \BDC(\ifield_{M\times\R_\infty})^\op \times \BDC(\ifield_{M\times\R_\infty}) \To[\cihom] \BDC(\ifield_{M\times\R_\infty}) \To \BEC M,
\end{align}
factor through $\BEC M \times \BEC M$ and ${\BEC M}^\op \times \BEC M$, respectively.

\begin{definition}
\label{def:Tctens}
We denote by
\begin{align*}
\ctens &\colon \BEC M \times \BEC M \To \BEC M, \\
\cihom &\colon {\BEC M}^\op \times \BEC M \To \BEC M,
\end{align*}
the functors induced by \eqref{eq:ctenstilde} and \eqref{eq:cihomtilde}, respectively.
\end{definition}

Note that, for any $K\in\BEC M$, the composition
\[
\field_{\{t\geq 0\}} \ctens K \To K\To\cihom(\field_{\{t\geq 0\}} , K)
\]
is an isomorphism in $\BEC M$ by Proposition~\ref{pro:tenshomstar}.

\begin{definition}
By Lemma~\ref{lem:cihomKt0} one gets functors
\begin{align*}
{\opb\pi\ast}\tens{\ast} &\colon \BDC(\ifield_M) \times \BEC M \To \BEC M, \\
\rihom(\opb\pi{\ast},{\ast}) &\colon \BDC(\ifield_M)^\op \times \BEC M
\To \BEC M, \\
\rihom({\ast},\epb\pi{\ast}) &\colon \BEC M{}^\op \times \BDC(\ifield_M)
\To \BEC M.
\end{align*}
\end{definition}

\begin{remark}
The composition
\[
\BDC(\ifield_{M\times\R_\infty})\times \BDC(\ifield_{M\times\R_\infty}) \To[\otimes] \BDC(\ifield_{M\times\R_\infty}) \to \BEC M
\]
\emph{does not} factor through $\BEC M\times\BEC M$,
and the composition
\[
\BDC(\ifield_{M\times\R_\infty})^\op \times \BDC(\ifield_{M\times\R_\infty}) \To[\rihom] \BDC(\ifield_{M\times\R_\infty}) \to \BEC M
\]
\emph{does not} factor through ${\BEC M}^\op\times\BEC M$.
\end{remark}

\begin{lemma}
For $K_1,K_2,K_3\in\BEC M$ there is an isomorphism
\[
\Hom[\BEC M](K_1\ctens K_2,K_3) \simeq \Hom[\BEC M](K_1,\cihom(K_2,K_3)),
\]
i.e., for $K\in\BEC M$,
$K\ctens\ast$ is a left adjoint of $\cihom(K,\ast)$.
\end{lemma}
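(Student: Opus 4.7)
The plan is to lift the known adjunction $(\ctens,\cihom)$ in the ambient category $\BDC(\ifield_{M\times\R_\infty})$ (Proposition~\ref{pro:ctenscihom}) to the quotient $\BEC M$, using the right adjoint $\RE$ of the quotient functor $Q\colon \BDC(\ifield_{M\times\R_\infty})\to\BEC M$ (Notation~\ref{not:Tlr}) as a bridge to convert $\Hom$'s in $\BEC M$ into $\Hom$'s in $\BDC(\ifield_{M\times\R_\infty})$.

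First, I would pick representatives $F_i\in\BDC(\ifield_{M\times\R_\infty})$ of $K_i$. Since by construction (Definition~\ref{def:Tctens}) the functors $\ctens$ and $\cihom$ on $\BEC M$ are induced from those on $\BDC(\ifield_{M\times\R_\infty})$, the objects $K_1\ctens K_2$ and $\cihom(K_2,K_3)$ are represented by $F_1\ctens F_2$ and $\cihom(F_2,F_3)$ respectively. Using the adjunction $Q\dashv\RE$, I get
\[
\Hom[\BEC M](K_1\ctens K_2,K_3)\simeq\Hom[\BDC(\ifield_{M\times\R_\infty})](F_1\ctens F_2,\RE K_3),
\]
which by the ambient adjunction of Proposition~\ref{pro:ctenscihom} is isomorphic to $\Hom[\BDC(\ifield_{M\times\R_\infty})](F_1,\cihom(F_2,\RE K_3))$. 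Similarly, $\Hom[\BEC M](K_1,\cihom(K_2,K_3))\simeq\Hom[\BDC(\ifield_{M\times\R_\infty})](F_1,\RE\cihom(K_2,K_3))$.

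Thus it remains to identify $\cihom(F_2,\RE K_3)\simeq\RE\cihom(K_2,K_3)$ in $\BDC(\ifield_{M\times\R_\infty})$, which is the main point. This is computed directly from the explicit description $\RE K=\cihom(\field_{\{t\geq 0\}}\dsum\field_{\{t\leq 0\}},F)$ together with the associativity isomorphism $\cihom(A\ctens B,C)\simeq\cihom(A,\cihom(B,C))$ and the commutativity of $\ctens$:
\[
\cihom(F_2,\cihom(\field_{\{t\geq 0\}}\dsum\field_{\{t\leq 0\}},F_3))\simeq\cihom(\field_{\{t\geq 0\}}\dsum\field_{\{t\leq 0\}},\cihom(F_2,F_3)),
\]
both sides being canonically isomorphic to $\cihom((\field_{\{t\geq 0\}}\dsum\field_{\{t\leq 0\}})\ctens F_2,F_3)$ by Proposition~\ref{pro:ctenscihom}.

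The only slightly delicate point is verifying that all isomorphisms are natural and compatible with passage to the quotient, but this is automatic since every step takes place either inside $\BDC(\ifield_{M\times\R_\infty})$ (where Proposition~\ref{pro:ctenscihom} applies) or uses the universal property of $\RE$ as a right adjoint, so no genuine obstacle arises; the result is essentially a formal consequence of having a quotient functor with both a left and a right adjoint and a tensor–hom adjunction upstairs that descends to the quotient.
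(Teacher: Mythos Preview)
Your proof is correct. The paper states this lemma without proof, treating it as an immediate consequence of Proposition~\ref{pro:ctenscihom} and the definition of $\ctens$, $\cihom$ on the quotient $\BEC M$; your argument spells out exactly this descent via the adjunction $Q\dashv\RE$ and the identity $\cihom(F_2,\RE K_3)\simeq\RE\cihom(K_2,K_3)$, which is precisely what is implicitly being invoked.
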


\begin{lemma}
\label{lem:Thommorph}
For $K_0,K_1,K_2\in\BEC M$ there are natural morphisms in $\BEC M$
\begin{align*}
& K_0 \ctens \cihom(K_0,K_1) \to K_1, \\
& \cihom(K_0,K_1) \ctens \cihom(K_1,K_2) \to \cihom(K_0,K_2), \\
& K_0 \ctens \cihom(K_1,K_2) \to \cihom(K_1,K_0\ctens K_2), \\
& \cihom(K_1,K_2) \to \cihom(K_0\ctens K_1, K_0\ctens K_2), \\
& \cihom(K_1,K_2) \to \cihom(\cihom(K_0,K_1),\cihom(K_0,K_2)), \\
& K_0 \to \cihom(\cihom(K_0,K_1),K_1).
\end{align*}
\end{lemma}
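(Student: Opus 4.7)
The plan is to construct the six morphisms using only the closed symmetric monoidal structure of $\BEC M$, which is provided by the preceding results: associativity and symmetry of $\ctens$ (Lemma~\ref{lem:musigma} and Proposition~\ref{pro:ctenscihom}), the unit object $\field_{\{t=0\}}$, and the adjunction $\Hom(K_1\ctens K_2,K_3)\simeq\Hom(K_1,\cihom(K_2,K_3))$ established in the lemma immediately preceding this one. No further analysis of the convolution bifunctor is needed; this is purely formal.

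The first step is to build the \emph{evaluation} morphism
\[
\mathrm{ev}\colon K_0 \ctens \cihom(K_0,K_1) \to K_1,
\]
which is obtained by applying the adjunction to the identity $\id_{\cihom(K_0,K_1)}$, composed with the symmetry $K_0\ctens\cihom(K_0,K_1)\simeq\cihom(K_0,K_1)\ctens K_0$. This yields morphism 1 directly.

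From $\mathrm{ev}$, all the remaining morphisms are obtained by adjunction. For morphism 2, apply two copies of $\mathrm{ev}$ to get
\[
K_0 \ctens \cihom(K_0,K_1) \ctens \cihom(K_1,K_2) \to K_1 \ctens \cihom(K_1,K_2) \to K_2,
\]
and take its adjoint. For morphism 3, compose the symmetry with $\mathrm{ev}$ to obtain $K_1\ctens K_0\ctens\cihom(K_1,K_2)\to K_0\ctens K_2$ and take its adjoint. Morphism 4 follows by taking the adjoint of $K_0\ctens K_1\ctens\cihom(K_1,K_2)\to K_0\ctens K_2$ (obtained from $\mathrm{ev}$ by tensoring with $K_0$). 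Morphism 5 is the adjoint of the composition morphism 2. Finally, morphism 6 is the adjoint of $\cihom(K_0,K_1)\ctens K_0\to K_1$, which is again $\mathrm{ev}$ up to symmetry.

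No step is an obstacle: the construction is a routine consequence of closed symmetric monoidality. The only thing one should check explicitly, if desired, is that $\cihom$ viewed as a bifunctor ${\BEC M}^\op\times\BEC M\to\BEC M$ makes these diagrams commute, but this is automatic since all morphisms are defined as adjoints of explicit composites of $\mathrm{ev}$ and structural isomorphisms. I would therefore present the proof as a short list of one-line constructions, each defined as the adjoint under Proposition~\ref{pro:ctenscihom} of the indicated composite.
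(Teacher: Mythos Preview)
Your proposal is correct and follows essentially the same approach as the paper: construct the evaluation morphism as the adjoint of the identity on $\cihom(K_0,K_1)$, then derive each remaining morphism as the adjoint of an explicit composite built from evaluation and the symmetry of $\ctens$. The paper's proof is organized identically, with the same one-line derivations for morphisms 2 through 6.
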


\begin{proof}
The first morphism is the image of the identity by the isomorphism
\begin{multline*}
\Hom[\BEC M](\cihom(K_0,K_1),\cihom(K_0,K_1)) \\
\isoto \Hom[\BEC M](K_0\ctens\cihom(K_0,K_1),K_1).
\end{multline*}

\noindent The second morphism follows from
\eqn
&&K_0 \ctens \cihom(K_0,K_1) \ctens \cihom(K_1,K_2) \\
&&\hs{15ex}\to K_1 \ctens \cihom(K_1,K_2)
\To K_2.
\eneqn

\noindent The third morphism is the image by the isomorphism
\begin{multline*}
\Hom[\BEC M](K_0 \ctens K_1 \ctens \cihom(K_1,K_2), K_0\ctens K_2) \\
\isoto
\Hom[\BEC M](K_0 \ctens \cihom(K_1,K_2),\cihom(K_1,K_0\ctens K_2))
\end{multline*}
of the morphism
\begin{equation}
\label{eq:KKiK}
K_0 \ctens K_1 \ctens \cihom(K_1,K_2) \to K_0\ctens K_2
\end{equation}
induced by the first morphism in the statement.

\noindent The fourth morphism is obtained from \eqref{eq:KKiK}.

\noindent The fifth morphism is obtained from the second one.

\noindent The last morphism follows from the first one.
\end{proof}

Let $f\colon M \to N$ be a continuous map of good topological spaces.
Denote by $\tilde f\colon M\times\R_\infty \to N\times\R_\infty$ the associated morphism. Then, by Lemma~\ref{lem:tildef}~(iii), the compositions of functors
\begin{align}
\label{eq:oimftilde}
& \BDC(\ifield_{M\times\R_\infty}) \To[\reeim{\tilde f},\; \roim{\tilde f}] \BDC(\ifield_{N\times\R_\infty}) \To \BEC N, \\
\label{eq:opbftilde}
& \BDC(\ifield_{N\times\R_\infty}) \To[\opb{\tilde f},\; \epb{\tilde f}] \BDC(\ifield_{M\times\R_\infty}) \To \BEC M
\end{align}
factor through $\BEC M$ and $\BEC N$, respectively.

\begin{definition}
\label{def:fT}
We denote by
\begin{align*}
\Eeeim f,\; \Eoim f &\colon \BEC M \to \BEC N, \\
\Eopb f,\; \Eepb f &\colon \BEC N \to \BEC M,
\end{align*}
the functors induced by \eqref{eq:oimftilde} and \eqref{eq:opbftilde}, respectively.
\end{definition}

\begin{definition}
For $K\in\BEC M$ and $L\in\BEC N$, set
\[
K \cetens L = \Eopb p_1 K \ctens \Eopb p_2 L \in \BEC{M\times N},
\]
where $p_1$ and $p_2$ denote the projections from $M\times N$ to $M$ and $N$, respectively.
\end{definition}

Using Notation~\ref{not:Tlr}, for $K\in\BEC M$ and $L\in\BEC N$ one has
isomorphisms in $\BEC M$ or $\BEC N$: 
\begin{align*}
\Eeeim f K &\simeq \reeim{\tilde f}\LE K \simeq \reeim{\tilde f}\RE K, \\
\Eoim f K &\simeq \roim{\tilde f}\LE K \simeq \roim{\tilde f}\RE K, \\
\Eopb f L &\simeq \opb{\tilde f}\LE L \simeq \opb{\tilde f}\RE L, \\
\Eepb f L &\simeq \epb{\tilde f}\LE L \simeq \epb{\tilde f}\RE L.
\end{align*}

Let us now show that the above operations satisfy 
similar properties to  the external operations for ind-sheaves.

The following two propositions immediately follow from their counterpart in Lemmas~\ref{lem:badj} and \ref{lem:bcomp}.

\begin{proposition}
\label{pro:Tadj}
Let $f\colon M \to N$ be a continuous map of good topological spaces.
\bnum
\item
The functor $\Eeeim f$ is left adjoint to $\Eepb f$.
\item
The functor $\Eopb f$ is left adjoint to $\Eoim f$.
\ee
\end{proposition}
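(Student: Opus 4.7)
The proof is essentially formal given the machinery already assembled. The plan is to lift both adjunctions along the quotient functor $Q_M\colon\BDC(\ifield_{M\times\R_\infty})\to\BEC M$ and then apply the bordered-space adjunctions of Lemma~\ref{lem:badj} to $\tilde f\colon M\times\R_\infty\to N\times\R_\infty$.

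\medskip

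First, I would record what the quotient setup gives us. By Proposition~\ref{pro:tam}, the inclusion ${}^\bot\ind\shc_{\{t^*=0\}}\hookrightarrow\BDC(\ifield_{M\times\R_\infty})$ realizes $\LE\colon\BEC M\to\BDC(\ifield_{M\times\R_\infty})$ as a left adjoint of $Q_M$, and the inclusion $\ind\shc_{\{t^*=0\}}^\bot\hookrightarrow\BDC(\ifield_{M\times\R_\infty})$ realizes $\RE$ as a right adjoint of $Q_M$. By Definition~\ref{def:fT}, we have canonical identifications in $\BEC M$ and $\BEC N$:
\[
\Eeeim f K\simeq Q_N\bl\reeim{\tilde f}\LE K\br,\quad \Eepb f L\simeq Q_M\bl\epb{\tilde f}\RE L\br,
\]
and similarly with $\opb{\tilde f}$, $\roim{\tilde f}$ in place of $\reeim{\tilde f}$, $\epb{\tilde f}$.

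\medskip

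The proof of~(i) would then proceed by the following chain of isomorphisms, for $K\in\BEC M$ and $L\in\BEC N$:
\begin{align*}
\Hom[\BEC N](\Eeeim f K,L)
&\simeq\Hom[\BDC(\ifield_{N\times\R_\infty})](\reeim{\tilde f}\LE K,\RE L)\\
&\simeq\Hom[\BDC(\ifield_{M\times\R_\infty})](\LE K,\epb{\tilde f}\RE L)\\
&\simeq\Hom[\BEC M](K,Q_M(\epb{\tilde f}\RE L))\\
&\simeq\Hom[\BEC M](K,\Eepb f L),
\end{align*}
where the first step uses $Q_N\dashv\RE$, the second uses Lemma~\ref{lem:badj}~(i) for $\tilde f$, and the third uses $\LE\dashv Q_M$. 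Part~(ii) is entirely analogous, using $\LE L$ as representative for $L$, invoking $Q_M\dashv\RE$, then Lemma~\ref{lem:badj}~(ii) for $\tilde f$, and finally $\LE\dashv Q_N$ to land in $\Hom[\BEC N](L,\Eoim f K)$.

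\medskip

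There is no genuine obstacle: every step is either a definitional rewriting or one of the three adjunctions already in hand. The only point that deserves a brief sanity check is the well-definedness of $\Eeeim f,\Eoim f,\Eopb f,\Eepb f$ as functors on the quotient categories, which has already been arranged in Definition~\ref{def:fT} via Lemma~\ref{lem:tildef}~(iii) and Lemma~\ref{lem:piRinfty}; so the isomorphisms above are functorial in $K$ and $L$ and give the two desired adjunctions.
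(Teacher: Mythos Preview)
Your proof is correct and takes essentially the same approach as the paper, which simply states that the proposition ``immediately follows from'' its counterpart in Lemma~\ref{lem:badj}. You have spelled out the passage through the quotient via the adjunctions $\LE\dashv Q\dashv\RE$, which is exactly the content the paper leaves implicit.
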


\begin{proposition}\label{pro:Tcomp}
Given two continuous maps of good topological spaces $L \To[g] M \To[f] N$, one has
\[
\Eeeim{(f\circ g)} \simeq \Eeeim f \circ \Eeeim g, \qquad
\Eoim{(f\circ g)} \simeq \Eoim f \circ \Eoim g
\]
and
\[
\Eopb{(f\circ g)} \simeq \Eopb g \circ \Eopb f, \qquad
\Eepb{(f\circ g)} \simeq \Eepb g \circ \Eepb f.
\]
\end{proposition}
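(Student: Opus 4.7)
The plan is to reduce the statement to the corresponding statement for bordered-space operations, namely Lemma~\ref{lem:bcomp}, and then observe that the resulting isomorphisms descend to the quotient categories defining $\BEC{\dummy}$.

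First I would note that the association $f \mapsto \tilde f$, where $\tilde f\colon M\times\R_\infty\to N\times\R_\infty$ is the morphism of bordered spaces induced by a continuous map $f\colon M\to N$, is functorial: for $L\To[g]M\To[f]N$ one has an obvious equality $\widetilde{f\circ g}=\tilde f\circ\tilde g$ of morphisms of bordered spaces. This reduces the problem to an entirely formal manipulation with the quotient functor $Q_M\colon\BDC(\ifield_{M\times\R_\infty})\to\BEC M$.

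Next I would unwind Definition~\ref{def:fT}. By that definition, for each of the four operations the diagram
\[
\xymatrix@C=8ex{
\BDC(\ifield_{M\times\R_\infty}) \ar[r]^{\reeim{\tilde f},\,\roim{\tilde f}} \ar[d]_{Q_M} & \BDC(\ifield_{N\times\R_\infty}) \ar[d]^{Q_N} \\
\BEC M \ar[r]^{\Eeeim f,\,\Eoim f} & \BEC N
}
\]
commutes (and analogously for $\Eopb f$ and $\Eepb f$, with arrows reversed). Consequently, for any $F\in\BDC(\ifield_{L\times\R_\infty})$ one has, functorially in $F$,
\[
\Eeeim f\circ\Eeeim g\,(Q_L F)\simeq Q_N\bl \reeim{\tilde f}\reeim{\tilde g}F\br,\quad
\Eeeim{(f\circ g)}\,(Q_L F)\simeq Q_N\bl \reeim{\widetilde{f\circ g}}F\br,
\]
and similarly for the other three operations.

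Now I would apply Lemma~\ref{lem:bcomp} to the composable pair $\tilde g,\tilde f$ of morphisms of bordered spaces; combined with $\widetilde{f\circ g}=\tilde f\circ\tilde g$, it yields canonical isomorphisms
\[
\reeim{\widetilde{f\circ g}}\simeq\reeim{\tilde f}\circ\reeim{\tilde g},\quad
\roim{\widetilde{f\circ g}}\simeq\roim{\tilde f}\circ\roim{\tilde g},
\]
\[
\opb{\widetilde{f\circ g}}\simeq\opb{\tilde g}\circ\opb{\tilde f},\quad
\epb{\widetilde{f\circ g}}\simeq\epb{\tilde g}\circ\epb{\tilde f}.
\]
Since every object of $\BEC L$ is in the essential image of $Q_L$ (by the definition of the quotient category), post-composing with $Q_N$ and $Q_L$ turns each of these four isomorphisms into the corresponding isomorphism of enhanced functors asserted in the statement.

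There is no serious obstacle here, as Lemma~\ref{lem:bcomp} has already done the substantive work; the only point worth a brief check is the well-definedness of the composition at the level of quotients, which is immediate from the commutative square above.
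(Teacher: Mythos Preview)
Your proof is correct and follows exactly the approach indicated in the paper, which simply states that the proposition ``immediately follow[s] from [its] counterpart in Lemmas~\ref{lem:badj} and \ref{lem:bcomp}.'' You have merely spelled out the straightforward reduction to Lemma~\ref{lem:bcomp} via the functoriality $\widetilde{f\circ g}=\tilde f\circ\tilde g$ and the definition of the enhanced operations through the quotient functor.
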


\begin{proposition}
\label{pro:Tproj}
Let $f\colon M \to N$ be a continuous map of good topological spaces.
For $K\in\BEC M$ and $L,L_1,L_2\in\BEC N$, one has isomorphisms
\begin{align*}
\Eeeim f(\Eopb f L \ctens K) & \simeq L \ctens \Eeeim f K, \\
\Eopb f (L_1\ctens L_2) &\simeq \Eopb f L_1 \ctens \Eopb f L_2, \\
\cihom(L,\Eoim f K) & \simeq \Eoim f\; \cihom(\Eopb f L,K), \\
\cihom(\Eeeim f K, L) & \simeq \Eoim f\; \cihom(K, \Eepb f L), \\
\Eepb f \cihom(L_1,L_2) & \simeq \cihom(\Eopb f L_1, \Eepb f L_2),
\end{align*}
and a morphism
\[
\Eopb f \cihom(L_1,L_2) \to \cihom(\Eopb f L_1,\Eopb f L_2).
\]
\end{proposition}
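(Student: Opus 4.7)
The plan is to reduce each claim to the corresponding statement for ind-sheaves on bordered spaces (Proposition~\ref{pro:bproj} and Lemma~\ref{lem:bcart}), by unwinding the definitions in $\BDC(\ifield_{M\times\R_\infty})$ and using that $\ctens$, $\cihom$, $\Eeeim f$, $\Eopb f$, etc., are induced by honest functors on the bordered-space categories via the quotient by $\ind\shc_{\{t^*=0\}}$. The crucial geometric input is that for $\tilde f = f\times \id_{\R_\infty}\colon M\times\R_\infty\to N\times\R_\infty$ and $\bar f = f\times\id_{\R_\infty^2}\colon M\times\R_\infty^2 \to N\times\R_\infty^2$, one has Cartesian diagrams
\[
\xymatrix@C=5ex@R=4ex{
M\times\R_\infty^2 \ar[r]^{\bar f} \ar[d]^{\mu_M} & N\times\R_\infty^2 \ar[d]^{\mu_N} \\
M\times\R_\infty \ar[r]^{\tilde f}\ar@{}[ur]|-\square & N\times\R_\infty
} \qquad
\xymatrix@C=5ex@R=4ex{
M\times\R_\infty^2 \ar[r]^{\bar f} \ar[d]^{q_i^M} & N\times\R_\infty^2 \ar[d]^{q_i^N} \\
M\times\R_\infty \ar[r]^{\tilde f}\ar@{}[ur]|-\square & N\times\R_\infty
} \quad (i=1,2),
\]
which allow us to propagate base change through the convolution.

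For the first isomorphism, I would pick representatives $L'\in\BDC(\ifield_{N\times\R_\infty})$, $K'\in\BDC(\ifield_{M\times\R_\infty})$ of $L$, $K$, and compute
\[
\reeim{\tilde f}(\opb{\tilde f}L' \ctens K')
\simeq \reeimv{\mu_N\sep!!}\reeim{\bar f}(\opb{\bar f}\opb{q_1^N}L' \tens \opb{q_2^M}K')
\]
by Lemma~\ref{lem:bcomp} and the left Cartesian diagram. Then Proposition~\ref{pro:bproj} (ordinary projection formula for bordered spaces) and Lemma~\ref{lem:bcart} (base change through the right Cartesian diagram) give successively
\[
\reeimv{\mu_N\sep!!}(\opb{q_1^N}L' \tens \reeim{\bar f}\opb{q_2^M}K')
\simeq \reeimv{\mu_N\sep!!}(\opb{q_1^N}L' \tens \opb{q_2^N}\reeim{\tilde f}K')
\simeq L' \ctens \reeim{\tilde f}K',
\]
which represents $L\ctens\Eeeim f K$. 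The second isomorphism is analogous: $\opb{\tilde f}$ commutes with $\tens$ (by Proposition~\ref{pro:bproj}) and with $\opb{q_i^N}$, and $\opb{\tilde f}\reeim{\mu_N}\simeq\reeim{\mu_M}\opb{\bar f}$ by base change, yielding $\opb{\tilde f}(L_1'\ctens L_2')\simeq \opb{\tilde f}L_1' \ctens \opb{\tilde f}L_2'$.

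Having established the projection formula (1) and the compatibility (2), the remaining isomorphisms follow formally by Yoneda, using the adjunctions $(\Eeeim f,\Eepb f)$, $(\Eopb f,\Eoim f)$ of Proposition~\ref{pro:Tadj} and the adjunction $(K\ctens{-},\cihom(K,{-}))$ in $\BEC{*}$. For instance, for the fourth isomorphism one computes for any $K''\in\BEC N$:
\[
\Hom(K''\ctens \Eeeim f K,L)
\simeq \Hom(\Eeeim f(\Eopb f K'' \ctens K),L)
\simeq \Hom(\Eopb f K'', \cihom(K,\Eepb f L)),
\]
where the first step uses (1) and the second uses $(\Eeeim f,\Eepb f)$-adjunction and $\ctens\dashv\cihom$. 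Rewriting the right-hand side as $\Hom(K'',\Eoim f\,\cihom(K,\Eepb f L))$ and applying Yoneda yields~(4); the isomorphisms~(3) and~(5) are proved by completely analogous bookkeeping. Finally, the morphism $\Eopb f\cihom(L_1,L_2)\to\cihom(\Eopb f L_1,\Eopb f L_2)$ is obtained by adjunction from the composite
\[
\Eopb f\cihom(L_1,L_2)\ctens \Eopb f L_1 \isoto \Eopb f(\cihom(L_1,L_2)\ctens L_1)\to \Eopb f L_2,
\]
where the first arrow is~(2) and the second comes from the canonical evaluation of Lemma~\ref{lem:Thommorph}. I expect no substantial obstacle: the only nontrivial point is getting the Cartesian diagrams and the projection formula at the level of bordered spaces right, after which everything is pure adjunction calculus.
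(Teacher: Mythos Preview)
Your proposal is correct and follows essentially the same approach as the paper: reduce to the Cartesian diagrams relating $\mu$, $q_1$, $q_2$ on $M\times\R_\infty^2$ and $N\times\R_\infty^2$ to $\tilde f$ and $\bar f$, then apply the projection formula and base change from Proposition~\ref{pro:bproj} and Lemma~\ref{lem:bcart}. The only organizational difference is that the paper proves the fourth isomorphism directly as its representative example (and asserts the other four are similar), whereas you prove (1) and (2) directly and then derive (3)--(5) by Yoneda and the adjunctions of Proposition~\ref{pro:Tadj}; both are valid and the underlying geometry is identical.
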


\begin{proof}
(i)
Since the proofs of the five isomorphisms in the statement are similar, let us only deal with the fourth one.
Consider the morphisms
\begin{align*}
q_{M1},q_{M2},\mu_M &\colon M\times\R^2_\infty \to M\times\R_\infty, \\
q_{N1},q_{N2},\mu_N &\colon N\times\R^2_\infty \to N\times\R_\infty
\end{align*}
induced by \eqref{eq:muq1q2}.
Consider the Cartesian diagrams
\[
\vcenter{\vbox{
\xymatrix{
M\times\R^2_\infty \ar[r]^{f'} \ar[d]^u & N\times\R^2_\infty \ar[d]^v \\
M\times\R_\infty \ar[r]^{{\tilde f}} \ar@{}[ur]|-\square & N\times\R_\infty
}}}
\ \text{for $(u,v) = (q_{M1},q_{N1})$, $(q_{M2},q_{N2})$, $(\mu_M,\mu_N)$.}
\]
Then one has
\begin{align*}
\cihom(\Eeeim f K, L) 
& \simeq \roimv{q_{N1\sep*}}\rihom(\opb{q_{N2}}\reeim{\tilde f}\LE K, \epb{\mu_N}\RE L) \\
& \simeq \roimv{q_{N1\sep*}}\rihom(\reeim {f'}\opb{q_{M2}}\LE K, \epb{\mu_N}\RE L) \\
& \simeq \roimv{q_{N1\sep*}}\roim f'\rihom(\opb{q_{M2}}\LE K, \epbv{f^{\prime\sep!}}\epb{\mu_N}\RE L) \\
& \simeq \roim{\tilde f}\roimv{q_{M1\sep*}}\rihom(\opb{q_{M2}}\LE K, \epb{\mu_M}\epb{\tilde f}\RE L) \\
& \simeq \Eoim f \cihom(K, \Eepb f L).
\end{align*}

\smallskip\noindent(ii)
The last morphism in the statement is obtained by adjunction from
\begin{align*}
\Eopb f L_1 \ctens \Eopb f \cihom(L_1,L_2)
&\simeq \Eopb f (L_1 \ctens \cihom(L_1,L_2)) \\
&\to \Eopb f L_2.
\end{align*}
Here, the last morphism follows from Lemma~\ref{lem:Thommorph}.
\end{proof}

The next proposition follows from Lemma~\ref{lem:bcart}.

\begin{proposition}
\label{pro:Tcart}
Consider a Cartesian diagram of good topological spaces 
\begin{equation*}
\xymatrix@C=8ex{
M' \ar[r]^{f'} \ar[d]^{g'} & N' \ar[d]^{g} \\
M \ar[r]^{f}\ar@{}[ur]|-\square & N.
}
\end{equation*}
Then there are isomorphisms of functors $\BEC M \to \BEC{N'}$
\[
\Eopb g\Eeeim f \simeq \Eeeim f' \Eopbv{g^{\prime\sep-1}}, \qquad
\Eepb g\Eoim f \simeq \Eoim f' \Eepbv{g^{\prime\sep!}}.
\]
\end{proposition}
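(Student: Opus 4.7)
The plan is to derive Proposition~\ref{pro:Tcart} directly from its bordered-space analogue, Lemma~\ref{lem:bcart}, by lifting the given Cartesian square of topological spaces to a Cartesian square of bordered spaces and then descending along the quotient defining $\BEC{\bullet}$.

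First, I would observe that taking the product with a fixed bordered space preserves fiber products in the category of bordered spaces. Applying this to $\R_\infty$, the Cartesian diagram of good topological spaces in the statement gives a Cartesian diagram
\[
\xymatrix@C=8ex{
M'\times\R_\infty \ar[r]^{\tilde{f'}} \ar[d]^{\tilde{g'}} & N'\times\R_\infty \ar[d]^{\tilde g} \\
M\times\R_\infty \ar[r]^{\tilde f}\ar@{}[ur]|-\square & N\times\R_\infty,
}
\]
where $\tilde f,\tilde g,\tilde{f'},\tilde{g'}$ are the morphisms of bordered spaces associated with $f,g,f',g'$ as in Lemma~\ref{lem:tildef}.

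Second, I would apply Lemma~\ref{lem:bcart} to this Cartesian diagram of bordered spaces to obtain the base-change isomorphisms
\[
\opb{\tilde g}\reeim{\tilde f} \isoto \reeim{\tilde{f'}}\opbv{\tilde{g'}^{\sep-1}}, \qquad
\epb{\tilde g}\roim{\tilde f} \isoto \roim{\tilde{f'}}\epbv{\tilde{g'}^{\sep!}}
\]
in $\BDC(\ifield_{N'\times\R_\infty})$.

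Finally, I would compose with the quotient functor $\BDC(\ifield_{\bullet\times\R_\infty}) \to \BEC{\bullet}$. By Definition~\ref{def:fT}, the functors $\Eeeim f,\Eoim f,\Eopb g,\Eepb g$ are obtained from $\reeim{\tilde f},\roim{\tilde f},\opb{\tilde g},\epb{\tilde g}$ by composition with this quotient functor; hence the isomorphisms above descend to the claimed isomorphisms of functors $\BEC M\to\BEC{N'}$. For this descent to be well-defined one uses the fact, already built into Definition~\ref{def:fT} (and ultimately resting on Lemma~\ref{lem:tildef}), that each of the four ind-sheaf functors above sends $\ind\shc_{\{t^*=0\}}$ to itself, and similarly for natural isomorphisms between them.

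There is essentially no genuine obstacle here: the substantive content lies in Lemma~\ref{lem:bcart} and in the compatibility recorded in Lemma~\ref{lem:tildef}, both of which are available. The only point worth a moment's verification is that the induced square of bordered spaces above is Cartesian in the bordered-space sense, which amounts to checking that $(M'\times\R_\infty,\overline\Gamma_{\tilde f}\times_{N\times\overline\R}\overline\Gamma_{\tilde g})$ agrees with $M'\times\R_\infty$ as a bordered space—an immediate consequence of $\overline\Gamma_{\tilde f}=\overline\Gamma_f\times\Delta_{\overline\R}$ and similarly for $\tilde g$.
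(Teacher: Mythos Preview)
Your proposal is correct and follows exactly the approach indicated in the paper, which simply states that the proposition follows from Lemma~\ref{lem:bcart}. You have spelled out in detail what the paper leaves implicit: lifting the Cartesian square by $\times\R_\infty$, applying Lemma~\ref{lem:bcart}, and descending through the quotient via Definition~\ref{def:fT}.
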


\begin{lemma}\label{lem:ihomE}  
Let $F_1,F_2\in\BDC(\ifield_{M\times\R_\infty})$.
Let $K_1$, $K_2$ be the objects of $\BEC M$ corresponding to $F_1$, $F_2$ 
by the quotient functor. Then one has
\begin{align*}
\roim\pi\rihom(\LE K_1,\RE K_2) &\simeq
\roim\pi\rihom(\LE K_1,F_2)\\
&\simeq \roim\pi\rihom(F_1,\RE K_2).
\end{align*}
\end{lemma}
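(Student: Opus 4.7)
The plan is to exploit the adjunctions $\LE \dashv Q \dashv \RE$, where $Q\colon \BDC(\ifield_{M\times\R_\infty}) \to \BEC M$ denotes the quotient functor. Given $K_i = Q F_i$, the counit and unit of these adjunctions produce natural morphisms $\LE K_1 \to F_1$ and $F_2 \to \RE K_2$, which fit into distinguished triangles
\[
\LE K_1 \To F_1 \To C' \To[+1] \quad\text{and}\quad F_2 \To \RE K_2 \To C \To[+1].
\]
Since the relations $Q\LE \simeq \id$ and $Q\RE \simeq \id$ (coming from the equivalences in Proposition \ref{pro:tam}) force $QC \simeq 0$ and $QC' \simeq 0$, both cones $C$ and $C'$ belong to $\ind\shc_{\{t^*=0\}}$.

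The core step is to establish the two vanishings
\[
\roim\pi\rihom(\LE K_1, D) \simeq 0, \qquad \roim\pi\rihom(D, \RE K_2) \simeq 0
\]
for any $D \in \ind\shc_{\{t^*=0\}}$. Writing $D \simeq \opb\pi L$ for some $L \in \BDC(\ifield_M)$ (by the characterization given just before Proposition \ref{pro:tam}), the first vanishing follows from Lemma \ref{lem:piihomctens} applied to $\LE K_1 = (\field_{\{t\geq 0\}} \dsum \field_{\{t\leq 0\}}) \ctens F_1$:
\[
\roim\pi\rihom(\LE K_1, \opb\pi L) \simeq \roim\pi\rihom\bl F_1, \cihom(\field_{\{t\geq 0\}} \dsum \field_{\{t\leq 0\}}, \opb\pi L)\br,
\]
combined with the vanishing $\cihom(\field_{\{t\geq 0\}} \dsum \field_{\{t\leq 0\}}, \opb\pi L) \simeq 0$ from Lemma \ref{lem:piRinfty}. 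For the second vanishing, the adjunction $\opb\pi \dashv \roim\pi$ together with $\opb\pi(F\tens L) \simeq \opb\pi F \tens \opb\pi L$ gives
\[
\roim\pi\rihom(\opb\pi L, \RE K_2) \simeq \rihom(L, \roim\pi\RE K_2),
\]
and $\roim\pi\RE K_2 \simeq 0$ holds because $\RE K_2 \in \ind\shc_{\{t^*=0\}}^\bot$ is characterized by this vanishing in Proposition \ref{pro:tam}(ii-b).

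Granting these vanishings, the conclusion is immediate. Applying the triangulated functor $\roim\pi\rihom(\LE K_1, -)$ to the triangle $F_2 \to \RE K_2 \to C \to[+1]$ and using $\roim\pi\rihom(\LE K_1, C) \simeq 0$ yields the first isomorphism $\roim\pi\rihom(\LE K_1, F_2) \simeq \roim\pi\rihom(\LE K_1, \RE K_2)$. Applying the contravariant functor $\roim\pi\rihom(-, \RE K_2)$ to the triangle $\LE K_1 \to F_1 \to C' \to[+1]$ and using $\roim\pi\rihom(C', \RE K_2) \simeq 0$ yields $\roim\pi\rihom(F_1, \RE K_2) \simeq \roim\pi\rihom(\LE K_1, \RE K_2)$. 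Composing these two isomorphisms produces the desired chain. The main (mild) obstacle is recognizing the asymmetry in strategy: the right variable is controlled by sliding the convolution past $\rihom$ via Lemma \ref{lem:piihomctens} and then invoking Lemma \ref{lem:piRinfty}, while the left variable is controlled directly by the $\opb\pi \dashv \roim\pi$ adjunction combined with the definition of $\ind\shc_{\{t^*=0\}}^\bot$.
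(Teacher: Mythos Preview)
Your proof is correct and follows the same overall strategy as the paper: reduce to showing that $\roim\pi\rihom(\LE K_1,-)$ and $\roim\pi\rihom(-,\RE K_2)$ kill objects of $\ind\shc_{\{t^*=0\}}$, then apply this to the cones of $F_2\to\RE K_2$ and $\LE K_1\to F_1$. Your second vanishing is verbatim the paper's argument. For the first vanishing, the paper takes a shorter and more symmetric route: instead of writing the cone as $\opb\pi L$ and passing through Lemmas~\ref{lem:piihomctens} and~\ref{lem:piRinfty}, it writes the cone as $\epb\pi L$ and uses the adjunction $\reeim\pi\dashv\epb\pi$ to get
\[
\roim\pi\rihom(\LE K_1,\epb\pi L)\simeq\rihom(\reeim\pi\LE K_1,L)\simeq 0,
\]
since $\reeim\pi\LE K_1\simeq 0$ by Proposition~\ref{pro:tam}(ii-a). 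This mirrors the second vanishing exactly and avoids the detour through convolution.
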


\begin{proof}
The first isomorphism follows from
\[
\roim\pi\rihom(\LE K_1,\epb\pi L)
\simeq \rihom(\reeim\pi\LE K_1,L) \simeq 0,
\]
and the second isomorphism follows from
\[
\roim\pi\rihom(\opb\pi L,\RE K_2) \simeq \rihom(L,\roim\pi \RE K_2) \simeq 0.
\]
\end{proof}

\begin{definition}\label{def:HomT}
We define the hom-functor
\[
\fhom\colon {\BEC M}^\op \times \BEC M \to \BDC(\field_M)
\]
as follows
\begin{align*}
\fhom(K_1,K_2) 
&= \alpha_M\,\roim\pi\rihom(\LE K_1,\LE K_2) \\
&\simeq \alpha_M\,\roim\pi\rihom(\LE K_1,\RE K_2) \\
&\simeq \alpha_M\,\roim\pi\rihom(\RE K_1,\RE K_2) \\
&\underset{(*)}\simeq \alpha_M\roim{\overline\pi}\rihom(\reeimv{j_{M\sep!!}}\LE K_1,\roimv{j_{M\sep*}} \RE K_2) \\
&\simeq \roim{\overline\pi}\rhom(\reeimv{j_{M\sep!!}}\LE K_1,\roimv{j_{M\sep*}} \RE K_2).
\end{align*} 
Here, $(*)$ follows from Lemma~\ref{lem:jM}~(iv) and in the last isomorphism we used the fact that $\alpha$ commutes with $\roim{\overline\pi}$.
\end{definition}

\begin{lemma}
For $K_1,K_2\in\BEC M$, one has
\begin{align*}
\Hom[\BEC M](K_1,K_2)
&\simeq H^0\rsect(M; \fhom(K_1,K_2)) \\
&\simeq \Hom[\BDC(\field_M)](\field_M, \fhom(K_1,K_2)) .
\end{align*}
\end{lemma}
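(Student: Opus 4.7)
The plan is to reduce everything to a chain of natural identifications, starting from the adjunction property of the quotient functor and its left/right adjoints $\LE, \RE$.

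First, I would use the preceding unnamed lemma (the one just before Lemma~\ref{lem:ihomE}) which gives
\[
\Hom[\BEC M](K_1, K_2) \simeq \Hom[\BDC(\ifield_{M\times\R_\infty})](\LE K_1, \RE K_2).
\]
Setting $F_1 \defeq \LE K_1$ and $F_2 \defeq \RE K_2$, this reduces the task to computing $\Hom[\BDC(\ifield_{M\times\R_\infty})](F_1, F_2)$. By Corollary~\ref{cor:bord} applied to the bordered space $M\times\R_\infty$, this is isomorphic to
\[
\Hom[\BDC(\ifield_{M\times\overline\R})](\reeimv{j_{M\sep!!}}F_1, \roimv{j_{M\sep*}}F_2).
\]

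Next, I would use the standard formula expressing $\Hom$ in a derived category of ind-sheaves via $\rsect$ and $\rihom$, together with the fact that $\alpha$ computes the sheaf part of an ind-sheaf and commutes with $\roim{\overline\pi}$ (as recalled in Definition~\ref{def:HomT}). Concretely, writing $G \defeq \rihom(\reeimv{j_{M\sep!!}}F_1, \roimv{j_{M\sep*}}F_2) \in \BDC(\ifield_{M\times\overline\R})$, one has
\[
H^0 \rsect(M\times\overline\R; \alpha_{M\times\overline\R} G) \simeq \Hom[\BDC(\ifield_{M\times\overline\R})](\reeimv{j_{M\sep!!}}F_1, \roimv{j_{M\sep*}}F_2),
\]
and then $\rsect(M\times\overline\R; \cdot) \simeq \rsect(M; \roim{\overline\pi}\,\cdot)$, so one rewrites the right-hand side as $H^0\rsect(M; \roim{\overline\pi} \alpha_{M\times\overline\R} G) \simeq H^0\rsect(M; \alpha_M \roim{\overline\pi} G)$. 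Finally, from the chain of isomorphisms given in Definition~\ref{def:HomT}, one identifies $\alpha_M \roim{\overline\pi} G$ with $\fhom(K_1, K_2)$, yielding the first isomorphism of the statement.

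The second isomorphism is routine: for any sheaf $H \in \BDC(\field_M)$, one has the tautological identification
\[
\Hom[\BDC(\field_M)](\field_M, H) \simeq H^0\rsect(M; H),
\]
applied to $H = \fhom(K_1, K_2)$.

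No step should be a genuine obstacle; the only point requiring care is keeping track of the passage between $\rihom$ (ind-sheaf valued) and $\rhom = \alpha\circ\rihom$ (sheaf valued), and in particular using that $\rsect$ on an ind-sheaf agrees with $\rsect$ applied to its $\alpha$-image, which is built into the definition of $\Hom$ in $\BDC(\ifield_{\bullet})$. Everything else is an assembly of already stated facts: the $\LE/\RE$ adjunction, Corollary~\ref{cor:bord}, and the alternative expressions for $\fhom$ listed in Definition~\ref{def:HomT}.
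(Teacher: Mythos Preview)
The paper states this lemma without proof, treating it as immediate from Definition~\ref{def:HomT} and the adjunction properties of $\LE,\RE$. Your argument is correct and is exactly the natural unwinding of those definitions: the $\LE/\RE$ description of $\Hom$ in $\BEC M$, Corollary~\ref{cor:bord} to pass to $M\times\overline\R$, the identification $\rhom=\alpha_M\rihom$ together with $\alpha$ commuting with $\roim{\overline\pi}$, and the last displayed line of Definition~\ref{def:HomT}.
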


\begin{lemma}
\label{lem:homT}
For $K_1,K_2,K_3\in\BEC M$, one has
\[
\fhom(K_1 \ctens K_2,K_3) \simeq
\fhom(K_1, \cihom (K_2,K_3)).
\]
In particular,
\[
\fhom(K_1,K_2) \simeq
\fhom(\field_{\{t=0\}}, \cihom (K_1,K_2)).
\]
\end{lemma}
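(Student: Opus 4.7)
The strategy is to reduce the asserted isomorphism in $\BDC(\field_M)$ to its analogue in $\BDC(\ifield_{M\times\R_\infty})$, which is exactly the content of Lemma~\ref{lem:piihomctens}. The passage between the two levels is handled by Lemma~\ref{lem:ihomE}, according to which $\roim\pi\rihom(\LE K,F)$ and $\roim\pi\rihom(F,\RE K')$ depend only on the classes of $K,K'$ in $\BEC M$, independently of which representative $F$ is chosen for the other argument.

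Concretely, I would begin from the identification
\[
\fhom(K_1\ctens K_2, K_3)\simeq \alpha_M\,\roim\pi\rihom\bl\LE(K_1\ctens K_2),\RE K_3\br
\]
supplied by Definition~\ref{def:HomT}. Since the functor $\ctens$ on $\BEC M$ is induced by the homonymous functor on $\BDC(\ifield_{M\times\R_\infty})$, the object $\LE K_1\ctens \LE K_2\in\BDC(\ifield_{M\times\R_\infty})$ is a representative of $K_1\ctens K_2\in\BEC M$. Lemma~\ref{lem:ihomE} therefore allows me to replace $\LE(K_1\ctens K_2)$ by $\LE K_1\ctens \LE K_2$ without altering the value of $\roim\pi\rihom(\dummy,\RE K_3)$. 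A direct application of Lemma~\ref{lem:piihomctens} to the three objects $\LE K_1,\LE K_2,\RE K_3$ of $\BDC(\ifield_{M\times\R_\infty})$ then yields
\[
\roim\pi\rihom(\LE K_1\ctens \LE K_2,\RE K_3)\simeq \roim\pi\rihom\bl\LE K_1,\cihom(\LE K_2,\RE K_3)\br .
\]
Since $\cihom$ on $\BEC M$ is also induced from its analogue in $\BDC(\ifield_{M\times\R_\infty})$, the object $\cihom(\LE K_2,\RE K_3)$ is a representative of $\cihom(K_2,K_3)\in\BEC M$, and a second application of Lemma~\ref{lem:ihomE}---this time fixing $\LE K_1$ on the left---identifies the right-hand side with $\roim\pi\rihom\bl \LE K_1,\RE\cihom(K_2,K_3)\br$. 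Applying $\alpha_M$ and reading Definition~\ref{def:HomT} backwards delivers $\fhom\bl K_1,\cihom(K_2,K_3)\br$, which is the first isomorphism.

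The ``In particular'' statement is then immediate: substituting $\field_{\{t=0\}},K_1,K_2$ for $K_1,K_2,K_3$ in the main formula gives $\fhom(\field_{\{t=0\}}\ctens K_1,K_2)\simeq \fhom\bl\field_{\{t=0\}},\cihom(K_1,K_2)\br$, and $\field_{\{t=0\}}\ctens K_1\simeq K_1$ by the corollary following Lemma~\ref{lem:nilpo}, where $\field_{\{t=0\}}$ was identified as the unit for $\ctens$. The only subtle point in the argument is the bookkeeping around representatives in the quotient category $\BEC M$; since Lemma~\ref{lem:ihomE} was designed precisely to neutralise this subtlety, I do not anticipate any substantive obstacle.
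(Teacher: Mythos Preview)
Your proof is correct and is exactly the argument the paper's setup is designed to yield: the paper states Lemma~\ref{lem:homT} without proof, but your route through Lemma~\ref{lem:piihomctens} (the $\BDC(\ifield_{M\times\R_\infty})$-level adjunction under $\roim\pi\rihom$) together with Lemma~\ref{lem:ihomE} (which handles the passage between representatives in the quotient) is precisely the intended one. The ``In particular'' part via the unit property of $\field_{\{t=0\}}$ is also correct.
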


Let $i_0 \colon M \to M\times\R_\infty$ be the embedding $x \mapsto (x,0)$.

\begin{lemma}
\label{lem:ai0}
For $K\in\BEC M$ and $L\in\BDC(\ifield_M)$, one has
\[
\fhom(\field_{\{t = 0\}} \tens \opb\pi L,K) \simeq
\alpha_M\rihom(L, \epb{i_0} \RE K).
\]
\end{lemma}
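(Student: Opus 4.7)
The plan is to rewrite $\field_{\{t=0\}}\tens\opb\pi L$ as a direct image, then apply the standard adjunction for a closed embedding.

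First, let $i_0\colon M\to M\times\R_\infty$ be the embedding $x\mapsto(x,0)$, so that $\pi\circ i_0=\id_M$ and $\field_{\{t=0\}}\simeq\roimv{i_{0\sep*}}\field_M\simeq\reeimv{i_{0\sep!!}}\field_M$. By the projection formula,
\[
\field_{\{t=0\}}\tens\opb\pi L \;\simeq\; \reeimv{i_{0\sep!!}}\field_M \tens \opb\pi L \;\simeq\; \reeimv{i_{0\sep!!}}(\field_M\tens \opb{i_0}\opb\pi L) \;\simeq\; \reeimv{i_{0\sep!!}}L.
\]

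Next, using Definition~\ref{def:HomT} together with Lemma~\ref{lem:ihomE}, the hom-functor may be computed using any representative of its first argument, so
\[
\fhom(\field_{\{t=0\}}\tens\opb\pi L,K) \;\simeq\; \alpha_M\,\roim\pi\,\rihom\bl\reeimv{i_{0\sep!!}}L,\RE K\br.
\]
Applying the adjunction isomorphism of Proposition~\ref{pro:bproj} for the closed embedding $i_0$, we get
\[
\rihom\bl\reeimv{i_{0\sep!!}}L,\RE K\br \;\simeq\; \roimv{i_{0\sep*}}\rihom\bl L,\epb{i_0}\RE K\br,
\]
so that by Lemma~\ref{lem:bcomp} and $\pi\circ i_0=\id_M$,
\[
\roim\pi\,\rihom\bl\reeimv{i_{0\sep!!}}L,\RE K\br \;\simeq\; \roim{(\pi\circ i_0)}\,\rihom\bl L,\epb{i_0}\RE K\br \;\simeq\; \rihom\bl L,\epb{i_0}\RE K\br.
\]
Applying $\alpha_M$ yields the claimed isomorphism.

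The argument involves only the formal adjunction properties of the six operations together with the identification $\field_{\{t=0\}}\simeq\roimv{i_{0\sep*}}\field_M$; there is no real obstacle, the only subtlety being the use of Lemma~\ref{lem:ihomE} to replace $\LE(\field_{\{t=0\}}\tens\opb\pi L)$ with the sheaf-level representative $\field_{\{t=0\}}\tens\opb\pi L$ when computing $\fhom$.
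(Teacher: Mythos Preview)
Your proof is correct and follows essentially the same approach as the paper's: both use the identification $\field_{\{t=0\}}\simeq\roimv{i_{0\sep*}}\field_M$, the adjunction for $i_0$, and the relation $\pi\circ i_0=\id_M$, together with Lemma~\ref{lem:ihomE} to replace $\LE$ of the first argument by any representative. The only cosmetic difference is that you first apply the projection formula to rewrite $\field_{\{t=0\}}\tens\opb\pi L$ as $\reeimv{i_{0\sep!!}}L$ and then use the $(\reeimv{i_{0\sep!!}},\epb{i_0})$ adjunction in one step, whereas the paper first applies the tensor--hom adjunction and then the identity $\rihom(\field_{\{t=0\}},\RE K)\simeq\roimv{i_{0\sep*}}\epb{i_0}\RE K$; these are equivalent rearrangements of the same computation.
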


Note that $\alpha$ does not commute with $\epb{i_0}$.

\begin{proof}
There is the chain of isomorphisms 
\begin{align*}
\fhom(\field_{\{t = 0\}} \tens \opb\pi L,K) 
&\simeq \alpha_M\,\roim\pi\rihom(\field_{\{t= 0\}} \tens \opb\pi L, \RE K) \\ 
&\simeq \alpha_M\,\roim\pi\rihom(\opb\pi L, \rihom (\field_{\{t= 0\}}, \RE K)) \\ 
&\simeq \alpha_M\rihom(L,\roim\pi\roimv{i_{0\sep*}}\epb{i_0} \RE K)  \\
&\simeq \alpha_M\rihom(L,\epb{i_0} \RE K).
\end{align*}
Here the first isomorphism follows from Lemma~\ref{lem:ihomE}.
\end{proof}

The following lemma follows from the fact that $\alpha$ commutes with $\roim f$.

\begin{lemma}
\label{homepb}
For $f\colon M\to N$ a morphism of good topological spaces, $K\in\BEC M$ and $L\in\BEC N$, one has
\begin{align*}
\roim f \fhom(K,\Eepb f L) &\simeq \fhom(\Eeeim f K, L), \\
\roim f \fhom(\Eopb f L, K) &\simeq \fhom(L, \Eoim f K).
\end{align*}
\end{lemma}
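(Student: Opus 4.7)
The plan is to unfold the definition of $\fhom$ from Definition~\ref{def:HomT} and perform a short chain of adjunctions, using that $\alpha$ commutes with $\roim f$ together with the commuting square $f\circ\pi_M = \pi_N\circ\tilde f$, where $\tilde f\colon M\times\R_\infty \to N\times\R_\infty$ denotes the morphism of bordered spaces induced by $f$.

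For the first isomorphism, I would take $\epb{\tilde f}\RE L$ as a representative of $\RE\,\Eepb f L$; it belongs to $\ind\shc_{\{t^*=0\}}^\bot$, which is the content of applying the base change $\epb{\tilde f}\cihom(L_1,L_2) \simeq \cihom(\opb{\tilde f}L_1,\epb{\tilde f}L_2)$ (an immediate consequence of Proposition~\ref{pro:bproj} and Lemma~\ref{lem:bcart}, along the lines of Proposition~\ref{pro:Tproj}) to $L_1 = \field_{\{t\geq 0\}}\dsum\field_{\{t\leq 0\}}$, which is preserved by $\opb{\tilde f}$. I would then compute
\begin{align*}
\roim f\,\fhom(K,\Eepb f L)
&\simeq \roim f\,\alpha_M\,\roim{\pi_M}\rihom(\LE K,\epb{\tilde f}\RE L) \\
&\simeq \alpha_N\,\roim{\pi_N}\,\roim{\tilde f}\rihom(\LE K,\epb{\tilde f}\RE L) \\
&\simeq \alpha_N\,\roim{\pi_N}\rihom(\reeim{\tilde f}\LE K,\RE L) \\
&\simeq \fhom(\Eeeim f K,L),
\end{align*}
where the second line uses $\alpha_N\circ\roim f \simeq \roim f\circ\alpha_M$ together with $\roim f\circ\roim{\pi_M} \simeq \roim{\pi_N}\circ\roim{\tilde f}$; the third line uses the $(\reeim{\tilde f},\epb{\tilde f})$-adjunction; and the last line uses the alternative form of $\fhom$ provided by Lemma~\ref{lem:ihomE}, recognizing $\reeim{\tilde f}\LE K$ as a (not necessarily $\LE$) representative of $\Eeeim f K$.

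The second isomorphism is entirely parallel: I would take $\opb{\tilde f}\LE L$ as a representative of $\LE\,\Eopb f L$ (it lies in ${}^\bot\ind\shc_{\{t^*=0\}}$ by the dual projection-formula argument), and then replace the $(\reeim{\tilde f},\epb{\tilde f})$-adjunction above with the $(\opb{\tilde f},\roim{\tilde f})$-adjunction to obtain
\begin{align*}
\roim f\,\fhom(\Eopb f L,K)
&\simeq \alpha_N\,\roim{\pi_N}\,\roim{\tilde f}\rihom(\opb{\tilde f}\LE L,\RE K) \\
&\simeq \alpha_N\,\roim{\pi_N}\rihom(\LE L,\roim{\tilde f}\RE K) \\
&\simeq \fhom(L,\Eoim f K),
\end{align*}
with $\roim{\tilde f}\RE K$ representing $\Eoim f K$.

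There is no real obstacle here; the whole point of the argument is the hint that $\alpha$ commutes with $\roim f$. The only minor care needed is verifying that the chosen representatives $\epb{\tilde f}\RE L$ and $\opb{\tilde f}\LE L$ remain legitimate inputs for the formula defining $\fhom$, which is ensured by Proposition~\ref{pro:Tproj} together with the flexibility in the choice of representatives recorded in Lemma~\ref{lem:ihomE}.
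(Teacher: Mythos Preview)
Your proof is correct and is precisely the argument the paper has in mind: the paper does not give a detailed proof but simply remarks that the lemma ``follows from the fact that $\alpha$ commutes with $\roim f$.'' Your unpacking of the definition of $\fhom$, combined with the square $f\circ\pi_M=\pi_N\circ\tilde f$, the adjunctions from Proposition~\ref{pro:bproj}, and the flexibility in the choice of representatives recorded in Lemma~\ref{lem:ihomE}, is exactly how that hint is meant to be read.
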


\begin{remark}\bnum
\item
For $K_1,K_2\in\BEC M$ and $F\in\BDC(\field_M)$, the isomorphism
\[
\rhom(F,\fhom(K_1,K_2))
\simeq
\fhom(\opb\pi F \tens K_1, K_2)
\]
\emph{does not} hold in general.
\item
Let $f\colon M\to N$ be a morphism of good topological spaces and $L_1,L_2\in\BEC N$.
Since $\alpha$ and $\epb f$ do not commute in general, the isomorphism $\epb f\fhom(L_1,L_2) \simeq \fhom(\Eopb f L_1,\Eepb f L_2)$ does not hold in general. 
\ee
\end{remark}

\subsection{$t$-structure of $\BEC M$}

In this subsection,
we will give a $t$-structure on $\BEC M$.
Recall the t-structure on $\BDC(\ifield_{M\times\R_\infty})$
defined in \S\;\ref{subse:t-str}.

\begin{definition}\label{def:tstrTDC}
We set
\begin{align*}
\Bec[\leq0]{M} &= \{ K\in\BEC M\semicolon \LE K\in \derd^{\leq 0}(\ifield_{M\times\R_\infty}) \}, \\
\Bec[\geq0] M&= \{ K\in\BEC M\semicolon \LE K\in \derd^{\geq 0}(\ifield_{M\times\R_\infty}) \}.
\end{align*}
\end{definition}

\begin{proposition}
The pair $({\Bec[{\leq 0}] M}, \Bec [{\geq 0}]M)$ is a $t$-structure on $\BEC M$.
\end{proposition}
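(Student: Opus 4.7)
The plan is to verify directly the three axioms of a $t$-structure, exploiting the equivalence $\LE\colon \BEC M \isoto {}^\bot\ind\shc_{\{t^*=0\}}$ of Proposition~\ref{pro:tam}~(ii-a), with quasi-inverse given by the quotient functor~$Q$.

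Axiom (i), namely closure of $\Bec[\leq0]M$ under $[1]$ and of $\Bec[\geq0]M$ under $[-1]$, is immediate from the definition since $\LE$ is triangulated. For axiom (ii) (Hom-vanishing), given $K \in \Bec[\leq 0]M$ and $K' \in \Bec[\geq 1]M$, I would take the representative $F_2 = \LE K'$ of $K'$ in the formula
\[
\Hom[\BEC M](K,K') \simeq \Hom[\BDC(\ifield_{M\times\R_\infty})](\LE K,F_2)
\]
of the preceding lemma; the vanishing then follows from $\LE K \in \derd^{\leq 0}$, $\LE K' \in \derd^{\geq 1}$, and the canonical $t$-structure on $\BDC(\ifield_{M\times\R_\infty})$.

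For axiom (iii) (existence of a truncation triangle), I would set $F = \LE K$ and consider the truncation triangle $\tau^{\leq 0}F \to F \to \tau^{\geq 1}F \to[+1]$ in $\BDC(\ifield_{M\times\R_\infty})$. Applying $Q$ and using $Q\LE \simeq \id$, this yields a distinguished triangle $K' \to K \to K'' \to[+1]$ in $\BEC M$ with $K' \defeq Q\tau^{\leq 0}F$ and $K'' \defeq Q\tau^{\geq 1}F$. It remains to check that $\LE K' \in \derd^{\leq 0}$ and $\LE K'' \in \derd^{\geq 1}$. Since $\LE Q$ acts as the identity on ${}^\bot\ind\shc_{\{t^*=0\}}$, this reduces to the key claim that ${}^\bot\ind\shc_{\{t^*=0\}}$ is stable under the truncation functors $\tau^{\leq 0}$ and $\tau^{\geq 1}$ of $\BDC(\ifield_{M\times\R_\infty})$.

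The main obstacle, and the only non-formal point, is this stability claim. To prove it, I would use the characterization ${}^\bot\ind\shc_{\{t^*=0\}} = \{F \semicolon \reeim\pi F \simeq 0\}$ from Proposition~\ref{pro:tam}~(ii-a). Since the fibers of $\pi\colon M\times\R_\infty \to M$ have soft-dimension $1$, Proposition~\ref{pro:t-str}~(iii) shows that $\reeim\pi$ sends $\derd^{\leq 0}$ into $\derd^{\leq 1}$, while left exactness (Proposition~\ref{pro:t-str}~(i)) gives $\reeim\pi(\derd^{\geq 0}) \subset \derd^{\geq 0}$. Applying $\reeim\pi$ to the truncation triangle of such an $F$ and using $\reeim\pi F \simeq 0$ provides an isomorphism $\reeim\pi\tau^{\geq 1}F \simeq \reeim\pi\tau^{\leq 0}F\,[1]$; the left hand side lies in $\derd^{\geq 1}$ whereas the right hand side lies in $\derd^{\leq 0}$, so both vanish, which establishes the claim and concludes the verification.
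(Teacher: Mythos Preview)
Your proof is correct and follows essentially the same approach as the paper. The paper's proof focuses exclusively on the key stability claim---that $\tau^{\leq 0}\LE K$ and $\tau^{>0}\LE K$ lie in ${}^\bot\ind\shc_{\{t^*=0\}}$---and establishes it by the identical degree argument via $\reeim\pi$; you are a bit more explicit in separately checking axioms (i) and (ii), and in citing both parts (i) and (iii)(a) of Proposition~\ref{pro:t-str} (the paper cites only (iii)(a), though (i) is also needed for the bound $\reeim\pi\tau^{\geq 1}F\in\derd^{\geq 1}$).
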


\begin{proof}
It is enough to show that for $K\in\BEC M$ there are isomorphisms
\begin{align*}
(\field_{\{t\geq 0\}} \dsum \field_{\{t\leq 0\}}) \ctens \tau^{\leq 0}\LE K &\simeq \tau^{\leq 0}\LE K, \\
(\field_{\{t\geq 0\}} \dsum \field_{\{t\leq 0\}}) \ctens \tau^{.0}\LE K &\simeq \tau^{>0}\LE K.
\end{align*}
In other words, we have to prove
\begin{equation}
\label{eq:taubot}
\tau^{\leq 0}\LE K,\ \tau^{> 0}\LE K \in {}^\bot\shc_{\{t^*=0\}}.
\end{equation}
Hence it is enough to show
\eq
&&\reeim{\pi}\tau^{\leq 0}\LE K\simeq \reeim{\pi}\tau^{>0}\LE K\simeq0.
\label{eq:vandir}
\eneq
We have a distinguished triangle
$$
\reeim{\pi}\tau^{\leq 0}\LE K\to\reeim{\pi}\LE K\to \reeim{\pi}\tau^{>0}\LE K\tone.$$
Since the middle term vanishes we have
$$\reeim{\pi}\tau^{>0}\LE K\simeq \reeim{\pi}\tau^{\leq 0}\LE K[1].$$
By Proposition~\ref{pro:t-str} (iii) (a), we have
$$
\text{$\reeim{\pi}\tau^{>0}\LE K\in \derd^{>0}(\ifield_{M\times\R_\infty})$
and $\reeim{\pi}\tau^{\leq 0}\LE K[1]\in \derd^{\le0}(\ifield_{M\times\R_\infty})$.}$$
Hence we obtain \eqref{eq:vandir}.
\end{proof}

Let $\tau^{\leq n}$, $\tau^{\geq n}$ and $H^n$ be the truncation functors and the cohomology functor for this $t$-structure. Then we have the quasi-commutative diagrams
\[
\vcenter{\vbox{
\xymatrix@C=6ex{
\BEC M \ar[r]^u \ar[d]^{\LE} & \BEC M \ar[d]^{\LE} \ar[r]^\id & \BEC M \\
\BDC(\ifield_{M\times\R_\infty}) \ar[r]^u & \BDC(\ifield_{M\times\R_\infty}) \ar[ur]_Q
}}}
\text{for $u = \tau^{\leq 0}$, $\tau^{\geq 0}$,$H^n$,}
\]
where $Q$ is the quotient functor.

\begin{lemma}\label{lem:geqaexact}
For $a\in\R$, the functors
\[
\field_{\{t=a\}}\ctens\ast,
\quad\field_{\{t\geq a\}}\ctens\ast,\quad \field_{\{t\leq a\}}\ctens\ast
\]
are exact endofunctors of $\BEC M$.
\end{lemma}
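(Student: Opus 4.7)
The plan is to exploit the $t$-structure on $\BEC M$ via the functor $\LE$. First I would reduce to the case $a=0$. The translation $\mu_a\colon M\times\R_\infty\to M\times\R_\infty$ is an isomorphism of bordered spaces commuting with $\pi$, so $\roimv{\mu_{a\sep*}}$ is a $t$-exact auto-equivalence of $\BDC(\ifield_{M\times\R_\infty})$ that descends to a $t$-exact auto-equivalence of $\BEC M$. Since
\[
\field_{\{t=a\}}\ctens K\simeq \roimv{\mu_{a\sep*}}K,\qquad \field_{\{t\geq a\}}\ctens K\simeq \roimv{\mu_{a\sep*}}(\field_{\{t\geq 0\}}\ctens K),
\]
and similarly for $\field_{\{t\leq a\}}$, it suffices to treat the case $a=0$. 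The case $\field_{\{t=0\}}\ctens\ast$ is just the identity, hence trivially $t$-exact.

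For $\field_{\{t\geq 0\}}\ctens\ast$, and symmetrically for $\field_{\{t\leq 0\}}\ctens\ast$, the key observation is the following. For any $K\in\BEC M$, set $F:=\LE K$. By Proposition~\ref{pro:tam}~(ii-a), $F$ lies in ${}^\bot\ind\shc_{\{t^*=0\}}$, so that $F\simeq(\field_{\{t\geq 0\}}\dsum\field_{\{t\leq 0\}})\ctens F$. Distributing the convolution over the direct sum yields
\[
F\simeq (\field_{\{t\geq 0\}}\ctens F)\oplus(\field_{\{t\leq 0\}}\ctens F).
\]
Applying the idempotent relations $\field_{\{t\geq 0\}}\ctens\field_{\{t\geq 0\}}\simeq\field_{\{t\geq 0\}}$ and $\field_{\{t\leq 0\}}\ctens\field_{\{t\geq 0\}}\simeq0$ from Lemma~\ref{lem:nilpo}, a direct computation shows $\LE(\field_{\{t\geq 0\}}\ctens K)\simeq \field_{\{t\geq 0\}}\ctens F$ in $\BDC(\ifield_{M\times\R_\infty})$. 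Hence $\LE(\field_{\{t\geq 0\}}\ctens K)$ is a direct summand of $\LE K$.

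Since $\derd^{\leq 0}(\ifield_{M\times\R_\infty})$ and $\derd^{\geq 0}(\ifield_{M\times\R_\infty})$ are closed under direct summands, this immediately implies that $\field_{\{t\geq 0\}}\ctens\ast$ preserves both $\Bec[\leq 0]{M}$ and $\Bec[\geq 0]{M}$, and similarly for $\field_{\{t\leq 0\}}\ctens\ast$. There is no delicate estimate involved; the main point is to recognize that membership of $\LE K$ in the left orthogonal $^\bot\ind\shc_{\{t^*=0\}}$ produces the splitting of $\LE K$ along the two idempotents $\field_{\{t\geq 0\}}$ and $\field_{\{t\leq 0\}}$, which automatically preserves the $t$-degree. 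The only technical verification is that $\LE$ commutes with $\field_{\{t\geq 0\}}\ctens\ast$ in the sense above, and this follows at once from the idempotent identities.
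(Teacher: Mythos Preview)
Your proof is correct and follows essentially the same approach as the paper: both arguments rest on the observation that $\field_{\{t\geq a\}}\ctens K$ and $\field_{\{t\leq a\}}\ctens K$ are the two direct summands of $(\field_{\{t\geq a\}}\dsum\field_{\{t\leq a\}})\ctens K\simeq\field_{\{t=a\}}\ctens K$, and that direct summands inherit membership in the aisles of a $t$-structure. The paper's version is slightly more streamlined in that it handles general $a$ directly rather than reducing to $a=0$, and it phrases the splitting in $\BEC M$ rather than passing explicitly through $\LE$, but the content is the same.
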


\begin{proof}
The functor
$\field_{\{t= a\}}\ctens\ast \simeq \roimv{\mu_{a\sep*}}(\ast)$ is an exact functor, where $\mu_a\colon M\times\R_\infty\to M\times\R_\infty$ is the morphism induced by the translation $t\mapsto t+a$.

For $K\in\BEC M$, there are isomorphisms
\begin{align*}
(\field_{\{t\geq a\}} \dsum \field_{\{t\leq a\}}) \ctens K 
&\simeq \field_{\{t =a\}} \ctens (\field_{\{t\geq 0\}} \dsum \field_{\{t\leq 0\}}) \ctens K  \\
&\simeq \field_{\{t =a\}} \ctens K.
\end{align*}
It follows that $(\field_{\{t\geq a\}} \dsum \field_{\{t\leq a\}}) \ctens \ast$ is an exact functor. Hence so are $\field_{\{t\geq a\}}\ctens\ast$ and $\field_{\{t\leq a\}}\ctens\ast$.
\end{proof}

\subsection{Stable objects}

\begin{notation}
\label{not:gg}
Consider the objects of $\BDC(\ifield_{M\times\R_\infty})$
\[
\field_{\{t\gg0\}} \defeq \indlim[a\rightarrow+\infty] \field_{\{t\geq a\}},\qquad
\field_{\{t<\ast\}}  \defeq \indlim[a\rightarrow+\infty] \field_{\{t< a\}}.
\]
\end{notation}
We have a distinguished triangle in $\BDC(\ifield_{M\times\R_\infty})$
\eq\field_{M\times\R}\To \field_{\{t\gg0\}} 
\To \field_{\{t<\ast\}} [1]\To[+1].\eneq

\begin{proposition}
\label{pro:gggeqK}
For $K\in\BDC(\ifield_{M\times\R_\infty})$ and $n\in\Z$ one has
\begin{align*}
\reeimv{j_{M\sep!!}}H^n(\field_{\{t\gg0\}} \ctens K) &\simeq \indlim[a\rightarrow+\infty] \reeimv{j_{M\sep!!}} H^n(\field_{\{t\geq a\}} \ctens K), \\
H^n(\field_{\{t\gg0\}} \ctens K) &\simeq \field_{\{t\gg0\}} \ctens H^n(\field_{\{t\geq 0\}} \ctens K).
\end{align*}
\end{proposition}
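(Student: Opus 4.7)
The plan is to derive both isomorphisms by systematically exchanging the cohomology functor $H^n$, the embedding $\reeimv{j_{M\sep!!}}$, the convolution $\ctens$, and the filtrant $\indlim$ defining $\field_{\{t\gg 0\}}$. Three key ingredients underlie everything: (i) $\ctens$ commutes with filtrant $\indlim$ in each variable, since its defining functors $\reeim\mu$, $\opb{q_i}$, and $\tens$ do; (ii) $\reeimv{j_{M\sep!!}}$ commutes with filtrant $\indlim$ (being a left adjoint, by Lemma~\ref{lem:badj}) and is $t$-exact, by the definition of the $t$-structure in \S\ref{subse:t-str}; (iii) filtrant inductive limits are exact in the Grothendieck abelian category $\ind(\field_{M\times\overline\R})$, so $H^n$ commutes with $\indlim$.

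Chaining (i)--(iii) yields the first isomorphism directly: beginning from $\field_{\{t\gg 0\}}\ctens K\simeq\indlim[a](\field_{\{t\geq a\}}\ctens K)$ by (i), I pass $\reeimv{j_{M\sep!!}}$ inside the limit by (ii), swap it with $H^n$ by $t$-exactness, and finally pull $\indlim$ outside of $H^n$ by (iii).

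For the second isomorphism, the additional ingredient is translation invariance: the map $\mu_a\colon M\times\R_\infty\to M\times\R_\infty$, $(x,t)\mapsto(x,t+a)$, extends to a homeomorphism of $M\times\overline\R$ fixing $M\times\{\pm\infty\}$, hence is an isomorphism of bordered spaces, making $\roimv{\mu_{a\sep*}}$ a $t$-exact auto-equivalence. Combined with $\field_{\{t\geq a\}}\ctens K\simeq\field_{\{t=a\}}\ctens\field_{\{t\geq 0\}}\ctens K\simeq\roimv{\mu_{a\sep*}}(\field_{\{t\geq 0\}}\ctens K)$, $t$-exactness gives $H^n(\field_{\{t\geq a\}}\ctens K)\simeq\field_{\{t=a\}}\ctens L$, where $L\defeq H^n(\field_{\{t\geq 0\}}\ctens K)$. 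Feeding this into the first isomorphism produces $\reeimv{j_{M\sep!!}}H^n(\field_{\{t\gg 0\}}\ctens K)\simeq\indlim[a]\reeimv{j_{M\sep!!}}(\field_{\{t=a\}}\ctens L)$, which remains to be identified with $\reeimv{j_{M\sep!!}}(\field_{\{t\gg 0\}}\ctens L)$.

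The main obstacle will be the bookkeeping of transition maps for this last identification: I must compare $\indlim[a](\field_{\{t=a\}}\ctens L)$, whose transitions are inherited from the quotients $\field_{\{t\geq a_1\}}\to\field_{\{t\geq a_2\}}$ applied to $K$, with $\indlim[a](\field_{\{t\geq a\}}\ctens L)=\field_{\{t\gg 0\}}\ctens L$, whose transitions come from the analogous quotients applied to $L$. The key auxiliary fact is that $L$ inherits from $\field_{\{t\geq 0\}}\ctens K$ the idempotent-fixedness $\field_{\{t\geq 0\}}\ctens L\simeq L$, which gives $\field_{\{t\geq a\}}\ctens L\simeq\field_{\{t=a\}}\ctens L$ and forces the two systems to agree. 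Establishing this idempotent-fixedness amounts, via an argument analogous to the proof of the $t$-structure on $\BEC M$, to the stability of ${}^\bot\ind\shc_{\{t^*\leq 0\}}$ under truncation, which should follow from the semi-orthogonal decomposition in Proposition~\ref{pro:tam}.
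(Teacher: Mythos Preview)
Your argument is correct, and for part (i) it is essentially the same as the paper's: the paper simply cites \cite[Proposition~5.2.6~(i)]{KS01}, which packages the commutation of $H^n$, $\reeimv{j_{M\sep!!}}$, and filtrant $\indlim$ that you spell out.

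For part (ii) the paper takes a shorter route. It invokes Lemma~\ref{lem:geqaexact} directly: since $\field_{\{t\geq a\}}\ctens\ast$ is an exact endofunctor of $\BEC M$, one immediately has $H^n(\field_{\{t\geq a\}}\ctens K)\simeq\field_{\{t\geq a\}}\ctens H^n(K)$ there, and taking the ind-limit finishes in one line. You instead use only the $t$-exactness of translation $\field_{\{t=a\}}\ctens\ast\simeq\roimv{\mu_{a\sep*}}$, which gives $H^n(\field_{\{t\geq a\}}\ctens K)\simeq\field_{\{t=a\}}\ctens L$ rather than $\field_{\{t\geq a\}}\ctens L$; this forces you to separately establish the idempotent-fixedness $\field_{\{t\geq 0\}}\ctens L\simeq L$ and to track the transition maps by hand. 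Your identification of the remaining step --- truncation stability of ${}^\bot\ind\shc_{\{t^*\leq 0\}}$ --- is correct and does follow from Proposition~\ref{pro:tam} as you suggest: since ${}^\bot\ind\shc_{\{t^*=0\}}$ is truncation-stable (proof of the $t$-structure proposition on $\BEC M$) and the $t$-structure on $\BEC M\simeq\BECp M\dsum\BECm M$ is the product $t$-structure, each summand ${}^\bot\ind\shc_{\{\pm t^*\leq 0\}}$ is itself truncation-stable. The paper's route via Lemma~\ref{lem:geqaexact} is cleaner precisely because that lemma already absorbs this argument (its proof observes that $\field_{\{t\geq a\}}\ctens\ast$ and $\field_{\{t\leq a\}}\ctens\ast$ are the two summands of the exact functor $\field_{\{t=a\}}\ctens\ast$ on $\BEC M$), so the transition maps are automatically compatible by naturality and no separate bookkeeping is required.
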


\begin{proof}
(i) The first isomorphism follows from 
Proposition 5.2.6~(i) of \cite{KS01}.

\smallskip\noindent(ii)
Let us prove the second isomorphism.
Lemma~\ref{lem:geqaexact} implies 
\begin{align*}
H^n(\field_{\{t\geq a\}} \ctens K) 
&\simeq \field_{\{t\geq a\}} \ctens H^n(K).
\end{align*}
Taking the ind-limit with respect to $a\rightarrow+\infty$, we obtain the desired result.
\end{proof}

We have the isomorphisms in $\BDC(\ifield_{M\times\R_\infty})$
\begin{align}
&\field_{\{t\gg0\}} \ctens \field_{\{t\gg0\}} \simeq \field_{\{t\gg0\}}, \\
&\field_{\{t\geq -a\}} \ctens \field_{\{t\gg0\}} \isoto \field_{\{t\gg0\}} \isoto \field_{\{t\geq a\}} \ctens \field_{\{t\gg0\}}
\end{align}
for any $a\in\R_{\geq 0}$.

\begin{notation}
\label{not:Tam}
Denote by $\field^\enh_M$ the object of $\BEC M$ associated with $\field_{\{t\gg0\}}\in\BDC(\ifield_{M\times\R_\infty})$.
More generally, for $F\in\BDC(\field_M)$, set
\[
F^\enh \defeq \field^\enh_M \tens \opb\pi F \in \BEC M.
\]
\end{notation}

Note that one has
\[
\LE \field^\enh_M \simeq \field_{\{t\gg0\}}
\quad\text{and}\quad
\RE \field^\enh_M \simeq \field_{\{t<\ast\}}[1].
\]

\begin{lemma}
The functor 
$\field_M^\enh\ctens\ast$
is an exact endofunctor of $\BEC M$.
\end{lemma}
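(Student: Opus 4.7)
My plan is to reduce the exactness assertion to a statement in $\BDC(\ifield_{M\times\R_\infty})$ via $\LE$. By Definition~\ref{def:tstrTDC}, an object $K\in\BEC M$ lies in $\Bec[\leq 0]M$ (resp.\ $\Bec[\geq 0]M$) precisely when $\LE K$ lies in $\derd^{\leq 0}(\ifield_{M\times\R_\infty})$ (resp.\ $\derd^{\geq 0}(\ifield_{M\times\R_\infty})$). The functor $\field_M^\enh\ctens\ast$ is manifestly triangulated, so it will suffice to identify $\LE(\field_M^\enh\ctens K)$ and check that it has cohomology concentrated in the same range of degrees as $\LE K$.

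The first step is to establish the identification $\LE(\field_M^\enh\ctens K)\simeq\field_{\{t\gg 0\}}\ctens\LE K$. Since $\field_M^\enh$ is represented by $\field_{\{t\gg 0\}}$ (Notation~\ref{not:Tam}) and $K$ by $\LE K$, Definition~\ref{def:Tctens} yields $\field_M^\enh\ctens K\simeq Q(\field_{\{t\gg 0\}}\ctens\LE K)$, where $Q$ denotes the quotient functor. By Proposition~\ref{pro:tam}(ii-a) and Notation~\ref{not:Tlr}, $\LE\circ Q=(\field_{\{t\geq 0\}}\dsum\field_{\{t\leq 0\}})\ctens\ast$. Thus I must verify the vanishing $\field_{\{t\leq 0\}}\ctens\field_{\{t\gg 0\}}\simeq 0$ and the idempotency $\field_{\{t\geq 0\}}\ctens\field_{\{t\gg 0\}}\simeq\field_{\{t\gg 0\}}$. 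Writing $\field_{\{t\geq a\}}\simeq\field_{\{t=a\}}\ctens\field_{\{t\geq 0\}}$ and invoking Lemma~\ref{lem:nilpo} gives these identities at each finite $a$; passing to the filtered ind-limit $a\to+\infty$ (which commutes with $\ctens$, since $\reeim\mu$, $\opb{q_i}$ and $\tens$ all commute with filtered ind-limits) produces the desired identifications at the level of $\field_{\{t\gg 0\}}$.

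The second step is to show $\field_{\{t\gg 0\}}\ctens\ast$ is t-exact on $\BDC(\ifield_{M\times\R_\infty})$. For this I would invoke Proposition~\ref{pro:gggeqK}(ii), which provides $H^n(\field_{\{t\gg 0\}}\ctens G)\simeq\field_{\{t\gg 0\}}\ctens H^n(\field_{\{t\geq 0\}}\ctens G)$. The proof of that proposition establishes (via Lemma~\ref{lem:geqaexact}) the t-exactness of $\field_{\{t\geq 0\}}\ctens\ast$ in the form $H^n(\field_{\{t\geq 0\}}\ctens G)\simeq\field_{\{t\geq 0\}}\ctens H^n(G)$; chaining the two identifications yields $H^n(\field_{\{t\gg 0\}}\ctens G)\simeq\field_{\{t\gg 0\}}\ctens H^n(G)$, hence t-exactness. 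Combined with the first step, this will give the claim. The main subtlety I anticipate lies in the bookkeeping between the t-structure on $\BEC M$ and that on $\BDC(\ifield_{M\times\R_\infty})$, specifically in correctly transferring the exactness statement for $\field_{\{t\geq a\}}\ctens\ast$ between the two categories; the ind-limit arguments in the first step also depend on the filtered colimit behaving well with the various operations.
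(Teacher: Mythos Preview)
Your overall strategy matches the paper's, and Step~1 is correct. The gap is in Step~2: the functors $\field_{\{t\geq 0\}}\ctens\ast$ and $\field_{\{t\gg 0\}}\ctens\ast$ are \emph{not} t-exact on $\BDC(\ifield_{M\times\R_\infty})$. Take $G=\field_{\{t<0\}}$, concentrated in degree~$0$: using the distinguished triangle $\field_{\{t\geq 0\}}\to\field_{\{t<0\}}[1]\to\field_{M\times\R}[1]\to[+1]$ and Lemma~\ref{lem:nilpo} one finds $\field_{\{t\geq 0\}}\ctens\field_{\{t<0\}}\simeq\field_{\{t\geq 0\}}[-1]$, which sits in degree~$1$; passing to the ind-limit gives $\field_{\{t\gg 0\}}\ctens\field_{\{t<0\}}\simeq\field_{\{t\gg 0\}}[-1]$. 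So the identification $H^n(\field_{\{t\geq 0\}}\ctens G)\simeq\field_{\{t\geq 0\}}\ctens H^n(G)$ you extract from the proof of Proposition~\ref{pro:gggeqK} does not hold in $\BDC$; Lemma~\ref{lem:geqaexact} only asserts it in $\BEC M$. You correctly flagged this transfer as the main subtlety---it is in fact where your argument breaks.

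The fix is to stay in $\BEC M$, as the paper does. Proposition~\ref{pro:gggeqK}(ii), read through the quotient functor, gives $H^n(\field_M^\enh\ctens K)\simeq\field_M^\enh\ctens H^n(\field_{\{t\geq 0\}}\ctens K)$ in $\BEC M$; then Lemma~\ref{lem:geqaexact} yields $H^n(\field_{\{t\geq 0\}}\ctens K)\simeq\field_{\{t\geq 0\}}\ctens H^n(K)$, and since $\field_M^\enh\ctens\field_{\{t\geq 0\}}\simeq\field_M^\enh$ one obtains $H^n(\field_M^\enh\ctens K)\simeq\field_M^\enh\ctens H^n(K)$. Alternatively, in your $\BDC$-level framework, you only need t-exactness of $\field_{\{t\gg 0\}}\ctens\ast$ on the image of $\LE$; there it holds because $\field_{\{t\geq 0\}}\ctens\LE K$ is a direct summand of $\LE K$, so $H^n(\field_{\{t\geq 0\}}\ctens\LE K)$ is a direct summand of $H^n(\LE K)$ and Proposition~\ref{pro:gggeqK}(ii) applies without needing exactness on all of $\BDC$.
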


\begin{proof}
By Proposition~\ref{pro:gggeqK}, for $K\in\BEC M$ one has
\[
H^n(\field_M^\enh \ctens K) \simeq \field_M^\enh \ctens H^n(K).
\]
Hence $\field_M^\enh\ctens\ast$ is an exact functor.
\end{proof}

\begin{proposition}
\label{pro:equivTam}
Let $K\in\BEC M$. 
Then the following conditions are equivalent.
\bnum
\item
$K \isofrom \field_{\{t\geq 0\}} \ctens K \isoto \field_{\{t\geq a\}} \ctens K$ for any $a\geq 0$,
\item
$\cihom(\field_{\{t\geq a\}}, K) \isoto \cihom(\field_{\{t\geq 0\}}, K) \isofrom K$ for any $a\geq 0$,
\item
$K \isofrom \field_{\{t\geq 0\}} \ctens K \isoto \field_M^\enh \ctens K$,
\item
$\cihom(\field_M^\enh, K) \isoto \cihom(\field_{\{t\geq 0\}}, K) \isofrom K$,
\item
$K \simeq \field_M^\enh \ctens L$ for some $L\in\BEC M$,
\item
$K \simeq \cihom(\field_M^\enh, L)$ for some $L\in\BEC M$.
\ee
\end{proposition}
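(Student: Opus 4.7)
The plan is to partition the six conditions into the convolution triple $\{(1), (3), (5)\}$ and the internal-hom triple $\{(2), (4), (6)\}$, establish each triple as internally equivalent, and then bridge them via the crosswise equivalence $(3)\Leftrightarrow(4)$. The basic computational inputs for both triples will be the idempotency $\field_M^\enh \ctens \field_M^\enh \simeq \field_M^\enh$ and the absorption $\field_{\{t\geq a\}} \ctens \field_M^\enh \simeq \field_M^\enh$ for every $a \in \R$; both follow directly from $\field_M^\enh = \indlim[a\to+\infty]\field_{\{t\geq a\}}$ together with the standard identity $\field_{\{t\geq a\}} \ctens \field_{\{t\geq b\}} \simeq \field_{\{t\geq a+b\}}$.

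Granting these, $(3)\Leftrightarrow(5)$ and $(4)\Leftrightarrow(6)$ are immediate: in one direction take $L = K$; in the other, compute $\field_M^\enh \ctens (\field_M^\enh \ctens L) \simeq \field_M^\enh \ctens L$ (and use Proposition~\ref{pro:ctenscihom} to write $\cihom(\field_M^\enh, \cihom(\field_M^\enh, L)) \simeq \cihom(\field_M^\enh \ctens \field_M^\enh, L) \simeq \cihom(\field_M^\enh, L)$ for the dual version). The equivalence $(1)\Leftrightarrow(3)$ reduces to the absorption: $(3)\Rightarrow(1)$ is the chain $\field_{\{t\geq a\}} \ctens K \simeq \field_{\{t\geq a\}} \ctens \field_M^\enh \ctens K \simeq \field_M^\enh \ctens K \simeq K$, while for $(1)\Rightarrow(3)$ I will use that convolution commutes with filtered ind-limits to write $\field_M^\enh \ctens K \simeq \indlim[a\to+\infty](\field_{\{t\geq a\}} \ctens K)$; under (1), each transition morphism $\field_{\{t\geq a\}} \ctens K \to \field_{\{t\geq b\}} \ctens K$ (for $0 \leq a \leq b$) is an isomorphism, as it fits in a commutative triangle with the two isomorphisms $\field_{\{t\geq 0\}} \ctens K \isoto \field_{\{t\geq a\}} \ctens K$ and $\field_{\{t\geq 0\}} \ctens K \isoto \field_{\{t\geq b\}} \ctens K$ postulated by (1), so the ind-system is essentially constant and the colimit reduces to $\field_{\{t\geq 0\}} \ctens K \simeq K$. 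The equivalence $(2) \Leftrightarrow (4)$ proceeds dually, using that by adjunction $\cihom(\field_M^\enh, K)$ represents the functor $A \mapsto \plim[a\to+\infty]\Hom[\BEC M](A, \cihom(\field_{\{t\geq a\}}, K))$.

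For the crosswise equivalence $(3) \Leftrightarrow (4)$, the central tool will be Proposition~\ref{pro:tenshomstar}, which (in the quotient $\BEC M$, where the term $\opb\pi L$ vanishes) supplies the natural isomorphism $\field_{\{t\geq 0\}} \ctens K \isoto \cihom(\field_{\{t\geq 0\}}, K)$. Convolving with $\field_{\{t=a\}}$ (the translation functor) and using the translation identity $\field_{\{t=a\}} \ctens \cihom(F, G) \simeq \cihom(\field_{\{t=-a\}} \ctens F, G)$ extends this to $\cihom(\field_{\{t\geq a\}}, K) \simeq \field_{\{t\geq -a\}} \ctens K$ in $\BEC M$ for every $a \in \R$. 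Then for $(3) \Rightarrow (4)$: absorption gives $\cihom(\field_{\{t\geq a\}}, K) \simeq \field_{\{t\geq -a\}} \ctens \field_M^\enh \ctens K \simeq K$ for every $a \geq 0$, and the computation
$\Hom[\BEC M](A, \cihom(\field_M^\enh, K)) \simeq \plim[a\to+\infty]\Hom[\BEC M](A, \cihom(\field_{\{t\geq a\}}, K)) \simeq \plim[a\to+\infty]\Hom[\BEC M](A, K) \simeq \Hom[\BEC M](A, K)$
combined with Yoneda yields $\cihom(\field_M^\enh, K) \simeq K$, which is (4); the converse is dual.

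The main obstacle will be the naturality required in this last step: to conclude that the pro-system $\{\cihom(\field_{\{t\geq a\}}, K)\}_a$ is essentially constant, rather than merely termwise isomorphic to $K$, the identifications $\cihom(\field_{\{t\geq a\}}, K) \simeq K$ must be compatible with the transition morphisms induced by $\field_{\{t\geq a\}} \to \field_{\{t\geq b\}}$ for $a \leq b$. This should reduce to a routine naturality diagram once one observes that all such identifications arise from a common presentation factoring through $\field_M^\enh \ctens K$, so the compatibility devolves onto the naturality of the absorption isomorphism $\field_{\{t\geq a\}} \ctens \field_M^\enh \simeq \field_M^\enh$ under refinement of $a$.
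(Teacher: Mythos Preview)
Your approach is essentially the same as the paper's: the paper also singles out (i)$\Rightarrow$(iii) and (ii)$\Rightarrow$(iv) as the nontrivial implications and handles the bridge via the identity $\cihom(\field_{\{t\geq a\}}, K) \simeq \field_{\{t\geq -a\}} \ctens K$ in $\BEC M$. For the colimit/limit steps you describe informally (``convolution commutes with filtered ind-limits'', ``essentially constant system''), the paper invokes Corollary~\ref{cor:tensindlim} and Proposition~\ref{pro:Prolim}, which are precisely the Hom-level statements needed to make those manipulations rigorous in the triangulated setting.
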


\begin{proof}
The less obvious implications (i)$\implies$(iii) and (ii)$\implies$(iv) follows from Corollary~\ref{cor:tensindlim} and Proposition~\ref{pro:Prolim}.

Note also that $\cihom(\field_{\{t\geq a\}}, K) \simeq \field_{\{t\geq -a\}} \ctens K$ for any $a\in\R$. Hence, for example, (iii)$\implies$(ii) is given by
\begin{align*}
\cihom(\field_{\{t\geq a\}}, K)
&\simeq \field_{\{t\geq -a\}} \ctens K \\
&\simeq \field_{\{t\geq -a\}} \ctens \field_M^\enh \ctens K \\
&\simeq \field_M^\enh \ctens K \simeq K.
\end{align*}
\end{proof}

\begin{definition}
A stable object is an object of $\BECp M$ that satisfies the equivalent conditions of Proposition~\ref{pro:equivTam}.
\end{definition}

\begin{remark}
The notion of stable object is related to the notion of torsion object from \cite{Tam08} (compare \cite[\S5]{GS12} and Proposition~\ref{pro:pro:HomStab} below).
\end{remark}

Note that, for $K\in\BEC M$, one has isomorphisms in $\BEC M$
\begin{align*}
\field_M^\enh \ctens \cihom(\field_M^\enh, K) &\simeq \cihom(\field_M^\enh,K), \\
\cihom(\field_M^\enh, \field_M^\enh \ctens K) &\simeq \field_M^\enh \ctens K.
\end{align*}

\begin{corollary}
For $K_1,K_2\in\BEC M$ there is an isomorphism in $\BEC M$
\[
\cihom(\field_M^\enh \ctens K_1, \field_M^\enh \ctens K_2) 
\simeq
\cihom(K_1, \field_M^\enh \ctens K_2).
\]
\end{corollary}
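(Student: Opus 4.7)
The plan is to combine the tensor--hom adjunction of Proposition~\ref{pro:ctenscihom} with the characterization of stable objects given in Proposition~\ref{pro:equivTam}. First, applying Proposition~\ref{pro:ctenscihom} one rewrites
\[
\cihom(\field_M^\enh \ctens K_1, \field_M^\enh \ctens K_2)
\simeq
\cihom(\field_M^\enh, \cihom(K_1, \field_M^\enh \ctens K_2)).
\]
Setting $L \defeq \field_M^\enh \ctens K_2$, condition (v) of Proposition~\ref{pro:equivTam} says that $L$ is stable. Hence it suffices to show that $\cihom(K_1, L)$ is also stable; once this is established, condition (iv) of Proposition~\ref{pro:equivTam} yields
\[
\cihom(\field_M^\enh, \cihom(K_1, L)) \simeq \cihom(K_1, L),
\]
which is the desired conclusion.

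The stability of $\cihom(K_1, L)$ follows from a purely formal calculation using adjunction twice together with the stability of $L$:
\begin{align*}
\cihom(\field_M^\enh, \cihom(K_1, L))
&\simeq \cihom(\field_M^\enh \ctens K_1, L) \\
&\simeq \cihom(K_1, \cihom(\field_M^\enh, L)) \\
&\simeq \cihom(K_1, L),
\end{align*}
where the first two isomorphisms come from Proposition~\ref{pro:ctenscihom} and the last from stability of $L$. There is no substantive obstacle: once one recognizes that stability is preserved by the functor $\cihom(K_1, \cdot)$ (which is the content of the displayed calculation), the corollary is immediate from the adjunction.
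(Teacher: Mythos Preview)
Your proof is correct and essentially the same as the paper's: the paper states this Corollary without an explicit proof, as it follows immediately from the preceding note $\cihom(\field_M^\enh, \field_M^\enh \ctens K) \simeq \field_M^\enh \ctens K$ together with the adjunction $\cihom(\field_M^\enh \ctens K_1, -) \simeq \cihom(K_1, \cihom(\field_M^\enh, -))$. You apply the adjunction in the opposite order and phrase the key step via stability, but the underlying computation (lines two and three of your final display) is exactly this direct two-line argument.
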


\begin{proposition}\label{pro:pro:HomStab}
Let $F\in\BDC(\field_{M\times\R_\infty})$ and $K\in\BEC M$.
Assume that $\overline\pi(\supp(\reimv{j_{M\sep!}}F))$ is compact. Then there are isomorphisms
\begin{align*}
\Hom[\BEC M]&(\field^\enh_M \ctens F, \field^\enh_M \ctens K) \\
&\simeq \varinjlim_{a\rightarrow+\infty} \Hom[\BEC M](F, \field_{\{t\geq a\}}\ctens K) \\
&\simeq \varinjlim_{a\rightarrow+\infty} \Hom[\BEC M](\field_{\{t\geq - a\}}\ctens F, K).
\end{align*}
\end{proposition}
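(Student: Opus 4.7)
The plan is to prove the three isomorphisms in turn. The outer two reductions are formal consequences of the $\ctens$--$\cihom$ adjunction together with the stability of $\field^\enh_M\ctens K$, while the middle isomorphism --- the commutation of Hom with the filtered inductive limit --- is where the compact support hypothesis enters essentially.

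For the first isomorphism, commutativity of $\ctens$ and the adjunction (Proposition~\ref{pro:ctenscihom}) give
\[
\Hom[\BEC M](\field^\enh_M\ctens F,\field^\enh_M\ctens K) \simeq \Hom[\BEC M](F,\cihom(\field^\enh_M,\field^\enh_M\ctens K)).
\]
Since $\field^\enh_M\ctens K$ is stable by construction, Proposition~\ref{pro:equivTam} yields $\cihom(\field^\enh_M,\field^\enh_M\ctens K)\simeq \field^\enh_M\ctens K$, completing this step.

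The equivalence of the second and third expressions also follows from the adjunction: one has
\[
\Hom[\BEC M](\field_{\{t\geq -a\}}\ctens F, K)\simeq \Hom[\BEC M](F,\cihom(\field_{\{t\geq -a\}},K)),
\]
and it suffices to identify $\cihom(\field_{\{t\geq -a\}},K)\simeq \field_{\{t\geq a\}}\ctens K$ in $\BEC M$. Writing $\field_{\{t\geq -a\}}\simeq \field_{\{t=-a\}}\ctens \field_{\{t\geq 0\}}$, using the translation identity $\cihom(\field_{\{t=-a\}},-)\simeq \field_{\{t=a\}}\ctens -$ (from the lemma preceding Corollary~\ref{cor:pieeim}), and noting that the distinguished triangle of Proposition~\ref{pro:tenshomstar} gives $\cihom(\field_{\{t\geq 0\}},K)\simeq \field_{\{t\geq 0\}}\ctens K$ in $\BEC M$ (its third term $\opb\pi L$ being in $\ind\shc_{\{t^*=0\}}$, which is killed by the quotient), one obtains the desired identification.

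The main isomorphism
\[
\Hom[\BEC M](F,\field^\enh_M\ctens K)\simeq \varinjlim_{a\to+\infty}\Hom[\BEC M](F,\field_{\{t\geq a\}}\ctens K)
\]
is proved as follows. Pick a representative $K'\in\BDC(\ifield_{M\times\R_\infty})$ of $K$; then $\field^\enh_M\ctens K$ is represented by $\field_{\{t\gg 0\}}\ctens K'\simeq\indlim[a\to+\infty](\field_{\{t\geq a\}}\ctens K')$, since $\ctens$ commutes with filtered colimits in each variable. The Hom in $\BEC M$ can be computed as the ambient-category Hom $\Hom[\BDC(\ifield_{M\times\R_\infty})](\LE F,\cdot)$ against this representative. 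The compactness of $\overline\pi(\supp(\reimv{j_{M\sep!}}F))$ guarantees that $\reimv{j_{M\sep!!}}\LE F$ has compact support in $M\times\overline\R$. Unfolding $\ctens$ as a push-forward from $M\times\mathsf S$ via Lemma~\ref{lem:ctenstilde}, the inductive system $\{\field_{\{t\geq a\}}\ctens K'\}$ takes the form $\opb{j_M}\reeim{\tilde\mu}(\opb{\tilde q_2}\reeimv{j_{M\sep!!}}K'\tens \opb{\tilde q_1}\reeimv{j_{M\sep!!}}\field_{\{t\geq a\}})$, which upstairs on $M\times\mathsf S$ is manifestly of the form $({\rm fixed})\tens\indlim[a](\cdot)$. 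The compactness of the support of the source then allows one to apply Corollary~\ref{cor:tensindlim} to commute Hom with the inductive limit. The hard part will be the careful bookkeeping of supports required to transport this argument across the non-proper map $\tilde\mu$, and this is precisely what the hypothesis on $F$ is designed to enable.
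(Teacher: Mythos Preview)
Your two outer reductions are correct and match the paper's argument; the paper writes them slightly differently (it first replaces $\field^\enh_M\ctens F$ by $\field^\enh_M\ctens(\field_{\{t\geq 0\}}\ctens F)$ before applying the adjunction, and it records the second--third equivalence simply as $\Hom[\BEC M](\field_{\{t\geq -a\}}\ctens F,K)\simeq\Hom[\BEC M](F,\field_{\{t\geq a\}}\ctens K)$), but the content is the same.

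For the middle step, your idea to unfold $\ctens$ through $M\times\mathsf S$ via Lemma~\ref{lem:ctenstilde} is unnecessary, and the ``careful bookkeeping of supports across the non-proper map $\tilde\mu$'' that you anticipate can be avoided entirely. The paper's argument stays on $M\times\overline\R$: it rewrites the Hom as
\[
\Hom[\BDC(\ifield_{M\times\overline\R})]\bigl(\reeimv{j_{M\sep!!}}(\field_{\{t\geq0\}}\ctens F),\ \roimv{j_{M\sep*}}(\field_{\{t\gg0\}}\ctens\LE K)\bigr),
\]
notes that the source has compact support in $M\times\overline\R$ (the hypothesis gives compactness in the $M$ direction, and $\overline\R$ is already compact), and then applies Corollary~\ref{cor:tensindlim} directly to pull the filtered colimit $\field_{\{t\gg0\}}=\indlim[a]\field_{\{t\geq a\}}$ out of the Hom. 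No passage to $\mathsf S$, no analysis of $\tilde\mu$, is needed. Your route would eventually reach the same conclusion, but the detour is both longer and, as you yourself flag, incomplete as stated.
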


\begin{proof}
(i)
We have
\begin{align*}
\Hom[\BEC M]&(\field^\enh_M \ctens F, \field^\enh_M \ctens K) \\
&\simeq \Hom[\BEC M](\field_{\{t\geq0\}} \ctens F, \cihom(\field^\enh_M, \field^\enh_M \ctens K)) \\
&\simeq \Hom[\BEC M](\field_{\{t\geq0\}} \ctens F, \field^\enh_M \ctens K) \\
&\simeq \Hom[\BDC(\ifield_{M\times\overline\R})](\reeimv{j_{M\sep!!}}(\field_{\{t\geq0\}} \ctens F), \roimv{j_{M\sep*}}(\field_{\{t\gg0\}} \ctens \LE K)) \\
&\underset{(*)}\simeq \varinjlim_{a\rightarrow+\infty} \Hom[\BDC(\ifield_{M\times\overline\R})](\reeimv{j_{M\sep!!}}(\field_{\{t\geq0\}} \ctens F), \roimv{j_{M\sep*}}(\field_{\{t\geq a\}} \ctens \LE K)) \\
&\simeq \varinjlim_{a\rightarrow+\infty} \Hom[\BEC M](F, \field_{\{t\geq a\}}\ctens K).
\end{align*}
Here $(*)$ follows from Corollary~\ref{cor:tensindlim}.

\smallskip\noindent(ii)
The other isomorphism follows from
\[
\Hom[\BEC M](\field_{\{t\geq -a\}}\ctens F, K)
\simeq
\Hom[\BEC M](F, \field_{\{t\geq a\}}\ctens K).
\]
\end{proof}

\begin{lemma}\label{lem:kTamtens}
For $F\in\BDC(\field_{M\times\R_\infty})$ and $K\in\BEC M$, there is an isomorphism in $\BEC M$
\[
\field_M^\enh \ctens \cihom(F,K)
\isoto
\cihom(F,\field_M^\enh \ctens K).
\]
\end{lemma}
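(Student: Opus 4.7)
The natural morphism
\[
\alpha\colon \field_M^\enh \ctens \cihom(F, K) \To \cihom(F, \field_M^\enh \ctens K)
\]
comes from the third morphism of Lemma~\ref{lem:Thommorph} applied to $K_0 = \field_M^\enh$, $K_1 = F$, $K_2 = K$. My plan is to verify that $\alpha$ is an isomorphism in $\BEC M$, by first showing that both sides are stable objects and then reducing to the elementary pieces $\field_{\{t\geq a\}}$ from which $\field_M^\enh$ is built. The left-hand side is stable by construction; for the right-hand side, using the stability $\field_M^\enh \ctens K \simeq \cihom(\field_M^\enh, \field_M^\enh \ctens K)$, the tensor-hom adjunction of Proposition~\ref{pro:ctenscihom}, and commutativity of $\ctens$, one gets
\[
\cihom(\field_M^\enh, \cihom(F, \field_M^\enh \ctens K)) \simeq \cihom(F \ctens \field_M^\enh, \field_M^\enh \ctens K) \simeq \cihom(F, \field_M^\enh \ctens K),
\]
so Proposition~\ref{pro:equivTam}~(iv) applies.

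The key intermediate step is the analogue of the lemma with $\field_M^\enh$ replaced by $\field_{\{t\geq a\}}$ for each $a\in\R$. By Proposition~\ref{pro:tenshomstar}, whose error term $\opb\pi L$ lies in $\ind\shc_{\{t^*=0\}}$ and hence vanishes in $\BEC M$, one has in $\BEC M$ the identification $\cihom(\field_{\{t\geq a\}}, K) \simeq \field_{\{t\geq -a\}}\ctens K$, natural in $K$. Combined with the tensor-hom adjunction and the commutativity of $\ctens$, this yields in $\BEC M$ the chain of natural isomorphisms
\begin{align*}
\field_{\{t\geq a\}}\ctens \cihom(F, K)
&\simeq \cihom(\field_{\{t\geq -a\}}, \cihom(F, K))
\simeq \cihom(F \ctens \field_{\{t\geq -a\}}, K) \\
&\simeq \cihom(F, \cihom(\field_{\{t\geq -a\}}, K))
\simeq \cihom(F, \field_{\{t\geq a\}}\ctens K),
\end{align*}
and a naturality check shows that this realizes the morphism $\alpha$ with $\field_{\{t\geq a\}}$ in place of $\field_M^\enh$.

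To conclude, one passes to the ind-limit $a \to +\infty$, using that $\field_M^\enh$ is represented in $\BDC(\ifield_{M\times\R_\infty})$ by $\field_{\{t\gg 0\}} = \indlim[a]\field_{\{t\geq a\}}$. Since $\ctens$ commutes with filtered ind-limits in each variable (being built from $\reeim\mu$, $\tens$, and pullbacks), the left side becomes $\field_M^\enh \ctens \cihom(F, K)$. The main obstacle is the corresponding passage on the right, namely the identification $\indlim[a]\cihom(F, \field_{\{t\geq a\}}\ctens K) \simeq \cihom(F, \field_M^\enh \ctens K)$ in $\BEC M$, which is not formal because $\cihom(F,-)$ is a right adjoint and does not commute with ind-limits in general. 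Here one exploits that $F$ is a complex of ordinary sheaves and that $\mu\colon M\times\R^2_\infty\to M\times\R_\infty$ is topologically submersive, so that Proposition~\ref{pro:topsub} reduces $\epb\mu$ to $\opb\mu$ up to a shift and thus lets one commute the ind-limit with the explicit formula $\cihom(F,X) \simeq \roimv{q_{1\sep*}}\rihom(\opb{q_2}F,\epb\mu X)$ at the level of $\BDC(\ifield_{M\times\R_\infty})$, any residual discrepancy lying in $\ind\shc_{\{t^*=0\}}$ and therefore vanishing upon descent to $\BEC M$.
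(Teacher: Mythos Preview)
Your strategy matches the paper's through the level-$a$ computation: both reduce to proving $\field_{\{t\geq a\}}\ctens\cihom(F,K)\simeq\cihom(F,\field_{\{t\geq a\}}\ctens K)$ in $\BEC M$ via the chain
\[
\field_{\{t\geq a\}}\ctens\cihom(F,K)\simeq\cihom(\field_{\{t\geq -a\}},\cihom(F,K))\simeq\cihom(F,\cihom(\field_{\{t\geq -a\}},K))\simeq\cihom(F,\field_{\{t\geq a\}}\ctens K),
\]
and both agree that the ind-limit on the left is unproblematic.

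The gap is in your treatment of the right-hand limit. Invoking Proposition~\ref{pro:topsub} to replace $\epb\mu$ by $\opb\mu[1]$ does not help: the formula $\cihom(F,X)\simeq\roimv{q_{1\sep*}}\rihom(\opb{q_2}F,\epb\mu X)$ still contains $\roimv{q_{1\sep*}}$ and $\rihom(\opb{q_2}F,\,\cdot\,)$, neither of which commutes with filtered ind-limits, and your assertion that ``any residual discrepancy lies in $\ind\shc_{\{t^*=0\}}$'' is not supported by anything you have written. The paper handles this step by a different mechanism. First it observes that both $\field_{\{t\geq a\}}\ctens\cihom(F,L)$ and $\field_{\{t\geq 0\}}\ctens\cihom(F,\field_{\{t\geq a\}}\ctens L)$ lie in ${}^\bot\ind\shc_{\{t^*\leq 0\}}$, so the isomorphism in $\BEC M$ lifts to an isomorphism in $\BDC(\ifield_{M\times\R_\infty})$. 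Then it applies the functor $J_{M\times\overline\R}\circ\reeimv{j_{M\sep!!}}$ of Proposition~\ref{pro:J}: this functor is conservative, commutes with $\tens$, $\reeim{}$ and $\opb{}$ (hence with $\ctens$), and sends $\indlim$ to an honest filtered colimit in $\operatorname{Ind}\bl\BDC(\Mod_c(\field_{M\times\overline\R}))\br$. That is precisely the tool needed to transport the level-$a$ isomorphisms to $a\to+\infty$ and conclude that \eqref{eq:ggeqcihom} is an isomorphism; it is what your argument is missing.
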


\begin{proof}
Let us first show that, for $L\in \BDC(\ifield_{M\times\R_\infty})$, the morphism in $\BDC(\ifield_{M\times\R_\infty})$
\begin{equation}
\label{eq:ggeqcihom}
\field_{\{t\gg0\}} \ctens \cihom(F,L)
\To
\field_{\{t\geq 0\}} \ctens \cihom(F,\field_{\{t\gg0\}} \ctens L)
\end{equation}
is an isomorphism.
For any $a\in\R$, there are isomorphisms in $\BEC M$
\begin{align*}
\field_{\{t\geq a\}} \ctens \cihom(F,L)
&\simeq  \cihom(\field_{\{t\geq -a\}}, \cihom(F,L)) \\
&\simeq  \cihom(F, \cihom(\field_{\{t\geq -a\}},L)) \\
&\simeq  \cihom(F, \field_{\{t\geq a\}} \ctens L).
\end{align*}
Hence we have an isomorphism in $\BDC(\ifield_{M\times\R_\infty})$
\[
\field_{\{t\geq a\}} \ctens \cihom(F,L)
\isoto  \field_{\{t\geq 0\}} \ctens \cihom(F, \field_{\{t\geq a\}} \ctens L).
\]
In order to see that \eqref{eq:ggeqcihom} is an isomorphism, we shall use Proposition~\ref{pro:J}.
We have
\begin{align*}
J_{M\times\overline\R}\;\reeimv{j_{M\sep!!}}&(\field_{\{t\gg0\}} \ctens \cihom(F,L)) \\
&\simeq \varinjlim_{a\rightarrow+\infty}J_{M\times\overline\R}\;\reeimv{j_{M\sep!!}}\bl\field_{\{t\geq a\}} \ctens \cihom(F,L)\br \\
&\simeq \varinjlim_{a\rightarrow+\infty}J_{M\times\overline\R}\;\reeimv{j_{M\sep!!}}\bl\field_{\{t\geq 0\}} \ctens \cihom(F,\field_{\{t\geq a\}}\ctens L)\br \\
&\simeq J_{M\times\overline\R}\;\reeimv{j_{M\sep!!}}\bl\field_{\{t\geq 0\}} \ctens \cihom(F,\field_{\{t\gg0\}} \ctens L)\br.
\end{align*}
By Proposition~\ref{pro:J}, it follows that \eqref{eq:ggeqcihom} is an isomorphism.

It remains to notice that for $K\in\BEC M$ we have isomorphisms in $\BEC M$
\begin{align*}
\field_M^\enh \ctens \cihom(F,K) 
&\simeq \field_{\{t\geq 0\}} \ctens \cihom(F,\field_M^\enh \ctens K) \\
&\simeq \cihom\bl\field_{\{t\geq 0\}}, \cihom(F,\field_M^\enh \ctens K)\br \\
&\simeq \cihom\bl F, \cihom(\field_{\{t\geq 0\}},\field_M^\enh \ctens K)\br \\
&\simeq \cihom(F, \field_M^\enh \ctens K) .
\end{align*}
\end{proof}

\begin{corollary}
\label{cor:pifieldT}
For $K\in\BEC M$ and $F\in\BDC(\field_M)$, we have
\[
\field_M^\enh \ctens \rihom(\opb\pi F, K) \simeq 
\rihom(\opb\pi F, \field_M^\enh \ctens K).
\]
\end{corollary}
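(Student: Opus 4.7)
The plan is to reduce this corollary directly to Lemma~\ref{lem:kTamtens} by interpreting the functor $\rihom(\opb\pi F, \ast)$ on $\BEC M$ as a special case of $\cihom$, via the third isomorphism of Lemma~\ref{lem:cihomKt0}.

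First, I would recall that (in view of Lemma~\ref{lem:cihomKt0}) the functor $\rihom(\opb\pi F, \ast)\colon \BEC M \to \BEC M$ is by construction given by
\[
\rihom(\opb\pi F, K) \simeq \cihom(\opb\pi F \tens \field_{\{t=0\}}, K),
\]
and crucially, since $F\in\BDC(\field_M)$ is an honest sheaf, the object $\opb\pi F \tens \field_{\{t=0\}}$ lies in $\BDC(\field_{M\times\R_\infty})$, not merely in $\BDC(\ifield_{M\times\R_\infty})$. This is exactly the hypothesis required to invoke Lemma~\ref{lem:kTamtens}.

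Then, applying Lemma~\ref{lem:kTamtens} with the sheaf $\opb\pi F \tens \field_{\{t=0\}}$ in place of $F$, I would pass $\field_M^\enh \ctens$ inside $\cihom$:
\[
\field_M^\enh \ctens \cihom(\opb\pi F \tens \field_{\{t=0\}}, K)
\isoto
\cihom(\opb\pi F \tens \field_{\{t=0\}}, \field_M^\enh \ctens K).
\]
Finally, applying Lemma~\ref{lem:cihomKt0} in reverse to the right-hand side gives $\rihom(\opb\pi F, \field_M^\enh \ctens K)$, completing the chain of isomorphisms.

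In summary, the proof is essentially a three-line computation combining Lemmas~\ref{lem:cihomKt0} and~\ref{lem:kTamtens}; no real obstacle arises beyond the bookkeeping observation that $\opb\pi F\tens\field_{\{t=0\}}$ is genuinely a sheaf, so Lemma~\ref{lem:kTamtens} applies without modification.
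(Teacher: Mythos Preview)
Your proposal is correct and follows essentially the same approach as the paper: the paper's proof likewise reduces the statement to Lemma~\ref{lem:kTamtens} via the identification $\rihom(\opb\pi F,K)\simeq\cihom(\opb\pi F\tens\field_{\{t=0\}},K)$ from Lemma~\ref{lem:cihomKt0}. One small slip: you refer to the ``third isomorphism'' of Lemma~\ref{lem:cihomKt0}, but the identity you actually write and use is the second one.
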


\begin{proof}
This easily follows from Lemma~\ref{lem:kTamtens} and the isomorphism
\[
\rihom(\opb\pi F,K) \simeq \cihom(\opb\pi F \tens \field_{\{t=0\}},K),
\]
due to Lemma~\ref{lem:cihomKt0}.
\end{proof}

\begin{proposition}
\label{pro:kTarihom}
Let $F\in\BDC(\field_{M\times\R_\infty})$ and $G\in\BDC(\ifield_M)$. Then there is an isomorphism in $\BEC M$
\begin{equation}
\label{eq:epbpiopbpi}
\field_M^\enh\ctens\opb a \rihom(F,\epb\pi G) \simeq
\cihom(F,\field_M^\enh \tens\opb\pi G).
\end{equation}
\end{proposition}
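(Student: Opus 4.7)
The plan is to prove \eqref{eq:epbpiopbpi} by chaining three isomorphisms that are already available in the text, using no new input beyond Lemmas \ref{lem:cihomKt0} and \ref{lem:kTamtens}. The whole argument should fit on a few lines once the right reformulations are made.

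First, I would rewrite the left-hand side using the third isomorphism of Lemma \ref{lem:cihomKt0}, applied to $K = F$ and $L = G$. This gives
\[
\opb a \rihom(F,\epb\pi G) \simeq \cihom\bl F, \opb\pi G \tens \field_{\{t=0\}}\br,
\]
so that
\[
\field_M^\enh \ctens \opb a \rihom(F,\epb\pi G) \simeq \field_M^\enh \ctens \cihom\bl F, \opb\pi G \tens \field_{\{t=0\}}\br.
\]

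Next, I would apply Lemma \ref{lem:kTamtens} with $K = \opb\pi G \tens \field_{\{t=0\}}$ to bring $\field_M^\enh\ctens$ inside the $\cihom$:
\[
\field_M^\enh \ctens \cihom\bl F, \opb\pi G \tens \field_{\{t=0\}}\br
\isoto
\cihom\bl F, \field_M^\enh \ctens (\opb\pi G \tens \field_{\{t=0\}})\br.
\]

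Finally, I would identify the inner term with $\field_M^\enh \tens \opb\pi G$. By the first isomorphism of Lemma \ref{lem:cihomKt0} (taking $K = \field_M^\enh$ and $L = G$), one has $\opb\pi G \tens \field_M^\enh \simeq (\opb\pi G \tens \field_{\{t=0\}}) \ctens \field_M^\enh$, which is precisely $\field_M^\enh \ctens (\opb\pi G \tens \field_{\{t=0\}})$ by commutativity of $\ctens$. Composing the three isomorphisms yields \eqref{eq:epbpiopbpi}. There is no real obstacle here: once the $\opb a\rihom(\ast,\epb\pi\ast)$ on the left is converted into a $\cihom$ via Lemma \ref{lem:cihomKt0}, the statement is exactly Lemma \ref{lem:kTamtens} rewritten on both sides using the same lemma.
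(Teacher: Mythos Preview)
Your proof is correct and follows essentially the same approach as the paper: first rewrite $\opb a \rihom(F,\epb\pi G)$ as $\cihom(F,\field_{\{t=0\}}\tens\opb\pi G)$ via Lemma~\ref{lem:cihomKt0}, then apply Lemma~\ref{lem:kTamtens} to move $\field_M^\enh\ctens$ inside, and finally simplify $\field_M^\enh\ctens(\field_{\{t=0\}}\tens\opb\pi G)\simeq\field_M^\enh\tens\opb\pi G$. The only cosmetic difference is that the paper leaves the last simplification implicit, while you spell it out via the first isomorphism of Lemma~\ref{lem:cihomKt0}.
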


\begin{proof}
Recall that, by Lemma~\ref{lem:cihomKt0}, one has
\[
\opb a \rihom(F,\epb\pi G) \simeq \cihom(F,\field_{\{t=0\}} \tens\opb\pi G).
\]
Hence, Lemma~\ref{lem:kTamtens} implies
\begin{align*}
\field_M^\enh\ctens\opb a \rihom(F,\epb\pi G) 
&\simeq \field_M^\enh\ctens\cihom(F,\field_{\{t=0\}} \tens\opb\pi G) \\
&\simeq \cihom(F,\field_M^\enh\ctens(\field_{\{t=0\}} \tens\opb\pi G)) \\
&\simeq \cihom(F,\field_M^\enh \tens\opb\pi G).
\end{align*}
\end{proof}

\begin{remark}
\label{rem:vanish}
By Lemma~\ref{lem:Indlim}, one has
\[
\rihom(\field_M^\enh ,\epb\pi\omega_M) 
\simeq \opb{j_M} \rihom(\field_{\{t\gg 0\}}, \omega_{M\times\overline\R})
\simeq 0.
\]
Moreover, one has
\[
\cihom(\field_M^\enh ,\field_M^\enh\tens\opb\pi \omega_M)
\simeq \field_M^\enh\tens\opb\pi \omega_M.
\]
Hence \eqref{eq:epbpiopbpi} does not hold for $F=\field_M^\enh$ and $G=\omega_M$.
\end{remark}

\begin{proposition}\label{pro:stableops}
Let $f\colon M\to N$ be a continuous map of good topological spaces.
\bnum
\item
For $K\in\BEC M$ one has
\[
\Eeeim f(\field_M^\enh \ctens K) \simeq \field_N^\enh \ctens \Eeeim f K.
\]
\item
For $L\in\BEC N$ one has
\begin{align*}
\Eopb f(\field_N^\enh \ctens L) &\simeq \field_M^\enh \ctens \Eopb f L, \\
\Eepb f(\field_N^\enh \ctens L) &\simeq \field_M^\enh \ctens \Eepb f L.
\end{align*}
\ee
\end{proposition}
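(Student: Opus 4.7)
The strategy is to apply Lemma~\ref{lem:tildef} with the ind-sheaf $G\defeq\field_{\{t\gg 0\}}=\indlim[a\to+\infty]\field_{\{t\geq a\}}\in \BDC(\ifield_{\R_\infty})$, for which $\opb{s_M}G\in\BDC(\ifield_{M\times\R_\infty})$ is a representative of $\field_M^\enh\in\BEC M$ (and similarly on the $N$-side). Parts (i) and the first isomorphism of (ii) are then immediate: for any representative $K'\in\BDC(\ifield_{M\times\R_\infty})$ of $K$ (resp.\ $L'$ of $L$), Lemma~\ref{lem:tildef}(i) (resp.\ the first equation of Lemma~\ref{lem:tildef}(ii)) yields
\[
\reeim{\tilde f}(\field_{\{t\gg 0\}}\ctens K')\simeq \field_{\{t\gg 0\}}\ctens\reeim{\tilde f}K',\qquad
\opb{\tilde f}(\field_{\{t\gg 0\}}\ctens L')\simeq \field_{\{t\gg 0\}}\ctens\opb{\tilde f}L'
\]
in $\BDC(\ifield)$, and these descend via the quotient functor to the claimed isomorphisms in $\BEC$.

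For the second isomorphism of (ii), the analogous direct approach fails because the second equation of Lemma~\ref{lem:tildef}(ii) commutes $\epb{\tilde f}$ with $\cihom(\opb{s_N}G,\cdot)$ rather than with $\opb{s_N}G\ctens(\cdot)$. My plan is to factor $\tilde f$ through its graph as $\tilde f=\tilde p\circ\tilde\iota$, where $\tilde\iota\colon M\times\R_\infty\hookrightarrow (M\times N)\times\R_\infty$ is the (closed) graph embedding and $\tilde p\colon (M\times N)\times\R_\infty\to N\times\R_\infty$ is the projection onto the second factor. By Proposition~\ref{pro:Tcomp} one has $\Eepb f\simeq \Eepbv{\;\iota}\Eepb p$, so it is enough to establish the assertion separately for $\tilde p$ and $\tilde\iota$.

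For the projection $\tilde p$: since $\tilde p$ is topologically submersive, Proposition~\ref{pro:topsub} gives $\epb{\tilde p}K\simeq \epb{\tilde p}\field\tens\opb{\tilde p}K$. Combining this with (ii)(a) already proved for $p$, together with the commutation $\field^\enh\ctens(\opb\pi F\tens K)\simeq \opb\pi F\tens(\field^\enh\ctens K)$ (which follows from the commutativity of $\ctens$ and Lemma~\ref{lem:cihomrihompi}), one deduces $\Eepb p(\field_N^\enh\ctens L)\simeq \field_{M\times N}^\enh\ctens\Eepb p L$. For the graph embedding $\tilde\iota$, I would argue via Proposition~\ref{pro:opbepb} applied to $V_n=(M\times N)\times\{t>n\}$ together with relatively compact subanalytic open subsets $U\subset M\times\R_\infty$ (whose projection to the $\overline\R$-factor is bounded, so that the nesting condition $U\cap\overline{\opb{\tilde\iota}V_n}\subset \opb{\tilde\iota}V_{n+1}$ holds after suitably shrinking), and use Proposition~\ref{pro:J} to transfer the resulting isomorphism back into $\BDC(\ifield)$. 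Combining the two cases via Proposition~\ref{pro:Tcomp} completes the proof.

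The main obstacle is the graph embedding case: verifying the nesting condition of Proposition~\ref{pro:opbepb} for a cofinal family of $U$ and globalizing the resulting local isomorphism—both of which should go through using the fact that the morphisms of bordered spaces behave well with respect to the $t$-translation implicit in the definition of $\field_M^\enh$, and that the desired equality in $\BEC M$ is local on $M\times\R_\infty$.
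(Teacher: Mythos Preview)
Your treatment of (i) and of the $\Eopb f$ case in (ii) via Lemma~\ref{lem:tildef} is fine and essentially matches the paper's argument (which uses the projection formula of Proposition~\ref{pro:Tproj} together with $\Eopb f\field_N^\enh\simeq\field_M^\enh$). The projection step in your factorization for the $\Eepb f$ case also goes through as you describe.

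The gap is in the closed-embedding step. Proposition~\ref{pro:opbepb} commutes $\epb{\tilde\iota}$ with the \emph{ordinary tensor product} by an ind-limit $\indlim_n\field_{V_n}$: with your choice $V_n=(M\times N)\times\{t>n\}$ it yields
\[
\field_U\tens\epb{\tilde\iota}K\tens\field_{\{t\gg0\}}\ \simeq\ \field_U\tens\epb{\tilde\iota}\bigl(K\tens\field_{\{t\gg0\}}\bigr).
\]
But the statement you need concerns the \emph{convolution} $\field_{\{t\gg0\}}\ctens K=\reeim\mu(\opb{q_1}\field_{\{t\gg0\}}\tens\opb{q_2}K)$, and tensoring by $\field_{\{t\gg0\}}$ is a genuinely different operation (for instance $\field_{\{t=0\}}\tens\field_{\{t\gg0\}}\simeq0$ whereas $\field_{\{t=0\}}\ctens\field_{\{t\gg0\}}\simeq\field_{\{t\gg0\}}$). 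Your sketch gives no mechanism for passing from the $\tens$ statement to the $\ctens$ statement, so the application of Proposition~\ref{pro:opbepb} as written does not close the argument.

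The paper handles $\Eepb f$ directly (without factoring $f$) by first rewriting $\field^\enh\ctens(\cdot)$, via Lemma~\ref{lem:ctenstilde} and the compactification $\mathsf S$ of Notation~\ref{not:sfS}, as a direct image $\roimv{\tilde\mu_{\sep*}}$ of an \emph{ordinary} tensor product $\opb{\tilde q_1}C\tens\epb{\tilde q_2}(\cdot)$; vanishing results from Lemma~\ref{lem:psi} then remove the boundary contributions, after which Proposition~\ref{pro:opbepb} is applied on $M\times\mathsf S\to N\times\mathsf S$ to the tensor factor $\opb{\tilde q_{1N}}C_N$. In short, Proposition~\ref{pro:opbepb} does enter, but only after the convolution has been unpacked into a tensor product on a larger space --- precisely the step that is missing from your proposal.
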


\begin{proof}
The isomorphisms
\begin{align*}
\Eeeim f(\field_M^\enh \ctens K) &\simeq \field_N^\enh \ctens \Eeeim f K, \\
\Eopb f(\field_N^\enh \ctens L) &\simeq \field_M^\enh \ctens \Eopb f L
\end{align*}
follow from Proposition~\ref{pro:Tproj} and $\Eopb f \field_N^\enh \simeq \field_M^\enh$. Let us prove
\begin{equation}
\label{eq:TepbStable}
\Eepb f(\field_N^\enh \ctens L) \simeq \field_M^\enh \ctens \Eepb f L.
\end{equation}
If $L\in\BECm N$, then both sides of \eqref{eq:TepbStable} vanish. We may then assume $L\in\BECp N$, i.e.\ $L\isoto\cihom(\field_{\{t\geq 0\}},L)$.

Set $\widetilde L = \roimv{j_{N\sep*}}\RE L$, so that
\[
\widetilde L \simeq \roimv{j_{N\sep*}}\cihom(\field_{\{t\geq 0\}},\opb{j_N}\widetilde L).
\]
Let $\overline f\colon M\times\overline\R\to N\times\overline\R$ be the map induced by $f$.
By Lemma~\ref{lem:tildef}, we have
\[
\epb{\overline f}\widetilde L \simeq \roimv{j_{M\sep*}}\cihom(\field_{\{t\geq 0\}},\opb{j_M}\epb{\overline f}\widetilde L).
\]
Then, Lemma~\ref{lem:psi} implies
\[
\field_{\{t=+\infty\}} \tens \widetilde L \simeq 0, \quad
\field_{\{t=+\infty\}} \tens \epb{\overline f}\widetilde L \simeq 0.
\]
Set
\begin{align*}
C_M &= \indlim[a\rightarrow+\infty]\field_{M\times\{-\infty\leq t<a\}}[1], \\
C_N &= \indlim[a\rightarrow+\infty]\field_{N\times\{-\infty\leq t<a\}}[1],
\end{align*}
so that 
\[
C_M \simeq \roimv{j_{M\sep*}}\RE(\field^\enh_M),
\quad
C_N \simeq \roimv{j_{N\sep*}}\RE(\field^\enh_N).
\]
Using Notation~\ref{not:sfS}, consider the maps
\begin{align*}
\tilde q_{1M}, \tilde q_{2M}, \tilde\mu_M &\colon M\times\mathsf S \To M\times \overline\R, \\
\tilde q_{1N}, \tilde q_{2N}, \tilde\mu_N &\colon N\times\mathsf S \To N\times \overline\R, \\
f' &\colon M\times\mathsf S\To N\times\mathsf S,
\end{align*}
where $f'$ is the map induced by $f$.

Then, by Lemma~\ref{lem:ctenstilde}, $\field_M^\enh \ctens \Eepb f L$ is represented by the object of $\BDC(\ifield_{M\times\overline\R})$
\[
\roimv{\tilde\mu_{M\sep*}}(\field_{M\times\R^2} \tens \opb{\tilde q_{1M}} C_M \tens \opb{\tilde q_{2M}} \epb{\overline f}\widetilde L).
\]
Since
\begin{align*}
& \field_{\opb{\tilde q_{1M}}(M\times\{+\infty\})} \tens \opb{\tilde q_{1M}} C_M  \simeq 0, \\
& \field_{\opb{\tilde q_{2M}}(M\times\{+\infty\})} \tens \opb{\tilde q_{2M}} \epb{\overline f}\widetilde L \simeq 0, \\
&\opb{\tilde \mu_M}(M\times\R) \cap M \times (\mathsf S \setminus \R^2)
\subset \opb{\tilde q_{1M}}(\{t=+\infty\}) \cup \opb{\tilde q_{2M}}(\{t=+\infty\}),
\end{align*}
we obtain
\begin{multline*}
\field_{\opb{\tilde \mu_M}(M\times\R)} \tens \field_{M\times\R^2} \tens \opb{\tilde q_{1M}} C_M \tens \opb{\tilde q_{2M}} \epb{\overline f}\widetilde L \\
\simeq
\field_{\opb{\tilde \mu_M}(M\times\R)} \tens \opb{\tilde q_{1M}} C_M \tens \opb{\tilde q_{2M}} \epb{\overline f}\widetilde L.
\end{multline*}
Moreover, one has
\[
\field_{\opb{\tilde \mu_M}(M\times\R)} \tens \opb{\tilde q_{2M}} \epb{\overline f}\widetilde L \simeq \field_{\opb{\tilde \mu_M}(M\times\R)} \tens \epb{\tilde q_{2M}} \epb{\overline f}\widetilde L[-1],
\]
since $\tilde q_{2M}$ is topologically submersive and $\epb{\tilde q_{2M}}\field_{M\times\overline\R} \simeq \field_{\opb{\tilde\mu}(M\times\R)\union\opb{\tilde q_{1M}}(M\times\R)}[1]$.

Hence we conclude that $\field_M^\enh \ctens \Eepb f L$ is represented by
\[
\roimv{\tilde\mu_{M\sep*}}(\opb{\tilde q_{1M}} C_M \tens \epb{\tilde q_{2M}} \epb{\overline f}\widetilde L[-1]).
\]
On the other hand, by the same reasoning, $\field_N^\enh \ctens L$ is represented by the object of $\BDC(\ifield_{N\times\overline\R})$
\[
\roimv{\tilde\mu_{N\sep*}}(\opb{\tilde q_{1N}} C_N \tens \epb{\tilde q_{2N}} \widetilde L[-1]).
\]
Hence $\Eepb f(\field_N^\enh \ctens L)$ is represented by the object of $\BDC(\ifield_{M\times\overline\R})$
\[
\epb{\overline f}\roimv{\tilde\mu_{N\sep*}}(\opb{\tilde q_{1N}} C_N \tens \epb{\tilde q_{2N}} \widetilde L[-1]) \simeq \roimv{\tilde\mu_{M\sep*}}\epbv{f^{\prime\sep!}}(\opb{\tilde q_{1N}} C_N \tens \epb{\tilde q_{2N}} \widetilde L[-1]).
\]
Finally, Proposition~\ref{pro:opbepb} implies that
\begin{align*}
\epbv{f^{\prime\sep!}}(\opb{\tilde q_{1N}} C_N \tens \epb{\tilde q_{2N}} \widetilde L[-1])
&\simeq \opbv{f^{\prime\sep-1}}\opb{\tilde q_{1N}} C_N \tens \epbv{f^{\prime\sep!}}\epb{\tilde q_{2N}} \widetilde L[-1] \\
&\simeq \opb{\tilde q_{1M}} C_M \tens \epb{\tilde q_{2M}}\epb{\overline f} \widetilde L[-1].
\end{align*}
\end{proof}

\begin{proposition}\label{pro:embed}
The functor $e(F) = \field_M^\enh\tens\opb\pi F$ gives a fully faithful embedding
\[
e\colon \BDC(\ifield_M) \To \BEC M.
\]
\end{proposition}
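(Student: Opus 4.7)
The plan is to compute $\fhom(e(F_1), e(F_2))$ and show it is isomorphic to $\rhom(F_1, F_2)$; applying $H^0\rsect(M;-)$ will then yield $\Hom[\BEC M](e(F_1), e(F_2)) \simeq \Hom[\BDC(\ifield_M)](F_1, F_2)$, which is fully faithfulness.

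First, I would establish the isomorphism $\cihom(e(F_1), e(F_2)) \simeq e(\rihom(F_1, F_2))$ in $\BEC M$. Writing $e(F_1) \simeq (\opb\pi F_1\tens\field_{\{t=0\}})\ctens\field_M^\enh$ via Lemma~\ref{lem:cihomKt0}, the adjunction between $\ctens$ and $\cihom$ from Proposition~\ref{pro:ctenscihom}, combined with the stability identity $\cihom(\field_M^\enh, e(F_2))\simeq e(F_2)$, gives $\cihom(e(F_1), e(F_2)) \simeq \rihom(\opb\pi F_1, \field_M^\enh\tens\opb\pi F_2)$. Corollary~\ref{cor:pifieldT} then lets us pull $\field_M^\enh$ past $\rihom(\opb\pi F_1, \ast)$, while the $(\opb{i_0}, \roim{i_{0\sep*}})$-adjunction for the closed embedding $i_0\colon M\to M\times\R_\infty$ identifies $\rihom(\opb\pi F_1, \opb\pi F_2\tens\field_{\{t=0\}})\simeq\opb\pi\rihom(F_1, F_2)\tens\field_{\{t=0\}}$. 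By Lemma~\ref{lem:homT}, this reduces the remaining computation to $\fhom(\field_{\{t=0\}}, e(G))$ with $G = \rihom(F_1, F_2)$.

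Next, Lemma~\ref{lem:ai0} gives $\fhom(\field_{\{t=0\}}, e(G)) \simeq \alpha_M\,\epb{i_0}\RE e(G)$, so it remains to show $\epb{i_0}\RE e(G)\simeq G$. To identify the representative $\RE e(G)\in\ind\shc^\bot_{\{t^*=0\}}\subset\BDC(\ifield_{M\times\R_\infty})$, I would apply Proposition~\ref{pro:tenshomstar} to $K = \field_{\{t\gg 0\}}\tens\opb\pi G$: since $\field_{\{t\geq 0\}}\ctens K\simeq K$, the distinguished triangle $\opb\pi L \to K \to \cihom(\field_{\{t\geq 0\}}, K)\to[+1]$ with the alternative expression $L\simeq\roim{\overline\pi}(\field_{\{t\neq -\infty\}}\tens\roimv{j_{M\sep*}}K)$ should, using $\roim{\overline\pi}\field_{[a,+\infty]}\simeq\field_M$ together with the projection formula for the proper map $\overline\pi$ and commutation with the ind-limit defining $\field_{\{t\gg 0\}}$, yield $L\simeq G$. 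The canonical map $\opb\pi G\to K$ is then induced by $\field_{M\times\R}\to\field_{\{t\gg 0\}}$, so its cone is $\field_{\{t<\ast\}}[1]\tens\opb\pi G$. The analogous argument applied to $\field_{\{t\leq 0\}}$, where $\field_{\{t\leq 0\}}\ctens K\simeq 0$ and the corresponding $L'$ vanishes because $\roim{\overline\pi}\field_{[a,+\infty)}\simeq 0$, gives $\cihom(\field_{\{t\leq 0\}}, K)\simeq 0$. Hence $\RE e(G)\simeq\field_{\{t<\ast\}}[1]\tens\opb\pi G$.

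Finally, since $i_0$ is a smooth closed embedding of codimension one with $\{t<a\}\supset M\times\{0\}$ for every $a>0$, I have $\opb{i_0}\field_{\{t<\ast\}}\simeq\field_M$ and $\epb{i_0}(-)\simeq\opb{i_0}(-)[-1]$, giving $\epb{i_0}\RE e(G)\simeq G$. Assembling the steps yields $\fhom(e(F_1), e(F_2))\simeq\alpha_M\rihom(F_1, F_2) \simeq \rhom(F_1, F_2)$, and passing to $H^0\rsect(M;-)$ completes the proof. The main obstacle will be the identification $L\simeq G$: the ordinary projection formula for $\roim\pi$ fails for general ind-sheaves on bordered spaces, so the computation must go through the compactification via $\overline\pi$ and requires a careful interchange of $\roim{\overline\pi}$ with the filtrant ind-limit defining $\field_{\{t\gg 0\}}$.
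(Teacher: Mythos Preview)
Your approach is correct but considerably more elaborate than the paper's. The paper works directly at the level of $\Hom$: after the same opening reduction to $\Hom_{\BEC M}(\field_{\{t=0\}}\tens\opb\pi F,\,\field_M^\enh\tens\opb\pi G)$, it passes to $\BDC(\ifield_{M\times\R_\infty})$ via $\LE$, obtaining $\Hom\bigl((\field_{\{t\geq0\}}\oplus\field_{\{t\leq0\}})\tens\opb\pi F,\ \field_{\{t\gg0\}}\tens\opb\pi G\bigr)$, and then uses the $(\opb\pi,\roim\pi)$ adjunction together with the single computation
\[
\roim\pi(\field_{\{t\gg0\}}\tens\opb\pi G)\ \simeq\ \roim{\overline\pi}\bigl(\indlim[a\to+\infty]\field_{\{a\leq t\leq+\infty\}}\tens\opb{\overline\pi}G\bigr)\ \simeq\ G,
\]
which is exactly the step you flag as the main obstacle. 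Your route instead refines this to a sheaf-level statement $\fhom(e(F_1),e(F_2))\simeq\rhom(F_1,F_2)$ by passing through $\RE e(G)$ and $\epb{i_0}$; the identification $L\simeq G$ in your invocation of Proposition~\ref{pro:tenshomstar} is precisely the paper's $\roim\pi$ computation above, so the analytic core is the same. What your longer path buys is the intermediate isomorphism $\cihom(e(F_1),e(F_2))\simeq e(\rihom(F_1,F_2))$, which is a pleasant structural fact not made explicit in the paper. One small point to tighten: when you assert that the map $\opb\pi G\to\field_{\{t\gg0\}}\tens\opb\pi G$ arising from Proposition~\ref{pro:tenshomstar} agrees with the one induced by $\field_{M\times\R}\to\field_{\{t\gg0\}}$, this is not automatic from the statement of that proposition alone; you can bypass this by noting that the paper already records $\RE\field_M^\enh\simeq\field_{\{t<\ast\}}[1]$, or by observing that any two such maps differ by an element of $\End(G)$ and the cone is insensitive to composition with an automorphism.
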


\begin{proof}
For $F,G\in \BDC(\ifield_M)$ one has
\begin{align*}
\Hom[\BEC M]&(\field_M^\enh\tens\opb\pi F, \field_M^\enh\tens\opb\pi G) \\
&\simeq\Hom[\BEC M](\field_M^\enh\ctens(\field_{\{t=0\}}\tens\opb\pi F), \field_M^\enh\tens\opb\pi G) \\
&\simeq\Hom[\BEC M](\field_{\{t=0\}}\tens\opb\pi F, \cihom(\field_M^\enh, \field_M^\enh\tens\opb\pi G)) \\
&\simeq\Hom[\BEC M](\field_{\{t=0\}}\tens\opb\pi F, \field_M^\enh\tens\opb\pi G).
\end{align*}
Since
\begin{align*}
\LE(\field_{\{t=0\}}\tens\opb\pi F) &\simeq (\field_{\{t\geq0\}} \dsum \field_{\{t\leq0\}})\tens\opb\pi F, \\
\LE(\field_M^\enh\tens\opb\pi G) &\simeq \field_{\{t\gg0\}}\tens\opb\pi G, 
\end{align*}
one further has
\begin{align*}
\Hom[\BEC M]&(\field_{\{t=0\}}\tens\opb\pi F, \field_M^\enh\tens\opb\pi G) \\
&\simeq \Hom[\BDC(\ifield_{M\times\R_\infty})]((\field_{\{t\geq0\}} \dsum \field_{\{t\leq0\}})\tens\opb\pi F, \field_{\{t\gg0\}}\tens\opb\pi G) \\
&\simeq \Hom[\BDC(\ifield_{M\times\R_\infty})](\opb\pi F, \field_{\{t\gg0\}}\tens\opb\pi G) \\
&\simeq \Hom[\BDC(\ifield_M)](F, \roim\pi (\field_{\{t\gg0\}}\tens\opb\pi G)) \\
&\underset{(*)}\simeq \Hom[\BDC(\ifield_M)](F, G).
\end{align*}
Here, in $(*)$, we used the fact that
\begin{align*}
\roim\pi (\field_{\{t\gg0\}}\tens\opb\pi G)
&\simeq \roim{\overline\pi}\roimv{j_{M\sep*}} (\field_{\{t\gg0\}}\tens\opb\pi G) \\
&\simeq \roim{\overline\pi} (\indlim[a\rightarrow+\infty]\field_{\{a\leq t \leq+\infty\}}\tens\opb{\overline\pi} G),
\end{align*}
and $\roim{\overline\pi}\indlim[a\rightarrow+\infty]\field_{\{a\leq t \leq+\infty\}} \simeq \field_M$.
\end{proof}

\subsection{Duality}

\begin{definition}
We define the duality functor
\[
\Edual_M\colon\BEC M\to {\BEC M}^\op, \qquad K \mapsto \cihom(K,\omega^\enh_M),
\]
where we recall that $\omega^\enh_M \defeq \field^\enh_M \tens \opb\pi\omega_M$.
\end{definition}

\begin{proposition}
Let $f\colon M\to N$ be a continuous map of good topological spaces and $K\in\BEC M$. Then one has an isomorphism in $\BEC N$
\[
\Edual_N \Eeeim f K \simeq \Eoim f \Edual_M K.
\]
\end{proposition}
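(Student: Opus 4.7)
The strategy is to mirror the classical proof of the Verdier-style identity $\dual_N \reim f \simeq \roim f \dual_M$, which proceeds by adjunction and then a computation of the dualizing complex. The starting point here is the fourth isomorphism of Proposition~\ref{pro:Tproj}, applied with $L = \omega_N^\enh$:
\[
\Edual_N \Eeeim f K = \cihom(\Eeeim f K, \omega_N^\enh) \simeq \Eoim f\,\cihom(K, \Eepb f\, \omega_N^\enh).
\]
Thus the entire statement reduces immediately to the single ``key identity''
\[
\Eepb f\, \omega_N^\enh \simeq \omega_M^\enh. \tag{$\star$}
\]

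To prove $(\star)$, I would first rewrite $\omega_N^\enh = \field_N^\enh \tens \opb\pi\omega_N$ as a convolution product, so that Proposition~\ref{pro:stableops}~(ii) becomes applicable. The first isomorphism of Lemma~\ref{lem:cihomKt0}, taken with $L = \omega_N$ and $K = \field_N^\enh$, yields
\[
\omega_N^\enh \simeq \field_N^\enh \ctens (\opb\pi\omega_N \tens \field_{\{t=0\}}).
\]
Proposition~\ref{pro:stableops}~(ii), which expresses the compatibility of $\Eepb f$ with $\ctens$-multiplication by $\field^\enh$, then gives
\[
\Eepb f\,\omega_N^\enh \simeq \field_M^\enh \ctens \Eepb f\bl \opb\pi\omega_N \tens \field_{\{t=0\}}\br.
\]

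The remaining task is classical. Identifying $\opb\pi\omega_N \tens \field_{\{t=0\}} \simeq \roim{i_{0,N}}\omega_N$, where $i_{0,N}\colon N \to N\times\R_\infty$ is the zero section $y\mapsto(y,0)$, and observing that the square
\[
\xymatrix@R=2em@C=4em{
M \ar[r]^f \ar[d]_{i_{0,M}} & N \ar[d]^{i_{0,N}} \\
M\times\R_\infty \ar[r]_-{\tilde f}\ar@{}[ur]|-\square & N\times\R_\infty
}
\]
is Cartesian in the category of bordered spaces, Lemma~\ref{lem:bcart} (base change) yields
\[
\epb{\tilde f}\roim{i_{0,N}}\omega_N \simeq \roim{i_{0,M}}\epb f\omega_N \simeq \roim{i_{0,M}}\omega_M \simeq \opb\pi\omega_M \tens \field_{\{t=0\}}.
\]
Combining these, $\Eepb f\,\omega_N^\enh \simeq \field_M^\enh \ctens (\opb\pi\omega_M \tens \field_{\{t=0\}}) \simeq \omega_M^\enh$, which proves $(\star)$ and hence the proposition.

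I do not expect a genuine obstacle. The only non-formal input is the stability of $\Eepb f$ under $\ctens$-multiplication by $\field^\enh$ (Proposition~\ref{pro:stableops}~(ii)), whose proof occupies a substantial portion of this section; but it is already established in the excerpt, so once granted, the argument above is a pure assembly of adjunction, the $\ctens$-decomposition provided by Lemma~\ref{lem:cihomKt0}, and bordered-space base change.
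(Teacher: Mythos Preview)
Your proof is correct and follows exactly the paper's approach: the paper's proof consists of the single sentence ``This follows from Proposition~\ref{pro:Tproj} and $\Eepb f \omega^\enh_N \simeq \omega^\enh_M$, which is a consequence of Proposition~\ref{pro:stableops}~(ii).'' You have simply unpacked the second clause, supplying the $\ctens$-decomposition via Lemma~\ref{lem:cihomKt0} and the base-change computation that the paper leaves implicit.
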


\begin{proof}
This follows from Proposition~\ref{pro:Tproj} and $\Eepb f \omega^\enh_N \simeq \omega^\enh_M$, which is a consequence of Proposition~\ref{pro:stableops}~(ii).
\end{proof}

\begin{proposition}
\label{pro:Tduala}
For $F\in\BDC(\field_{M\times\R_\infty})$, one has
\[
\Edual_M(\field^\enh_M \ctens F) \simeq \field^\enh_M \ctens \opb a \dual_{M\times\R} F.
\]
Here $a$ is the involution of $M\times\R$ defined by $a(x,t) = (x,-t)$.
\end{proposition}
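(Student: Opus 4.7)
The plan is to chain four identifications together, the first three being purely formal adjunction/stability arguments and the last being the geometric identification of the relative dualizing data.

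First, I unfold the left-hand side using the adjunction of Proposition~\ref{pro:ctenscihom}:
\[
\Edual_M(\field^\enh_M \ctens F) = \cihom(\field^\enh_M \ctens F, \omega^\enh_M) \simeq \cihom\bl F, \cihom(\field^\enh_M, \omega^\enh_M)\br.
\]
So the task reduces to computing $\cihom(\field^\enh_M, \omega^\enh_M)$ and then identifying the resulting expression.

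Second, I claim $\omega^\enh_M = \field^\enh_M \tens \opb\pi\omega_M$ is stable in the sense of Proposition~\ref{pro:equivTam}. Using Lemma~\ref{lem:cihomrihompi} with $L = \omega_M$ and $K_1 = K_2 = \field^\enh_M$, together with the idempotency $\field^\enh_M \ctens \field^\enh_M \simeq \field^\enh_M$ (a limit of Lemma~\ref{lem:nilpo}), one obtains
\[
\field^\enh_M \ctens \omega^\enh_M \simeq \opb\pi\omega_M \tens (\field^\enh_M \ctens \field^\enh_M) \simeq \opb\pi\omega_M \tens \field^\enh_M = \omega^\enh_M.
\]
Hence condition (iii) of Proposition~\ref{pro:equivTam} is satisfied, and consequently condition (iv) gives $\cihom(\field^\enh_M, \omega^\enh_M) \simeq \omega^\enh_M$.

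Third, I apply Proposition~\ref{pro:kTarihom} with $G = \omega_M$ to obtain
\[
\cihom(F, \omega^\enh_M) = \cihom(F, \field^\enh_M \tens \opb\pi\omega_M) \simeq \field^\enh_M \ctens \opb a\, \rihom(F, \epb\pi\omega_M).
\]
Combining this with steps one and two gives
\[
\Edual_M(\field^\enh_M \ctens F) \simeq \field^\enh_M \ctens \opb a\, \rihom(F, \epb\pi\omega_M).
\]

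Fourth, it remains to identify $\rihom(F, \epb\pi\omega_M) \simeq \dual_{M\times\R}F$ in $\BDC(\ifield_{M\times\R_\infty})$. Since $\pi\colon M\times\R_\infty \to M$ is topologically submersive with fibers of dimension one, Proposition~\ref{pro:topsub} yields $\epb\pi\field_M \tens \opb\pi\omega_M \isoto \epb\pi\omega_M$, and $\epb\pi\field_M$ is the relative dualizing complex, namely $\field_{M\times\R}[1]$ (using the orientability of $\R$). Thus $\epb\pi\omega_M$ represents $\omega_{M\times\R} \simeq \omega_M \etens \field_\R[1]$. Since $F$ is a (complex of) sheaves on $M\times\R$ and $\rihom$ restricts to $\rhom$ on genuine sheaves, one concludes $\rihom(F, \epb\pi\omega_M) \simeq \dual_{M\times\R}F$, completing the proof after applying $\opb a$.

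The main obstacle is the bookkeeping in step four: carefully verifying that $\epb\pi\omega_M$, computed via Definition~\ref{def:fbordered} for the bordered projection $\pi$, agrees with the usual $\omega_{M\times\R}$ after identification. This uses Proposition~\ref{pro:topsub} together with the fact that the relative dualizing complex of a topologically submersive map factors out of $\epb\pi$; the remaining identification with $\dual_{M\times\R}F$ is immediate once one knows $F$ is just a sheaf, so that $\rihom$ collapses to $\rhom$.
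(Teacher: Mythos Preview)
Your proof is correct and follows exactly the same route as the paper's: unfold $\Edual_M$ via the $\ctens$/$\cihom$ adjunction, use stability of $\omega^\enh_M$ to get $\cihom(\field^\enh_M,\omega^\enh_M)\simeq\omega^\enh_M$, and then invoke Proposition~\ref{pro:kTarihom}. The paper simply writes the chain of isomorphisms and leaves your step four (the identification $\rhom(F,\epb\pi\omega_M)\simeq\dual_{M\times\R}F$) implicit, while you spell it out; note that for step two it is slightly quicker to observe that $\omega^\enh_M = \field^\enh_M\ctens(\field_{\{t=0\}}\tens\opb\pi\omega_M)$ directly satisfies condition~(v) of Proposition~\ref{pro:equivTam}.
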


\begin{proof}
We have
\begin{align*}
\Edual_M(\field^\enh_M \ctens F) 
&= \cihom(\field^\enh_M \ctens F,\omega_M^\enh) \\
&\simeq \cihom(F, \cihom(\field^\enh_M,\omega_M^\enh)) \\
&\simeq \cihom(F, \omega_M^\enh) \\
&= \cihom(F, \field^\enh_M \tens \opb\pi\omega_M) \\
&\simeq \field^\enh_M \ctens \opb a \rhom (F,\epb\pi\omega_M).
\end{align*}
Here, the last isomorphism follows from Proposition~\ref{pro:kTarihom}.
\end{proof}

\begin{corollary}
\label{cor:Tduala}
For $F\in\BDC(\field_{M})$, one has
\[
\Edual_M(\field^\enh_M \tens \opb\pi F) \simeq \field^\enh_M \tens \opb\pi \dual_M F.
\]
\end{corollary}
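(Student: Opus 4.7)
\smallskip

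The plan is to deduce this as a direct corollary of Proposition~\ref{pro:Tduala} applied to the sheaf $G \defeq \opb\pi F\tens\field_{\{t=0\}}\in\BDC(\field_{M\times\R_\infty})$. The key observation is that, by Lemma~\ref{lem:cihomKt0}, one has the identification
\[
\field^\enh_M\tens\opb\pi F \simeq \field^\enh_M\ctens G
\]
in $\BEC M$, which brings $\field^\enh_M\tens\opb\pi F$ into the form required by Proposition~\ref{pro:Tduala}.

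Applying Proposition~\ref{pro:Tduala} then gives
\[
\Edual_M(\field^\enh_M\tens\opb\pi F) \simeq \field^\enh_M\ctens \opb a\dual_{M\times\R}\bl\opb\pi F\tens\field_{\{t=0\}}\br,
\]
so the next step is to compute the right-hand side in terms of $\dual_M F$. I would denote by $i_0\colon M\hookrightarrow M\times\R$ the closed embedding $x\mapsto(x,0)$ and observe that $\opb\pi F\tens\field_{\{t=0\}}\simeq\roim{i_0}F$. Since $i_0$ is a closed embedding, $\dual_{M\times\R}$ commutes with $\roim{i_0}$, yielding
\[
\dual_{M\times\R}\bl\opb\pi F\tens\field_{\{t=0\}}\br \simeq \roim{i_0}\dual_M F \simeq \opb\pi\dual_M F\tens\field_{\{t=0\}}.
\]
This is the only nonformal step in the argument and should not be an obstacle: it follows from $i_0^!\,\omega_{M\times\R}\simeq\omega_M$, i.e.\ from orientability of the factor $\R$.

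Finally, since $\pi\circ a=\pi$ and $\opb a\field_{\{t=0\}}\simeq\field_{\{t=0\}}$, the involution $\opb a$ acts trivially, and one more application of Lemma~\ref{lem:cihomKt0} in the reverse direction gives
\[
\field^\enh_M\ctens\bl\opb\pi\dual_M F\tens\field_{\{t=0\}}\br \simeq \field^\enh_M\tens\opb\pi\dual_M F,
\]
which is the desired isomorphism. No genuine difficulty arises, as each step is supplied by a previously established lemma.
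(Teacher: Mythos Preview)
Your proposal is correct and follows essentially the same route as the paper: rewrite $\field^\enh_M\tens\opb\pi F$ as $\field^\enh_M\ctens(\field_{\{t=0\}}\tens\opb\pi F)$ via Lemma~\ref{lem:cihomKt0}, apply Proposition~\ref{pro:Tduala}, compute $\opb a\dual_{M\times\R}(\field_{\{t=0\}}\tens\opb\pi F)\simeq\field_{\{t=0\}}\tens\opb\pi\dual_M F$, and convert back. Your justification of the middle step via $i_0$ is in fact slightly more explicit than the paper's, which simply asserts the isomorphism.
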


\begin{proof}
We have
\begin{align*}
\Edual_M(\field^\enh_M \tens \opb\pi F) 
&\simeq \Edual_M(\field^\enh_M \ctens (\field_{\{t=0\}} \tens \opb\pi F)) \\
&\simeq \field^\enh_M \ctens \opb a \dual_{M\times\R}(\field_{\{t=0\}} \tens \opb\pi F) \\
&\simeq \field^\enh_M \ctens (\field_{\{t=0\}} \tens \opb\pi \dual_M F) \\
&\simeq \field^\enh_M \tens \opb\pi \dual_M F.
\end{align*}
\end{proof}

\subsection{$\R$-constructible objects}
In this subsection, we assume that  $M$ is a subanalytic space.
Recall the natural morphism
\[
j_M\colon M\times\R_\infty \To M\times\overline\R,
\]
and  the category $\BDC(\field_{M\times\R_\infty})$ from Notation~\ref{not:fieldNbN}.

\begin{definition}
We denote by $\BDC_\Rc(\field_{M\times\R_\infty})$ the full subcategory of $\BDC(\field_{M\times\R_\infty})$ whose objects $F$ are such that $\reimv{j_{M\sep!}}F$ is an $\R$-constructible object of $\BDC(\field_{M\times\overline\R})$. 

We regard $\BDC_\Rc(\field_{M\times\R_\infty})$ as a full subcategory of $\BDC(\ifield_{M\times\R_\infty})$.
\end{definition}

Note that $\BDC_\Rc(\field_{M\times\R_\infty})$ is stable by the functors $\ctens$, $\cihom$ and $\tens$, $\rihom$.

\begin{definition}
\label{def:TRc}
We say that an object $K\in\BEC M$ is \emph{$\R$-constructible} if for any relatively compact subanalytic open subset $U\subset M$ there exists an isomorphism
\[
\opb\pi\field_U \tens K \simeq \field_M^\enh \ctens F \quad\text{for some } F\in \BDC_\Rc(\field_{M\times\R_\infty}).
\]
Denote by $\BECRc M$ the full subcategory of $\BEC M$ whose objects are $\R$-constructible.
\end{definition}

Note in particular that $\R$-constructible objects of $\BEC M$ are stable objects.

\begin{proposition}
\label{pro:Rcthick}
Let $K'\To[\varphi] K\To K''\To[+1]$ be a distinguished triangle in $\BEC M$.
If $K'$ and $K$ are $\R$-constructible, so is $K''$.
\end{proposition}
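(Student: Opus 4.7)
The plan is to reduce to a question about $\R$-constructible sheaves on $M\times\R_\infty$ and then take a mapping cone there. Since $\R$-constructibility is a local condition on $M$, we fix a relatively compact subanalytic open subset $U\subset M$ and produce some $F''\in\BDC_\Rc(\field_{M\times\R_\infty})$ with $\opb\pi\field_U\tens K''\simeq \field_M^\enh\ctens F''$. Pick a slightly larger relatively compact subanalytic open $U'\subset M$ with $\overline U\subset U'$ and invoke the hypothesis on $U'$ to obtain $F'_0, F_0\in\BDC_\Rc(\field_{M\times\R_\infty})$ with $\opb\pi\field_{U'}\tens K'\simeq \field_M^\enh\ctens F'_0$ and $\opb\pi\field_{U'}\tens K\simeq \field_M^\enh\ctens F_0$. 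Setting $F'\defeq \opb\pi\field_U\tens F'_0$ and $F\defeq \opb\pi\field_U\tens F_0$, we get $\R$-constructible representatives whose support is contained in $\overline U\times\R$, hence has compact image in $M$; this will be crucial for applying Proposition~\ref{pro:pro:HomStab}. Applying the exact functor $\opb\pi\field_U\tens(\cdot)$ to the given distinguished triangle yields
\[
\field_M^\enh\ctens F' \to[\alpha] \field_M^\enh\ctens F \to \opb\pi\field_U\tens K''\to[+1]
\]
in $\BEC M$, and it remains to identify the third term.

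The compact support of $F'$ in $M$ permits an application of Proposition~\ref{pro:pro:HomStab}, which gives
\[
\Hom[\BEC M](\field_M^\enh\ctens F',\field_M^\enh\ctens F)\simeq \varinjlim_{a\to+\infty}\Hom[\BEC M](\field_{\{t\geq -a\}}\ctens F', F).
\]
Hence $\alpha$ is the image of some $\beta\in\Hom[\BEC M](\field_{\{t\geq -a\}}\ctens F', F)$ for some $a>0$. Note that $\field_{\{t\geq -a\}}\ctens F'$ lies in $\BDC_\Rc(\field_{M\times\R_\infty})$: this follows from Lemma~\ref{lem:ctenstilde} together with the compactness of the space $\mathsf S$ of Notation~\ref{not:sfS}, which makes $\reim{\tilde\mu}$ preserve $\R$-constructibility.

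The crux is then to promote $\beta$ to a genuine morphism $\tilde\beta\colon \field_{\{t\geq -a\}}\ctens F'\to F$ in $\BDC_\Rc(\field_{M\times\R_\infty})$, possibly after enlarging $a$. I regard this as the main obstacle, because a priori $\BEC M$-morphisms between two $\R$-constructible objects differ from $\BDC_\Rc$-morphisms by the quotient by $\ind\shc_{\{t^*=0\}}$, so one must control the failure of fullness. My plan is to combine the description $\Hom[\BEC M](A,B)\simeq \Hom[\BDC(\ifield_{M\times\R_\infty})](\LE A, B)$ (Lemma~\ref{lem:ihomE}) with the filtered presentation $\field_M^\enh\simeq\indlim_{a\to+\infty}\field_{\{t\geq a\}}$ (Notation~\ref{not:gg}) and Corollary~\ref{cor:tensindlim}, exploiting the compact support of $F'$ in $M$ to interchange $\Hom$ with the colimit and thereby realize $\beta$ by a morphism in $\BDC_\Rc$. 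Once $\tilde\beta$ is in hand, take $F''$ to be its mapping cone, which lies in $\BDC_\Rc(\field_{M\times\R_\infty})$ since this category is triangulated. Applying $\field_M^\enh\ctens(\cdot)$ and using the absorption identity $\field_M^\enh\ctens\field_{\{t\geq -a\}}\simeq \field_M^\enh$, the triangle $\field_{\{t\geq -a\}}\ctens F'\to[\tilde\beta] F\to F''\to[+1]$ in $\BDC_\Rc$ induces a triangle in $\BEC M$ isomorphic to the one displayed above, whence $\opb\pi\field_U\tens K''\simeq \field_M^\enh\ctens F''$, completing the argument.
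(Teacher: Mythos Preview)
Your approach is essentially the same as the paper's: reduce to the compactly supported case, invoke Proposition~\ref{pro:pro:HomStab} to factor the given morphism through a finite translation, and take the cone in $\BDC_\Rc(\field_{M\times\R_\infty})$.

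The one place where you overcomplicate matters is the ``promotion'' step, which you flag as the main obstacle. In fact there is no obstacle: since $\field_{\{t\geq -a\}}\ctens F'$ already satisfies $\LE(\field_{\{t\geq -a\}}\ctens F')\simeq\field_{\{t\geq -a\}}\ctens F'$ (as $\field_{\{t\leq 0\}}\ctens\field_{\{t\geq -a\}}\simeq 0$), the adjunction gives directly
\[
\Hom[\BEC M](\field_{\{t\geq -a\}}\ctens F', F)\simeq \Hom[\BDC(\ifield_{M\times\R_\infty})](\field_{\{t\geq -a\}}\ctens F', F),
\]
and since both arguments are $\R$-constructible sheaves, this is a $\Hom$ in $\BDC_\Rc(\field_{M\times\R_\infty})$. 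So $\beta$ is already your $\tilde\beta$; no further enlargement of $a$ or colimit argument is needed. The paper makes this transparent by first replacing $F'$ with $\field_{\{t\geq 0\}}\ctens F'$ (which does not change $\field_M^\enh\ctens F'$), and then working with the other form $\Hom[\BEC M](F',\field_{\{t\geq a\}}\ctens F)\simeq\Hom[\BDC(\ifield_{M\times\R_\infty})](F',\field_{\{t\geq a\}}\ctens F)$ of Proposition~\ref{pro:pro:HomStab}; the morphism $\varphi'\colon F'\to\field_{\{t\geq a\}}\ctens F$ is then visibly a morphism of $\R$-constructible objects, and one completes it to a triangle.
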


\begin{proof}
We may assume that $K' = \field_M^\enh \ctens F'$ and $K = \field_M^\enh \ctens F$ for $F,F'\in\BDC_\Rc(\field_{M\times\R_\infty})$. By replacing $F'$ with $\field_{\{t\geq 0\}}\ctens F'$, we may also assume that $F' \simeq \field_{\{t\geq 0\}}\ctens F'$. We may  assume further that $\overline\pi(\supp(\reimv{j_{M\sep!}}F'))$ is compact.
Then, by Proposition~\ref{pro:pro:HomStab},
\[
\Hom[\BEC M](K',K)
\simeq \ilim[a\rightarrow+\infty] \Hom[\BDC(\ifield_{M\times\R_\infty})](F', \field_{\{t\geq a\}} \ctens F) .
\]
Hence there exist $a\in\R$ and a morphism in $\BDC(\field_{M\times\R_\infty})$
\[
\varphi'\colon F' \to \field_{\{t\geq a\}} \ctens F
\]
such that $\varphi\colon K'\to K$ is equal to
\[
K' = \field_M^\enh \ctens F'
\To[\varphi']  \field_M^\enh \ctens (\field_{\{t\geq a\}} \ctens F)
\simeq \field_M^\enh \ctens F = K.
\]
Completing $\varphi'$ in a distinguished triangle $F'\To[\varphi'] \field_{\{t\geq a\}} \ctens F\To F''\To[+1]$, we have
$F''\in\BDC_\Rc(\field_{M\times\R_\infty})$ and $K'' \simeq \field_M^\enh \ctens F''$.
\end{proof}

\begin{corollary}
The category $\BECRc M$ is a triangulated category.
\end{corollary}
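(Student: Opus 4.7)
The plan is to verify that $\BECRc M$ satisfies the standard criteria for being a full triangulated subcategory of $\BEC M$: namely, it contains the zero object, is stable under the shift functor $[\pm 1]$, and satisfies the ``two-out-of-three'' property for distinguished triangles.

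The zero object is clearly in $\BECRc M$ (take $F=0$ in Definition~\ref{def:TRc}). Stability under shifts is immediate from the definition: if $\opb\pi\field_U \tens K \simeq \field_M^\enh \ctens F$ with $F\in \BDC_\Rc(\field_{M\times\R_\infty})$, then applying $[n]$ gives $\opb\pi\field_U \tens K[n] \simeq \field_M^\enh \ctens F[n]$, and $F[n]\in\BDC_\Rc(\field_{M\times\R_\infty})$.

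The essential input is Proposition~\ref{pro:Rcthick}, which handles one of the three cases of the ``two-out-of-three'' property: given a distinguished triangle $K' \to K \to K'' \to[+1]$ in $\BEC M$, if $K'$ and $K$ are $\R$-constructible, then so is $K''$. The remaining two cases are obtained by rotating the distinguished triangle: the rotations $K \to K'' \to K'[1]\to[+1]$ and $K''[-1]\to K'\to K\to[+1]$ are again distinguished triangles in $\BEC M$, so Proposition~\ref{pro:Rcthick} combined with the shift-stability established above covers all configurations. I do not expect any obstacle here, since the proof is purely formal once Proposition~\ref{pro:Rcthick} is in hand.
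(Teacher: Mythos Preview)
Your proof is correct and is exactly the standard argument the paper has in mind: the corollary is stated without proof precisely because Proposition~\ref{pro:Rcthick} plus the evident shift-stability and the rotation axiom immediately give the two-out-of-three property. There is nothing to add.
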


\begin{lemma}
\label{lem:H^nRc}
Let $K\in\BEC M$. Then $K$ is $\R$-constructible if and only if $H^n K$ is $\R$-constructible for any $n\in\Z$.
\end{lemma}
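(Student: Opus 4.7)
The plan is to treat the two directions separately: the reverse implication by a dévissage exploiting the thickness of $\R$-constructible objects in triangles, and the forward implication by reducing to an explicit sheaf-theoretic computation on the compact auxiliary space $\mathsf S$.

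For the reverse direction, I would first note that $K$ is bounded in the induced $t$-structure (since $\LE K\in\BDC(\ifield_{M\times\R_\infty})$ is bounded and $\LE$ is conservative). Then, proceeding by induction on the cohomological length of $K$, the distinguished triangles $\tau^{\leq n-1}K\to\tau^{\leq n}K\to H^n K[-n]\tone$ together with Proposition~\ref{pro:Rcthick} (applied to the triangle and its rotations) show that every $\tau^{\leq n}K$ is $\R$-constructible, hence so is $K$.

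For the forward direction, I would begin by fixing a relatively compact subanalytic open $V\subset M$. Since $\opb\pi\field_V\tens\ast$ is exact and $\R$-constructibility is a local property, I can replace $K$ by $\opb\pi\field_V\tens K$ and assume $K\simeq\field_M^\enh\ctens F$ globally for some $F\in\BDC_\Rc(\field_{M\times\R_\infty})$. My goal is then to identify $H^n K$ with $\field_M^\enh\ctens G_n$ where $G_n\defeq H^n(\field_{\{t\geq 0\}}\ctens F)$, and to verify that $G_n\in\BDC_\Rc(\field_{M\times\R_\infty})$. The identification follows by applying $\LE$: using $\LE K\simeq\field_{\{t\gg 0\}}\ctens F$ (from $\field_{\{t\leq 0\}}\ctens\field_{\{t\gg 0\}}\simeq 0$) together with Proposition~\ref{pro:gggeqK}~(ii), one computes
$$\LE H^n K\simeq H^n(\field_{\{t\gg 0\}}\ctens F)\simeq\field_{\{t\gg 0\}}\ctens G_n\simeq\LE(\field_M^\enh\ctens G_n),$$
and the desired isomorphism in $\BEC M$ then comes from the full faithfulness of $\LE$ (i.e.\ from the equivalence in Proposition~\ref{pro:tam}~(ii-a)).

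The main obstacle will be the $\R$-constructibility of $G_n$, which reduces to that of $\field_{\{t\geq 0\}}\ctens F$ itself. The idea is to rewrite the convolution using the \emph{compact} auxiliary space $\mathsf S$ of Notation~\ref{not:sfS}: by Lemma~\ref{lem:ctenstilde},
$$\field_{\{t\geq 0\}}\ctens F\simeq\opb{j_M}\reeim{\tilde\mu}\bl\opb{\tilde q_1}\reeimv{j_{M\sep!!}}\field_{\{t\geq 0\}}\tens\opb{\tilde q_2}\reeimv{j_{M\sep!!}}F\br.$$
Since $\mathsf S$ is compact, $\tilde\mu\colon M\times\mathsf S\to M\times\overline\R$ is proper and subanalytic, so by Remark~\ref{rem:coperM} it commutes with the embedding $\iota$ of sheaves into ind-sheaves and preserves $\R$-constructibility of sheaves. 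As both tensor factors are images under $\iota$ of $\R$-constructible sheaves on $M\times\overline\R$, this shows that $\field_{\{t\geq 0\}}\ctens F$ lies in $\BDC_\Rc(\field_{M\times\R_\infty})$, and each cohomology sheaf $G_n$ inherits $\R$-constructibility, completing the argument.
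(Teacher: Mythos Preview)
Your proof is correct and follows essentially the same approach as the paper: the paper's proof is the single displayed line $H^n(\field_M^\enh\ctens F)\simeq\field_M^\enh\ctens H^n(\field_{\{t\geq 0\}}\ctens F)$ (Proposition~\ref{pro:gggeqK}), which is exactly your identification of $H^nK$ with $\field_M^\enh\ctens G_n$; the reverse d\'evissage and the $\R$-constructibility of $G_n$ are left implicit. Your argument via the compact space $\mathsf S$ for the constructibility of $\field_{\{t\geq 0\}}\ctens F$ simply supplies a proof of the fact the paper records without argument just after the definition of $\BDC_\Rc(\field_{M\times\R_\infty})$, namely that this category is stable under $\ctens$.
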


\begin{proof}
For $F\in\BDC(\field_{M\times\R_\infty})$, we have
\[
H^n(\field_M^\enh \ctens F) \simeq \field_M^\enh \ctens H^n(\field_{\{t\geq 0\}} \ctens F)
\]
by Proposition~\ref{pro:gggeqK}.
\end{proof}

\begin{proposition}
\label{pro:summand}
Let $K_1,K_2\in\BEC M$. If $K_1\dsum K_2$ is $\R$-constructible, then $K_1$ and $K_2$ are $\R$-constructible.
\end{proposition}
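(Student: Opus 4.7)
My plan uses a cone construction inside $\BDC_\Rc(\field_{M\times\R_\infty})$ combined with Proposition~\ref{pro:gggeqK} to transfer the splitting of $K=K_1\dsum K_2$ into a splitting on cohomology sheaves of an $\R$-constructible object.

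First, the reduction. By Lemma~\ref{lem:H^nRc}, an object of $\BEC M$ is $\R$-constructible iff each of its cohomology sheaves is. Since $H^n(K_1\dsum K_2)\simeq H^nK_1\dsum H^nK_2$, one reduces to showing the statement for $K_1,K_2$ in the heart of the $t$-structure of $\BEC M$, i.e.\ both concentrated in degree $0$. The property is also local on $M$, so one fixes a relatively compact subanalytic open $U\subset M$, replaces $K_i$ with $\opb\pi\field_U\tens K_i$, and assumes $K\defeq K_1\dsum K_2\simeq \field_M^\enh\ctens F$ for some $F\in\BDC_\Rc$. Replacing $F$ by $\field_{\{t\geq0\}}\ctens F$, one may assume $F$ is stable, and by shrinking $U$ arrange the support condition needed to apply Proposition~\ref{pro:pro:HomStab}.

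Next, lift the idempotent. Let $e\colon K\to K$ be the projection onto $K_1$, so $\operatorname{cone}(e)\simeq K_2\dsum K_1[1]$. By Proposition~\ref{pro:pro:HomStab},
\[
\End_{\BEC M}(K)\simeq\varinjlim_{a\to+\infty}\Hom_{\BDC(\ifield_{M\times\R_\infty})}(F,\field_{\{t\geq a\}}\ctens F),
\]
so $e$ is represented by a morphism $\tilde e\colon F\to\field_{\{t\geq a\}}\ctens F$ for some $a>0$. Form $C\defeq\operatorname{cone}(\tilde e)\in\BDC_\Rc(\field_{M\times\R_\infty})$. Applying the functor $\field_M^\enh\ctens(-)$ and using $\field_M^\enh\ctens(\field_{\{t\geq a\}}\ctens F)\simeq\field_M^\enh\ctens F\simeq K$, the resulting distinguished triangle identifies $\field_M^\enh\ctens C\simeq\operatorname{cone}(e)\simeq K_2\dsum K_1[1]$.

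Finally, extract the summands via cohomology. The objects $F$ and $\field_{\{t\geq a\}}\ctens F$ are stable, hence so is $C$. By Proposition~\ref{pro:gggeqK} applied to stable $C$, one has $H^n(\field_M^\enh\ctens C)\simeq\field_M^\enh\ctens H^nC$. Since $K_1,K_2$ lie in degree $0$, reading off the cohomology of $K_2\dsum K_1[1]$ yields
\[
K_1\simeq\field_M^\enh\ctens H^{-1}C,\qquad K_2\simeq\field_M^\enh\ctens H^0C,
\]
with $H^{-1}C,H^0C\in\BDC_\Rc(\field_{M\times\R_\infty})$, which exhibits each $K_i$ as $\R$-constructible.

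The main obstacle is step three: one must carefully verify that the cohomology in $\BEC M$ of the stable object $\field_M^\enh\ctens C$ agrees with the cohomology computed in $\BDC(\ifield_{M\times\R_\infty})$, so that Proposition~\ref{pro:gggeqK} can legitimately exchange $\field_M^\enh\ctens(-)$ with $H^n$. This rests on the fact that for stable $G$, $\LE(\field_M^\enh\ctens G)$ is represented by the same underlying object in $\BDC(\ifield_{M\times\R_\infty})$, so the $t$-structures match. Once this identification is granted, the cone construction in $\BDC_\Rc$ does all the work and the proof is immediate.
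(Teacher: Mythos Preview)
Your cone computation is wrong: if $e\colon K_1\oplus K_2\to K_1\oplus K_2$ is the projection onto $K_1$, i.e.\ the matrix $\bigl(\begin{smallmatrix}\id&0\\0&0\end{smallmatrix}\bigr)$, then the triangle splits as the sum of $K_1\to[\id]K_1$ (cone $0$) and $K_2\to[0]K_2$ (cone $K_2\oplus K_2[1]$), so $\operatorname{cone}(e)\simeq K_2\oplus K_2[1]$, not $K_2\oplus K_1[1]$. Consequently your cohomology extraction gives $K_2\simeq\field_M^\enh\ctens H^{-1}C$ and $K_2\simeq\field_M^\enh\ctens H^0C$, recovering only $K_2$. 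This is easily repaired: you get $K_2$ $\R$-constructible, and then swap the roles of $K_1$ and $K_2$. A second minor point: $F$ and $\field_{\{t\geq a\}}\ctens F$ are not ``stable'' in the paper's sense (stable means $\simeq\field_M^\enh\ctens(-)$); what you actually use is $C\simeq\field_{\{t\geq 0\}}\ctens C$, which does follow from your replacement of $F$ and which suffices for Proposition~\ref{pro:gggeqK} to give $H^n(\field_M^\enh\ctens C)\simeq\field_M^\enh\ctens H^nC$. With these corrections your argument goes through.

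Your route differs from the paper's. You reduce to degree~$0$ via Lemma~\ref{lem:H^nRc} at the outset, then lift the idempotent to $\BDC_\Rc(\field_{M\times\R_\infty})$ (mimicking the proof of Proposition~\ref{pro:Rcthick}) and read off the summands from the cohomology of the cone. The paper instead stays in $\BEC M$: it applies Proposition~\ref{pro:Rcthick} directly to the cone of $\bigl(\begin{smallmatrix}0&0\\0&\id\end{smallmatrix}\bigr)$ to see that $K_1\oplus K_1[1]$ is $\R$-constructible, and then runs an induction on the cohomological amplitude to deduce that $K\oplus K[1]$ $\R$-constructible implies $K$ $\R$-constructible. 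Your degree-$0$ reduction replaces that induction by a single cohomology computation; the paper's argument avoids having to lift morphisms to $\BDC_\Rc$ and invoke Proposition~\ref{pro:pro:HomStab} a second time.
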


\begin{proof}
Let $f\colon K_1\dsum K_2\to K_1\dsum K_2$ be the morphism 
given by $\left(\begin{smallmatrix}0&0\\0&\id_{K_2}\end{smallmatrix}\right)$.
Then we have a distinguished triangle
$$K_1\dsum K_2\To[f] K_1\dsum K_2\to K_1\dsum K_1[1]\tone.$$
Hence, Proposition~\ref{pro:Rcthick} implies that
$K_1\dsum K_1[1]$ is $\R$-constructible.

It is therefore enough to show that 
\eq
&&\hs{5ex}\text{$K\in\BEC M$ is $\R$-constructible
if $K\dsum K[1]$ is $\R$-constructible.}
\label{cond:rcons}
\eneq
We may assume $H^n(K)=0$ unless $a\le n\le b$.
Let us show \eqref{cond:rcons} by induction on $b-a$.
By Lemma~\ref{lem:H^nRc}, $H^a(K)\simeq H^{a-1}(K\dsum K[1])$
is $\R$-constructible.
Hence $H^a(K)[-a]\dsum H^a(K)[-a+1]$ is $\R$-constructible.
There is a distinguished triangle
$$H^a(K)[-a]\dsum H^a(K)[-a+1]\To K\dsum K[1]
\To \tau^{>a}K\dsum (\tau^{>a}K)[1]\tone,$$
where $\tau^{>a}$ is the truncation functor with respect to the t-structure of
$\BEC M$.
Hence, $\tau^{>a}K\dsum (\tau^{>a}K)[1]$ is $\R$-constructible
by Proposition~\ref{pro:Rcthick}.
By the induction hypothesis, $\tau^{>a}K$ is $\R$-constructible.
Then, by the distinguished triangle
$$H^a(K)[-a]\To K \To \tau^{>a}K\tone,$$
we conclude that $K$ is $\R$-constructible.
\end{proof}

\begin{lemma}\label{lem:Rcfilt}
Let $K\in\BEC M$. Then the following conditions are equivalent.
\bnum
\item
$K\in\BECRc M$,
\item
there exist a locally finite family $\{Z_i\}_{i\in I}$ of locally closed subanalytic subsets of $M$ and $F_i\in\BDC_\Rc(\field_{M\times\R_\infty})$ such that
$M = \Union\nolimits_{i\in I} Z_i$ and
\[
\opb\pi\field_{Z_i} \tens K \simeq \field_M^\enh \ctens F_i \quad\text{for all }i\in I,
\]
\item
there exist a filtration $\emptyset = M_{-1}\subset M_0\subset\cdots\subset M_r = M$ and objects $F_k\in\BDC_\Rc(\ifield_{M\times\R_\infty})$ for $0\leq k\leq r$ such that $M_k$ is a closed subanalytic subset of $M$ and
\[
\opb\pi\field_{M_k\setminus M_{k-1}} \tens K \simeq \field_M^\enh \ctens F_k.
\]
\ee
\end{lemma}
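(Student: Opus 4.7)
The plan is to prove the equivalence by the chain (iii) $\Rightarrow$ (ii) $\Rightarrow$ (i) $\Rightarrow$ (iii). The first implication is immediate: take $I=\{0,1,\dots,r\}$ and $Z_k=M_k\setminus M_{k-1}$, which is a finite, hence locally finite, family of locally closed subanalytic subsets covering $M$ with the required isomorphism given in (iii).

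For (ii) $\Rightarrow$ (i), let $U\subset M$ be a relatively compact subanalytic open. By local finiteness, only finitely many $Z_i$ meet $\overline U$, say $Z_{i_1},\dots,Z_{i_N}$. I would refine these intersections to a finite subanalytic partition $U=\bigsqcup_{j=1}^N T_j$ with each $T_j\subset Z_{i(j)}$ and ordered so that the accumulated union $U_j\defeq T_1\cup\cdots\cup T_j$ is locally closed in $U$ for every $j$ (for instance by increasing dimension, using the frontier condition of subanalytic stratifications). For each $j$ we have
\[
\opb\pi\field_{T_j}\tens K \;\simeq\; \opb\pi\field_{T_j}\tens\opb\pi\field_{Z_{i(j)}}\tens K \;\simeq\; \field_M^\enh\ctens(\opb\pi\field_{T_j}\tens F_{i(j)}),
\]
and the right-hand side is $\R$-constructible because $\field_{T_j}$ and $F_{i(j)}$ are. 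Using the distinguished triangles
\[
\opb\pi\field_{U_{j-1}}\tens K \To \opb\pi\field_{U_j}\tens K \To \opb\pi\field_{T_j}\tens K \To[+1]
\]
together with Proposition~\ref{pro:Rcthick} and its consequence (that $\BECRc M$ is a triangulated subcategory, hence stable under cones of morphisms in any position), one concludes by induction on $j$ that $\opb\pi\field_U\tens K=\opb\pi\field_{U_N}\tens K$ is $\R$-constructible, yielding (i).

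For (i) $\Rightarrow$ (iii), I would cover $M$ by a locally finite family $\{U_i\}_{i\in I}$ of relatively compact subanalytic open subsets and invoke (i) to get $F_i\in\BDC_\Rc(\field_{M\times\R_\infty})$ with $\opb\pi\field_{U_i}\tens K\simeq\field_M^\enh\ctens F_i$. Next, refine $\{U_i\}$ to a locally finite subanalytic stratification $\{S_\alpha\}$ with $S_\alpha\subset U_{i(\alpha)}$. Since a subanalytic space has finite subanalytic dimension $n$, the sets
\[
M_k\;\defeq\;\bigcup_{\dim S_\alpha\le k}\overline{S_\alpha},\qquad k=0,1,\dots,n,
\]
form a finite filtration of $M$ by closed subanalytic subsets, with $M_k\setminus M_{k-1}$ a locally finite disjoint union of strata $S_\alpha$ of dimension exactly $k$. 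Then, using that locally finite sums of sheaves are well-defined and that $\ctens$ commutes with coproducts,
\[
\opb\pi\field_{M_k\setminus M_{k-1}}\tens K\;\simeq\;\soplus_{\dim S_\alpha=k}\bl\opb\pi\field_{S_\alpha}\tens K\br\;\simeq\;\field_M^\enh\ctens F_k,
\]
where $F_k\defeq\soplus_{\dim S_\alpha=k}\bl\opb\pi\field_{S_\alpha}\tens F_{i(\alpha)}\br$. The sum defining $F_k$ is locally finite, so on each relatively compact open it reduces to a finite sum of $\R$-constructible summands, whence $F_k\in\BDC_\Rc(\field_{M\times\R_\infty})$.

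The main obstacle is the implication (i) $\Rightarrow$ (iii), since it requires assembling the local data from (i) into a single global $F_k$ on each filtration piece. Two ingredients are essential: the finite subanalytic dimension of $M$, which forces $r$ to be finite, and the compatibility of $\ctens$ with locally finite direct sums, which is what lets the pointwise $\R$-constructibility assemble globally. The ordering lemma used in (ii) $\Rightarrow$ (i) to keep the intermediate unions locally closed is a mild technicality that follows from standard subanalytic stratification arguments.
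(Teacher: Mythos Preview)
Your argument is correct and rests on the same three ingredients as the paper's proof---a trivial implication, a filtration/stratification construction, and the closure of $\BECRc M$ under cones (Proposition~\ref{pro:Rcthick})---but you cycle in the opposite direction. The paper proves (i)$\Rightarrow$(ii)$\Rightarrow$(iii)$\Rightarrow$(i): the implication (i)$\Rightarrow$(ii) is declared obvious (cover $M$ by relatively compact subanalytic opens and take $Z_i=U_i$); for (ii)$\Rightarrow$(iii) one builds a filtration $\{M_k\}$ so that each connected component of $M_k\setminus M_{k-1}$ lies in some $Z_i$; and (iii)$\Rightarrow$(i) follows directly from Proposition~\ref{pro:Rcthick} applied to the distinguished triangles coming from the global filtration. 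Your (i)$\Rightarrow$(iii) is essentially the paper's (i)$\Rightarrow$(ii)$\Rightarrow$(iii) fused into one step. Your (ii)$\Rightarrow$(i) does more work than the paper's (iii)$\Rightarrow$(i), since you must first refine the $Z_i\cap U$ to an ordered partition before invoking the triangulated argument, whereas the paper already has the filtration in hand. One minor slip: with the increasing-dimension ordering you describe, $U_{j-1}$ is \emph{closed} in $U_j$, so the triangle should read $\opb\pi\field_{T_j}\tens K\to\opb\pi\field_{U_j}\tens K\to\opb\pi\field_{U_{j-1}}\tens K\to[+1]$; as you note, this is harmless since $\BECRc M$ is triangulated. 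Both arguments implicitly use that the locally finite direct sum $\soplus_\alpha\opb\pi\field_{S_\alpha}\tens F_{i(\alpha)}$ lands in $\BDC_\Rc(\field_{M\times\R_\infty})$; neither proof spells out why the amplitudes stay bounded, but this can be arranged by first replacing each $F_i$ with $H^0(\field_{\{t\ge0\}}\ctens F_i)$ after reducing to $K$ in a single degree via Lemma~\ref{lem:H^nRc}.
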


\begin{proof}
(i)$\implies$(ii) is obvious.

\smallskip\noindent(ii)$\implies$(iii)
There exists a filtration $\{M_k\}$ such that each connected component of $M_k\setminus M_{k-1}$ is contained in some $Z_i$.

\smallskip\noindent(iii)$\implies$(i)
follows from Proposition~\ref{pro:Rcthick}.
\end{proof}

\begin{corollary}
$\R$-constructibility of $K\in\BEC M$ is a local property on $M$.
\end{corollary}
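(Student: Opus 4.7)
Interpret ``local property'' as: if $\{U_\alpha\}_{\alpha\in A}$ is a subanalytic open cover of $M$ with $\Eopb{j_\alpha}K\in\BECRc{U_\alpha}$ for every $\alpha$, where $j_\alpha\colon U_\alpha \hookrightarrow M$ denotes the inclusion, then $K\in\BECRc M$. By Definition~\ref{def:TRc} it suffices to show that for any relatively compact subanalytic open $U\subset M$ there is $F\in\BDC_\Rc(\field_{M\times\R_\infty})$ with $\opb\pi\field_U\tens K\simeq\field_M^\enh\ctens F$. Since $\overline U$ is compact, pick finitely many $U_1,\ldots,U_n$ from the cover together with relatively compact subanalytic opens $V_j\Subset U_j$ such that $\overline U\subset V_1\cup\cdots\cup V_n$, and argue by induction on $n$.

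For $n=1$, using $\overline U\subset U_1$, base change together with the projection formula yield, in $\BEC M$,
\[
\opb{\pi_M}\field_U\tens K \simeq \Eeeim{\tilde j_1}\bl\opb{\pi_{U_1}}\field_U\tens\Eopb{\tilde j_1}K\br,
\]
where $\tilde j_1\colon U_1\times\R_\infty\to M\times\R_\infty$ is induced by $j_1$. By $\R$-constructibility of $\Eopb{j_1}K$ on $U_1$, write $\opb{\pi_{U_1}}\field_U\tens\Eopb{j_1}K\simeq\field_{U_1}^\enh\ctens G$ for some $G\in\BDC_\Rc(\field_{U_1\times\R_\infty})$. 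Replacing $G$ by $\opb{\pi_{U_1}}\field_U\tens G$ (which is harmless thanks to Lemma~\ref{lem:cihomrihompi}) we ensure that $G$ has support contained in $\overline U\times\overline\R$, hence compact over $U_1$. Proposition~\ref{pro:stableops} then yields
\[
\opb{\pi_M}\field_U\tens K\simeq \Eeeim{\tilde j_1}(\field_{U_1}^\enh\ctens G)\simeq \field_M^\enh\ctens\Eeeim{\tilde j_1}G,
\]
and the support condition guarantees $\Eeeim{\tilde j_1}G\in\BDC_\Rc(\field_{M\times\R_\infty})$.

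For $n\geq 2$, set $W=V_1\cup\cdots\cup V_{n-1}$ and $W'=V_n$; since $U\subset W\cup W'$, the Mayer--Vietoris short exact sequence
\[
0\to\field_{U\cap W\cap W'}\to\field_{U\cap W}\dsum\field_{U\cap W'}\to\field_U\to 0
\]
of sheaves on $M$ produces, upon applying $\opb\pi(-)\tens K$, a distinguished triangle in $\BEC M$. The subsets $U\cap W$ and $U\cap W\cap W'$ are relatively compact subanalytic opens covered by $V_1,\ldots,V_{n-1}$, so the inductive hypothesis applies to them; while $U\cap W'\Subset U_n$ falls under the base case. Proposition~\ref{pro:Rcthick}, asserting closure of $\BECRc M$ under cones, yields $\opb\pi\field_U\tens K\in\BECRc M$.

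The only mildly delicate step is the base case, where one must control the commutation between $\Eeeim{\tilde j_1}$ and $\field^\enh\ctens(-)$ via Proposition~\ref{pro:stableops} and restrict the support of $G$ so that its pushforward from $U_1\times\R_\infty$ to $M\times\R_\infty$ retains $\R$-constructibility; the Mayer--Vietoris induction itself is then purely formal.
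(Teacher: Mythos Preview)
Your argument is correct. The paper, however, does not reprove this from the definition: it states the corollary immediately after Lemma~\ref{lem:Rcfilt}, and the intended proof is that condition~(ii) of that lemma is visibly local. Namely, given a subanalytic open cover $\{U_\alpha\}$ with each $\Eopb{j_\alpha}K$ $\R$-constructible, choose a locally finite refinement by relatively compact subanalytic opens $V_\beta$ with $\overline{V_\beta}\subset U_{\alpha(\beta)}$; the hypothesis on $U_{\alpha(\beta)}$ yields $G_\beta\in\BDC_\Rc(\field_{U_{\alpha(\beta)}\times\R_\infty})$ with $\opb{\pi}\field_{V_\beta}\tens\Eopb{j_{\alpha(\beta)}}K\simeq\field_{U_{\alpha(\beta)}}^\enh\ctens G_\beta$, and pushing forward (exactly your base case computation) gives $F_\beta\in\BDC_\Rc(\field_{M\times\R_\infty})$ with $\opb\pi\field_{V_\beta}\tens K\simeq\field_M^\enh\ctens F_\beta$. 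The family $\{V_\beta\}$ then witnesses condition~(ii), hence $K\in\BECRc M$.

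So the paper's route replaces your Mayer--Vietoris induction by the equivalence (i)$\Leftrightarrow$(ii) of Lemma~\ref{lem:Rcfilt}, whose proof (ii)$\Rightarrow$(iii)$\Rightarrow$(i) already absorbs the inductive gluing via a filtration argument and Proposition~\ref{pro:Rcthick}. Your approach works directly from Definition~\ref{def:TRc} and Proposition~\ref{pro:Rcthick}, without invoking Lemma~\ref{lem:Rcfilt}; the trade-off is that you must carry out the finite induction by hand. Two small points worth tightening: you mix the notations $\Eopb{j_1}$ and $\Eopb{\tilde j_1}$ (the enhanced operations are indexed by the map on base spaces, so $\Eeeim{j_1}$, $\Eopb{j_1}$ is the correct convention); and in the inductive step, $\overline{U\cap W}$ is contained in $\overline{V_1}\cup\cdots\cup\overline{V_{n-1}}\subset U_1\cup\cdots\cup U_{n-1}$ rather than in $V_1\cup\cdots\cup V_{n-1}$, so to apply the induction hypothesis one should first enlarge each $V_j$ slightly within $U_j$.
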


The following lemma is not used in this paper, but it might help the reader to understand the category $\BECRc M$.

\begin{lemma}
\label{lem:Tstrat}
The complex $K\in\BEC M$ is $\R$-constructible if and only if there exist 
\bnum
\item
a  locally finite family $\{Z_i\}_{i\in I}$ of locally closed subanalytic subsets of $M$,
\item
finite sets $A_i$, for $i\in I$,
\item
continuous subanalytic functions $\varphi_{i,a}\colon Z_i \to \R$ and $\psi_{i,a}\colon Z_i \to \R\union{\{+\infty\}}$ for $i\in I$ and $a\in A_i$, such that $\varphi_{i,a}(x)<\psi_{i,a}(x)$ for all $x\in Z_i$
{\rm(}here a function is called subanalytic if its graph is subanalytic in $M\times\overline\R${\rm)},
\item
integers $m_{i,a}\in\Z$ for $i\in I$ and $a\in A_i$,
\ee
such that $M=\DUnion_{i\in I}Z_i$ and there are
isomorphisms for any $i\in I$
\[
\opb\pi\field_{Z_i} \tens K \simeq \DSum_{a\in A_i} \field_M^\enh \ctens \field_{W_{i,a}} [m_{i,a}] ,
\]
where we set
\[
W_{i,a} = \{(x,t)\in Z_i\times\R\semicolon \varphi_{i,a}(x) \leq t < \psi_{i,a}(x) \}.
\]
\end{lemma}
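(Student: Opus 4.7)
The ``if'' direction is immediate: given the stratification data, set $F_i\defeq\DSum_{a\in A_i}\field_{W_{i,a}}[m_{i,a}]\in\BDC_\Rc(\field_{M\times\R_\infty})$ and apply Lemma~\ref{lem:Rcfilt}.

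For the ``only if'' direction, start with $K\in\BECRc M$ and invoke Lemma~\ref{lem:Rcfilt} to obtain a locally finite subanalytic partition $\{Z_i\}_{i\in I}$ of $M$ together with $F_i\in\BDC_\Rc(\field_{M\times\R_\infty})$ satisfying $\opb\pi\field_{Z_i}\tens K\simeq\field_M^\enh\ctens F_i$. Since $\field_M^\enh\ctens F_i\simeq\field_M^\enh\ctens(\field_{\{t\geq c\}}\ctens F_i)$ in $\BEC M$ for every $c\in\R$, one may (replacing $F_i$ by $\field_{\{t\geq c_i\}}\ctens F_i$ for $c_i\in\R$ large enough) assume that each $F_i$ is a bounded complex of $\R$-constructible sheaves on $Z_i\times\R$ supported in $\{t\geq c_i\}$.

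The core of the argument is then to refine each $Z_i$ by a locally finite subanalytic stratification so that, on each new stratum $Z$, one obtains an isomorphism
\[
F_i|_{Z\times\R}\simeq\DSum_{a\in A}\field_{W_a}[m_a],
\qquad W_a=\{(x,t)\in Z\times\R\semicolon \varphi_a(x)\leq t<\psi_a(x)\},
\]
with $A$ finite, $m_a\in\Z$, and $\varphi_a\colon Z\to\R$, $\psi_a\colon Z\to\R\union\{+\infty\}$ continuous subanalytic with $\varphi_a<\psi_a$. Over a single point this is the classical decomposition of any bounded $\R$-constructible complex of sheaves on $\R$ as a finite direct sum of shifted constant sheaves on intervals. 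For general $Z_i$ the plan is to argue by induction on $\dim Z_i$: choose a subanalytic triangulation of a relatively compact open neighbourhood of $\supp(F_i)$ in $Z_i\times\overline\R$ adapted to the cohomology sheaves of $F_i$, producing finitely many subanalytic ``walls'' $\{t=\theta_j(x)\}$; on a top-dimensional open subanalytic stratum $Z'\subset Z_i$ the cohomology sheaves of $F_i|_{Z'\times\R}$ are locally constant along $Z'$, and their fiberwise interval decompositions use only these walls. After a further subanalytic refinement of $Z'$, the combinatorial type of the decomposition becomes constant on each piece, so the endpoints are continuous subanalytic functions $\varphi_a,\psi_a$; the complement $Z_i\setminus Z'$ has strictly smaller dimension and is handled by the inductive hypothesis.

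The main obstacle is gluing the non-canonical fiberwise interval decompositions into a globally consistent subanalytic decomposition over each stratum. The walls $\{t=\theta_j(x)\}$ supply a finite collection of candidate endpoints, so the combinatorial type of the decomposition (which pairs of walls bound each bar, and with what multiplicity) takes only finitely many values on $Z'$; further subanalytic refinement of the base then stabilizes this type, allowing the fiberwise decompositions to be assembled into the required direct sum with subanalytic endpoints.
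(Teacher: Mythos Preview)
Your overall strategy matches the paper's, but there are two genuine gaps.

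First, the replacement $F_i\rightsquigarrow\field_{\{t\geq c_i\}}\ctens F_i$ does not give an object ``supported in $\{t\geq c_i\}$''; what it gives is the idempotency $F_i\simeq\field_{\{t\geq 0\}}\ctens F_i$, and that is exactly what is needed later. You then assert that the fibrewise decomposition on $\R$ produces only half-open bars $[\varphi_a,\psi_a)$, but the classical barcode over a point uses all four interval types. The paper makes this step explicit: after arranging $F\simeq\field_{\{t\geq 0\}}\ctens F$, one lists the four possible interval sheaves $\field_{\{\xi_a<t<\xi_b\}}$, $\field_{\{\xi_a\leq t<\xi_b\}}$, $\field_{\{\xi_a<t\leq\xi_b\}}$, $\field_{\{\xi_a\leq t\leq\xi_b\}}$ and checks that only the second survives the condition $\field_{\{t\geq 0\}}\ctens(\,\cdot\,)\simeq(\,\cdot\,)$.

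Second, and more seriously, your ``main obstacle'' is real and your proposed resolution does not close it. Knowing that the combinatorial type of the fibrewise barcode is constant over a stratum does \emph{not} let you glue the non-canonical pointwise decompositions into a global direct-sum decomposition: a locally constant family of isomorphic decompositions can still have monodromy. The paper sidesteps this entirely. It first refines the partition so that there are finitely many subanalytic walls $\xi_{i,0}<\cdots<\xi_{i,r_i}$ on each $Z_i$ with $F|_{Z_i\times\R}$ locally constant on every wall and every band between consecutive walls; it then refines once more so that each $Z_i$ is \emph{contractible}. On a contractible base these locally constant sheaves are constant, so $\opb\pi\field_{Z_i}\tens F$ is already a finite direct sum of shifts of interval sheaves with endpoints among the $\xi_{i,a}$ --- no gluing argument is needed. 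Adding the contractibility step to your outline, and using the idempotency $F\simeq\field_{\{t\geq 0\}}\ctens F$ rather than a support condition, yields the paper's proof.
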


\begin{proof}
We may assume $K = \field_M^\enh \ctens F$ for $F\in\BDC_\Rc(\field_{M\times\R_\infty})$ such that $F\simeq\field_{\{t\geq 0\}}\ctens F$.

Since $F$ is $\R$-constructible,
there exist a partition $M=\DUnion_{i\in I} Z_i$, integers $r_i\in\Z_{>0}$  ($i\in I$), and continuous subanalytic functions $\xi_{i,a}\colon Z_i \to \ol \R$  ($i\in I$, $0\leq a \leq r_i$), such that $-\infty = \xi_{i,0}(x) <\cdots <\xi_{i,r_i}(x)=+\infty$ for any $x\in Z_i$, and such that $F|_{Z_i\times\R}$ is locally constant on $\{(x,t)\semicolon x\in Z_i,\ t=\xi_{i,a}(x)\}$ (for $0 < a < r_i$) and on
$Z_i\times\R\setminus\Union_{a=1}^{r_i-1}\{t=\xi_{i,a}(x)\}$.

We may further assume that $Z_i$ is contractible. Then $\opb\pi\field_{Z_i} \tens F$ is a finite direct sum of shifts of sheaves of the form
\bnum
\item
$\field_{\{\xi_{i,a}(x)<t<\xi_{i,b}(x)\}}$ for $0\leq a<b\leq r_i$,
\item
$\field_{\{\xi_{i,a}(x)\leq t<\xi_{i,b}(x)\}}$ for $0 < a<b\leq r_i$, 
\item
$\field_{\{\xi_{i,a}(x)<t\leq \xi_{i,b}(x)\}}$ for $0\leq a<b< r_i$, 
\item
$\field_{\{\xi_{i,a}(x)\leq t\leq \xi_{i,b}(x)\}}$ for $0< a\leq b < r_i$.
\ee
Since we assumed $F\simeq\field_{\{t\geq 0\}}\ctens F$, any direct summand of $\opb\pi\field_{Z_i} \tens F$ satisfies the same condition. Hence only the case (ii) survives.
\end{proof}

\begin{notation}
For $K\in\BEC M$, we set
\[
\supp^\enh(K) = \overline\pi(\supp(\reeimv{j_{M\sep!!}}\LE K)) \subset M.
\]
\end{notation}

\begin{proposition}
\label{pro:RcTfunctorial}
Let $f\colon M\to N$ be a continuous subanalytic morphism of subanalytic spaces.
\bnum
\item
The functors $\Eopb f$ and $\Eepb f$ send $\BECRc N$ to $\BECRc M$.
\item
Let $K\in\BECRc M$ be such that $\supp^\enh(K)$ is proper over $N$. Then $\Eeeim f K \simeq \Eoim f K \in \BECRc N$.
\ee
\end{proposition}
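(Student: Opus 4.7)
Both parts rely on reducing the $\R$-constructibility verification to the hypothesis applied to a suitable relatively compact subanalytic open, together with the good behavior of the operations under $\field_M^\enh\ctens(\cdot)$ given by Proposition~\ref{pro:stableops}.

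For part (i), given a relatively compact subanalytic open $U\subset M$, continuity of $f$ makes $f(\overline U)$ compact in $N$, so I can choose a relatively compact subanalytic open $V\subset N$ containing $f(\overline U)$; in particular $U\subset\opb f(V)$. Combining Lemma~\ref{lem:tildef} with the projection formula (Proposition~\ref{pro:Tproj}) yields
\[
\opb{\pi_M}\field_U\tens\Eopb f L \simeq \opb{\pi_M}\field_U\tens\Eopb f\bl\opb{\pi_N}\field_V\tens L\br,
\]
and analogously for $\Eepb f$. Applying the hypothesis $\opb{\pi_N}\field_V\tens L\simeq \field_N^\enh\ctens G$ with $G\in\BDC_\Rc(\field_{N\times\R_\infty})$ and using Proposition~\ref{pro:stableops}~(ii), this becomes $\field_M^\enh\ctens\bl\opb{\pi_M}\field_U\tens\opb{\tilde f}G\br$ in the case of $\Eopb f$ and similarly with $\epb{\tilde f}G$ in the case of $\Eepb f$. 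By Lemma~\ref{lem:f=jfj}, $\opb{\tilde f}G\simeq\opb{j_M}\opb{\overline f}\reim{j_{N!}}G$ and $\epb{\tilde f}G\simeq\opb{j_M}\epb{\overline f}\reim{j_{N!}}G$, and since $\overline f$ is subanalytic both $\opb{\overline f}$ and $\epb{\overline f}$ preserve $\R$-constructibility; after cutting with $\opb{\pi_M}\field_U$ the support is restricted to the relatively compact set $\overline U\times\overline\R$, giving an object of $\BDC_\Rc(\field_{M\times\R_\infty})$.

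For part (ii), I would first prove the isomorphism $\Eeeim f K\simeq\Eoim f K$, and then the $\R$-constructibility of $\Eeeim f K$. For the isomorphism, I represent both sides at the level of $\BDC(\ifield_{N\times\R_\infty})$ via $\reeim{\tilde f}\LE K$ and $\roim{\tilde f}\LE K$, and rewrite them using Lemma~\ref{lem:f=jfj} as $\opb{j_N}\reim{\overline f}\reim{j_{M!!}}\LE K$ and $\opb{j_N}\roim{\overline f}\roim{j_{M\sep*}}\LE K$ respectively. Both $\reim{j_{M!!}}\LE K$ and $\roim{j_{M\sep*}}\LE K$ have support contained in the closed subset $\supp^\enh(K)\times\overline\R$ of $M\times\overline\R$, and by assumption $\overline f$ is proper on this set; hence $\reim{\overline f}\simeq\roim{\overline f}$ on them. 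Moreover, the natural morphism $\reim{j_{M!!}}\LE K\to\roim{j_{M\sep*}}\LE K$ restricts to an isomorphism on $M\times\R$, so its cone has support in $M\times\{\pm\infty\}$; after $\reim{\overline f}\simeq\roim{\overline f}$ the cone is supported in $N\times\{\pm\infty\}$, and is therefore killed by $\opb{j_N}$.

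For the $\R$-constructibility of $\Eeeim f K$, I work locally on $N$. For a relatively compact subanalytic open $V\subset N$, the projection formula yields $\opb{\pi_N}\field_V\tens\Eeeim f K\simeq\Eeeim f K'$ with $K'\defeq\opb{\pi_M}\opb f\field_V\tens K$, and $\supp^\enh(K')\subset\supp^\enh(K)\cap\opb f(\overline V)$ is compact by properness. Choosing a relatively compact subanalytic open $U\subset M$ containing this compact set and applying $\R$-constructibility of $K$, there is $F\in\BDC_\Rc(\field_{M\times\R_\infty})$ with $\opb{\pi_M}\field_U\tens K\simeq\field_M^\enh\ctens F$; after replacing $F$ by $\opb{\pi_M}\field_U\tens F$ one may assume $\reim{j_{M!}}F$ has compact support. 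Then $K'\simeq\field_M^\enh\ctens F'$ with $F'\defeq\opb{\pi_M}\opb f\field_V\tens F\in\BDC_\Rc(\field_{M\times\R_\infty})$, so Proposition~\ref{pro:stableops}~(i) gives $\Eeeim f K'\simeq\field_N^\enh\ctens\Eeeim f F'$; and $\Eeeim f F'\simeq\opb{j_N}\reim{\overline f}\reim{j_{M!}}F'$ is $\R$-constructible as a compactly supported subanalytic direct image of an $\R$-constructible sheaf.

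The most delicate step is the comparison $\Eeeim f K\simeq\Eoim f K$: it requires a careful accounting of supports at the boundary $\{\pm\infty\}$, to ensure that although $\reim{j_{M!!}}\LE K$ and $\roim{j_{M\sep*}}\LE K$ differ there, the difference disappears after $\reim{\overline f}$ (which maps $\{\pm\infty\}$ to $\{\pm\infty\}$) followed by the restriction $\opb{j_N}$ to $N\times\R$.
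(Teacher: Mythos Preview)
Your proof is correct and follows essentially the same strategy as the paper's, though you supply considerably more detail. In particular, for (i) both you and the paper reduce to the fact that $\opb{\tilde f}$ and $\epb{\tilde f}$ preserve $\BDC_\Rc(\field_{(\cdot)\times\R_\infty})$ and then invoke Proposition~\ref{pro:stableops}~(ii); your explicit choice of $V\supset f(\overline U)$ and the cone argument for $\Eepb f$ make this transparent.

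For (ii) the paper is very terse: it simply says one may assume $K=\field_M^\enh\ctens F$ with $\overline{\pi\supp(F)}$ compact, observes $\Eeeim f F\in\BDC_\Rc(\field_{N\times\R_\infty})$, and cites Proposition~\ref{pro:stableops}. Your treatment of the $\R$-constructibility is the same localization-to-$V$ argument spelled out in full. Where you genuinely add content is in the isomorphism $\Eeeim f K\simeq\Eoim f K$: the paper does not explain this step, whereas you give a clean direct argument via supports. Your key observations---that both $\reeim{j_{M!!}}\LE K$ and $\roim{j_{M\sep*}}\LE K$ are supported in $\supp^\enh(K)\times\overline\R$, that $\overline f$ is proper on this set so $\reeim{\overline f}\simeq\roim{\overline f}$ there, and that the cone of $\reeim{j_{M!!}}\LE K\to\roim{j_{M\sep*}}\LE K$ lives over $M\times\{\pm\infty\}$ and is thus annihilated by $\opb{j_N}$ after pushforward---are all correct and make explicit what the paper leaves to the reader. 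One small remark: when you write $\reim{\overline f}$ and $\roim{\overline f}$ you should use the ind-sheaf versions $\reeim{\overline f}$ and $\roim{\overline f}$; the properness on the support still gives the isomorphism (factor through the closed embedding of $\supp^\enh(K)\times\overline\R$ and apply the proper case of Proposition~3.3.14).
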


\begin{proof}
(i) 
Note that $\Eopb f$ and $\Eepb f$ send $\BDC_\Rc(\field_{N\times\R_\infty})$ to $\BDC_\Rc(\field_{M\times\R_\infty})$.
Then the statement follows from Proposition~\ref{pro:stableops}.

\smallskip\noindent(ii)
We may assume that $K=\field_M^\enh \ctens F$ for $F\in\BDC_\Rc(\field_{M\times\R_\infty})$ such that $\overline{\pi\supp(F)}$ is compact. 
Then $\Eeeim f F\in\BDC_\Rc(\field_{N\times\R_\infty})$, and the statement follows from Proposition~\ref{pro:stableops}.
\end{proof}

\begin{theorem}
If $K\in\BECRc M$, then $\Edual_M K\in\BECRc M$ and the natural morphism
\[
K \To \Edual_M \Edual_M K
\]
is an isomorphism.
\end{theorem}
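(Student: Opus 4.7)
The proof proceeds by reducing to an explicit computation via Proposition~\ref{pro:Tduala}. First observe that both claims are local on $M$: the $\R$-constructibility of $\Edual_M K$ is local by the remark following Lemma~\ref{lem:Rcfilt}, while $\Edual_M$ commutes with the localization functor $\opb\pi\field_U\tens(\dummy)$ for $U\subset M$ open, as follows by combining Lemma~\ref{lem:kTamtens} (to move $\opb\pi\field_U\tens$ past $\cihom$) with the identification $\opb\pi\field_U\tens\omega_M^\enh\simeq \omega_U^\enh$ obtained from Corollary~\ref{cor:pifieldT}. Thus we may assume that there exists $F\in \BDC_\Rc(\field_{M\times\R_\infty})$ with
\[
K \simeq \field_M^\enh \ctens F.
\]

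By Proposition~\ref{pro:Tduala}, we then have $\Edual_M K \simeq \field_M^\enh\ctens \opb a\,\dual_{M\times\R}F$. To see that this lies in $\BECRc M$, set $\overline F\defeq \reimv{j_{M\sep!}}F\in\BDC_\Rc(\field_{M\times\overline\R})$. Since $\omega_{M\times\R}\simeq\opb{j_M}\omega_{M\times\overline\R}$, one has $\dual_{M\times\R}F\simeq\opb{j_M}\dual_{M\times\overline\R}\overline F$, hence
\[
\reimv{j_{M\sep!}}\dual_{M\times\R}F \simeq \field_{M\times\R}\tens\dual_{M\times\overline\R}\overline F
\]
is $\R$-constructible on $M\times\overline\R$. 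Since $a$ extends to an involution of $M\times\overline\R$ swapping $\pm\infty$, the functor $\opb a$ preserves this condition, so $\opb a\,\dual_{M\times\R}F\in\BDC_\Rc(\field_{M\times\R_\infty})$ and $\Edual_M K\in\BECRc M$.

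Iterating and using that $\opb a$ commutes with $\dual_{M\times\R}$ on $\R$-constructible sheaves (because $a$ is an involutive diffeomorphism with $\opb a\omega_{M\times\R}\simeq\omega_{M\times\R}$) together with classical biduality $\dual_{M\times\R}\dual_{M\times\R}F\isoto F$ for $\R$-constructible sheaves, we obtain
\[
\Edual_M\Edual_M K \simeq \field_M^\enh \ctens \opb a\,\dual_{M\times\R}\opb a\,\dual_{M\times\R}F \simeq \field_M^\enh\ctens F \simeq K.
\]

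The main obstacle is to verify that this abstract isomorphism coincides with the natural biduality morphism $K\to\Edual_M\Edual_M K$ arising from Lemma~\ref{lem:Thommorph}. This requires tracking naturality in $F$ through the isomorphism of Proposition~\ref{pro:Tduala}, so that under the identification $\Edual_M(\field_M^\enh\ctens(\dummy))\simeq \field_M^\enh\ctens\opb a\,\dual_{M\times\R}(\dummy)$, the evaluation $K\to\Edual_M\Edual_M K$ corresponds to the classical biduality $F\to\dual_{M\times\R}\dual_{M\times\R}F$. Since the latter is known to be an isomorphism for $\R$-constructible sheaves, this completes the proof. The verification is a diagram chase using the explicit description of the isomorphism in the proof of Proposition~\ref{pro:Tduala}.
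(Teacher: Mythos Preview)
Your approach is essentially the same as the paper's: reduce to $K\simeq\field_M^\enh\ctens F$ with $F\in\BDC_\Rc(\field_{M\times\R_\infty})$, apply Proposition~\ref{pro:Tduala} to get $\Edual_M K\simeq\field_M^\enh\ctens\opb a\,\dual_{M\times\R}F$, and iterate using classical biduality. The paper's proof is terser; it simply asserts ``We may assume $K=\field_M^\enh\ctens F$'' and does not comment on the naturality issue you raise at the end.

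One small correction: your justification that $\Edual_M$ commutes with $\opb\pi\field_U\tens(\dummy)$ is not quite right as stated. Lemma~\ref{lem:kTamtens} concerns $\field_M^\enh\ctens$, not $\opb\pi\field_U\tens$, and in fact $\Edual_M(\opb\pi\field_U\tens K)\simeq\rihom(\opb\pi\field_U,\Edual_M K)$ by Lemma~\ref{lem:cihomrihompi}, which differs from $\opb\pi\field_U\tens\Edual_M K$. What you actually need, and what does hold, is
\[
\opb\pi\field_U\tens\Edual_M K\;\simeq\;\opb\pi\field_U\tens\Edual_M(\opb\pi\field_U\tens K),
\]
obtained by tensoring the previous identity with $\opb\pi\field_U$ and using the idempotency $\opb\pi\field_U\tens\rihom(\opb\pi\field_U,L)\simeq\opb\pi\field_U\tens L$. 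This is enough to make the locality reduction go through for both claims.
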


\begin{proof}
The natural morphism is constructed using Lemma~\ref{lem:Thommorph}.

We may assume $K = \field_M^\enh \ctens F$ for $F\in\BDC_\Rc(\field_{M\times\R_\infty})$.
Then
\begin{align*}
\Edual_M K
&\simeq \Edual_M(\field_M^\enh \ctens F) \\
&\simeq \field_M^\enh \ctens \opb a \dual_{M\times\R} F
\end{align*}
by Proposition~\ref{pro:Tduala}. 
Since $\dual_{M\times\R} F$ belongs to $\BDC_\Rc(\field_{M\times\R_\infty})$,
it follows that $\Edual_M K$ is $\R$-constructible.
Moreover, we have
\begin{align*}
\Edual_M\Edual_M K
&\simeq \Edual_M(\field_M^\enh\ctens \opb a\dual_{M\times\R} F) \\
&\simeq \field_M^\enh\ctens \dual_{M\times\R} \dual_{M\times\R} F \\
&\simeq \field_M^\enh\ctens F \simeq K.
\end{align*}
Hence $K\to\Edual_M\Edual_M K$ is an isomorphism.
\end{proof}

\begin{proposition}\label{prop:homdual}
Let $K,K'\in\BECRc M$. Then both $K\ctens K'$ and $\cihom(K,K')$ are $\R$-constructible, and one has isomorphisms
\bnum
\item
$\Edual_M(K\ctens K') \simeq \cihom( K,\Edual_M K')$, 
\item $\Edual_M\cihom(K,K') \simeq K \ctens \Edual_M K'$,
\item $\cihom(K,K')\simeq\cihom(\Edual_MK',\Edual_MK)$,
\vs{.5ex}
\item $\fhom(K,K')\simeq\fhom(\Edual_MK',\Edual_MK)$.
\ee
\end{proposition}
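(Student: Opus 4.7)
The plan is to first establish the two $\R$-constructibility statements, and then derive all four isomorphisms formally from adjunction, biduality, and the commutativity of $\ctens$. Since $\R$-constructibility is a local property on $M$ (the corollary to Lemma~\ref{lem:Rcfilt}), I work on a relatively compact subanalytic open $U \subset M$ and reduce to the model case $K \simeq \field_M^\enh \ctens F$ and $K' \simeq \field_M^\enh \ctens F'$ for some $F, F' \in \BDC_\Rc(\field_{M\times\R_\infty})$.

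For the $\R$-constructibility of $K \ctens K'$, I use that $\field_M^\enh \ctens \field_M^\enh \simeq \field_M^\enh$ (since $\field_{\{t\gg 0\}} \ctens \field_{\{t\gg 0\}} \simeq \field_{\{t\gg 0\}}$) together with associativity to get $K \ctens K' \simeq \field_M^\enh \ctens (F \ctens F')$, and then invoke stability of $\BDC_\Rc(\field_{M\times\R_\infty})$ under $\ctens$ recorded just before Definition~\ref{def:TRc}. For $\cihom(K,K')$, the adjunction in Proposition~\ref{pro:ctenscihom} and the stability of $K'$ (Proposition~\ref{pro:equivTam}) yield $\cihom(K,K') \simeq \cihom(F, \cihom(\field_M^\enh, \field_M^\enh \ctens F')) \simeq \cihom(F, \field_M^\enh \ctens F')$; then Lemma~\ref{lem:kTamtens} gives $\cihom(K,K') \simeq \field_M^\enh \ctens \cihom(F,F')$, and the stability of $\BDC_\Rc$ under $\cihom$ concludes.

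For (i), I simply apply the adjunction isomorphism $\cihom(K \ctens K', L) \simeq \cihom(K, \cihom(K', L))$ of Proposition~\ref{pro:ctenscihom} (which descends to $\BEC M$) with $L = \omega_M^\enh$. For (ii), I apply (i) with $K'$ replaced by $\Edual_M K'$ and use biduality $\Edual_M \Edual_M K' \simeq K'$ (the theorem preceding this proposition), obtaining $\Edual_M(K \ctens \Edual_M K') \simeq \cihom(K,K')$; applying $\Edual_M$ again and using biduality gives $K \ctens \Edual_M K' \simeq \Edual_M \cihom(K, K')$. For (iii), by commutativity of $\ctens$ and (ii), $\Edual_M \cihom(K, K') \simeq \Edual_M K' \ctens K$; then (i) applied with $(K, K') \mapsto (\Edual_M K', K)$ gives $\Edual_M(\Edual_M K' \ctens K) \simeq \cihom(\Edual_M K', \Edual_M K)$, so that biduality yields $\cihom(K, K') \simeq \cihom(\Edual_M K', \Edual_M K)$. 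Finally, (iv) follows from Lemma~\ref{lem:homT}, which identifies $\fhom(K,K')$ with $\fhom(\field_{\{t=0\}}, \cihom(K,K'))$, combined with (iii) applied to the inner $\cihom$.

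The only non-formal step is the $\R$-constructibility of $\cihom(K, K')$, where the chain through Lemma~\ref{lem:kTamtens} is essential; everything else in (i)--(iv) is a pure exercise in adjunction and biduality, and I do not anticipate any serious obstacle.
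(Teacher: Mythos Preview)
Your proof is correct and follows essentially the same pattern as the paper: reduce to $K\simeq\field_M^\enh\ctens F$, $K'\simeq\field_M^\enh\ctens F'$ for the constructibility claims, and then derive (i)--(iv) formally from adjunction, commutativity of $\ctens$, and biduality. The one tactical difference is in how you handle the $\R$-constructibility of $\cihom(K,K')$: you argue directly via Lemma~\ref{lem:kTamtens} to obtain $\cihom(K,K')\simeq\field_M^\enh\ctens\cihom(F,F')$, whereas the paper first establishes~(i), then observes $\cihom(K,K')\simeq\Edual_M(K\ctens\Edual_MK')$, and concludes from the already-proved fact that $\Edual_M$ preserves $\R$-constructibility. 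The paper's route shows that even this step is purely formal (contrary to your closing remark that it is ``the only non-formal step''); your route avoids appealing to the biduality theorem at this point but invokes Lemma~\ref{lem:kTamtens} instead. Either way the argument goes through without difficulty.
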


\begin{proof}
Let us first show that $K\ctens K'$ is $\R$-constructible if both $K$ and $K'$ are so. It is not restrictive to assume $K \simeq \field_M^\enh \ctens F$ and $K' \simeq \field_M^\enh \ctens F'$ for $F,F'\in\BDC_\Rc(\field_{M\times\R_\infty})$.
Then $K\ctens K' \simeq \field_M^\enh \ctens (F\ctens F')$, and hence $K\ctens K'$ is $\R$-constructible.

The first isomorphism in the statement is immediate. 

Hence $\cihom(K,K') \simeq \Edual_M(K\ctens \Edual_M K')$ is $\R$-constructible.

The second isomorphism follows from this isomorphism by applying the functor $\Edual_M$.

The third isomorphism follows from (i).

The fourth isomorphism follows from
\begin{align*}
\fhom(K,K')&\simeq \fhom\bl\field_M^\enh,\cihom(K,K')\br\\
&\simeq \fhom\bl\field_M^\enh,\cihom(\Edual_MK',\Edual_MK)\br\\
&\simeq \fhom(\Edual_MK',\Edual_MK).
\end{align*}
\QED

\begin{proposition}\label{pro:eopbTdual}
Let $f\colon M\to N$ be a continuous subanalytic morphism. For $L\in\BECRc N$ there are isomorphisms
\[
\Eepb f (\Edual_N L) \simeq \Edual_M (\Eopb f L), \quad
\Eopb f (\Edual_N L) \simeq \Edual_M(\Eepb f L).
\]
\end{proposition}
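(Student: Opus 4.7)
The plan is to reduce both statements to the single fact that $\Eepb f \omega_N^\enh \simeq \omega_M^\enh$, combined with the adjunction-type formula of Proposition~\ref{pro:Tproj} and biduality on $\BECRc{}$.

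First I would establish the key lemma $\Eepb f \omega_N^\enh \simeq \omega_M^\enh$. Writing $\omega_N^\enh = \field_N^\enh \tens \opb{\pi_N}\omega_N$ and using Lemma~\ref{lem:cihomKt0} to convert the tensor product with $\opb{\pi_N}\omega_N$ into a convolution, we get
\[
\omega_N^\enh \simeq \field_N^\enh \ctens (\field_{\{t=0\}}\tens\opb{\pi_N}\omega_N) \simeq \field_N^\enh \ctens \roim{i_{0,N}}\omega_N,
\]
where $i_{0,N}\colon N\hookrightarrow N\times\R_\infty$ is the zero section. Applying Proposition~\ref{pro:stableops}~(ii) and then the base change formula (Lemma~\ref{lem:bcart}) applied to the Cartesian square formed by $f$, $\tilde f = f\times\id_{\R_\infty}$, $i_{0,M}$ and $i_{0,N}$, together with $\epb f\omega_N\simeq\omega_M$, we obtain
\[
\Eepb f\omega_N^\enh \simeq \field_M^\enh \ctens \epb{\tilde f}\roim{i_{0,N}}\omega_N \simeq \field_M^\enh \ctens \roim{i_{0,M}}\omega_M \simeq \omega_M^\enh.
\]

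Second, the first isomorphism in the statement follows directly from Proposition~\ref{pro:Tproj}:
\[
\Eepb f(\Edual_N L) = \Eepb f\,\cihom(L,\omega_N^\enh) \simeq \cihom(\Eopb f L,\Eepb f\omega_N^\enh) \simeq \cihom(\Eopb f L,\omega_M^\enh) = \Edual_M(\Eopb f L).
\]

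Third, for the second isomorphism, I would use biduality. Since $\Edual_N L \in \BECRc N$ by the theorem on biduality, the first isomorphism applied to $\Edual_N L$ in place of $L$ gives
\[
\Eepb f(\Edual_N\Edual_N L) \simeq \Edual_M(\Eopb f\Edual_N L),
\]
and the left-hand side is $\Eepb f L$. Applying $\Edual_M$ and using that $\Eopb f\Edual_N L\in\BECRc M$ (by Proposition~\ref{pro:RcTfunctorial}~(i)) so that biduality holds, we get $\Edual_M\Eepb f L \simeq \Eopb f(\Edual_N L)$, which is the desired isomorphism. No step is a serious obstacle: the content is entirely the identification $\Eepb f\omega_N^\enh\simeq\omega_M^\enh$, which is essentially a base change computation once the stability formula of Proposition~\ref{pro:stableops}~(ii) is used to peel off $\field_N^\enh$.
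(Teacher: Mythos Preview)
Your proof is correct and follows essentially the same approach as the paper's: the first isomorphism comes from Proposition~\ref{pro:Tproj} together with $\Eepb f\omega_N^\enh\simeq\omega_M^\enh$, and the second from biduality applied to the first. The only difference is that you spell out the verification of $\Eepb f\omega_N^\enh\simeq\omega_M^\enh$ via base change, whereas the paper simply cites Proposition~\ref{pro:stableops}~(ii) for this (the fact having already been noted in the proof of an earlier proposition).
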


\begin{proof}
(i) There are isomorphisms
\begin{align*}
\Eepb f (\Edual_N L) 
&= \Eepb f \cihom(L,\omega^\enh_N) \\
&\simeq \cihom(\Eopb f L, \Eepb f \omega^\enh_N) \\
&\underset{(*)}\simeq \cihom(\Eopb f L, \omega^\enh_M) \\
&= \Edual_M(\Eopb f L).
\end{align*}
Here $(*)$ follows from Proposition~\ref{pro:stableops}~(ii).

\smallskip\noindent(ii) By (i), there are isomorphisms
\[
\Edual_M (\Eopb f \Edual_N L) \simeq \Eepb f\Edual_N \Edual_N L \simeq \Eepb f L.
\]
Further applying $\Edual_M$, we get $\Eopb f (\Edual_N L) \simeq \Edual_M(\Eepb f L)$.
\end{proof}

\begin{proposition}
\label{pro:DKeL} 
Let $M$ be a subanalytic space, $N$ a good topological space, and $K\in\BECRc M$, $L\in\BEC N$.
Then one has an isomorphism in $\BEC{M\times N}$
\[
\cihom(\Eopb p_1K,\field_{M\times N}^\enh \ctens \Eepb p_2L) \simeq
\Edual_M K \cetens L,
\]
where $p_1$ and $p_2$ denote the projections from $M\times N$ to $M$ and $N$, respectively.
\end{proposition}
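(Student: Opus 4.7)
The plan is to reduce to the case where $K \simeq \field_M^\enh\ctens F$ for some $F\in\BDC_\Rc(\field_{M\times\R_\infty})$, and then unwind both sides explicitly to reduce the identification to a Künneth-type formula on $M\times N\times\R^2_\infty$.

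First I would construct the natural morphism $\Edual_M K\cetens L \to \cihom(\Eopb{p_1}K, \field^\enh_{M\times N}\ctens\Eepb{p_2}L)$. By adjunction this corresponds to a morphism $\Eopb{p_1}K\ctens\Eopb{p_1}\Edual_M K\ctens\Eopb{p_2}L \to \field^\enh_{M\times N}\ctens\Eepb{p_2}L$, which is built from the evaluation $K\ctens\Edual_M K\to\omega_M^\enh$ (Lemma~\ref{lem:Thommorph}) pulled back via $p_1$, combined with a natural comparison $\Eopb{p_1}\omega_M^\enh\ctens\Eopb{p_2}L \to \field^\enh_{M\times N}\ctens\Eepb{p_2}L$. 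Using Lemma~\ref{lem:Rcfilt}, any $K\in\BECRc M$ admits (locally on $M$) a finite filtration whose successive quotients are of the form $\field_M^\enh\ctens F_k$. Since both sides of the claimed isomorphism send distinguished triangles in $K$ to distinguished triangles (in the same sense, despite the contravariant variable on the left), and since the statement is local on $M$, standard d\'evissage reduces the problem to the case $K = \field_M^\enh\ctens F$ with $F\in\BDC_\Rc(\field_{M\times\R_\infty})$.

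For such $K$, let $r_i\colon (M\times N)\times\R_\infty \to (M,N)\times\R_\infty$ denote the morphism of bordered spaces induced by the obvious projection. Proposition~\ref{pro:Tproj} and Proposition~\ref{pro:stableops} give $\Eopb{p_1}(\field_M^\enh\ctens F) \simeq \field^\enh_{M\times N}\ctens\opb{r_1}F$. Using the stability of $\field^\enh_{M\times N}\ctens\Eepb{p_2}L$ together with Lemma~\ref{lem:kTamtens} to move $\field^\enh_{M\times N}$ across $\cihom$, the LHS becomes $\field^\enh_{M\times N}\ctens\cihom(\opb{r_1}F,\Eepb{p_2}L)$. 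On the other hand, Proposition~\ref{pro:Tduala} gives $\Edual_M(\field_M^\enh\ctens F)\simeq\field_M^\enh\ctens\opb a\dual_{M\times\R}F$, so the RHS unpacks to $\field^\enh_{M\times N}\ctens\bl\opb a\opb{r_1}\dual_{M\times\R}F\ctens\Eopb{p_2}L\br$.

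Next I would compute $\cihom(\opb{r_1}F,\epb{r_2}\ell)$ and $\opb a\opb{r_1}\dual F\ctens\opb{r_2}\ell$ (for $\ell$ a representative of $L$) on $(M\times N)\times\R^2_\infty$ using the definitions via $q_1,q_2,\mu$. A linear change of variables in the $\R^2$-factor realizes $(M\times N)\times\R^2_\infty$ as the product of bordered spaces $(M\times\R_\infty)\times(N\times\R_\infty)$, with the pair $(a\circ r_1\circ q_1,\, r_2\circ q_2)$ (after the associated swap) corresponding to the two projections. Applying Proposition~\ref{pro:const} (in its bordered-space incarnation, valid since $F$ is $\R$-constructible) yields $\rihom(\opb{(a\circ r_1\circ q_1)}F,\epb{(r_2\circ q_2)}\ell) \simeq \opb{q_1}\opb a\opb{r_1}\dual F\tens\opb{q_2}\opb{r_2}\ell$. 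After this substitution both expressions reduce to $\reim\mu$, respectively $\roim\mu$, applied to the same integrand $\mathcal{G}=\opb{q_1}\opb a\opb{r_1}\dual F\tens\opb{q_2}\opb{r_2}\ell$.

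The hard part will be the final identification: showing $\field^\enh_{M\times N}\ctens\reim\mu\,\mathcal{G} \simeq \field^\enh_{M\times N}\ctens\roim\mu\,\mathcal{G}$ in $\BEC{M\times N}$, despite $\mu$ not being proper on the open part. The cone of the natural transformation $\reim\mu\to\roim\mu$ captures the boundary behavior of $\mathcal{G}$ as $(t_1,t_2)$ escapes to infinity along the fibers of $\mu$. One should check that this cone becomes trivial after $\field^\enh_{M\times N}\ctens$: more precisely, that it lies in the kernel of the localization $\BDC(\ifield_{(M\times N)\times\R_\infty})\to\BEC{M\times N}$ followed by $\field^\enh_{M\times N}\ctens(\cdot)$. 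This hinges on the vanishing $\field^\enh_{M\times N}\ctens\opb{\pi}(-)\simeq 0$ from Lemma~\ref{lem:piRinfty} together with a careful asymptotic analysis of $\opb{r_1}\dual F$ and $\opb{r_2}\ell$ at the two components of $(\mathsf{S}\setminus\R^2)$, which ensures the relevant boundary contributions are of $\opb{\pi}$-type.
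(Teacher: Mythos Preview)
Your proposal is correct and follows essentially the same route as the paper: construct the morphism from the evaluation $K\ctens\Edual_M K\to\omega_M^\enh$, reduce to $K=\field_M^\enh\ctens F$ with $F\in\BDC_\Rc(\field_{M\times\R_\infty})$, identify the two integrands via the K\"unneth formula of Proposition~\ref{pro:const}, and then show that $\reeim\mu\to\roim\mu$ becomes an isomorphism in $\BEC{M\times N}$. The paper isolates your ``hard part'' as the separate Proposition~\ref{pro:mu!mu*} (and Sublemma~\ref{subl:mumu}), where the cone is computed on the hexagonal compactification $\mathsf S$ to be exactly $\opb{\pi_{M\times N}}(L_+\oplus L_-)$ with $L_\pm=\psi_{M,\pm\infty}(F)\etens\psi_{N,\mp\infty}(G)$, hence already zero in $\BEC{M\times N}$; the boundary analysis there is carried out first for $M=\point$ by d\'evissage on generators of $\BDC_\Rc(\field_{\R_\infty})$ and then reduced to that case.
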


In order to prove the above proposition, we need some preliminary results.

\begin{proposition}
\label{pro:mu!mu*}
Let $M$ be a subanalytic space, $N$ a good topological space, and consider
the morphism
\[
\mu\colon M\times N\times\R^2_\infty \to M\times N\times\R_\infty
\]
induced by $(t_1,t_2) \mapsto t_1+t_2$. 
Then, for any $F\in\BDC_\Rc(\field_{M\times\R_\infty})$ and $G\in\BDC(\ifield_{N\times\R_\infty})$, there exists a distinguished triangle in $\BDC(\ifield_{M\times N\times\R_\infty})$
\[
\reeim\mu(F\etens G) \to
\roim\mu(F\etens G) \to
\opb{\pi_{M\times N}}(L_+ \dsum L_-) \to[+1],
\]
where
\begin{align*}
L_\pm &= \psi_{M,\pm\infty}(F) \etens \psi_{N,\mp\infty}(G) 
\end{align*}
{\rm(}see {\rm Notation~\ref{not:psi})}.
Here, we identify $M\times\R_\infty\times N\times\R_\infty$ with $M\times N\times\R^2_\infty$.
\end{proposition}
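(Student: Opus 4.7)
Plan. I would work inside the compactification $\mathsf{S}$ of $\R^2$ from Notation~\ref{not:sfS}. Writing $X\defeq M\times N$, the continuous extension $\tilde\mu\colon X\times\mathsf{S}\to X\times\overline\R$ of $\mu$ is proper (as $\mathsf{S}$ is compact), so $\reeim{\tilde\mu}\simeq\roim{\tilde\mu}$. Denote by $j\colon X\times\R^2_\infty\to X\times\mathsf{S}$ and $j_0\colon X\times\R_\infty\to X\times\overline\R$ the natural morphisms of bordered spaces. Lemma~\ref{lem:f=jfj} then yields
\[
\reeim\mu(F\etens G) \simeq \opb{j_0}\roim{\tilde\mu}\reeim{j}(F\etens G), \qquad \roim\mu(F\etens G) \simeq \opb{j_0}\roim{\tilde\mu}\roim{j}(F\etens G),
\]
and the arrow appearing in the proposition is induced by the canonical morphism $\reeim{j}\to\roim{j}$; let $C$ be its cone, which is supported on $X\times(\mathsf{S}\setminus\R^2)$.

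Next, by the picture following Notation~\ref{not:sfS}, the boundary $\mathsf{S}\setminus\R^2$ consists of six edges and six vertices, and $\tilde\mu=-t_3$ is finite only along the two diagonal edges
\[
E_\pm \defeq \{(t_1,t_2,t_3)\in\mathsf{S}\semicolon t_1=\pm\infty,\ t_2=\mp\infty,\ t_3\in\R\}.
\]
Every other edge or vertex is sent by $\tilde\mu$ into $\{+\infty\}\cup\{-\infty\}\subset\overline\R$, hence contributes nothing to $\opb{j_0}\roim{\tilde\mu}C$. With $i_\pm\colon X\times E_\pm\hookrightarrow X\times\mathsf{S}$ the inclusions, and since $\opb{i_\pm}\reeim{j}(F\etens G)\simeq 0$, the problem reduces to computing $\opb{i_\pm}\roim{j}(F\etens G)$.

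The key step will be to establish the identification
\[
\opb{i_+}\roim{j}(F\etens G) \simeq \opb{p_+}\bl \psi_{M,+\infty}(F) \etens \psi_{N,-\infty}(G) \br,
\]
where $p_+\colon X\times E_+\to X$ is the projection (and symmetrically for $E_-$). To prove it I would introduce local coordinates $(t_3,T)$ near $E_+$ in $\mathsf{S}$, with $(t_1,t_2)=(T,-T-t_3)$ and $T\to+\infty$ along $E_+$, presenting a neighborhood of $E_+$ as an open subset of $\R_{t_3}\times\ocint{C,+\infty}_T$. Via this product structure, and using $\R$-constructibility of $F$ (invoked through Corollary~\ref{cor:exthom} and Proposition~\ref{pro:const}) to commute $\etens$ with the direct image in the $T$-direction, the stalk at $T=+\infty$ is identified with $\psi_{M,+\infty}(F)\etens\psi_{N,-\infty}(G)$ independently of $t_3$.

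Finally, $\tilde\mu|_{E_+}\colon X\times E_+\to X\times\R$ is the homeomorphism $(x,y,+\infty,-\infty,t_3)\mapsto(x,y,-t_3)$. Since the ind-sheaf just computed is constant along $E_+$, pushing forward yields $\opb{\pi_{M\times N}}L_+$ in $\BDC(\ifield_{X\times\R_\infty})$; the symmetric analysis at $E_-$ contributes $\opb{\pi_{M\times N}}L_-$, and summing completes the distinguished triangle. The main technical obstacle is the identification in the third paragraph, namely commuting $\roim{j}$ with $\etens$ near the corner $(+\infty,-\infty)$ of $\mathsf{S}$; this is where the $\R$-constructibility hypothesis on $F$ is essential.
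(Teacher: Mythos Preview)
Your overall architecture is the same as the paper's: both arguments work on the hexagon $\mathsf S$, use properness of $\tilde\mu$, and identify the cone of $\reeim\mu\to\roim\mu$ with the contribution of the two boundary edges $E_\pm$ lying over $\tilde\mu^{-1}(\R)$. The reduction of the problem to computing $\opb{i_\pm}\roim j(F\etens G)$ is correct, and your final identification of that object with $\opb{p_\pm}L_\pm$ is the right target.

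The gap is in the key technical step, which you yourself flag as the main obstacle. Your proposal is to pass to coordinates $(t_3,T)$ with $(t_1,t_2)=(T,-T-t_3)$ and then invoke Corollary~\ref{cor:exthom} and Proposition~\ref{pro:const} to ``commute $\etens$ with the direct image in the $T$-direction''. But those two results are about genuine product situations, and the change of variables destroys the product structure: in the new coordinates the $G$-factor sees $t_2=-T-t_3$, which depends on \emph{both} $t_3$ and $T$. So neither result applies as stated, and the step as written does not go through.

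The paper handles exactly this point via Sublemma~\ref{subl:mumu}, which shows that
\[
\opb{\tilde\mu}\field_{X\times\R}\tens\rihom\bl\field_{X\times(\mathsf S\setminus\R^2)},\opb{\tilde p}(\widetilde F\etens\widetilde G)\br\simeq 0,
\]
i.e.\ that $\roim j(F\etens G)\simeq\opb{\tilde p}(\widetilde F\etens\widetilde G)$ over $\tilde\mu^{-1}(X\times\R)$, whence $\opb{i_\pm}\roim j(F\etens G)\simeq\opb{p_\pm}L_\pm$ is immediate. Its proof does not use a product-type formula directly; instead it reduces to $M=\point$ by a stalk/inductive-limit argument (pushing $F$ forward to $\R_\infty$ over each relatively compact $U\subset M$), and then performs d\'evissage on $\BDC_\Rc(\field_{\R_\infty})$ to reduce to $F=\field_\R$ or $F=\field_{[a,b]}$. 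In the nontrivial case $F=\field_\R$ the relevant map becomes topologically submersive and Proposition~\ref{pro:topsub} (not Proposition~\ref{pro:const}) is what makes the computation go. If you carry out the analogous reduction, your local-coordinate picture can be completed along the same lines, but it does not follow from the two results you cite.
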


\begin{proof}
Set $X=M\times N$.
With Notation~\ref{not:sfS}, consider the diagram
\[
X\times\R_\infty\To[j_X]X\times\overline\R \from[\;\;\tilde\mu\;\;] X\times\mathsf S \To[\tilde p] X\times\overline\R^2,
\]
where $\tilde p$ is induced by $(\tilde q_1,\tilde q_2)$. Set
\[
\widetilde F = \roimv{j_{M\sep*}}F \in \BDC_\Rc(\field_{M\times\overline\R}),
\quad
\widetilde G = \roimv{j_{N\sep*}}G \in \BDC(\ifield_{N\times\overline\R}).
\]
Then we have
\begin{align*}
\reeim\mu(F\etens G) &\simeq 
\opb{j_X}\roim{\tilde\mu}(\field_{X\times\R^2}\tens\opb{\tilde p}(\widetilde F \etens \widetilde G)), \\
\roim\mu(F\etens G) &\simeq 
\opb{j_X}\roim{\tilde\mu}(\rihom(\field_{X\times\R^2}, \opb{\tilde p}(\widetilde F \etens \widetilde G))).
\end{align*}
In Sublemma~\ref{subl:mumu} below, we will prove the isomorphism
\begin{multline}
\label{eq:mumutempA}
\opb{\tilde\mu}\field_{X\times\R} \tens \rihom(\field_{X\times\R^2}, \opb{\tilde p}(\widetilde F \etens \widetilde G)) \\
\simeq \opb{\tilde\mu}\field_{X\times\R} \tens \opb{\tilde p}(\widetilde F \etens \widetilde G).
\end{multline}
Admitting \eqref{eq:mumutempA}, 
we have 
$$\roim\mu(F\etens G) \simeq 
\opb{j_X}\roim{\tilde\mu}\opb{\tilde p}(\widetilde F \etens \widetilde G).$$
Hence, we obtain a distinguished triangle 
\[
\reeim\mu(F\etens G) \to
\roim\mu(F\etens G) \to
\opb{j_X}\roim{\tilde\mu}(\field_{X\times(\mathsf S\setminus\R^2)}\tens\opb{\tilde p}(\widetilde F \etens \widetilde G)) \to[+1].
\]
We have
\begin{multline}
\label{eq:mumutempB}
\opb{\tilde\mu}(X\times\R) \cap (X\times(\mathsf S\setminus\R^2)) \\ 
= \opb{\tilde\mu}(X\times\R) \cap \opb{\tilde p}(X\times\{(+\infty,-\infty), (-\infty,+\infty)\}).
\end{multline}
Moreover, we have
\[
\field_{X\times\{(+\infty,-\infty), (-\infty,+\infty)\}} \tens (\widetilde F \etens \widetilde G) \simeq \roimv{i_{+\sep*}}L_+ \dsum \roimv{i_{-\sep*}}L_-,
\]
where $i_\pm\colon X\to X\times\overline\R^2$ is the inclusion $x \mapsto (x,\pm\infty,\mp\infty)$.
Hence we obtain
\begin{multline}
\label{eq:mumutempC}
\field_{X\times\R} \tens \roim{\tilde\mu}(\field_{X\times(\mathsf S\setminus\R^2)}\tens\opb{\tilde p}(\widetilde F \etens \widetilde G)) \\
\simeq \roim{\tilde\mu}(\field_{\opb{\tilde\mu}(X\times\R)}\tens\opb{\tilde p}(\roimv{i_{+\sep*}}L_+ \dsum \roimv{i_{-\sep*}}L_-)).
\end{multline}
By the commutative diagram
\[
\xymatrix{
\opb{\tilde\mu}(X\times\R) \cap \opb{\tilde p}(X\times\{(\pm\infty,\mp\infty)\}) 
\ar[r]^-{\tilde p} \ar[d]^-{\tilde\mu}_-\bwr & X\times\{(\pm\infty,\mp\infty)\} \\
X\times\R \ar[r]^{\pi_X} & X  \ar[u]_-\bwr^-{i_\pm},
}
\]
the right hand side of \eqref{eq:mumutempC} is isomorphic to $\opb{\pi_{M\times N}}(L_+ \dsum L_-)$. Hence we obtain the desired result.
\end{proof}

\begin{sublemma}
\label{subl:mumu}
With the same notations as in the proof of {\rm\/ Proposition~\ref{pro:mu!mu*}}, we have
\[
\opb{\tilde\mu}\field_{X\times\R} \tens \rihom(\field_{X\times(\mathsf S\setminus\R^2)}, \opb{\tilde p}(\widetilde F \etens \widetilde G)) \simeq 0,
\]
where $\widetilde F = \roimv{j_{M\sep*}}F$ and $\widetilde G = \roimv{j_{N\sep*}}G$.
\end{sublemma}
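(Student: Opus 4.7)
The plan is to localise the claim near the two corner fibres $\opb{\tilde p}(X\times\{(\pm\infty,\mp\infty)\})$, and reduce to a commutation of $\opb{\tilde p}$ with $\rihom$ there. The set-theoretic identity recorded in the ambient proof,
$$\opb{\tilde\mu}(X\times\R)\cap(X\times(\mathsf S\setminus\R^2))=\opb{\tilde\mu}(X\times\R)\cap\opb{\tilde p}\bl X\times C\br,$$
with $C=\{(+\infty,-\infty),(-\infty,+\infty)\}$, shows that $\opb{\tilde\mu}\field_{X\times\R}\tens(\cdot)$ kills anything supported on $X\times(\mathsf S\setminus\R^2)$ away from $\opb{\tilde p}(X\times C)$. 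By the symmetry $t_1\leftrightarrow t_2$, I need only handle the corner $(+\infty,-\infty)$.

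Applying $\rihom(\cdot,\opb{\tilde p}(\widetilde F\etens\widetilde G))$ to the distinguished triangle
$$\field_{X\times\R^2}\To\field_{X\times\mathsf S}\To\field_{X\times(\mathsf S\setminus\R^2)}\To[+1]$$
on $X\times\mathsf S$, the asserted vanishing is equivalent to the natural morphism
$$\opb{\tilde p}(\widetilde F\etens\widetilde G)\To\rihom\bl\field_{X\times\R^2},\opb{\tilde p}(\widetilde F\etens\widetilde G)\br$$
being an isomorphism after tensoring with $\opb{\tilde\mu}\field_{X\times\R}$.

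To produce this morphism, I would use the $\R$-constructibility of $F$, which makes $\widetilde F=\roimv{j_{M\sep*}}F$ $\R$-constructible and, together with $\widetilde G=\roimv{j_{N\sep*}}G$, gives the tautological identities $\widetilde F\simeq\rhom(\field_{M\times\R},\widetilde F)$ and $\widetilde G\simeq\rihom(\field_{N\times\R},\widetilde G)$. Corollary~\ref{cor:exthom} then yields
$$\widetilde F\etens\widetilde G\simeq\rihom\bl\field_{(M\times\R)\times(N\times\R)},\widetilde F\etens\widetilde G\br.$$
Pulling back along $\tilde p$ and using $\opb{\tilde p}\field_{(M\times\R)\times(N\times\R)}=\field_{X\times\R^2}$ on $X\times\mathsf S$ (valid since $\opb{\tilde p}(\R^2)=\R^2$), the canonical morphism $\opb{\tilde p}\rihom\to\rihom\opb{\tilde p}$ delivers the required comparison.

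The main obstacle is to show this comparison is an isomorphism after tensoring with $\opb{\tilde\mu}\field_{X\times\R}$, as $\opb{\tilde p}$ does not commute with $\rihom$ in general. I plan to establish it via Proposition~\ref{pro:opbepb}, with $f=\tilde p$ and the exhaustion $V_n=X\times((-n,n)\times(-n,n))\subset X\times\overline\R^2$ of $X\times\R^2$; for a relatively compact subanalytic open $U\subset X\times\mathsf S$ whose intersection with $X\times(\mathsf S\setminus\R^2)$ lies in $\opb{\tilde\mu}(X\times\R)$, the boundedness of $t_3=-(t_1+t_2)$ on $U$ near the corner forces $U\cap\overline{\opb{\tilde p}(V_n)}\subset\opb{\tilde p}(V_{n+1})$ for large $n$, which is exactly the hypothesis needed. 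Converting between $\opb{\tilde p}$, $\epb{\tilde p}$, and $\rihom$-statements via the open inclusion $k\colon X\times\R^2\hookrightarrow X\times\mathsf S$ (so that $\rihom(\field_{X\times\R^2},-)\simeq\roim k\opb k$) then yields the vanishing.
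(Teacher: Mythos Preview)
Your reduction to showing that
\[
\opb{\tilde\mu}\field_{X\times\R}\tens\opb{\tilde p}H\To\opb{\tilde\mu}\field_{X\times\R}\tens\rihom(\field_{X\times\R^2},\opb{\tilde p}H)
\]
is an isomorphism (with $H=\widetilde F\etens\widetilde G$) is correct, as is the use of Corollary~\ref{cor:exthom} to obtain $H\simeq\rihom(\field_{X\times\R^2},H)$ on $X\times\overline\R^2$. The gap is in the appeal to Proposition~\ref{pro:opbepb}. With $V_n=X\times(-n,n)^2$ one has $\opb{\tilde p}(V_n)=X\times(-n,n)^2\subset X\times\R^2\subset X\times\mathsf S$, and since $[-n,n]^2$ is already compact in the open subset $\R^2\subset\mathsf S$, the closure in $X\times\mathsf S$ is just $X\times[-n,n]^2$. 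The hypothesis of Proposition~\ref{pro:opbepb} therefore holds for \emph{every} open $U$, and the conclusion collapses to the tautology that things agree over $X\times\R^2$, where $\tilde p$ is an isomorphism; it says nothing at the corner fibres, which is exactly where the content lies. More fundamentally, $\tilde p$ is not topologically submersive over $(\pm\infty,\mp\infty)$---its fibre there is a copy of $\overline\R$ in the $t_3$-direction---and a stalk computation shows that $\opb{\tilde p}\roim{k'}\not\simeq\roim k$ at such points: neighbourhoods of a corner point in $\mathsf S$ meet $\R^2$ in half-strips $\{|t_1+t_2+t_3^0|<\epsilon,\ t_1>a\}$, whereas neighbourhoods of $(+\infty,-\infty)$ in $\overline\R^2$ meet $\R^2$ in quadrants $\{t_1>a,\ t_2<-b\}$. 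No choice of exhaustion in Proposition~\ref{pro:opbepb} bridges this mismatch.

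The paper proceeds differently: it first reduces via Proposition~\ref{pro:J} to the case where $G$ is a genuine sheaf, then takes stalks and reduces to $M=\{\mathrm{pt}\}$, and finally does a d\'evissage on $F\in\BDC_\Rc(\field_{\R_\infty})$, which is generated by $\field_\R$ and $\field_{[a,b]}$. The key commutation used in the case $F=\field_\R$ is for the composite $\tilde q_2=\overline p_2\circ\tilde p\colon X\times\mathsf S\to N\times\overline\R$, which, unlike $\tilde p$, \emph{is} topologically submersive everywhere, so Proposition~\ref{pro:topsub} applies.
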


\begin{proof}  
By Proposition~\ref{pro:J}, we may assume $G\in\BDC(\field_{N\times\R_\infty})$ without loss of generality. Set
\[
\Psi_{M,N}(F,G) = \rhom(\field_{X\times(\mathsf S\setminus\R^2)}, \opb{\tilde p}(\widetilde F \etens \widetilde G)) \in \BDC(\field_{X\times\mathsf S}).
\]
By \eqref{eq:mumutempB}, it is enough to show
\begin{equation}
\label{eq:sublmumu}
\field_{\opb{\tilde\mu}(X\times\R) \cap \opb{\tilde p}(X\times(\overline\R\setminus\R)^2)} \tens \Psi_{M,N}(F,G) \simeq 0.
\end{equation}

\smallskip\noindent(i)
We shall first show \eqref{eq:sublmumu} when $M=\point$, so that
$F\in\BDC_\Rc(\cor_{\R_\infty})$. Note that $\BDC_\Rc(\cor_{\R_\infty})$ is the smallest triangulated category which is stable by taking direct summands and contains $\field_\R$ and $\field_{[a,b]}$ for $-\infty<a\leq b<+\infty$. Hence we may assume $F=\field_\R$ or $F=\field_{[a,b]}$.

\smallskip\noindent(i-1)
If $F=\field_{[a,b]}$, then 
\[
\supp(\widetilde F \etens \widetilde G) \cap (X\times(\overline\R\setminus\R)^2) =\emptyset,
\]
so that \eqref{eq:sublmumu} is obvious.

\smallskip\noindent(i-2)
If $F=\field_\R$, then 
\[
\Psi_{M,N}(F,G) = \rhom(\field_{X\times(\mathsf S\setminus\R^2)}, \opb{\tilde p}\opb{\overline p_2} \widetilde G),
\]
where $\ol p_2\cl X\times\ol\R^2\to N\times \ol\R$ is the projection. 
Since
\[
\opb{\tilde\mu}(X\times\R) \cap (X\times(\mathsf S\setminus\R^2)) =
\opb{\tilde\mu}(X\times\R) \cap \opb{\tilde p}\opb{\overline p_2}(N\times(\overline\R\setminus\R)),
\]
we have
\begin{align*}
&\field_{\opb{\tilde\mu}(X\times\R) \cap \opb{\tilde p}(X\times(\overline\R\setminus\R)^2)} \tens \Psi_{M,N}(F,G) \\
&\quad\simeq \field_{\opb{\tilde\mu}(X\times\R) \cap \opb{\tilde p}(X\times(\overline\R\setminus\R)^2)} \tens \rhom(\field_{\opb{\tilde p}\opb{\overline p_2}(N\times(\overline\R\setminus\R))}, \opb{\tilde p}\opb{\overline p_2} \widetilde G) \\
&\quad\underset{(*)}\simeq \field_{\opb{\tilde\mu}(X\times\R) \cap \opb{\tilde p}(X\times(\overline\R\setminus\R)^2)} \tens \opb{\tilde p}\opb{\overline p_2} \rhom(\field_{N\times(\overline\R\setminus\R)}, \roimv{j_{N\sep*}} G) \\
&\quad\simeq 0,
\end{align*}
where (*) is due to Proposition~\ref{pro:topsub}, since $\overline p_2\, \tilde p$ is topologically submersive.

\smallskip\noindent(ii)
Let us now prove \eqref{eq:sublmumu} in the general case.
We shall show that
\[
\Psi_{M,N}(F,G)_{(x_0,y_0,z_0)} \simeq 0
\]
for any $(x_0,y_0,z_0)\in M\times N\times\mathsf S$ such that
\[
(x_0,y_0,z_0) \in \opb{\tilde\mu}(X\times\R) \cap \opb p(X\times(\overline\R\setminus\R)^2).
\]
For any $k\in\Z$, one has
\[
H^k\Psi_{M,N}(F,G)_{(x_0,y_0,z_0)} \simeq
\ilim[U,V] H^k(\overline U \times V; \Psi_{M,N}(F,G)),
\]
where $U\subset M$ ranges over the family of relatively compact subanalytic open neighborhoods of $x_0\in M$, and $V$ ranges over the family of open neighborhoods of $(y_0,z_0)\in N\times\mathsf S$.

Let $r\colon M\times\R_\infty\to\R_\infty$ be the projection, and set
\[
\Phi_{U}(F) \defeq \roim r(F\tens\field_{\overline U\times\R}) \in \BDC_\Rc(\field_{\R_\infty}).
\]
Then 
\eqn
H^k(\overline U \times V; \Psi_{M,N}(F,G))
&\simeq& H^k\bl\ol U\times (V\cap (N\times \R^2)); F\etens G\br\\
&\simeq& H^k\bl V\cap (N\times \R^2); \Phi_{U}(F)\etens G\br\\
&\simeq&H^k(V; \Psi_{\point,N}(\Phi_{U}(F),G)).
\eneqn
Hence, taking the limit on $U$ and $V$, we obtain
\[
H^k\Psi_{M,N}(F,G)_{(x_0,y_0,z_0)} \simeq \ilim[U] H^k\Psi_{\point,N}(\Phi_{U}(F),G)_{(y_0,z_0)},
\]
which vanishes by (i).
\end{proof}

As a consequence of Proposition~\ref{pro:mu!mu*} we get

\begin{corollary}
\label{cor:mu!mu*}
Let $M$ be a subanalytic space and $N$ a good topological space.
For $F\in\BDC_\Rc(\ifield_{M\times\R_\infty})$ and 
$L\in\BDC(\ifield_{N\times\R_\infty})$, 
the morphism 
\[
\reeim\mu(F\etens L) \To
\roim\mu( F\etens L)
\]
is an isomorphism in $\BEC{M\times N}$.
\end{corollary}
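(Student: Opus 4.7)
The plan is to deduce the corollary almost immediately from Proposition~\ref{pro:mu!mu*}. That proposition provides, for $F \in \BDC_\Rc(\field_{M\times\R_\infty})$ and $G \in \BDC(\ifield_{N\times\R_\infty})$, a distinguished triangle in $\BDC(\ifield_{M\times N\times\R_\infty})$
\[
\reeim\mu(F\etens G) \To \roim\mu(F\etens G) \To \opb{\pi_{M\times N}}(L_+ \dsum L_-) \To[+1],
\]
where $L_\pm = \psi_{M,\pm\infty}(F) \etens \psi_{N,\mp\infty}(G)$. So the obstruction to the natural morphism $\reeim\mu(F\etens L) \to \roim\mu(F\etens L)$ being an isomorphism is precisely the third term of this triangle.

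First I would apply Proposition~\ref{pro:mu!mu*} with $G = L$, yielding the distinguished triangle above in $\BDC(\ifield_{M\times N\times\R_\infty})$. Then I would invoke the characterization of the subcategory $\ind\shc_{\{t^* = 0\}}$ given in the lemma following Definition~\ref{def:DerT}, which states in particular that
\[
\ind\shc_{\{t^* = 0\}} = \{K \semicolon K \simeq \opb\pi K_0 \text{ for some } K_0 \in \BDC(\ifield_{M\times N}) \}.
\]
Taking $K_0 = L_+ \dsum L_-$ (an object of $\BDC(\ifield_{M\times N})$), it follows that the third term $\opb{\pi_{M\times N}}(L_+ \dsum L_-)$ lies in $\ind\shc_{\{t^* = 0\}}$, and hence vanishes in the quotient category $\BEC{M\times N} = \BDC(\ifield_{M\times N\times\R_\infty})/\ind\shc_{\{t^* = 0\}}$.

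Finally, since the distinguished triangle persists in $\BEC{M\times N}$ and its third term is zero there, the first morphism $\reeim\mu(F\etens L) \to \roim\mu(F\etens L)$ becomes an isomorphism in $\BEC{M\times N}$, as claimed. The only mild subtlety is confirming that the corollary's hypothesis $F \in \BDC_\Rc(\ifield_{M\times\R_\infty})$ matches the hypothesis of Proposition~\ref{pro:mu!mu*}; this is immediate from the convention, established just before Definition~\ref{def:TRc}, that $\BDC_\Rc(\field_{M\times\R_\infty})$ is regarded as a full subcategory of $\BDC(\ifield_{M\times\R_\infty})$. There is no real obstacle here, the entire content having been loaded into Proposition~\ref{pro:mu!mu*}, whose proof (which I would not rewrite) is the substantive computation.
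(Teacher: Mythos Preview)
Your proof is correct and follows exactly the approach the paper intends: the paper states the corollary immediately after Proposition~\ref{pro:mu!mu*} with the phrase ``As a consequence of Proposition~\ref{pro:mu!mu*} we get'' and gives no further argument, so your deduction via the distinguished triangle and the observation that $\opb{\pi_{M\times N}}(L_+\dsum L_-)\in\ind\shc_{\{t^*=0\}}$ is precisely what is meant.
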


\begin{remark}
The above result is not true in general if we drop the assumption that $F\in\BDC_\Rc(\field_{M\times\R_\infty})$. For example, if $M=N=\point$ and
$F=L= K=\DSum_{n\in\Z}\field_{\{n\}} \in \Mod(\field_{\R_\infty})$, one has
\begin{align*}
\reeim\mu( F\etens L) &\simeq \field^{\oplus\Z} \tens K, \\
\roim\mu(F\etens L) &\simeq \field^{\Z} \tens K.
\end{align*}
\end{remark}

\begin{proposition}\label{pro:cihomaD}
Let $M$ be a subanalytic space, $N$ a good topological space.
Let $p_1\colon M\times N\to M$ and $p_2\colon M\times N\to N$ be the natural projections.
Then, for $F\in\BDC_\Rc(\field_{M\times\R_\infty})$ and $L\in\BEC N$ there is an isomorphism in $\BEC{M\times N}$
\[
\cihom(\Eopb p_1F,\Eepb p_2L) \simeq
\opb a \dual_{M\times\R}F \cetens L.
\]
\end{proposition}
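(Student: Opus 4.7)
Set $X=M\times N$, let $\tilde p_i\colon X\times\R_\infty\to\cdot\times\R_\infty$ be the morphisms of bordered spaces lifting the projections $p_i$, and let $q_{X,1},q_{X,2},\mu_X\colon X\times\R_\infty^2\to X\times\R_\infty$ be as in \eqref{eq:muq1q2}. Under the natural identification $X\times\R_\infty^2\simeq(M\times\R_\infty)\times(N\times\R_\infty)$ sending $(x_1,x_2,t_1,t_2)$ to $\bl(x_1,t_1),(x_2,t_2)\br$, let $p_Y$ and $p_Z$ denote the two factor projections. The strategy is to express both sides of the claimed isomorphism as $\reeim{\mu_X}$ (respectively $\roim{\mu_X}$) of the external tensor product $(\opb a\dual_{M\times\R}F)\etens L$, and then invoke Corollary~\ref{cor:mu!mu*}.

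For the right-hand side, unwinding the definitions of $\cetens$ and $\ctens$ and observing that $\tilde p_1\circ q_{X,1}$ and $\tilde p_2\circ q_{X,2}$ correspond, under the above identification, to $p_Y$ and $p_Z$, one obtains
\[
\opb a\dual_{M\times\R}F\cetens L \simeq \reeim{\mu_X}\bl(\opb a\dual_{M\times\R}F)\etens L\br.
\]

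For the left-hand side, apply the formula $\cihom(K_1,K_2)\simeq\roim\mu\rihom(\opb{q_2}\opb a K_1,\epb{q_1}K_2)$ of Lemma~\ref{lem:musigma}; using that $a$ on $M\times\R_\infty$ commutes with $\tilde p_1$, this becomes
\[
\cihom(\Eopb{p_1}F,\Eepb{p_2}L)\simeq\roim{\mu_X}\rihom\bl\opb{(\tilde p_1\circ q_{X,2})}\opb a F,\epb{(\tilde p_2\circ q_{X,1})}L\br.
\]
The involution $\tau(x_1,x_2,t_1,t_2)=(x_1,x_2,t_2,t_1)$ of $X\times\R_\infty^2$ satisfies $\mu_X\circ\tau=\mu_X$, $\tilde p_1\circ q_{X,2}=p_Y\circ\tau$, and $\tilde p_2\circ q_{X,1}=p_Z\circ\tau$. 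Since $\tau$ is an isomorphism, $\rihom(\opb\tau A,\epb\tau B)\simeq\epb\tau\rihom(A,B)$ (Proposition~\ref{pro:bproj}) and $\roim{\mu_X}\epb\tau\simeq\roim{\mu_X}$, so the above rewrites as
\[
\cihom(\Eopb{p_1}F,\Eepb{p_2}L)\simeq\roim{\mu_X}\rihom(\opb{p_Y}\opb a F,\epb{p_Z}L).
\]
Now $\opb a F$ is $\R$-constructible on the subanalytic space $M\times\R$, while $L$ can be represented by an ind-sheaf on $N\times\R$, so Proposition~\ref{pro:const} applied to the projections $p_Y,p_Z$ yields
\[
\rihom(\opb{p_Y}\opb a F,\epb{p_Z}L)\simeq\opb{p_Y}\dual_{M\times\R}(\opb a F)\tens\opb{p_Z}L\simeq(\opb a\dual_{M\times\R}F)\etens L,
\]
using $\dual\circ\opb a\simeq\opb a\circ\dual$ for the involution $a$. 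Therefore the left-hand side is isomorphic to $\roim{\mu_X}\bl(\opb a\dual_{M\times\R}F)\etens L\br$.

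The two sides thus differ only by $\reeim{\mu_X}$ versus $\roim{\mu_X}$. Since $\BDC_\Rc(\field_{M\times\R_\infty})$ is stable under both $\opb a$ and $\dual_{M\times\R}$, the sheaf $\opb a\dual_{M\times\R}F$ is $\R$-constructible in the bordered sense, and Corollary~\ref{cor:mu!mu*} furnishes the required isomorphism in $\BEC X$, completing the proof. The main technical obstacle is the coordinate bookkeeping needed to convert, via the symmetry $\tau$, the mixed pullbacks arising from the convolution-hom formula into genuine projection pullbacks $\opb{p_Y},\epb{p_Z}$, so that Proposition~\ref{pro:const} applies directly; once this identification is made, the remainder of the argument is a straightforward combination of the ambient results.
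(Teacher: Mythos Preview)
Your proof is correct and follows essentially the same approach as the paper's: both express $\cihom(\Eopb{p_1}F,\Eepb{p_2}L)$ as $\roim\mu$ of an inner $\rihom$, rewrite that $\rihom$ as a tensor product via Proposition~\ref{pro:const}, identify the result with the $\reeim\mu$-expression for $\opb a\dual_{M\times\R}F\cetens L$, and then invoke Corollary~\ref{cor:mu!mu*}. The only difference is cosmetic: the paper introduces the projections $r_1,r_2$ (your $p_Y,p_Z$) directly and states the $\cihom$ formula in those terms without comment, whereas you make the coordinate swap $\tau$ explicit; also, the paper fixes the representative $G=\RE L$ at the outset rather than speaking of ``an ind-sheaf representing $L$''.
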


\begin{proof}
Set $G=\RE  L\in \BDC(\ifield_{N\times\R_\infty})$. Consider the morphisms
\begin{align*}
r_1&\colon M\times N\times\R^2_\infty \to M\times\R_\infty, \\
r_2&\colon M\times N\times\R^2_\infty \to N\times\R_\infty, \\
\mu&\colon M\times N\times\R^2_\infty \to M\times N\times\R_\infty
\end{align*}
induced by $(t_1,t_2)\mapsto t_1$, $(t_1,t_2)\mapsto t_2$ and $(t_1,t_2)\mapsto t_1+t_2$, respectively. Then
\begin{align*}
\cihom(\Eopb p_1F,\Eepb p_2 L)
&\simeq \roim{\mu}\rihom(\opb{r_1}\opb a F, \epb{r_2}G), \\
\opb a \dual_{M\times\R}F \cetens L
&\simeq \reeim{\mu}(\opb{r_1}\opb a\dual_{M\times\R} F \tens \opb{r_2} G).
\end{align*}
By Proposition~\ref{pro:const},
\[
\rihom(\opb{r_1}\opb a F, \epb{r_2} G) \simeq
\opb{r_1}\opb a\dual_{M\times\R} F \tens \opb{r_2} G,
\]
and Corollary~\ref{cor:mu!mu*} implies that
\[
\reeim{\mu}(\opb{r_1}\opb a\dual_{M\times\R} F \tens \opb{r_2}G) \to
\roim{\mu}(\opb{r_1}\opb a\dual_{M\times\R} F \tens \opb{r_2} G)
\]
is an isomorphism in $\BEC{M\times N}$.
\end{proof}

\begin{proof}[Proof of Proposition~\ref{pro:DKeL}]
Let $p_1\colon M\times N\to M$ and $p_2\colon M\times N\to N$ be the natural projections. We have
\[
\Edual_M K \cetens L = \Eopb p_1\Edual_M K \ctens \Eopb p_2L.
\]
Hence we have a sequence of morphisms
\begin{align*}
\Eopb p_1K \ctens (\Edual_M K \cetens L)
&\simeq \Eopb p_1K \ctens \Eopb p_1\Edual_M K \ctens \Eopb p_2L \\
&\to \Eopb p_1\omega^\enh_M \ctens \Eopb p_2L \\
&= \Eopb p_1(\field^\enh_M \tens \opb{\pi_M}\omega_M) \ctens \Eopb p_2L \\
&\underset{(*)}\simeq \field^\enh_{M\times N} \ctens (\opb{\pi_{M\times N}}\opb{p_1}\omega_M \tens \Eopb p_2L) \\
&\simeq \field^\enh_{M\times N} \ctens \Eepb p_2L,
\end{align*}
where $(*)$ follows from Lemma~\ref{lem:cihomrihompi}.
Hence we obtain a morphism
\[
\Edual_M K \cetens L \to \cihom(\Eopb p_1K,\field_{M\times N}^\enh \ctens \Eepb p_2L).
\]
We shall show that it is an isomorphism for $K\in\BECRc M$.
We may assume $K\simeq \field_M^\enh \ctens F$ for $F\in\BDC_\Rc(\field_{M\times\R_\infty})$. Then
\begin{align*}
\cihom(\Eopb p_1K,&\field_{M\times N}^\enh \ctens \Eepb p_2L) \\
&\simeq \cihom(\Eopb p_1\field_M^\enh \ctens \Eopb p_1 F,\field_{M\times N}^\enh \ctens \Eepb p_2L) \\
&\simeq \cihom(\Eopb p_1 F, \cihom(\field_{M\times N}^\enh,\field_{M\times N}^\enh \ctens \Eepb p_2L)) \\
&\simeq \cihom(\Eopb p_1 F, \field_{M\times N}^\enh \ctens \Eepb p_2L) \\
&\simeq \cihom(\Eopb p_1 F, \Eepb p_2(\field_N^\enh \ctens L)).
\end{align*}
Here, the last isomorphism follows from Proposition~\ref{pro:stableops}~(ii).
By Proposition~\ref{pro:cihomaD}, one has
\begin{align*}
\cihom(\Eopb p_1 F, \Eepb p_2(\field_N^\enh \ctens L)) 
&\simeq \Eopb p_1\opb a \dual_{M\times\R} F \ctens \Eopb p_2(\field_N^\enh \ctens L) \\
&\simeq \Eopb p_1\opb a \dual_{M\times\R} F \ctens \field_{M\times N}^\enh \ctens \Eopb p_2L \\
&\simeq \Eopb p_1(\field_M^\enh \ctens \opb a \dual_{M\times\R} F) \ctens \Eopb p_2L.
\end{align*}
By Proposition~\ref{pro:Tduala}, one finally has
\begin{align*}
\Eopb p_1(\field_M^\enh \ctens \opb a \dual_{M\times\R} F) \ctens \Eopb p_2L
&\simeq \Eopb p_1\Edual_M(\field_M^\enh \ctens F) \ctens \Eopb p_2L \\
&\simeq \Eopb p_1\Edual_M K \ctens \Eopb p_2L.
\end{align*}
\end{proof}

\begin{proposition}\label{prop:exteriodual}
\label{pro:Detens}
Let $M$ and $N$ be subanalytic spaces. For $K\in\BECRc M$ and $L\in\BECRc N$ we have
\[
\Edual_{M\times N}(K\cetens L) \simeq \Edual_M K \cetens \Edual_N L.
\]
\end{proposition}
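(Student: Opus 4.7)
The plan is as follows. First, I construct a canonical morphism $\Edual_M K \cetens \Edual_N L \to \Edual_{M\times N}(K\cetens L)$: combining the evaluation pairings from Lemma~\ref{lem:Thommorph}, $K\ctens\Edual_M K\to\omega^\enh_M$ and $L\ctens\Edual_N L\to\omega^\enh_N$, after pulling back along $p_1,p_2$ one obtains
\[
(K\cetens L)\ctens(\Edual_M K\cetens\Edual_N L)\simeq \Eopb{p_1}(K\ctens\Edual_M K)\ctens\Eopb{p_2}(L\ctens\Edual_N L)
\to\omega^\enh_M\cetens\omega^\enh_N.
\]
Using Corollary~\ref{cor:pifieldT} together with Proposition~\ref{pro:stableops}(ii) and the isomorphism $\omega_{M\times N}\simeq \opb{p_1}\omega_M\tens\opb{p_2}\omega_N$, one checks that $\omega^\enh_M\cetens\omega^\enh_N\simeq\omega^\enh_{M\times N}$. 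Adjunction then yields the desired morphism.

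To verify that this morphism is an isomorphism, I work locally on $M\times N$, since $\R$-constructibility is local. By Lemma~\ref{lem:Rcfilt} and a dévissage via Proposition~\ref{pro:Rcthick}, I may assume $K=\field^\enh_M\ctens F$ and $L=\field^\enh_N\ctens G$ with $F\in\BDC_\Rc(\field_{M\times\R_\infty})$ and $G\in\BDC_\Rc(\field_{N\times\R_\infty})$. Denoting by $\mu\colon M\times N\times\R_\infty^2\to M\times N\times\R_\infty$ the sum morphism (with projections $q_1,q_2$) and by $a'(t_1,t_2)=(-t_1,-t_2)$, one computes directly from the definitions that
\[
K\cetens L \simeq \field^\enh_{M\times N}\ctens \reeim\mu(F\etens G),
\]
and, using Proposition~\ref{pro:Tduala} on each factor together with compatibility of duality with external products of $\R$-constructible sheaves,
\[
\Edual_M K\cetens\Edual_N L \simeq \field^\enh_{M\times N}\ctens \reeim\mu\bl\opb{a'}\dual_{M\times N\times\R^2}(F\etens G)\br.
\]

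For the left-hand side, Proposition~\ref{pro:Tduala} applied to the sheaf $\reeim\mu(F\etens G)$ gives
\[
\Edual_{M\times N}(K\cetens L) \simeq \field^\enh_{M\times N}\ctens \opb a\dual_{(M\times N)\times\R}\reeim\mu(F\etens G).
\]
Since $a\circ\mu=\mu\circ a'$ and $\dual\,\reim=\roim\,\dual$ (applied at the level of $\reim{j_{!}}$-extensions, where $F\etens G$ is $\R$-constructible), this is isomorphic to
\[
\field^\enh_{M\times N}\ctens \roim\mu\bl\opb{a'}\dual(F\etens G)\br.
\]
Now Corollary~\ref{cor:mu!mu*}, applied to $\opb{a_M}\dual_{M\times\R} F\in\BDC_\Rc(\field_{M\times\R_\infty})$ and $\opb{a_N}\dual_{N\times\R} G\in\BDC_\Rc(\field_{N\times\R_\infty})$ (whose exterior product is $\opb{a'}\dual(F\etens G)$), says that the natural morphism $\reeim\mu\to\roim\mu$ is an isomorphism in $\BEC{M\times N}$. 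Hence both sides become canonically isomorphic.

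The main obstacle is not the computation but the bookkeeping in Step 1--2: I must check that the canonical morphism produced by adjunctions coincides, after the identifications in Step 3, with the isomorphism supplied by Corollary~\ref{cor:mu!mu*}. This amounts to tracing the natural transformations $\reeim\mu(F\etens G)\to\roim\mu(F\etens G)$ against the evaluation pairings that define the duality morphism; in the simplified local model $K=\field^\enh_M\ctens F$, $L=\field^\enh_N\ctens G$ this reduces to naturality of duality for $\R$-constructible sheaves on $M\times N\times\R^2_\infty$, and hence holds on the nose.
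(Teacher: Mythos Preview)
Your argument is essentially correct, but it takes a different route from the paper's. The paper's proof is a short functorial argument: write
\[
\Edual_{M\times N}(K\cetens L)=\cihom(\Eopb{p_1}K\ctens\Eopb{p_2}L,\omega^\enh_{M\times N})\simeq\cihom\bigl(\Eopb{p_1}K,\Edual_{M\times N}(\Eopb{p_2}L)\bigr),
\]
apply Proposition~\ref{pro:eopbTdual} to get $\Edual_{M\times N}(\Eopb{p_2}L)\simeq\Eepb{p_2}\Edual_N L$, and then invoke Proposition~\ref{pro:DKeL} (noting that $\Edual_N L$ is stable) to conclude. No local reduction, no explicit Verdier duality computation.

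Both approaches ultimately rest on Corollary~\ref{cor:mu!mu*}: the paper has already packaged that analytic input into Proposition~\ref{pro:DKeL}, so the present proof simply quotes it, whereas you redo the reduction to local models $\field^\enh\ctens F$ and reapply Corollary~\ref{cor:mu!mu*} by hand. Your approach buys independence from Proposition~\ref{pro:DKeL}, at the cost of the Verdier-duality bookkeeping for $\mu$ on bordered spaces and the compatibility check you flag at the end. The paper's approach is shorter and avoids that verification entirely, since the canonical morphism is built into the $\cihom$ adjunctions and Proposition~\ref{pro:DKeL} already contains the comparison with $\reeim\mu\to\roim\mu$.
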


\begin{proof}
Let $p_1$ and $p_2$ be the projections from $M\times N$ to $M$ and $N$, respectively. Then we have
\begin{align*}
\Edual_{M\times N}(K\cetens L)
&= \cihom(\Eopb p_1 K \ctens \Eopb p_2 L, \omega^\enh_{M\times N}) \\ 
&\simeq \cihom(\Eopb p_1 K , \cihom (\Eopb p_2 L, \omega^\enh_{M\times N})) \\ 
&\simeq \cihom(\Eopb p_1 K , \Edual_{M\times N} (\Eopb p_2 L)) .
\end{align*}
Since $\Edual_{M\times N} (\Eopb p_2 L) \simeq \Eepb p_2\Edual_N L$ 
by Proposition~\ref{pro:eopbTdual}, one has
\begin{align*}
\Edual_{M\times N}(K\cetens L)
&\simeq \cihom(\Eopb p_1 K , \Eepb p_2\Edual_N L) \\ 
&\simeq \Edual_M K \cetens \Edual_N L
\end{align*}
by Proposition~\ref{pro:DKeL}.
\end{proof}

\begin{proposition}\label{prop:inversedual}
For $k=1,2$ let $f_k\colon M_k\to N_k$ 
be a morphism of subanalytic spaces and $L_k\in\BECRc{N_k}$.
Set $f=f_1\times f_2\colon M_1\times M_2\to N_1\times N_2$. Then we have
\begin{align*}
\Eopb f(L_1\cetens L_2) &\simeq \Eopb f_1 L_1 \cetens \Eopb f_2 L_2, \\
\Eepb f(L_1\cetens L_2) &\simeq \Eepb f_1 L_1 \cetens \Eepb f_2 L_2.
\end{align*}
\end{proposition}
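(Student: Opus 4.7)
The plan is to prove the two isomorphisms separately, the first by a routine manipulation using Propositions~\ref{pro:Tproj} and \ref{pro:Tcomp}, and the second by reducing to the first via the duality isomorphisms established in Propositions~\ref{pro:eopbTdual} and \ref{prop:exteriodual}.

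For the first isomorphism, I would argue as follows. Let $p_k\colon N_1\times N_2\to N_k$ and $q_k\colon M_1\times M_2\to M_k$ denote the projections, so that $p_k\circ f=f_k\circ q_k$. By definition $L_1\cetens L_2=\Eopb{p_1}L_1\ctens\Eopb{p_2}L_2$. Proposition~\ref{pro:Tproj} says that $\Eopb f$ commutes with $\ctens$, and Proposition~\ref{pro:Tcomp} gives $\Eopb f\circ\Eopb{p_k}\simeq\Eopb{q_k}\circ\Eopb{f_k}$; combining these gives
\[
\Eopb f(L_1\cetens L_2)\simeq \Eopb{q_1}\Eopb{f_1}L_1\ctens\Eopb{q_2}\Eopb{f_2}L_2 = \Eopb{f_1}L_1\cetens\Eopb{f_2}L_2.
\]
Note that $\R$-constructibility is not needed here.

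For the second isomorphism, the strategy is to bootstrap from the first via duality. By Proposition~\ref{pro:eopbTdual}, for each $k$ one has $\Eepb{f_k}L_k\simeq\Edual_{M_k}\Eopb{f_k}\Edual_{N_k}L_k$. Since $\Edual_{N_k}L_k\in\BECRc{N_k}$ by Proposition~\ref{prop:homdual} and $\Eopb{f_k}$ preserves $\R$-constructibility by Proposition~\ref{pro:RcTfunctorial}~(i), the objects $\Eopb{f_k}\Edual_{N_k}L_k$ are in $\BECRc{M_k}$. Hence Proposition~\ref{prop:exteriodual} applies and yields
\[
\Eepb{f_1}L_1\cetens\Eepb{f_2}L_2
\simeq \Edual_{M_1\times M_2}\bl\Eopb{f_1}\Edual_{N_1}L_1\cetens\Eopb{f_2}\Edual_{N_2}L_2\br.
\]
Using the first isomorphism (already proved) with $L_k$ replaced by $\Edual_{N_k}L_k$, this becomes
\[
\Edual_{M_1\times M_2}\Eopb f\bl\Edual_{N_1}L_1\cetens\Edual_{N_2}L_2\br
\simeq \Edual_{M_1\times M_2}\Eopb f\,\Edual_{N_1\times N_2}(L_1\cetens L_2),
\]
where the last step uses Proposition~\ref{prop:exteriodual} once more. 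Applying Proposition~\ref{pro:eopbTdual} in the reverse direction concludes
\[
\Edual_{M_1\times M_2}\Eopb f\,\Edual_{N_1\times N_2}(L_1\cetens L_2)\simeq \Eepb f(L_1\cetens L_2).
\]

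I do not foresee any serious obstacle: the argument is purely formal once the two main tools (commutation of $\Eopb$ with $\ctens$ and of $\Edual$ with $\cetens$ on $\R$-constructible objects) are in place. The only point requiring care is to verify, at each step where Proposition~\ref{prop:exteriodual} or Proposition~\ref{pro:eopbTdual} is invoked, that the relevant objects lie in $\BECRc{\cdot}$; this is guaranteed by Proposition~\ref{pro:RcTfunctorial}~(i) and Proposition~\ref{prop:homdual}.
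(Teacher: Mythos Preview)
Your proposal is correct and follows essentially the same approach as the paper: the first isomorphism is immediate from the compatibility of $\Eopb f$ with $\ctens$, and the second is deduced from the first via the duality relations of Proposition~\ref{pro:eopbTdual} and Proposition~\ref{prop:exteriodual}. The only difference is the order in which you chain the isomorphisms, and you are slightly more explicit in checking the $\R$-constructibility hypotheses at each step.
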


\begin{proof}
The first isomorphism is immediate from Proposition~\ref{pro:Tproj}.

Let us show the second isomorphism. By the first isomorphism, we have
\begin{align*}
\Eopb f(\Edual_{N_1} L_1\cetens \Edual_{N_2} L_2) 
&\simeq \Eopb f_1 \Edual_{N_1} L_1 \cetens \Eopb f_2 \Edual_{N_2} L_2 \\
&\simeq \Edual_{M_1} \Eepb f_1 L_1 \cetens \Edual_{M_2} \Eepb f_2 L_2,
\end{align*}
where the last isomorphism follows from Proposition~\ref{pro:eopbTdual}.
Applying $\Edual_{M_1\times M_2}$, and using Proposition~\ref{pro:Detens}, we obtain
\begin{align*}
\Edual_{M_1\times M_2} (\Eopb f(\Edual_{N_1} L_1\cetens \Edual_{N_2} L_2) )
&\simeq  \Eepb f \Edual_{N_1\times N_2}(\Edual_{N_1} L_1\cetens \Edual_{N_2} L_2) \\
&\simeq \Eepb f(L_1\cetens L_2)
\end{align*}
and
\[
\Edual_{M_1\times M_2} (\Edual_{M_1} \Eepb f_1 L_1 \cetens \Edual_{M_2} \Eepb f_2 L_2) \simeq \Eepb f_1 L_1 \cetens \Eepb f_2 L_2.
\]
\end{proof}

\begin{proposition}
\label{homText}
For $K\in\BECRc M$ and $K'\in\BEC M$, one has
\begin{align*}
\cihom(K,\field_M^\enh\ctens K') &\simeq \Eepb\delta(\Edual K \cetens K'), \\
\fhom(K,\field_M^\enh\ctens K') 
&\simeq \fhom(\field_M^\enh,\Eepb\delta(\Edual_M K \cetens K')) \\
&\simeq \opb\delta\fhom(\field_\Delta^\enh,\Edual_M K \cetens K'),
\end{align*}
where $\delta\colon\Delta\to M\times M$ denotes the diagonal embedding.
\end{proposition}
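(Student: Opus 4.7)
The plan is to deduce the three isomorphisms in sequence, starting from Proposition~\ref{pro:DKeL} and then invoking the adjunction $(\Eeeim\delta,\Eepb\delta)$ for the diagonal embedding $\delta\colon M\to M\times M$. For the first isomorphism, I would apply Proposition~\ref{pro:DKeL} with $N=M$ and $L=K'$ to get
$$\cihom(\Eopb{p_1}K,\field_{M\times M}^\enh\ctens\Eepb{p_2}K')\simeq\Edual_M K\cetens K'\quad\text{in }\BEC{M\times M},$$
and then apply $\Eepb\delta$ on both sides. The left-hand side simplifies to $\cihom(K,\field_M^\enh\ctens K')$ by combining Proposition~\ref{pro:Tproj} (commuting $\Eepb\delta$ with $\cihom$), Proposition~\ref{pro:stableops}~(ii) (commuting $\Eepb\delta$ with $\field^\enh\ctens(-)$), and the identities $p_1\circ\delta=p_2\circ\delta=\id_M$.

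For the second isomorphism, $\R$-constructibility implies stability, so $K\simeq\field_M^\enh\ctens K$. Applying Lemma~\ref{lem:homT} with $K_1=\field_M^\enh$ and $K_2=K$ then gives
$$\fhom(K,\field_M^\enh\ctens K')\simeq\fhom(\field_M^\enh\ctens K,\field_M^\enh\ctens K')\simeq\fhom\bl\field_M^\enh,\cihom(K,\field_M^\enh\ctens K')\br,$$
into which we substitute the first isomorphism.

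The third isomorphism is the most delicate, and rests on the identification $\Eeeim\delta\,\field_M^\enh\simeq\field_\Delta^\enh$ in $\BEC{M\times M}$. I would establish it by the chain
\begin{align*}
\Eeeim\delta\,\field_M^\enh
&\simeq\Eeeim\delta(\field_M^\enh\ctens\field_{\{t=0\}_M})
\simeq\field_{M\times M}^\enh\ctens\Eeeim\delta\,\field_{\{t=0\}_M}\\
&\simeq\field_{M\times M}^\enh\ctens\bl\opb{\pi_{M\times M}}\field_\Delta\tens\field_{\{t=0\}_{M\times M}}\br
\simeq\opb{\pi_{M\times M}}\field_\Delta\tens\field_{M\times M}^\enh=\field_\Delta^\enh,
\end{align*}
using the projection formula of Proposition~\ref{pro:stableops}~(i), the explicit computation $\Eeeim\delta\,\field_{\{t=0\}_M}\simeq\field_{\Delta\times\{0\}_{M\times M}}$ (direct image of a sheaf concentrated on a closed subset along a closed embedding), and Lemma~\ref{lem:cihomrihompi}. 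With this in hand, Lemma~\ref{homepb} applied to $\delta$ yields
$$\roim\delta\,\fhom(\field_M^\enh,\Eepb\delta L)\simeq\fhom(\Eeeim\delta\,\field_M^\enh,L)\simeq\fhom(\field_\Delta^\enh,L),$$
and applying $\opb\delta$ together with $\opb\delta\roim\delta\simeq\id$ (valid since $\delta$ is a closed embedding) produces the desired $\fhom(\field_M^\enh,\Eepb\delta L)\simeq\opb\delta\fhom(\field_\Delta^\enh,L)$. The main obstacle is essentially just this last identification; once it is in place, the rest of the argument is a direct application of the formalism developed in this section.
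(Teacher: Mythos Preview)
Your proposal is correct and follows essentially the same route as the paper: for (i) you invoke Proposition~\ref{pro:DKeL} and pull back along $\Eepb\delta$ using Proposition~\ref{pro:Tproj} and Proposition~\ref{pro:stableops}~(ii), for (ii) you use Lemma~\ref{lem:homT}, and for (iii) you apply the adjunction of Lemma~\ref{homepb} together with $\Eeeim\delta\,\field_M^\enh\simeq\field_\Delta^\enh$. The only difference is cosmetic: the paper leaves the identification $\Eeeim\delta\,\field_M^\enh\simeq\field_\Delta^\enh$ implicit in writing $\roim\delta\fhom(\field_M^\enh,\Eepb\delta(\cdots))\simeq\fhom(\Eeeim\delta\field_M^\enh,\cdots)$, whereas you spell it out explicitly via Proposition~\ref{pro:stableops}~(i).
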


\begin{proof}
(i) 
Let $p_1,p_2\colon M\times M\to M$ be the projections.
By Proposition~\ref{pro:stableops}~(ii), one has
\begin{align*}
\field_M^\enh\ctens K'
&\simeq \field_M^\enh\ctens \Eepb\delta \Eepb p_2 K' \\
&\simeq \Eepb\delta(\field_{M\times M}^\enh\ctens  \Eepb p_2 K').
\end{align*}
Then one has
\begin{align*}
\cihom(K,\field_M^\enh\ctens K') 
&\simeq \cihom(\Eopb\delta \Eopb p_1 K,\Eepb\delta(\field_{M\times M}^\enh\ctens  \Eepb p_2 K')) \\
&\underset{(*)}\simeq \Eepb\delta\cihom(\Eopb p_1 K,\field_{M\times M}^\enh\ctens\Eepb p_2 K') \\
&\underset{(**)}\simeq \Eepb\delta(\Edual_M K \cetens K'),
\end{align*}
where $(*)$ follows from Proposition~\ref{pro:Tproj} and $(**)$ from Proposition~\ref{pro:DKeL}.

\smallskip\noindent(ii)
The second isomorphism follows from (i) and Lemma~\ref{lem:homT}.

\smallskip\noindent(iii)
The third isomorphism follows by applying $\opb\delta$ to
\[
\roim\delta\fhom(\field_M^\enh,\Eepb\delta(\Edual_M K \cetens K'))
\simeq \fhom(\Eeeim\delta\field_M^\enh,\Edual_M K \cetens K').
\]
\end{proof}

\subsection{Ring action}

Let $S$ be a good topological space, and $\sha$ a sheaf of $\field$-algebras on $S$.
Recall from \cite{KS01} that the category of $\sha$-modules in the category of ind-sheaves is defined by\footnote{The category $\ind(\sha)$ is denoted by $\ind(\beta\sha)$ in \cite{KS01}.}
\begin{align*}
\ind(\sha)& = \left\{(F,\varphi)\;\semicolon\; 
\parbox{47ex}{$F\in\ind(\field_S)$,\\ 
$\varphi\colon\sha\to\shEnd(F)$ 
is a $\field$-algebras homomorphism}\right\}.
\end{align*}
Here, $\shEnd(F)$ is the sheaf of $\field$-algebras given by $U\mapsto\Endo[\ind(\field_U)](F|_U)$.

\begin{definition}
Let $f\colon (M,\bM) \to S$ be a morphism of bordered spaces, and $\sha$ a sheaf of $\field$-algebras on $S$. 
Recall that $f$ is decomposed as $(M,\bM) \isofrom (\Gamma_f,\overline\Gamma_f) \to S$.
We set
\[
\BDC(\ind\sha_{(M,\bM)}) = \BDC\bl\ind(\opb{p_2}\sha)\br
/\BDC\bl\ind((\opb{p_2}\sha)_{\overline\Gamma_f\setminus\Gamma_f})\br,
\]
where $p_2\colon\overline\Gamma_f\to S$ is the projection.
\end{definition}

\begin{remark}
If $f$ is induced by a map $\check f\colon \bM \to S$, then one has an equivalence
\[
\BDC(\ind\sha_{(M,\bM)}) \simeq \BDC\bl\ind\opb{\check f}\sha\br/
\BDC\bl\ind((\opb{\check f}\sha)_{\bM\setminus M})\br.
\]
\end{remark}

Let us set
\[
\sha_{(M,\bM)} = \opb{p_2}\sha,
\]
where $p_2\colon\overline\Gamma_f\to S$.
It is a sheaf of $\field$-algebras on $\overline\Gamma_f$.
One can define the functors\footnote{For $M=\bM=S$, the functors $\ltens[\sha]$ and $\rhom[\sha]$ are denoted by $\beta(\ast)\ltens[\beta\sha]\ast$ and $\rihom[\beta\sha](\beta(\ast),\ast)$, respectively, in \cite{KS01}.}
\begin{align*}
\rihom &\colon \BDC(\ifield_{(M,\bM)})^\op \times \BDC(\ind\sha_{(M,\bM)}) \To \BDC(\ind\sha_{(M,\bM)}), \\
\ltens[\sha] &\colon \BDC(\sha_{(M,\bM)}^\op) \times \BDC(\ind\sha_{(M,\bM)}) \To \BDC(\ind\field_{(M,\bM)}), \\
\rhom[\sha] &\colon \BDC(\sha_{(M,\bM)})^\op \times \BDC(\ind\sha_{(M,\bM)}) \To \BDC(\ind\field_{(M,\bM)}).
\end{align*}

\begin{lemma}
Let $F\in\BDC(\field_{(M,\bM)}) \simeq \BDC(\field_M)$, $\shm\in\BDC(\sha_{(M,\bM)})$, $\shn\in\BDC(\sha_{(M,\bM)}^\op)$ and $\shk\in\BDC(\ind\sha_{(M,\bM)})$. Then there are isomorphisms
\begin{align*}
\rihom(F,\shn\ltens[\sha]\shk) &\simeq \shn\ltens[\sha]\rihom(F,\shk), \\
\rihom(F,\rhom[\sha](\shm,\shk)) &\simeq \rhom[\sha](\shm,\rihom(F,\shk)) \\
&\simeq \rhom[\sha](F\tens\shm,\shk).
\end{align*}
\end{lemma}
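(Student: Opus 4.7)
The plan is to reduce everything to the analogous, classical statement for ind-sheaves with a ring action on the ambient space $\bM$, and then check that the resulting isomorphisms descend to the bordered quotient category.

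First I would use Lemma~\ref{lem:fborddecomp} to replace $(M,\bM)$ by $(\Gamma_f,\overline\Gamma_f)$, so that the structural morphism $f\colon(M,\bM)\to S$ extends to a continuous map $\check f\colon\bM\to S$. Setting $\tilde\sha\defeq\opb{\check f}\sha$, the remark preceding the lemma identifies
\[
\BDC(\ind\sha_{(M,\bM)}) \simeq \BDC(\ind\tilde\sha)/\BDC(\ind(\tilde\sha_{\bM\setminus M})).
\]
In particular $\shm,\shn,\shk$ admit representatives $\tilde\shm,\tilde\shn,\tilde\shk$ on $\bM$, and $F$ admits a representative $\tilde F\in\BDC(\field_{\bM})$.

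Second, I would apply the corresponding isomorphisms for ind-sheaves with a ring action on the (non-bordered) space $\bM$. These are established in \cite{KS01} and give the three displayed isomorphisms at the level of $\BDC(\ind\tilde\sha)$ and $\BDC(\ind\field_{\bM})$, by the standard adjunction arguments (tensor-hom adjunction for $\ltens[\tilde\sha]$ together with the usual interplay between $\rihom$ and tensor product for ind-sheaves).

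Third, I would show that these isomorphisms pass to the quotient defining the bordered categories. The key point is that each of the functors involved—$\rihom(\tilde F,\cdot)$, $\tilde\shn\ltens[\tilde\sha](\cdot)$, $\rhom[\tilde\sha](\tilde\shm,\cdot)$, and $\cdot\tens\tilde\shm$—commutes (up to the natural transformations above) with $\field_M\tens(\cdot)$ and with $\rihom(\field_M,\cdot)$. Indeed, by Lemma~\ref{lem:jM}~(iii) the tensor product and inner hom commute with $\opb{j_M}\simeq\epb{j_M}$, and the ring-theoretic operations are internal to these functors. Consequently the functors respect the kernel $\BDC(\ind(\tilde\sha_{\bM\setminus M}))$ and so induce well-defined functors on the quotient, and the isomorphisms descend.

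The only mild obstacle is ensuring that the $\tilde\sha$-action induced on $\rihom(\tilde F,\tilde\shk)$ and the descent to $\ind\sha_{(M,\bM)}$ are compatible with the natural isomorphisms in \cite{KS01}; this is a straightforward verification since the action comes entirely from the $\tilde\sha$-action on $\tilde\shk$ (on which the functor $\rihom(\tilde F,\cdot)$ acts $\field$-linearly), and nothing essentially new happens in the bordered framework beyond bookkeeping.
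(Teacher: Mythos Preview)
The paper states this lemma without proof, treating it as a routine verification. Your approach is correct and is exactly the natural one: reduce via Lemma~\ref{lem:fborddecomp} to the case where the structural map extends to $\bM$, invoke the corresponding isomorphisms for ind-sheaves with ring action from \cite{KS01}, and observe that all functors involved respect the subcategory supported on $\bM\setminus M$ (equivalently, commute with $\field_M\tens(\cdot)$ and $\rihom(\field_M,\cdot)$) so that the isomorphisms descend to the quotient.
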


\medskip

Recall that $\pi\colon M \times \R_\infty \to M$ denotes the projection.

\begin{definition}
For $\sha$ a sheaf of $\field$-algebras on $M$, we set
\[
\BEC[\ind\sha]{} = \BDC(\ind\sha_{M\times\R_\infty})/\{K\semicolon \opb\pi\roim\pi K \isoto K\}.
\]
\end{definition}

We have a forgetful functor
\[
\BEC[\ind\sha]{} \to \BEC M. 
\]

\begin{remark}
The results on $\BEC M$ can be extended to this context with $\BEC[\ind\sha]{}$.
\end{remark}

\section{Review of tempered functions}\label{se:tempered}

We recall here some constructions of \cite{KS96,KS01}.
In particular, we recall the ind-sheaf $\Ot_X$ of tempered holomorphic functions on a complex analytic manifold $X$, which plays a fundamental role in this paper. We end this section by adapting the notion of bordered space to the framework of analytic spaces.

\subsection{Real setting}

Let $M$ be a real analytic manifold and let $U\subset M$ be an open subset.

One says that a function $\varphi\colon U \to \C$ has polynomial growth
at $x_\circ\in M\setminus U$ if there exist a
sufficiently small compact neighborhood $K$ of $x_\circ$ and constants
$C>0$, $r\in\Z_{>0}$ such that
\begin{equation}
\label{eq:poly}
|\varphi(x)| \leq 
C \, \dist(K\setminus U,x)^{-r} \quad \text{for any }
x\in K \cap U.
\end{equation}
(Here ``$\dist$'' denotes the Euclidean distance with respect to a local
coordinate system.)

One says that a smooth function $\varphi \in \Ci_M(U)$ is tempered at $x_0\in M\setminus U$ if all of its derivatives have polynomial growth at $x_0$.

Denote by $\Db_M$ the sheaf of Schwartz's distributions on $M$.

\begin{definition}[{\cite[Definition 7.2.5]{KS01}}]
\bnum
\item
For a subanalytic open subset $U\subset M$, we define $\Cit_M(U)$ as the set of $C^\infty$-functions defined on $U$ which are tempered at every point of $M\setminus U$. Then $\Cit_M$ is a subanalytic sheaf.
\item
For a subanalytic open subset $U\subset M$,
we define the sheaf of $\Cfield$-algebras $\shc_{U|M}^{\infty,\textrm{temp}} \defeq \hom(\Cfield_U,\Cit_M)$.
\item
The subanalytic sheaf of tempered distributions on $M$ is defined by
\begin{align*}
\Dbt_M(V) &\defeq \Db_M(M) / \sect_{M\setminus V}(M; \Db_M) \\
&\simeq \Im(\Db_M(M)\to\Db_M(V))
\end{align*}
for any subanalytic open subset $V\subset M$.
We still denote by $\Dbt_M$ the corresponding subanalytic ind-sheaf.
\ee
\end{definition}

There is a morphism $\Dbt_M\to\Db_M$ of ind-sheaves.

For any open subset $V\subset M$ we have
\[
\shc_{U|M}^{\infty,\textrm{temp}}(V) = \{\varphi\in\Ci_M(V\cap U) \semicolon \varphi \text{ is tempered at any point of }V\setminus U\}.
\]

One has the following lemma.

\begin{lemma}
For any $\R$-constructible sheaf $F$,
\[
H^k\rihom(F,\Dbt_M) = 0\quad\text{for any }k\neq0.
\]
\end{lemma}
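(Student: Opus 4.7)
The plan is a standard devissage argument reducing to the fundamental property of $\Dbt_M$. First, since $F$ is $\R$-constructible and $\Dbt_M$ is subanalytic, the object $\rihom(F,\Dbt_M)$ belongs to $\BDC_\suban(\ifield_M)$. By Proposition~\ref{pro:suban0} applied to the truncations $\tau^{>0}\rihom(F,\Dbt_M)$ and $\tau^{<0}\rihom(F,\Dbt_M)$, the desired vanishing is equivalent to
\[
\Hom[\BDC(\ifield_M)]\bl\field_V[n],\rihom(F,\Dbt_M)\br \simeq 0 \quad\text{for all } n\neq 0
\]
and any relatively compact subanalytic open $V\subset M$.

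Next, I would perform a devissage on $F$. Any $\R$-constructible sheaf admits, locally on $M$, a finite filtration whose graded pieces are finite sums of sheaves of the form $\field_Z$ with $Z$ a locally closed subanalytic subset. Using the long exact cohomology sequences attached to such filtrations, together with the short exact sequence $0\to\field_{U'}\to\field_U\to\field_{U\setminus U'}\to 0$ for open subanalytic $U'\subset U$, the problem reduces to the case $F=\field_U$ with $U$ subanalytic open. In that case, by the $(\tens,\rihom)$ adjunction and $\field_V\tens\field_U\simeq\field_{U\cap V}$, one has
\[
\Hom[\BDC(\ifield_M)]\bl\field_V[n],\rihom(\field_U,\Dbt_M)\br \simeq \Hom[\BDC(\ifield_M)](\field_W[n],\Dbt_M),
\]
where $W:=U\cap V$ is again a relatively compact subanalytic open subset of $M$.

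The remaining task is to invoke the fundamental property of $\Dbt_M$ established in \cite{KS96,KS01}: for any relatively compact subanalytic open $W\subset M$, one has $\Hom[\BDC(\ifield_M)](\field_W,\Dbt_M) \simeq \Dbt_M(W)$ and $\Hom[\BDC(\ifield_M)](\field_W[n],\Dbt_M) \simeq 0$ for $n\neq 0$. This rests on the c-softness of $\Db_M$: the short exact sequence $0\to\sect_{M\setminus W}(M;\Db_M)\to\Db_M(M)\to\Dbt_M(W)\to 0$ together with the vanishing of the higher local cohomology $R^k\sect_{M\setminus W}(M;\Db_M)=0$ for $k>0$. The devissage in the middle paragraph is routine; the only substantive input is this classical property of $\Dbt_M$, which is precisely what makes it the correct ind-sheaf representing tempered distributions.
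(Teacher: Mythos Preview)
Your argument is correct and follows essentially the same strategy as the paper: test $\rihom(F,\Dbt_M)$ against $\field_V$, use the $(\tens,\rihom)$ adjunction, and invoke the fundamental acyclicity property of $\Dbt_M$ from \cite{KS01}. The paper's proof is more streamlined only in that it skips your d\'evissage on $F$: since \cite[Proposition~7.2.6~(i)]{KS01} already asserts $H^k\RHom(G',\Dbt_M)=0$ for $k\neq 0$ and \emph{any} $\R$-constructible $G'$, one can apply it directly to $G'=G\tens F$ (with $G$ an arbitrary $\R$-constructible test sheaf, in particular $G=\field_V$) without first reducing $F$ to a constant sheaf on an open set. Your reduction to $\field_W$ and the explicit appeal to c-softness of $\Db_M$ is in effect re-deriving the special case of that proposition; it is not wrong, just redundant given the citation.
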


\begin{proof}
For any $\R$-constructible sheaf $G$ and any $k\neq 0$, one has
\[
H^k\RHom(G,\rihom(F,\Dbt_M)) \simeq H^k\RHom(G\tens F,\Dbt_M) \simeq 0,
\]
where the last isomorphism follows from \cite[Proposition 7.2.6 (i)]{KS01}.
\end{proof}

\begin{proposition}\label{pro:CDtoD}
Let $U\subset M$ be a subanalytic open subset.
The product $\Ci_M \tens \Db_M \to \Db_M$ induces a $\Cfield_M$-algebra homomorphism
\[
\shc_{U|M}^{\infty,\textrm{temp}} \to \shEnd(\ihom(\Cfield_U,\Dbt_M)).
\]
In other words, $\ihom(\Cfield_U,\Dbt_M) \in \ind(\shc_{U|M}^{\infty,\textrm{temp}})$.
\end{proposition}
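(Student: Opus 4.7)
The plan is to construct the homomorphism explicitly at the level of the underlying subanalytic sheaves, using pointwise multiplication, and then to reduce the non-formal part to the classical tempered multiplication theorem, which is essentially built into the very definition of $\Dbt_M$.

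First I would unwind the meaning of the assertion. As a subanalytic sheaf, $\ihom(\Cfield_U,\Dbt_M)$ is described by
\[
\Hom[\ind(\Cfield_M)](\Cfield_W, \ihom(\Cfield_U,\Dbt_M)) \simeq \Hom[\ind(\Cfield_M)](\Cfield_{U\cap W},\Dbt_M) = \Dbt_M(U\cap W)
\]
for any relatively compact subanalytic open $W\subset M$, where the right-hand side consists of distributions on $U\cap W$ which are the restriction of some distribution on $M$. Constructing the desired morphism $\shc_{U|M}^{\infty,\tmp}\to\shEnd(\ihom(\Cfield_U,\Dbt_M))$ is then the same as specifying, for each pair of relatively compact subanalytic opens $W'\subset W$ and each $\varphi\in\shc_{U|M}^{\infty,\tmp}(W)$, an endomorphism $\mu_\varphi$ of $\Dbt_M(U\cap W')$, in a way that is functorial in $W'$, $\Cfield_M$-linear and multiplicative in $\varphi$.

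The obvious candidate is pointwise multiplication: given $u\in\Dbt_M(U\cap W')$, pick any $\widetilde u\in\Db_M(M)$ with $\widetilde u|_{U\cap W'} = u$ and set $\mu_\varphi(u)\defeq(\varphi\,\widetilde u)|_{U\cap W'}$, which is a priori only a distribution on $U\cap W'$. Independence of the choice of $\widetilde u$ is immediate, because two choices differ by an element of $\sect_{M\setminus(U\cap W')}(M;\Db_M)$, which $\varphi$ leaves in the same subspace after restriction. Compatibility with shrinking $W'$ is tautological, and the multiplicativity and unitality of $\varphi\mapsto\mu_\varphi$ follow immediately from the pointwise product of smooth functions and distributions on $U\cap W'$.

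The hard part—really the only non-formal step—is showing that $\mu_\varphi(u)$ again belongs to $\Dbt_M(U\cap W')$, i.e.\ extends from $U\cap W'$ to a distribution on $M$. After multiplying by a subanalytic smooth cutoff equal to $1$ on a neighborhood of $\overline{W'}$ and supported in $W$, one reduces to the classical statement that a $\Ci$-function on $U\cap W$ whose derivatives all have polynomial growth at points of $W\setminus U$ multiplies a distribution (restricted to $U\cap W$) into a distribution with moderate growth at the boundary, which therefore extends by Łojasiewicz's theorem, and then extends by zero outside $W$. This is precisely what underlies the well-definedness of the subanalytic sheaves $\Cit_M$ and $\Dbt_M$ recalled just above the statement, so this step can be taken from \cite[\S7.2]{KS01}. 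Once this is in hand, $\mu_\varphi$ is a well-defined endomorphism, and the assignment $\varphi\mapsto\mu_\varphi$ is by construction a $\Cfield_M$-algebra homomorphism, giving the desired lift of $\ihom(\Cfield_U,\Dbt_M)$ to an object of $\ind(\shc_{U|M}^{\infty,\tmp})$.
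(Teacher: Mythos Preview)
Your proof is correct and follows essentially the same approach as the paper: both reduce to the fact that tempered $C^\infty$ functions multiply tempered distributions into tempered distributions. The paper's proof is a one-liner that cites \cite[Lemma~3.3]{Kas84} for this key multiplication statement, whereas you unwind the construction of $\mu_\varphi$ explicitly and appeal to \cite[\S7.2]{KS01}; the content is the same, and your added detail (independence of the lift $\widetilde u$, the cutoff reduction) is sound, though ``subanalytic smooth cutoff'' should simply read ``smooth cutoff''.
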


\begin{proof}
Let $V\subset M$ be a relatively compact subanalytic open subset.
By~\cite[Lemma 3.3]{Kas84}, 
the product induces a natural morphism
\[
\Cit_M(U\cap V) \tens \Dbt_M(U\cap V) \to \Dbt_M(U\cap V).
\]
\end{proof}

For a closed subset $Z\subset M$, denote by $\shi^\infty_{M,Z} \subset \shc^\infty_{M}$ the subsheaf of functions which vanish on $Z$ up to infinite order.
Recall the Whitney functor of \cite{KS96}
\[
\ast\wtens \shc^\infty_M \colon \BDC_\Rc(\Cfield_M) \to \BDC(\Cfield_M).
\]
It is characterized by setting $\Cfield_U\wtens \shc^\infty_{M} \defeq \shi^\infty_{M,M\setminus U}$ for any subanalytic open subset  $U\subset M$.

One says that a function $\varphi\in\shc^\infty_{M}(U)$ is rapidly decreasing
at $x_\circ\in M\setminus U$ if there exists a
sufficiently small compact neighborhood $K$ of $x_\circ$ such that for any $r\in\Z_{>0}$ and $\alpha\in\Z_{\geq 0}^n$ there is a constant
$C>0$ with
\[
|\partial_x^\alpha\varphi(x)| \leq C\,\dist(K\setminus U,x)^r \quad \text{for any }
x\in K \cap U.
\]
(Here ``$\dist$'' and $\partial^\alpha$ are taken with respect to a local
coordinate system.)

One says that $\varphi\in\shc^\infty_{M}(U)$ is rapidly decreasing at the boundary of $U$ if it is rapidly decreasing at each point of the boundary of $U$.

\begin{lemma}\label{lem:W}
A section of $\shc^\infty_{M}(U)$ extends to a global section of $\Cfield_U\wtens \shc^\infty_{M}$ if and only if it is rapidly decreasing at the boundary of $U$.
\end{lemma}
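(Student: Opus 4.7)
Recall that by definition $\Cfield_U\wtens\shc^\infty_M=\shi^\infty_{M,M\setminus U}$ is the subsheaf of $\shc^\infty_M$ consisting of smooth functions on $M$ whose partial derivatives of \emph{all} orders vanish identically on the closed set $M\setminus U$. The claim therefore amounts to asking: given $\varphi\in\shc^\infty_M(U)$, does there exist an extension $\tilde\varphi\in\shc^\infty_M(M)$ with $\partial^\alpha\tilde\varphi|_{M\setminus U}=0$ for every multi-index $\alpha$, and the answer should be affirmative exactly when $\varphi$ is rapidly decreasing at $\partial U$. The plan is to deduce each implication from Taylor's formula.

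For the ``only if'' direction, suppose such $\tilde\varphi$ exists. Fix $x_0\in\partial U\subset M\setminus U$ and a compact neighborhood $K$ of $x_0$, and pick local coordinates. For $x\in K\cap U$ I would choose $y\in K\setminus U$ realizing $\dist(K\setminus U,x)$. Applying Taylor's formula with integral remainder to $\partial^\alpha\tilde\varphi$ along the segment from $y$ to $x$, and using $\partial^\beta\tilde\varphi(y)=0$ for every $\beta$, one obtains
\[
|\partial^\alpha\varphi(x)|=|\partial^\alpha\tilde\varphi(x)|\le C_{r,\alpha}\,|x-y|^{r}=C_{r,\alpha}\,\dist(K\setminus U,x)^{r}
\]
for any $r\ge 0$, with $C_{r,\alpha}$ controlled by a uniform bound for the $(|\alpha|+r)$-th derivatives of $\tilde\varphi$ on $K$. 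This is exactly the rapid decrease of $\varphi$ at $x_0$.

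For the converse, extend $\varphi$ by zero to a function $\tilde\varphi$ on $M$, and write $\tilde\varphi^{(\alpha)}$ for the analogous extension by zero of $\partial^\alpha\varphi$. I would check by induction on $|\alpha|$ that $\tilde\varphi^{(\alpha)}$ is continuous on $M$ and that its classical partial derivatives exist and satisfy $\partial_i\tilde\varphi^{(\alpha)}=\tilde\varphi^{(\alpha+e_i)}$. Both properties are automatic on $U$ and on the interior of $M\setminus U$; the content lies at points $x_0\in M\setminus U$ that are approached from within $U$. Continuity uses $\dist(x,M\setminus U)\le|x-x_0|$, so the rapid decrease of $\partial^\alpha\varphi$ (with any $r\ge 1$) forces $\tilde\varphi^{(\alpha)}(x)\to 0$ as $x\to x_0$ through $U$, while $\tilde\varphi^{(\alpha)}\equiv 0$ on $M\setminus U$. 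For the derivative, the difference quotient $\tilde\varphi^{(\alpha)}(x_0+he_i)/h$ is either zero (if $x_0+he_i\notin U$) or bounded by $C_r|h|^{r-1}$ (by rapid decrease with $r\ge 2$, applied using $\dist(x_0+he_i,M\setminus U)\le|h|$), hence tends to $0=\tilde\varphi^{(\alpha+e_i)}(x_0)$.

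The main obstacle is the second direction, and specifically the inductive verification of smoothness at boundary points $x_0\in\partial U$ that are approached simultaneously from $U$ and from its complement; all three regimes (interior of $U$, interior of $M\setminus U$, and $\partial U$) must be handled in a unified way. They all reduce to the single polynomial-decay estimate provided by the rapid decrease hypothesis, so no further geometric input on $\partial U$ (e.g.\ subanalyticity) is actually needed.
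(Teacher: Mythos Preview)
The paper states this lemma without proof, treating it as an elementary consequence of the definitions. Your argument is correct and supplies precisely the missing details: the ``only if'' direction is the standard Taylor-remainder estimate (all derivatives of the extension vanish on $M\setminus U$, so the $r$-th order remainder controls $\partial^\alpha\varphi$ by $\dist(K\setminus U,x)^r$), and the ``if'' direction is the equally standard verification that the extension by zero is $C^\infty$, carried out by the usual induction showing each $\tilde\varphi^{(\alpha)}$ is continuous and has classical partial derivatives $\tilde\varphi^{(\alpha+e_i)}$. Your remark that no subanalyticity of $\partial U$ is needed for this particular lemma is also correct; the subanalytic hypothesis enters elsewhere (in the construction of the Whitney functor on $\R$-constructible sheaves), not in this pointwise characterization.
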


\subsection{Complex setting}

Let $X$ be a complex analytic manifold.
Denote by $X_\R$ the real analytic manifold underlying $X$. It is identified with the diagonal of $X\times\overline X$, where $\overline X$ is the conjugate complex manifold of $X$.
Recall that $(\overline X)_\R = X_\R$ and that sections of $\O_{\overline X}$ are the complex conjugates of sections of $\O_X$.

Recall that, by Dolbeault resolution, one has
\[
\O_X \simeq \rhom[\D_{\overline X}](\O_{\overline X}, \Db_{X_\R}).
\]

\begin{definition}[{\cite[\S7.3]{KS01}}]
One sets
\begin{align*}
\Ot_X &= \rhom[\D_{\overline X}](\O_{\overline X}, \Dbt_{X_\R}) \in \BDC(\ind\D_X), \\
\Ovt_X &= \Omega_X \tens[\O_X] \Ot_X \in \BDC(\ind\D_X^\op).
\end{align*}
\end{definition}

The canonical morphism $\Dbt_{X_\R} \to \Db_{X_\R}$ induces a canonical morphism $\Ot_X\to\O_X$ in $\BDC(\ind\D_X)$. Note that $\Ot_X\in \BDC_\suban(\iCfield_X)$. It is not concentrated in degree zero, in general.

\begin{notation}
The classical de Rham and solution functors are
\begin{align*}
\dr_X &\colon \BDC(\D_X) \to \BDC(\Cfield_X), &
\shm &\mapsto \Omega_X \ltens[\D_X] \shm, \\
\sol_X &\colon \BDC(\D_X)^\op \to \BDC(\Cfield_X), &
\shm &\mapsto \rhom[\D_X] (\shm,\O_X),
\end{align*}
and the tempered de Rham and solution functors are
\begin{align*}
\drt_X &\colon \BDC(\D_X) \to \BDC(\iCfield_X), &
\shm &\mapsto \Ovt_X \ltens[\D_X] \shm, \\
\solt_X &\colon \BDC(\D_X)^\op \to \BDC(\iCfield_X), &
\shm &\mapsto \rhom[\D_X] (\shm,\Ot_X).
\end{align*}
\end{notation}

One has
\[
\sol_X \simeq \alpha_X\solt_X,
\quad
\dr_X \simeq \alpha_X\drt_X.
\]
Recall that, by \cite[Lemma~7.4.11]{KS01}, for $\shl\in\BDC_\reghol(\D_X)$ one has
\[
\solt_X(\shl) \simeq \sol_X(\shl),
\quad
\drt_X(\shl) \simeq \dr_X(\shl).
\]
For $\shm\in\BDC_\coh(\D_X)$, one has
\[
\solt_X(\shm) \simeq \drt_X(\ddual_X\shm)[-d_X].
\]
Note that
\[
\drt_X(\O_X) \simeq \dr_X(\O_X) \simeq \Cfield_X[d_X].
\]

Let us recall some functorial properties of the tempered de Rham and solution functors.

\begin{theorem}[{\cite[Theorems 7.4.1, 7.4.6 and 7.4.12]{KS01}}]
\label{thm:ifunct}
Let $f\colon X\to Y$ be a complex analytic map.
\bnum
\item
There is an isomorphism in $\BDC(\ind\opb f\D_Y)$
\[
\epb f \Ot_Y[d_Y] \simeq \D_{Y\leftarrow X} \ltens[\D_X] \Ot_X [d_X].
\]
\item
For any $\shn\in\BDC(\D_Y)$ there is an isomorphism in $\BDC(\iCfield_X)$
\[
\drt_X(\dopb f\shn) [d_X] \simeq 
\epb f \drt_Y(\shn) [d_Y].
\]
\item
Let $\shm\in\BDC_\good(\D_X)$, and assume that $\supp\shm$ is proper over $Y$. Then there is an isomorphism in $\BDC(\iCfield_Y)$
\[
\drt_Y(\doim f\shm) \simeq 
\reeim f\drt_X(\shm) .
\]
\item
Let $\shl\in\BDC_\reghol(\D_X)$. 
Then there is an isomorphism in $\BDC(\ind\D_X)$
\[
\Ot_X \ltens[\O_X] \shl \simeq 
\rihom(\sol_X(\shl),\Ot_X).
\]
In particular, for a closed hypersurface $Y\subset X$, one has
\[
\Ot_X \ltens[\O_X] \O_X(*Y) \simeq \rihom(\Cfield_{X\setminus Y},\Ot_X).
\]
\ee
\end{theorem}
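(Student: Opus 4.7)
The final statement collects three classical theorems from~\cite{KS01}, so my plan is to handle the four assertions separately, extracting (ii) and (iii) from (i) by fairly formal manipulations, while (iv) requires a genuinely different argument based on dévissage of regular holonomic modules.

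\smallskip

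\textit{Step 1 (Part (i)).} My plan is to reduce the holomorphic statement to its real analogue, namely an isomorphism of the form
$$\epb f \Dbt_{Y_\R}[2d_Y] \simeq \D_{Y_\R\leftarrow X_\R} \ltens[\D_{X_\R}] \Dbt_{X_\R}[2d_X]$$
for tempered distributions on the underlying real analytic manifolds. I would factor $f$ as the composition of the graph embedding $X\hookrightarrow X\times Y$ and the second projection $X\times Y\to Y$, so that it suffices to treat a closed embedding and a smooth projection separately. For the smooth projection, integration along the fiber gives an explicit isomorphism, and the required temperedness is preserved. For a closed embedding, the tempered Cauchy-Kowalevski theorem developed in~\cite{KS96} produces the comparison. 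Finally, passing from $\Dbt$ to $\Ot$ by applying $\rhom[\D_{\overline X}](\O_{\overline X},\ast)$ and using the analogous formula for the conjugate transfer bimodule yields the statement in terms of~$\O_X$.

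\textit{Step 2 (Parts (ii) and (iii)).} Part~(ii) is a formal consequence of~(i): expanding
$$\drt_X(\dopb f\shn) \simeq \Ovt_X \ltens[\D_X] \bl\D_{X\to Y}\ltens[\opb f\D_Y]\opb f\shn\br[d_X-d_Y],$$
and rewriting $\Ovt_X \ltens[\D_X] \D_{X\to Y}[d_X-d_Y]$ as $\epb f\Ovt_Y[-d_Y]$ via the dual form of~(i), produces the desired isomorphism. Part~(iii) is the direct image counterpart, which I would deduce from~(ii) by the $(\reeim f,\epb f)$ adjunction combined with coherent duality: since $\shm$ is good with proper support over $Y$, Proposition~\ref{pro:Dadj} provides the $\D$-module adjunction, which upon pairing with $\Ovt_Y$ and using~(ii) yields the claim. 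As an independent verification, one can treat smooth maps via the relative Dolbeault resolution and closed embeddings by direct computation, then glue via the graph factorization.

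\textit{Step 3 (Part (iv)).} This is the most delicate assertion. My plan is dévissage on $\BDC_\reghol(\D_X)$, using that any regular holonomic module admits, after resolution of singularities, a finite filtration whose graded pieces are pushforwards $\doim i\shl'$ with $i\colon Z\hookrightarrow X$ a smooth closed embedding and $\shl'$ a regular meromorphic connection on~$Z$. Using~(i)--(iii) together with base change, the problem reduces to the fundamental case $\shl = \O_X(*Y)$ for a closed hypersurface $Y\subset X$, where the claim becomes
$$\Ot_X \ltens[\O_X] \O_X(*Y) \simeq \rihom(\Cfield_{X\setminus Y}, \Ot_X).$$
A natural morphism from the left to the right can be constructed from the $\D_X$-linear inclusion $\Ot_X \to \rihom(\Cfield_{X\setminus Y}, \Ot_X)$. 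To prove it is an isomorphism, I would resolve $Y$ into a normal crossing divisor and work locally in adapted coordinates, where both sides can be described by explicit growth conditions: meromorphic functions of arbitrary pole order on the left, and tempered holomorphic sections (equivalently, sections of $\Cit_{X_\R}\otimes\O$ satisfying Cauchy-Riemann equations on $X\setminus Y$) on the right. The agreement in this local normal form is essentially Lojasiewicz's inequality.

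\textit{Main obstacle.} The genuinely hard step is part~(iv): the comparison between the algebraic operation of inverting sections vanishing on $Y$ and the analytic operation of taking tempered holomorphic functions on $X\setminus Y$ requires both resolution of singularities to reduce to normal crossings and precise growth estimates. Part~(i) is also technically demanding since temperedness interacts subtly with non-proper maps, and one must check that the integration and restriction morphisms preserve polynomial growth uniformly in families.
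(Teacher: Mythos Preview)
The paper does not supply a proof of this theorem: it is stated with an explicit citation to \cite[Theorems 7.4.1, 7.4.6 and 7.4.12]{KS01} and no proof environment follows. There is therefore nothing in the present paper to compare your proposal against.

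That said, your outline is broadly consistent with how these results are established in \cite{KS01} and \cite{KS96}: the factorisation through the graph, the reduction of the holomorphic statement to the real tempered-distribution statement via Dolbeault, and the d\'evissage for part~(iv) are all standard moves in that theory. One minor comment: in Step~2 you derive~(iii) from~(ii) by adjunction and Proposition~\ref{pro:Dadj}, but Proposition~\ref{pro:Dadj} is a statement about ordinary $\D$-module hom, not about $\drt$; the actual proof in \cite{KS01} of the direct-image compatibility~(iii) proceeds more directly via the projection formula for ind-sheaves and the transfer-bimodule description, rather than by a formal adjunction from~(ii). Your alternative verification (relative Dolbeault for smooth maps, direct computation for closed embeddings) is closer to what is actually done.
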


\subsection{Back to the real setting}

\begin{proposition}[{\cite[Theorem 5.10]{KS96}}]
\label{pro:DbtOt}
Let $X$ be a complexification of a real analytic manifold $M$, 
and denote by $i\colon M \to X$ the embedding. Then
\[
\epb i \Ot_X[d_X] \simeq \Dbt_M \tens \ori_M.
\]
\end{proposition}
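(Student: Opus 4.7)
The strategy is to combine the Dolbeault presentation of $\Ot_X$ with a local analysis of how tempered distributions behave under the closed real-analytic embedding $i\colon M\to X_\R$.

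First, starting from the defining isomorphism $\Ot_X\simeq \rhom[\D_{\ol X}](\O_{\ol X},\Dbt_{X_\R})$ and resolving $\O_{\ol X}$ by the Koszul complex of antiholomorphic derivatives $\ol\partial_1,\ldots,\ol\partial_{d_X}$, one obtains locally a Dolbeault-type presentation of $\Ot_X$ as the complex
$$\Dbt_{X_\R}\xrightarrow{\ol\partial}\Dbt_{X_\R}\tens_{\field}\wedge^1(\C^{d_X})^*\to\cdots\to\Dbt_{X_\R}\tens_{\field}\wedge^{d_X}(\C^{d_X})^*.$$
The functor $\epb i$ is compatible with tensoring by locally free $\field$-modules of finite rank, so the computation reduces to understanding $\epb i \Dbt_{X_\R}$ together with the induced $\ol\partial$-differentials.

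Next, in local holomorphic coordinates $(z_1,\ldots,z_{d_X})=(x_1+iy_1,\ldots,x_{d_X}+iy_{d_X})$ on $X$ with $M=\{y_1=\cdots=y_{d_X}=0\}$, one carries out the key computation: for a relatively compact subanalytic open subset $V\subset M$ and a tubular neighborhood $U\subset X_\R$ of $V$, the complex $\rihom(\field_V,\Dbt_{X_\R})$ is computed by boundary-value/trace arguments for tempered distributions. Here one uses that $X_\R$ is canonically oriented (so $\ori_{M/X_\R}\simeq\ori_M$), and that tempered distributions on $M$ arise precisely as boundary values of tempered holomorphic functions on wedge-like neighborhoods, combined with Lemma~\ref{lem:W} for the rapidly decreasing test functions dual to $\Dbt_M$. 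This yields the tempered restriction formula
$$\epb i\Dbt_{X_\R}[d_X]\simeq\Dbt_M\tens\ori_M,$$
together with the vanishing of $\ol\partial$ on the restricted complex, since the $\ol\partial_j$ restrict to tangential operators along $M$ which act on $\Dbt_M$ via a resolution that degenerates on the totally real submanifold.

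Finally, assembling the restricted Dolbeault complex with the above identification gives $\epb i\Ot_X[d_X]\simeq\Dbt_M\tens\ori_M$, and one checks independence of coordinates by the functoriality of $\Dbt_{X_\R}$ and $\Dbt_M$ under real-analytic isomorphisms.

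The main obstacle is the key step: establishing the tempered restriction formula $\epb i\Dbt_{X_\R}[d_X]\simeq\Dbt_M\tens\ori_M$ together with the compatibility with the $\ol\partial$-operators. The non-tempered analogue follows from classical support-theoretic arguments, but in the tempered setting one must control polynomial growth uniformly across tubes shrinking onto $M$, which is precisely the content of the subanalytic estimates developed in \cite{KS96}.
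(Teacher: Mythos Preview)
The paper does not give its own proof of this proposition; it is quoted from \cite[Theorem~5.10]{KS96}. Your outline, however, contains a genuine error rather than merely a gap.

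Your termwise restriction formula $\epb i\Dbt_{X_\R}[d_X]\simeq\Dbt_M\tens\ori_M$ is false. Work in local coordinates $z_j=x_j+iy_j$ with $M=\{y=0\}$. For a relatively compact subanalytic open $V\subset M$ one computes $\RHom(\Cfield_V,\epb i\Dbt_{X_\R})\simeq\RHom(\Cfield_{V\times\{0\}},\Dbt_{X_\R})$, which is concentrated in degree $0$ and identifies with the space of tempered distributions on $V\times\R^{d_X}$ supported on $\{y=0\}$. By the structure theorem these are finite sums $\sum_\alpha u_\alpha(x)\,\partial_y^\alpha\delta(y)$ with $u_\alpha\in\Dbt_M(V)$; hence $\epb i\Dbt_{X_\R}$ sits in degree $0$ and is locally $\Cfield[\partial_{y_1},\dots,\partial_{y_{d_X}}]\tens\Dbt_M\tens\ori_M$, not $\Dbt_M\tens\ori_M$ shifted by $d_X$. (This is consistent with the transfer-module description $\epb i\Dbt_{X_\R}\simeq\D_{X_\R\leftarrow M}\ltens[\D_M]\Dbt_M$ that the paper later records as Lemma~\ref{lem:fDbt}.)

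Your second claim, that the $\ol\partial_j$ ``restrict to tangential operators along $M$'' and hence the Dolbeault differentials vanish after applying $\epb i$, is also wrong: $\ol\partial_j=\tfrac12(\partial_{x_j}+i\partial_{y_j})$ has the normal component $\partial_{y_j}$. And note that if the differentials really did vanish on a complex whose terms are all $\Dbt_M\tens\ori_M$ (tensored with $\wedge^p(\C^{d_X})^*$), the cohomology would be a direct sum over $p$, not a single copy of $\Dbt_M\tens\ori_M$. What actually happens is that the $\partial_{y_j}$ act on the polynomial factor $\Cfield[\partial_y]$ in $\epb i\Dbt_{X_\R}$, and the resulting Koszul-type complex in the normal derivatives has cohomology concentrated in top degree, yielding $\Dbt_M\tens\ori_M$ after the shift. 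This is a genuine computation (essentially the Spencer/Koszul resolution of $\O_{\overline X}$ interacting with the transfer module), not a degeneration, and carrying it out carefully is equivalent in difficulty to the argument in \cite{KS96}.
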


\begin{lemma}
\label{lem:fDbt}
Let $f\colon M \to N$ be a morphism of real analytic manifolds. Then
\[
\epb f\Dbt_N \simeq \D_{N\leftarrow M} \ltens[\D_M] \Dbt_M,
\]
where $\D_{N\leftarrow M} = \D_{Y\leftarrow X}|_M\tens \ori_M \tens \opb f \ori_N$ for a complexification $X\to Y$ of $f$.
\end{lemma}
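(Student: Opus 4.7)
The plan is to deduce this real-analytic identity from the complex-analytic Theorem~\ref{thm:ifunct}~(i) by means of the comparison Proposition~\ref{pro:DbtOt}. Since the statement is local on $M$, I may shrink $M$ and $N$ and choose complexifications $i_M\colon M\hookrightarrow X$ and $i_N\colon N\hookrightarrow Y$ together with a holomorphic map $\tilde f\colon X\to Y$ satisfying $\tilde f\circ i_M=i_N\circ f$. Then the proof unfolds by chasing through a commutative square of $\epb{\dummy}$-functors, and using in an essential way that $\ori_M$ and $\ori_N$ are locally constant of rank one (so that $\epb f$, $\opb f$ and $\ltens$ all commute with twisting by them).

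Concretely, I begin with $\Dbt_N\simeq\epb{i_N}\Ot_Y[d_Y]\tens\ori_N$ from Proposition~\ref{pro:DbtOt} and apply $\epb f$ to obtain
\[
\epb f\Dbt_N\simeq\epb{i_M}\epb{\tilde f}\Ot_Y[d_Y]\tens\opb f\ori_N.
\]
Theorem~\ref{thm:ifunct}~(i) for the holomorphic map $\tilde f$ rewrites this as
\[
\epb f\Dbt_N\simeq\epb{i_M}\bl\D_{Y\leftarrow X}\ltens[\D_X]\Ot_X[d_X]\br\tens\opb f\ori_N.
\]
Applying Proposition~\ref{pro:DbtOt} to $i_M$ and unwinding the definition of $\D_{N\leftarrow M}$, the desired conclusion is reduced to the following base-change isomorphism, which is the heart of the proof:
\begin{equation}\label{eq:planbasechange}
\epb{i_M}\bl\D_{Y\leftarrow X}\ltens[\D_X]\Ot_X[d_X]\br\simeq \D_{Y\leftarrow X}\vert_M\ltens[\D_M]\epb{i_M}\Ot_X[d_X].
\end{equation}

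To establish \eqref{eq:planbasechange}, the plan is to exploit that $\D_{Y\leftarrow X}$ is locally $\D_X$-flat from the right; this can be seen by factoring $\tilde f$ through its graph $\Gamma\colon X\hookrightarrow X\times Y$ (a closed embedding) followed by the second projection $p\colon X\times Y\to Y$ (a smooth submersion), where the transfer bimodules are respectively of the form $\D_X\tens\sho_X^{\oplus\text{f.r.}}\text{-type}$ and $\D_{X\times Y}$-coherent and $\O_{X\times Y}$-locally free of infinite rank but well-understood via the relative de Rham complex. Equivalently, one can replace $\Ot_X$ by its Dolbeault resolution $\rhom[\D_{\bar X}](\Omega_{\bar X}^\bullet,\Dbt_{X_\R})[d_X]$, in which case both sides of \eqref{eq:planbasechange} reduce to elementary manipulations on finite complexes of $\Dbt$-type ind-sheaves, and the commutation of $\epb{i_M}$ with the (infinite but locally $\O_X$-free) tensor $\D_{Y\leftarrow X}\ltens[\D_X]-$ becomes a direct consequence of the projection-formula-type commutation of $\epb{i_M}$ with tensoring by locally free $\O_X$-modules (since $\D_M=\opb{i_M}\D_X$ in Kashiwara's convention for real analytic $\D$-modules).

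The main obstacle I anticipate is precisely the justification of \eqref{eq:planbasechange}: the transfer bimodule $\D_{Y\leftarrow X}$ is not $\O_X$-coherent, so neither flatness nor commutation with $\epb{\dummy}$ is automatic. The safest route is the graph factorization, which separates the difficulty into two independent steps — a closed embedding and a smooth projection — each of which is standard. Once \eqref{eq:planbasechange} is in hand, reassembling the factors $\ori_M$, $\opb f\ori_N$ and the shifts by $d_X$, $d_Y$ (which cancel since $d_X-d_M=d_Y-d_N=0$ in the appropriate sense) yields the identity of the lemma.
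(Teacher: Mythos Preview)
Your approach is essentially the same as the paper's: both reduce the statement to Proposition~\ref{pro:DbtOt} and Theorem~\ref{thm:ifunct}~(i) via the commutative square $i_N\circ f = \tilde f\circ i_M$, and both pivot on the commutation
\[
\epb{i_M}\bigl(\D_{Y\leftarrow X}\ltens[\D_X]\Ot_X\bigr)\;\simeq\;\opb{i_M}\D_{Y\leftarrow X}\ltens[\opb{i_M}\D_X]\epb{i_M}\Ot_X,
\]
which is exactly your \eqref{eq:planbasechange}. The paper writes this step in one line without comment, treating it as routine; your graph-factorization plan would certainly work, but the commutation already follows from the fact that $\D_{Y\leftarrow X}$ admits (locally) a finite resolution by free right $\D_X$-modules via the relative Spencer complex, so no elaborate detour is needed.
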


\begin{proof}
Consider the diagram
\[
\xymatrix@C=7ex{
M \ar[r]^{f} \ar[d]^{i_M} & N \ar[d]^{i_N} \\
X \ar[r]^{\tilde f} & Y.
}
\]
Then one has the isomorphisms
\begin{align*}
\D_{N\leftarrow M} \ltens[\D_M] \Dbt_M
&\underset{(*)}\simeq \opb{i_M}\D_{Y\leftarrow X} \ltens[\opb{i_M}\D_X] \epb{i_M}\Ot_X\tens\opb f\ori_N[d_M] \\
&\simeq \epb{i_M}(\D_{Y\leftarrow X} \ltens[\D_X] \Ot_X)\tens\opb f\ori_N[d_M] \\
&\underset{(**)}\simeq \epb{i_M}\epb{\tilde f}\Ot_Y \tens\opb f\ori_N[d_N] \\
&\simeq \epb f\epb{i_N}\Ot_Y \tens\opb f\ori_N[d_N] \\
&\underset{(*)}\simeq \epb f\Dbt_N,
\end{align*}
where $(*)$'s follow from Proposition~\ref{pro:DbtOt}, and $(**)$ follows from Theorem~\ref{thm:ifunct}~(i).
\end{proof}

\subsection{Real analytic bordered spaces}

\begin{definition}
The category of \emph{real analytic bordered spaces} is the category whose objects are pairs $(M,\bM)$ where $\bM$ is a real analytic manifold and $M\subset \bM$ is an open subanalytic subset.
Morphisms $f\colon (M,\bM) \to (N,\bN)$ are real analytic maps $f\colon M\to N$ such that
\bnum
\item
$\Gamma_f$ is a subanalytic subset of $\bM\times\bN$, and
\item
$\overline\Gamma_f\to\bM$ is proper.
\ee
\end{definition}

Hence a morphism of real analytic bordered spaces is a morphism of bordered spaces.

\begin{lemma}
Let $f\colon(M,\bM)\to(N,\bN)$ be a morphism of real analytic bordered spaces.
Then $f$ is an isomorphism if the following conditions are satisfied
\bnum
\item $f\colon M\to N$ is an isomorphism of real analytic manifolds,
\item $\overline\Gamma_f\to\bN$ is proper.
\ee
\end{lemma}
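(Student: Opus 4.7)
The idea is to exhibit $g \defeq f^{-1}\colon N \to M$, given to us by condition (i), as an inverse morphism in the category of real analytic bordered spaces. Since composition of morphisms of bordered spaces agrees with composition of the underlying maps, and $f \circ g = \id_N$, $g \circ f = \id_M$ hold automatically on $M$ and $N$, the only substantive task is to verify that $g$ really defines a morphism of real analytic bordered spaces $(N,\bN) \to (M,\bM)$, i.e.\ that $\Gamma_g$ is subanalytic in $\bN \times \bM$ and that the projection $\overline{\Gamma_g} \to \bN$ is proper.

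The key observation is that the swap map $\tau\colon \bM \times \bN \to \bN \times \bM$, $(x,y) \mapsto (y,x)$, is a real analytic diffeomorphism, hence a homeomorphism which preserves subanalyticity of subsets. Since $\Gamma_g = \tau(\Gamma_f)$ as subsets of $\bN \times \bM$, one concludes that $\Gamma_g$ is subanalytic in $\bN \times \bM$ from the subanalyticity of $\Gamma_f$. Moreover, because $\tau$ is a homeomorphism, $\overline{\Gamma_g} = \tau(\overline{\Gamma_f})$, and under this identification the first projection $\overline{\Gamma_g} \to \bN$ corresponds to the second projection $\overline{\Gamma_f} \to \bN$. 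The properness of the latter is exactly hypothesis (ii).

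Having shown that $g$ is a morphism in the category, we invoke Lemma~\ref{lem:bordcom} to form the compositions $g \circ f$ and $f \circ g$ inside the category, which are then identified with the identity morphisms of $(M,\bM)$ and $(N,\bN)$ respectively, since composition at the level of underlying maps gives the identities on $M$ and $N$. This shows $f$ is an isomorphism.

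There is really no substantive obstacle here: the lemma is essentially a bookkeeping statement that condition (ii) is precisely the missing axiom, beyond (i), required to promote the set-theoretic inverse $f^{-1}$ to a morphism of bordered spaces. The only point to be vigilant about is that subanalyticity is preserved under the swap diffeomorphism and that closures commute with homeomorphisms, both of which are immediate.
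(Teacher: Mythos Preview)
Your proof is correct. The paper states this lemma without proof, treating it as evident; your argument supplies exactly the details one would expect: the swap homeomorphism $\tau$ carries $\Gamma_f$ to $\Gamma_g$, preserves subanalyticity and closures, and converts hypothesis (ii) into the properness condition required for $g=f^{-1}$ to be a morphism of real analytic bordered spaces. One small point you leave implicit is that $g$ is itself real analytic, but this is immediate from (i).
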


Recall that $j_M\colon(M,\bM)\to\bM$ and $j_N\colon(N,\bN)\to\bN$ denote the natural morphisms.

\begin{proposition}\label{pro:Dbtbordered}
Let $f\colon(M,\bM)\to(N,\bN)$ be an isomorphism of real analytic bordered spaces. Then there is an isomorphism in $\BDC(\iCfield_{(M,\bM)})$
\[
\opb{j_M}\Dbt_{\bM} \simeq \opb f\opb{j_N}\Dbt_{\bN}.
\]
\end{proposition}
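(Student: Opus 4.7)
The strategy is to factor $f$ through its graph and reduce to a compatibility statement in the ambient product manifold $\bM\times\bN$.

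\textbf{Reduction via the graph.} By Lemma~\ref{lem:fborddecomp}, $f$ decomposes as
\[
(M,\bM)\isofrom[q_1](\Gamma_f,\overline\Gamma_f)\To[q_2](N,\bN),
\]
where $q_1,q_2$ extend to continuous maps $\check q_1\colon\overline\Gamma_f\to\bM$ and $\check q_2\colon\overline\Gamma_f\to\bN$. Since $f$ is an isomorphism of real analytic bordered spaces, both $\check q_i$ are proper (properness of $\check q_1$ is the defining condition for $f$ being a morphism of bordered spaces, and properness of $\check q_2$ is the same condition applied to $f^{-1}$). In particular $q_1$ is an isomorphism of bordered spaces, so $\opb f\simeq \roim{q_1}\opb{q_2}$, and the desired isomorphism is equivalent to
\[
\opb{q_2}\opb{j_N}\Dbt_{\bN}\simeq \opb{q_1}\opb{j_M}\Dbt_{\bM}\quad\text{in }\BDC(\iCfield_{(\Gamma_f,\overline\Gamma_f)}).
\]

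\textbf{Passage to the ambient product.} Realize $\overline\Gamma_f$ as a closed subanalytic subset of the real analytic manifold $\bM\times\bN$, with $\check q_i$ equal to the restrictions of the projections $p_i\colon\bM\times\bN\to\bM,\bN$. Factor each $q_i$ as $\tilde p_i\circ\iota$, where $\iota\colon(\Gamma_f,\overline\Gamma_f)\to(M\times N,\bM\times\bN)$ is the natural closed embedding and $\tilde p_i\colon(M\times N,\bM\times\bN)\to(M,\bM),(N,\bN)$ are the bordered projections. Each $\tilde p_i$ is induced by the real analytic submersion $p_i$ between ambient manifolds, so Lemma~\ref{lem:f=jfj} gives $\opb{\tilde p_i}\opb{j_\bullet}\Dbt_{\bullet}\simeq \opb{j_{M\times N}}\opb{p_i}\Dbt_\bullet$. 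The problem thus reduces to producing an isomorphism
\[
\opb\iota\opb{j_{M\times N}}\opb{p_1}\Dbt_{\bM}\simeq \opb\iota\opb{j_{M\times N}}\opb{p_2}\Dbt_{\bN}
\]
in $\BDC(\iCfield_{(\Gamma_f,\overline\Gamma_f)})$.

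\textbf{Construction of the isomorphism and main obstacle.} Over the graph $\Gamma_f$, both projections $p_i|_{\Gamma_f}$ are real analytic diffeomorphisms onto $M$ and $N$, and the classical pullback of distributions (for real analytic diffeomorphisms) yields a canonical identification between the two sides on $\Gamma_f$. The task is to promote this pointwise identification to a genuine isomorphism in the quotient category, i.e., modulo objects whose supports meet $\overline\Gamma_f$ only in $\overline\Gamma_f\setminus\Gamma_f$. The main obstacle is the matching of temperedness: one must check that, near a boundary point of $\Gamma_f$ in $\overline\Gamma_f$, the polynomial growth measured by the distance to $\bM\setminus M$ (via $p_1$) is equivalent to that measured by the distance to $\bN\setminus N$ (via $p_2$). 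This is where the subanalyticity and properness hypotheses enter decisively: by the Lojasiewicz inequality applied to the closed subanalytic graph $\overline\Gamma_f$, together with properness of the two subanalytic projections $\check q_1,\check q_2$, the two boundary-distance functions restricted to $\overline\Gamma_f$ are each bounded above by positive powers of the other. This comparison is exactly what is required so that the tempered growth condition defining $\Dbt_{\bM}$ transports to that of $\Dbt_{\bN}$ under $f$, giving the required isomorphism by standard gluing for subanalytic ind-sheaves.
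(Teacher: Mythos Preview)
Your graph factorization and passage to the ambient product are essentially cosmetic: since $q_1$ is already an isomorphism of bordered spaces, the reduced statement $\opb{q_2}\opb{j_N}\Dbt_{\bN}\simeq\opb{q_1}\opb{j_M}\Dbt_{\bM}$ is equivalent to the original one, and embedding into $\bM\times\bN$ does not add new structure beyond what the subanalytic graph already provides. In the end the content is exactly the section-level comparison $\Dbt_{\bM}(f^{-1}(V))\simeq\Dbt_{\bN}(V)$ for $V\subset N$ relatively compact subanalytic, which is what the paper establishes directly by viewing both sides as subanalytic sheaves on $\bM$.

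The real gap is in your final step. The \L ojasiewicz inequality does give that the two boundary-distance functions are polynomially equivalent on $\overline\Gamma_f$, but this alone does not yield the isomorphism for $\Dbt$: tempered distributions are defined by \emph{extendability} to the compactification, not by a pointwise growth bound, and $f$ does not extend to a map $\bM\to\bN$. The paper closes this gap by duality: using \cite[Theorem~6.1]{KS96}, the comparison is reduced to showing that a smooth function on $V$ is rapidly decreasing at the boundary in $\bN$ if and only if its pullback is rapidly decreasing at the boundary in $\bM$ (Lemma~\ref{lem:repdec}). The proof of that lemma requires controlling the derivatives $\partial_x^\gamma y_k^i$ of the coordinate change and showing they have polynomial growth; this chain-rule control of derivatives is strictly more than a comparison of distance functions, and your sketch does not supply it. If you want to salvage your approach, you would need to argue separately that all partial derivatives of $f$ and $f^{-1}$ are tempered at the boundary (which follows from real analyticity together with the subanalyticity of the graph), and then either reproduce the duality argument or give a direct proof that extendability of distributions is preserved.
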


\begin{proof}
We shall regard $\roimv{j_{M\sep*}}\opb{j_M}\Dbt_{\bM}$ and $\roimv{j_{M\sep*}}\opb f\opb{j_N}\Dbt_{\bN}$ as subanalytic sheaves on $\bM$. Hence it is enough to show that
\[
\Dbt_{\bM}(\opb f(V)) \simeq \Dbt_{\bN}(V)
\]
for any relatively compact subanalytic open subset $V$ of $\bN$ contained in $N$.

By \cite[Theorem 6.1]{KS96}, the topological dual of the above isomorphism is given by
\[
\sect(\bM;\Cfield_{\opb f(V)}\wtens \shc^\infty_{\bM}) \simeq 
\sect(\bN;\Cfield_{V}\wtens \shc^\infty_{\bN}).
\]
Hence, by Lemma~\ref{lem:W}, the proposition follows from Lemma~\ref{lem:repdec} below.
\end{proof}

\begin{lemma}
\label{lem:repdec}
With the same notations as in the above proposition, let $V$ be a relatively compact subanalytic open  subset of $\bN$ contained in $N$, and let $u\in\shc^\infty_{\bN}(V)$. Then $u$ is rapidly decreasing at the boundary of $V$ if and only if $f^*(u)\in \shc^\infty_{\bM}(\opb f(V))$ is rapidly decreasing at the boundary of $\opb f(V)$.
\end{lemma}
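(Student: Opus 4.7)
The plan is to work on the closure of the graph $\overline\Gamma_f\subset\bM\times\bN$, combining a Łojasiewicz comparison of two distance functions with polynomial bounds on the derivatives of $f$. Since $f$ is an isomorphism of bordered spaces, its inverse $f^{-1}\cl(N,\bN)\to(M,\bM)$ is also a morphism, so both projections $p_M\cl\overline\Gamma_f\to\bM$ and $p_N\cl\overline\Gamma_f\to\bN$ are proper subanalytic maps. By symmetry, it is enough to prove the forward implication: if $u$ is rapidly decreasing at the boundary of $V$ in $\bN$, then $f^*u$ is rapidly decreasing at the boundary of $\opb f(V)$ in $\bM$.

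First, Lemma~\ref{lem:proper}~(a), applied to the proper morphism $f$, yields $p_N^{-1}(V)\subset\Gamma_f$, hence $p_N^{-1}(V)=p_M^{-1}(\opb f(V))$. Now fix a boundary point $x_0$ of $\opb f(V)$ in $\bM$ and a compact subanalytic neighborhood $K$ of $x_0$ in $\bM$. Then $\widetilde K\defeq p_M^{-1}(K)$ is compact and subanalytic in $\overline\Gamma_f$, and the two continuous subanalytic functions
\[
\phi_M(z)\defeq\dist_\bM\bl\bM\setminus\opb f(V),p_M(z)\br,\qquad \phi_N(z)\defeq\dist_\bN\bl\bN\setminus V,p_N(z)\br
\]
on $\widetilde K$ share the same zero set. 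By the Łojasiewicz inequality for continuous subanalytic functions on a compact set, there exist $C_1>0$ and an integer $c\geq 1$ such that $\phi_N(z)^c\leq C_1\phi_M(z)$ on $\widetilde K$; specialising to $z=(x,f(x))$ with $x\in K\cap\opb f(V)$ gives
\[
\dist_\bN\bl\bN\setminus V,f(x)\br\leq C_1^{1/c}\dist_\bM\bl\bM\setminus\opb f(V),x\br^{1/c}.
\]

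Next, I will use the following polynomial derivative bound on $f$: for each multi-index $\beta$ there exist $C_\beta,s_\beta>0$ with
\[
|\partial^\beta f(x)|\leq C_\beta\dist_\bM(\bM\setminus M,x)^{-s_\beta}\qquad\text{for all }x\in K\cap M.
\]
This is a consequence of the real analyticity of $f$ on $M$ together with the subanalyticity of $\Gamma_f$ in $\bM\times\bN$: one complexifies locally and applies Cauchy estimates on a polydisc whose radius is bounded below by a positive power of $\dist_\bM(\bM\setminus M,x)$, the latter bound coming again from Łojasiewicz applied to the subanalytic graph.

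Combining the two ingredients through Faà di Bruno's formula, for each multi-index $\alpha$ there is $\sigma_\alpha>0$ with
\[
|\partial^\alpha(f^*u)(x)|\leq C_\alpha\sum_{|\beta|\leq|\alpha|}\bl|\partial^\beta u|\circ f\br(x)\cdot\dist_\bM(\bM\setminus M,x)^{-\sigma_\alpha}.
\]
Using rapid decrease of $u$ with an arbitrary exponent $R>0$, the Łojasiewicz comparison above, and the inclusion $\bM\setminus M\subset\bM\setminus\opb f(V)$ (which gives $\dist_\bM(\bM\setminus M,x)\geq\dist_\bM(\bM\setminus\opb f(V),x)$), one obtains
\[
|\partial^\alpha(f^*u)(x)|\leq C'_{R,\alpha}\dist_\bM\bl\bM\setminus\opb f(V),x\br^{R/c-\sigma_\alpha}.
\]
Since $R$ is arbitrary, the exponent can be made as large as desired, proving that $f^*u$ is rapidly decreasing at $x_0$. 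The main technical obstacle will be rigorously establishing the polynomial derivative bound on $f$ from the subanalyticity of its graph; once that is in hand, the rest is a clean combination of Łojasiewicz on $\overline\Gamma_f$ and the chain rule.
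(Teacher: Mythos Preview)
Your approach is essentially the paper's: both reduce to the chain rule together with polynomial growth of the partial derivatives of $f$ near the boundary, and the paper simply asserts that growth (your acknowledged ``main technical obstacle'') without further justification. The only structural differences are that the paper first covers the compact fiber over $x_\circ$ in $\overline\Gamma_f$ by finitely many coordinate charts $V_i\subset\bN$---which is what one needs to make sense of $|\partial^\beta f|$ and is exactly the setting for the Cauchy-estimate argument you sketch---and that it leaves implicit the \L{}ojasiewicz comparison between $\dist_{\bN}(\bN\setminus V,f(x))$ and $\dist_{\bM}(\bM\setminus\opb f(V),x)$ that you spell out.
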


\begin{proof}
Denote by $q_1\colon\overline\Gamma_f\to\bM$ and $q_2\colon\overline\Gamma_f\to\bN$ the projections. 
Note that, since $f$ is an isomorphism of real analytic bordered spaces, one has 
\[
\Gamma_f = \overline\Gamma_f\times_{\bM}M = \overline\Gamma_f\times_{\bN}N.
\]

Assume that $u$ is rapidly decreasing at the boundary of $V$. For $x_\circ\in\partial(\opb f(V))$ let us choose a sufficiently small open neighborhood $W$ of $x_0$ and local coordinates $(x_1,\dots,x_n)$.
Since $\opb{q_1}(x_\circ)$ is compact, shrinking $W$ if necessary, there exist finitely many relatively compact  subanalytic open subsets $\{V_i\}$ and $\{V_i'\}$ of $\bN$ such that
\bna
\item 
$\overline{V_i'} \subset V_i$,
\item
$\opb{q_1}(W) \subset \Union_i(W\times V_i')$,
\item
there exist local coordinates $(y_1^i,\dots,y_n^i)$ on $V_i$.
\ee
Then $f(\opb f(V)\cap W) \subset \Union_i V_i'$.

It follows that the derivatives $\partial_x^\alpha f^*(u)$ are linear combinations of derivatives $\partial_{y^i}^\beta u$ with coefficients given by products of terms of the form $\partial_x^\gamma y_k^i$.
Since $\partial_{y^i}^\beta u$ are rapidly decreasing and $\partial_x^\gamma y_k^i$ have polynomial growth, it follows that $f^*(u)|_{\opb f(V\cap V_i')}$ is rapidly decreasing at $x_\circ$ for any $i$. Hence $f^*(u) \in \shc^\infty_{\bM}(\opb f V)$ is rapidly decreasing at $x_\circ$.
\end{proof}

\section{Exponential \texorpdfstring{$\D$}{D}-modules}\label{se:expo}

Let $X$ be a complex analytic manifold. According to the results of
Mochizuki~\cite{Moc09,Moc11} and  Kedlaya~\cite{Ked10,Ked11}
(see \S\ref{sse:normal} below), a fundamental model for irregular holonomic $\D_X$-modules is the exponential $\D_X$-module associated with a meromorphic connection $d+d\varphi$
for a meromorphic function  $\varphi\in\O_X(*Y)$
with poles on a hypersurface $Y$.
In this section we describe the tempered de Rham complex of such exponential $\D_X$-modules.

\subsection{Exponential $\D$-modules}
Let $X$ be a complex analytic manifold.

\begin{definition}
\label{def:expY}
Let $Y\subset X$ be a complex analytic hypersurface. Set $U=X\setminus Y$.
For $\varphi\in\O_X(*Y)$, set
\begin{align*}
\D_X e^\varphi &= \D_X/\{P\semicolon Pe^\varphi=0 \text{ on } U\}, \\
\she^\varphi_{U|X}&=\D_X e^\varphi(*Y).
\end{align*}
\end{definition}

Hence $\D_X e^\varphi\subset\she^\varphi_{U|X}$.
Note that $\she^\varphi_{U|X}$ is a holonomic $\D_X$-module which satisfies
\[
\she^\varphi_{U|X} \simeq \she^\varphi_{U|X}(*Y), \qquad \ss(\she^\varphi_{U|X}) = Y.
\]
Note that the map $\O_X(*Y)\to[\cdot e^\varphi]\she^\varphi_{U|X}$ induces an isomorphism as $\O_X$-modules.

\begin{lemma}
\label{lem:YEphi}
For $\varphi\in\O_X(*Y)$ one has
\[
(\ddual_X\she^\varphi_{U|X})(*Y) \simeq \she^{-\varphi}_{U|X} .
\]
\end{lemma}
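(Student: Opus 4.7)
The plan is to reduce the claim to a computation on $U=X\setminus Y$, where $\she^\varphi_{U|X}$ becomes the rank-one connection $(\O_U, d+d\varphi)$, and then transport the result back to $X$ via the $(*Y)$-localization. The guiding idea is that on a holonomic $\D_X$-module $\shn$ the functor $\shn\mapsto\shn(*Y) \simeq \shn\tens_{\O_X}\O_X(*Y)$ depends only on $\shn|_U$: more precisely, letting $j\colon U\hookrightarrow X$ denote the open embedding, one has the standard isomorphism $\shn(*Y)\simeq\doim j\dopb j\shn$, obtained from the adjunction together with the fact that the cone of $\shn\to\doim j\dopb j\shn$ is supported on $Y$ and thus killed by $(*Y)$.

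First, I would compute the dual on $U$. Since $\ddual$ commutes with restriction to an open subset, $(\ddual_X \she^\varphi_{U|X})|_U \simeq \ddual_U(\she^\varphi_{U|X}|_U)$. Now $\she^\varphi_{U|X}|_U$ is a locally free $\O_U$-module of rank one generated by $e^\varphi$, with connection $\nabla(e^\varphi)=d\varphi\otimes e^\varphi$; for such a connection, the $\D_U$-module dual is the dual line bundle equipped with the dual connection, placed in degree zero. The perfect pairing $e^\varphi\tens e^{-\varphi}\mapsto 1$ and the identity $d(e^\varphi\cdot e^{-\varphi})=d(1)=0$ identify the dual connection with $\nabla^\vee(e^{-\varphi})=-d\varphi\otimes e^{-\varphi}$, so that
\[
(\ddual_X \she^\varphi_{U|X})|_U \;\simeq\; \she^{-\varphi}_{U|X}|_U.
\]

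Second, I would extend the isomorphism to $X$. Since $\she^{-\varphi}_{U|X}$ is, by construction, a meromorphic extension along $Y$ satisfying $\she^{-\varphi}_{U|X}(*Y)=\she^{-\varphi}_{U|X}$, applying $(*Y)\simeq\doim j\dopb j$ to the isomorphism on $U$ produces the desired global isomorphism $(\ddual_X\she^\varphi_{U|X})(*Y)\simeq\she^{-\varphi}_{U|X}$. The one subtle point — and the main obstacle one must address carefully — is to make the isomorphism on $U$ \emph{canonical} enough to commute with meromorphic extension: this is best done by constructing the natural $\D_X$-linear pairing $\she^\varphi_{U|X}\tens_{\O_X}\she^{-\varphi}_{U|X}\to\O_X(*Y)$ (well-defined by the Leibniz identity above) and using it, together with the rank-one freeness over $\O_X(*Y)$, to produce a canonical morphism $\she^{-\varphi}_{U|X}\to (\ddual_X\she^\varphi_{U|X})(*Y)$ which the $U$-computation then shows is an isomorphism.
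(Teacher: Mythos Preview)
Your proposal is correct and, once you reach the final paragraph, takes essentially the same approach as the paper: construct the $\D_X$-linear pairing $\she^{-\varphi}_{U|X}\dtens\she^{\varphi}_{U|X}\to\O_X(*Y)$, obtain from it a morphism between $(\ddual_X\she^\varphi_{U|X})(*Y)$ and $\she^{-\varphi}_{U|X}$, and conclude by noting it is an isomorphism on $U=X\setminus Y$. The paper's proof is just the two-line version of exactly this; your detour through $\doim j\dopb j$ and the explicit rank-one computation on $U$ is unnecessary scaffolding, but not wrong.
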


\begin{proof}
The morphism $\D_X e^{-\varphi}(*Y) \dtens \D_X e^\varphi(*Y) \to \O_X (*Y)$ induces a morphism $\ddual_X\she^\varphi_{U|X} \to \she^{-\varphi}_{U|X}$. Since it is an isomorphism outside of $Y$, the statement follows.
\end{proof}

\begin{remark}
The isomorphism $\ddual_X\she^\varphi_{U|X} \simeq \she^{-\varphi}_{U|X}$ does not hold in general. For example, let $X=\C^2\owns(u,v)$, $Y=\{v=0\}$ and $\varphi(u,v) = u^2/v^2$. Then $\she^\varphi_{U|X} \simeq \D_X v^{-2} e^\varphi$ and there is an epimorphism 
$$\she^\varphi_{U|X} \twoheadrightarrow \shb_{\{(0,0)\}}\simeq\D_X/(\D_Xu+\D_Xv).$$
Hence $\ddual_X\she^\varphi_{U|X}$ contains $\shb_{\{(0,0)\}}$ as a submodule. 
\end{remark}

\subsection{Tempered de Rham}
Our aim in this subsection is to describe the tempered de Rham complex of an exponential $\D$-module.

Let $X$, $Y$, $U$ and $\varphi$ be as in
Definition~\ref{def:expY}.
For $c\in\R$, set for short
\[
\{\Re \varphi < c\} = \{x\in U\semicolon \Re \varphi(x) < c\} \subset X.
\]

\begin{notation}
\label{not:<?}
We set
\begin{align*}
\Cfield_{\{\Re \varphi <\ast\}} &\defeq \indlim[c\rightarrow+\infty]\Cfield_{\{\Re \varphi < c\}} \in \ind(\Cfield_X), \\
E^\varphi_{U|X} &\defeq \rihom(\Cfield_U,\Cfield_{\{\Re \varphi <\ast\}}) \in \BDC(\iCfield_X).
\end{align*}
\end{notation}

For example, denoting by $z \in \C \subset \PP$ the affine coordinate of the complex projective line, one has 
\begin{equation}
\label{eq:HjEt}
H^j E^z_{\C|\PP} \simeq
\begin{cases}
\Cfield_{\{\Re z <\ast\}} &\text{for }j=0, \\
\Cfield_{\{\infty\}} &\text{for }j=1, \\
0 &\text{otherwise}.
\end{cases}
\end{equation}

\begin{proposition}
\label{pro:Solphi}
Let $Y\subset X$ be a closed complex analytic hypersurface, and set $U=X\setminus Y$.
For $\varphi\in\O_X(*Y)$, there is an isomorphism in $\BDC(\iCfield_X)$
\[
\drt_X(\she^{-\varphi}_{U|X}) \simeq E^\varphi_{U|X}[d_X].
\]
\end{proposition}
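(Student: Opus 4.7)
The plan is to identify $\drt_X(\she^{-\varphi}_{U|X})$ with a twisted tempered holomorphic de Rham complex, construct a natural morphism $\alpha\colon E^\varphi_{U|X}[d_X]\to\drt_X(\she^{-\varphi}_{U|X})$ using the flat section $e^\varphi$ of the twisted connection, and then verify that $\alpha$ is a quasi-isomorphism via a tempered Poincar\'e--Dolbeault lemma.

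For the identification, I would use the $\O_X$-linear isomorphism $\she^{-\varphi}_{U|X}\simeq\O_X(*Y)$, $fe^{-\varphi}\mapsto f$, under which the $\D_X$-structure becomes the twisted connection $\nabla=d-d\varphi\wedge$ on $\O_X(*Y)$. Combined with Theorem~\ref{thm:ifunct}~(iv), which gives $\Ot_X\ltens[\O_X]\O_X(*Y)\simeq\rihom(\Cfield_U,\Ot_X)$, this yields
\[
\drt_X(\she^{-\varphi}_{U|X})\simeq\bigl[\Omega_X^\bullet\otimes_{\O_X}\rihom(\Cfield_U,\Ot_X),\ d-d\varphi\wedge\bigr]\,[d_X].
\]
Since $(d-d\varphi\wedge)(e^\varphi)=0$, the exponential $e^\varphi$ is flat for the twisted differential. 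On $\{\Re\varphi<c\}$ one has $|e^\varphi|<e^c$, and because $\varphi$ and its derivatives lie in $\O_X(*Y)$ all derivatives of $e^\varphi$ belong to $e^\varphi\cdot\O_X(*Y)$ and hence have polynomial growth near $Y$; thus $e^\varphi$ defines a tempered holomorphic function on $\{\Re\varphi<c\}\cap U$. Sending $1\in\Cfield_{\{\Re\varphi<c\}}$ to $e^\varphi$ and passing to the ind-limit $c\to+\infty$ produces the desired morphism $\alpha$.

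To prove $\alpha$ is an isomorphism, the question is local on $X$. Using the tempered Dolbeault resolution $\Ot_X\simeq\rhom[\D_{\overline X}](\O_{\overline X},\Dbt_{X_\R})$, one replaces $\rihom(\Cfield_U,\Ot_X)$ by a complex of tempered distributions, so that $\drt_X(\she^{-\varphi}_{U|X})$ becomes the total complex of a tempered real de Rham--Dolbeault bicomplex with coefficients in $\rihom(\Cfield_U,\Dbt_{X_\R})$ and twisted total differential $\bar\partial+d-d\varphi\wedge$. Restricting to $\{\Re\varphi<c\}$ and performing the formal gauge transformation $\omega\mapsto e^\varphi\omega$ converts the twisted total differential into the ordinary $\bar\partial+d$; the statement then reduces to the tempered Poincar\'e lemma asserting that the tempered real de Rham complex with coefficients in $\Dbt_{X_\R}$ on the open subanalytic set $\{\Re\varphi<c\}$ is a resolution of $\Cfield_{\{\Re\varphi<c\}}$.

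The main obstacle is executing the gauge transformation \emph{within} the tempered framework. While multiplication by $e^\varphi$ is a well-defined, temperedness-preserving operation on $\{\Re\varphi<c\}$, the inverse $e^{-\varphi}$ is unbounded there, so one cannot simply conclude that $e^\varphi\cdot$ induces an isomorphism of complexes and read off surjectivity on cohomology. Instead, surjectivity has to be obtained by showing that any tempered closed form for the twisted differential on $\{\Re\varphi<c\}\cap U$ is, up to a tempered twisted boundary, proportional to $e^\varphi$; this requires solving the $\bar\partial$- and twisted $d$-equations with explicit tempered estimates near the boundary $Y$, in the spirit of H\"ormander's $L^2$-techniques underlying the construction of $\Dbt_{X_\R}$ in \cite{KS96,KS01}.
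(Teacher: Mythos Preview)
Your construction of the morphism $\alpha$ is reasonable, but the proof that it is an isomorphism has a genuine gap that you yourself identify: the gauge transformation $\omega\mapsto e^\varphi\omega$ is \emph{not} an automorphism of the tempered complex on $\{\Re\varphi<c\}$, because $e^{-\varphi}$ is unbounded there. Your proposed fix---solving the twisted $\bar\partial$- and $d$-equations with tempered estimates via H\"ormander-type techniques---is not carried out, and it is not clear it can be: the geometry of $\{\Re\varphi<c\}$ near $Y$ can be complicated (e.g.\ spiraling sectors when $\varphi$ has a pole), and a uniform tempered Poincar\'e lemma on such domains is not available in the literature you cite. As written, the argument stops at the hard analytic step.

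The paper avoids this analysis entirely by a functorial reduction. One writes $\varphi=a/b$ with $Y=b^{-1}(0)$ and pulls back along $f=(a,b)\colon X\to\C^2$, so that $\she^{-\varphi}_{U|X}\simeq\dopb f\she^{-u/v}_{\{v\neq 0\}|\C^2}$; by Theorem~\ref{thm:ifunct}~(ii) and Proposition~\ref{pro:opbepb} the statement reduces to the single model case $\varphi=u/v$ on $\C^2$. That case is handled by blowing up the origin and pulling back from $\she^{-z}_{\C|\PP}$ on $\PP$, and the $\PP$ case in turn reduces (via the isomorphism of real analytic bordered spaces $(\R^2,\PR^2)\simeq(\C,\PP)$ and a harmless gauge by the \emph{bounded} function $e^{-\sqrt{-1}y}$) to a one-variable computation on $\PR$ where $\partial_t-1$ acts on tempered distributions. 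The point is that all the delicate analysis is concentrated in a single elementary ODE on the real line, and the general case follows from the six-functor formalism.
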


The fundamental case where $X=\C$ and $\varphi(z)=1/z$ was considered in \cite[Proposition~7.3]{KS03}.

In order to prove the above proposition, we need some preliminary results.

\begin{lemma}\label{lem:IhomU}
With the above notations, one has
\[
\drt_X(\she^{-\varphi}_{U|X})  \isoto \rihom(\Cfield_U,\drt_X(\she^{-\varphi}_{U|X})).
\]
\end{lemma}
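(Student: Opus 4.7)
The plan is to rewrite $\drt_X(\she^{-\varphi}_{U|X})$ as an object that is manifestly invariant under $\rihom(\Cfield_U,\cdot)$. Since $\she^{-\varphi}_{U|X}\simeq\she^{-\varphi}_{U|X}(*Y)$ by the very definition of $\she^{-\varphi}_{U|X}$, it is naturally a module over $\D_X(*Y)\defeq\D_X\tens_{\O_X}\O_X(*Y)$. Setting $\Ovt_X(*Y)\defeq\Ovt_X\ltens[\O_X]\O_X(*Y)\simeq\Ovt_X\ltens[\D_X]\D_X(*Y)$, the change-of-rings isomorphism gives
\[
\drt_X(\she^{-\varphi}_{U|X}) = \Ovt_X\ltens[\D_X]\she^{-\varphi}_{U|X}\simeq\Ovt_X(*Y)\ltens[\D_X(*Y)]\she^{-\varphi}_{U|X}.
\]

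Next, I would apply Theorem~\ref{thm:ifunct}~(iv) to the regular holonomic $\D_X$-module $\O_X(*Y)$, whose solution complex is $\Cfield_U$, to get $\Ot_X\ltens[\O_X]\O_X(*Y)\simeq\rihom(\Cfield_U,\Ot_X)$. Tensoring with $\Omega_X$ over $\O_X$ yields $\Ovt_X(*Y)\simeq\rihom(\Cfield_U,\Ovt_X)$, and the idempotence $\Cfield_U\tens\Cfield_U\simeq\Cfield_U$ then gives $\rihom(\Cfield_U,\Ovt_X(*Y))\simeq\Ovt_X(*Y)$.

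It would then remain to verify the commutation
\[
\rihom\bl\Cfield_U, G\ltens[\D_X(*Y)]\she^{-\varphi}_{U|X}\br \simeq \rihom(\Cfield_U,G)\ltens[\D_X(*Y)]\she^{-\varphi}_{U|X}
\]
for $G=\Ovt_X(*Y)$. Combining this with the previous paragraph, one obtains
\[
\rihom(\Cfield_U,\drt_X(\she^{-\varphi}_{U|X}))\simeq \rihom(\Cfield_U,\Ovt_X(*Y))\ltens[\D_X(*Y)]\she^{-\varphi}_{U|X}\simeq\drt_X(\she^{-\varphi}_{U|X}),
\]
which is the required isomorphism (and one checks that the composition agrees with the natural morphism in the statement).

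The hard part will be precisely the commutation of $\rihom(\Cfield_U,\cdot)$ with $\cdot\ltens[\D_X(*Y)]\she^{-\varphi}_{U|X}$. I expect this to reduce, by a local resolution argument, to the case of a finitely generated free $\D_X(*Y)$-module, where the statement is trivial since $\rihom(\Cfield_U,\cdot)$ commutes with finite direct sums. The reduction uses that $\she^{-\varphi}_{U|X}$ is coherent over $\D_X(*Y)$: indeed it is a locally free $\O_X(*Y)$-module of rank one with integrable connection, so it admits locally a finite Spencer/Koszul-type resolution by free $\D_X(*Y)$-modules, to which one applies a standard spectral sequence argument.
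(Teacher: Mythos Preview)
Your approach is essentially the same as the paper's: use $\she^{-\varphi}_{U|X}\simeq\she^{-\varphi}_{U|X}(*Y)$, apply Theorem~\ref{thm:ifunct}~(iv) to replace $\O_X(*Y)\ltens[\O_X]\Ovt_X$ by $\rihom(\Cfield_U,\Ovt_X)$, and then commute $\rihom(\Cfield_U,\cdot)$ past the tensor product over $\D_X$. The paper does this in three lines, staying over $\D_X$ rather than passing to $\D_X(*Y)$; your change-of-rings detour is harmless but unnecessary.

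The one place you overcomplicate things is the ``hard part''. The commutation
\[
\rihom(F,\shn)\ltens[\sha]\shm \;\simeq\; \rihom(F,\shn\ltens[\sha]\shm)
\]
for $F\in\BDC(\field_X)$, $\shm\in\BDC(\sha^\op)$ and $\shn\in\BDC(\ind\sha)$ is a general fact of the ind-sheaf formalism (it is recorded in the paper as the unnumbered Lemma in \S4.10, and goes back to \cite{KS01}). You do not need a hands-on Spencer/Koszul resolution of $\she^{-\varphi}_{U|X}$ over $\D_X(*Y)$; the commutation holds for any coherent $\D_X$-module $\shm$, and its proof (if one unwinds it) is exactly the free-resolution argument you sketch, but carried out once and for all at the level of the general lemma. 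So your plan is correct, just longer than needed: invoke the general commutation and you are done.
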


\begin{proof}
One has
\begin{align*}
\Ovt_X \ltens[\D_X] \she^{-\varphi}_{U|X} &\simeq
\Ovt_X \ltens[\D_X] (\she^{-\varphi}_{U|X} \dtens \O_X(*Y)) \\
&\simeq (\O_X(*Y) \ltens[\O_X] \Ovt_X) \ltens[\D_X] \she^{-\varphi}_{U|X} \\
&\simeq \rihom(\Cfield_U, \Ovt_X \ltens[\D_X] \she^{-\varphi}_{U|X}).
\end{align*}
The last isomorphism follows from Theorem~\ref{thm:ifunct}~(iv).
\end{proof}

Let $M$ be a real analytic manifold, and $i\colon M\to X$ a complexification of $M$.
For $\shm\in\BDC(\D_X)$, let us set
\begin{align*}
\drt_M(\shm) &= \Dbvt_M \ltens[\D_X] \shm \\
&\simeq \epb i \drt_X(\shm)[d_X] \in\BDC(\iCfield_M),
\end{align*}
where $\Dbvt_M = \Dbt_M \tens \ori_M \tens[\opb i \O_X] \opb i \Omega_X \simeq \epb i \Ovt_X[d_X]$ is the subanalytic ind-sheaf of tempered distribution densities.

Note that, considering the complexification $X_\R\subset X\times\overline X$, one has
\begin{equation}
\label{eq:drXXR}
\drt_X(\shm) \simeq \drt_{X_\R}(\shm\detens\O_{\overline X})[-d_X].
\end{equation}

Let $\PR$ be the real projective line and denote by $x$ the coordinate on $\R=\PR\setminus\{\infty\}$.
Note that the object of $\BDC(\iCfield_\PR)$
\begin{align*}
\rihom(\Cfield_\R , \Cfield_{\{x <\ast\}}) 
&\simeq \ihom(\Cfield_\R , \Cfield_{\{x <\ast\}}) \\
&\simeq \indlim[c\rightarrow+\infty]\Cfield_{\{x<c\}\union\{\infty\}}
\end{align*}
is concentrated in degree zero.

\begin{lemma}\label{lem:Rt}
Let $\PR$ be the real projective line.
Denote by $x$ the coordinate on $\R=\PR\setminus\{\infty\}$ and by $z$ the coordinate on $\C=\PP\setminus\{\infty\}$.
Then there is an isomorphism in $\BDC(\iCfield_\PR)$
\[
\drt_\PR(\she^{-z}_{\C|\PP}) \simeq 
\ihom(\Cfield_\R , \Cfield_{\{x <\ast\}})[1].
\]
\end{lemma}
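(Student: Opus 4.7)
The plan is to compute $\drt_\PR(\she^{-z}_{\C|\PP})$ explicitly by applying the tempered de Rham functor to a concrete two-term resolution of the exponential module, and then matching the result with the ind-sheaf on the right-hand side. Since $\she^{-z}_{\C|\PP}$ is localized at $\infty$, it admits the free presentation $\D_\PP(*\infty) \to[\cdot(\partial_z+1)] \D_\PP(*\infty) \to \she^{-z}_{\C|\PP} \to 0$ as a left $\D_\PP$-module. Applying $\Ovt_\PP \ltens[\D_\PP](\cdot)$ and using the identification $\Ovt_\PP \ltens[\O_\PP] \O_\PP(*\infty) \simeq \rihom(\Cfield_\C, \Ovt_\PP)$, which follows from Theorem~\ref{thm:ifunct}~(iv) applied to the regular holonomic module $\O_\PP(*\infty)$ (whose solution complex is $\Cfield_\C$), I obtain that
\[
\drt_\PP(\she^{-z}_{\C|\PP}) \simeq \bigl[\rihom(\Cfield_\C, \Ovt_\PP) \to[1-\partial_z] \rihom(\Cfield_\C, \Ovt_\PP)\bigr]
\]
in cohomological degrees $[-1,0]$, the differential arising from the right $\D_\PP$-action on $\Ovt_\PP$ by integration by parts.

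Pulling back along $i\colon\PR\to\PP$ and shifting by $[1]$, I would use Proposition~\ref{pro:DbtOt} (giving $\epb i\,\Ot_\PP[1] \simeq \Dbvt_\PR$) together with the commutation $\epb i\,\rihom(F,G) \simeq \rihom(\opb i F, \epb i G)$ valid for constructible $F$ and the identity $\opb i\,\Cfield_\C = \Cfield_\R$, to obtain the explicit description
\[
\drt_\PR(\she^{-z}_{\C|\PP}) \simeq \bigl[\rihom(\Cfield_\R, \Dbvt_\PR) \to[1-\partial_x] \rihom(\Cfield_\R, \Dbvt_\PR)\bigr]
\]
in degrees $[-1, 0]$.

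The final step is to identify this two-term complex with $\ihom(\Cfield_\R, \Cfield_{\{x<*\}})[1]$. I would construct a morphism using the tempered bounded distributions $e^x\chi_{\{x<c\}}$, each of which lies in $\rihom(\Cfield_\R, \Dbvt_\PR)$ and satisfies $(1-\partial_x)(e^x\chi_{\{x<c\}}) = e^c\delta_c$. Although these distributions are not in the kernel individually, in the ind-limit $c\to+\infty$ the obstructing point masses $e^c\delta_c$ escape to $+\infty$, so the family organizes into a well-defined morphism from $\ihom(\Cfield_\R, \Cfield_{\{x<*\}})[1]$ into the complex above, in the derived ind-sheaf category. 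To verify this is a quasi-isomorphism, I would check stalks: at a finite $x_0\in\R$ both sides have stalk $\C$ (spanned by the germ of $e^x$ on one side and the eventually-constant ind-limit on the other), while at $\infty\in\PR$ the stalk of the kernel of $1-\partial_x$ contributes $\C$ from the $x\to-\infty$ branch where $e^x$ remains tempered, matching the stalk of $\ihom(\Cfield_\R, \Cfield_{\{x<*\}})$ at $\infty$. The cokernel vanishes locally by the standard fundamental-solution argument for the operator $1-\partial_x$ on tempered distribution densities. The principal obstacle is precisely this last step: the naive kernel and the target $\ihom$-sheaf do not agree on every subanalytic open of $\PR$, so the identification must be made carefully in the derived ind-sheaf category, particularly near $\infty\in\PR$ where the tempered structure interacts non-trivially with the ind-limit structure of $\Cfield_{\{x<*\}}$.
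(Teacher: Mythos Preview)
Your reduction to the two-term complex
\[
\shs = \bigl(\ihom(\Cfield_\R,\Dbt_\PR) \xrightarrow{\partial_x-1} \ihom(\Cfield_\R,\Dbt_\PR)\bigr)
\]
in degrees $[-1,0]$ is correct and is exactly what the paper does (with $\Dbvt_\PR$ in place of $\Dbt_\PR$, which is harmless here). The divergence is entirely in the final identification step, and there is a genuine gap.

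Your proposal to verify the quasi-isomorphism by checking stalks is not valid for ind-sheaves: an ind-sheaf is not determined by its stalks, even in the subanalytic case. A standard example on $\PR$ is $\Cfield_{\{x<\ast\}}=\indlim_c\Cfield_{\{x<c\}}$ versus $\Cfield_\R$; both have stalk $\Cfield$ on $\R$ and stalk $0$ at $\infty$, yet $\Hom(\Cfield_\R,\Cfield_{\{x<\ast\}})=0$. The correct tool is Proposition~\ref{pro:suban0}: since both sides of the putative isomorphism are subanalytic, it suffices to compute their sections over relatively compact subanalytic opens $U\subset\PR$.

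Once you adopt that viewpoint, your ``principal obstacle'' dissolves. The paper computes $H^{-1}\shs$ and $H^0\shs$ directly as subanalytic sheaves. For $H^{-1}$: on a subanalytic open $U$, one has $\ihom(\Cfield_\R,\Dbt_\PR)(U)=\Dbt_\PR(U\cap\R)$, and the kernel of $\partial_x-1$ there is spanned by $e^x$ exactly when $e^x$ is tempered on $U\cap\R$, i.e.\ exactly when $U\cap\R\subset\{x<c\}$ for some $c$. This is precisely the value of $\ihom(\Cfield_\R,\Cfield_{\{x<\ast\}})$ on $U$, so the two subanalytic sheaves agree on the nose. For $H^0$: surjectivity of $\partial_x-1$ on $\Dbt_\PR(U\cap\R)$ follows from surjectivity on $\Dbt_\PR(\R)$ together with surjectivity of the restriction map $\Dbt_\PR(\R)\to\Dbt_\PR(U\cap\R)$. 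No explicit cocycles like $e^x\chi_{\{x<c\}}$, no derived-category gymnastics near $\infty$, and no ind-limit subtleties are needed.
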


\begin{proof}
One has 
\begin{align*}
\Dbvt_\PR \ltens[\D_\PP] \she^{-z}_{\C|\PP} 
&\simeq \ihom(\Cfield_\R,\Dbvt_\PR) \ltens[\D_\PP] \she^{-z}_{\C|\PP} \\
&\simeq (\she^{-z}_{\C|\PP}) ^\mop \ltens[\D_\PP] \ihom(\Cfield_\R,\Dbt_\PR) \\
&\simeq \bigl( \ihom(\Cfield_\R,\Dbt_\PR) \to[\partial_x-1] \ihom(\Cfield_\R, \Dbt_\PR) \bigr) \mathbin{{=}{:}} \shs,
\end{align*}
where the complex $\shs$ is in degree $-1$ and $0$.
Here, the first isomorphism follows from the real analogue of Lemma~\ref{lem:IhomU}, $\mop$ is the functor in \eqref{def:r} 
and the last isomorphism follows from 
$\she^{-z}_{\C|\PP} \simeq \D_\PP/\D_\PP(\partial_z+1)$
and $(\she^{-z}_{\C|\PP}) ^\mop\simeq\D_\PP/(\partial_z-1)\D_\PP$.

Hence, we have to prove the isomorphisms of subanalytic sheaves
\[
H^{-1}\shs \simeq \ihom(\Cfield_\R , \Cfield_{\{x <\ast\}}), \qquad H^0\shs \simeq 0.
\]

Let $U\subset \PR$ be an open subanalytic subset, so that $U\cap\R$ is a finite union of open intervals.

The first isomorphism follows from the fact that $e^x\in\Dbt_\PR(U\cap\R)$ if and only 
if $U\cap\R \subset \{ x<c \}$ for some $c$.

To show that $H^0\shs \simeq 0$ it is enough to consider the commutative diagram
\[
\xymatrix{
\Dbt_\PR(\R) \ar@{->>}[r]^{\partial_x-1} \ar[d] & \Dbt_\PR(\R) \ar@{->>}[d] \\
\Dbt_\PR(U\cap\R) \ar[r]^{\partial_x-1} & \Dbt_\PR(U\cap\R)
}
\]
and notice that the vertical arrow, as well as the top horizontal arrow, is surjective.
\end{proof}

\begin{lemma}[{cf.~\cite[Proposition~7.3]{KS03}}]
\label{lem:t}
Let $\PP$ be the complex projective line and
denote by $z$ the coordinate on $\C=\PP\setminus\{\infty\}$.
There is an isomorphism in $\BDC(\iCfield_\PP)$
\[
\drt_\PP(\she^{-z}_{\C|\PP}) \simeq 
E^z_{\C|\PP}[1].
\]
\end{lemma}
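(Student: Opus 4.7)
My plan is to compute $\drt_\PP(\she^{-z}_{\C|\PP})$ explicitly via a Koszul-type complex obtained by combining a free resolution of the exponential $\D_\PP$-module with the Dolbeault description of $\Ot_\PP$, and then to identify the result with $E^z_{\C|\PP}[1]$ by a local calculation whose consistency is checked against the real analogue already established in Lemma~\ref{lem:Rt}.

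First, since $\she^{-z}_{\C|\PP}\simeq\she^{-z}_{\C|\PP}(*\infty)$, Lemma~\ref{lem:IhomU} (applied with $\varphi=-z$, $Y=\{\infty\}$) yields
\[
\drt_\PP(\she^{-z}_{\C|\PP})\isoto\rihom(\Cfield_\C,\drt_\PP(\she^{-z}_{\C|\PP})).
\]
On $\C$ we have $\she^{-z}_{\C|\PP}|_\C\simeq\D_\C/\D_\C(\partial_z+1)$, and using the trivialization $\Ovt_\PP\simeq\Ot_\PP\cdot dz$ on $\C$, the tempered de Rham complex is represented by a two-term complex $[\Ot_\PP\to[1-\partial_z]\Ot_\PP]$ concentrated in degrees $-1,0$ (after applying $\rihom(\Cfield_\C,\dummy)$). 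Plugging in the Dolbeault presentation $\Ot_\PP\simeq[\Dbt_{\PP_\R}\to[\partial_{\overline z}]\Dbt_{\PP_\R}]$ (Proposition~\ref{pro:DbtOt} in the form used in \cite{KS01}), the result becomes the total Koszul complex in the two commuting operators $\partial_{\overline z}$ and $1-\partial_z$ acting on $\rihom(\Cfield_\C,\Dbt_{\PP_\R})$.

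The cohomology of this Koszul complex is then computed locally. The simultaneous kernel of $\partial_{\overline z}$ and $1-\partial_z$ on distributions consists of multiples of $e^z$; such a multiple is a tempered distribution on a relatively compact subanalytic open $U\subset\C$ if and only if $\Re z$ is bounded above on $U$, which means $U\subset\{\Re z<c\}$ for some $c$. Taking the ind-limit on $c$ identifies $H^{-1}$ with $\Cfield_{\{\Re z<*\}}$, matching the $H^{-1}$ of $E^z_{\C|\PP}[1]$ in view of \eqref{eq:HjEt}. The $H^0$-contribution is concentrated at $\infty$: a local calculation in the coordinate $w=1/z$, analogous to the computation at the irregular endpoint in Lemma~\ref{lem:Rt}, produces the skyscraper $\Cfield_{\{\infty\}}$.

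To upgrade the cohomological identification to an isomorphism of complexes and to handle the point $\infty$ cleanly, I would use compatibility with restriction to the real axis $\iota\colon\PR\hookrightarrow\PP$: since $\drt_\PR(\she^{-z}_{\C|\PP})\simeq\epb\iota\drt_\PP(\she^{-z}_{\C|\PP})[1]$ and $\epb\iota(E^z_{\C|\PP}[1])[1]\simeq\rihom(\Cfield_\R,\Cfield_{\{x<*\}})[1]$, the target of our claim restricts under $\epb\iota[1]$ precisely to the object obtained in Lemma~\ref{lem:Rt}. Since the Koszul complex above consists of holomorphic objects in the $\overline z$-direction (its $H^{-1}$ is annihilated by $\partial_{\overline z}$), its structure in a neighborhood of any point of $\C$ is determined by its restriction to any transversal real slice, which by the real case is exactly $\ihom(\Cfield_\R,\Cfield_{\{x<*\}})$. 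Combined with the local calculation at $\infty$, this yields the desired isomorphism $\drt_\PP(\she^{-z}_{\C|\PP})\simeq E^z_{\C|\PP}[1]$. The main obstacle is the computation at $\infty$, where one has to control tempered distributions annihilated by $\partial_{\overline z}$ and $(1-\partial_z)$ in the coordinate $w=1/z$ with its essential singularity $e^{-1/w}$; this is the irregular ingredient that makes the cokernel nontrivial and produces the $\Cfield_{\{\infty\}}$ summand, and is essentially the content of the Hukuhara--Turrittin asymptotic analysis in the rank-one case.
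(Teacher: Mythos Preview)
Your overall strategy---set up the Koszul complex in $\partial_{\overline z}$ and $\partial_z-1$ and relate to Lemma~\ref{lem:Rt}---is in the right spirit, and the paper also reduces to Lemma~\ref{lem:Rt}. But your execution has two genuine gaps. First, the claim that the structure ``is determined by its restriction to any transversal real slice'' because $H^{-1}$ is annihilated by $\partial_{\overline z}$ is not a valid principle for subanalytic ind-sheaves; there is no analytic-continuation theorem in this setting, and you have not constructed a morphism between the two objects, only matched their cohomology groups. Since $E^z_{\C|\PP}[1]$ has nontrivial cohomology in two degrees, this is not enough. Second, you acknowledge the computation at $\infty$ as ``the main obstacle'' but do not carry it out; saying it is ``analogous to Lemma~\ref{lem:Rt}'' does not suffice, since Lemma~\ref{lem:Rt} treats a one-dimensional problem, while the behavior of tempered distributions on two-dimensional punctured neighborhoods of the irregular point is exactly what is at stake.

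The paper closes both gaps with one idea you are missing. It passes to the bordered space $(\C_\R,\PP_\R)\simeq(\R^2,\PR^2)$ via $(x,y)\mapsto x+\sqrt{-1}y$, using Proposition~\ref{pro:Dbtbordered} to transport $\Dbt$, and then observes that multiplication by $e^{-\sqrt{-1}y}\in\Cit_{\PR^2}(\R^2)$ is a tempered automorphism of $\opb k\Dbt_{\PR^2}$ which converts the action of $\partial_y-\sqrt{-1}$ into that of $\partial_y$. This single move turns the Koszul complex for $\she^{-z}_{\C|\PP}\detens\O_{\overline\PP}$ into the one for $\she^{-u}_{\C|\PP}\detens\O_\PP$, which is simply the pullback along the first projection $\PR^2\to\PR$ of the real one-variable computation of Lemma~\ref{lem:Rt}. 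The answer is then read off directly at the level of complexes via Lemma~\ref{lem:fDbt}; no separate analysis at $\infty$ and no extension-class argument are needed. The key point is that $e^{-\sqrt{-1}y}$ is tempered on $(\R^2,\PR^2)$ (bounded with polynomially bounded derivatives), which is precisely why the compactification $\PR\times\PR$ rather than $\PP_\R$ is used.
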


\begin{proof}
Consider the real analytic bordered spaces $(\C_\R,\PP_\R)$ and $(\R^2,\PR^2)$.
Then the morphism $f\colon (\R^2,\PR^2) \to (\C_\R,\PP_\R)$ given by $(x,y) \mapsto x+\sqrt{-1}y$ is an isomorphism of real analytic bordered spaces.
Consider the morphisms
\[
\xymatrix{
\PR^2 & (\R^2,\PR^2) \ar[l]_-{k} \ar[r]^-f & (\C_\R,\PP_\R) \ar[r]^-j & \PP_\R .
}
\]
By Proposition~\ref{pro:Dbtbordered},
\[
\opb f \opb j \Dbt_{\PP_\R} \simeq \opb k\Dbt_{\PR^2}.
\]
By \eqref{eq:drXXR} and Lemma~\ref{lem:IhomU},
\[
\drt_\PP(\she^{-z}_{\C|\PP})  \simeq
\roim j\opb j((\she^{-z}_{\C|\PP} \detens \O_{\overline\PP})^\mop \ltens[\D_{\PP\times\overline\PP}] \Dbt_{\PP_\R})[-1].
\]
Note that $\opb j((\she^{-z}_{\C|\PP} \detens \O_{\overline\PP})^\mop \ltens[\D_{\PP\times\overline\PP}] \Dbt_{\PP_\R})$ is represented by the complex
\[
\opb j\Dbt_{\PP_\R} \To[(\partial_z-1,\;\partial_{\,\overline z})]
(\opb j\Dbt_{\PP_\R})^2 \To[(-\partial_{\,\overline z},\;\partial_z-1)]
\opb j\Dbt_{\PP_\R}.
\]
Applying $\opb f$, we get the complex
\[
\opb k\Dbt_{\PR^2} \To[(\partial_x-1,\;\partial_y-\sqrt{-1})]
(\opb k\Dbt_{\PR^2})^2 \To[(-\partial_y+\sqrt{-1},\;\partial_x-1)]
\opb k\Dbt_{\PR^2}.
\]
This last complex represents $\opb k((\she^{-u}_{\C|\PP} \detens \she^{-\sqrt{-1}v}_{\C|\PP})^\mop \ltens[\D_{\PP^2}] \Dbt_{\PR^2})$,
where $(u,v)\in\C^2$ is a complexification of $(x,y)\in\R^2$.

We have thus proved
\[
\drt_\PP(\she^{-z}_{\C|\PP})  \simeq \roim j\roim f\opb k((\she^{-u}_{\C|\PP} \detens \she^{-\sqrt{-1}v}_{\C|\PP})^\mop \ltens[\D_{\PP^2}] \Dbt_{\PR^2})[-1].
\]

By Proposition~\ref{pro:CDtoD}, the function $e^{-\sqrt{-1}y}\in\Cit_{\PR^2}(\R^2)$ induces an automorphism
of $\opb k\Dbt_{\PR^2}$. This automorphism interchanges the actions of $\partial_y$ and of $\partial_y-\sqrt{-1}$. 
Hence, for a $\D_\PP$-module $\shm$, it induces an isomorphism
\begin{equation}
\label{eq:eiy}
\opb k ((\shm \detens \she^{-\sqrt{-1}v}_{\C|\PP})^\mop \ltens[\D_{\PP^2}] \Dbt_{\PR^2})
\simeq \opb k( (\shm \detens \O_\PP)^\mop \ltens[\D_{\PP^2}] \Dbt_{\PR^2}).
\end{equation}

We then have, denoting by $p_1$ the first projection $\PR^2\to\PR$, 
\begin{align*}
\drt_\PP(\she^{-z}_{\C|\PP})
&\simeq \roim j\roim f\opb k((\she^{-u}_{\C|\PP} \detens \O_\PP)^\mop \ltens[\D_{\PP^2}] \Dbt_{\PR^2}) [-1]\\
&\simeq \roim j\roim f\opb k((\she^{-u}_{\C|\PP})^\mop \ltens[\D_\PP] \D_{\PP\from[p_1]\PP^2} \ltens[\D_{\PP^2}] \Dbt_{\PR^2})[-1] \\
&\underset{(1)}\simeq \roim j\roim f\opb k((\she^{-u}_{\C|\PP})^\mop \ltens[\D_\PP] 
\epb{p_1} \Dbt_{\PR})[-1] \\
&\simeq \roim j\roim f\opb k\opb{p_1}\drt_\PR(\she^{-u}_{\C|\PP}) \\
&\underset{(2)}\simeq \roim j\roim f\opb k\opb{p_1}\Cfield_{\{x <\ast\}}[1] \\
&\simeq \roim j\opb j \Cfield_{\{\Re z <\ast\}}[1] ,
\end{align*}
where $(1)$ follows from Lemma~\ref{lem:fDbt} and $(2)$ follows from Lemma~\ref{lem:Rt}.
\end{proof}

\begin{lemma}
\label{lem:s/t}
Denote by $(u,v)$ the coordinates of   $\C^2$.
There is an isomorphism in $\BDC(\iCfield_{\C^2})$
\[
\drt_{\C^2}(\she^{-u/v}_{\{v\neq0\}|\C^2}) \simeq 
E^{u/v}_{\{v\neq0\}|\C^2}[2].
\]
\end{lemma}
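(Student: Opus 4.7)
The plan is to reduce to the one-dimensional case (Lemma~\ref{lem:t}) by exploiting that $u/v$ is the pullback of the coordinate $z$ on $\C$ under the complex analytic submersion $\varphi\colon U \to \C$, $(u,v) \mapsto u/v$, where $U = \{v \neq 0\}$.

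First, on $U$ one has $\she^{-u/v}_{\{v\neq0\}|\C^2}|_U \simeq \D_U e^{-u/v}$, and this $\D_U$-module is isomorphic to $\dopb\varphi(\D_\C e^{-z})$ (both describe the flat connection $d - d(u/v)$ on $\O_U$). Since $\varphi$ is smooth of relative complex dimension~$1$, Theorem~\ref{thm:ifunct}(ii) together with $\epb\varphi \simeq \opb\varphi[2]$ gives
\[
\drt_U(\D_U e^{-u/v}) \simeq \opb\varphi\,\drt_\C(\D_\C e^{-z})[1].
\]

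Second, I would compute $\drt_\C(\D_\C e^{-z})$ by restriction from $\PP$ along the open embedding $k\colon \C\hookrightarrow\PP$. Since $\she^{-z}_{\C|\PP}|_\C \simeq \D_\C e^{-z}$, Theorem~\ref{thm:ifunct}(ii) combined with Lemma~\ref{lem:t} yields
\[
\drt_\C(\D_\C e^{-z}) \simeq \opb k\,\drt_\PP(\she^{-z}_{\C|\PP}) \simeq \opb k\bl E^z_{\C|\PP}[1]\br \simeq \Cfield_{\{\Re z <\ast\}}[1]
\]
on $\C$. Combining with the previous step (and using that $\opb\varphi$ commutes with filtering ind-limits), on $U$ we obtain
\[
\drt_U(\D_U e^{-u/v}) \simeq \opb\varphi\,\Cfield_{\{\Re z <\ast\}}[2] \simeq \Cfield_{\{\Re(u/v) <\ast\}}[2].
\]

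Finally, to extend from $U$ to $\C^2$: by Lemma~\ref{lem:IhomU}, $\drt_{\C^2}(\she^{-u/v}_{\{v\neq0\}|\C^2}) \simeq \rihom(\Cfield_U, \drt_{\C^2}(\she^{-u/v}_{\{v\neq0\}|\C^2}))$, while $E^{u/v}_{\{v\neq0\}|\C^2}$ satisfies $\rihom(\Cfield_U, E^{u/v}_{\{v\neq0\}|\C^2}) \simeq E^{u/v}_{\{v\neq0\}|\C^2}$ by its very definition. Both sides of the target isomorphism therefore lie in the essential image of $\roim i \circ \opb i$ for $i\colon U\hookrightarrow\C^2$, so it suffices to compare restrictions to $U$; these coincide by the previous step, since $\opb i\,\drt_{\C^2}(\she^{-u/v}_{\{v\neq0\}|\C^2}) \simeq \drt_U(\D_U e^{-u/v})$ (Theorem~\ref{thm:ifunct}(ii) for the open embedding $i$) and $\opb i\,E^{u/v}_{\{v\neq0\}|\C^2} \simeq \Cfield_{\{\Re(u/v) <\ast\}}$ on $U$. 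The only mildly delicate point is the identification $\D_U e^{-u/v} \simeq \dopb\varphi(\D_\C e^{-z})$, but this follows immediately from the flat-connection description of exponential $\D$-modules.
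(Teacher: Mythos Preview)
Your proof is correct and takes a genuinely different, more direct route than the paper's. The paper resolves the rational function $u/v$ by blowing up the origin: with $p\colon\widetilde{\C^2}\to\C^2$ and $q\colon\widetilde{\C^2}\to\PP$, one writes $\she^{-u/v}_{\{v\neq0\}|\C^2}\simeq \O_{\C^2}(*\{v=0\})\dtens\doim p\dopb q\she^{-z}_{\C|\PP}$, and then pushes $\drt$ through $\doim p$ and $\dopb q$ using Theorem~\ref{thm:ifunct}~(ii)--(iv). This produces a chain of isomorphisms on $\widetilde{\C^2}$ that is eventually pushed down via $\roim p$. You instead stay on the open set $U$, use that $\varphi=u/v\colon U\to\C$ is a smooth submersion so that $\epb\varphi\simeq\opb\varphi[2]$, reduce to Lemma~\ref{lem:t}, and then invoke $\rihom(\Cfield_U,-)\simeq\roim i\,\opb i$ (valid for open embeddings of ind-sheaves by the adjunctions $(\reeim i,\epb i)$ and $(\opb i,\roim i)$ together with $\epb i\simeq\opb i$) to pass from $U$ to $\C^2$. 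Your argument avoids the blow-up and the proper direct image entirely; the paper's approach, by contrast, is a rehearsal of the proper-resolution philosophy used later (e.g.\ in Proposition~\ref{pro:Solphi} and Section~\ref{se:normal}), and keeps the compactification $\PP$ in view throughout. Both are valid; yours is shorter for this particular lemma.
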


\begin{proof}
Recall that $z$
denotes the coordinate on $\C=\PP\setminus\{\infty\}$.
Denote by $\widetilde\C^2$ the blow-up of the origin in $\C^2$.
Recall that $\widetilde \C^2 \subset \C^2 \times \PP$ is the surface of equation $uz_0 = vz_1$, where $(z_0:z_1)\in\PP$ 
are homogeneous coordinates with $z=z_1/z_0$. 
Consider the maps
\begin{equation*}
\xymatrix{
\C^2  & \widetilde \C^2 \ar[l]_p \ar[r]^q & \PP
}
\end{equation*}
induced by the projections from $\C^2 \times \PP$.
Since $\opb q(\infty) \subset \opb p(\{v=0\})$, one has
\begin{align}
\label{eq:pUqC}&\ba{l}
 p^{-1}(\{v\neq 0\}) \subset q^{-1}(\C), \\[1ex]
\she^{-u/v}_{\{v\neq0\}|\C^2} \simeq \O_{\C^2}(*\{v=0\})
\dtens\doim p\dopb q \she^{-z}_{\C|\PP}.
\ea\end{align}
It follows
\begin{align*}
\drt_{\C^2}(\she^{-u/v}_{\{v\neq0\}|\C^2} )
&\simeq \drt_{\C^2} \bigl(\O_{\C^2}(*\{v=0\})\dtens\doim p\dopb q \she^{-z}_{\C|\PP}\bigr) \\
&\simeq \rihom (\Cfield_{\{v\neq 0\}}, \drt_{\C^2}(\doim p\dopb q \she^{-z}_{\C|\PP})),
\end{align*}
where the last isomorphism follows from Theorem~\ref{thm:ifunct}~(iv).
Note that
\begin{align*}
\drt_{\C^2}(\doim p\dopb q \she^{-z}_{\C|\PP}) 
&\simeq \roim p \epb q(\drt_\PP(\she^{-z}_{\C|\PP})) [-1]\\
&\simeq \roim p \epb q\rihom(\Cfield_\C,\Cfield_{\{\Re z <\ast\}})\\
&\simeq \roim p\rihom(\opb q\Cfield_\C,\epb q\Cfield_{\{\Re z <\ast\}}) \\
&\simeq \roim p\rihom(\opb q\Cfield_\C,\opb q\Cfield_{\{\Re z <\ast\}}) [2].
\end{align*}
Here, 
the first isomorphism follows from Theorem~\ref{thm:ifunct}~(ii) and (iii),
the second isomorphism follows from Lemma~\ref{lem:t}, and
the last isomorphism follows from the fact that $q$ is smooth with fiber $\C$.
Hence
\begin{align*}
\rihom (\Cfield_{\{v\neq 0\}}, & \drt_{\C^2}(\doim p\dopb q \she^{-z}_{\C|\PP})) \\
&\simeq \rihom (\Cfield_{\{v\neq 0\}}, \roim p\rihom(\opb q\Cfield_\C,\opb q\Cfield_{\{\Re z <\ast\}}))[2] \\
& \simeq \roim p \rihom (\opb p \Cfield_{\{v\neq 0\}} \tens \opb q\Cfield_\C,\opb q\Cfield_{\{\Re z <\ast\}})[2] \\
& \underset{(1)}\simeq \roim p \rihom (\opb p \Cfield_{\{v\neq 0\}},\opb q\Cfield_{\{\Re z <\ast\}})[2] \\
& \simeq \roim p \rihom (\opb p \Cfield_{\{v\neq 0\}},\opb p \Cfield_{\{v\neq 0\}}\tens\opb q\Cfield_{\{\Re z <\ast\}})[2] \\
& \underset{(2)}\simeq \roim p \rihom (\opb p \Cfield_{\{v\neq 0\}},\opb p \Cfield_{\{\Re(u/v) <\ast\}})[2] \\
& \underset{(3)}\simeq  \roim p\rihom (\opb p\Cfield_{\{v\neq 0\}},\epb p \Cfield_{\{\Re(u/v) <\ast\}})[2] \\
& \simeq  \rihom (\Cfield_{\{v\neq 0\}}, \Cfield_{\{\Re(u/v) <\ast\}})[2] .
\end{align*}
Here,
$(1)$ follows from \eqref{eq:pUqC},
$(2)$ follows from the equality
\[
q^{-1}(\{\Re z<c\})\cap p^{-1}(\{v\neq 0\}) = p^{-1}(\{\Re(u/v) < c\})
\ \text{ for $c\in\R$,}
\]
and $(3)$ follows from the fact that $p$ is an isomorphism over $\{v\neq 0\}$.
\end{proof}

\begin{proof}[Proof of~Proposition~\ref{pro:Solphi}]
As in the previous lemma, denote by $(u,v)$ the coordinates in $\C^2$.
Write $\varphi=a/b$ for $a,b\in\O_X$ such that $Y=\opb b(0)$, and consider the map
\[
f = (a,b) \colon X\to \C^2.
\]
Since $f^{-1}(\{v=0\}) = b^{-1}(0) = Y$, one has
\begin{align}
\label{eq:UU'}
&f^{-1}(\{v\neq 0\}) = U, \\
\label{eq:EYY'}
&\she^{\varphi}_{U|X} \simeq \dopb f\she^{u/v}_{\{v\neq 0\}|\C^2}.
\end{align}
Note that
\begin{align*}
\drt_X(\dopb f\she^{-u/v}_{\{v\neq0\}|\C^2}) 
&\simeq \epb f (\drt_{\C^2}( \she^{-u/v}_{\{v\neq0\}|\C^2}))[2-d_X] \\
&\simeq \epb f\rihom(\Cfield_{\{v\neq 0\}},\Cfield_{\{\Re(u/v) <\ast\}})[4-d_X],
\end{align*}
where the first isomorphism follows from Theorem~\ref{thm:ifunct}~(ii), and the second isomorphism follows from Lemma~\ref{lem:s/t}.
Hence
\begin{align*}
\drt_X (\she^{-\varphi}_{U|X}) 
& \simeq \epb f\rihom(\Cfield_{\{v\neq 0\}},\Cfield_{\{\Re(u/v) <\ast\}})[4-d_X] \\
& \simeq \rihom(\opb f\Cfield_{\{v\neq 0\}}, \epb f\Cfield_{\{\Re(u/v) <\ast\}})[4-d_X] \\
& \underset{(1)}\simeq \rihom(\Cfield_U, \epb f\Cfield_{\{\Re(u/v) <\ast\}})[4-d_X] \\
& \underset{(2)}\simeq \rihom(\Cfield_U, \opb f\Cfield_{\{\Re(u/v) <\ast\}})[d_X] \\
& \simeq \rihom(\Cfield_U, \Cfield_{\{\Re \varphi <\ast\}})[d_X],
\end{align*}
where $(1)$ follows from \eqref{eq:UU'}, and
$(2)$ follows from Proposition~\ref{pro:opbepb}.
\end{proof}

\section{Normal form of holonomic \texorpdfstring{$\D$}{D}-modules}\label{se:normal}

On a complex curve, the classical results of Levelt-Turittin and of Hukuhara-Turittin describe the formal structure of a flat meromorphic connection and its asymptotic expansion on sectors.
Analogous statements in higher dimension have  recently been obtained 
by Mochizuki~\cite{Moc09,Moc11}
and Kedlaya~\cite{Ked10,Ked11}, after preliminary results and conjectures by Sabbah~\cite{Sab00}.

In this section we recall these statements in the language of $\D$-modules, and establish some lemmas that will be used later.
In particular, Lemma~\ref{lem:redux} below will be a key ingredient in our proof of the irregular Riemann-Hilbert correspondence.

\subsection{Real blow-up}\label{se:realblow}

Let $X$ be a complex manifold and $D\subset X$ a smooth closed hypersurface.
The total real blow-up 
\[
\varpi_\tot\colon \widetilde X_D^\tot \to X
\]
of $X$ along $D$ is the real analytic map of real analytic manifolds
locally defined as follows.

We take coordinates $(z,w)\in\C\times\C^{n-1}$ on $X$ such that $D=\{z=0\}$. Then one has
\[
\widetilde X_D^\tot = \{(t,\zeta,w)\in \R\times\C\times\C^{n-1} \semicolon |\zeta|=1 \}
\]
and
\[
\varpi_\tot\colon \widetilde X_D^\tot \to X, \quad (t,\zeta,w) \mapsto (t\zeta,w).
\]
Note that $\varpi_\tot$ is an unramified 2-sheeted covering over $X\setminus D$, so that we may write
\[
\opb{\varpi_\tot}(X\setminus D) = (X\setminus D) \times\{+,-\}.
\]

Consider the subsets locally defined by
\begin{align*}
\widetilde X_D^{> 0} &= \{(t,\zeta,w)\in \widetilde X_D^\tot \semicolon t > 0 \}
= (X\setminus D) \times\{+\}, \\
\widetilde X_D &= \{(t,\zeta,w)\in \widetilde X_D^\tot \semicolon t\geq 0 \} = \overline{\widetilde X_D^{> 0}}, \\
\widetilde X_D^{0} &= \{(t,\zeta,w)\in \widetilde X_D^\tot \semicolon t = 0 \} = \widetilde X_D \setminus \widetilde X_D^{> 0}.
\end{align*}

We call the subanalytic space $\widetilde X_D$ 
the \emph{real blow-up} of $X$ along $D$, and we denote by
\[
\varpi\colon\widetilde X_D \to X
\]
the map induced by $\varpi_\tot$.
Note that $\varpi$ induces an isomorphism
\[
\varpi\colon\widetilde X_D^{> 0} \isoto X\setminus D,
\]
and one has
\[
\widetilde X_D^{0} = \opb\varpi(D) = S_D X,
\]
where $S_D X = (T_D X\setminus D)/\R_{>0}$ 
denotes the normal sphere bundle to $D$ in $X$.

\medskip
Let now $D\subset X$ be a normal crossing divisor, and write (locally)
\begin{equation}
\label{eq:DD1Dr}
D = D_1 \union \cdots \union D_r,
\end{equation}
where $D_k\subset X$ are smooth hypersurfaces of $X$.
The total real blow-up 
\[
\varpi_\tot\colon \widetilde X_D^\tot \to X
\]
of $X$ along $D$ is defined by
\[
\widetilde X_D^\tot = \widetilde X_{D_1}^\tot \times_X \cdots \times_X \widetilde X_{D_r}^\tot.
\]
Note that $\varpi_\tot$ is an unramified $2^r$-sheeted covering over $X\setminus D$, so that we may write
\[
\opb{\varpi_\tot}(X\setminus D) = (X\setminus D) \times\{+,-\}^r.
\] 
Set
\begin{align*}
\widetilde X_D^{> 0} &= \widetilde X_{D_1}^{> 0} \times_X \cdots \times_X \widetilde X_{D_r}^{> 0} = (X\setminus D) \times\{(+,\dots,+)\}, \\
\widetilde X_D &= \overline{\widetilde X_D^{> 0}}, \\
\widetilde X_D^{0} &= \widetilde X_D \setminus \widetilde X_D^{> 0}.
\end{align*}

We call the subanalytic space $\widetilde X_D$ 
the \emph{real blow-up} of $X$ along $D$, and we denote by
\[
\varpi\colon\widetilde X_D \to X
\]
the proper map induced by $\varpi_\tot$.
Note that $\varpi$ induces an isomorphism
\[
\varpi\colon\widetilde X_D^{> 0} \isoto X\setminus D.
\]

\begin{remark}
The spaces $\widetilde X_D$, $\widetilde X_D^{> 0}$ and $\widetilde X_D^{0}$ are determined canonically.
On the contrary, the space $\widetilde X_D^\tot$ is not canonical.
For example, 
writing $D = D_2 \union \cdots \union D_r$ near a point $x\in D\setminus D_1$,
$\varpi_\tot$ becomes a $2^{r-1}$-sheeted covering over $X\setminus D$.
\end{remark}

\subsection{Sheaves of functions on the real blow-up}\label{sse:blowfun}

Let $X$ be a complex manifold and $D\subset X$ 
a normal crossing divisor. 
Set for short $\widetilde X = \widetilde X_D$.

\begin{notation}
\bnum
\item
Set $\shc_{\widetilde X}^{\infty,\textrm{temp}} = \opb i\shc_{\widetilde X^{> 0}|\widetilde X^{\tot}}^{\infty,\textrm{temp}}$, where $i\colon \widetilde X \to \widetilde X^\tot$ is the closed embedding. In other words, $\shc_{\widetilde X}^{\infty,\textrm{temp}}$ is the sheaf of $\C$-algebras on $\widetilde X$ defined by
\[
\widetilde X \underset{\text{open}}\supset V \mapsto \{u\in\shc_{\widetilde X^\tot}^\infty(V\cap\widetilde X^{> 0}) \semicolon u \text{ is tempered at any point of }V\cap \widetilde X^0\}.
\]
\item
Let $\sha_{\widetilde X}$ be the sheaf of rings on $\widetilde X$ defined by
\[
\widetilde X \underset{\text{open}}\supset V \mapsto \{u\in\shc_{\widetilde X}^{\infty,\textrm{temp}}(V) \semicolon u \text{ is holomorphic on }V\cap \widetilde X^{>0}\}.
\]
\item
Set $\D_{\widetilde X}^\sha = \sha_{\widetilde X} \tens[\opb\varpi\O_X] \opb\varpi\D_X$.
\item
Denote by $\D_{\widetilde X}^{\shc^{\infty,\textrm{temp}}}$ the ring of differential operators with $C^{\infty,\textrm{temp}}_{\widetilde X}$ coefficients.
\ee
\end{notation}

\begin{lemma}
\label{lem:A*D}
One has
\[
\sha_{\widetilde X} \simeq \opb\varpi\O_X(*D) \tens[\opb\varpi\O_X] \sha_{\widetilde X}\,.
\]
\end{lemma}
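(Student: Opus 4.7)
The plan is to exhibit $\sha_{\widetilde X}$ itself as an $\opb\varpi\O_X(*D)$-module, in such a way that the multiplication map $\opb\varpi\O_X(*D)\tens[\opb\varpi\O_X]\sha_{\widetilde X}\to\sha_{\widetilde X}$ provides a two-sided inverse to the canonical morphism $u\mapsto 1\tens u$. Since $\sha_{\widetilde X}$ is by definition an $\opb\varpi\O_X$-module, it suffices to check that $\opb\varpi\O_X(*D)$ is already contained in $\sha_{\widetilde X}$ as a subsheaf of rings on $\widetilde X$; equivalently, that the pullback $\opb\varpi\varphi$ of every germ $\varphi\in\O_X(*D)$ defines a section of $\sha_{\widetilde X}$.

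Since the statement is local on $\widetilde X$, one may choose coordinates $(z_1,\dots,z_n)$ on $X$ so that $D=\{z_1\cdots z_r=0\}$ near the base point. Then $\O_X(*D)$ is generated over $\O_X$ by $1/z_1,\dots,1/z_r$, so it is enough to verify that $\opb\varpi(1/z_i)\in\sha_{\widetilde X}$ for $1\leq i\leq r$. In the polar-type coordinates $z_i=t_i e^{\sqrt{-1}\theta_i}$, $t_i\geq 0$, adapted to the real blow-up (as described in \S\ref{se:realblow}), one has the formula $\opb\varpi(1/z_i)=e^{-\sqrt{-1}\theta_i}/t_i$, which is holomorphic on $\widetilde X^{>0}$ and $C^\infty$ there. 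All of its partial derivatives with respect to the real analytic coordinates $(t_j,\theta_j,\Re z_k,\Im z_k)$ (for $j\leq r$ and $k>r$) are finite sums of terms of the form $t_i^{-m}$ ($m\geq 1$) multiplied by smooth bounded coefficients on any relatively compact neighborhood; as $t_i$ is comparable, locally, to the distance to the stratum $\{t_i=0\}$ of $\widetilde X^0$, these derivatives have polynomial growth at $\widetilde X^0=\{t_1\cdots t_r=0\}$. This places $\opb\varpi(1/z_i)$ in $\shc^{\infty,\textrm{temp}}_{\widetilde X}$, and hence in $\sha_{\widetilde X}$.

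Once the inclusion $\opb\varpi\O_X(*D)\subset\sha_{\widetilde X}$ is established, $\sha_{\widetilde X}$ inherits the structure of a $\opb\varpi\O_X(*D)$-module, and the two maps above are mutually inverse by the universal property of the tensor product. No substantive obstacle is expected: the only nontrivial point is the polynomial growth estimate for $1/t_i$ together with all of its derivatives, and this is immediate from the explicit form of those derivatives in the polar coordinates.
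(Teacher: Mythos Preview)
Your argument is correct. The paper does not supply a proof of this lemma; it is stated and then used immediately in the subsequent remark. Your approach---reducing to the local generators $1/z_i$ and checking directly in the blow-up coordinates $(t_i,\zeta_i)$ that $\opb\varpi(1/z_i)=\bar\zeta_i/t_i$ is tempered at $\widetilde X^0$---is the natural one and fills the gap completely.
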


\begin{remark}
By Lemma~\ref{lem:A*D}, there is an action of $\opb\varpi\D_X$ on $\sha_{\widetilde X}$. Hence $\D_{\widetilde X}^\sha$ has a natural algebra structure. Note also that there are natural $\Cfield$-algebra morphisms
\begin{align*}
& \opb\varpi\D_X \to \D_{\widetilde X}^\sha, \\
& \D_{\widetilde X}^\sha \tens[\C] \opb\varpi\D_{\overline X} \to
\D_{\widetilde X}^{\shc^{\infty,\textrm{temp}}}.
\end{align*}
\end{remark}

\begin{notation}
Consider the ind-sheaf on $\widetilde X$
\[
\Dbt_{\widetilde X} \defeq \opb i\ihom(\Cfield_{\widetilde X^{> 0}}, \Dbt_{\widetilde X^\tot}),
\]
where $i\colon \widetilde X \to \widetilde X^\tot$ is the closed embedding.
\end{notation}

Note that one has
$$\opb i\ihom(\Cfield_{\widetilde X^{> 0}}, \Dbt_{\widetilde X^\tot})
\simeq\rihom(\Cfield_{\widetilde X^{> 0}}, \epb i\Dbt_{\widetilde X^\tot}),$$
where $\Cfield_{\widetilde X^{> 0}}$ on the left hand side denotes
a sheaf on $\widetilde X^\tot$ and  on the right  hand side 
a sheaf on $\widetilde X$.

\begin{lemma}
The ind-sheaf $\Dbt_{\widetilde X}$ has a structure of $\D_{\widetilde X}^{\shc^{\infty,\textrm{temp}}}$-module. In particular, it has a structure of $(\D_{\widetilde X}^\sha \tens[\Cfield] \opb\varpi\D_{\overline X})$-module.
\end{lemma}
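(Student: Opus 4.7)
The plan is to produce the $\D^{\shc^{\infty,\textrm{temp}}}_{\widetilde X}$-module structure on $\Dbt_{\widetilde X}$ by assembling two compatible actions: multiplication by tempered smooth functions, and differentiation by real-analytic vector fields on the ambient real-analytic manifold $\widetilde X^\tot$.

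First, I would apply Proposition~\ref{pro:CDtoD} to the subanalytic open embedding $\widetilde X^{>0} \subset \widetilde X^\tot$, obtaining a $\Cfield$-algebra morphism
\[
\shc^{\infty,\textrm{temp}}_{\widetilde X^{>0}|\widetilde X^\tot} \to \shEnd\bl\ihom(\Cfield_{\widetilde X^{>0}}, \Dbt_{\widetilde X^\tot})\br.
\]
Pulling back by the closed embedding $i\colon \widetilde X \hookrightarrow \widetilde X^\tot$, together with the very definitions of $\shc^{\infty,\textrm{temp}}_{\widetilde X}$ and $\Dbt_{\widetilde X}$, then yields an action of $\shc^{\infty,\textrm{temp}}_{\widetilde X}$ on $\Dbt_{\widetilde X}$.

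Next, since $\widetilde X^\tot$ is a real-analytic manifold, every real-analytic vector field $\theta$ on $\widetilde X^\tot$ acts on $\Db_{\widetilde X^\tot}$, and this action preserves the subanalytic ind-sheaf $\Dbt_{\widetilde X^\tot}$: $\theta$ is local, and a derivative of a tempered distribution on a subanalytic open is again tempered. Since the functor $\ihom(\Cfield_{\widetilde X^{>0}},\dummy)$ commutes with sheaf-theoretic endomorphisms, $\theta$ transports to an endomorphism of $\ihom(\Cfield_{\widetilde X^{>0}}, \Dbt_{\widetilde X^\tot})$, and pulling back by $i$ provides an action on $\Dbt_{\widetilde X}$ of (the pullback of) the sheaf of real-analytic vector fields on $\widetilde X^\tot$.

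Finally, to combine the two actions I would verify the two natural compatibilities: $\theta$ preserves $\shc^{\infty,\textrm{temp}}_{\widetilde X^{>0}|\widetilde X^\tot} \subset \shc^\infty_{\widetilde X^{>0}}$ (by the very definition of ``tempered'', derivatives of tempered smooth functions are again tempered), and the Leibniz rule $\theta(f\cdot u) = (\theta f)\cdot u + f\cdot (\theta u)$ holds on distributions. These relations mean the two morphisms above assemble into a single $\Cfield$-algebra morphism from the subsheaf of $\shEnd[\Cfield](\Dbt_{\widetilde X})$ generated by $\shc^{\infty,\textrm{temp}}_{\widetilde X}$ and the pulled-back real-analytic vector fields on $\widetilde X^\tot$; this subsheaf coincides with $\D^{\shc^{\infty,\textrm{temp}}}_{\widetilde X}$, yielding the first assertion. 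The ``in particular'' statement is then immediate from the $\Cfield$-algebra morphism $\D^\sha_{\widetilde X} \tens_\Cfield \opb\varpi\D_{\overline X} \to \D^{\shc^{\infty,\textrm{temp}}}_{\widetilde X}$ recorded in the preceding Remark. The main obstacle I foresee is precisely this final identification of $\D^{\shc^{\infty,\textrm{temp}}}_{\widetilde X}$ with the algebra generated by the two actions; this is a local matter, reducing to the standard description of differential operators with tempered smooth coefficients as $\shc^{\infty,\textrm{temp}}_{\widetilde X}$-linear combinations of iterates of a local frame of real-analytic vector fields on $\widetilde X^\tot$.
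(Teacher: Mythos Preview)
Your proposal is correct and follows essentially the same approach as the paper, which dispatches the lemma in one line: ``This immediately follows from Proposition~\ref{pro:CDtoD}.'' Your write-up is simply the unpacking of that sentence --- Proposition~\ref{pro:CDtoD} supplies the tempered-smooth-function action, the differential-operator action is already present on $\Dbt_{\widetilde X^\tot}$, and the Leibniz compatibility glues them into a $\D^{\shc^{\infty,\textrm{temp}}}_{\widetilde X}$-module structure --- so there is no substantive difference in strategy.
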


This immediately follows from Proposition~\ref{pro:CDtoD}.

\begin{notation}
We set
\[
\Ot_{\widetilde X} = 
\rhom[\opb\varpi\D_{\overline X}](\opb\varpi\O_{\overline X}, \Dbt_{\widetilde X})
\in\BDC(\ind\D_{\widetilde X}^\sha),
\]
the Dolbeault complex with coefficients in $\Dbt_{\widetilde X}$.
\end{notation}

\begin{theorem}
\label{thm:forOt}
There is an isomorphism in $\BDC(\ind\opb\varpi\D_X)$
\[
\operatorname{for}(\Ot_{\widetilde X}) \simeq
\epb\varpi\rihom(\Cfield_{X\setminus D},\Ot_X),
\]
where $\operatorname{for}\colon \BDC(\ind\D_{\widetilde X}^\sha) \to \BDC(\ind\opb\varpi\D_X)$ is the forgetful functor.
\end{theorem}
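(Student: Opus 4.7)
The plan is to reduce the theorem to an $\opb\varpi\D_{\overline X}$-linear isomorphism of subanalytic ind-sheaves on $\widetilde X$,
\begin{equation*}
\Dbt_{\widetilde X} \simeq \epb\varpi\,\rihom(\Cfield_{X\setminus D}, \Dbt_{X_\R}), \qquad (\ast)
\end{equation*}
which I refer to as $(\ast)$. Applying $\rhom[\opb\varpi\D_{\overline X}](\opb\varpi\O_{\overline X},-)$ to $(\ast)$, the left-hand side is $\operatorname{for}(\Ot_{\widetilde X})$ by definition; for the right-hand side, commuting $\rhom[\D_{\overline X}](\O_{\overline X},-)$ past $\rihom(\Cfield_{X\setminus D},-)$ (standard internal-hom manipulation) and past $\epb\varpi$ (since the $\opb\varpi\D_{\overline X}$-structure on the pullback is by definition the pullback of the $\D_{\overline X}$-structure) yields $\epb\varpi\,\rihom(\Cfield_{X\setminus D}, \Ot_X)$, which is the statement of the theorem.

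To prove $(\ast)$, I first show that both sides lie in $\BDC(\iCfield_{\widetilde X^0})^\bot$. For the right-hand side, Proposition~\ref{pro:bproj} together with $\opb\varpi\Cfield_{X\setminus D} = \Cfield_{\widetilde X^{>0}}$ gives the isomorphism
\[
\epb\varpi\,\rihom(\Cfield_{X\setminus D}, \Dbt_{X_\R}) \simeq \rihom(\Cfield_{\widetilde X^{>0}}, \epb\varpi\,\Dbt_{X_\R}),
\]
which lies in the right orthogonal by Proposition~\ref{pro:bord}. For the left-hand side, a direct computation of sections over relatively compact subanalytic open subsets of $\widetilde X$, based on the definition $\Dbt_{\widetilde X} = \opb i\,\ihom(\Cfield_{\widetilde X^{>0}}, \Dbt_{\widetilde X^\tot})$, yields $\Dbt_{\widetilde X} \isoto \rihom(\Cfield_{\widetilde X^{>0}}, \Dbt_{\widetilde X})$, placing $\Dbt_{\widetilde X}$ in the same right orthogonal. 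By the equivalence $\BDC(\iCfield_{(\widetilde X^{>0}, \widetilde X)}) \simeq \BDC(\iCfield_{\widetilde X^0})^\bot$ of Proposition~\ref{pro:bord}, it then suffices to prove $(\ast)$ after applying the quotient functor $\opb{j_{\widetilde X^{>0}}}$.

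For this, I consider the morphism of real analytic bordered spaces $\tilde\varpi\colon (\widetilde X^{>0}, \widetilde X) \to (X\setminus D, X)$ induced by $\varpi$. Since $\varpi$ restricts to a real analytic isomorphism $\widetilde X^{>0} \isoto X\setminus D$ and $\varpi\colon \widetilde X\to X$ is proper, $\tilde\varpi$ is an isomorphism of real analytic bordered spaces, so Proposition~\ref{pro:Dbtbordered} gives $\opb{j_{\widetilde X^{>0}}}\Dbt_{\widetilde X} \simeq \opb{\tilde\varpi}\,\opb{j_{X\setminus D}}\Dbt_{X_\R}$. Using Lemma~\ref{lem:bcomp} together with the factorization $\varpi\circ j_{\widetilde X^{>0}} = j_{X\setminus D}\circ\tilde\varpi$ and the identity $\epb{\tilde\varpi}\simeq\opb{\tilde\varpi}$ for an isomorphism of bordered spaces, one finds
\[
\opb{j_{\widetilde X^{>0}}}\,\epb\varpi\,\Dbt_{X_\R} \simeq \opb{\tilde\varpi}\,\opb{j_{X\setminus D}}\Dbt_{X_\R},
\]
which matches the first expression. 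The main obstacle will be verifying the $\opb\varpi\D_{\overline X}$-linearity of $(\ast)$; this is ensured because the identification in Proposition~\ref{pro:Dbtbordered} is induced by the natural pullback of tempered distributions along the bordered space isomorphism $\tilde\varpi$, and since $\varpi$ restricts to a biholomorphism on $\widetilde X^{>0}$, this pullback intertwines the $\D_{\overline X}$-action on $\Dbt_{X_\R}$ with the $\opb\varpi\D_{\overline X}$-action on $\Dbt_{\widetilde X}$.
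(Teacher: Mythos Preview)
Your reduction to the isomorphism $(\ast)$ is the same as the paper's, but from that point the approaches diverge. The paper computes $\epb\varpi\rihom(\Cfield_{X\setminus D},\Dbt_X)$ directly via the transfer-module formula of Lemma~\ref{lem:fDbt}, applied to the complexification of $\varpi_\tot\colon\widetilde X^\tot\to X_\R$, and then uses the algebraic identity
\[
\D_{X_\C\leftarrow\widetilde X^\tot_\C}\ltens[\D_{\widetilde X^\tot_\C}]\D_{\widetilde X^\tot_\C}(*\widetilde X^0_\C)\simeq\D_{\widetilde X^\tot_\C}(*\widetilde X^0_\C).
\]
This route carries the $\D$-linearity automatically through the transfer bimodule.

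Your route via the bordered-space formalism and Proposition~\ref{pro:Dbtbordered} is conceptually clean, but there is a technical gap: the pair $(\widetilde X^{>0},\widetilde X)$ is \emph{not} a real analytic bordered space in the sense of the paper, because $\widetilde X$ is only a subanalytic space (a manifold with corners when $D$ has several branches), not a real analytic manifold. Proposition~\ref{pro:Dbtbordered} therefore does not apply as written. The fix is to work with $(\widetilde X^{>0},\widetilde X^\tot)$ instead, which is isomorphic to $(\widetilde X^{>0},\widetilde X)$ as an ordinary bordered space since $\widetilde X$ is closed in $\widetilde X^\tot$; then $\tilde\varpi\colon(\widetilde X^{>0},\widetilde X^\tot)\to(X\setminus D,X)$ is a genuine isomorphism of real analytic bordered spaces. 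Proposition~\ref{pro:Dbtbordered} gives $\opb{j'}\Dbt_{\widetilde X^\tot}\simeq\opb{\tilde\varpi}\opb{j_{X\setminus D}}\Dbt_{X_\R}$ for $j'\colon(\widetilde X^{>0},\widetilde X^\tot)\to\widetilde X^\tot$, and you must then identify $\opb{j'}\Dbt_{\widetilde X^\tot}$ with the image of $\Dbt_{\widetilde X}$ in the quotient, which is immediate from the definition $\Dbt_{\widetilde X}=\opb i\ihom(\Cfield_{\widetilde X^{>0}},\Dbt_{\widetilde X^\tot})$. With this correction your argument goes through; it has the advantage of avoiding the complexification and the transfer-module computation, at the cost of making the $\opb\varpi\D_{\overline X}$-linearity a separate verification rather than a structural consequence.
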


\begin{proof}
It is enough to prove the isomorphism
\[
\Dbt_{\widetilde X} \simeq \epb\varpi\rihom(\Cfield_{X\setminus D},\Dbt_X).
\]
Consider a complexification of morphisms of real analytic manifolds
\[
\vcenter{\vbox{
\xymatrix{
\widetilde X^\tot \ar[r] & X_\R \\
\widetilde X^0 \ar[u] \ar[ur],
}}}
\mbox{\large$\hookrightarrow$}
\vcenter{\vbox{
\xymatrix{
\widetilde X^\tot_\C \ar[r] & X_\C \\
\widetilde X^0_\C\,. \ar[u] \ar[ur]
}}}
\]
Then $\Dbt_{\widetilde X}$ is a module over
\[
\D_{\widetilde X^\tot_\C}(*\widetilde X^0_\C) \defeq
\D_{\widetilde X^\tot_\C} \tens[\O_{\widetilde X^\tot_\C}] \O_{\widetilde X^\tot_\C}(*\widetilde X^0_\C).
\]
Hence
\begin{align*}
\epb\varpi\rihom(\Cfield_{X\setminus D},\Dbt_X)
&\simeq \rihom(\Cfield_{\widetilde X^{>0}}, \epb\varpi\Dbt_X) \\
&\simeq \rihom(\Cfield_{\widetilde X^{>0}}, \D_{X_\C \leftarrow \widetilde X^\tot_\C} \ltens[\D_{\widetilde X^\tot_\C}] \Dbt_{\widetilde X}) \\
&\simeq \D_{X_\C \leftarrow \widetilde X^\tot_\C} \ltens[\D_{\widetilde X^\tot_\C}] \rihom(\Cfield_{\widetilde X^{>0}}, \Dbt_{\widetilde X}) \\
&\simeq \D_{X_\C \leftarrow \widetilde X^\tot_\C} \ltens[\D_{\widetilde X^\tot_\C}] \Dbt_{\widetilde X} \\
&\simeq \D_{X_\C \leftarrow \widetilde X^\tot_\C} \ltens[\D_{\widetilde X^\tot_\C}] \D_{\widetilde X^\tot_\C}(*\widetilde X^0_\C) \ltens[\D_{\widetilde X^\tot_\C}(*\widetilde X^0_\C)]  
\Dbt_{\widetilde X},
\end{align*}
where the second isomorphism follows from Lemma~\ref{lem:fDbt}.
To conclude, note that
\[
\D_{X_\C \leftarrow \widetilde X^\tot_\C} \ltens[\D_{\widetilde X^\tot_\C}] \D_{\widetilde X^\tot_\C}(*\widetilde X^0_\C) \simeq 
\D_{\widetilde X^\tot_\C}(*\widetilde X^0_\C).
\]
\end{proof}

\begin{remark}
The importance of Theorem~\ref{thm:forOt} is in showing that $\epb\varpi\rihom(\Cfield_{X\setminus D},\Ot_X)$ has a structure of $\D_{\widetilde X}^\sha\,$-module.
\end{remark}

\begin{corollary}
\label{cor:varpiOt}
There is an isomorphism in $\BDC(\ind\D_X)$
\[
\roim\varpi\Ot_{\widetilde X} \simeq
\rihom(\Cfield_{X\setminus D}, \Ot_X).
\]
\end{corollary}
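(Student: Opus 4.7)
The plan is to deduce the corollary by applying $\roim\varpi$ to both sides of the isomorphism in Theorem~\ref{thm:forOt}, and then simplifying the right-hand side via the adjunction $(\reeim\varpi,\epb\varpi)$. More precisely, on the left we obtain $\roim\varpi\,\Ot_{\widetilde X}$ (the forgetful functor being tacit, as $\roim\varpi$ sends $\opb\varpi\D_X$-modules to $\D_X$-modules). On the right, Theorem~\ref{thm:forOt} yields $\roim\varpi\,\epb\varpi\rihom(\Cfield_{X\setminus D},\Ot_X)$, so it suffices to establish the general formula
\[
\roim\varpi\,\epb\varpi\,G \simeq \rihom(\reeim\varpi\,\Cfield_{\widetilde X},G)
\]
for $G\in\BDC(\ifield_X)$, and then to identify $\reeim\varpi\,\Cfield_{\widetilde X}\tens\Cfield_{X\setminus D}$ with $\Cfield_{X\setminus D}$.

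The displayed formula is obtained by taking $\Cfield_{\widetilde X}\simeq\rihom(\Cfield_{\widetilde X},\Cfield_{\widetilde X})$ and applying the projection/adjunction identity $\roim\varpi\,\rihom(F,\epb\varpi G)\simeq\rihom(\reeim\varpi F,G)$ (valid since $\reeim\varpi$ is left adjoint to $\epb\varpi$; cf.\ Proposition~\ref{pro:bproj}). Applying this with $G=\rihom(\Cfield_{X\setminus D},\Ot_X)$ and using $\rihom(F_1,\rihom(F_2,H))\simeq\rihom(F_1\tens F_2,H)$, we arrive at
\[
\roim\varpi\,\epb\varpi\rihom(\Cfield_{X\setminus D},\Ot_X)\simeq\rihom\bl\reeim\varpi\,\Cfield_{\widetilde X}\tens\Cfield_{X\setminus D},\,\Ot_X\br.
\]

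To conclude, I would observe that $\varpi\colon\widetilde X\to X$ is proper and restricts to an isomorphism $\widetilde X^{>0}\isoto X\setminus D$, so $(\reeim\varpi\,\Cfield_{\widetilde X})|_{X\setminus D}\simeq\Cfield_{X\setminus D}$, while the factor $\Cfield_{X\setminus D}$ annihilates the rest over $D$; hence $\reeim\varpi\,\Cfield_{\widetilde X}\tens\Cfield_{X\setminus D}\simeq\Cfield_{X\setminus D}$, giving the desired identification $\rihom(\Cfield_{X\setminus D},\Ot_X)$. No significant obstacle is expected: the argument is essentially a formal consequence of Theorem~\ref{thm:forOt} and the six-operation calculus; the only point to check carefully is that all isomorphisms are compatible with the $\D_X$-module structures, which is automatic since Theorem~\ref{thm:forOt} is stated in $\BDC(\ind\opb\varpi\D_X)$ and $\roim\varpi$ produces $\D_X$-linear morphisms.
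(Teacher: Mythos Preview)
Your argument is correct and essentially the same as the paper's: both apply $\roim\varpi$ to Theorem~\ref{thm:forOt} and reduce via the adjunction $(\reeim\varpi,\epb\varpi)$; the paper first pulls $\Cfield_{X\setminus D}$ inside via $\epb\varpi\rihom(G_1,G_2)\simeq\rihom(\opb\varpi G_1,\epb\varpi G_2)$ and then uses $\reeim\varpi\opb\varpi\Cfield_{X\setminus D}\simeq\Cfield_{X\setminus D}$, whereas you factor through $\reeim\varpi\Cfield_{\widetilde X}\tens\Cfield_{X\setminus D}$, which is the same object by the projection formula.
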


\begin{proof}
By the above theorem, we have
\begin{align*}
\Ot_{\widetilde X} 
&\simeq \epb\varpi\rihom(\Cfield_{X\setminus D},\Ot_X) \\
&\simeq \rihom(\opb\varpi\Cfield_{X\setminus D},\epb\varpi\Ot_X).
\end{align*}
Hence
\begin{align*}
\roim\varpi\Ot_{\widetilde X} 
&\simeq \roim\varpi\rihom(\opb\varpi\Cfield_{X\setminus D},\epb\varpi\Ot_X) \\
&\simeq \rihom(\reeim\varpi\opb\varpi\Cfield_{X\setminus D},\Ot_X) \\
&\simeq \rihom(\Cfield_{X\setminus D},\Ot_X).
\end{align*}
\end{proof}

\begin{proposition}\label{pro:AOt}
One has
\[
\sha_{\widetilde X} \simeq \alpha_{\widetilde X}\Ot_{\widetilde X}.
\]
\end{proposition}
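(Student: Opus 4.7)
The plan is to reduce, via the compatibility of $\alpha$ with $\rihom$ against a sheaf, the identity to a statement about the Dolbeault complex with tempered distribution coefficients on $\widetilde X$, and then to invoke a $\bar\partial$-Poincaré-type lemma with tempered estimates.

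First I would rewrite the right-hand side. By Proposition~\ref{pro:J}(v) (together with the identity $\rhom\simeq\alpha_M\rihom$ on a sheaf argument), and since $\opb\varpi\O_{\overline X}$ is an ordinary sheaf,
\[
\alpha_{\widetilde X}\Ot_{\widetilde X} \simeq
\rhom[\opb\varpi\D_{\overline X}]\bl\opb\varpi\O_{\overline X},\; \alpha_{\widetilde X}\Dbt_{\widetilde X}\br.
\]
Next compute $\alpha_{\widetilde X}\Dbt_{\widetilde X}$. Using that $\alpha$ commutes with $\opb i$ and with $\rihom(F,\dummy)$ for $F$ a sheaf, and the known formula $\alpha_{\widetilde X^\tot}\Dbt_{\widetilde X^\tot}\simeq\Db_{\widetilde X^\tot}$ from \cite{KS01}, one gets
\[
\alpha_{\widetilde X}\Dbt_{\widetilde X} \simeq \opb i\,\hom(\Cfield_{\widetilde X^{>0}},\Db_{\widetilde X^\tot}),
\]
a sheaf $\Db^{\mathrm t}_{\widetilde X}$ on $\widetilde X$ whose sections over an open $V$ are distributions on $V\cap\widetilde X^{>0}$ that extend as distributions to a subanalytic open neighborhood of $V$ in $\widetilde X^\tot$.

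Second, I would compute the $\rhom$ explicitly by the Dolbeault resolution of $\opb\varpi\O_{\overline X}$ as an $\opb\varpi\D_{\overline X}$-module. This presents $\alpha_{\widetilde X}\Ot_{\widetilde X}$ as the complex
\[
\Db^{\mathrm t}_{\widetilde X} \xrightarrow{\,\bar\partial\,}
\Db^{\mathrm t}_{\widetilde X}\tens[\opb\varpi\shc^\infty_X]\opb\varpi\Omega_{\overline X}^{0,1}
\xrightarrow{\,\bar\partial\,}\cdots,
\]
so the proposition reduces to two assertions about this complex: its $H^0$ equals $\sha_{\widetilde X}$, and its higher cohomology vanishes.

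For $H^0$: a $\bar\partial$-closed section $u$ of $\Db^{\mathrm t}_{\widetilde X}$ over $V$ is holomorphic on $V\cap\widetilde X^{>0}$, hence $C^\infty$ by elliptic regularity, and represents an element of $\shc^\infty_{\widetilde X^\tot}$-distribution type. The temperedness of $u$ as a distribution, combined with holomorphicity and Cauchy's estimates on shrinking polydiscs inside $\widetilde X^{>0}$, forces all partial derivatives of $u$ to have polynomial growth at the boundary $\widetilde X^0$; this is precisely the condition defining $\sha_{\widetilde X}(V)$. Conversely a section of $\sha_{\widetilde X}$ defines a tempered distribution, whence the identification of $H^0$ with $\sha_{\widetilde X}$.

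For the vanishing of $H^k$ with $k>0$: this is the main obstacle. It is a tempered $\bar\partial$-Poincaré lemma on the real blow-up. The plan is to work locally, choosing coordinates $(z_1,\dots,z_n)$ on $X$ with $D=\{z_1\cdots z_\ell=0\}$, so that $\widetilde X$ acquires polar-type coordinates $(r_i,\theta_i,z_j)$ with $r_i\ge 0$. One then solves the $\bar\partial$-equation using the standard Cauchy-Green integral operators in each variable and verifies that the solution inherits tempered growth along the boundary $\widetilde X^0$; the estimates follow from standard bounds on convolution with $1/\zeta$ against distributions whose support can be enlarged subanalytically. Equivalently, one may replace $\Db^{\mathrm t}_{\widetilde X}$ in the Dolbeault complex by the tempered smooth function sheaf $\shc^{\infty,\mathrm{temp}}_{\widetilde X}$, whose Dolbeault cohomology is known to be concentrated in degree zero with value $\sha_{\widetilde X}$ (cf.\ the Majima-Sabbah tempered $\bar\partial$-Poincaré theory), and then check that the inclusion of the two Dolbeault complexes is a quasi-isomorphism locally on $\widetilde X$. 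Either route produces the desired exactness; the propagation of polynomial growth control through the integral solution operator, especially near strata of $\widetilde X^0$ of positive codimension where several $r_i$ vanish simultaneously, is the technical heart of the proof.
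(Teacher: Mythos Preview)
Your approach is conceptually sound but takes a substantially harder route than the paper. Two remarks.

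First, a small slip: the formula $\alpha_{\widetilde X}\Dbt_{\widetilde X}\simeq\opb i\,\hom(\Cfield_{\widetilde X^{>0}},\Db_{\widetilde X^\tot})$ is not correct. The functor $\alpha$ does not carry $\rihom(F,\dummy)$ to $\rhom(F,\alpha(\dummy))$; rather $\alpha\rihom=\rhom$, so one gets $\opb i\,\rhom(\Cfield_{\widetilde X^{>0}},\Dbt_{\widetilde X^\tot})$, whose sections over $V$ are the tempered distributions on $V\cap\widetilde X^{>0}$ you describe. Your verbal description is right, but the displayed formula would give all distributions.

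Second, and more importantly, the paper avoids the tempered $\bar\partial$-Poincar\'e lemma on the blow-up entirely. For $H^0$ it cites \cite[Theorem~10.5]{KS96}, which is essentially your elliptic-regularity argument packaged. For the higher vanishing, instead of working in polar coordinates on $\widetilde X$, the paper observes that for a relatively compact subanalytic open $U\subset\widetilde X^\tot$ one has, setting $V=\varpi(U\cap\widetilde X^{>0})$,
\[
\rsect(U;\alpha_{\widetilde X}\Ot_{\widetilde X})\simeq\RHom(\Cfield_{U\cap\widetilde X^{>0}},\Ot_{\widetilde X})\simeq\RHom(\Cfield_V,\Ot_X),
\]
the second isomorphism coming from Corollary~\ref{cor:varpiOt}. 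This reduces the problem to the vanishing of $H^k\RHom(\Cfield_V,\Ot_{\C^n})$ for $k\neq0$ when $V\subset\C^n$ is convex subanalytic and relatively compact, which is \cite[Theorem~5.10]{DS96}. So the whole technical heart you identify---controlling the integral solution operators near the corners of $\widetilde X^0$---is bypassed by pushing down along $\varpi$ and quoting a known acyclicity result on $\C^n$. Your route via Majima--Sabbah would ultimately work, but it reproves something the paper gets for free from Corollary~\ref{cor:varpiOt}.
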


\begin{proof}
By the definition of $\sha_{\widetilde X}$, using \cite[Theorem 10.5]{KS96} one has
\[
\sha_{\widetilde X} \simeq H^0 \alpha_{\widetilde X}\Ot_{\widetilde X}.
\]
Let $U$ be a relatively compact subanalytic open subset of $\widetilde X^\tot$ and set $V=\varpi(U\cap \widetilde X^{>0})$. Then we have
\begin{align*}
\rsect(U;\alpha_{\widetilde X}\Ot_{\widetilde X})
&\simeq \RHom(\Cfield_{U\cap \widetilde X^{>0}}, \Ot_{\widetilde X}) \\
&\simeq \RHom(\Cfield_V, \Ot_X),
\end{align*}
where the last isomorphism follows from Corollary~\ref{cor:varpiOt}.
Hence the vanishing of the higher cohomology groups of the complex $\alpha_{\widetilde X}\Ot_{\widetilde X}$ follows from the fact that, if $V$ is a relatively compact subanalytic convex open subset of $\C^n$, then,
\[
H^k\RHom(\Cfield_V,\Ot_{\C^n}) = 0 \quad \text{for $k\neq 0$}.
\]
This last fact follows e.g.\ from \cite[Theorem 5.10]{DS96}.
\end{proof}

\subsection{Normal forms}\label{sse:normal}

Let $X$ be a complex manifold and $D\subset X$ a normal crossing divisor. 
Let $(z_1,\dots,z_n)$ be a system of local coordinates
of $X$  such that $D=\{z_1\cdots z_r=0\}$.

\begin{notation}
For $\shm\in\BDC(\D_X)$, set
\[
\shm^\sha = \D_{\widetilde X}^\sha \ltens[\opb\varpi\D_X] \opb\varpi\shm.
\]
\end{notation}

\begin{lemma}
If $\shm$ is a holonomic $\D_X$-module such that $\ss(\shm)\subset D$ and $\shm\isoto\shm(*D)$,
then one has
\begin{equation}
\shm^\sha \simeq \D_{\widetilde X}^\sha \tens[\opb\varpi\D_X] \opb\varpi\shm.
\end{equation}
\end{lemma}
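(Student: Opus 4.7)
The plan is to prove that the natural morphism $\shm^\sha \to \D_{\widetilde X}^\sha \tens[\opb\varpi\D_X] \opb\varpi\shm$ is an isomorphism by showing that the higher Tors $\tor^{\opb\varpi\D_X}_{k}(\D_{\widetilde X}^\sha, \opb\varpi\shm)$ vanish for $k>0$. I would achieve this through a two-step change-of-rings reduction, followed by a structural argument on $\shm$.

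First, I would reduce the computation from a derived tensor over $\opb\varpi\D_X$ to one over $\opb\varpi\O_X$. Since $\D_X$ is locally $\O_X$-free on both sides, $\opb\varpi\D_X$ is flat over $\opb\varpi\O_X$; therefore any flat resolution $P^\bullet \to \opb\varpi\shm$ as a left $\opb\varpi\D_X$-module is \emph{a fortiori} a flat $\opb\varpi\O_X$-resolution. Combined with the isomorphism $\D_{\widetilde X}^\sha \tens[\opb\varpi\D_X] P^\bullet \simeq \sha_{\widetilde X} \tens[\opb\varpi\O_X] P^\bullet$ (immediate from the definition of $\D_{\widetilde X}^\sha$), this yields
\[
\D_{\widetilde X}^\sha \ltens[\opb\varpi\D_X] \opb\varpi\shm
\simeq \sha_{\widetilde X} \ltens[\opb\varpi\O_X] \opb\varpi\shm.
\]

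Second, I would exploit the $(*D)$-structure on both sides. By Lemma~\ref{lem:A*D}, $\sha_{\widetilde X}$ is a module over the localization $\opb\varpi\O_X(*D)$, and by the hypothesis $\shm\isoto\shm(*D)$, so is $\opb\varpi\shm$. Since $\opb\varpi\O_X(*D)$ is flat over $\opb\varpi\O_X$ (localization is exact), the Tor base-change formula gives
\[
\sha_{\widetilde X} \ltens[\opb\varpi\O_X] \opb\varpi\shm
\simeq \sha_{\widetilde X} \ltens[\opb\varpi\O_X(*D)] \opb\varpi\shm.
\]
The lemma thus boils down to showing that this last derived tensor product is concentrated in degree $0$.

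For the third and main step, one uses the structure of $\shm$. Being holonomic with $\ss(\shm)\subset D$ and $\shm\isoto\shm(*D)$, the module $\shm$ is a meromorphic flat connection on $X$ with poles along $D$; in particular it is $\O_X(*D)$-coherent. The question being local on $X$, the plan is to invoke the Mochizuki--Kedlaya normal form (as formalized by Lemma~\ref{lem:redux} in the next section): after a sufficient sequence of ramified blow-ups, $\shm$ is locally a successive extension of exponential twists $\she^\varphi_{X\setminus D|X}\tens\shl$ of regular meromorphic connections, each of which is $\O_X(*D)$-locally free of finite rank. Since $\sha_{\widetilde X}$ is trivially flat against $\opb\varpi\O_X(*D)$-locally free modules, the derived and underived tensor products coincide in these cases. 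The main obstacle is exactly here: a general holonomic $\shm$ satisfying our hypotheses need not be $\O_X(*D)$-locally free at points of $D$, so one cannot bypass the structure theorem. The hard work is verifying that blow-up invariance of the assertion (compatibility of $\shm^\sha$ with $\epb\varpi$ via Theorem~\ref{thm:forOt}) and stability under short exact sequences are enough to propagate the conclusion back from the normal-form case to arbitrary $\shm$.
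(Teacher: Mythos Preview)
Your first reduction step is exactly what the paper does: using $\D_{\widetilde X}^\sha = \sha_{\widetilde X}\tens[\opb\varpi\O_X]\opb\varpi\D_X$ and the $\O_X$-local freeness of $\D_X$, one gets
\[
\D_{\widetilde X}^\sha \ltens[\opb\varpi\D_X] \opb\varpi\shm \;\simeq\; \sha_{\widetilde X} \ltens[\opb\varpi\O_X] \opb\varpi\shm.
\]
But at this point the paper is already done: it simply observes that $\shm$ is flat over $\O_X$, so the right-hand side is underived. That is the entire proof.

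Your proposal misses precisely this observation, and in fact asserts its negation. Under the hypotheses $\ss(\shm)\subset D$ and $\shm\isoto\shm(*D)$, the $\D_X$-module $\shm$ is a meromorphic connection along $D$: it is $\O_X(*D)$-coherent and carries an integrable connection. It is then a classical result (due to Malgrange; see e.g.\ his \emph{\'Equations diff\'erentielles \`a coefficients polynomiaux}, or Sabbah's books on isomonodromic deformations) that any such module is locally free over $\O_X(*D)$, and in particular flat over $\O_X$. Your claim that ``a general holonomic $\shm$ satisfying our hypotheses need not be $\O_X(*D)$-locally free at points of $D$'' is therefore incorrect, and this is exactly the elementary fact that makes your detour through Mochizuki--Kedlaya and Lemma~\ref{lem:redux} unnecessary.

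Beyond being superfluous, the detour has real gaps. Lemma~\ref{lem:redux} is designed for statements $P_X(\shm)$ invariant under projective pushforward, direct summands, and distinguished triangles; it does not give you local freeness of $\shm$ on the original $X$ from local freeness of a pullback on a blow-up. You yourself flag the ``blow-up invariance'' step as the hard work without indicating how to carry it out, and the compatibility you gesture at via Theorem~\ref{thm:forOt} concerns $\Ot_{\widetilde X}$ rather than the purely algebraic vanishing of $\tor^{\opb\varpi\O_X(*D)}_{>0}(\sha_{\widetilde X},\opb\varpi\shm)$. In short: keep your Step~1, replace Steps~2--3 by the single sentence ``$\shm$ is $\O_X$-flat'', and you have the paper's proof.
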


\begin{proof}
This follows from
\begin{align*}
\D_{\widetilde X}^\sha \ltens[\opb\varpi\D_X] \opb\varpi\shm 
&\simeq
(\sha_{\widetilde X} \ltens[\opb\varpi\O_X] \opb\varpi\D_X) \ltens[\opb\varpi\D_X] \opb\varpi\shm \\
&\simeq \sha_{\widetilde X} \ltens[\opb\varpi\O_X] \opb\varpi\shm
\end{align*}
by noticing that $\shm$ is flat over $\O_X$.
\end{proof} 

It is well known that if $\shm$ is a regular holonomic $\D_X$-module such that $\shm \simeq \shm(*D)$ and $\ss(\shm) \subset D$, then $\shm^\sha$ is isomorphic to a finite direct sum of copies of $(\O_X)^\sha$, locally on $\widetilde X^0$.
(Note that $z_k^\lambda(\log z_k)^m$ is a section of $\sha_{\widetilde X}$, locally on $\widetilde X^0$, for $\lambda\in\C$, $k=1,\dots,r$ and $m\in\Z_{\geq 0}$.)

\begin{definition}
\label{def:normal}
We say that a holonomic $\D_X$-module $\shm$ has \emph{a normal form} along $D$ if
\bnum
\item
$\shm \simeq \shm(*D)$,
\item
$\ss(\shm) \subset D$,
\item
for any $x\in \widetilde X^0$, there exist an open neighborhood $U\subset X$ of $\varpi(x)$ and finitely many $\varphi_i\in\sect(U;\O_X(*D))$
such that
\[
(\shm^\sha)|_V \simeq
\left.\left( \soplus\nolimits_i (\she_{U\setminus D|U}^{\varphi_i})^\sha \right)\right|_V
\]
for some neighborhood $V\subset\opb\varpi(U)$ of $x$.
\ee
\end{definition}

A ramification of $X$ along $D$ on a neighborhood $U$ of $x\in D$ is a finite map
\[
p \colon X' \to U
\]
of the form $p(z) = (z_1^{m_1},\dots,z_r^{m_r},z_{r+1},\dots,z_n)$ for some $(m_1,\dots,m_r)\in(\Z_{>0})^r$.
Here $(z_1,\dots,z_n)$ is a 
local coordinate system such that $D=\{z_1\cdots z_r=0\}$.

\begin{definition}
\label{def:quasi-normal}
We say that a holonomic $\D_X$-module $\shm$ has \emph{a quasi-normal form} along $D$ if it satisfies (i) and (ii) in Definition~\ref{def:normal}, and if for any $x\in D$ there exists a ramification $p\colon X'\to U$ on a neighborhood $U$ of $x$ such that $\dopb p (\shm|_U)$ has a normal form along $\opb p (D\cap U)$.
\end{definition}

\begin{remark}
With the above notations, $\dopb p(\shm|_U)$ and $\doim p\dopb p(\shm|_U)$ are concentrated in degree zero, and
$\shm|_U$ is a direct summand of $\doim p\dopb p (\shm|_U)$.
\end{remark}

\begin{theorem}[see \cite{Maj84,Sab00,Moc09,Moc11,Ked10,Ked11}]
\label{thm:normal}
Let $X$ be a complex manifold, $\shm$ a holonomic $\D_X$-module and $x\in X$.
Then there exist an open neighborhood $U$ of $x$, a closed analytic hypersurface $Y\subset U$, a complex manifold $X'$ and a projective morphism $f\colon X'\to U$ such that
\bnum
\item $\ss(\shm)\cap U\subset Y$,
\item $D\defeq\opb f(Y)$ is a 
normal crossing divisor  of $X'$,
\item $f$ induces an isomorphism $X'\setminus D \to U \setminus Y$,
\item $(\dopb f \shm)(*D)$ has a quasi-normal form along $D$.
\ee
\end{theorem}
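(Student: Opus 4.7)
The plan is to reduce the statement to the deep structure theorems of Mochizuki and Kedlaya via Hironaka's resolution of singularities. First, I would take $Y$ to be a closed analytic hypersurface of a neighborhood of $x$ containing $\ss(\shm)$; this is possible since $\ss(\shm)$ is the projection to $X$ of the characteristic variety $\chv(\shm)$, and holonomy implies this projection is contained in a proper analytic subset. Shrinking the neighborhood, we may assume $Y$ is a hypersurface. This settles condition (i) trivially.

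Next I would apply Hironaka's theorem to obtain a projective modification $f \colon X' \to U$ with $X'$ smooth, such that $D \defeq \opb f(Y)$ is a normal crossing divisor and $f$ restricts to an isomorphism $X'\setminus D \isoto U\setminus Y$; this yields (ii) and (iii). Setting $\shm' \defeq (\dopb f\shm)(*D)$, we have that $\shm'$ is a holonomic $\D_{X'}$-module with $\shm' \simeq \shm'(*D)$, and on $X'\setminus D$ it coincides (via $f$) with $\shm|_{U\setminus Y}$, which, being a holonomic module with empty singular support, is a flat meromorphic connection. Hence $\shm'$ is a flat meromorphic connection on $X'$ with poles along the normal crossing divisor $D$.

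The crucial step is then to show that, possibly after further blowing up along centers contained in $D$ (which we absorb into $f$ using the projectivity of the composition), $\shm'$ admits a quasi-normal form along $D$ in the sense of Definition~\ref{def:quasi-normal}. This is precisely the content of the Mochizuki-Kedlaya theorem on \emph{good formal structure} for meromorphic flat connections: locally on the real blow-up $\widetilde{X'}_D$, there is a ramification $p$ along the components of $D$ such that $\dopb p(\shm'|_{U'})$ decomposes (as a $\D^\sha_{\widetilde{X'}_{D}}$-module on the corresponding real blow-up) as a direct sum of exponential twists $(\she^{\varphi_i}_{U'\setminus D|U'})^\sha$ of regular meromorphic connections. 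After stripping the regular factors, which contribute only summands isomorphic to $(\O_{X'})^\sha$ by the classical Levelt-Turrittin-Malgrange-Sibuya-Deligne theory on the real blow-up, one obtains exactly the decomposition required by Definition~\ref{def:normal} applied to the ramified pullback.

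The main obstacle is, of course, this last step: the structure theorem of Mochizuki~\cite{Moc09,Moc11} and Kedlaya~\cite{Ked10,Ked11} is itself a deep result whose proof occupies entire monographs, and invoking it constitutes essentially the entire content of the theorem. In particular, the non-trivial geometric input is that Mochizuki-Kedlaya first need to further modify $X'$ (still by a projective map that is an isomorphism away from $D$) to achieve the \emph{good} condition on the exponents $\varphi_i$, after which the formal decomposition lifts to an actual analytic decomposition on sectors of the real blow-up by the higher-dimensional Hukuhara-Turrittin theorem. Once this is granted, the verification that the conclusion matches our notion of quasi-normal form is a direct comparison of definitions, using that $(\shm')^\sha$ is by construction insensitive to the regular part modulo the sheaf $\sha_{\widetilde{X'}_D}$.
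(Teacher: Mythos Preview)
The paper does not give a proof of this theorem: it is stated with the attribution ``see \cite{Maj84,Sab00,Moc09,Moc11,Ked10,Ked11}'' and is used as a black box input from the literature, with no proof environment following the statement. Your outline is a reasonable summary of how those cited results assemble into the statement --- choose $Y$ containing the singular support, resolve via Hironaka to make the pole divisor normal crossing, then invoke the Mochizuki--Kedlaya good formal structure theorem (together with the higher-dimensional asymptotic lifting) to obtain the quasi-normal form after further projective modification and ramification --- and you correctly identify that the entire substance lies in the Mochizuki--Kedlaya step. There is nothing in the paper to compare your sketch against; the authors simply import the result.

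One small clarification on your last paragraph: you do not need to ``strip'' the regular factors. As noted just before Definition~\ref{def:normal}, a regular meromorphic connection already satisfies $(\shl)^\sha \simeq \bigoplus (\O_X)^\sha$ locally on $\widetilde X^0$, so an exponential twist $\she^{\varphi_i}\dtens\shl_i$ of a regular connection gives $(\she^{\varphi_i})^\sha$-summands directly upon passing to $\sha$. The decomposition required by Definition~\ref{def:normal} is thus an immediate consequence of the good formal decomposition lifted to sectors, with no additional reduction step.
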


Remark that, under assumption (iii), $(\dopb f \shm)(*D)$ is concentrated in degree zero.

\medskip

The above fundamental result provides the following tool to prove statements concerning holonomic objects.

\begin{lemma}\label{lem:redux}
Let $P_X(\shm)$ be a statement concerning a complex manifold $X$ and a holonomic object $\shm\in\BDC_\hol(\D_X)$. Consider the following conditions.
\bna
\item
Let $X=\Union\nolimits_{i\in I}U_i$ be an open covering. Then $P_X(\shm)$ is true if and only if $P_{U_i}(\shm|_{U_i})$ is true for any $i\in I$.
\item
If $P_X(\shm)$ is true, then $P_X(\shm[n])$ is true for any $n\in\Z$.
\item
Let $\shm'\to\shm\to\shm''\to[+1]$ be a distinguished triangle in $\BDC_\hol(\D_X)$. If $P_X(\shm')$ and $P_X(\shm'')$ are true, then $P_X(\shm)$ is true.
\item
Let $\shm$ and $\shm'$ be holonomic $\D_X$-modules. 
If $P_X(\shm\dsum\shm')$ is true, then $P_X(\shm)$ is true.
\item Let $f\colon X\to Y$ be a projective morphism and $\shm$ a good holonomic $\D_X$-module. If $P_X(\shm)$ is true, then $P_Y(\doim f\shm)$ is true.
\item If $\shm$ is a holonomic $\D_X$-module with a normal form along a normal crossing divisor of $X$, then $P_X(\shm)$ is true.
\ee
If conditions {\rm (a)--(f)} are satisfied, 
then $P_X(\shm)$ is true for any complex manifold $X$ and any $\shm\in\BDC_\hol(\D_X)$.
\end{lemma}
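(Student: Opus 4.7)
First I would use (b) and (c), applied to the truncation distinguished triangles $\tau^{\le a}\shm \to \shm \to \tau^{>a}\shm \to[+1]$ and inducted on the amplitude of cohomology, to reduce the problem to the case where $\shm$ is a single holonomic $\D_X$-module placed in degree $0$. I would then argue by induction on the pair $(\dim X,\ \dim\supp\shm)$, ordered lexicographically. The base case $\shm=0$ is handled by (f) applied with an empty family of $\varphi_i$'s, since $0$ trivially satisfies Definition~\ref{def:normal}.

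For the inductive step, by (a) it suffices to prove $P_U(\shm|_U)$ locally around each point of $X$. Fix $x \in X$ and apply Theorem~\ref{thm:normal} to obtain an open $U \ni x$, a hypersurface $Y \subset U$ with $\ss(\shm|_U) \subset Y$, a complex manifold $X'$, and a projective morphism $f \colon X' \to U$ such that $D \defeq \opb f(Y)$ is a normal crossing divisor, $f \colon X' \setminus D \isoto U \setminus Y$, and $\shm' \defeq (\dopb f\shm|_U)(*D)$ has a quasi-normal form along $D$. Applying (c) to the distinguished triangle
\[
\rsect_{[Y]}(\shm|_U) \to \shm|_U \to \shm|_U(*Y) \to[+1]
\]
reduces the problem to the two outer terms. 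For the $(*Y)$-term: since $f$ is proper and restricts to an isomorphism off $D$, a direct computation using that $f_{*} f^{*}$ is the identity on meromorphic extensions yields $\shm|_U(*Y) \simeq \doim f\shm'$, so by (e) it suffices to prove $P_{X'}(\shm')$. By Definition~\ref{def:quasi-normal}, locally on $X'$ there is a finite (hence proper) ramification $p \colon X'' \to V$ such that $\dopb p(\shm'|_V)$ has a normal form along $\opb p(D \cap V)$, to which (f) applies; then (e) for $p$, (d) (using that $\shm'|_V$ is a direct summand of $\doim p\dopb p(\shm'|_V)$), and (a) yield $P_{X'}(\shm')$.

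The remaining term $\rsect_{[Y]}(\shm|_U)$ is a holonomic $\D_U$-module supported on the (possibly singular) hypersurface $Y$, whose dimension is strictly less than $\dim X$. I would handle it by using Hironaka's resolution of singularities to choose a projective morphism $h \colon \widetilde Y \to U$ with $\widetilde Y$ smooth of dimension $\dim X - 1$ and image contained in $Y$, and then realize $\rsect_{[Y]}(\shm|_U)$ as a direct summand of $\doim h(\shn)$ for a suitable holonomic $\D_{\widetilde Y}$-module $\shn$. Since $\dim\widetilde Y < \dim X$, the outer inductive hypothesis gives $P_{\widetilde Y}(\shn)$, and (e) followed by (d) yields $P_U(\rsect_{[Y]}(\shm|_U))$. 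Combining via (c) and (a) then gives $P_X(\shm)$. The main obstacle will be this last step: realizing a holonomic module supported on a singular hypersurface as a direct summand of a pushforward from a smooth resolution, which requires carefully combining Kashiwara's equivalence on the smooth strata with resolution of singularities and may require an additional sub-induction on the dimension of the support within each ambient dimension.
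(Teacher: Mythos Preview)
Your overall architecture matches the paper's: reduce to a single holonomic module via (b) and (c) applied to truncation triangles, then run a double induction on $(\dim X,\dim\supp\shm)$, handling the meromorphic part via Theorem~\ref{thm:normal} together with the ramification argument for quasi-normal forms. Your second paragraph is precisely the paper's steps (ii) and (iii-1).

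The gap is in your last paragraph. The claim that $\rsect_{[Y]}(\shm|_U)$ can be realized as a \emph{direct summand} of $\doim h(\shn)$ from a resolution is too strong and already fails for the simplest singular hypersurfaces: if $Y\subset\C^2$ is the node $\{xy=0\}$ and $\shm=H^1_{[Y]}(\O_X)$, the pushforward from the normalization is $\shb_{\{x=0\}}\oplus\shb_{\{y=0\}}$, and Mayer--Vietoris gives a short exact sequence $0\to\shm\to\shb_{\{x=0\}}\oplus\shb_{\{y=0\}}\to\shb_{\{0\}}\to 0$ that does not split. The paper never asks for a summand. Given $\shm$ with $Y\defeq\supp\shm\subsetneq X$, it resolves $Y$ by a projective $f\colon Y'\to X$ with $Y'$ smooth, $f$ birational onto $Y$, and $Z'=\opb f(Y_\sing)$ a hypersurface in $Y'$; sets $\shn=(\dopb f\shm)(*Z')[d_{Y'}-d_X]$; and uses the \emph{distinguished triangle}
\[
\shm \To \doim f\shn \To \shl \tone,
\]
whose first arrow is an isomorphism over $X\setminus Y_\sing$ (Kashiwara's equivalence on $Y_\reg$), so $\supp\shl\subset Y_\sing$. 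Then $P_{Y'}(\shn)$ holds by induction on $\dim X$, hence $P_X(\doim f\shn)$ by (e), and $P_X(\shl)$ by induction on $\dim\supp$; now (b) and (c) give $P_X(\shm)$. Your lexicographic induction already supports this sub-induction --- you only need to replace the direct-summand step by this triangle. Note also that the paper resolves $\supp\shm$ directly rather than the auxiliary hypersurface $Y$ produced by Theorem~\ref{thm:normal}; this is why it separates the cases $\supp\shm=X$ and $\supp\shm\neq X$ instead of always passing through the local-cohomology triangle as you do.
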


\begin{proof}
Let $X$ be a complex manifold and $\shm\in\BDC_\hol(\D_X)$. Let us show that $P_X(\shm)$ is true.

\smallskip\noindent (i)
Let $a\leq b$ be integers such that $\shm\in\derd^{[a,b]}_\hol(\D_X)$.
Then one says that  $\shm$ has amplitude $\leq b-a$. 
By applying (b) and (c) to the distinguished triangle
\[
\tau^{\leq a}\shm \To \shm \To \tau^{>a}\shm \To[+1]
\]
and arguing by induction on the amplitude of $\shm$, we may assume that $\shm$ is concentrated in degree zero. In other words, we may assume that $\shm$ is a holonomic $\D_X$-module.
Since the question is local on $X$ by (a), we may further assume that $\shm$ is good.

\smallskip\noindent (ii)
Assume that $\shm$ is a good holonomic $\D_X$-module with a quasi-normal form along a normal crossing divisor $D\subset X$.

Locally, there exists a ramification $p\colon X'\to X$ as in Definition~\ref{def:quasi-normal}, such that $\dopb p\shm$ has a normal form. Then, $P_{X'}(\dopb p\shm)$ is true by (f). Hence
$P_X(\doim p\dopb p \shm)$ is true by (e). Since $\shm$ is a direct summand of $\doim p\dopb p \shm$, it follows from (d) that $P_X(\shm)$ is true.

\smallskip\noindent (iii)
Let $\shm$ be a good holonomic $\D_X$-module. We will argue by induction on $\dim X$ and by induction on the dimension of $Y \defeq \supp\shm$. 

\smallskip\noindent (iii-1)
Assume first $Y=X$. Then, locally on $X$, there exist a closed hypersurface $Z\subset X$ and a projective morphism $f\colon X'\to X$ such that $D \defeq \opb f Z$ is a normal crossing divisor of  $X'$, $f$ induces an isomorphism $X'\setminus D\isoto X\setminus Z$, and $(\dopb f \shm)(*D)$ has a quasi-normal form. Hence $P_{X'}(\dopb f \shm(*D))$ is true by (ii).
Since $\dopb f \shm(*D)$ is good and $\shm(*Z) \simeq \doim f \dopb f\shm(*D)$, $P_X(\shm(*Z))$ is true by (e).
Let us consider a distinguished triangle
\[
\shm\To\shm(*Z)\To\shn\To[+1].
\]
Then $\dim\supp\shn < \dim Y$, and hence $P_X(\shn)$ is true by the induction hypothesis. Therefore $P_X(\shm)$ is true by (b) and (c).

\smallskip\noindent (iii-2)
Assume now that $Y\neq X$. Let $Y_\sing$ be its singular locus, and let $f\colon Y'\to X$ be a projective morphism such that $Y'$ is a complex manifold, $f(Y')= Y$, $Z'\defeq\opb f Y_\sing$ is a closed hypersurface of $Y'$, and $f$ induces an isomorphism $Y'\setminus Z'\isoto Y\setminus Y_\sing$.
Then $\shn \defeq \dopb f\shm(*Z')[d_{Y'}-d_X]$ is a good holonomic $\D_{Y'}$-module.
Since $\dim Y'<\dim X$, $P_{Y'}(\shn)$ is true by the induction hypothesis on $\dim X$. 
Hence $P_X(\doim f\shn)$ is also true by (e).
Consider a distinguished triangle
\[
\shm \To \doim f\shn \To \shl \To[+1].
\]
Since $\supp\shl \subset Y_\sing$, the induction hypothesis on $\dim Y$ implies that $P_X(\shl)$ is true. Hence $P_X(\shm)$ is also true by (b) and (c).
\end{proof}

\section{Enhanced tempered functions}\label{se:enhanced}

We define in this section the enhanced ind-sheaves of tempered distributions and of tempered holomorphic functions.

\subsection{Enhanced tempered distributions}

Denote by $\PR$ and $\PP$ the real and complex projective line, respectively.
Let $t\in\R\subset\PR$ and  $\tau\in\C\subset\PP$ be the affine coordinates, with $t=\tau|_\R$. 
Let $M$ be a real analytic manifold, and consider the natural morphism of bordered spaces
\[
j\colon M\times\R_\infty \to M\times\PR .
\]

\begin{definition}
\label{def:DbT}
Set
\[
\DbT_M = 
\epb j \rhom[\D_\PP](\she_{\C|\PP}^\tau,\Dbt_{M\times\PR})[1] \in \BDC(\iCfield_{M\times\R_\infty}),
\]
and denote by $\DbE_M$ the associated object of $\BEC[\iCfield]X$. 
\end{definition}

Here the shift has been chosen 
so that Propositions \ref{pro:OTetens} and \ref{prop:regireg} below 
hold.
Note that, by an argument similar to that in the proof of Lemma~\ref{lem:Rt}, one has
\[
H^k(\DbT_M) = 0 \quad\text{for }k\neq-1.
\]

\begin{remark}
There are monomorphisms
$$\Cfield_{\{t<\ast\}} \tens \opb\pi\Dbt_M
 \monoto H^{-1}(\DbT_M)\monoto \opb\pi\Db_M.$$
The first one is induced by $v(x)\mapsto e^tv(x)$,
and the second is induced by $u(x,t)\mapsto e^{-t}u(x,t)$.
They are not isomorphisms (if $\dim M\ge 1$).
In fact, for $M=\R$ and $U=\{(x,t)\in M\times\R\semicolon x>0,\ t<-1/x\}$,
one has
$e^te^{1/x} \in \Hom(\Cfield_U, H^{-1}(\DbT_M))$ but
$e^{1/x}\in  \Hom(\Cfield_U, \opb\pi\Db_M)\simeq\Hom(\C_{\{x>0\}},\Db_M)$
does not belong to
$$\Hom(\Cfield_U, \Cfield_{\{t<\ast\}} \tens \opb\pi\Dbt_M)
\simeq \Hom(\Cfield_U, \opb\pi\Dbt_M)\simeq\Hom(\C_{\{x>0\}},\Dbt_M).$$
\end{remark}

\begin{proposition}
\label{pro:DbTgeq0}
There are isomorphisms in $\BDC(\iCfield_{M\times\R_\infty})$
\begin{align*}
\DbT_M 
&\isoto \cihom(\Cfield_{\{t\geq0\}}, \DbT_M) \\
&\isofrom \cihom(\Cfield_{\{t\geq a\}}, \DbT_M) \quad \text{for any $a\geq 0$}.
\end{align*}
\end{proposition}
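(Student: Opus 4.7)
The plan is to combine the representation of $\DbT_M$ via a first-order linear differential equation with the translation invariance of tempered distributions, in order to deduce both isomorphisms of the proposition.

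First, the Koszul presentation $0\to\D_\PP\xrightarrow{\cdot(\partial_\tau-1)}\D_\PP\to\she_{\C|\PP}^\tau\to 0$ (valid on $\C\subset\PP$, where the localization at $\infty$ is trivial) realizes $\DbT_M$ in $\BDC(\iCfield_{M\times\R_\infty})$ as the two-term complex in cohomological degrees $-1,0$
\[
\DbT_M\simeq \bigl[\opb{j_M}\Dbt_{M\times\overline\R}\xrightarrow{\partial_t-1}\opb{j_M}\Dbt_{M\times\overline\R}\bigr],
\]
where Proposition~\ref{pro:Dbtbordered} is used to identify $\epb j\,\Dbt_{M\times\PR}\simeq\opb{j_M}\Dbt_{M\times\overline\R}$. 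Set $A\defeq\opb{j_M}\Dbt_{M\times\overline\R}$. The translation $\mu_a\colon M\times\R_\infty\to M\times\R_\infty$, $(x,t)\mapsto(x,t+a)$, is an isomorphism of bordered spaces which canonically preserves $A$ and commutes with $\partial_t-1$; hence there is a canonical isomorphism $\opb{\mu_a}\DbT_M\simeq\DbT_M$ for every $a\in\R$.

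For the second isomorphism, combining this translation invariance with the base-change identity $\opb{\mu_a}F\ctens G\simeq F\ctens\opb{\mu_a}G$ for convolution and with the formula $\cihom(\Cfield_{\{t\geq a\}},K)\simeq\Cfield_{\{t\geq-a\}}\ctens K$ used in the proof of Proposition~\ref{pro:equivTam}, one gets
\[
\cihom(\Cfield_{\{t\geq a\}},\DbT_M)\simeq\Cfield_{\{t\geq-a\}}\ctens\DbT_M\simeq\Cfield_{\{t\geq 0\}}\ctens\opb{\mu_a}\DbT_M\simeq\cihom(\Cfield_{\{t\geq 0\}},\DbT_M).
\]

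For the first isomorphism, applying $\cihom(-,\DbT_M)$ to the distinguished triangle $\Cfield_{\{t>0\}}\to\Cfield_{\{t\geq 0\}}\to\Cfield_{\{t=0\}}\to[+1]$ and using $\cihom(\Cfield_{\{t=0\}},\DbT_M)\simeq\DbT_M$, it is enough to prove that $\cihom(\Cfield_{\{t>0\}},\DbT_M)\simeq 0$. Commuting $\cihom(\Cfield_{\{t>0\}},-)$ with the two-term presentation, this reduces to showing that $\partial_t-1$ acts as an isomorphism on $\cihom(\Cfield_{\{t>0\}},A)$. Informally, this reflects the formal invertibility $(\partial_t-1)^{-1}f(t)=-\int_t^{+\infty}e^{t-s}f(s)\,ds$, the exponential factor $e^{t-s}$ causing the primitive to remain tempered provided $f$ is tempered on $\{t>0\}$. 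The main obstacle is upgrading this formal inversion to a genuine isomorphism of subanalytic ind-sheaves on the bordered space $M\times\R_\infty$, in particular controlling the temperedness of the primitive operator uniformly in the subanalytic open sets parametrising the ind-sheaf structure of $A$ near the boundary $M\times\{\pm\infty\}$.
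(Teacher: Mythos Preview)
Your outline follows the same high-level strategy as the paper's proof, but it has two genuine gaps, one in each part.

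\medskip
\textbf{First isomorphism.} Your reduction is correct and matches the paper: it suffices to show $\cihom(\Cfield_{\{t>0\}},\DbT_M)\simeq 0$, equivalently that $\partial_t-1$ is an isomorphism after applying $\cihom(\Cfield_{\{t>0\}},-)$ to the two-term resolution. But you stop precisely at the point that carries the content: you call the required invertibility an ``obstacle'' and leave it unproved. The paper resolves this concretely. It invokes Lemma~\ref{lem:vanrelsuban} to reduce to showing $\RHom(\Cfield_U,\cihom(\Cfield_{\{t>0\}},\DbT_M))=0$ for subanalytic opens $U$ with connected fibers over $M$; the convolution $\Cfield_U\ctens\Cfield_{\{t>0\}}$ is computed explicitly as $\Cfield_W[-1]$ for $W=\{(x,t):x\in V,\ \varphi(x)<t\}$; surjectivity of $\partial_t-1$ on $\Hom(\Cfield_W,\Dbt_{M\times\PR})$ follows by restriction from its surjectivity on $\Hom(\Cfield_{M\times\R},\Dbt_{M\times\PR})$; and injectivity is the separate Lemma~\ref{lem:uet}, asserting that if $u(x)e^t$ is tempered at $t=+\infty$ then $u=0$. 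Your integral formula is a heuristic for surjectivity, but the injectivity step (which is where growth at $t=+\infty$ enters) is not addressed at all.

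\medskip
\textbf{Second isomorphism.} The identity $\cihom(\Cfield_{\{t\geq a\}},K)\simeq\Cfield_{\{t\geq-a\}}\ctens K$ that you invoke is valid only in $\BEC[\iCfield]M$, not in $\BDC(\iCfield_{M\times\R_\infty})$: by Proposition~\ref{pro:tenshomstar} the two sides differ by an object of the form $\opb\pi L$. What does hold in $\BDC$ is $\cihom(\Cfield_{\{t\geq a\}},K)\simeq\cihom(\Cfield_{\{t\geq 0\}},\mu_{-a*}K)$, and combined with your translation-invariance $\mu_{-a*}\DbT_M\simeq\DbT_M$ this yields an abstract isomorphism. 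However, the statement asks that the \emph{canonical} morphism (induced by $\Cfield_{\{t\geq 0\}}\to\Cfield_{\{t\geq a\}}$) be an isomorphism, and your chain does not identify your isomorphism with this one. The paper handles this by again testing on subanalytic opens and exhibiting the explicit inverse $u(x,t)\mapsto e^a\,u(x,t-a)$ on sections; note the factor $e^a$, which is not captured by naive translation of the complex.
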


\begin{proof}
(i)
Let us prove the isomorphism
\[
\DbT_M \isoto \cihom(\Cfield_{\{t\geq0\}}, \DbT_M).
\]
Denote by $p\colon M\times\PR\to M$ the projection.
Let $U\subset M\times\PR$ be an open subanalytic subset such that $U\cap(M\times\R)\cap \opb p(x)$ is connected for all $x\in M$. 
Note that $\roim j\DbT_M$ belongs to $\BDC_\suban(\iCfield_{M\times\PR})$.
By Lemma~\ref{lem:vanrelsuban}, it is then enough to show
\[
\RHom(\Cfield_U,\cihom(\Cfield_{\{t>0\}}, \DbT_M))
\simeq 0.
\]
One has
\[
\RHom(\Cfield_U,\cihom(\Cfield_{\{t>0\}}, \DbT_M))
\simeq \RHom(\Cfield_U \ctens \Cfield_{\{t>0\}}, \DbT_M).
\]
Set $V= p(U)\subset M$ and $U\cap(M\times\R)=\{(x,t)\in V\times\R \semicolon \varphi(x)<t<\psi(x) \}$, where $\varphi,\psi\colon V \to \overline\R$ are subanalytic functions with $\varphi(x)<\psi(x)$ for all $x\in V$. Then
\[
\Cfield_U \ctens \Cfield_{\{t>0\}} \simeq \Cfield_W[-1],
\]
where $W=\{(x,t)\in V\times\R\semicolon \varphi(x)<t \}$.
Note that $\varphi$ takes value in $\overline\R\setminus\{+\infty\}$.
Hence we have to prove that the bottom arrow in the commutative diagram below is an isomorphism.
\[
\xymatrix{
\Hom(\Cfield_{M\times\R}, \Dbt_{M\times\PR}) \ar@{->>}[r]^{\partial_t-1} \ar@{->>}[d] &
\Hom(\Cfield_{M\times\R}, \Dbt_{M\times\PR}) \ar@{->>}[d] \\
\Hom(\Cfield_W, \Dbt_{M\times\PR}) \ar[r]^{\partial_t-1} & 
\Hom(\Cfield_W, \Dbt_{M\times\PR}) .
}
\]
Since the top arrow is surjective and the vertical arrows are surjective, also the bottom arrow is surjective.
By Lemma~\ref{lem:uet} below, the bottom arrow is injective.

\smallskip\noindent(ii)
In order to prove the isomorphism
\[
\cihom(\Cfield_{\{t\geq a\}}, \DbT_M) \isoto \cihom(\Cfield_{\{t\geq0\}}, \DbT_M),
\]
it is enough to show that
\[
\cihom(\Cfield_{\{t< a\}}, \DbT_M) \isoto \cihom(\Cfield_{\{t< 0\}}, \DbT_M).
\]
Hence, as in (i), it is enough to show that
\[
\RHom(\Cfield_U,\cihom(\Cfield_{\{t< a\}}, \DbT_M)) \isoto \RHom(\Cfield_U,\cihom(\Cfield_{\{t< 0\}}, \DbT_M))
\]
for any subanalytic open subset $U\subset M\times\PR$ such that
\[
U\cap(M\times\R) = \{(x,t)\in V\times\R \semicolon \varphi(x)<t<\psi(x) \},
\]
where $V=p(U)$.
One has $\Cfield_U \ctens \Cfield_{\{t<a\}} \simeq \Cfield_{W_a}[-1]$,
where
\[
W_a = \{(x,t)\in V\times\R\semicolon t-a<\psi(x) \}.
\]
Hence we have to show that the following morphism is a quasi-isomorphism
\begin{multline*}
\left( \Hom(\Cfield_{W_a}, \Dbt_{M\times\PR}) \to[\partial_t-1] \Hom(\Cfield_{W_a}, \Dbt_{M\times\PR}) \right) \\
\To
\left( \Hom(\Cfield_{W_0}, \Dbt_{M\times\PR}) \to[\partial_t-1] \Hom(\Cfield_{W_0}, \Dbt_{M\times\PR}) \right).
\end{multline*}
Since the arrows $\partial_t-1$ are surjective, we have to show that the natural morphism
\begin{multline*}
\ker\left( \Hom(\Cfield_{W_a}, \Dbt_{M\times\PR}) \to[\partial_t-1] \Hom(\Cfield_{W_a}, \Dbt_{M\times\PR}) \right) \\
\to
\ker\left( \Hom(\Cfield_{W_0}, \Dbt_{M\times\PR}) \to[\partial_t-1] \Hom(\Cfield_{W_0}, \Dbt_{M\times\PR}) \right)
\end{multline*}
is an isomorphism.
Indeed, its inverse is given by $u(x,t) \mapsto e^a\,u(x,t-a)$.
\end{proof}

\begin{lemma}
\label{lem:uet}
Let $u\in\sect(M;\Db_M)$ and assume that $u(x)e^t \in \sect(M\times\{t>0\};\Db_{M\times\PR})$ is tempered at $t=\infty$. Then $u=0$.
\end{lemma}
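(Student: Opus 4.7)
The plan is to test $u(x)e^t$ against test densities whose support moves toward $t=\infty$, exploiting the mismatch between the exponential growth of $e^T$ and the polynomial growth allowed by temperedness. Heuristically: in the coordinate $s=1/t$ near $\infty\in\PR$, the section $u(x)e^t$ is formally represented by $u(x)e^{1/s}/s^2$, whose essential singularity at $s=0$ cannot be dominated by any distribution unless the prefactor $u$ vanishes.

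First I would unwind the tempered hypothesis. By the definition of $\Dbt_{M\times\PR}$, locally around each $(x_0,\infty)\in M\times\PR$ there exist a relatively compact subanalytic open neighborhood $U$ and a distribution $\bar v\in\Db_{M\times\PR}(U)$ whose restriction to $U\cap(M\times\{t>0\})$ equals $u(x)e^t$. Pick a compact $K\subset U$ of the form $\{(x,s)\semicolon x\in K_0,\ 0\leq s\leq\varepsilon\}$ in the chart $(x,s=1/t)$. The distribution $\bar v$ has some finite order $N$ on $K$, hence $|\langle\bar v,\psi\rangle|\leq C\|\psi\|_{C^N(K)}$ for every smooth $\psi$ supported in $K$.

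Next I test against an explicit family. Choose $\varphi\in C^\infty_c(M)$ with $\operatorname{supp}\varphi\subset K_0$, and $\chi\in C^\infty_c(\R)$ with $\operatorname{supp}\chi\subset[a,b]$ for some $0<a<b$, chosen so that $c:=\int e^s\chi(s)\,ds\neq 0$ (any nonzero $\chi\geq 0$ works). For $T>0$, set $\psi_T(x,t):=\varphi(x)\chi(t-T)$. A direct Fubini computation on $U\cap(M\times\{t>0\})$ yields
\[
\langle u(x)e^t,\psi_T\rangle=\langle u,\varphi\rangle\cdot c\cdot e^T.
\]
For $T$ large, $\operatorname{supp}\psi_T\subset K$, so the left-hand side also equals $\langle\bar v,\psi_T\rangle$. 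Expressed in the $(x,s)$ chart, $\chi(t-T)=\chi(1/s-T)$ is supported in $s\in[1/(T+b),1/(T+a)]$, and a routine induction shows that the $k$-th $s$-derivative of $\chi(1/s-T)$ is bounded by $C_k\|\chi\|_{C^k}s^{-2k}\leq C_k\|\chi\|_{C^k}(T+b)^{2k}$. Hence $\|\psi_T\|_{C^N(K)}\leq C'T^{2N}$, and combining with the order estimate gives
\[
|\langle u,\varphi\rangle|\cdot|c|\cdot e^T\leq C''T^{2N}\quad\text{for all large }T.
\]
Since $e^T$ outgrows every polynomial, this forces $\langle u,\varphi\rangle=0$. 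As $\varphi$ and $x_0$ were arbitrary, $u=0$.

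The only real obstacle is the bookkeeping of the coordinate change at $\infty\in\PR$: one must verify that the $t$-chart (used for the Fubini computation) and the $s$-chart (used for the $C^N$-norm bound) give equivalent $C^N$-norms on compact subsets of $U$, and confirm the polynomial derivative bounds in $s$ as claimed. Once this is in place, the argument collapses to the elementary one-line observation that $e^T$ beats any power of $T$.
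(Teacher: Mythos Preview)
Your argument is correct and is essentially the same as the paper's, just more explicit: the paper integrates out the $M$-variable first by pairing with $v\in C^\infty_c(M)$, reducing immediately to the one-dimensional observation that $ce^t$ tempered at $t=\infty$ forces $c=0$, which it leaves as evident. Your test against $\varphi(x)\chi(t-T)$ amounts to the same reduction, and your finite-order estimate in the $s=1/t$ chart spells out precisely why the one-dimensional case is evident; the paper's organization simply spares you that bookkeeping.
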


\begin{proof}
For any $v\in\shc^\infty_c(M)$, set $c= \int v(x)u(x) dx$. Then the function $ce^t= \int v(x)u(x)e^t dx$ is tempered at $t=\infty$, and hence $c=0$.
\end{proof}

\subsection{Enhanced tempered holomorphic functions}

Let $X$ be a complex manifold. 
Consider the natural morphism of bordered spaces
\[
i\colon X\times\R_\infty \to X\times\PP .
\]
Let $\tau\in\C\subset\PP$ be the affine coordinate such that $\tau|_\R = t$, the affine coordinate of $\R$. 

\begin{definition}
Set
\begin{align*}
\OEn_X 
&= \epb i ((\she_{\C|\PP}^{-\tau})^\mop\ltens[\D_\PP]\Ot_{X\times\PP})[1] \\
&\simeq \epb i \rhom[\D_\PP](\she_{\C|\PP}^\tau,\Ot_{X\times\PP})[2] \in \BEC[\ind\D]X, \\
\OvE_X &= \Omega_X \ltens[\O_X] \OEn_X \\
&\simeq \epb i(\Ovt_{X\times\PP} \ltens[\D_\PP] \she_{\C|\PP}^{-\tau})[1]  \in \BEC[\ind\D^\op]X.
\end{align*}
 Recall that $\mop\colon\BDC(\D_\PP) \to \BDC(\D_\PP^\op)$ is the functor given by $\shm^\mop = \Omega_\PP \ltens[\O_\PP] \shm$.
\end{definition}

\begin{theorem}
\label{thm:OTgeq0}
There is an isomorphism in $\BDC(\iCfield_{X\times\R_\infty})$
\[
\RE\OEn_X \simeq \epb i ((\she_{\C|\PP}^{-\tau})^\mop\ltens[\D_\PP]\Ot_{X\times\PP})[1],
\]
and there are isomorphisms in $\BEC[\ind\D]X$
\begin{align*}
\OEn_X &\isoto \cihom(\Cfield_{\{t\geq 0\}}, \OEn_X) \\
&\isofrom \cihom(\Cfield_{\{t\geq a\}}, \OEn_X) \quad
\text{for any $a\geq 0$}.
\end{align*}
\end{theorem}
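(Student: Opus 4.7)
The plan is to exhibit a single candidate representative
\[
F\defeq\epb i\bl(\she^{-\tau}_{\C|\PP})^\mop\ltens[\D_\PP]\Ot_{X\times\PP}\br[1]\in\BDC(\ifield_{X\times\R_\infty})
\]
of $\OEn_X$ and verify for it both the ``right-orthogonality'' needed for the first assertion and the two $\cihom$-isomorphisms needed for the remaining assertions. First, I would show that $F$ represents $\OEn_X$ in $\BDC(\ifield_{X\times\R_\infty})$: by holonomic duality on $\PP$, one has $\rhom[\D_\PP](\she^\tau_{\C|\PP},\Ot_{X\times\PP})[1]\simeq(\ddual_\PP\she^\tau_{\C|\PP})^\mop\ltens[\D_\PP]\Ot_{X\times\PP}$, and by Lemma~\ref{lem:YEphi}, $(\ddual_\PP\she^\tau_{\C|\PP})(*\infty)\simeq\she^{-\tau}_{\C|\PP}$; the discrepancy between $\ddual_\PP\she^\tau_{\C|\PP}$ and $\she^{-\tau}_{\C|\PP}$ is thus supported on $\{\infty\}\subset\PP$. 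Since $i\colon X\times\R_\infty\to X\times\PP$ sends $X\times\R$ into $X\times\C$ which avoids $X\times\{\infty\}$, any ind-sheaf supported on $X\times\{\infty\}$ vanishes under $\epb i$, yielding $F\simeq\OEn_X$ in $\BDC(\ifield_{X\times\R_\infty})$ (compatibly with $\D_X$-structures, since all isomorphisms are $\D_X$-equivariant).

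The remaining task is to establish in $\BDC(\ifield_{X\times\R_\infty})$ the isomorphisms
\[
F\isoto\cihom(\Cfield_{\{t\geq 0\}},F)\isofrom\cihom(\Cfield_{\{t\geq a\}},F)\quad(a\geq 0).
\]
Once proved, the first places $F$ in $\ind\shc_{\{t^*\leq 0\}}^\bot\subset\ind\shc_{\{t^*=0\}}^\bot$ and so identifies $F$ with $\RE\OEn_X$ via Proposition~\ref{pro:tam}~(ii-b), yielding the first assertion of the theorem; the displayed isomorphisms then descend to $\BEC[\ind\D]X$ to give the second and third assertions. My strategy for proving them is to reduce to the tempered distribution analogue, Proposition~\ref{pro:DbTgeq0}. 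Using the Dolbeault resolution $\Ot_{X\times\PP}\simeq\rhom[\D_{\overline X\times\overline\PP}](\O_{\overline X\times\overline\PP},\Dbt_{(X\times\PP)_\R})$, and the fact that $(\she^{-\tau}_{\C|\PP})^\mop\ltens[\D_\PP](-)$ commutes with the $\D_{\overline X\times\overline\PP}$-action (the two operate on independent variables), $F$ can be rewritten as $\epb i$ of a Dolbeault complex whose coefficient object is the distributional analogue of $F$. Since $\cihom(\Cfield_{\{t\geq a\}},-)$ commutes with the $\overline X$-part $\rhom[\D_{\overline X}](\O_{\overline X},-)$ of the Dolbeault (whose differentials act on the $X$-factor alone, disjoint from $t$), the problem reduces to the corresponding statement for the $\overline\PP$-Dolbeault of the distributional object, which is handled by Proposition~\ref{pro:DbTgeq0} applied to $\DbT_{X_\R}$.

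The principal obstacle is controlling the $\overline\PP$-Dolbeault: since $\partial_{\bar\tau}=\tfrac12(\partial_t+\sqrt{-1}\partial_s)$ mixes the $t$-direction with the imaginary direction of $\tau$, the commutation between $\cihom(\Cfield_{\{t\geq a\}},-)$ and the $\partial_{\bar\tau}$-part of the Dolbeault is not formal. I expect to resolve it by a twist eliminating the imaginary part, parallel to the $e^{-\sqrt{-1}y}$-trick used in the proof of Lemma~\ref{lem:t}, which effectively decouples the real and imaginary directions of $\tau$ and allows the $t$-only operator $\cihom(\Cfield_{\{t\geq a\}},-)$ to be pulled past the relevant Dolbeault differentials before invoking Proposition~\ref{pro:DbTgeq0}.
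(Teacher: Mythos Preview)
Your overall strategy coincides with the paper's: reduce to Proposition~\ref{pro:DbTgeq0} through a Dolbeault argument. The paper's execution, however, is a one-liner: it simply records the identification
\[
\OEn_X \simeq \rhom[\opb\pi\D_{\overline X}](\opb\pi\O_{\overline X},\DbE_{X_\R}),
\]
after which the theorem is immediate from Proposition~\ref{pro:DbTgeq0}, since the $\overline X$-Dolbeault involves only differentials in the $X$-variables and therefore commutes with $\cihom(\Cfield_{\{t\geq a\}},-)$ trivially. The point is that the entire $\PP$-direction (both the $\D_\PP$-action and the $\overline\PP$-Dolbeault, together with $\epb i$) is absorbed into the very definition of $\DbT_{X_\R}$: factoring $i$ as $X\times\R_\infty\to X\times\PR\hookrightarrow X\times\PP$ and using the relative form of Proposition~\ref{pro:DbtOt} in the $\PP$-direction, one has $\epb\iota\,\rhom[\D_{\overline\PP}](\O_{\overline\PP},\Dbt_{X_\R\times\PP_\R})[1]\simeq\Dbt_{X_\R\times\PR}$. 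So the ``obstacle'' you identify never needs to be confronted as a commutation problem.

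Your proposed resolution via a twist \`a~la Lemma~\ref{lem:t} is not the right tool. The $e^{-\sqrt{-1}y}$ trick there converts a specific $\D$-module ($\she^{-\sqrt{-1}v}$) into another ($\O$); it is not a mechanism for commuting $\cihom$ past $\partial_{\bar\tau}$, nor for passing from objects on $X_\R\times\PP_\R$ to objects on $X_\R\times\PR$. What is actually needed is the identification above. Once you see that the $\overline\PP$-part is packaged inside $\DbT_{X_\R}$ rather than left as an external Dolbeault to push $\cihom$ through, the argument collapses to the paper's single observation.
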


\begin{proof}
This follows from Proposition~\ref{pro:DbTgeq0}, noticing that
\[
\OEn_X \simeq \rhom[\opb\pi\D_{\overline X}](\opb\pi\O_{\overline X},\DbE_{X_\R}),
\]
where $X_\R$ denotes the real analytic manifold underlying $X$.
\end{proof}

As a consequence of Theorem~\ref{thm:OTgeq0} and Proposition~\ref{pro:equivTam}, we get the following result.

\begin{corollary}
\label{cor:CTamOT}
There are isomorphisms in $\BEC[\ind\D]X$
\begin{align*}
\OEn_X 
&\simeq \cihom(\Cfield_X^\enh, \OEn_X) \\
&\simeq \Cfield_X^\enh \ctens \OEn_X.
\end{align*}
\end{corollary}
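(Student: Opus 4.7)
The plan is to deduce the corollary directly from Theorem~\ref{thm:OTgeq0} by invoking the characterization of stable objects given in Proposition~\ref{pro:equivTam}. Concretely, Theorem~\ref{thm:OTgeq0} asserts that for every $a\geq 0$ there are isomorphisms
\[
\cihom(\Cfield_{\{t\geq a\}}, \OEn_X) \isoto \cihom(\Cfield_{\{t\geq 0\}}, \OEn_X) \isofrom \OEn_X
\]
in $\BEC[\ind\D]X$. Forgetting the $\D_X$-module structure, this says precisely that the enhanced ind-sheaf underlying $\OEn_X$ satisfies condition (ii) of Proposition~\ref{pro:equivTam}.

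By the equivalence of the six conditions in Proposition~\ref{pro:equivTam}, condition~(ii) implies conditions (iii) and (iv), i.e.
\[
\OEn_X \isofrom \Cfield_{\{t\geq 0\}}\ctens \OEn_X \isoto \Cfield_X^\enh \ctens \OEn_X
\quad\text{and}\quad
\cihom(\Cfield_X^\enh, \OEn_X) \isoto \cihom(\Cfield_{\{t\geq 0\}}, \OEn_X) \isofrom \OEn_X.
\]
Composing the outer arrows yields both desired isomorphisms in $\BEC X$.

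It remains to upgrade these isomorphisms from $\BEC X$ to $\BEC[\ind\D]X$. This is automatic: the isomorphisms produced by Proposition~\ref{pro:equivTam} are the canonical morphisms arising from the unit and counit of the adjunction $(K_1\ctens\ast,\cihom(K_1,\ast))$ together with the idempotent identities of Lemma~\ref{lem:nilpo}, all of which are natural in the second argument. In particular, applied to $\OEn_X\in\BEC[\ind\D]X$, they are morphisms of $\D_X$-modules (equivalently, they lift through the forgetful functor $\BEC[\ind\D]X\to\BEC X$), and being isomorphisms on underlying enhanced ind-sheaves they are isomorphisms in $\BEC[\ind\D]X$. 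There is no serious obstacle here; the only point requiring care is this naturality/compatibility with the $\D_X$-action, but it follows from the definition of $\BEC[\ind\D]X$ as a quotient of a derived category of modules, where all the sheaf-theoretic operations used to build the morphisms of Proposition~\ref{pro:equivTam} are applied directly to modules.
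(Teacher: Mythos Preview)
Your proof is correct and follows exactly the approach indicated in the paper, which simply states that the corollary is a consequence of Theorem~\ref{thm:OTgeq0} and Proposition~\ref{pro:equivTam}. Your additional remark about the compatibility with the $\D_X$-module structure is a reasonable clarification, though the paper leaves it implicit.
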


\begin{proposition}
\label{pro:OTetens}
There is a canonical morphism
\[
\OEn_X \cetens \OEn_Y \to \OEn_{X\times Y}.
\]
\end{proposition}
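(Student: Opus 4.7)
The plan is to build the morphism from three ingredients: the multiplicative structure of tempered holomorphic functions (which provides an external product $\Ot_{X\times\PP}\etens\Ot_{Y\times\PP}\to\Ot_{X\times Y\times\PP^2}$), the exponential identity $e^{\tau_1}\cdot e^{\tau_2}=e^{\tau_1+\tau_2}$, and the compatibility of the bordered-space six operations with external products and with the embeddings entering the definition of $\OEn$.

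First, I would introduce the addition $s\colon\C^2\to\C$ as a morphism of bordered spaces $s\colon(\C^2,\PP^2)\to(\C,\PP)$; properness of $\overline\Gamma_s\to\PP^2$ is automatic since $\PP^2\times\PP$ is compact. Twisting by $X\times Y$ produces a bordered-space morphism $\tilde s$ which, together with the real addition $\mu\colon\R_\infty^2\to\R_\infty$, fits into the commutative square
\[
\vcenter{\xymatrix@C=6ex{
X\times Y\times\R_\infty^2 \ar[r]^-{\tilde i^{(2)}} \ar[d]_-{\tilde\mu} & (X\times Y\times\C^2,X\times Y\times\PP^2) \ar[d]^-{\tilde s} \\
X\times Y\times\R_\infty \ar[r]^-{\tilde i} & (X\times Y\times\C,X\times Y\times\PP),
}}
\]
whose horizontal embeddings are precisely those entering the definitions of $\OEn_{X\times Y}$ and of an analogous object on $X\times Y\times\PP^2$. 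Moreover, $\dopb{\tilde s}\she_{\C|\PP}^{-\tau}\simeq\she_{\C|\PP}^{-\tau}\detens\she_{\C|\PP}^{-\tau}$ as $\D$-modules on $(X\times Y\times\C^2,X\times Y\times\PP^2)$, both sides being generated by $e^{-(\tau_1+\tau_2)}$ with the same singular support.

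Combining this $\D$-module identification with the external product for $\Ot$ yields a morphism in $\BDC(\ind\D_{X\times Y\times\PP^2})$:
\begin{multline*}
\bigl((\she_{\C|\PP}^{-\tau})^\mop\ltens[\D_\PP]\Ot_{X\times\PP}\bigr)\etens\bigl((\she_{\C|\PP}^{-\tau})^\mop\ltens[\D_\PP]\Ot_{Y\times\PP}\bigr) \\
\to (\dopb{\tilde s}\she_{\C|\PP}^{-\tau})^\mop\ltens[\D_{\PP^2}]\Ot_{X\times Y\times\PP^2}.
\end{multline*}
I then apply $\epb{\tilde i^{(2)}}[1]$. On the source, Lemma~\ref{lem:epbetens} lets me relate the result to $\Eopb{p_1}\OEn_X\tens\Eopb{p_2}\OEn_Y$ pulled back to $X\times Y\times\R_\infty^2$, where $p_1,p_2$ are the projections from $X\times Y$. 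On the target, the commutative square together with Theorem~\ref{thm:ifunct}~(i) identifies $\epb{\tilde i^{(2)}}\dopb{\tilde s}$ applied to the defining object of $\OEn_{X\times Y}$ with $\epb{\tilde\mu}\epb{\tilde i}$ applied to the same object, i.e.\ with $\epb{\tilde\mu}\OEn_{X\times Y}$. Finally, invoking the $(\reeim{\tilde\mu},\epb{\tilde\mu})$ adjunction from Lemma~\ref{lem:badj} together with the formula $\ctens=\reeim{\tilde\mu}(\opb{q_1}(\cdot)\tens\opb{q_2}(\cdot))$ that defines $\cetens$, this is equivalent to the desired morphism $\OEn_X\cetens\OEn_Y\to\OEn_{X\times Y}$.

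The main obstacle will be the careful bookkeeping of the bordered-space pullback functors $\epb$ and $\dopb$ in combination with the external product and the exponential twist. In particular, verifying the base change $\epb{\tilde i^{(2)}}\dopb{\tilde s}\simeq\epb{\tilde\mu}\epb{\tilde i}$ applied to $(\she_{\C|\PP}^{-\tau})^\mop\ltens[\D_\PP]\Ot_{X\times Y\times\PP}$ requires the bordered-space analogue of Theorem~\ref{thm:ifunct}~(i), the $\D$-module identification above, and compatibility of the exterior-tensor morphism for $\Ot$ with the $\D_{X\times Y\times\PP^2}$-action induced through $\tilde s$.
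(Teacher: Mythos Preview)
Your overall strategy is sound and matches the paper's: build an external product via the multiplicative structure of $\Ot$, use the exponential identity, and pass to $\epb\mu$ before adjoining. The gap is in the step you yourself flag as the main obstacle: establishing the base change $\epb{\tilde i^{(2)}}\dopb{\tilde s}\simeq\epb{\tilde\mu}\epb{\tilde i}$ applied to the defining object of $\OEn_{X\times Y}$.

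The difficulty is that Theorem~\ref{thm:ifunct}~(i) is stated and proved only for genuine complex analytic maps between complex manifolds. Your $\tilde s$ is merely a morphism of bordered spaces: the addition $s\colon\C^2\to\C$ does not extend to a complex analytic map $\PP^2\to\PP$. Worse, the closure $\overline\Gamma_s\subset\PP^2\times\PP$ --- which is exactly the surface $\mathbb S'$ introduced in the paper's proof --- has a quadratic singularity at $(\infty,\infty,\infty)$. So even the standard reduction of a bordered morphism to its graph (Lemma~\ref{lem:fborddecomp}) does not put you in a position to invoke Theorem~\ref{thm:ifunct}~(i), because the resulting space is not a complex manifold. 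There is no bordered-space analogue of Theorem~\ref{thm:ifunct}~(i) available in the paper, and proving one in this situation amounts to resolving the singularity of $\overline\Gamma_s$.

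This is precisely what the paper does. It blows up the singular point to obtain a smooth projective surface $\mathbb S$ carrying honest complex analytic maps $\tilde p\colon\mathbb S\to\PP\times\PP$ and $\tilde\mu\colon\mathbb S\to\PP$, so that Theorem~\ref{thm:ifunct}~(i) can be applied twice --- once for $\tilde p$ and once for $\tilde\mu$. The intermediate step also requires the morphism $\dopb{\tilde p}(\she_{\C^2|\PP\times\PP}^{-\tau_1-\tau_2})\to\dopb{\tilde\mu}(\she_{\C|\PP}^{-\tau})(*\opb{\tilde p}(\PP\times\PP\setminus\C^2))$, which compensates for the discrepancy between the two exceptional divisors. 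In short, your plan is correct in spirit, but the construction of $\mathbb S$ is not optional bookkeeping; it is the device that makes the $\D$-module pullback formula applicable at all.
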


In order to prove this proposition, we need a complex analytic analogue of the construction in Notation~\ref{not:sfS}.

\begin{notation}
Denote by $\mathbb S'$ the closure of $\{(x_1,x_2,x_3)\in\C^3\semicolon x_1+x_2+x_3=0 \}$ in $\PP\times\PP\times\PP$. 
Then $\mathbb S'$ has a quadratic singularity at $(\infty,\infty,\infty)$.
Denote by $\mathbb S$ the blow-up of $\mathbb S'$ with center $(\infty,\infty,\infty)$.
Then $\mathbb S$ is a smooth projective surface.
Consider the maps
\[
\PP \from[{\ \tilde\mu\ }] \mathbb S\To[\tilde p] \PP\times\PP
\]
induced by $(x_1,x_2,x_3) \mapsto -x_3$, and $(x_1,x_2,x_3) \mapsto (x_1,x_2)$, respectively. 
We denote by the same letters the induced maps
\[
X\times Y\times\PP \from[\tilde\mu] X\times Y\times\mathbb S \to[\tilde p] X\times Y\times\PP\times\PP.
\]
\end{notation}

\begin{remark}
The algebraic surface $\mathbb S$ is also obtained as the blow-up of the complex projective plane $\PP^2(\C)$ with center at three points on a line.
\end{remark}

\begin{proof}[Proof of Proposition~\ref{pro:OTetens}]
Consider the diagrams of bordered spaces
\[
\xymatrix{
X\times\R_\infty \ar[d]^{i_X} & X\times Y\times\R_\infty\times\R_\infty \ar[l]_-{p_1} \ar[r]^-{p_2} \ar[d]^j & Y\times\R_\infty \ar[d]^{i_Y} \\
X\times\PP & X\times Y\times\PP\times\PP \ar[l]_-{\overline p_1} \ar[r]^-{\overline p_2} & Y\times\PP
}
\]
and
\[
\xymatrix{
X\times Y\times\mathbb S \ar[r]^{\tilde\mu} \ar[d]^{\tilde p} & X\times Y\times\PP \\ 
X\times Y\times\PP\times\PP & X\times Y\times\R_\infty\times\R_\infty \ar[ul]_{j_{\mathbb S}} \ar[r]^\mu \ar[l]^j & X\times Y\times\R_\infty \ar[ul]_{i_{X\times Y}}.
}
\]
Recall that
\begin{align*}
\OEn_X 
&= \epb{i_X} ((\she_{\C|\PP}^{-\tau_1})^\mop\ltens[\D_\PP]\Ot_{X\times\PP})[1], \\
\OEn_Y 
&= \epb{i_Y} ((\she_{\C|\PP}^{-\tau_2})^\mop\ltens[\D_\PP]\Ot_{Y\times\PP})[1],
\end{align*}
where $\tau_1$ and $\tau_2$ are coordinates on $\C\subset\PP$.

There are morphisms
\begin{align*}
\opb{p_1}\OEn_X \tens \opb{p_2}\OEn_Y 
&\to \epb j(\opb{\overline p_2}((\she_{\C|\PP}^{-\tau_1})^\mop\ltens[\D_\PP]\Ot_{X\times\PP}) \tens \opb{\overline p_2}((\she_{\C|\PP}^{-\tau_2})^\mop\ltens[\D_\PP]\Ot_{Y\times\PP}))[2] \\
&\to \epb j((\she_{\C|\PP}^{-\tau_1}\etens \she_{\C|\PP}^{-\tau_2})^\mop \ltens[\D_\PP\etens\D_\PP] (\opb{\overline p_1}\Ot_{X\times\PP} \tens \opb{\overline p_2}\Ot_{Y\times\PP}))[2] \\
&\to \epb j((\she_{\C^2|\PP\times\PP}^{-\tau_1-\tau_2})^\mop \ltens[\D_{\PP\times\PP}] \Ot_{X\times Y\times\PP\times\PP})[2],
\end{align*}
where the first morphism follows from Lemma~\ref{lem:epbetens}.

Since $j = \tilde p\circ j_{\mathbb S}$, we have
\begin{align*}
\epb j(&(\she_{\C^2|\PP\times\PP}^{-\tau_1-\tau_2})^\mop \ltens[\D_{\PP\times\PP}] \Ot_{X\times Y\times\PP\times\PP})[2] \\
&\simeq \epb {j_{\mathbb S}}\epb{\tilde p}((\she_{\C^2|\PP\times\PP}^{-\tau_1-\tau_2})^\mop \ltens[\D_{\PP\times\PP}] \Ot_{X\times Y\times\PP\times\PP})[2] \\
&\simeq \epb{j_{\mathbb S}}(\opb{\tilde p}(\she_{\C^2|\PP\times\PP}^{-\tau_1-\tau_2})^\mop \ltens[\opb{\tilde p}\D_{\PP\times\PP}] \epb{\tilde p}\Ot_{X\times Y\times\PP\times\PP})[2] \\
&\underset{(*)}\simeq \epb {j_{\mathbb S}}(\opb{\tilde p}(\she_{\C^2|\PP\times\PP}^{-\tau_1-\tau_2})^\mop \ltens[\opb{\tilde p}\D_{\PP\times\PP}] \D_{\PP\times\PP\from[{\;\tilde p\;}]\mathbb S} \ltens[\D_{\mathbb S}] \Ot_{X\times Y\times\mathbb S})[2] \\
&\simeq \epb{j_{\mathbb S}}(\dopb{\tilde p}(\she_{\C^2|\PP\times\PP}^{-\tau_1-\tau_2})^\mop \ltens[\D_{\mathbb S}] \Ot_{X\times Y\times\mathbb S})[2] ,
\end{align*}
where $(*)$ follows from Theorem~\ref{thm:ifunct} (i).
We have a morphism 
\begin{align*}
\dopb{\tilde p}(\she_{\C^2|\PP\times\PP}^{-\tau_1-\tau_2})
&\to \dopb{\tilde p} (\she_{\C^2|\PP\times\PP}^{-\tau_1-\tau_2})  
(*\opb{\tilde p}(\PP\times\PP\setminus\C^2)) \\
&\simeq \dopb{\tilde \mu}(\she_{\C|\PP}^{-\tau})(*\opb{\tilde  
p}(\PP\times\PP\setminus\C^2)).
\end{align*}
Hence we obtain
\begin{align*}
\opb{p_1}\OEn_X \tens \opb{p_2}\OEn_Y 
&\to \epb{j_{\mathbb S}}(\dopb{\tilde \mu}(\she_{\C|\PP}^{-\tau})(*\opb{\tilde p}(\PP\times\PP\setminus\C^2))^\mop \ltens[\D_{\mathbb S}] \Ot_{X\times Y\times\mathbb S})[2] \\
&\simeq \epb{j_{\mathbb S}}\rihom(\Cfield_{X\times Y\times\C^2}, \dopb{\tilde \mu}(\she_{\C|\PP}^{-\tau})^\mop \ltens[\D_{\mathbb S}] \Ot_{X\times Y\times\mathbb S})[2].
\end{align*}
Since $\opb{j_{\mathbb S}}(\C_{X\times Y\times\C^2})\simeq \C_{ X\times Y\times \R^2}$, 
one has
\begin{align*}
\epb{j_{\mathbb S}}\rihom&(\Cfield_{X\times Y\times \C^2}, \dopb{\tilde \mu}(\she_{\C|\PP}^{-\tau})^\mop \ltens[\D_{\mathbb S}] \Ot_{X\times Y\times\mathbb S})[2] \\
&\simeq \epb{j_{\mathbb S}}(\dopb{\tilde \mu}(\she_{\C|\PP}^{-\tau})^\mop 
\ltens[\D_{\mathbb S}] \Ot_{X\times Y\times\mathbb S})[2] \\
&\simeq \epb{j_{\mathbb S}}(\opb{\tilde\mu}(\she_{\C|\PP}^{-\tau})^\mop \ltens[\opb{\tilde\mu}\D_\PP] \D_{\PP\from[{\;\tilde\mu\;}]\mathbb S} \ltens[\D_{\mathbb S}] \Ot_{X\times Y\times\mathbb S})[2] \\
&\underset{(*)}\simeq \epb{j_{\mathbb S}}(\opb{\tilde\mu}(\she_{\C|\PP}^{-\tau})^\mop \ltens[\opb{\tilde\mu}\D_\PP] \epb{\tilde\mu} \Ot_{X\times Y\times\PP})[1] \\
&\simeq \epb{j_{\mathbb S}}\epb{\tilde\mu}((\she_{\C|\PP}^{-\tau})^\mop \ltens[\D_\PP] \Ot_{X\times Y\times\PP})[1] \\
&\simeq \epb\mu\epb{i_{X\times Y}}((\she_{\C|\PP}^{-\tau})^\mop \ltens[\D_\PP] \Ot_{X\times Y\times\PP})[1],
\end{align*}
where $(*)$ follows from Theorem~\ref{thm:ifunct} (i).
We thus get a morphism
\[
\opb{p_1}\OEn_X \tens \opb{p_2}\OEn_Y \to \epb\mu\OEn_{X\times Y}.
\]
The desired morphism follows by adjunction.
\end{proof}

\section{Riemann-Hilbert correspondence}\label{se:RH}

We have now all the ingredients to state and prove a Riemann-Hilbert correspondence for holonomic $\D$-modules which are not necessarily regular. It is an analogue of the classical Riemann-Hilbert correspondence for regular holonomic $\D$-modules, in the framework of enhanced ind-sheaves.

\subsection{Enhanced de Rham and solution functors}

Recall that
\[
i\colon X\times\R_\infty \to X\times\PP
\]
is the natural morphism of bordered spaces, $\tau\in\C\subset\PP$ is the affine coordinate and $t=\tau|_\R$.

\begin{definition}
For $\shm\in\BDC(\D_X)$, set
\begin{align*}
\drE_X(\shm) &= \OvE_X \ltens[\D_X] \shm \\
&\simeq \epb i \drt_{X\times\PP}(\shm\detens\she_{\C|\PP}^{-\tau})[1], \\
\solE_X(\shm) &= \rhom[\D_X](\shm,\OEn_X)  \\
&\simeq \epb i \solt_{X\times\PP}(\shm\detens\she_{\C|\PP}^\tau)[2].
\end{align*}
They induce the functors
\begin{align*}
\drE_X &\colon \BDC(\D_X) \to \BEC[\iCfield]X, \\
\solE_X &\colon \BDC(\D_X)^\op \to \BEC[\iCfield]X.
\end{align*}
\end{definition}

Note that one has
\[
\solE_X(\shm) 
\simeq \drE_X(\ddual_X\shm)[-d_X].
\]

{}From Theorem~\ref{thm:ifunct}, one deduces

\begin{theorem}
\label{thm:Tfunct}
Let $f\colon X\to Y$ be a complex analytic map.
\bnum
\item
There is an isomorphism in $\BEC[\ind\opb f\D]Y$
\[
\Eepb f \OEn_Y[d_Y] \simeq \D_{Y\from X} \ltens[\D_X] \OEn_X [d_X].
\]
\item
For any $\shn\in\BDC(\D_Y)$ there is an isomorphism in $\BEC[\iCfield]X$
\[
\drE_X(\dopb f \shn)[d_X] \simeq \Eepb f \drE_Y(\shn)[d_Y].
\]
\item
Let $\shm\in\BDC_\good(\D_X)$, and assume that $\supp\shm$ is proper over $Y$. Then there is an isomorphism in $\BEC[\iCfield]Y$
\[
\drE_Y(\doim f\shm) \simeq  \Eeeim f\drE_X(\shm).
\]
\item
Let $\shl\in\BDC_\reghol(\D_X)$ and $\shm\in\BDC(\D_X)$. Then
\[
\drE_X(\shl\dtens\shm) \simeq \rihom(\opb\pi\sol_X(\shl), \drE_X(\shm)),
\]
where $\sol_X(\shl) = \rhom[\D_X](\shl,\O_X)$.
In particular, for a closed hypersurface $Y\subset X$, one has
\[
\drE_X\bl\shm(*Y)\br \simeq \rihom\bl\opb\pi\C_{X\setminus Y}, \drE_X(\shm)\br.
\]
\ee
\end{theorem}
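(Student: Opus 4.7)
All four parts reduce to the analogous statements of Theorem~\ref{thm:ifunct} by passing through the bordered-space inclusion $i_X\colon X\times\R_\infty\to X\times\PP$ and twisting by $\she_{\C|\PP}^{\pm\tau}$. The common geometric setup is the Cartesian square
\[
\xymatrix@C=6ex{
X\times\R_\infty \ar[r]^{\tilde f} \ar[d]^{i_X} & Y\times\R_\infty \ar[d]^{i_Y} \\
X\times\PP \ar[r]^{\bar f} \ar@{}[ur]|-\square & Y\times\PP,
}
\]
where $\tilde f = f\times\id_{\R_\infty}$ and $\bar f=f\times\id_\PP$. Since $\Eepb f$ and $\Eeeim f$ are induced (up to the quotient defining $\BEC[\iCfield]X$) by $\epb{\tilde f}$ and $\reeim{\tilde f}$ respectively, Lemma~\ref{lem:bcart} gives the base-change isomorphisms $\epb{\tilde f}\circ\epb{i_Y}\simeq\epb{i_X}\circ\epb{\bar f}$ and $\epb{i_Y}\circ\reeim{\bar f}\simeq\reeim{\tilde f}\circ\epb{i_X}$.

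First I would prove (i). Apply the base-change above to the representative $\OEn_Y\simeq\epb{i_Y}\rhom[\D_\PP](\she_{\C|\PP}^\tau,\Ot_{Y\times\PP})[2]$, then commute $\epb{\bar f}$ past $\rhom[\D_\PP](\she_{\C|\PP}^\tau,-)$: this is legitimate because $\bar f$ only acts along the $X\to Y$ direction and the $\D_\PP$-action lives entirely on the $\PP$-factor, so as an $(\opb{\bar f}\D_{Y\times\PP},\D_{X\times\PP})$-bimodule one has $\D_{Y\times\PP\leftarrow X\times\PP}\simeq\D_{Y\leftarrow X}\etens\D_\PP$. Finally invoke Theorem~\ref{thm:ifunct}(i) for $\bar f$ to obtain the claimed isomorphism. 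Part (ii) is then a formal consequence: unwinding $\dopb f\shn=\D_{X\to Y}\ltens[\opb f\D_Y]\opb f\shn$ and side-changing the statement of (i), one gets $\Eepb f\OvE_Y[d_Y]\simeq \OvE_X\ltens[\D_X]\D_{X\to Y}[d_X]$ as $(\iCfield,\opb f\D_Y)$-bimodules in the enhanced category, and the desired isomorphism follows by tensoring with $\opb f\shn$ over $\opb f\D_Y$.

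For (iii), I would use Proposition~\ref{pro:transCart} applied to the transversal Cartesian square $X\times\PP=X\times_Y(Y\times\PP)$ to obtain $(\doim f\shm)\detens\she_{\C|\PP}^{-\tau}\simeq\doim{\bar f}(\shm\detens\she_{\C|\PP}^{-\tau})$. Applying $\drt_{Y\times\PP}$, Theorem~\ref{thm:ifunct}(iii) for $\bar f$ (note that $\shm\detens\she_{\C|\PP}^{-\tau}$ is good with support proper over $Y\times\PP$) gives $\drt_{Y\times\PP}((\doim f\shm)\detens\she_{\C|\PP}^{-\tau})\simeq\reeim{\bar f}\drt_{X\times\PP}(\shm\detens\she_{\C|\PP}^{-\tau})$; then apply $\epb{i_Y}[1]$ and the base-change of Lemma~\ref{lem:bcart}. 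Part (iv) is obtained by applying Theorem~\ref{thm:ifunct}(iv) on $X\times\PP$ with the regular holonomic module $\shl\detens\O_\PP$ and the auxiliary module $\shm\detens\she_{\C|\PP}^{-\tau}$: one has $(\shl\detens\O_\PP)\dtens(\shm\detens\she_{\C|\PP}^{-\tau})\simeq(\shl\dtens\shm)\detens\she_{\C|\PP}^{-\tau}$ and $\sol_{X\times\PP}(\shl\detens\O_\PP)\simeq\opb{p_1}\sol_X(\shl)$ for $p_1\colon X\times\PP\to X$; applying $\epb{i_X}[1]$ and the standard identity $\epb{i_X}\rihom(F,G)\simeq\rihom(\opb{i_X}F,\epb{i_X}G)$, together with $\opb{i_X}\opb{p_1}=\opb\pi$, yields the result.

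The main obstacle is part (ii), where one must keep track of the left/right $\D$-module structures when commuting $\Eepb f$ past $\ltens[\D_X]\shn$ via the transfer bimodule; everything ultimately reduces to the bimodule isomorphism of Theorem~\ref{thm:ifunct}(i) for $\bar f$, but verifying the compatibility of the isomorphism in (i) with the $\D_X$-module and $\opb f\D_Y$-module structures on both sides—so that it propagates to (ii) after tensoring with $\opb f\shn$—requires a careful unwinding of the construction of $\OvE_X$ and the side-changing functor~\eqref{def:r}.
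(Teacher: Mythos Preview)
Your approach is correct and matches the paper's own treatment, which simply says the theorem is deduced from Theorem~\ref{thm:ifunct} without giving further details; your Cartesian square and base-change via Lemma~\ref{lem:bcart} are exactly the right mechanism.

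One remark: your flagged ``main obstacle'' in part (ii) is illusory. Rather than deducing (ii) from (i) and tracking bimodule structures through the side-changing functor, observe that since $\bar f=f\times\id_\PP$ one has directly $(\dopb f\shn)\detens\she_{\C|\PP}^{-\tau}\simeq\dopb{\bar f}(\shn\detens\she_{\C|\PP}^{-\tau})$; then Theorem~\ref{thm:ifunct}(ii) for $\bar f$ gives
\[
\drt_{X\times\PP}\bigl(\dopb{\bar f}(\shn\detens\she_{\C|\PP}^{-\tau})\bigr)[d_X+1]\simeq\epb{\bar f}\drt_{Y\times\PP}(\shn\detens\she_{\C|\PP}^{-\tau})[d_Y+1],
\]
and applying $\epb{i_X}$ together with $\epb{i_X}\circ\epb{\bar f}\simeq\epb{\tilde f}\circ\epb{i_Y}$ yields (ii) immediately, with no bimodule bookkeeping. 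The same pattern---rewrite the $\D$-module input on $X\times\PP$ as the appropriate operation applied to the input on $Y\times\PP$, invoke the corresponding part of Theorem~\ref{thm:ifunct}, then apply $\epb{i}$ and base-change---works uniformly for all four parts.
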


\begin{proposition}\label{prop:regireg}
For $\shl\in\BDC_\reghol(\D_X)$ one has an isomorphism in $\BEC[\iCfield]M$
\[
\drE_X(\shl) \simeq  e(\dr_X(\shl)) \defeq \Cfield_X^\enh \tens \opb\pi\dr_X(\shl).
\]
In particular, one has
\[
\drE_X(\O_X) \simeq \Cfield_X^\enh[d_X].
\]
\end{proposition}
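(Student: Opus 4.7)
The plan is to first establish the special case $\shl=\O_X$, namely $\drE_X(\O_X)\simeq\Cfield^\enh_X[d_X]$, by a direct calculation, and then bootstrap to general regular holonomic $\shl$ using Theorem~\ref{thm:Tfunct}~(iv) together with the duality Corollary~\ref{cor:Tduala}. The endgame invokes the classical identity $\dual_X\sol_X(\shl)[-d_X]\simeq\dr_X(\shl)$ for $\shl\in\BDC_\reghol(\D_X)$, which follows from the general holonomic formula $\sol_X(\shl)\simeq\dr_X(\ddual_X\shl)[-d_X]$ combined with the standard compatibility $\dr_X\circ\ddual_X\simeq\dual_X\circ\dr_X$ on regular holonomics coming from the classical Riemann--Hilbert correspondence.

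For the special case, I would unwind $\drE_X(\O_X)\simeq\epb i\,\drt_{X\times\PP}(\O_X\detens\she^{-\tau}_{\C|\PP})[1]$. Writing $\O_X\detens\she^{-\tau}_{\C|\PP}\simeq\dopb{q_\PP}\she^{-\tau}_{\C|\PP}$ for the projection $q_\PP\colon X\times\PP\to\PP$, Theorem~\ref{thm:ifunct}~(ii) combined with Lemma~\ref{lem:t} yields
\[
\drt_{X\times\PP}(\O_X\detens\she^{-\tau}_{\C|\PP})\simeq\epb{q_\PP}E^\tau_{\C|\PP}[1-d_X].
\]
Factoring $q_\PP\circ i = i_\R\circ s_X$ with $i_\R\colon\R_\infty\to\PP$ the bordered embedding and $s_X\colon X\times\R_\infty\to\R_\infty$ the projection, the smooth projection contributes $\epb{s_X}\simeq\opb{s_X}[2d_X]$. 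For $\epb{i_\R}E^\tau_{\C|\PP}$, Lemma~\ref{lem:f=jfj} reduces to the real codimension-one extension $\check{i_\R}\colon\overline\R\to\PP$; using the cohomology description \eqref{eq:HjEt} of $E^\tau_{\C|\PP}$ one finds $\epb{i_\R}E^\tau_{\C|\PP}\simeq\Cfield_{\{t<\ast\}}[-1]$, since $\opb{\check{i_\R}}\Cfield_{\{\Re\tau<\ast\}}$ restricts to $\Cfield_{\{t<\ast\}}$ on $\R_\infty$ while the $H^1=\Cfield_{\{\infty\}}$ contribution is killed (its pullback $\Cfield_{\{\pm\infty\}}$ sits on the boundary $\overline\R\setminus\R$). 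Assembling all shifts gives $\drE_X(\O_X)\simeq\Cfield_{\{t<\ast\}}[d_X+1]$ in $\BDC(\iCfield_{X\times\R_\infty})$, and the distinguished triangle $\Cfield_{M\times\R}\to\Cfield^\enh_M\to\Cfield_{\{t<\ast\}}[1]\to[+1]$ displayed before Proposition~\ref{pro:gggeqK}, together with the vanishing of $\opb\pi\Cfield_X\simeq\Cfield_{X\times\R}$ in $\BEC[\iCfield]X$, yields $\drE_X(\O_X)\simeq\Cfield^\enh_X[d_X]$.

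For general $\shl\in\BDC_\reghol(\D_X)$, applying Theorem~\ref{thm:Tfunct}~(iv) with $\shm=\O_X$ (so $\shl\dtens\O_X\simeq\shl$) and the case above gives $\drE_X(\shl)\simeq\rihom(\opb\pi\sol_X(\shl),\Cfield^\enh_X[d_X])$. Set $F=\sol_X(\shl)$, which is $\C$-constructible. Since $\Cfield^\enh_X$ is stable, writing $e(F)\simeq(\Cfield_{\{t=0\}}\tens\opb\pi F)\ctens\Cfield^\enh_X$ via Lemma~\ref{lem:cihomKt0} and combining the $\ctens$--$\cihom$ adjunction with $\cihom(\Cfield^\enh_X,\Cfield^\enh_X)\simeq\Cfield^\enh_X$ yields $\rihom(\opb\pi F,\Cfield^\enh_X)\simeq\cihom(e(F),\Cfield^\enh_X)$ in $\BEC[\iCfield]X$. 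Corollary~\ref{cor:Tduala} computes $\cihom(e(F),\omega^\enh_X)\simeq e(\dual_X F)$, and since $\omega^\enh_X\simeq\Cfield^\enh_X[2d_X]$, one obtains $\cihom(e(F),\Cfield^\enh_X)\simeq e(\dual_X F)[-2d_X]$. Substituting back and invoking the classical identity of the first paragraph produces
\[
\drE_X(\shl)\simeq e(\dual_X\sol_X(\shl))[-d_X]\simeq e(\dr_X(\shl)),
\]
as desired. The main obstacle is the careful shift-bookkeeping in the $\shl=\O_X$ computation, especially the identification of $\epb{i_\R}E^\tau_{\C|\PP}$ on the bordered space $\R_\infty$ and the interplay between the real codimension-one embedding $\R\subset\C$, the bordered-space structure at $\pm\infty$, and the quotient relation $\opb\pi\Cfield_X\simeq 0$ in $\BEC[\iCfield]X$; once this is handled, the bootstrap is a formal consequence of the enhanced duality formalism developed in Section~\ref{se:enhcdind}.
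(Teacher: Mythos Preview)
Your proposal is correct and follows essentially the same two-step strategy as the paper: first establish $\drE_X(\O_X)\simeq\Cfield_X^\enh[d_X]$ by reducing to the one-dimensional computation, then deduce the general regular holonomic case from Theorem~\ref{thm:Tfunct}~(iv) with $\shm=\O_X$ and the enhanced duality formalism. The organizational differences are minor: for step~(i) the paper first treats $X=\{\mathrm{pt}\}$ via Lemma~\ref{lem:Rt} and then pulls back along $a_X\colon X\to\{\mathrm{pt}\}$ using Theorem~\ref{thm:Tfunct}~(ii) and Proposition~\ref{pro:stableops}~(ii), whereas you work with $q_\PP\colon X\times\PP\to\PP$, Theorem~\ref{thm:ifunct}~(ii) and Lemma~\ref{lem:t} before applying $\epb i$; for step~(ii) the paper invokes Corollary~\ref{cor:pifieldT} directly while you pass through Corollary~\ref{cor:Tduala}. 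One small point: your justification of $\epb{i_\R}E^\tau_{\C|\PP}\simeq\Cfield_{\{t<\ast\}}[-1]$ via the cohomology \eqref{eq:HjEt} tacitly uses that $\epb{\check{i_\R}}$ commutes with the ind-limit defining $\Cfield_{\{\Re\tau<\ast\}}$, which is exactly the content of Proposition~\ref{pro:opbepb} (or, equivalently, what the paper packages into Lemma~\ref{lem:Rt}); it would be cleaner to cite one of these rather than arguing termwise on cohomology.
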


\begin{proof}
(i)\ When $X=\point$, by Lemma~\ref{lem:Rt} we have
\[
\drE_\point(\Cfield) \simeq \Cfield_{\{t<\ast\}}[1] \simeq \Cfield_\point^\enh.
\]
Hence, Theorem~\ref{thm:Tfunct}~(ii) and Proposition~\ref{pro:stableops}~(ii) imply
\[
\drE_X(\O_X) \simeq \Eepb a_X \Cfield_\point^\enh[-d_X] \simeq \Cfield_X^\enh[d_X],
\]
where $a_X\colon X\to\point$ is the canonical map. 

\smallskip\noindent(ii)\ 
By (i), setting $\shm=\O_X$ in Theorem~\ref{thm:Tfunct}~(iv) one has
\[
\drE_X(\shl) \simeq \rihom(\opb\pi\sol_X(\shl), \Cfield_X^\enh[d_X]).
\]
Moreover,
\begin{align*}
\rihom(\opb\pi\sol_X(\shl), \Cfield_X^\enh[d_X]) 
&\simeq \Cfield_X^\enh \ctens \rihom(\opb\pi\sol_X(\shl), \Cfield_{\{t=0\}}[d_X]) \\
&\simeq \Cfield_X^\enh \ctens (\opb\pi\dual_X(\sol_X(\shl)[d_X]) \tens \Cfield_{\{t=0\}}) \\
&\simeq \Cfield_X^\enh \tens \opb\pi\dual_X(\sol_X(\shl)[d_X]) \\
&\simeq \Cfield_X^\enh \tens \opb\pi\dr_X(\shl),
\end{align*}
where the first isomorphism follows from Corollary~\ref{cor:pifieldT}. 
\end{proof}

\subsection{Real blow-up}

Let $D\subset X$ be a normal crossing divisor, and denote by $\widetilde X$ the real blow-up of $X$ along $D$. Similarly, denote by $\widetilde{X\times\PP}$ the real blow-up of $X\times\PP$ along $D\times\PP$.
There is a natural identification $\widetilde{X\times\PP} = \widetilde X\times\PP$. Hence, following the notations in section~\ref{sse:blowfun},
we have the sheaves of rings on $\widetilde X\times\PP$ 
\begin{align*}
\sha_{\widetilde X\times\PP} &\subset \opb\varpi\rhom(\Cfield_{(X\setminus D)\times\PP},\O_{X\times\PP}), \\
\D_{\widetilde X\times\PP}^\sha &= \sha_{\widetilde X\times\PP} \tens[\opb\varpi\O_{X\times\PP}] \opb\varpi\D_{X\times\PP},
\end{align*}
and the complex
\[
\Ot_{\widetilde X\times\PP} \in \BDC(\ind\D_{\widetilde X\times\PP}^\sha).
\]

Consider the natural morphisms
\[
\widetilde X \from[{\ \pi_{\widetilde X}\ }] \widetilde X \times \R_\infty \To[\tilde\imath] \widetilde X \times \PP.
\]

\begin{definition}
Set
\begin{align*}
\OEn_{\widetilde X} 
&= \epb{\tilde\imath} ((\she_{\C|\PP}^{-\tau})^\mop \ltens[\D_\PP] \Ot_{\widetilde X\times\PP})[1] \\
&\simeq \epb{\tilde\imath} \rhom[\D_\PP](\she_{\C|\PP}^\tau,\Ot_{\widetilde X\times\PP})[2] \in \BEC[\ind\D^\sha]{\widetilde X}, \\
\OvE_{\widetilde X} &= \opb{\pi_{\widetilde X}}\opb\varpi\Omega_X \ltens[\opb{\pi_{\widetilde X}}\opb\varpi\O_X] \OEn_{\widetilde X} \\
&\simeq \epb{\tilde\imath} (\Ovt_{\widetilde X\times\PP}\ltens[\D_\PP]\she_{\C|\PP}^{-\tau}) [1] \in \BEC[\ind(\D_{\widetilde X}^{\sha})^\op]{}
\end{align*}
and
\[
\drE_{\widetilde X}(\shl) = \OvE_{\widetilde X} \ltens[\D_{\widetilde X}^\sha] \shl \in \BEC[\iCfield]{\widetilde X}\quad\text{for $\shl\in\BDC(\D_{\widetilde X}^\sha)$.}
\]
\end{definition}

Theorem~\ref{thm:forOt} and Corollary~\ref{cor:varpiOt} imply

\begin{theorem}
\label{thm:forOTtilde}
There are 
an isomorphism in $\BEC[\ind\D^\op]X$
\[
\Eoim\varpi\OvE_{\widetilde X} \simeq
\rihom(\opb\pi\Cfield_{X\setminus D}, \OvE_X),
\]
and an isomorphism in $\BEC[\ind\opb\varpi\D_X{}^\op]{}$ 
\[
\operatorname{for}(\OvE_{\widetilde X}) \simeq
\Eepb\varpi\rihom(\opb\pi\Cfield_{X\setminus D}, \OvE_X),
\]
where $\operatorname{for}\colon \BEC[\ind(\D^\sha_{\widetilde X})^\op]{}\to \BEC[\ind\opb\varpi\D_X{}^\op]{}$ is the forgetful functor.
\end{theorem}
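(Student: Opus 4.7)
The plan is to derive both isomorphisms from the already-established ind-sheaf results on the total space $\widetilde X \times \PP$, by pulling everything back along $\epb{\tilde\imath}$, tensoring with $\she_{\C|\PP}^{-\tau}$, and invoking base change for the Cartesian diagram of bordered spaces
\[
\xymatrix@C=8ex{
\widetilde X \times \R_\infty \ar[r]^{\tilde\imath} \ar[d]^{\tilde\varpi} & \widetilde X \times \PP \ar[d]^{\varpi_\PP} \\
X \times \R_\infty \ar[r]^i \ar@{}[ur]|-\square & X \times \PP,
}
\]
where $\tilde\varpi = \varpi \times \id$ and $\varpi_\PP = \varpi \times \id$. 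The key input is that $D \times \PP \subset X \times \PP$ is again a normal crossing divisor with real blow-up $\widetilde X \times \PP$, so Theorem~\ref{thm:forOt} and Corollary~\ref{cor:varpiOt} apply verbatim to $\varpi_\PP$.

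For the first isomorphism I would start from the model
\[
\OvE_{\widetilde X} \simeq \epb{\tilde\imath}(\Ovt_{\widetilde X\times\PP} \ltens[\D_\PP] \she_{\C|\PP}^{-\tau})[1].
\]
Then $\Eoim\varpi \OvE_{\widetilde X} \simeq \roim{\tilde\varpi}\epb{\tilde\imath}(\cdots)$; applying base change (Lemma~\ref{lem:bcart}) to the square above converts this to $\epb i \roim{\varpi_\PP}(\Ovt_{\widetilde X\times\PP} \ltens[\D_\PP] \she_{\C|\PP}^{-\tau})[1]$. Since $\she_{\C|\PP}^{-\tau}$ is $\O_\PP$-flat and the morphism $\varpi_\PP$ is the identity in the $\PP$-direction, a standard projection-formula argument pulls $\ltens[\D_\PP]\she_{\C|\PP}^{-\tau}$ outside $\roim{\varpi_\PP}$. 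Now Corollary~\ref{cor:varpiOt} (applied on $X\times\PP$) gives $\roim{\varpi_\PP}\Ovt_{\widetilde X\times\PP} \simeq \rihom(\Cfield_{(X\setminus D)\times\PP}, \Ovt_{X\times\PP})$, and we reverse the earlier steps: push $\rihom(\Cfield_{(X\setminus D)\times\PP},\cdot)$ across the $\D_\PP$-tensor, then across $\epb i$ using $\epb i \rihom(F,K) \simeq \rihom(\opb i F, \epb i K)$ and $\opb i \Cfield_{(X\setminus D)\times\PP} \simeq \opb\pi \Cfield_{X\setminus D}$ (because $\pi = \bar\pi \circ i$). This yields precisely $\rihom(\opb\pi \Cfield_{X\setminus D}, \OvE_X)$.

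For the second isomorphism I would run the same chain but use Theorem~\ref{thm:forOt} in place of Corollary~\ref{cor:varpiOt}: $\operatorname{for}(\Ot_{\widetilde X\times\PP}) \simeq \epb{\varpi_\PP}\rihom(\Cfield_{(X\setminus D)\times\PP}, \Ot_{X\times\PP})$. Applying $\operatorname{for}$ to $\OvE_{\widetilde X}$, commuting it with $\epb{\tilde\imath}$ and with $\ltens[\D_\PP]\she_{\C|\PP}^{-\tau}$, and then substituting this identity produces $\epb{\tilde\imath}\epb{\varpi_\PP}\bigl(\rihom(\Cfield_{(X\setminus D)\times\PP}, \Ovt_{X\times\PP}) \ltens[\D_\PP] \she_{\C|\PP}^{-\tau}\bigr)[1]$. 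Rewriting $\epb{\tilde\imath}\epb{\varpi_\PP} \simeq \epb{\tilde\varpi}\epb i$ by Lemma~\ref{lem:bcomp} and then commuting $\rihom$ back through $\epb i$ as in the first part rewrites the target as $\Eepb\varpi \rihom(\opb\pi \Cfield_{X\setminus D}, \OvE_X)$.

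The genuinely delicate step, and the one where I expect to have to be careful, is the projection-formula/flatness argument commuting $\roim{\varpi_\PP}$ (or $\operatorname{for}$) with $\ltens[\D_\PP]\she_{\C|\PP}^{-\tau}$: one needs to justify this at the level of $\ind \D^\sha$-modules on the bordered space $\widetilde X\times\PP$, since the $\D$-module structures sit on different sides and $\varpi_\PP$ is only real analytic. Once this compatibility is verified, the remaining manipulations are the standard $\rihom$-$\opb{}$-$\epb{}$ juggling that the earlier sections of the paper have set up, together with $\opb i \Cfield_{(X\setminus D)\times\PP} \simeq \opb\pi \Cfield_{X\setminus D}$.
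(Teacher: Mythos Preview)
Your proposal is correct and follows exactly the route the paper indicates: the paper's proof consists of the single line ``Theorem~\ref{thm:forOt} and Corollary~\ref{cor:varpiOt} imply'', and you have correctly unpacked this by applying those results on $X\times\PP$ with divisor $D\times\PP$ (using the identification $\widetilde{X\times\PP}=\widetilde X\times\PP$ that the paper sets up at the start of \S\ref{sse:blowfun} for the product), then invoking base change for the Cartesian square and the standard $\rihom$/$\epb{}$/$\opb{}$ compatibilities.

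The step you flag as delicate is in fact harmless: $\she_{\C|\PP}^{-\tau}$ admits the two-term free resolution $\D_\PP\to[\partial_\tau+1]\D_\PP$, so $\ast\ltens[\D_\PP]\she_{\C|\PP}^{-\tau}$ is just the cone of a $\opb{}\D_\PP$-linear endomorphism, and $\roim{\varpi_\PP}$, $\epb{\varpi_\PP}$, and $\operatorname{for}$ all commute with such cones because $\varpi_\PP$ is the identity in the $\PP$-direction.
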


\begin{corollary}\label{cor:drTXtilde}
For $\shm\in\BDC_\hol(\D_X)$ such that $\shm\isoto\shm(*D)$, we have
\begin{align*}
\drE_X(\shm) &\simeq \Eoim\varpi\drE_{\widetilde X}(\shm^\sha), \\
\drE_{\widetilde X}(\shm^\sha) &\simeq \Eepb\varpi\drE_X(\shm).
\end{align*}
\end{corollary}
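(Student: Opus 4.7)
The plan is to derive both isomorphisms from Theorem~\ref{thm:forOTtilde} combined with Theorem~\ref{thm:Tfunct}(iv), using standard projection formulas for the $\D_X$-module action. First, since by definition $\shm^\sha \simeq \D_{\widetilde X}^\sha \ltens[\opb\varpi\D_X] \opb\varpi\shm$ and $\OvE_{\widetilde X} \simeq \opb{\pi_{\widetilde X}}\opb\varpi\Omega_X \ltens[\opb{\pi_{\widetilde X}}\opb\varpi\O_X]\OEn_{\widetilde X}$, unwinding the definition of $\drE_{\widetilde X}$ gives
\[
\drE_{\widetilde X}(\shm^\sha) \simeq \OvE_{\widetilde X} \ltens[\opb\varpi\D_X] \opb\varpi\shm,
\]
where $\OvE_{\widetilde X}$ is viewed in $\BEC[\ind\opb\varpi\D_X{}^\op]{}$ via the forgetful functor.

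For the first isomorphism, I apply $\Eoim\varpi$ and combine the first assertion of Theorem~\ref{thm:forOTtilde} with the projection formula $\Eoim\varpi(K \ltens[\opb\varpi\D_X] \opb\varpi\shm) \simeq (\Eoim\varpi K) \ltens[\D_X] \shm$, which yields
\[
\Eoim\varpi\drE_{\widetilde X}(\shm^\sha) \simeq \rihom(\opb\pi\C_{X\setminus D}, \OvE_X) \ltens[\D_X] \shm.
\]
Since $\opb\pi\C_{X\setminus D}$ carries no $\D_X$-action, the functor $\rihom(\opb\pi\C_{X\setminus D},\dummy)$ commutes with $\ltens[\D_X] \shm$ on the right, so the right-hand side equals $\rihom(\opb\pi\C_{X\setminus D}, \drE_X(\shm))$. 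By Theorem~\ref{thm:Tfunct}(iv) with $\shl = \O_X(*D)$, this is $\drE_X(\shm(*D))$, which equals $\drE_X(\shm)$ since $\shm\isoto\shm(*D)$.

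The second isomorphism is proved analogously, but with the forgetful assertion of Theorem~\ref{thm:forOTtilde}, namely $\operatorname{for}(\OvE_{\widetilde X}) \simeq \Eepb\varpi\rihom(\opb\pi\C_{X\setminus D}, \OvE_X)$, in place of the first one, together with the projection formula
\[
\Eepb\varpi(K) \ltens[\opb\varpi\D_X] \opb\varpi\shm \simeq \Eepb\varpi(K \ltens[\D_X] \shm).
\]
The rest of the manipulation is identical: factor out $\rihom(\opb\pi\C_{X\setminus D},\dummy)$ through the $\D_X$-tensor, apply Theorem~\ref{thm:Tfunct}(iv), and use $\shm\simeq\shm(*D)$.

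The main obstacle is the justification of the two projection formulas above. These should follow from Proposition~\ref{pro:Tproj} applied to $\tilde\varpi\colon \widetilde X\times\R_\infty \to X\times\R_\infty$, once the bookkeeping between the $\D^\sha_{\widetilde X}$-, the $\opb\varpi\D_X$- and the $\D_X$-module structures is done carefully (in particular, distinguishing the left $\D_X$-structure on $\shm$ from the right one on $\OvE_X$, which the tensor product $\ltens[\D_X]$ absorbs). Granting this, both isomorphisms reduce to the combination of Theorems~\ref{thm:forOTtilde} and~\ref{thm:Tfunct}(iv) outlined above.
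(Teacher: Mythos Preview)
Your proof is correct and follows essentially the same approach as the paper. The paper's argument is identical in structure: unwind $\drE_{\widetilde X}(\shm^\sha)$ as $\OvE_{\widetilde X}\ltens[\opb\varpi\D_X]\opb\varpi\shm$, apply the projection formula for $\Eoim\varpi$ (respectively use the forgetful isomorphism for $\Eepb\varpi$), invoke Theorem~\ref{thm:forOTtilde}, then pass $\rihom(\opb\pi\Cfield_{X\setminus D},\dummy)$ through the $\D_X$-tensor and use $\shm\simeq\shm(*D)$; the only cosmetic difference is that the paper phrases the last step as $\rihom(\opb\pi\Cfield_{X\setminus D},\OvE_X)\ltens[\D_X]\shm \simeq \OvE_X\ltens[\D_X](\O_X(*D)\dtens\shm)$ rather than invoking Theorem~\ref{thm:Tfunct}(iv) explicitly.
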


\begin{proof}
By the first isomorphism in Theorem~\ref{thm:forOTtilde}, one has
\begin{align*}
\Eoim\varpi\drE_{\widetilde X}(\shm^\sha)
&= \Eoim\varpi(\OvE_{\widetilde X} \ltens[\D_{\widetilde X}^\sha]\shm^\sha) \\
&\simeq \Eoim\varpi(\OvE_{\widetilde X} \ltens[\opb\varpi\D_X]\opb\varpi\shm) \\
&\simeq (\Eoim\varpi\OvE_{\widetilde X}) \ltens[\D_X]\shm \\
&\simeq \rihom(\opb\pi\Cfield_{X\setminus D},\OvE_X) \ltens[\D_X]\shm \\
&\simeq \OvE_X \ltens[\D_X](\O_X(*D)\dtens\shm )\\
&\simeq \drE_X(\shm ).
\end{align*}
The proof of the second isomorphism in the statement is similar, using the second isomorphism in Theorem~\ref{thm:forOTtilde}.
\end{proof}

\subsection{Constructibility}

Let $Y\subset X$ be a complex analytic hypersurface and $\varphi\in\O_X(*Y)$. 
Set $U=X\setminus Y$.
Let $\tau\in\C\subset\PP$ be the affine coordinate such that $\tau|_\R = t$. 
We set
\[
\{t=\Re\varphi\} = \{(x,t)\in U\times\R\semicolon t=\Re\varphi(x)\}
\subset X\times\PR
\]
and define the object $E^\enh_{U|X}(\varphi)$ of  $\BEC[\iCfield]X$ by 
\[
E^\enh_{U|X}(\varphi) = \Cfield^\enh_X \ctens \rihom(\Cfield_{U\times\R}, \Cfield_{\{t=\Re\varphi\}}).
\]

Recall the notation $\she_{U|X}^\varphi$ from Definition~\ref{def:expY}.

\begin{lemma}\label{lem:drTEphi}
Let $Y\subset X$ be a closed hypersurface. 
Let $\varphi\in\O_X(*Y)$ be a meromorphic function with poles at $Y$.
Then we have an isomorphism in $\BEC[\iCfield]X$
\[
\drE_X(\she_{U|X}^\varphi) 
\simeq
E^\enh_{U|X}(\varphi)[d_X] .
\]

In particular, $\drE_X(\she_{U|X}^\varphi)$ is $\R$-constructible.
\end{lemma}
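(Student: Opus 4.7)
The plan is to reduce the computation to Proposition~\ref{pro:Solphi} by introducing the auxiliary variable $\tau\in\C\subset\PP$, lifting everything to $X\times\PP$, and descending to the enhanced category via the morphism $i\colon X\times\R_\infty\to X\times\PP$.

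By the definition of $\OvE_X$ and the compatibility of external products of $\D$-modules with the tempered de Rham complex, one has
\[
\drE_X(\she^\varphi_{U|X})
\simeq \epb i\,\drt_{X\times\PP}\bl\she^\varphi_{U|X}\detens\she^{-\tau}_{\C|\PP}\br[1].
\]
The external product is generated by $e^{\varphi(x)-\tau}$, so it is canonically identified with the exponential $\D_{X\times\PP}$-module $\she^{\varphi-\tau}_{U\times\C|X\times\PP}$ along the hypersurface $Z\defeq Y\times\PP\cup X\times\{\infty\}$, with $\varphi-\tau\in\O_{X\times\PP}(*Z)$. Applying Proposition~\ref{pro:Solphi} to $X\times\PP$ with the meromorphic function $\tau-\varphi$ then gives
\[
\drt_{X\times\PP}(\she^{\varphi-\tau}_{U\times\C|X\times\PP})
\simeq \rihom\bl\Cfield_{U\times\C},\Cfield_{\{\Re(\tau-\varphi)<\ast\}}\br[d_X+1].
\]

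To transport this along $\epb i$, the projection formula of Proposition~\ref{pro:bproj} together with $\opb i\Cfield_{U\times\C}\simeq\Cfield_{U\times\R}$ yields $\epb i\,\rihom(\Cfield_{U\times\C},G)\simeq\rihom(\Cfield_{U\times\R},\epb i\, G)$, so it suffices to compute $\epb i\Cfield_{\{\Re(\tau-\varphi)<c\}}$ for each $c\in\R$. By Lemma~\ref{lem:f=jfj}, this reduces to $\epb{\check\imath}$ for the closed embedding $\check\imath\colon X\times\PR\hookrightarrow X\times\PP$ of real codimension one. Since each $\{\Re(\tau-\varphi)<c\}$ lies in the affine chart $X\times\C\subset X\times\PP$, where the normal direction to $\check\imath$ is canonically trivialized by the imaginary axis, the standard formula $\epb{\check\imath}\Cfield_V\simeq\Cfield_{\check\imath^{-1}V}[-1]$ applies. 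Commuting $\epb i$ with the filtrant inductive limit in $c$ and combining shifts, one obtains
\[
\drE_X(\she^\varphi_{U|X})
\simeq \rihom\bl\Cfield_{U\times\R},\Cfield_{\{t-\Re\varphi<\ast\}}\br[d_X+1]
\]
in $\BDC(\iCfield_{X\times\R_\infty})$, hence also in $\BEC[\iCfield]X$.

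To match this with $E^\enh_{U|X}(\varphi)[d_X]$, I observe that the graph $\{t=\Re\varphi\}$ lies entirely in $U\times\R$, so the excision triangle along $Y\times\PR$ yields $\rihom(\Cfield_{U\times\R},\Cfield_{\{t=\Re\varphi\}})\simeq\Cfield_{\{t=\Re\varphi\}}$. A direct computation using Lemma~\ref{lem:musigma}, Corollary~\ref{cor:tensindlim}, and the $\RE$-representative $\Cfield_{\{t<\ast\}}[1]$ of $\Cfield^\enh_X$ in the enhanced category then gives
\[
\Cfield^\enh_X\ctens\Cfield_{\{t=\Re\varphi\}}
\simeq \rihom\bl\Cfield_{U\times\R},\Cfield_{\{t-\Re\varphi<\ast\}}\br[1]
\]
in $\BEC[\iCfield]X$, completing the required identification. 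The $\R$-constructibility follows from this presentation: on any relatively compact subanalytic open $U'\subset X$, the graph of $\Re\varphi|_{U'\cap U}$ is subanalytic in $U'\times\PR$, providing the required description as $\Cfield^\enh_X\ctens F$ with $F\in\BDC_\Rc(\Cfield_{X\times\R_\infty})$.

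The main obstacle will be the careful bookkeeping for $\epb i$ in the bordered-space setting: verifying commutation of $\epb i$ with the filtrant inductive limit defining $\Cfield_{\{\Re(\tau-\varphi)<\ast\}}$ (permitted because $\epb{\check\imath}$ locally reduces to $\opb{\check\imath}[-1]$ after trivialization of the normal bundle over the chart $X\times\C$), and tracking the shifts arising both from the codimension-one embedding and from passing between the $\LE$- and $\RE$-representatives of $\Cfield^\enh_X$ when evaluating the convolution.
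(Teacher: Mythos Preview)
Your overall approach coincides with the paper's: reduce to Proposition~\ref{pro:Solphi} on $X\times\PP$, compute $\epb i$, and match in the enhanced category. The first two steps are carried out correctly. The gap is in the matching step.

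The claim that $\rihom(\Cfield_{U\times\R},\Cfield_{\{t=\Re\varphi\}})\simeq\Cfield_{\{t=\Re\varphi\}}$ because the graph lies in $U\times\R$ is incorrect. Support in an open set $V$ gives $\Cfield_V\otimes G\simeq G$, not $\rihom(\Cfield_V,G)\simeq G$; the latter is $\derr j_* j^{-1}G$ for $j\colon V\hookrightarrow X\times\overline\R$, which differs from $j_!j^{-1}G$ precisely when the closure of $\supp G$ meets the boundary. For $\varphi$ with an effective pole (e.g.\ $X=\C$, $Y=\{0\}$, $\varphi=1/z$) the closure of $\{t=\Re\varphi\}$ in $X\times\overline\R$ contains all of $\{0\}\times\overline\R$, so the two objects differ and there is no reason for the difference to lie in $\ind\shc_{\{t^*=0\}}$. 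Your second displayed identity $\Cfield^\enh_X\ctens\Cfield_{\{t=\Re\varphi\}}\simeq\rihom(\Cfield_{U\times\R},\Cfield_{\{t-\Re\varphi<\ast\}})[1]$ inherits the same problem: a direct convolution computation gives $\Cfield^\enh_X\ctens\Cfield_{\{t=\Re\varphi\}}\simeq\Cfield_{\{t-\Re\varphi<\ast\}}[1]$ in $\BEC[\iCfield]X$, with no $\rihom(\Cfield_{U\times\R},\ast)$ appearing.

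The fix is exactly what the paper does: never drop the $\rihom(\Cfield_{U\times\R},\ast)$. After arriving at $\rihom(\Cfield_{U\times\R},\Cfield_{\{t-\Re\varphi<\ast\}})[d_X+1]$, use the triangle $\Cfield_{\{t-\Re\varphi\geq a\}}\to\Cfield_{U\times\R}\to\Cfield_{\{t-\Re\varphi<a\}}\to[+1]$ to identify $\Cfield_{\{t-\Re\varphi<\ast\}}[1]\simeq\Cfield^\enh_X\ctens\Cfield_{\{t=\Re\varphi\}}$ in $\BEC[\iCfield]X$, and then invoke Corollary~\ref{cor:pifieldT} (which says $\Cfield^\enh_X\ctens\rihom(\opb\pi F,K)\simeq\rihom(\opb\pi F,\Cfield^\enh_X\ctens K)$ for $F\in\BDC(\Cfield_X)$) with $F=\Cfield_U$ to pull $\Cfield^\enh_X$ outside the $\rihom$. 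This yields $E^\enh_{U|X}(\varphi)[d_X]$ directly. The $\R$-constructibility then follows because $\rihom(\Cfield_{U\times\R},\Cfield_{\{t=\Re\varphi\}})$ is $\R$-constructible on $X\times\overline\R$, both arguments being so.
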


\begin{proof}
We have
\[
\she_{U|X}^\varphi \detens \she_{\C|\PP}^{-\tau} \simeq \she_{U\times\C|X\times\PP}^{\varphi-\tau}.
\]
By Proposition~\ref{pro:Solphi},
\[
\drt_{X\times\PP}(\she_{U\times\C|X\times\PP}^{\varphi-\tau}) = 
\rihom(\Cfield_{U\times\C}, \indlim[a\rightarrow+\infty] \Cfield_{\{\Re(\tau-\varphi) < a\}} )[d_X+1],
\]
and by the definition,
\[
\drE_X(\she_{U|X}^\varphi) \simeq \epb i \drt_{X\times\PP}(\she_{U\times\C|X\times\PP}^{\varphi-\tau}) [1].
\]
Hence
\begin{align*}
\drE_X(\she_{U|X}^\varphi) 
&\simeq \rihom(\Cfield_{U\times\R}, \epb i \indlim[a\rightarrow+\infty] \Cfield_{\{\Re(\tau-\varphi) < a\}} [d_X+2]) \\
&\simeq \rihom(\Cfield_{U\times\R}, \indlim[a\rightarrow+\infty] \Cfield_{\{t-\Re\varphi < a\}} [d_X+1]),
\end{align*}
where the last isomorphism follows from
\[
\Cfield_{U\times\R} \tens \epb i \Cfield_{\{\Re(\tau-\varphi) < a\}} \simeq \Cfield_{\{t-\Re\varphi < a\}}[-1].
\]
In $\BEC[\iCfield]X$ we have
\[
\indlim[a\rightarrow+\infty] \Cfield_{\{t-\Re\varphi <a\}}[1]
\simeq \indlim[a\rightarrow+\infty] \Cfield_{\{t-\Re\varphi \geq a\}}
\simeq \Cfield_X^\enh \ctens \Cfield_{\{t = \Re\varphi\}}.
\]
Thus we obtain
\begin{align*}
\drE_X(\she_{U|X}^\varphi) 
&\simeq \rihom(\Cfield_{U\times\R}, \Cfield_X^\enh \ctens \Cfield_{\{t = \Re\varphi\}}) [d_X] \\
&\simeq \Cfield_X^\enh\ctens\rihom(\Cfield_{U\times\R}, \Cfield_{\{t = \Re\varphi\}}) [d_X].
\end{align*}
Here, the last isomorphism follows from Corollary~\ref{cor:pifieldT}.
\end{proof}

\begin{theorem}
For $\shm\in\BDC_\hol(\D_X)$, the object $\drE_X(\shm)$ 
of  $\BEC[\iCfield]X$ is $\R$-constructible.
\end{theorem}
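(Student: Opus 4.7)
\medskip

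The plan is to invoke Lemma~\ref{lem:redux} with the statement
\[
P_X(\shm)= \text{``$\drE_X(\shm)\in\BECRc[\iCfield]X$''},
\]
and verify the six conditions (a)--(f). Conditions (a)--(d) are essentially formal from what has been set up: (a) is the local nature of $\R$-constructibility stated just after Lemma~\ref{lem:Rcfilt}; (b) is the fact that $\BECRc[\iCfield]X$ is triangulated; (c) is Proposition~\ref{pro:Rcthick}; and (d) is Proposition~\ref{pro:summand}. Condition (e) follows by combining Theorem~\ref{thm:Tfunct}~(iii), which gives $\drE_Y(\doim f\shm)\simeq\Eeeim f\drE_X(\shm)$ for a projective $f$ and good $\shm$, with Proposition~\ref{pro:RcTfunctorial}~(ii), which says $\Eeeim f$ preserves $\R$-constructibility when the enhanced support is proper over $Y$; properness is automatic from the projectivity of $f$ together with the coherence of $\shm$.

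The heart of the argument is condition (f): if $\shm$ has a normal form along a normal crossing divisor $D\subset X$, we must show $\drE_X(\shm)\in\BECRc[\iCfield]X$. The strategy is to pass to the real blow-up $\varpi\colon\widetilde X\to X$ of $X$ along $D$. Since $\shm\simeq\shm(*D)$, Corollary~\ref{cor:drTXtilde} gives
\[
\drE_X(\shm)\simeq\Eoim\varpi\,\drE_{\widetilde X}(\shm^\sha),
\qquad
\drE_{\widetilde X}(\shm^\sha)\simeq\Eepb\varpi\,\drE_X(\shm).
\]
Because $\varpi$ is proper and $\supp^\enh$ is contained in $\widetilde X$ which is proper over $X$, one has $\Eoim\varpi\simeq\Eeeim\varpi$ here, so by Proposition~\ref{pro:RcTfunctorial}~(ii) it suffices to prove that $\drE_{\widetilde X}(\shm^\sha)$ is $\R$-constructible on $\widetilde X$.

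Since $\R$-constructibility on $\widetilde X$ is a local property, we may check it near each point $\tilde x\in\widetilde X$. By Definition~\ref{def:normal} there is a neighborhood on which $\shm^\sha$ is isomorphic to a finite direct sum $\soplus_i(\she_{U\setminus D|U}^{\varphi_i})^\sha$ for meromorphic functions $\varphi_i\in\O_X(*D)$ defined on some open $U\subset X$. Using Corollary~\ref{cor:drTXtilde} again (applied to each $\she_{U\setminus D|U}^{\varphi_i}$, which satisfies $\shm\simeq\shm(*D)$),
\[
\drE_{\widetilde X}\bl(\she_{U\setminus D|U}^{\varphi_i})^\sha\br
\simeq \Eepb\varpi\,\drE_X(\she_{U\setminus D|U}^{\varphi_i}).
\]
Lemma~\ref{lem:drTEphi} gives $\drE_X(\she_{U\setminus D|U}^{\varphi_i})\simeq E^\enh_{U\setminus D|U}(\varphi_i)[d_X]\in\BECRc[\iCfield]X$, and Proposition~\ref{pro:RcTfunctorial}~(i) then shows that its $\Eepb\varpi$ is $\R$-constructible on $\widetilde X$. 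Taking the finite direct sum, $\drE_{\widetilde X}(\shm^\sha)$ is locally $\R$-constructible, hence $\R$-constructible, which completes the verification of~(f).

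The main obstacle is condition (f), and in particular the fact that the normal-form decomposition is only available after passing to the real blow-up $\widetilde X$: this is precisely why the machinery of Section~\ref{sse:blowfun} (the ring $\D^{\sha}_{\widetilde X}$, the complex $\OEn_{\widetilde X}$, and the base-change formulas of Corollary~\ref{cor:drTXtilde}) is needed, so that one can glue the local exponential building blocks treated in Lemma~\ref{lem:drTEphi} into the global constructibility statement on $X$.
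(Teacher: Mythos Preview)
Your proof is correct and follows essentially the same route as the paper: apply Lemma~\ref{lem:redux}, verify (a)--(e) by the same formal references (locality of $\R$-constructibility, Proposition~\ref{pro:Rcthick}, Proposition~\ref{pro:summand}, Theorem~\ref{thm:Tfunct}~(iii) with Proposition~\ref{pro:RcTfunctorial}), and handle (f) by passing to the real blow-up via Corollary~\ref{cor:drTXtilde}, reducing locally to the exponential case treated in Lemma~\ref{lem:drTEphi}, and then pushing back down with Proposition~\ref{pro:RcTfunctorial}. Your explicit remark that $\Eoim\varpi\simeq\Eeeim\varpi$ by properness of $\varpi$ is a helpful clarification the paper leaves implicit.
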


\begin{proof}
(i)
Assume first that $\shm$ is a holonomic $\D_X$-module with a normal form along a normal crossing divisor $D$. Then
\[
\shm^\sha \defeq \D_{\widetilde X}^\sha\ltens[\opb\varpi\D_X]\opb\varpi\shm
\] 
is locally a direct sum of $\D_{\widetilde X}^\sha$-modules of the form $(\she_{X\setminus D|X}^\varphi)^\sha$ for $\varphi\in\O_X(*D)$ as in Lemma~\ref{lem:drTEphi}. By Corollary~\ref{cor:drTXtilde}, one has
\[
\drE_{\widetilde X}((\she_{X\setminus D|X}^\varphi)^\sha)
\simeq \Eepb\varpi \drE_X(\she_{X\setminus D|X}^\varphi).
\]
Since $\drE_X(\she_{X\setminus D|X}^\varphi)$ is $\R$-constructible, Proposition~\ref{pro:RcTfunctorial} implies that $\drE_{\widetilde X}((\she_{X\setminus D|X}^\varphi)^\sha)$ is $\R$-constructible. Hence also $\drE_{\widetilde X}(\shm^\sha)$ is $\R$-constructible. 
By Corollary~\ref{cor:drTXtilde}, $\drE_X(\shm ) \simeq \Eoim\varpi\drE_{\widetilde X}(\shm^\sha)$ is $\R$-constructible.

\smallskip\noindent (ii)
We shall apply Lemma~\ref{lem:redux} to the statement
\[
P_X(\shm) = \text{``$\drE_X(\shm)$ is $\R$-constructible''}.
\]
Hypotheses (a) and (b) are obvious, (c) follows from Proposition~\ref{pro:Rcthick}, (d) from Proposition~\ref{pro:summand}, (e) from Theorem~\ref{thm:Tfunct}~(iii) and Proposition~\ref{pro:RcTfunctorial}, and (f) from step (i).
\end{proof}

\begin{theorem}
\label{thm:drTetens}
For $\shm\in\BDC_\hol(\D_X)$ and $\shn\in\BDC_\hol(\D_Y)$, there is a canonical isomorphism
\[
\drE_X(\shm) \cetens \drE_Y(\shn) \isoto \drE_{X\times Y}(\shm \detens \shn).
\]
\end{theorem}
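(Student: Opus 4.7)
The canonical morphism is obtained from the morphism $\OEn_X\cetens\OEn_Y\to\OEn_{X\times Y}$ of Proposition~\ref{pro:OTetens} by tensoring with the exterior product of the canonical sheaves of top differential forms (giving $\OvE_X\cetens\OvE_Y\to\OvE_{X\times Y}$) and then applying this to $\shm\detens\shn$.

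To prove the resulting morphism is an isomorphism, I would apply Lemma~\ref{lem:redux} to the statement $P_X(\shm):=$ \emph{``the canonical morphism is an isomorphism for every $\shn\in\BDC_\hol(\D_Y)$''}. Conditions (a)--(d) of that lemma follow from the triangulated and local nature of $\drE$, $\cetens$, and $\detens$. Condition~(e), for a projective $f\colon X\to X'$, combines Theorem~\ref{thm:Tfunct}(iii) with the projection formula $\Eeeim{f\times\id_Y}(K\cetens L)\simeq\Eeeim{f}(K)\cetens L$, which follows from base change (Proposition~\ref{pro:Tcart}) and Proposition~\ref{pro:Tproj}. For condition~(f), a second application of Lemma~\ref{lem:redux} in the $\shn$ variable further reduces the problem to the case where both $\shm$ and $\shn$ have normal forms along normal crossing divisors.

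In this normal-form case, I would pass to the real blow-ups $\varpi_X\colon\widetilde X\to X$ and $\varpi_Y\colon\widetilde Y\to Y$. By Corollary~\ref{cor:drTXtilde}, both sides become $\Eoim{\varpi_X\times\varpi_Y}$-pushforwards of complexes on $\widetilde X\times\widetilde Y$, and using $\Eoim\simeq\Eeeim$ for the proper maps $\varpi_X,\varpi_Y$ together with proper base change and the projection formula (Propositions~\ref{pro:Tcart}, \ref{pro:Tproj}) one obtains the K\"unneth identity $\Eoim{\varpi_X}K\cetens\Eoim{\varpi_Y}L\simeq\Eoim{\varpi_X\times\varpi_Y}(K\cetens L)$. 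Locally on $\widetilde X\times\widetilde Y$, the normal-form hypothesis supplies decompositions $\shm^\sha\simeq\bigoplus_i(\she^{\varphi_i}_{U|X})^\sha$ and $\shn^\sha\simeq\bigoplus_j(\she^{\psi_j}_{V|Y})^\sha$. Using Corollary~\ref{cor:drTXtilde} to rewrite $\drE_{\widetilde X}((\she^{\varphi_i}_{U|X})^\sha)\simeq\Eepb{\varpi_X}\drE_X(\she^{\varphi_i}_{U|X})$, and Proposition~\ref{prop:inversedual} (which applies since $\drE_X(\she^{\varphi_i}_{U|X})$ is $\R$-constructible by Lemma~\ref{lem:drTEphi}) to obtain $\Eepb{\varpi_X}K\cetens\Eepb{\varpi_Y}L\simeq\Eepb{\varpi_X\times\varpi_Y}(K\cetens L)$, the statement is reduced---together with the natural identities $(\shm\detens\shn)^\sha\simeq\shm^\sha\detens\shn^\sha$ and $\she^{\varphi_i}_{U|X}\detens\she^{\psi_j}_{V|Y}\simeq\she^{\varphi_i\oplus\psi_j}_{U\times V|X\times Y}$ (where $(\varphi\oplus\psi)(x,y)=\varphi(x)+\psi(y)$)---to the single-exponential case $\shm=\she^\varphi_{U|X}$, $\shn=\she^\psi_{V|Y}$.

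For this exponential case, Lemma~\ref{lem:drTEphi} reduces the claim to an isomorphism
\[
E^\enh_{U|X}(\varphi)\cetens E^\enh_{V|Y}(\psi)\isoto E^\enh_{U\times V|X\times Y}(\varphi\oplus\psi),
\]
which is to be verified by a direct computation combining $\Cfield_X^\enh\cetens\Cfield_Y^\enh\simeq\Cfield_{X\times Y}^\enh$, the sheaf-level convolution identity $\Cfield_{\{t=\Re\varphi\}}\cetens\Cfield_{\{t=\Re\psi\}}\simeq\Cfield_{\{t=\Re(\varphi\oplus\psi)\}}$ (obtained by computing $\reeim\mu$ of the external product), the compatibility of $\rihom$ with pullback along the topologically submersive projections (Proposition~\ref{pro:topsub}), and Corollary~\ref{cor:exthom}. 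The hard part will be precisely this final computation: distributing the $\rihom(\Cfield_{U\times\R},\cdot)$ and $\rihom(\Cfield_{V\times\R},\cdot)$ factors over the convolution so as to produce $\rihom(\Cfield_{U\times V\times\R},\cdot)$ requires careful management of the ind-limits defining $\Cfield_X^\enh$ and of the interplay between $\rihom$ and $\ctens$, ultimately relying on the $\R$-constructibility of the sources to commute $\rihom$ past filtered colimits and convolutions.
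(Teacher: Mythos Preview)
Your overall strategy matches the paper's proof closely: construct the morphism from Proposition~\ref{pro:OTetens}, reduce via Lemma~\ref{lem:redux} to the normal-form case, pass to the real blow-ups $\widetilde X\times\widetilde Y\simeq\widetilde{X\times Y}$, use Corollary~\ref{cor:drTXtilde} and Proposition~\ref{prop:inversedual} to reduce to the single-exponential case $\shm=\she^\varphi_{U|X}$, $\shn=\she^\psi_{V|Y}$, and then verify the exterior-convolution identity for $E^\enh_{U|X}(\varphi)\cetens E^\enh_{V|Y}(\psi)$.

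The one substantive difference is in this last step. You propose a direct computation, correctly anticipating that distributing the $\rihom(\Cfield_{U\times\R},\cdot)$ factors across the convolution is the delicate point. The paper avoids this entirely by a duality trick: it computes $\Edual_X\bigl(E^\enh_{U|X}(\varphi)[d_X]\bigr)\simeq\Cfield^\enh_X\ctens\Cfield_{\{t=-\Re\varphi\}}[d_X]$ (via Proposition~\ref{pro:Tduala}), notes that on the dual side there is no $\rihom$ to contend with, verifies the trivial convolution identity
\[
\bigl(\Cfield^\enh_X\ctens\Cfield_{\{t=-\Re\varphi\}}\bigr)\cetens\bigl(\Cfield^\enh_Y\ctens\Cfield_{\{t=-\Re\psi\}}\bigr)\simeq\Cfield^\enh_{X\times Y}\ctens\Cfield_{\{t=-\Re(\varphi+\psi)\}},
\]
and then applies $\Edual_{X\times Y}$, invoking Proposition~\ref{prop:exteriodual} to commute $\Edual$ past $\cetens$. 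This replaces your ``hard part'' with a one-line computation plus two already-established compatibilities of the duality functor. Your route should work, but the paper's is considerably cleaner.
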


\begin{proof}
The morphism is defined by using Proposition~\ref{pro:OTetens}.

By d\'evissage, using Lemma~\ref{lem:redux}, we may assume that both $\shm$ and $\shn$ are holonomic $\D$-modules having a normal form along a normal crossing divisor. Denote by $D_X\subset X$ and $D_Y\subset Y$ the normal crossing divisors of the singularities of $\shm$ and $\shn$, respectively.
Note that $\shm\detens\shn$ has singularities at $D_{X\times Y} \defeq (D_X\times Y) \cup (X\times D_Y)$.

Consider the real blow-ups $\varpi_X\colon\widetilde X\to X$ and $\varpi_Y\colon\widetilde Y\to Y$. Note that $\widetilde{X\times Y} \simeq \widetilde X \times \widetilde Y$.

There is a natural morphism
\[
\OvE_{\widetilde X} \cetens \OvE_{\widetilde Y} \to \OvE_{\widetilde {X\times Y}}.
\]
Hence there are morphisms
\begin{align*}
(\OvE_{\widetilde X}\ltens[\D_X]\shm) \cetens (\OvE_{\widetilde Y}\ltens[\D_Y]\shn) 
&\to \OvE_{\widetilde {X\times Y}}\ltens[\D_X\etens \D_Y](\shm\etens\shn) \\
&\simeq \OvE_{\widetilde {X\times Y}}\ltens[\D_{X\times Y}](\shm\detens\shn) .
\end{align*}
The composite of the above morphisms is isomorphic to
\[
(\OvE_{\widetilde X}\ltens[\D_{\widetilde X}^\sha]\shm^\sha) \cetens (\OvE_{\widetilde Y}\ltens[\D_{\widetilde Y}^\sha]\shn^\sha)
\to (\OvE_{\widetilde {X\times Y}})\ltens[\D_{\widetilde {X\times Y}}^\sha](\shm\detens\shn)^\sha,
\]
i.e.\ to
\[
\drE_{\widetilde X}(\shm^\sha) \cetens \drE_{\widetilde Y}(\shn^\sha)
\to \drE_{\widetilde {X\times Y}}((\shm\detens\shn)^\sha).
\]
By Corollary~\ref{cor:drTXtilde}, it is enough to show that this morphism is an isomorphism.
Then, by Theorem~\ref{thm:normal}, we may assume
$\shm\simeq\she_{X\setminus D_X|X}^\varphi$ and
$\shn\simeq\she_{Y\setminus D_Y|Y}^\psi$
for $\varphi\in\O_X(*D_X)$ and $\psi\in\O_Y(*D_Y)$.
Hence, by Corollary~\ref{cor:drTXtilde}, one has
\[
\drE_{\widetilde X}(\shm^\sha) \simeq \Eepb \varpi_X \drE_X(\shm),
\]
and similarly for $\shm$ replaced by $\shn$ and $\shm\detens\shn$.

On the other hand, Proposition~\ref{prop:inversedual} implies
\eqn
&&\Eepb \varpi_X \drE_X(\shm)\cetens \Eepb \varpi_Y \drE_Y(\shn)
\simeq \Eepb \varpi_{X\times Y} \bl\drE_X(\shm)
\cetens \drE_Y(\shn)\br.\eneqn
We have thus reduced the theorem to the case
$\shm = \she_{X\setminus D_X|X}^\varphi$ and $\shn = \she_{Y\setminus D_Y|Y}^\psi$, and we conclude by using the lemma below.
\end{proof}

\begin{lemma}
Let $X$, $Y$ be complex manifolds, $D_X\subset X$, $D_Y\subset Y$ closed hypersurfaces, and $\varphi\in\O_X(*D_X)$, $\psi\in\O_Y(*D_Y)$.
Then we have an isomorphism in $\BEC[\iCfield]{X\times Y}$
\[
E^\enh_{X\setminus D_X|X}(\varphi) \cetens E^\enh_{Y\setminus D_Y|Y}(\psi) \simeq E^\enh_{(X\setminus D_X)\times(Y\setminus D_Y)|X\times Y}(\varphi+\psi).
\]
\end{lemma}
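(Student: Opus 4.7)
My plan is to identify both sides of the desired isomorphism with a common explicit representative in $\BEC[\iCfield]{X\times Y}$. Set $U=X\setminus D_X$, $V=Y\setminus D_Y$, and write
\[
\Gamma_\varphi=\{(x,t)\in U\times\R\semicolon t=\Re\varphi(x)\},\quad
\Gamma_\psi=\{(y,t)\in V\times\R\semicolon t=\Re\psi(y)\},
\]
and $\Gamma_{\varphi+\psi}\subset(U\times V)\times\R$ for the graph of $\Re(\varphi+\psi)$. A direct computation of the single convolution with $\Cfield^\enh_X=\indlim_{a\to+\infty}\Cfield_{\{t\geq a\}}$ gives
\[
\Cfield^\enh_X\ctens\Cfield_{\Gamma_\varphi}\simeq\indlim_{a\to+\infty}\Cfield_{\{(x,t)\in U\times\R\semicolon t-\Re\varphi(x)\geq a\}},
\]
and since $\Cfield_{U\times\R}=\opb\pi\Cfield_U$, Corollary~\ref{cor:pifieldT} allows us to commute $\Cfield^\enh_X\ctens$ with $\rihom(\Cfield_{U\times\R},-)$. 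Therefore each of the three objects $E^\enh_{U|X}(\varphi)$, $E^\enh_{V|Y}(\psi)$, $E^\enh_{U\times V|X\times Y}(\varphi+\psi)$ admits the stable representative $\Cfield^\enh_\bullet\ctens\Cfield_{\Gamma_\bullet}$ (with the $\rihom(\Cfield_{\bullet\times\R},-)$ factor intact but harmless).

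Next I would unfold the external convolution via its definition $K\cetens L=\Eopb{p_1}K\ctens\Eopb{p_2}L$ for the projections $p_1,p_2$ from $X\times Y$. Using Proposition~\ref{pro:stableops}~(ii) to move $\Cfield^\enh$ through $\Eopb{p_i}$, together with $\Eopb{p_i}\Cfield^\enh_\bullet\simeq\Cfield^\enh_{X\times Y}$ and the idempotence $\Cfield^\enh_{X\times Y}\ctens\Cfield^\enh_{X\times Y}\simeq\Cfield^\enh_{X\times Y}$, the problem reduces to identifying, on $X\times Y\times\R_\infty$, the convolution
\[
\Cfield^\enh_{X\times Y}\ctens\bigl(\opb{p_1}\Cfield_{\Gamma_\varphi}\ctens\opb{p_2}\Cfield_{\Gamma_\psi}\bigr)
\]
with $\Cfield^\enh_{X\times Y}\ctens\Cfield_{\Gamma_{\varphi+\psi}}$, up to the $\rihom(\Cfield_{U\times V\times\R},-)$ wrapper.

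The core calculation is the convolution of the two graph sheaves. With the usual notations $q_1,q_2,\mu\cl X\times Y\times\R_\infty^2\to X\times Y\times\R_\infty$, the tensor product $\opb{q_1}\opb{p_1}\Cfield_{\Gamma_\varphi}\tens\opb{q_2}\opb{p_2}\Cfield_{\Gamma_\psi}$ is the constant sheaf on
\[
\{(x,y,t_1,t_2)\semicolon x\in U,\,y\in V,\,t_1=\Re\varphi(x),\,t_2=\Re\psi(y)\},
\]
and this set is mapped by $\mu(t_1,t_2)=t_1+t_2$ by a proper homeomorphism onto $\Gamma_{\varphi+\psi}$. Hence $\reeim\mu$ of the tensor product is precisely $\Cfield_{\Gamma_{\varphi+\psi}}$, i.e.\ $\opb{p_1}\Cfield_{\Gamma_\varphi}\ctens\opb{p_2}\Cfield_{\Gamma_\psi}\simeq\Cfield_{\Gamma_{\varphi+\psi}}$. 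Smashing with $\Cfield^\enh_{X\times Y}$ and wrapping with $\rihom(\Cfield_{U\times V\times\R},-)$ (which, being the pullback of $\Cfield_{U\times V}$ from the base, commutes with $\Cfield^\enh_{X\times Y}\ctens$ by Corollary~\ref{cor:pifieldT}) yields the desired representative of $E^\enh_{U\times V|X\times Y}(\varphi+\psi)$.

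The main obstacle is bookkeeping: justifying that the ind-limits implicit in $\Cfield^\enh_\bullet$ commute with the convolutions appearing in $\cetens$, and that the $\rihom(\Cfield_{\bullet\times\R},-)$ factors interact correctly with $\Eopb{p_i}$, $\ctens$, and $\Cfield^\enh_\bullet\ctens-$ in the quotient $\BEC$. Each commutation is handled by Corollary~\ref{cor:tensindlim}, Corollary~\ref{cor:pifieldT}, Proposition~\ref{pro:stableops}, and the stability of the representatives involved, so once these are in place the explicit convolution of graph sheaves completes the proof.
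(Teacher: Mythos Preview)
Your core computation---that $\Eopb{p_1}\Cfield_{\Gamma_\varphi}\ctens\Eopb{p_2}\Cfield_{\Gamma_\psi}\simeq\Cfield_{\Gamma_{\varphi+\psi}}$ via a direct evaluation of $\reeim\mu$---is correct, and is the same computation the paper relies on. The difference is in how the $\rihom(\Cfield_{U\times\R},-)$ wrapper is handled, and there your argument has a genuine gap.

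The paper applies $\Edual$ first. Using Proposition~\ref{pro:Tduala} one finds
\[
\Edual_X\bigl(E^\enh_{U|X}(\varphi)[d_X]\bigr)\simeq\Cfield^\enh_X\ctens\Cfield_{\{t=-\Re\varphi\}}[d_X],
\]
so duality \emph{removes} the $\rihom$ wrapper entirely. The exterior convolution of these unwrapped objects is then exactly your graph computation, after which the paper dualizes back using Proposition~\ref{prop:exteriodual} (that $\Edual$ commutes with $\cetens$). This is the whole point of passing through the dual.

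Your direct route instead carries the two wrappers through $\ctens$, and the step you describe as ``wrapping with $\rihom(\Cfield_{U\times V\times\R},-)$'' implicitly assumes an isomorphism of the shape
\[
\rihom(\opb\pi\Cfield_{U\times Y},K_1)\ctens\rihom(\opb\pi\Cfield_{X\times V},K_2)\;\simeq\;\rihom(\opb\pi\Cfield_{U\times V},K_1\ctens K_2),
\]
at least after smashing with $\Cfield^\enh_{X\times Y}$. None of the results you cite deliver this: Corollary~\ref{cor:pifieldT} only moves $\Cfield^\enh$ across a single $\rihom$; Proposition~\ref{pro:stableops} moves $\Cfield^\enh$ across pullbacks; and Lemma~\ref{lem:cihomrihompi} lets $\rihom(\opb\pi L,-)$ commute with $\cihom$, not with $\ctens$. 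In general $A\tens\rhom(\Cfield_W,C)\to\rhom(\Cfield_W,A\tens C)$ is not an isomorphism (take $A$ supported on $\overline W\setminus W$), so the passage from ``convolution of wrapped objects'' to ``wrapped convolution'' has real content that your cited lemmas do not supply. The paper's duality trick is precisely what sidesteps this, since after dualizing one is convolving objects of the form $\Cfield^\enh\ctens\Cfield_\Gamma$ with no wrapper to track.
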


\begin{proof}
We have
\[
\Edual_X\bigl(E^\enh_{X\setminus D_X|X}(\varphi)[d_X]\bigr)
\simeq \Cfield^\enh_X\ctens \Cfield_{\{t=-\Re\varphi\}}[d_X].
\]
One checks easily that
\eqn
&&
\bl\Cfield^\enh_X\ctens \Cfield_{\{t=-\Re\varphi\}}[d_X]\br \cetens \bl\Cfield^\enh_Y\ctens \Cfield_{\{t=-\Re\psi\}}[d_Y])\br\\
&&\hs{30ex}\simeq\Cfield^\enh_{X\times Y}\ctens \Cfield_{\{t=-\Re(\varphi+\psi)\}}[d_X+d_Y].
\eneqn
Applying $\Edual_{X\times Y}$ and noticing that $\Edual$ commutes with $\cetens$
by Proposition~\ref{prop:exteriodual},  we obtain the desired result.
\end{proof}

\subsection{Duality}

Let $\sht$ be a tensor category with unit object $\mathbbm 1$.
Recall that an adjunction in $\sht$ is a datum $(X_1,X_2,\eta,\varepsilon)$ where $X_1,X_2\in\sht$ and
\[
\mathbbm 1 \To[\eta] X_1\tens X_2, \quad
X_2\tens X_1 \To[\varepsilon]  \mathbbm 1
\]
are morphisms such that the compositions
\begin{align*}
X_2 \simeq X_2\tens \mathbbm 1 \To[\eta] X_2\tens X_1\tens X_2 
\To[\varepsilon]  \mathbbm 1\tens X_2 \simeq X_2, \\
X_1 \simeq \mathbbm 1 \tens X_1 \To[\eta] X_1\tens X_2 \tens X_1 
\To[\varepsilon] X_1 \tens \mathbbm 1  \simeq X_1
\end{align*}
are the identities. In this case, $\Hom[\sht](Z, X_2) \simeq \Hom[\sht](Z\tens X_1,\mathbbm 1)$ functorially in $Z\in\sht$, and one calls $X_2$ a right dual of $X_1$.

\medskip
Let $X$ be a complex manifold.
We shall adapt the construction above  to the categories $\BDC_\hol(\D_X)$ and $\BECRc[\iCfield]X$.
\medskip

Define the maps
\[
p^n_{i_1\cdots i_m}\colon X^n\to X^m \quad\text{by }(x_1,\dots,x_n) \mapsto (x_{i_1},\dots,x_{i_m}). 
\]
In particular, 
$p^1_{11}$ is the diagonal embedding $\delta\colon X\to X\times X$.

Recall that $\shb_{\Delta_X}$ is the holonomic $\D_{X\times X}$-module
associated with the diagonal set $\Delta_X$ (see \eqref{eq:B}).
\begin{lemma}
\label{lem:HomDMM'}
For $\shm,\shm'\in\BDC_\good(\D_X)$ one has
\begin{align}
\label{eq:homdetens1}
\Hom[\BDC(\D_X)](\shm,\shm') &\simeq
\Hom[\BDC(\D_{X^3})](\shb_{\Delta_X}[-d_X]\detens\shm, \shm'\detens\shb_{\Delta_X}[d_X]), \\
\label{eq:homdetens2}
\Hom[\BDC(\D_X)](\shm,\shm') &\simeq
\Hom[\BDC(\D_{X^3})](\shm\detens\shb_{\Delta_X}[-d_X], \shb_{\Delta_X}[d_X]\detens\shm').
\end{align}
\end{lemma}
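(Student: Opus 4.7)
The plan is to reduce each side to a pure adjunction/base-change calculation in the $\D$-module formalism, using Propositions~\ref{pro:Dadj}, \ref{pro:Dadj2} and \ref{pro:transCart}. Since \eqref{eq:homdetens2} is symmetric to \eqref{eq:homdetens1} (interchange the two partial diagonals), I would only carry out \eqref{eq:homdetens1} in detail. Set $d=d_X$ and introduce the diagonal $\delta\colon X\to X^2$, the projections $p_1,p_2\colon X^2\to X$, the two partial diagonal closed embeddings $(\delta,\id),(\id,\delta)\colon X^2\to X^3$, and the projections $\pi_{ij}\colon X^3\to X^2$, $\pi_i\colon X^3\to X$.

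The first step is to identify the two objects appearing in the RHS as direct images from $X^2$. The projection formula applied to $\dopb{\pi_{12}}\shb_{\Delta_X}\dtens\dopb{\pi_3}\shm$, together with transversal base change for the Cartesian square
$$\xymatrix@C=7ex{X^2\ar[r]^-{(\delta,\id)}\ar[d]^-{p_1} & X^3\ar[d]^-{\pi_{12}} \\ X\ar[r]^-{\delta}\ar@{}[ur]|-\square & X^2}$$
gives $\shb_{\Delta_X}\detens\shm\simeq\doim{(\delta,\id)}\dopb{p_2}\shm$, and symmetrically $\shm'\detens\shb_{\Delta_X}\simeq\doim{(\id,\delta)}\dopb{p_1}\shm'$.

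The second step is the main adjunction. Since $(\delta,\id)$ is a closed embedding, Proposition~\ref{pro:Dadj} (with shift $d_{X^2}-d_{X^3}=-d$) converts the RHS of \eqref{eq:homdetens1} into
$$\Hom[\D_{X^2}](\dopb{p_2}\shm[-d],\,\dopb{(\delta,\id)}\doim{(\id,\delta)}\dopb{p_1}\shm').$$
The crucial simplification comes from the Cartesian square
$$\xymatrix@C=7ex{X\ar[r]^-{\delta}\ar[d]^-{\delta} & X^2\ar[d]^-{(\id,\delta)} \\ X^2\ar[r]^-{(\delta,\id)}\ar@{}[ur]|-\square & X^3,}$$
which is transversal by the dimension count $d_{X^2}+d_{X^2}-d_{X^3}=d$: by Proposition~\ref{pro:transCart} one has $\dopb{(\delta,\id)}\doim{(\id,\delta)}\simeq\doim{\delta}\dopb{\delta}$, and $\dopb{\delta}\dopb{p_1}\shm'=\shm'$ since $p_1\circ\delta=\id_X$. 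The expression therefore collapses to $\Hom[\D_{X^2}](\dopb{p_2}\shm[-d],\doim{\delta}\shm')$.

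The final step is to apply Proposition~\ref{pro:Dadj2} to the smooth projection $p_2\colon X^2\to X$ (valid since $\shm$ is good, in particular coherent); this removes $\dopb{p_2}$ and absorbs the shift $[-d]$, yielding $\Hom[\D_X](\shm,\doim{p_2}\doim{\delta}\shm')=\Hom[\D_X](\shm,\shm')$ via $\doim{p_2}\doim{\delta}=\doim{\id_X}$. The isomorphism \eqref{eq:homdetens2} is proved the same way after interchanging $(\delta,\id)\leftrightarrow(\id,\delta)$ and $p_1\leftrightarrow p_2$. The only subtle point is the shift bookkeeping: each adjunction contributes a shift equal to the relative dimension, and the $[\pm d]$ decorations in the statement are exactly those needed for the accumulated shifts to cancel. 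I expect this to be where mistakes would most easily occur, but no deeper obstruction arises.
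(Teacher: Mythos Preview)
Your proof is correct and follows essentially the same route as the paper's: identify $\shb_{\Delta_X}\detens\shm$ and $\shm'\detens\shb_{\Delta_X}$ as push-forwards along the partial diagonals (the paper writes these as $p^2_{112}$ and $p^2_{122}$, your $(\delta,\id)$ and $(\id,\delta)$), apply the adjunction of Proposition~\ref{pro:Dadj} for the closed embedding, use exactly the same transversal Cartesian square \eqref{eq:XX3} with Proposition~\ref{pro:transCart}, and finish with the smooth-projection adjunction. The only cosmetic difference is that you invoke Proposition~\ref{pro:Dadj2} explicitly for the final step, whereas the paper cites Proposition~\ref{pro:Dadj}; both yield the same shift cancellation.
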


\begin{proof}
Let us prove only \eqref{eq:homdetens1}.
We have
\begin{align*}
\shb_{\Delta_X}[-d_X]\detens\shm &\simeq \doimv{p^2_{112\sep*}} \dopbv{p_2^{2\sep*}}\shm [-d_X], \\
\shm'\detens\shb_{\Delta_X}[d_X] &\simeq
\doimv{p^2_{122\sep*}}\dopbv{p_1^{2\sep*}}\shm'[d_X].
\end{align*}
By Proposition~\ref{pro:Dadj},
\begin{multline*}
\Hom[\BDC(\D_{X^3})](\doimv{p^2_{112\sep*}}\dopbv{p_2^{2\sep*}}\shm[-d_X], \doimv{p^2_{122\sep*}}\,\dopbv{p_1^{2\sep*}}\shm'[d_X]) \\ \simeq
\Hom[\BDC(\D_{X^2})](\dopbv{p_2^{2\sep*}}\shm, \dopbv{p^{2\sep*}_{112}}\doimv{p^2_{122\sep*}}\,\dopbv{p_1^{2\sep*}}\shm'[d_X]).
\end{multline*}
Since
\begin{equation}
\label{eq:XX3}
\ba{c}
\xymatrix@C=8ex{
X \ar[r]^-{\delta} \ar[d]^-{\delta} & X\times X \ar[d]^-{p^2_{122}} \\
X\times X \ar[r]^-{p^2_{112}}\ar@{}[ur]|-\square & X\times X\times X
}\ea
\end{equation}
is a transversal Cartesian diagram, Proposition~\ref{pro:transCart} gives
\begin{align*}
\dopbv{p^{2\sep*}_{112}}\doimv{p^2_{122\sep*}}\dopbv{p_1^{2\sep*}}\shm'
&\simeq \doim \delta \dopb \delta\dopbv{p^{2\sep*}_1}\shm' \\
&\simeq \doim \delta\shm'.
\end{align*}
Hence
\begin{align*}
\Hom[\BDC(\D_{X^3})](\shb_{\Delta_X}[-d_X]\detens\shm,& \shm'\detens\shb_{\Delta_X}[d_X]) \\
&\simeq \Hom[\BDC(\D_{X^2})](\dopbv{p^{2\sep*}_2}\shm, \doim \delta\shm'[d_X]) \\
&\underset{(*)}\simeq \Hom[\BDC(\D_X)](\shm, \doimv{p^2_{2\sep*}}\doim \delta\shm') \\
&\simeq \Hom[\BDC(\D_X)](\shm, \shm'),
\end{align*}
where $(*)$ follows from Proposition~\ref{pro:Dadj}.
\end{proof}

\begin{definition}\label{def:Dadjun}
An \emph{adjunction} in $\BDC_\hol(\D_X)$  is a datum
$(\shm_1,\shm_2,\eta,\varepsilon)$, where
$\shm_1,\shm_2\in\BDC_\hol(\D_X)$ and
\begin{align*}
\shb_{\Delta_X}[-d_X] \To[\eta] &\shm_1\detens\shm_2, \\
&\shm_2\detens\shm_1 \To[\varepsilon] \shb_{\Delta_X}[d_X]
\end{align*}
are morphisms such that:
\bna
\item
the composition
\[
\shb_{\Delta_X}[-d_X]\detens\shm_1 \To[\eta] \shm_1\detens\shm_2\detens\shm_1 \To[\varepsilon] \shm_1\detens\shb_{\Delta_X}[d_X]
\]
corresponds to $\id_{\shm_1}$ by \eqref{eq:homdetens1},
\item
the composition
\[
\shm_2\detens\shb_{\Delta_X}[-d_X] \To[\eta] \shm_2\detens\shm_1\detens\shm_2 \To[\varepsilon] \shb_{\Delta_X}[d_X]\detens\shm_2
\]
corresponds to $\id_{\shm_2}$ by \eqref{eq:homdetens2}.
\ee
\end{definition}

\begin{proposition}
\bnum
\item For $\shm\in\BDC_\hol(\D_X)$ there is a natural adjunction $(\shm,\ddual_X\shm,\eta,\varepsilon)$, that we denote by $(\shm,\ddual_X\shm)$ for short.
\item
If $(\shm_1,\shm_2,\eta,\varepsilon)$ is an adjunction in $\BDC_\hol(\D_X)$, then $\shm_2\simeq\ddual_X\shm_1$.
\ee
\end{proposition}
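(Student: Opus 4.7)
My plan for (i) is to construct $\eta$ and $\varepsilon$ by combining the Grothendieck--Verdier adjunction for the diagonal embedding $\delta\colon X\to X\times X$ (Proposition~\ref{pro:Dadj} applied to $\doim\delta\O_X\simeq\shb_{\Delta_X}$) with the defining property of $\ddual_X$ and biduality. The first step will be to establish natural isomorphisms
\begin{align*}
\Hom[\BDC(\D_X)](\ddual_X\shn,\shm)
&\simeq \Hom[\BDC(\D_{X\times X})](\shb_{\Delta_X}[-d_X],\shm\detens\shn), \\
\Hom[\BDC(\D_X)](\shn,\shm)
&\simeq \Hom[\BDC(\D_{X\times X})](\ddual_X\shm\detens\shn,\shb_{\Delta_X}[d_X]),
\end{align*}
for $\shm,\shn\in\BDC_\hol(\D_X)$. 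Both are obtained by rewriting the right-hand side using $\shb_{\Delta_X}\simeq\doim\delta\O_X$, invoking the $(\doim\delta,\dopb\delta)$-adjunction, computing $\dopb\delta(\shm\detens\shn)$ as $\shm\dtens\shn$ up to a cohomological shift, and finally identifying $\shm\dtens\shn$ with a $\rhom[\D_X]$-expression via the duality formula $\shm\dtens\shn\simeq\rhom[\D_X](\ddual_X\shm,\shn)$ for holonomic modules.

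Given these isomorphisms, I will define $\eta$ as the image of $\id_{\ddual_X\shm}$ under the first isomorphism specialized to $\shn=\ddual_X\shm$, after applying biduality $\ddual_X\ddual_X\shm\simeq\shm$ on the right-hand side; similarly, $\varepsilon$ will be the image of $\id_\shm$ under the second isomorphism with $\shn=\shm$. To verify the triangle identities (a) and (b) of Definition~\ref{def:Dadjun}, I will trace the composed morphisms through the natural isomorphisms of Lemma~\ref{lem:HomDMM'}, using naturality of the hom-isomorphisms above in both arguments. In each case the composition collapses, at the level of the corresponding $\Hom$-set, to the identity on $\shm$ (respectively $\ddual_X\shm$), as expected.

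For (ii), my plan is to invoke the classical uniqueness of adjoints. Given an arbitrary adjunction $(\shm_1,\shm_2,\eta,\varepsilon)$ together with the canonical one $(\shm_1,\ddual_X\shm_1,\eta',\varepsilon')$ from (i), the first natural isomorphism of (i) applied to $\eta$ yields a morphism $\phi\colon\ddual_X\shm_1\to\shm_2$, and the second applied to $\varepsilon$ yields $\psi\colon\shm_2\to\ddual_X\shm_1$. A standard zig-zag computation, using the triangle identities (a), (b) of Definition~\ref{def:Dadjun} for both adjunctions simultaneously, will give $\phi\circ\psi=\id_{\shm_2}$ and $\psi\circ\phi=\id_{\ddual_X\shm_1}$, whence $\shm_2\simeq\ddual_X\shm_1$.

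The main obstacle I anticipate is bookkeeping: carefully tracking the cohomological shifts that arise in the computation of $\dopb\delta$ of an exterior product, and checking that the natural isomorphisms above really produce $\shb_{\Delta_X}[\pm d_X]$ on the nose (rather than some other shift or twist of $\shb_{\Delta_X}$). Once the shifts are under control, both the triangle identities in (i) and the zig-zag cancellations in (ii) should reduce to the naturality of well-known adjunctions.
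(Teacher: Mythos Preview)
Your proposal is correct and close in spirit to the paper's argument, but the packaging differs in a way worth noting.

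For (i), the paper simply declares the statement ``obvious'' and gives no details; your plan to build $\eta$ and $\varepsilon$ from the $(\doim\delta,\dopb\delta)$-adjunction together with the identification $\rhom[\D_X](\O_X,\shm\dtens\shn)\simeq\rhom[\D_X](\ddual_X\shn,\shm)$ and biduality is exactly the content hidden behind that word.  Your stated obstacle (tracking the shifts so that one lands on $\shb_{\Delta_X}[\pm d_X]$ rather than some other shift) is the only real point of care, and once it is sorted the triangle identities follow from naturality as you say.

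For (ii), the paper takes a representability route: from an arbitrary adjunction $(\shm_1,\shm_2,\eta,\varepsilon)$ it produces a \emph{functorial} isomorphism
\[
\Hom[\BDC(\D_X)](\shl,\shm_2)\;\simeq\;\Hom[\BDC(\D_{X^2})](\shl\detens\shm_1,\shb_{\Delta_X}[d_X])
\]
(the forward map is $\varphi\mapsto\varepsilon\circ(\varphi\detens\id_{\shm_1})$, and the inverse is built from $\eta$ via \eqref{eq:homdetens2}), applies this both to the given adjunction and to the canonical one from (i), and concludes $\shm_2\simeq\ddual_X\shm_1$ by Yoneda.  Your plan instead writes down explicit comparison maps $\phi,\psi$ between $\shm_2$ and $\ddual_X\shm_1$ and checks the zig-zag identities directly.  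These are the two standard proofs of uniqueness of adjoints and are formally equivalent; the paper's version has the advantage that one only needs to verify the functorial isomorphism once (the inverse maps are written down but one does not need to chase both triangle identities for two adjunctions simultaneously), while yours is more self-contained and avoids invoking Yoneda.
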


\begin{proof}
Since (i) is obvious, we will prove only (ii).

\smallskip\noindent(ii-a)
First, let us show that there is a functorial isomorphism in $\shl\in\BDC_\hol(\D_X)$
\begin{equation}
\label{eq:LM2}
\Hom[\BDC(\D_X)](\shl,\shm_2) \simeq \Hom[\BDC(\D_{X^2})](\shl\detens\shm_1,\shb_{\Delta_X}[d_X]).
\end{equation}
Consider the map sending $\varphi\in\Hom[\BDC(\D_X)](\shl,\shm_2)$ to the morphism $\psi$ given by the composition
\[
\shl\detens\shm_1 \To[\varphi] \shm_2\detens\shm_1 \To[\varepsilon]\shb_{\Delta_X}[d_X].
\]
Consider the map sending  $\psi\in \Hom[\BDC(\D_{X^2})](\shl\detens\shm_1,\shb_{\Delta_X}[d_X])$ to the morphism $\varphi$ which corresponds by \eqref{eq:homdetens2} to the composition
\[
\shl\detens\shb_{\Delta_X}[-d_X] \To[\varepsilon] \shl\detens\shm_1\detens\shm_2 
\To[\psi] \shb_{\Delta_X}[d_X] \detens \shm_2.
\]
Then it is easy to check that these maps are inverse to each other.

\smallskip\noindent(ii-b)
By applying \eqref{eq:LM2} first to the natural adjunction $(\shm_1,\ddual_X\shm_1)$, and then to the adjunction $(\shm_1,\shm_2,\eta,\varepsilon)$, we have
\begin{align*}
\Hom[\BDC(\D_X)](\shl,\ddual_X\shm_1) 
&\simeq \Hom[\BDC(\D_{X^2})](\shl\detens\shm_1,\shb_{\Delta_X}[d_X]) \\
&\simeq  \Hom[\BDC(\D_X)](\shl,\shm_2).
\end{align*}
Hence, by Yoneda, $\shm_2\simeq\ddual_X\shm_1$.
\end{proof}

Now, we have a similar formulation for $\BECRc[\iCfield]X$.
Recall from Notation~\ref{not:Tam} that
\begin{align*}
\Cfield_{\Delta_X}^\enh &= \Cfield_{X\times X}^\enh \tens \opb{\pi_{X\times X}}\Cfield_{\Delta_X}, \\
\omega_{\Delta_X}^\enh &= \Cfield_{X\times X}^\enh \tens \opb{\pi_{X\times X}}\omega_{\Delta_X}.
\end{align*}

\begin{lemma}
For $K,K'\in\BECRc[\iCfield]X$ one has
\begin{align}
\label{eq:homTetens1}
\Hom[{\BEC[\iCfield]X}](K,K') &\simeq
\Hom[{\BEC[\iCfield]{X^3}}](\Cfield_{\Delta_X}^\enh\cetens K, K'\cetens\omega_{\Delta_X}^\enh), \\
\label{eq:homTetens2}
\Hom[{\BEC[\iCfield]X}](K,K') &\simeq
\Hom[{\BEC[\iCfield]{X^3}}](K\cetens\Cfield_{\Delta_X}^\enh, \omega_{\Delta_X}^\enh\cetens K').
\end{align}
\end{lemma}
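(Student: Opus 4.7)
The proof follows the $\D$-module argument of Lemma~\ref{lem:HomDMM'}, adapted to the enhanced ind-sheaf setting. The plan is to first identify both sides of~\eqref{eq:homTetens1} with iterated image functors along the maps $p^2_{112},p^2_{122}\colon X^2\to X^3$ and the diagonal $\delta\colon X\to X^2$, and then collapse them via the adjunctions of Proposition~\ref{pro:Tadj} and base change (Proposition~\ref{pro:Tcart}) along the transversal Cartesian square~\eqref{eq:XX3}. The isomorphism~\eqref{eq:homTetens2} will follow by the symmetric argument, interchanging the first and third coordinates of $X^3$.

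Concretely, the first step is to establish the kernel-type identifications
\[
\Cfield_{\Delta_X}^\enh\cetens K\simeq\Eeeim{p^2_{112}}\Eopb{p^2_2}K,\qquad K'\cetens\omega_{\Delta_X}^\enh\simeq\Eoim{p^2_{122}}\Eepb{p^2_1}K',
\]
which are the enhanced analogues of the formulas used in the proof of Lemma~\ref{lem:HomDMM'}. From $\Cfield_{\Delta_X}\simeq\reim\delta\Cfield_X$ one checks directly that $\Cfield_{\Delta_X}^\enh\simeq\Eeeim{\tilde\delta}\Cfield_X^\enh$, where $\tilde\delta$ is the bordered-space lift of $\delta$. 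Combined with base change along the Cartesian square lifting $\delta\times\id_X$ over $\delta$, the projection formula (Proposition~\ref{pro:Tproj}), and the stability $\Cfield_{X^2}^\enh\ctens\Eopb{p^2_2}K\simeq\Eopb{p^2_2}K$ (which holds since $\R$-constructible objects are stable, cf.\ Definition~\ref{def:TRc}), this yields the first formula. The second is obtained either by an analogous computation starting from $\omega_{\Delta_X}^\enh\simeq\Eoim{\tilde\delta}\omega_X^\enh$, or more economically by applying $\Edual_{X^3}$ to the first identification and invoking $\Edual_{X^2}\Cfield_{\Delta_X}^\enh\simeq\omega_{\Delta_X}^\enh$ (Corollary~\ref{cor:Tduala}) together with Propositions~\ref{prop:exteriodual} and~\ref{pro:eopbTdual}.

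Granting these, the isomorphism~\eqref{eq:homTetens1} follows from the chain
\begin{align*}
\Hom[{\BEC[\iCfield]{X^3}}]&(\Eeeim{p^2_{112}}\Eopb{p^2_2}K,\Eoim{p^2_{122}}\Eepb{p^2_1}K')\\
&\simeq\Hom(\Eopb{p^2_2}K,\Eepb{p^2_{112}}\Eoim{p^2_{122}}\Eepb{p^2_1}K')\\
&\simeq\Hom(\Eopb{p^2_2}K,\Eoim\delta\Eepb\delta\Eepb{p^2_1}K')\\
&\simeq\Hom(\Eopb{p^2_2}K,\Eoim\delta K')\\
&\simeq\Hom(\Eopb\delta\Eopb{p^2_2}K,K')\simeq\Hom(K,K'),
\end{align*}
using in turn: the adjunctions of Proposition~\ref{pro:Tadj}; base change (Proposition~\ref{pro:Tcart}) applied to~\eqref{eq:XX3}; the identity $p^2_1\circ\delta=\id_X$ together with functoriality (Proposition~\ref{pro:Tcomp}); Proposition~\ref{pro:Tadj} again; and $p^2_2\circ\delta=\id_X$. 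The main obstacle will be the kernel-type identifications themselves, particularly the second: reconciling the $\Eopb$ that appears in the definition of $\cetens$ with the $\Eepb$ natural to pushforwards of $\omega$-type objects requires a delicate use of stability, in the spirit of Propositions~\ref{pro:stableops}(ii) and~\ref{pro:DKeL}. Once these are in place, the remainder of the argument is purely formal and exactly parallel to the $\D$-module case.
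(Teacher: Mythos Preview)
Your proposal is correct and follows essentially the same route as the paper: the paper also reduces to the two kernel identifications
\[
\Cfield_{\Delta_X}^\enh\cetens K\simeq\Eeeim{p^2_{112}}\Eopb{p^2_2}K,
\qquad
K'\cetens\omega_{\Delta_X}^\enh\simeq\Eoim{p^2_{122}}\Eepb{p^2_1}K',
\]
and then runs the identical chain of adjunctions and base change along the square~\eqref{eq:XX3}. The only difference is that the paper simply asserts these identifications, whereas you sketch their proofs; your duality argument for the second one (via Corollary~\ref{cor:Tduala} and Propositions~\ref{prop:exteriodual},~\ref{pro:eopbTdual}) is clean and in fact removes the ``delicacy'' you worried about, so you can drop that caveat.
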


\begin{proof}
The proof is parallel to that of Lemma~\ref{lem:HomDMM'}.

Let us prove only \eqref{eq:homTetens1}.
We have
\begin{align*}
\Cfield_{\Delta_X}^\enh\cetens K &\simeq \Eeeimv{p^2_{112\sep!!}}\Eopbv{p_2^{2\sep-1}}K, \\
K'\cetens\omega_{\Delta_X}^\enh &\simeq
\Eoimv{p^2_{122\sep*}}\Eepbv{p^{2\sep!}_1}K'.
\end{align*}
Hence
\begin{align*}
\Hom[{\BEC[\iCfield]{X^3}}]&(\Cfield_{\Delta_X}^\enh\cetens K, K'\cetens\omega_{\Delta_X}^\enh) \\
&\simeq \Hom[{\BEC[\iCfield]{X^3}}](\Eeeimv{p^2_{112\sep!!}}\Eopbv{p_2^{2\sep-1}}K, \Eoimv{p^2_{122\sep*}}\Eepbv{p^{2\sep!}_1}K') \\
&\simeq \Hom[{\BEC[\iCfield]{X^2}}](\Eopbv{p_2^{2\sep-1}}K, \Eepbv{p^{2\sep!}_{112}}\Eoimv{p^2_{122\sep*}}\Eepbv{p^{2\sep!}_1}K') \\
&\underset{(*)}\simeq \Hom[{\BEC[\iCfield]{X^2}}](\Eopbv{p_2^{2\sep-1}}K, \Eoim \delta\Eepb \delta \Eepbv{p^{2\sep!}_1}K') \\
&\simeq \Hom[{\BEC[\iCfield]{X^2}}](\Eopbv{p^{2\sep-1}_2}K, \Eoim \delta K') \\
&\simeq \Hom[{\BEC[\iCfield]X}](K, \Eoimv{p^2_{2\sep*}} \Eoim \delta K') \\
&\simeq \Hom[{\BEC[\iCfield]X}](K, K'),
\end{align*}
where $(*)$ follows from the fact that there is a Cartesian diagram \eqref{eq:XX3}.
\end{proof}

\begin{definition}\label{def:Tadjun}
An \emph{adjunction} in $\BECRc[\iCfield]X$ is a datum $(K_1,K_2,\eta,\varepsilon)$,
where $K_1,K_2\in\BECRc[\iCfield]X$ and 
\begin{align*}
\Cfield_{\Delta_X}^\enh \To[\eta] &K_1\cetens K_2, \\
&K_2\cetens K_1 \To[\varepsilon] \omega_{\Delta_X}^\enh,
\end{align*}
are morphisms such that:
\bna
\item 
the composition
\[
\Cfield_{\Delta_X}^\enh\cetens K_1 \To[\eta] K_1\cetens K_2\cetens K_1 \To[\varepsilon] K_1\cetens\omega_{\Delta_X}^\enh
\]
corresponds to $\id_{K_1}$ by \eqref{eq:homTetens1},
\item
the composition
\[
K_2\cetens\Cfield_{\Delta_X}^\enh \To[\eta] K_2\cetens K_1\cetens K_2 \To[\varepsilon] \omega_{\Delta_X}^\enh\cetens K_2
\]
corresponds to $\id_{K_2}$ by \eqref{eq:homTetens2}.
\ee
\end{definition}

Similarly to the case of $\BDC_\hol(\D_X)$, we obtain: 

\begin{proposition}
\bnum
\item For $K\in\BECRc[\iCfield]X$ there is a natural adjunction $(K,\Edual_X K,\eta,\varepsilon)$, that we denote by $(K,\Edual_X K)$ for short.
\item
If $(K_1,K_2,\eta,\varepsilon)$ is an adjunction in $\BECRc[\iCfield]X$, then $K_2\simeq\Edual_X K_1$.
\ee
\end{proposition}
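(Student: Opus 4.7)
The plan is to mirror the argument given just above for holonomic $\D$-modules, replacing $\shb_{\Delta_X}[\pm d_X]$ by $\Cfield_{\Delta_X}^\enh$ and $\omega_{\Delta_X}^\enh$, $\ddual_X$ by $\Edual_X$, and using as main tools Proposition~\ref{pro:DKeL}, Proposition~\ref{homText}, Proposition~\ref{prop:inversedual} and Lemma~\ref{lem:Thommorph}, together with the biduality $K\isoto\Edual_X\Edual_X K$ for $K\in\BECRc[\iCfield]X$. A preliminary observation to be recorded is the identification
\[
\Cfield_{\Delta_X}^\enh \simeq \Eeeim\delta\,\Cfield_X^\enh \simeq \Eoim\delta\,\Cfield_X^\enh,
\qquad
\omega_{\Delta_X}^\enh \simeq \Eeeim\delta\,\omega_X^\enh \simeq \Eoim\delta\,\omega_X^\enh,
\]
obtained by applying the projection formula of Proposition~\ref{pro:Tproj} to $\Eopb\delta\Cfield_{X\times X}^\enh \simeq \Cfield_X^\enh$ and $\Eopb\delta\omega_{X\times X}^\enh \simeq \omega_X^\enh$, noting that $\delta$ is a closed embedding so $\Eeeim\delta \simeq \Eoim\delta$.

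For part~(i), the natural adjunction is built as follows. The counit $\varepsilon\colon\Edual_X K\cetens K\to\omega_{\Delta_X}^\enh\simeq\Eoim\delta\,\omega_X^\enh$ is defined by $(\Eopb\delta,\Eoim\delta)$-adjunction from the natural pairing
\[
\Eopb\delta(\Edual_X K\cetens K)\simeq\Edual_X K\ctens K
\;=\;\cihom(K,\omega_X^\enh)\ctens K\;\To\;\omega_X^\enh
\]
provided by Lemma~\ref{lem:Thommorph}. Dually, using biduality and Proposition~\ref{pro:DKeL} with $(K,L)$ replaced by $(\Edual_X K,\Edual_X K)$, one has
\[
K\cetens\Edual_X K \simeq \cihom\bl\Eopb{p_1}\Edual_X K,\ \Cfield_{X\times X}^\enh\ctens\Eepb{p_2}\Edual_X K\br,
\]
and applying $\Eepb\delta$, Proposition~\ref{pro:Tproj} and Proposition~\ref{pro:stableops}~(ii) gives $\Eepb\delta(K\cetens\Edual_X K)\simeq\cihom(\Edual_X K,\Cfield_X^\enh\ctens\Edual_X K)$; the unit $\eta\colon\Cfield_{\Delta_X}^\enh\simeq\Eeeim\delta\Cfield_X^\enh\to K\cetens\Edual_X K$ is then defined, by $(\Eeeim\delta,\Eepb\delta)$-adjunction, from the morphism $\Cfield_X^\enh\to\cihom(\Edual_X K,\Cfield_X^\enh\ctens\Edual_X K)$ adjoint to the identity of $\Cfield_X^\enh\ctens\Edual_X K$. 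Verifying the two triangle identities (a) and (b) of Definition~\ref{def:Tadjun} is then a diagram chase, formal once one rewrites everything via Proposition~\ref{pro:DKeL}, Proposition~\ref{pro:Tproj} and the projection formula; this is the step expected to be the main technical obstacle, because one has to keep track of the identifications $\Eopb\delta\Eopb{p_i}\simeq\id$ and of biduality at every occurrence of $\Edual_X$.

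For part~(ii), one first establishes, exactly as in step (ii-a) of the $\D$-module proof, the functorial isomorphism in $L\in\BECRc[\iCfield]X$
\[
\Hom[{\BEC[\iCfield]X}](L,K_2)
\;\simeq\;
\Hom[{\BEC[\iCfield]{X^2}}]\bl L\cetens K_1,\ \omega_{\Delta_X}^\enh\br,
\]
by sending $\varphi\colon L\to K_2$ to $\varepsilon\circ(\varphi\cetens K_1)$, with inverse constructed through $\eta$ via \eqref{eq:homTetens2}; the triangle identities for $(K_1,K_2,\eta,\varepsilon)$ imply that these two maps are mutually inverse. Applying this to the natural adjunction $(K_1,\Edual_X K_1)$ constructed in (i), one gets
\[
\Hom[{\BEC[\iCfield]X}](L,\Edual_X K_1)
\;\simeq\;
\Hom[{\BEC[\iCfield]{X^2}}]\bl L\cetens K_1,\ \omega_{\Delta_X}^\enh\br
\;\simeq\;
\Hom[{\BEC[\iCfield]X}](L,K_2)
\]
for every $L\in\BECRc[\iCfield]X$, and Yoneda on $\BECRc[\iCfield]X$ concludes that $K_2\simeq\Edual_X K_1$.
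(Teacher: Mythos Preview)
Your proposal is correct and follows essentially the same approach as the paper, which in fact does not give a proof at all but simply says ``Similarly to the case of $\BDC_\hol(\D_X)$, we obtain'' the proposition. Your argument is exactly this parallel: part~(ii) reproduces step~(ii-a)--(ii-b) of the $\D$-module proof verbatim in the enhanced setting, and for part~(i) you supply the explicit construction of $\eta$ and $\varepsilon$ (which the paper deemed ``obvious'' in the $\D$-module case) via the pairing $\Edual_X K\ctens K\to\omega_X^\enh$ and Proposition~\ref{pro:DKeL}, leaving the triangle identities as a formal diagram chase.
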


\bigskip
Note that
\[
\drE_{X\times X}(\shb_{\Delta_X}[-d_X]) \simeq \Cfield_{\Delta_X}^\enh,
\quad
\drE_{X\times X}(\shb_{\Delta_X}[d_X]) \simeq \omega_{\Delta_X}^\enh.
\]

\begin{proposition}
Let $(\shm_1,\shm_2,\eta,\varepsilon)$ be an adjunction in $\BDC_\hol(\D_X)$.
Then $(\drE_X(\shm_1),\drE_X(\shm_2),\drE_{X\times X}(\eta),\drE_{X\times X}(\varepsilon))$ is an adjunction in $\BECRc[\iCfield]X$.
\end{proposition}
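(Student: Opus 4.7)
The plan is to transport the adjunction structure through the functor $\drE$ using Theorem~\ref{thm:drTetens} and the functorial properties of $\drE$ with respect to direct and inverse images established in Theorem~\ref{thm:Tfunct}.

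First I would define the candidate unit and counit. By Theorem~\ref{thm:drTetens}, applied to $\shm_1\detens\shm_2$ and $\shm_2\detens\shm_1$, there are canonical isomorphisms
\[
\drE_{X\times X}(\shm_1\detens\shm_2) \simeq \drE_X(\shm_1)\cetens\drE_X(\shm_2),\qquad
\drE_{X\times X}(\shm_2\detens\shm_1) \simeq \drE_X(\shm_2)\cetens\drE_X(\shm_1).
\]
Combined with the isomorphisms $\drE_{X\times X}(\shb_{\Delta_X}[-d_X])\simeq\Cfield^\enh_{\Delta_X}$ and $\drE_{X\times X}(\shb_{\Delta_X}[d_X])\simeq\omega^\enh_{\Delta_X}$ recalled right before the statement, applying $\drE_{X\times X}$ to $\eta$ and $\varepsilon$ produces morphisms of the required source and target for an adjunction in $\BECRc[\iCfield]X$.

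The heart of the argument is to verify the two triangle identities of Definition~\ref{def:Tadjun}. For this I would show that the Hom-bijections \eqref{eq:homdetens1}, \eqref{eq:homdetens2} and their enhanced counterparts \eqref{eq:homTetens1}, \eqref{eq:homTetens2} are compatible with $\drE$. Concretely, for any $\shm,\shm'\in\BDC_\hol(\D_X)$ there should be a commutative diagram
\[
\xymatrix@C=4ex{
\Hom[\BDC(\D_X)](\shm,\shm') \ar[r]^-{\sim} \ar[d]^{\drE_X} &
\Hom[\BDC(\D_{X^3})](\shb_{\Delta_X}[-d_X]\detens\shm,\shm'\detens\shb_{\Delta_X}[d_X]) \ar[d]^{\drE_{X^3}} \\
\Hom[{\BEC[\iCfield]X}](\drE_X(\shm),\drE_X(\shm')) \ar[r]^-{\sim} &
\Hom[{\BEC[\iCfield]{X^3}}](\Cfield_{\Delta_X}^\enh\cetens \drE_X(\shm),\drE_X(\shm')\cetens\omega_{\Delta_X}^\enh),
}
\]
and an analogous diagram for \eqref{eq:homdetens2}/\eqref{eq:homTetens2}. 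Granting these, applying $\drE_{X^3}$ to the triangle identity (a) of Definition~\ref{def:Dadjun} for $\shm_1$, and using Theorem~\ref{thm:drTetens} to identify exterior tensor products with convolutions, yields the triangle identity (a) of Definition~\ref{def:Tadjun} for $(\drE_X(\shm_1),\drE_X(\shm_2),\drE_{X\times X}(\eta),\drE_{X\times X}(\varepsilon))$; symmetrically for (b).

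To establish the commutativity of the above diagram, I would follow the proof of Lemma~\ref{lem:HomDMM'} step by step and show that at each step the $\D$-module manipulation has a parallel enhanced ind-sheaf manipulation under $\drE$. The three ingredients needed are: the adjunction $(\doim f,\dopb f)$ on good $\D$-modules with proper support (Proposition~\ref{pro:Dadj}) corresponds to the adjunction $(\Eeeim f,\Eepb f)$ on $\BEC[\iCfield]{}$ (Proposition~\ref{pro:Tadj}), via $\drE\circ\doim f\simeq\Eeeim f\circ\drE$ and $\drE\circ\dopb f[d_X]\simeq\Eepb f\circ\drE[d_Y]$ (Theorem~\ref{thm:Tfunct}~(ii),(iii)); the base-change isomorphism of Proposition~\ref{pro:transCart} applied to the transversal Cartesian square \eqref{eq:XX3} corresponds to Proposition~\ref{pro:Tcart} applied to the same square viewed as a Cartesian diagram of topological spaces; and the identifications $\drE_{X\times X}(\doim\delta\O_X[\pm d_X])\simeq \Eoim\delta\drE_X(\O_X)[\pm d_X]$ with $\drE_X(\O_X)\simeq\Cfield_X^\enh[d_X]$ (Proposition~\ref{prop:regireg}) give $\Cfield_{\Delta_X}^\enh$ and $\omega_{\Delta_X}^\enh$ respectively.

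The main obstacle I anticipate is purely bookkeeping: carefully tracking the degree shifts, orientation sheaves and transfer bimodules that enter the isomorphisms of Lemma~\ref{lem:HomDMM'} so that they match exactly the shifts in the enhanced Hom-identifications \eqref{eq:homTetens1}, \eqref{eq:homTetens2}. Once the compatibility of the Hom-bijections is established, the triangle identities transfer formally by functoriality of $\drE$ applied to the commutative diagrams witnessing (a) and (b) of Definition~\ref{def:Dadjun}.
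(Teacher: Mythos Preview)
Your proposal is correct and is exactly what the paper's proof has in mind; the paper simply states ``This easily follows from the functorial properties of $\drE_X$,'' and you have unpacked what those functorial properties are (Theorem~\ref{thm:drTetens} for exterior products, Theorem~\ref{thm:Tfunct} for direct and inverse images, Propositions~\ref{pro:transCart}/\ref{pro:Tcart} for base change, and Proposition~\ref{prop:regireg} for the identification of $\drE$ of $\shb_{\Delta_X}[\pm d_X]$). Your observation that the shifts must be tracked carefully is the only subtlety, and it is routine.
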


\begin{proof}
This easily follows from the functorial properties of $\drE_X$.
\end{proof}

In particular, the natural adjunction $(\shm,\ddual_X\shm)$ in $\BDC_\hol(\D_X)$ induces an adjunction $(\drE_X(\shm),\drE_X(\ddual_X\shm))$ in $\BECRc[\iCfield]X$.
We thus get the following result.

\begin{theorem}
\label{thm:drTsolT}
For $\shm\in\BDC_\hol(\D_X)$ there is an isomorphism
\[
\Edual_X\drE_X(\shm) \simeq \drE_X(\ddual_X\shm).
\]
\end{theorem}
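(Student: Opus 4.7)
The proof should essentially be a corollary of the machinery already developed in this subsection, so my plan is to organize it around a uniqueness argument for right duals.

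First, I would invoke the natural adjunction $(\shm,\ddual_X\shm,\eta_\shm,\varepsilon_\shm)$ in $\BDC_\hol(\D_X)$, where the unit and counit are morphisms
\[
\eta_\shm\colon \shb_{\Delta_X}[-d_X] \To \shm\detens\ddual_X\shm, \qquad \varepsilon_\shm\colon \ddual_X\shm\detens\shm \To \shb_{\Delta_X}[d_X]
\]
in $\BDC_\hol(\D_{X^3})$. Applying $\drE_{X\times X}$ and using the identifications $\drE_{X\times X}(\shb_{\Delta_X}[-d_X]) \simeq \Cfield_{\Delta_X}^\enh$ and $\drE_{X\times X}(\shb_{\Delta_X}[d_X]) \simeq \omega_{\Delta_X}^\enh$ (direct from Proposition~\ref{prop:regireg} and Theorem~\ref{thm:Tfunct}(iii), since $\shb_{\Delta_X}$ is regular holonomic), together with the K\"unneth-type isomorphism of Theorem~\ref{thm:drTetens} to identify $\drE_X(\shm_1)\cetens\drE_X(\shm_2)$ with $\drE_{X\times X}(\shm_1\detens\shm_2)$, I obtain candidate morphisms
\[
\Cfield_{\Delta_X}^\enh \To \drE_X(\shm)\cetens\drE_X(\ddual_X\shm), \qquad \drE_X(\ddual_X\shm)\cetens\drE_X(\shm) \To \omega_{\Delta_X}^\enh.
\]

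Next, by the proposition immediately preceding the theorem, this datum is an adjunction in $\BECRc[\iCfield]X$, since $\drE_X$ preserves the compositions appearing in conditions (a) and (b) of Definition~\ref{def:Dadjun} (we also use that $\drE_X$ lands in $\BECRc[\iCfield]X$, established earlier in this section, and that $\ddual_X\shm$ is still holonomic). Thus $(\drE_X(\shm),\drE_X(\ddual_X\shm))$ carries the structure of an adjunction in $\BECRc[\iCfield]X$. By the uniqueness of right duals (the second part of the proposition preceding the theorem), any such $K_2$ with $(\drE_X(\shm),K_2)$ an adjunction is canonically isomorphic to $\Edual_X\drE_X(\shm)$. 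This produces the desired isomorphism
\[
\drE_X(\ddual_X\shm) \simeq \Edual_X\drE_X(\shm).
\]

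The only nontrivial input is Theorem~\ref{thm:drTetens}, which is already in hand. Consequently there is no genuine obstacle here: the theorem is a formal consequence of the compatibility of $\drE_X$ with exterior product, with inverse/direct images along $p^n_{i_1\cdots i_m}$ (needed to translate the two definitions of adjunction, which use $\shb_{\Delta_X}$ on one side and $\Cfield_{\Delta_X}^\enh,\omega_{\Delta_X}^\enh$ on the other), and with the regular holonomic case via Proposition~\ref{prop:regireg}. The main point to double-check while writing up is simply that the unit/counit diagrams defining an adjunction in $\BDC_\hol(\D_X)$ really do transport to those defining an adjunction in $\BECRc[\iCfield]X$; this is routine once the K\"unneth isomorphism is known to be compatible with the base change maps used to formulate \eqref{eq:homdetens1}--\eqref{eq:homdetens2} and \eqref{eq:homTetens1}--\eqref{eq:homTetens2}, which in turn follows from Propositions \ref{pro:Tcart} and \ref{pro:Tproj}.
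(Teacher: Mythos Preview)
Your proposal is correct and follows exactly the paper's argument: transport the natural adjunction $(\shm,\ddual_X\shm)$ via $\drE_X$ using the preceding proposition, then invoke uniqueness of right duals in $\BECRc[\iCfield]X$. One small slip: the unit $\eta_\shm$ and counit $\varepsilon_\shm$ are morphisms in $\BDC_\hol(\D_{X^2})$, not $\BDC_\hol(\D_{X^3})$; only the compatibility compositions in Definition~\ref{def:Dadjun} live over $X^3$.
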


Recalling that
\[
\drE_X(\ddual_X\shm) \simeq \solE_X(\shm)[d_X],
\]
we deduce

\begin{corollary}
For $\shm\in\BDC_\hol(\D_X)$ there is an isomorphism
\[
\solE_X(\shm)[d_X] \simeq \Edual_X(\drE_X(\shm)).
\]
\end{corollary}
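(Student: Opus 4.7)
The corollary is essentially a direct combination of Theorem~\ref{thm:drTsolT} with a definitional identity, so the plan is short. First, I would invoke Theorem~\ref{thm:drTsolT}, which provides the key content: the enhanced duality functor $\Edual_X$ intertwines $\drE_X$ with the $\D$-module duality $\ddual_X$, yielding
\[
\Edual_X\drE_X(\shm) \simeq \drE_X(\ddual_X\shm).
\]
This is the substantive ingredient, and its proof has already been carried out by transporting the natural adjunction $(\shm,\ddual_X\shm)$ in $\BDC_\hol(\D_X)$ through $\drE_X$ to obtain an adjunction $\bl\drE_X(\shm),\drE_X(\ddual_X\shm)\br$ in $\BECRc[\iCfield]X$, and then using uniqueness of right duals.

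Second, I would recall the definitional identity relating $\solE_X$ and $\drE_X$: by construction one has
\[
\solE_X(\shm) \simeq \drE_X(\ddual_X\shm)[-d_X],
\]
which is the enhanced analogue of the standard relation $\sol_X \simeq \dr_X \circ \ddual_X[-d_X]$ and was recorded in the paper immediately after the definition of $\solE_X$ and $\drE_X$. Equivalently, shifting by $d_X$,
\[
\drE_X(\ddual_X\shm) \simeq \solE_X(\shm)[d_X].
\]

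Finally, I would just chain the two isomorphisms:
\[
\Edual_X\bl\drE_X(\shm)\br \simeq \drE_X(\ddual_X\shm) \simeq \solE_X(\shm)[d_X],
\]
which gives the desired formula. There is no serious obstacle here, since the hard work is packaged in Theorem~\ref{thm:drTsolT}; the corollary amounts to rewriting its right-hand side via the definition of $\solE_X$.
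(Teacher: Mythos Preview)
Your proposal is correct and follows exactly the paper's approach: the paper states the corollary immediately after Theorem~\ref{thm:drTsolT}, prefaced by ``Recalling that $\drE_X(\ddual_X\shm) \simeq \solE_X(\shm)[d_X]$, we deduce'', which is precisely the two-step chain you describe.
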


Hence we obtain the following corollary of Theorems~\ref{thm:Tfunct}, \ref{thm:drTetens} and Proposition~\ref{prop:exteriodual}.

\begin{corollary}
Let $f\colon X\to Y$ be a complex analytic map.
\bnum
\item
For any $\shn\in\BDC_\hol(\D_Y)$ there is an isomorphism in $\BEC[\iCfield]X$
\[
\solE_X(\dopb f\shn) \simeq \Eopb f\solE_Y(\shn).
\]
\item
Let $\shm\in\BDC_\hol(\D_X)\cap \BDC_\good(\D_X)$, and assume that $\supp\shm$ is proper over $Y$. Then there is an isomorphism in $\BEC[\iCfield]Y$
\[
\solE_Y(\doim f\shm)[d_Y] \simeq  \Eoim f\solE_X(\shm)[d_X].
\]
\item
Let $\shm\in\BDC_\hol(\D_X)$ and $\shn\in\BDC_\hol(\D_Y)$. Then there is an isomorphism in $\BEC[\iCfield]{X\times Y}$
\[
\solE_X(\shm) \cetens \solE_Y(\shn) \isoto \solE_{X\times Y}(\shm \detens \shn).
\]
\ee
\end{corollary}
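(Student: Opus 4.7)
The plan is to deduce all three isomorphisms by combining the analogous statements for $\drE$ with the duality identity
\[
\solE_X(\shm)[d_X] \simeq \Edual_X\drE_X(\shm),
\]
which results from Theorem~\ref{thm:drTsolT} together with the formula $\solE_X(\shm)\simeq \drE_X(\ddual_X\shm)[-d_X]$ recorded before Proposition~\ref{prop:regireg}. The essential input on the target side is that $\Edual$ interacts well with the six operations for enhanced ind-sheaves, namely Proposition~\ref{pro:eopbTdual} (which exchanges $\Eopb f$ with $\Eepb f$), the isomorphism $\Edual_N\Eeeim f K \simeq \Eoim f \Edual_M K$ proved just after the definition of $\Edual$, and Proposition~\ref{prop:exteriodual} (which commutes $\Edual$ with $\cetens$).

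For part~(i), I would start from Theorem~\ref{thm:Tfunct}~(ii),
\[
\drE_X(\dopb f\shn)[d_X] \simeq \Eepb f\drE_Y(\shn)[d_Y],
\]
and apply $\Edual_X$. The left-hand side becomes $\solE_X(\dopb f\shn)[2d_X]$ and the right-hand side becomes $\Edual_X\Eepb f\drE_Y(\shn)[d_Y]\simeq\Eopb f\Edual_Y\drE_Y(\shn)[d_Y]\simeq\Eopb f\solE_Y(\shn)[2d_Y]$ by Proposition~\ref{pro:eopbTdual}. Cancelling the common shift yields the claim. For part~(ii), I would dualize the isomorphism of Theorem~\ref{thm:Tfunct}~(iii),
\[
\drE_Y(\doim f\shm)\simeq \Eeeim f\drE_X(\shm),
\]
obtaining $\solE_Y(\doim f\shm)[d_Y]\simeq \Eoim f\Edual_X\drE_X(\shm)\simeq \Eoim f\solE_X(\shm)[d_X]$, which is the stated formula. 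For part~(iii), I would dualize Theorem~\ref{thm:drTetens},
\[
\drE_X(\shm)\cetens\drE_Y(\shn)\isoto \drE_{X\times Y}(\shm\detens\shn),
\]
and apply Proposition~\ref{prop:exteriodual} on the left and the duality identity on the right; after cancelling the shift by $d_X+d_Y=d_{X\times Y}$ one gets the desired isomorphism.

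The only subtle point is verifying that all the $\R$-constructibility hypotheses required by Proposition~\ref{pro:eopbTdual} and Proposition~\ref{prop:exteriodual} are met in the objects appearing in the argument. This is not really an obstacle, since we have already proved that $\drE$ of a holonomic complex is $\R$-constructible, so each factor $\drE_X(\shm)$, $\drE_Y(\shn)$, $\drE_{X\times Y}(\shm\detens\shn)$ lies in $\BECRc[\iCfield]{}$, and the same holds for the goodness/properness hypothesis in~(ii), which is inherited directly from Theorem~\ref{thm:Tfunct}~(iii). Apart from this bookkeeping, no genuinely new calculation is needed—the proof is a purely formal dualization of the results of the previous two theorems and Proposition~\ref{prop:exteriodual}.
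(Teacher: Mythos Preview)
Your approach is exactly the one the paper has in mind: the corollary is stated immediately after Theorem~\ref{thm:drTsolT} and is obtained, as the paper says, by combining Theorems~\ref{thm:Tfunct} and~\ref{thm:drTetens} with Proposition~\ref{prop:exteriodual} (and, implicitly, Proposition~\ref{pro:eopbTdual} and the identity $\Edual_N\Eeeim f\simeq\Eoim f\Edual_M$). Parts~(ii) and~(iii) of your proposal are correct as written.

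There is, however, a bookkeeping slip in part~(i). The duality functor satisfies $\Edual_X(K[n])\simeq\Edual_X(K)[-n]$, not $\Edual_X(K)[n]$. With the correct sign, applying $\Edual_X$ to $\drE_X(\dopb f\shn)[d_X]\simeq\Eepb f\drE_Y(\shn)[d_Y]$ gives
\[
\solE_X(\dopb f\shn)[d_X-d_X]\simeq\Eopb f\solE_Y(\shn)[d_Y-d_Y],
\]
i.e.\ both sides carry no shift at all and the statement follows directly. With your sign convention you obtained shifts $[2d_X]$ and $[2d_Y]$, which are not equal when $d_X\neq d_Y$, so the phrase ``cancelling the common shift'' does not actually work as you wrote it. Once the sign is corrected, the argument is complete and identical to the paper's intended proof.
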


We obtain the following corollary of Theorem~\ref{thm:Tfunct}~(iv).

\begin{corollary}
\label{cor:SolXminusY}
If $\shm\in\BDC_\hol(\D_X)$ and $Y\subset X$ is a closed hypersurface, then
\[
\solE_X(\shm(*Y)) \simeq \opb\pi\Cfield_{X\setminus Y} \tens \solE_X(\shm).
\]
\end{corollary}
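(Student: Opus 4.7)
The plan is to deduce the statement from the $\drE_X$ version in Theorem~\ref{thm:Tfunct}~(iv) by dualizing. Combining the duality isomorphism $\solE_X(\shn)[d_X]\simeq\Edual_X\drE_X(\shn)$ for $\shn\in\BDC_\hol(\D_X)$, which is the corollary to Theorem~\ref{thm:drTsolT}, with Theorem~\ref{thm:Tfunct}~(iv) applied to $\shn=\shm(*Y)$, one immediately obtains
\[
\solE_X(\shm(*Y))[d_X]\simeq\Edual_X\rihom(\opb\pi\Cfield_{X\setminus Y},\drE_X(\shm)).
\]
What remains is to transform the right-hand side into $\opb\pi\Cfield_{X\setminus Y}\tens\solE_X(\shm)[d_X]$.

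The key observation is that the inner $\rihom$ can be recast as a $\cihom$, thereby becoming amenable to Proposition~\ref{prop:homdual}~(ii). Since $\drE_X(\shm)$ is $\R$-constructible by the preceding theorem and hence stable, applying Lemma~\ref{lem:cihomrihompi} with $K_1=\Cfield_X^\enh$ and $K_2=\drE_X(\shm)$ yields
\[
\cihom\bl e(F),\drE_X(\shm)\br \simeq \rihom\bl\opb\pi F,\cihom(\Cfield_X^\enh,\drE_X(\shm))\br \simeq \rihom(\opb\pi F,\drE_X(\shm)),
\]
where $e(F)=\Cfield_X^\enh\tens\opb\pi F$ is the embedding of Proposition~\ref{pro:embed}. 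Moreover, for $F=\Cfield_{X\setminus Y}$ the object $e(F)$ lies in $\BECRc[\iCfield]X$ in the sense of Definition~\ref{def:TRc}: for any relatively compact subanalytic $U\subset X$ one has $\opb\pi\Cfield_U\tens e(F)\simeq\Cfield_X^\enh\ctens\bigl(\opb\pi(\Cfield_U\tens F)\tens\Cfield_{\{t=0\}}\bigr)$, and the second factor is $\R$-constructible as a sheaf on $X\times\R_\infty$.

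With both $e(\Cfield_{X\setminus Y})$ and $\drE_X(\shm)$ in $\BECRc[\iCfield]X$, Proposition~\ref{prop:homdual}~(ii) supplies
\[
\Edual_X\cihom\bl e(\Cfield_{X\setminus Y}),\drE_X(\shm)\br \simeq e(\Cfield_{X\setminus Y})\ctens\Edual_X\drE_X(\shm) \simeq e(\Cfield_{X\setminus Y})\ctens\solE_X(\shm)[d_X].
\]
To conclude, I unfold $e(\Cfield_{X\setminus Y})=\Cfield_X^\enh\tens\opb\pi\Cfield_{X\setminus Y}$ and apply Lemma~\ref{lem:cihomrihompi} once more to pull $\opb\pi\Cfield_{X\setminus Y}$ outside the convolution, obtaining $\opb\pi\Cfield_{X\setminus Y}\tens(\Cfield_X^\enh\ctens\solE_X(\shm)[d_X])$; since $\solE_X(\shm)\simeq\Edual_X\drE_X(\shm)[-d_X]$ is itself $\R$-constructible, and hence stable, the inner convolution collapses, and desuspending finishes the argument. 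No substantial obstacle is anticipated: the whole proof is a formal manipulation of the duality and the tensor-hom adjunctions, with the only routine verification being the $\R$-constructibility of $e(\Cfield_{X\setminus Y})$ needed to invoke Proposition~\ref{prop:homdual}~(ii).
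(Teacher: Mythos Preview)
Your proof is correct and follows the route the paper intends: the corollary is placed among the duality-derived consequences of Theorem~\ref{thm:drTsolT}, and you derive it from Theorem~\ref{thm:Tfunct}~(iv) by passing through $\solE_X=\Edual_X\circ\drE_X[-d_X]$ and then unwinding the duality. The paper gives no details beyond the one-line attribution, and your use of Proposition~\ref{prop:homdual}~(ii) together with the stability of $\drE_X(\shm)$ and $\solE_X(\shm)$ is an appropriate way to fill the gap.
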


We also obtain the following corollary of Lemma~\ref{lem:drTEphi}.

\begin{corollary}
\label{cor:solTEphi}
If $Y\subset X$ is a closed hypersurface and $\varphi\in\O_X(*Y)$, then
\[
\solE_X(\she^\varphi_{X\setminus Y|X}) \simeq \Cfield_X^\enh \ctens \Cfield_{\{t=-\Re\varphi \}}.
\]
\end{corollary}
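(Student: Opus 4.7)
The plan is to combine the duality between $\drE_X$ and $\solE_X$ provided by Theorem~\ref{thm:drTsolT} with the explicit description of $\drE_X(\she^\varphi_{U|X})$ given in Lemma~\ref{lem:drTEphi}, and then carry out a classical Verdier-duality computation in $X\times\R$. Setting $U = X\setminus Y$, Theorem~\ref{thm:drTsolT} yields $\solE_X(\she^\varphi_{U|X})[d_X] \simeq \Edual_X\bl\drE_X(\she^\varphi_{U|X})\br$, and Lemma~\ref{lem:drTEphi} rewrites the right hand side as $\Edual_X(\Cfield_X^\enh \ctens F)[d_X]$, where $F \defeq \rihom(\Cfield_{U\times\R},\Cfield_{\{t=\Re\varphi\}})$. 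Thus it suffices to compute $\Edual_X(\Cfield_X^\enh \ctens F)$.

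First I would identify $F$ with a genuine sheaf. Writing $j\colon U\times\R \hookrightarrow X\times\R$ for the open embedding and $k\colon Z \hookrightarrow U\times\R$ for the closed embedding of $Z = \{(x,t) \in U\times\R\semicolon t=\Re\varphi(x)\}$, one has $\opb j \Cfield_{\{t=\Re\varphi\}} \simeq k_*\Cfield$ (since $k$ is a closed embedding inside $U\times\R$), and therefore
\[
F \simeq \roim j \opb j \Cfield_{\{t=\Re\varphi\}} \simeq (j\circ k)_*\Cfield .
\]
In particular $F \in \BDC(\Cfield_{X\times\R_\infty})$, so Proposition~\ref{pro:Tduala} applies and gives
\[
\Edual_X(\Cfield_X^\enh \ctens F) \simeq \Cfield_X^\enh \ctens \opb a \dual_{X\times\R}F ,
\]
with $a\colon t\mapsto -t$.

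The remaining task is the classical Verdier-dual computation of $\dual_{X\times\R}F$. Applying $\dual_{X\times\R}\reim f \simeq \roim f \dual$ to $f = j\circ k$ and using biduality of $\R$-constructible sheaves, one obtains $\dual_{X\times\R}F \simeq (j\circ k)_!\omega_Z$. Now $Z$ is the graph of the real-analytic function $\Re\varphi$ on the complex manifold $U$, so it is diffeomorphic to $U$ and hence an orientable real-analytic manifold of dimension $2d_X$; consequently $\omega_Z \simeq \Cfield_Z[2d_X]$ and
\[
\dual_{X\times\R}F \simeq \Cfield_{\{t=\Re\varphi\}}[2d_X] .
\]
Pushing through $\opb a$ yields $\opb a \dual_{X\times\R}F \simeq \Cfield_{\{t=-\Re\varphi\}}[2d_X]$, and combining the shifts $[2d_X]-[d_X]=[d_X]$ on the right with the shift $[d_X]$ on the left produces the claimed isomorphism.

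The point requiring the most care is the classical Verdier computation of $\dual_{X\times\R}F$: the locally closed embedding $j\circ k$ of $Z$ into $X\times\R$ is not proper, so one must distinguish carefully between $\reim{(\cdot)_!}$ and $\roim{(\cdot)_*}$ and invoke the correct compatibility with $\dual$. Aside from this bookkeeping, the argument is a formal combination of Theorem~\ref{thm:drTsolT}, Lemma~\ref{lem:drTEphi}, and Proposition~\ref{pro:Tduala}.
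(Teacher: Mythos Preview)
Your approach is correct and is essentially the one the paper has in mind: the corollary is placed right after Theorem~\ref{thm:drTsolT}, and the paper itself performs the same duality computation (stated without details) in the proof of the lemma following Theorem~\ref{thm:drTetens}, where it records $\Edual_X\bigl(E^\enh_{U|X}(\varphi)[d_X]\bigr) \simeq \Cfield^\enh_X\ctens \Cfield_{\{t=-\Re\varphi\}}[d_X]$.

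One small slip to fix: when you substitute Lemma~\ref{lem:drTEphi}, you get $\drE_X(\she^\varphi_{U|X})\simeq (\Cfield_X^\enh\ctens F)[d_X]$, and since $\Edual_X$ turns a shift $[d_X]$ into $[-d_X]$, the right hand side becomes $\Edual_X(\Cfield_X^\enh\ctens F)[-d_X]$, not $[d_X]$ as you wrote. Your final shift accounting ``$[2d_X]-[d_X]=[d_X]$ on the right matches $[d_X]$ on the left'' is nonetheless consistent with the correct sign, so the conclusion is right; just correct the intermediate display.
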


\subsection{Riemann-Hilbert correspondence}

Let $X$ be a complex manifold.
Recall the hom-functor from Definition~\ref{def:HomT}
\[
\fhom\colon {\BEC[\iCfield]X}^\op \times \BEC[\iCfield]X \to \BDC(\Cfield_X).
\]

\begin{proposition}
\label{pro:shmhomTam}
There is a functorial morphism in $\shm\in\BDC(\D_X)$
\begin{equation}
\label{eq:rec-morph}
\shm \To \fhom(\solE_X(\shm),\OEn_X).
\end{equation}
\end{proposition}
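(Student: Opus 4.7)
The plan is to construct \eqref{eq:rec-morph} as an enhanced biduality morphism, analogous to the classical morphism $\shm \to \rhom(\sol_X(\shm),\O_X)$ in the regular holonomic setting.

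First I would observe, using Corollary~\ref{cor:CTamOT} together with the fact that $\OEn_X \in \BEC[\ind\D]X$ carries a natural $\D_X$-module structure, that for every $K \in \BEC[\iCfield]X$ the complex $\fhom(K,\OEn_X)$ inherits a $\D_X$-module structure via the $\D_X$-action on the second argument; this justifies the statement that \eqref{eq:rec-morph} is a morphism in $\BDC(\D_X)$. This in turn fixes the meaning of the right-hand side.

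The morphism itself is then constructed as the adjoint of the tautological evaluation pairing associated with $\solE_X(\shm) = \rhom[\D_X](\shm,\OEn_X)$. Explicitly, starting from the identity
$$\id \colon \solE_X(\shm) \To \rhom[\D_X](\shm,\OEn_X)$$
and using the tensor/$\rhom$ adjunction, one obtains an evaluation pairing $\opb\pi\shm \tens^\derl_{\opb\pi\D_X} \LE\solE_X(\shm) \To \LE\OEn_X$ in $\BDC(\ind\opb\pi\D_X)$. Its adjoint, composed with the unit $\shm \To \roim\pi\opb\pi\shm$ of the adjunction $(\opb\pi,\roim\pi)$ and the application of $\alpha_X$, gives the composition
$$\shm \To \roim\pi\opb\pi\shm \To \alpha_X\roim\pi\rihom(\LE\solE_X(\shm),\LE\OEn_X) = \fhom(\solE_X(\shm),\OEn_X),$$
where the last equality is Definition~\ref{def:HomT} together with $\RE\OEn_X\simeq\OEn_X$ (Corollary~\ref{cor:CTamOT}). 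Functoriality in $\shm$ is automatic since each step---the identity $\id_{\solE_X(\shm)}$, the tensor/$\rhom$ adjunction, and the unit of $(\opb\pi,\roim\pi)$---is functorial.

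The main (modest) technical obstacle is verifying that the resulting morphism is $\D_X$-linear, i.e.\ lifts to $\BDC(\D_X)$ and not merely to $\BDC(\Cfield_X)$. This reduces to checking that the evaluation pairing above is compatible with the $\D_X$-action on $\OEn_X$ coming from $\BEC[\ind\D]X$; this in turn is inherited from the fact that $\solE_X(\shm) = \rhom[\D_X](\shm,\OEn_X)$ is formed by contracting the $\D_X$-action on both sides, so what remains is a residual $\D_X$-action on the target, compatible with the action on $\shm$ by construction. This check is routine but is the only point which requires any care.
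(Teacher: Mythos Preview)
Your construction is correct and is essentially the paper's own argument: both build the morphism as the adjoint of the tautological evaluation pairing for $\solE_X(\shm)=\rhom[\D_X](\shm,\OEn_X)$. The paper phrases it with $\RE$ and works on $X\times\overline\R$ via $j_{X*}$ and $\overline\pi_X$, while you use $\LE$ and the bordered projection $\pi$; Definition~\ref{def:HomT} already identifies $\fhom(K_1,K_2)$ with $\alpha_X\roim\pi\rihom(\LE K_1,\LE K_2)$, so your appeal to Corollary~\ref{cor:CTamOT} is unnecessary.
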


\begin{proof}
Recall the natural morphisms
\[
X\times\R_\infty \To[j_X] X\times\overline\R \To[\overline\pi_X] X.
\]
By the definition of $\solE_X$, one has
\[
\roimv{j_{X\sep*}}\RE \solE_X(\shm) \simeq \rhom[\opb{\overline\pi_X}\D_X](\opb{\overline\pi_X}\shm,\roimv{j_{X\sep*}}\RE \OEn_X).
\]
Hence, there is a morphism in $\BDC(\opb{\overline\pi_X}\D_X)$
\[
\opb{\overline\pi_X}\shm \to \rhom(\roimv{j_{X\sep*}}\RE \solE(\shm),\roimv{j_{X\sep*}}\RE \OEn_X),
\]
which induces by adjunction the desired morphism.
\end{proof}

Consider the diagonal embedding
\[
\delta \colon X \to X\times X.
\]

\begin{lemma}\label{lem:RC2}
For $\shm\in\BDC_\hol(\D_X)$, one has
\[
\fhom(\solE_X(\shm),\OEn_X) \simeq
\fhom\bl\Cfield_{\{t=0\}}, \Eepb\delta(\drE_X(\shm)\cetens\OEn_X)\br[d_X].
\]
In particular, there is a functorial morphism in $\shm\in\BDC_\hol(\D_X)$
\begin{equation}
\label{eq:rec-truemorph}
\shm \to \fhom\bl\Cfield_{\{t=0\}}, \Eepb\delta(\drE_X(\shm)\cetens\OEn_X)\br[d_X].
\end{equation}
\end{lemma}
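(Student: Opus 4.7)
The plan is to derive the isomorphism by combining Lemma~\ref{lem:homT}, the stability of $\OEn_X$, Proposition~\ref{homText}, and the compatibility of $\drE_X$ with duality from Theorem~\ref{thm:drTsolT}. The morphism~\eqref{eq:rec-truemorph} will then follow immediately by composing with the natural morphism of Proposition~\ref{pro:shmhomTam}.

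First I would apply Lemma~\ref{lem:homT} with $K_1 = \solE_X(\shm)$ and $K_2 = \OEn_X$ to rewrite
\[
\fhom(\solE_X(\shm),\OEn_X) \simeq \fhom\bl\Cfield_{\{t=0\}},\, \cihom(\solE_X(\shm),\OEn_X)\br,
\]
reducing matters to computing $\cihom(\solE_X(\shm),\OEn_X)$ in a form involving $\Eepb\delta$. The key point is that $\OEn_X$ is a stable object by Corollary~\ref{cor:CTamOT}, so $\OEn_X \simeq \Cfield_X^\enh \ctens \OEn_X$, putting us in the setting of Proposition~\ref{homText}~(i). That proposition gives
\[
\cihom(\solE_X(\shm),\OEn_X) \simeq \cihom(\solE_X(\shm),\Cfield_X^\enh \ctens \OEn_X) \simeq \Eepb\delta(\Edual_X\solE_X(\shm) \cetens \OEn_X).
\]
(Note that $\shm \in \BDC_\hol(\D_X)$ ensures $\solE_X(\shm) \in \BECRc[\iCfield]X$, so duality is well-behaved.)

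The remaining step is to compute $\Edual_X\solE_X(\shm)$. By the corollary to Theorem~\ref{thm:drTsolT} one has $\solE_X(\shm)[d_X] \simeq \Edual_X\drE_X(\shm)$. Applying $\Edual_X$ to both sides, using that it is an involution on $\BECRc[\iCfield]X$ together with the identity $\Edual_X(K[n]) \simeq (\Edual_X K)[-n]$, yields
\[
\Edual_X\solE_X(\shm) \simeq \drE_X(\shm)[d_X].
\]
Substituting back and pulling the shift out, one obtains
\[
\fhom(\solE_X(\shm),\OEn_X) \simeq \fhom\bl\Cfield_{\{t=0\}},\, \Eepb\delta(\drE_X(\shm)\cetens\OEn_X)\br[d_X],
\]
which is the claimed isomorphism. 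Composing with the morphism $\shm \to \fhom(\solE_X(\shm),\OEn_X)$ of Proposition~\ref{pro:shmhomTam} gives the functorial morphism~\eqref{eq:rec-truemorph}.

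There is no serious obstacle; the plan is essentially a sequence of rewriting steps. The only delicate point is tracking the shift correctly: the $[d_X]$ appearing on the right-hand side arises solely from the sign flip in $\Edual_X(\Edual_X\drE_X(\shm)[-d_X])$, not from any exterior tensor computation. Everything else is a direct application of results already established in the paper.
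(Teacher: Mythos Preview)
Your proof is correct and follows essentially the same route as the paper's own proof. The paper invokes Proposition~\ref{homText} (together with Corollary~\ref{cor:CTamOT}) and Theorem~\ref{thm:drTsolT} directly, whereas you unpack Proposition~\ref{homText} into Lemma~\ref{lem:homT} plus the first isomorphism of Proposition~\ref{homText}; this is merely a presentational difference, and your shift bookkeeping via $\Edual_X\solE_X(\shm)\simeq\drE_X(\shm)[d_X]$ is exactly what the paper does with $K=\drE_X(\shm)$.
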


\begin{proof}
By Proposition~\ref{homText} and Corollary~\ref{cor:CTamOT}, for $K\in\BECRc[\iCfield]X$ one has
\[
\fhom(\Edual_X K,\OEn_X) \simeq \fhom(\Cfield_{\{t=0\}}, \Eepb\delta(K\cetens\OEn_X)).
\]
Moreover, by Theorem~\ref{thm:drTsolT}, one has
\[
\Edual_X(\drE_X(\shm)) \simeq \solE_X(\shm)[d_X].
\]
\end{proof}

We can now state our Riemann-Hilbert correspondence.

\medskip

Consider the quasi-commutative diagram of functors
\begin{equation}
\label{eq:RH}
\ba{c}\xymatrix@C=6.4ex{
\BDC_\hol(\D_X) \ar[r]^-{\solE_X} \ar@{=}[d] & {\BECRc[\iCfield]X}^\op \ar[rrr]^-{\fhom(\ast,\OEn_X)} \ar[d]^{\Edual_X(\ast[d_X])}_\wr &&& \BDC(\D_X) \ar@{=}[d] \\
\BDC_\hol(\D_X) \ar[r]_-{\drE_X} & \BECRc[\iCfield]X \ar[rrr]_(.42){\qquad\fhom(\Cfield_{\{t=0\}}, \Eepb\delta(\ast\cetens\OEn_X))[d_X]} &&& \BDC(\D_X) .
}\ea
\end{equation}

\begin{theorem}\label{th:RH}
\label{thm:RH}
\bnum
\item
For $\shm\in\BDC_\hol(\D_X)$, the morphisms \eqref{eq:rec-morph} and \eqref{eq:rec-truemorph} are isomorphisms. This means in particular that we can reconstruct $\shm$ from $\drE_X(\shm)$.
\item
The functor
\[
\drE_X \colon \BDC_\hol(\D_X) \to \BECRc[\iCfield]X
\]
is fully faithful.
\ee
\end{theorem}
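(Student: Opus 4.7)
Our plan is to prove part~(i) first via Lemma~\ref{lem:redux}, and then to deduce part~(ii) from it. Set $P_X(\shm)$ to be the statement that the canonical morphism $\shm \to \fhom(\solE_X(\shm),\OEn_X)$ of \eqref{eq:rec-morph} is an isomorphism. Locality on $X$ (hypothesis~(a)), shift invariance~(b), and stability under distinguished triangles~(c) of Lemma~\ref{lem:redux} are immediate from the fact that both sides are local and triangulated in $\shm$. Hypothesis~(d), stability under direct summands, follows since $\solE_X$ and $\fhom(\cdot,\OEn_X)$ commute with finite direct sums. For hypothesis~(e), given a projective morphism $f\colon X'\to X$ and a good $\shm\in\BDC_\hol(\D_{X'})$ with $\supp\shm$ proper over $X$, I combine Theorem~\ref{thm:Tfunct}(iii) (and its $\solE$ counterpart, obtained via Theorem~\ref{thm:drTsolT}) with the adjunction formula of Lemma~\ref{homepb} and Proposition~\ref{pro:Dadj} to transport $P_{X'}(\shm)$ into $P_X(\doim f\shm)$.

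The decisive hypothesis is~(f): that $P_X(\shm)$ holds when $\shm$ has a normal form along a normal crossing divisor $D\subset X$. By Definition~\ref{def:normal} and Corollary~\ref{cor:drTXtilde}, pulling back to the real blow-up $\varpi\colon\widetilde X\to X$ presents $\shm^\sha$ as a local direct sum of modules of the form $(\she^{\varphi_i}_{X\setminus D|X})^\sha$. Locality on $\widetilde X$ and compatibility with direct sums then reduce the proof of $P_X(\shm)$ to the model case $\shm = \she^\varphi_{U|X}$ for a closed hypersurface $Y\subset X$, $U = X\setminus Y$, and $\varphi\in\O_X(*Y)$. Corollary~\ref{cor:solTEphi} gives $\solE_X(\she^\varphi_{U|X}) \simeq \Cfield_X^\enh \ctens \Cfield_{\{t=-\Re\varphi\}}$, so the task becomes the computation of $\fhom(\Cfield_X^\enh \ctens \Cfield_{\{t=-\Re\varphi\}}, \OEn_X)$ and its identification with $\she^\varphi_{U|X}$.

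Unfolding this $\fhom$ by means of Lemma~\ref{lem:homT}, the definition of $\OEn_X$ from Definition~\ref{def:DbT}, and Dolbeault resolution, the identification becomes the analytic assertion that a holomorphic function $u$ on $U$, defined locally near $Y$, is a section of the coherent sub-$\O_X$-module $\she^\varphi_{U|X}\subset\O_X(*Y)\cdot e^\varphi$ if and only if the product $e^{-\varphi}u$ is tempered along $Y$. This is where the main obstacle lies: one direction is built into the definition of $\she^\varphi_{U|X}$, while the other is a non-trivial asymptotic growth statement, furnished in one variable by the classical Hukuhara-Turittin theorem and reducible to the one-variable case in higher dimensions by means of the real-blow-up description of $\Ot_{\widetilde X}$ provided by Theorem~\ref{thm:forOt}, together with an induction along the boundary strata of $\widetilde X^0$.

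For part~(ii), observe that Theorem~\ref{thm:drTsolT} yields $\Edual_X\drE_X(\shm) \simeq \solE_X(\shm)[d_X]$, so the reconstruction functor $\Psi^\enh_X(K) \defeq \fhom(\Edual_X K,\OEn_X)[d_X]$ satisfies $\Psi^\enh_X(\drE_X(\shm)) \simeq \shm$ naturally in $\shm$, by part~(i). Faithfulness of $\drE_X$ is immediate from $\Psi^\enh_X\circ\drE_X \simeq \id$. For fullness, given $\varphi\colon\drE_X(\shm)\to\drE_X(\shn)$ I set $f \defeq \Psi^\enh_X(\varphi)\colon\shm\to\shn$ and verify $\drE_X(f) = \varphi$ by combining Proposition~\ref{prop:homdual}(iv), which identifies $\fhom(\drE_X(\shm),\drE_X(\shn))$ with $\fhom(\solE_X(\shn),\solE_X(\shm))$, with the reconstruction isomorphism of part~(i) applied to both $\shm$ and $\shn$.
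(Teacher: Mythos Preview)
Your overall architecture for part~(i) via Lemma~\ref{lem:redux} matches the paper's, and your treatment of hypotheses (a)--(e) and of part~(ii) is essentially the paper's argument (the paper phrases (ii) as a direct isomorphism $\rhom[\D_X](\shm,\shn)\simeq\fhom(\drE_X\shm,\drE_X\shn)$, but the content is the same). However, your handling of hypothesis~(f) contains two genuine gaps.

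First, the reduction from a module $\shm$ with normal form along $D$ to the exponential case cannot be done by ``locality on $\widetilde X$'' applied to the statement $P_X$: the decomposition $\shm^\sha|_V\simeq\bigoplus_i(\she^{\varphi_i})^\sha|_V$ holds only on small open sets $V\subset\widetilde X$, not on $X$, and $P_X$ is a statement about objects on $X$. The paper (Lemma~\ref{lem:rec-f}) instead formulates and proves an auxiliary reconstruction statement on $\widetilde X$, namely $\shm^\sha\isoto\fhom(\solE_{\widetilde X}(\shm^\sha),\OEn_{\widetilde X})$, using the local decomposition there; it then pushes forward by $\roim\varpi$ and uses $\roim\varpi\shm^\sha\simeq\shm$ together with $\Eeeim\varpi\Eopb\varpi\solE_X(\shm)\simeq\solE_X(\shm)$ to recover $P_X(\shm)$. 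This descent step is not automatic and you have omitted it.

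Second, your identification of the analytic core is off. The computation $\fhom(\solE_X(\she^\varphi_{U|X}),\OEn_X)\simeq\she^\varphi_{U|X}$ (Proposition~\ref{pro:rec-ephi}) does \emph{not} reduce to Hukuhara--Turittin, nor to any statement about which holomorphic functions lie in $\she^\varphi_{U|X}$ (as an $\O_X$-module the latter is all of $\O_X(*Y)$). The paper instead computes at the level of tempered distributions (Lemma~\ref{lem:rec-f-temp}): one represents $\DbT_{X_\R}$ by the two-term complex $\partial_t-1$ acting on $\ihom(\Cfield_{\{t<\Re\varphi\}},\Dbt_{X_\R\times\PR})$, checks surjectivity of $\partial_t-1$ directly, and identifies its kernel with $\{e^{t-\varphi(x)}v(x):v\in\Dbt_{X_\R}(U)\}$. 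The only nontrivial point is the sublemma that $e^{t-\varphi(x)}v(x)$ is tempered on $\{t<\Re\varphi\}$ if and only if $v(x)$ is tempered on $U$; this is proved by an elementary cutoff-and-integrate argument in the $t$-variable, with no asymptotic analysis required. Hukuhara--Turittin enters the paper only through Theorem~\ref{thm:normal} (the existence of normal forms), which you have already invoked via Lemma~\ref{lem:redux}.
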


We will prove (i) in Section~\ref{sse:Reconstruction} and (ii) in
Section~\ref{sse:Fullyfaithfulness}.

\medskip

Let us check that the correspondence \eqref{eq:RH} is compatible with the classical Riemann-Hilbert correspondence.

\begin{proposition}
There is a quasi-commutative diagram
\[
\xymatrix@C=5em{
\BDC_\reghol(\D_X) \ar[r]^{\dr_X} \ar[d]
& \BDC_\Cc(\Cfield_X) \ar[rr]^{\rhom(\dual_X(\ast),\Ot_X)[d_X]} \ar[d]^e
&& \BDC_\reghol(\D_X) \ar[d] \\
\BDC_\hol(\D_X) \ar[r]^{\drE_X} 
& \BECRc[\iCfield]X \ar[rr]^{\fhom(\Cfield_{\{t=0\}}, \Eepb\delta(\ast\cetens\OEn_X))[d_X]}
&& \BDC(\D_X) ,
}
\]
where $e(F) = \Cfield_X^\enh \tens \opb\pi F$ is the fully faithful functor of {\rm Proposition~\ref{pro:embed}}.
\end{proposition}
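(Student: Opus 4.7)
The plan is to treat the two squares separately. The left square commutes by Proposition~\ref{prop:regireg}, which supplies the functorial isomorphism $\drE_X(\shl)\simeq e(\dr_X(\shl))$ for $\shl\in\BDC_\reghol(\D_X)$, so all the real work lies in the right square.

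For the right square my first move is to identify the bottom-right composite with the reconstruction functor $\Psi_X^\enh$ of~\S\ref{sse:introOT}. Applying Proposition~\ref{homText} (with $K$ replaced by $\Edual_X K$ and $K'$ by $\OEn_X$) and the stability $\Cfield_X^\enh\ctens\OEn_X\simeq\OEn_X$ of Corollary~\ref{cor:CTamOT} yields
\[
\Eepb\delta(K\cetens\OEn_X)\simeq\cihom(\Edual_X K,\OEn_X)
\]
for $K\in\BECRc[\iCfield]X$. The second assertion of Lemma~\ref{lem:homT} then gives $\fhom(\Cfield_{\{t=0\}},\cihom(\Edual_X K,\OEn_X))\simeq\fhom(\Edual_X K,\OEn_X)$, so the bottom-right arrow coincides with $K\mapsto\fhom(\Edual_X K,\OEn_X)[d_X]=\Psi_X^\enh(K)$.

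The quasi-commutativity of the right square therefore reduces to producing a natural isomorphism $\Psi_X^\enh\circ e\simeq\Psi_X$ of functors $\BDC_\Cc(\Cfield_X)\to\BDC(\D_X)$. For this I would invoke the classical Riemann-Hilbert correspondence of~\cite{Kas84}: it makes $\dr_X$ an equivalence $\BDC_\reghol(\D_X)\simeq\BDC_\Cc(\Cfield_X)$ with quasi-inverse $\Psi_X$, so each $F\in\BDC_\Cc(\Cfield_X)$ is canonically isomorphic to $\dr_X(\shl)$ with $\shl\defeq\Psi_X(F)\in\BDC_\reghol(\D_X)$. The chain
\[
\Psi_X^\enh(e(F))\simeq\Psi_X^\enh(e(\dr_X(\shl)))\simeq\Psi_X^\enh(\drE_X(\shl))\simeq\shl\simeq\Psi_X(F),
\]
whose three nontrivial steps come respectively from Proposition~\ref{prop:regireg}, Theorem~\ref{th:RH}(i), and classical reconstruction, supplies the desired natural transformation. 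The only genuine point to check is the naturality in $F$ of this zig-zag, which is immediate from the naturality of each constituent isomorphism in the variable $\shl$ together with the functoriality of $\Psi_X$; no serious obstacle is expected once the bottom-right composite has been identified with $\Psi_X^\enh$.
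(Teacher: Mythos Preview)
Your argument is correct but takes a genuinely different route from the paper. You identify the bottom-right composite with $\Psi_X^\enh$ (this is Lemma~\ref{lem:RC2}), then appeal to classical Riemann--Hilbert to write $F\simeq\dr_X(\shl)$ and invoke Theorem~\ref{th:RH}(i) to conclude $\Psi_X^\enh(\drE_X(\shl))\simeq\shl$. There is no circularity, since the proof of Theorem~\ref{th:RH}(i) in \S\ref{sse:Reconstruction} does not use this proposition, but you are invoking the main reconstruction theorem of the paper---whose proof rests on Mochizuki--Kedlaya and the normal-form analysis---for a compatibility statement that can be checked directly.

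The paper's proof is elementary and avoids Theorem~\ref{th:RH} entirely. After the same identification of the bottom-right composite with $\fhom(\Edual_X(\ast),\OEn_X)[d_X]$, it computes $\fhom(\Edual_X(e(F)),\OEn_X)$ head-on: Corollary~\ref{cor:Tduala} gives $\Edual_X(\Cfield_X^\enh\tens\opb\pi F)\simeq\Cfield_X^\enh\tens\opb\pi\dual_X F$; stability of $\OEn_X$ lets one strip the $\Cfield_X^\enh$; Lemma~\ref{lem:ai0} reduces $\fhom(\Cfield_{\{t=0\}}\tens\opb\pi\dual_X F,\OEn_X)$ to $\rhom(\dual_X F,\epb{i_0}\RE\OEn_X)$; and Lemma~\ref{lem:i0OT} identifies $\epb{i_0}\RE\OEn_X\simeq\Ot_X$. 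This last lemma is the key new input and is itself a short computation using Theorem~\ref{thm:ifunct}(i). The upshot is that the paper's argument works for arbitrary $F\in\BDC_\Cc(\Cfield_X)$ without ever passing through a $\D$-module $\shl$, and depends only on formal properties of $\OEn_X$ rather than on any structure theory of holonomic modules.
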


\begin{proof}
(i) The quasi-commutativity of the left hand side square follows from 
Proposition~\ref{prop:regireg}.

\smallskip\noindent(ii)
Denote by $i_0\colon X\to X\times\R_\infty$ the morphism given by $x\mapsto(x,0)$.
The quasi-commutativity of the right hand side square follows from
\begin{align*}
\fhom&(\Edual_X(\Cfield_X^\enh \tens \opb\pi F),\OEn_X) \\
&\underset{(1)}\simeq \fhom(\Cfield_X^\enh \tens \opb\pi \dual_X F,\OEn_X) \\
&\simeq \fhom(\Cfield_X^\enh \ctens (\Cfield_{\{t=0\}} \tens \opb\pi \dual_X F),\OEn_X) \\
&\simeq \fhom(\Cfield_{\{t=0\}} \tens \opb\pi \dual_X F,\OEn_X) \\
&\underset{(2)}\simeq \rhom(\dual_X F,\epb{i_0}\RE\OEn_X) \\
&\underset{(3)}\simeq \rhom(\dual_X F, \Ot_X).
\end{align*}
Here, $(1)$ follows from Corollary~\ref{cor:Tduala}, $(2)$ follows from Lemma~\ref{lem:ai0}, and $(3)$ follows from Lemma~\ref{lem:i0OT} below.
\end{proof}

\begin{lemma}\label{lem:i0OT}
One has
\[
\epb{i_0}\RE\OEn_X \simeq \Ot_X,
\]
where $i_0\colon X\to X\times\R_\infty$ denotes the morphism given by $x\mapsto(x,0)$.
\end{lemma}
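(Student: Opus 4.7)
The plan is to compute $\epb{i_0}\RE\OEn_X$ directly, using the explicit representative of $\RE\OEn_X$ furnished by Theorem~\ref{thm:OTgeq0} together with the inverse-image formula for $\Ot$ under a closed embedding. First, observe that the composition $i\comp i_0\colon X\to X\times\PP$ coincides with the holomorphic closed embedding $j(x)=(x,0)$, so by Lemma~\ref{lem:bcomp} we have $\epb{i_0}\comp\epb i\simeq\epb j$. Combined with Theorem~\ref{thm:OTgeq0} this gives
\[
\epb{i_0}\RE\OEn_X\simeq\epb j\bl(\she^{-\tau}_{\C|\PP})^\mop\ltens[\D_\PP]\Ot_{X\times\PP}\br[1].
\]
Since $j(X)=X\times\{0\}\subset X\times\C$, the behaviour at $\tau=\infty$ is irrelevant for the computation, and one may work with $\she^{-\tau}_{\C|\PP}$ near $\tau=0$.

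Next I would use the standard free left $\D_\PP$-resolution $0\to\D_\PP\to[\cdot(\partial_\tau+1)]\D_\PP\to\she^{-\tau}_{\C|\PP}\to 0$ (valid on $X\times\C$, since $(\partial_\tau+1)e^{-\tau}=0$), apply $\Omega_\PP\tens[\O_\PP](-)$ to obtain a right $\D_\PP$-resolution of $(\she^{-\tau}_{\C|\PP})^\mop$, and tensor over $\D_\PP$ with $\Ot_{X\times\PP}$. This represents the object in square brackets by the two-term complex
\[
\bl\Ot_{X\times\PP}(*(X\times\infty))\to[\partial_\tau+1]\Ot_{X\times\PP}(*(X\times\infty))\br
\]
in degrees $-1,0$, the differential being the left $\D$-module action of $\partial_\tau+1$.

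Then I would invoke Theorem~\ref{thm:ifunct}(i) applied to $j$, which gives $\epb j\Ot_{X\times\PP}\simeq\D_{X\times\PP\from X}\ltens[\D_X]\Ot_X[-1]$. The transfer module $\D_{X\times\PP\from X}$ is right $\D_X$-free on the basis $\{\partial_\tau^k\}_{k\geq 0}$, and the induced left $\opb j\D_{X\times\PP}$-action makes $\partial_\tau$ act as multiplication by $\partial_\tau$ on this polynomial basis. Applying $\epb j$ to the two-term complex above and collecting the shifts yields the total complex
\[
\bl\Ot_X\tens[\C]\C[\partial_\tau]\to[\partial_\tau+1]\Ot_X\tens[\C]\C[\partial_\tau]\br
\]
in cohomological degrees $0,1$. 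Multiplication by the polynomial $\partial_\tau+1$ is injective on this polynomial module (no zero divisors) and has cokernel canonically isomorphic to $\Ot_X$ via the evaluation $\partial_\tau\mapsto-1$. Hence the complex is quasi-isomorphic to $\Ot_X[-1]$, and the outer shift $[1]$ from the definition of $\RE\OEn_X$ produces $\epb{i_0}\RE\OEn_X\simeq\Ot_X$.

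The main obstacle is the consistent bookkeeping of the various cohomological shifts (from the definition of $\OEn_X$, from Theorem~\ref{thm:ifunct}(i), and from the two resolutions), together with the verification that the induced differential on $\epb j\Ot_{X\times\PP}$ really is left multiplication by $\partial_\tau+1$ on the polynomial module $\Ot_X\tens\C[\partial_\tau]$, so that the algebraic argument ``evaluate at $\partial_\tau=-1$'' produces the desired $\Ot_X$.
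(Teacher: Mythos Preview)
Your proposal is correct and follows essentially the same approach as the paper: both compose $i_0$ with $i$ to get the holomorphic embedding $s\colon X\to X\times\PP$ at $\tau=0$, invoke Theorem~\ref{thm:OTgeq0} for the representative of $\RE\OEn_X$, and then use Theorem~\ref{thm:ifunct}(i) to compute $\epb s$. The paper packages the final step more conceptually as $(\she_{\C|\PP}^{-\tau})^\mop\ltens[\D_\PP]\D_{X\times\PP\from X}\simeq\dopb s\she_{\C|\PP}^{-\tau}\simeq\C$ (the fibre of a rank-one connection at a nonsingular point), whereas you unroll this same computation via the explicit free resolution; one minor slip is that the side-change gives $(\she^{-\tau}_{\C|\PP})^\mop\simeq\D_\PP/(\partial_\tau-1)\D_\PP$ (cf.\ the proof of Lemma~\ref{lem:Rt}), so the differential is $\partial_\tau-1$ rather than $\partial_\tau+1$, but this does not affect the argument.
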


\begin{proof}
By Theorem~\ref{thm:OTgeq0}, we have
\[
\RE\OEn_X = \epb i((\she_{\C|\PP}^{-\tau})^\mop\ltens[\D_\PP]\Ot_{X\times\PP})[1].
\]
Let $s\colon\{0\}\to\PP$ be the inclusion, and denote by the same letter the induced map $s\colon X\to X\times\PP$. Then one has $s = i\circ i_0$, and
\begin{align*}
\epb{i_0}\RE\OEn_X
&\simeq \epb s((\she_{\C|\PP}^{-\tau})^\mop\ltens[\D_\PP]\Ot_{X\times\PP})[1] \\
&\underset{(*)}\simeq (\she_{\C|\PP}^{-\tau})^\mop\ltens[\D_\PP] \D_{X\times\PP \from[s] X} \ltens[\D_X] \Ot_X \\
&\simeq (\dopb s \she_{\C|\PP}^{-\tau})\ltens[\C]\Ot_X\simeq\Ot_X.
\end{align*}
Here $(*)$ follows from Theorem~\ref{thm:ifunct}~(i).
\end{proof}

\subsection{Reconstruction}\label{sse:Reconstruction}

By Lemma~\ref{lem:RC2}, 
the following result implies
Theorem~\ref{thm:RH}~(i).

\begin{theorem}
\label{thm:reconstruction}
Let $\shm\in\BDC_\hol(\D_X)$. Then, the morphism 
in {\rm Proposition~\ref{pro:shmhomTam}}
$$\shm \To \fhom(\solE_X(\shm),\OEn_X)$$
is an isomorphism. 
\end{theorem}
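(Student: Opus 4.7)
The plan is to apply the reduction lemma~\ref{lem:redux} to the statement
\[
P_X(\shm) = \text{``the canonical morphism }\shm\to\fhom(\solE_X(\shm),\OEn_X)\text{ is an isomorphism''}.
\]
Thus one verifies the six hypotheses (a)--(f). Conditions (a) (locality), (b) (stability by shifts), (c) (distinguished triangles) and (d) (direct summands) are straightforward, since $\solE_X$, $\fhom$ and $\OEn_X$ behave well under restriction to open subsets, shifts, and triangles. Condition (e) (projective direct image) is the only non-trivial bookkeeping: combining the compatibility $\solE_Y(\doim f\shm)[d_Y]\simeq \Eoim f\solE_X(\shm)[d_X]$ (the corollary to Theorem~\ref{thm:drTsolT}), the adjunction of Lemma~\ref{homepb} between $\Eoim f$, $\Eepb f$ and $\fhom$, the expression $\Eepb f\OEn_Y[d_Y]\simeq\D_{Y\from X}\ltens[\D_X]\OEn_X[d_X]$ from Theorem~\ref{thm:Tfunct}(i), and the $\D$-module adjunction of Proposition~\ref{pro:Dadj}, one transports the isomorphism $P_X(\shm)$ into $P_Y(\doim f\shm)$.

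The real content is condition (f): the theorem for a holonomic $\shm$ with a normal form along a normal crossing divisor $D\subset X$. The first step is to pull back to the real blow-up $\varpi\colon\widetilde X\to X$. By Corollary~\ref{cor:drTXtilde}, $\solE_X(\shm)\simeq \Eoim\varpi\solE_{\widetilde X}(\shm^\sha)$, so by Proposition~\ref{pro:Tproj} and Lemma~\ref{homepb},
\[
\fhom(\solE_X(\shm),\OEn_X)\simeq \roim\varpi\fhom(\solE_{\widetilde X}(\shm^\sha),\Eepb\varpi\OEn_X)\simeq \roim\varpi\fhom(\solE_{\widetilde X}(\shm^\sha),\OEn_{\widetilde X}),
\]
where the last isomorphism uses the $\D^\sha$-enhancement of Theorem~\ref{thm:forOTtilde}. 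Since $\shm^\sha$ is locally on $\widetilde X$ a direct sum of exponential factors $(\she^{\varphi_i}_{U|X})^\sha$, by additivity the question reduces to the model case $\shm=\she^\varphi_{U|X}$ for $\varphi\in\O_X(*Y)$.

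For this model case, Corollary~\ref{cor:solTEphi} gives $\solE_X(\she^\varphi_{U|X})\simeq \Cfield_X^\enh\ctens\Cfield_{\{t=-\Re\varphi\}}$, so one must show that
\[
\she^\varphi_{U|X}\;\isoto\;\fhom\bl\Cfield_X^\enh\ctens\Cfield_{\{t=-\Re\varphi\}},\,\OEn_X\br.
\]
Unwinding Definition~\ref{def:HomT} and using Lemma~\ref{lem:ihomE}, the right-hand side is computed via $\roim{\overline\pi}\,\rhom\bl\indlim[a\to+\infty]\Cfield_{\{t\geq-\Re\varphi+a\}\cap(U\times\R)},\,\roimv{j_{M\sep*}}\RE\OEn_X\br$, which by the definition of $\OEn_X$ and Proposition~\ref{pro:DbtOt} becomes a complex of holomorphic sections on $U$ whose product with $e^{-\varphi}$ is tempered along $Y$. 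This is precisely the description of $\she^\varphi_{U|X}$ as $\O_X(*Y)\cdot e^\varphi$.

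The main obstacle will be this last explicit computation: identifying $\fhom\bl\Cfield_X^\enh\ctens\Cfield_{\{t=-\Re\varphi\}},\OEn_X\br$ with $\she^\varphi_{U|X}$. The delicate point is that the ind-limit $\indlim[a\to+\infty]$ interacts non-trivially with $\rhom$ (cf.~Proposition~\ref{pro:Prolim}), so one must use the Mittag-Leffler argument there to commute the limit with $\roim{\overline\pi}$, and then carefully translate the resulting temperedness condition on $X\times\PP$ (for the de~Rham complex of $\she^{-\tau}_{\C|\PP}\detens\she^\varphi_{U|X}$) back into the exponential temperedness $|e^{-\varphi}\cdot f|\lesssim \dist(\cdot,Y)^{-N}$ on $X$ itself. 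Once this identification is made, the canonical morphism is easily seen to be the inclusion of $\she^\varphi_{U|X}$ into itself, hence an isomorphism.
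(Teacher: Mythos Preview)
Your overall strategy matches the paper's: apply Lemma~\ref{lem:redux}, dispatch (a)--(d) easily, prove (e) via the functorial package exactly as you outline, and concentrate on (f) via the real blow-up and the exponential model. Your treatment of (e) is correct and coincides with the paper's Lemma~\ref{lem:rec-e}.

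For (f), however, two of your identifications are not quite right and the paper's argument is correspondingly more delicate.

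First, the isomorphism $\Eepb\varpi\OEn_X\simeq\OEn_{\widetilde X}$ that you invoke is false: Theorem~\ref{thm:forOTtilde} gives $\OEn_{\widetilde X}\simeq\Eepb\varpi\rihom(\opb\pi\Cfield_{X\setminus D},\OEn_X)$, with an extra localization along $U=X\setminus D$. Likewise, Corollary~\ref{cor:drTXtilde} is stated for $\drE$, not for $\solE$, and the push-forward identity $\solE_X(\shm)\simeq\Eoim\varpi\solE_{\widetilde X}(\shm^\sha)$ you quote does not follow from it directly. The paper instead establishes the \emph{pull-back} relation
\[
\opb{\tilde\varpi}\opb\pi\Cfield_U\tens\solE_{\widetilde X}(\shm^\sha)\simeq\Eopb\varpi\solE_X(\shm),
\]
proves the reconstruction isomorphism $\shm^\sha\isoto\fhom(\solE_{\widetilde X}(\shm^\sha),\OEn_{\widetilde X})$ \emph{on $\widetilde X$} (where the local splitting into exponentials is available), and only then descends via $\roim\varpi$, using $\Eeeim\varpi\Eopb\varpi\solE_X(\shm)\simeq\solE_X(\shm)$ (which holds because $\varpi$ is an isomorphism over $U$ and $\solE_X(\shm)\simeq\opb\pi\Cfield_U\tens\solE_X(\shm)$). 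The $\Cfield_U$ factor is what reconciles the discrepancy between $\Eepb\varpi\OEn_X$ and $\OEn_{\widetilde X}$; your displayed chain silently drops it.

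Second, the explicit computation for $\she^\varphi_{U|X}$ is not done by a Mittag-Leffler argument on the ind-limit. The paper works at the level of tempered distributions (Lemma~\ref{lem:rec-f-temp}): it identifies $\roim\pi\rihom(\Cfield_{\{t<\Re\varphi\}}[1],\DbT_{X_\R})$ with $\she^{-\varphi}_{U|X}\ltens[\O_X]\Dbt_{X_\R}$ by computing the kernel and cokernel of $\partial_t-1$ on tempered distributions on $\{t<\Re\varphi\}$. The crux is Sublemma~\ref{sublemma}, showing that $e^{t-\varphi(x)}v(x)$ is tempered on $\{t<\Re\varphi\}$ if and only if $v(x)$ is tempered on $U$; this is proved by an explicit cut-off in $t$. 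One then applies $\rhom[\D_{\overline X}](\O_{\overline X},\ast)$ to pass to $\OEn_X$ and obtains Proposition~\ref{pro:rec-ephi}, whose $\alpha_{\widetilde X}$ gives $\shm^\sha$ via Proposition~\ref{pro:AOt}. Your sketch has the right target but not this mechanism.
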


\begin{proof}
Consider the statement
\begin{equation}
\label{eq:P-rec}
P_X(\shm) = \text{``\,one has $\shm \isoto \fhom(\solE_X(\shm),\OEn_X)$\,''}.
\end{equation}
Then the hypotheses of Lemma~\ref{lem:redux} are all easily verified, except (e) and (f). We will prove (e) in Lemma~\ref{lem:rec-e} below, and (f) in Lemma~\ref{lem:rec-f} below. Then the theorem follows from Lemma~\ref{lem:redux}.
\end{proof}

\begin{lemma}
\label{lem:rec-e}
Let $f\colon X\to Y$ be a projective morphism and $\shm$ a good holonomic $\D_X$-module.
Under  notation \eqref{eq:P-rec},
if $P_X(\shm)$ is true, then $P_Y(\doim f \shm)$ is true.
\end{lemma}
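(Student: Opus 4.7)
The plan is to compute $\fhom(\solE_Y(\doim f\shm),\OEn_Y)$ directly, transport it back to $X$ via adjunction, apply the hypothesis $P_X(\shm)$, and push forward. Since $f$ is projective, hence proper, the natural morphism $\Eeeim f\to\Eoim f$ is an isomorphism, and by the corollary to Theorem~\ref{thm:drTsolT} (compatibility of $\solE$ with direct image) we have $\solE_Y(\doim f\shm)\simeq \Eeeim f\,\solE_X(\shm)[d_X-d_Y]$. Using the contravariance of $\fhom$ in its first argument, this already yields
\[
\fhom(\solE_Y(\doim f\shm),\OEn_Y)\simeq \fhom(\Eeeim f\,\solE_X(\shm),\OEn_Y)[d_Y-d_X].
\]

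Next, I would apply Lemma~\ref{homepb}, which gives $\fhom(\Eeeim f\,\solE_X(\shm),\OEn_Y)\simeq \roim f\,\fhom(\solE_X(\shm),\Eepb f\OEn_Y)$, together with Theorem~\ref{thm:Tfunct}(i), which identifies $\Eepb f\OEn_Y\simeq \D_{Y\from X}\ltens[\D_X]\OEn_X[d_X-d_Y]$. Substituting and cancelling the shifts produces
\[
\fhom(\solE_Y(\doim f\shm),\OEn_Y)\simeq \roim f\,\fhom\bl\solE_X(\shm),\D_{Y\from X}\ltens[\D_X]\OEn_X\br.
\]
At this point I would invoke the compatibility
\[
\fhom\bl\solE_X(\shm),\D_{Y\from X}\ltens[\D_X]\OEn_X\br\simeq \D_{Y\from X}\ltens[\D_X]\fhom(\solE_X(\shm),\OEn_X),
\]
which propagates the left $\opb f\D_Y$-/right $\D_X$-bimodule $\D_{Y\from X}$ through $\fhom$; this uses that $\D_{Y\from X}$ is locally $\opb f\D_Y\tens[\opb f\O_Y]\omega_{X/Y}$, so that it admits local resolutions by copies of $\D_X$-twisted sums, combined with the fact that $\fhom$ commutes with filtered colimits and finite direct sums in the second argument. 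Granting this, the hypothesis $P_X(\shm)$ rewrites the right-hand side as $\D_{Y\from X}\ltens[\D_X]\shm$, and applying $\roim f$ gives exactly $\doim f\shm$.

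To finish, one checks that the composite isomorphism agrees with the morphism \eqref{eq:rec-morph} for $\doim f\shm$. This is a formal diagram chase: the evaluation morphism $\shm\to\fhom(\solE_X(\shm),\OEn_X)$ is adjoint to the identity on $\solE_X(\shm)$, and each step of the above chain is an adjunction or functorial isomorphism, so the induced morphism $\doim f\shm\to \fhom(\solE_Y(\doim f\shm),\OEn_Y)$ coincides with \eqref{eq:rec-morph} for $\doim f\shm$. Since the hypothesis gives an \emph{iso}morphism at the $\shm$-level, so does the composite at the $\doim f\shm$-level, establishing $P_Y(\doim f\shm)$.

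The main obstacle is the interchange $\fhom(\solE_X(\shm),\D_{Y\from X}\ltens[\D_X]\OEn_X)\simeq \D_{Y\from X}\ltens[\D_X]\fhom(\solE_X(\shm),\OEn_X)$. Everything else is a formal manipulation based on results already established (Theorem~\ref{thm:Tfunct}(i), Lemma~\ref{homepb}, properness), but this step mixes the enhanced ind-sheaf operations with the $\D_X$-module structure on $\OEn_X$ and requires genuine use of the local structure of $\D_{Y\from X}$ as a $\D_X$-module. The shift-bookkeeping (the $[d_X]$'s coming from $\solE$, from Theorem~\ref{thm:Tfunct}(i), and from the contravariance of $\fhom$) cancels out cleanly as above, which is a useful sanity check that the proposed route is correct.
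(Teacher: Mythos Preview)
Your proof is correct and follows essentially the same route as the paper's own proof: compute $\solE_Y(\doim f\shm)$ via the direct-image compatibility, use Lemma~\ref{homepb} to move $\Eeeim f$ across $\fhom$, apply Theorem~\ref{thm:Tfunct}(i) to rewrite $\Eepb f\OEn_Y$, interchange $\D_{Y\from X}\ltens[\D_X](-)$ with $\fhom$, and then invoke $P_X(\shm)$. The paper's argument is exactly this chain of isomorphisms, written in a single display; the interchange step you flag as the ``main obstacle'' is used by the paper without further comment, so your discussion is if anything more scrupulous than the original.
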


\begin{proof}
One has
\begin{align*}
\fhom(\solE_Y(\doim f\shm),\OEn_Y)
&\simeq \fhom(\Eoim f\solE_X(\shm)[d_X-d_Y],\OEn_Y) \\
&\simeq \roim f\fhom(\solE_X(\shm),\Eepb f\OEn_Y[d_Y-d_X]) \\
&\simeq \roim f\fhom(\solE_X(\shm),\D_{Y\leftarrow X} \ltens[\D_X] \OEn_X) \\
&\simeq \roim f (\D_{Y\leftarrow X} \ltens[\D_X] \fhom(\solE_X(\shm),\OEn_X)) \\
&\simeq \roim f (\D_{Y\leftarrow X} \ltens[\D_X] \shm) 
= \doim f \shm,
\end{align*}
where the last isomorphism follows from the fact that $P_X(\shm)$ is true.
\end{proof}

We now have to show that Theorem~\ref{thm:reconstruction} holds if $\shm$ is a holonomic $\D_X$-module with a normal form along a normal crossing divisor. We begin with the following result, analogous to \cite[Proposition 7.3]{DAg12}.

\begin{lemma}
\label{lem:rec-f-temp}
Let $Y\subset X$ be a complex analytic hypersurface and $\varphi\in\O_X(*Y)$. 
Set $U=X\setminus Y$.
Then there is an isomorphism in $\BDC(\ind(\D_X\tens[\Cfield]\D_{\overline X}))$
\[
\roim\pi\rihom(\Cfield_{\{t<\Re\varphi\}}[1],\DbT_{X_\R})
\simeq \she^{-\varphi}_{U|X} \ltens[\O_X] \Dbt_{X_\R}.
\]
\end{lemma}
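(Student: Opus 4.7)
The plan is to expand $\DbT_{X_\R}$ as a two-term complex of subanalytic ind-sheaves, reduce the left-hand side to the analogous complex of tempered distributions supported in $S_\varphi \defeq \{(x,t)\in U\times\R\semicolon t<\Re\varphi(x)\}$, and identify the resulting kernel with the right-hand side by multiplication with the tempered function $e^{t-\varphi}$.

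First, as in the proof of Lemma~\ref{lem:Rt}, a local free presentation $0\to\D_\PP\to[\cdot(\partial_\tau-1)]\D_\PP\to\she^\tau_{\C|\PP}\to 0$ on $\C$ (combined with the $(*\infty)$-modification) exhibits $\DbT_{X_\R}$ in $\BDC(\iCfield_{X_\R\times\R_\infty})$ as the two-term complex
\[
\bigl[\,\Dbt_{X_\R\times\R_\infty}\to[\partial_t-1]\Dbt_{X_\R\times\R_\infty}\,\bigr]\quad\text{in degrees $-1,0$},
\]
where $\Dbt_{X_\R\times\R_\infty}\defeq\opb j\rihom(\C_{X_\R\times\R},\Dbt_{X_\R\times\PR})$. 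Applying $\rihom(\C_{S_\varphi}[1],\,\cdot\,)$ and then $\roim\pi$ turns the left-hand side into
\[
\roim\pi\bigl[\,\rihom(\C_{S_\varphi},\Dbt_{X_\R\times\R_\infty})\to[\partial_t-1]\rihom(\C_{S_\varphi},\Dbt_{X_\R\times\R_\infty})\,\bigr][-1].
\]

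Next, I claim that $\partial_t-1$ is an epimorphism on this complex of subanalytic ind-sheaves. Given a tempered distribution $w$ on $S_\varphi$, the explicit formula $v(x,t)\defeq-\int_t^{\Re\varphi(x)}e^{t-s}w(x,s)\,ds$ provides a primitive, and its temperedness is inherited from $w$ because the kernel $e^{t-s}$ is bounded by $1$ for $s\geq t$. Therefore, up to the $[-1]$-shift, the complex is quasi-isomorphic to the kernel $\shk\defeq\ker(\partial_t-1)$ placed in degree~$0$, and the left-hand side becomes $\roim\pi\shk$.

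Finally, I identify $\roim\pi\shk$ with the right-hand side via multiplication by $e^{t-\varphi}$. On $S_\varphi$ the estimate $|e^{t-\varphi}|=e^{t-\Re\varphi}\leq 1$ lets the exponential factor dominate the polar growth of all derivatives of $\varphi$ near $Y$, so $e^{t-\varphi}$ is a section of $\Cit_{X_\R\times\R_\infty}$ on $S_\varphi$ and induces an $\O_X\tens\O_{\overline X}$-linear automorphism of the relevant tempered distributions. The assignment $h\cdot e^{-\varphi}\longmapsto h(x)\,e^{t-\varphi(x)}$ sends a section of $\she^{-\varphi}_{U|X}\ltens[\O_X]\Dbt_{X_\R}$ to a section of $\shk$: annihilation by $\partial_t-1$ is immediate, the $\D_X$- and $\D_{\overline X}$-actions are intertwined by the Leibniz rule, and the push-forward by $\pi$ absorbs the $e^t$-factor since every section of $\shk$ has the form $f(x)\,e^t$. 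The inverse direction uses the tempered unit $e^{i\Im\varphi}\in\Cit_{X_\R}(U)$ to pass between temperedness of $f\,e^{\Re\varphi}$ and temperedness of $f\,e^\varphi$. The principal obstacle is the analytic input underlying Steps~2 and~3—surjectivity-with-tempered-primitive of $\partial_t-1$ on $S_\varphi$, and the $\Cit$-character of $e^{t-\varphi}$ on $S_\varphi$ up to the boundary along $Y$—both of which rest on the single bound $|e^{t-\varphi}|\leq 1$ on $S_\varphi$ and the exponential-beats-polynomial principle near~$Y$.
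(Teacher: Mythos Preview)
Your overall structure matches the paper's: represent $\DbT_{X_\R}$ by the two-term complex $[\partial_t-1]$, prove surjectivity, and identify the kernel via $e^{t-\varphi}$. But two steps have genuine gaps.

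\textbf{Surjectivity.} Your primitive $v(x,t)=-\int_t^{\Re\varphi(x)}e^{t-s}w(x,s)\,ds$ integrates a distribution up to the \emph{boundary} of the open set $S_\varphi$, and this boundary value has no meaning for a general (tempered) distribution. The paper avoids this entirely: it observes that $\partial_t-1$ is already surjective on $\Dbt_{X_\R\times\PR}(V\times\R)$, and then uses that the restriction map $\Dbt(V\times\R)\to\Dbt(S_\varphi\cap(V\times\R))$ is surjective by the very definition of $\Dbt$. No explicit primitive on $S_\varphi$ is needed.

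\textbf{Kernel, inverse direction.} The forward map $v\mapsto e^{t-\varphi}v$ is fine, since $e^{t-\varphi}\in\Cit$ on $S_\varphi$ by your bound $|e^{t-\varphi}|\le1$. The real content is the converse: why does temperedness of $e^{t-\varphi(x)}v(x)$ on $S_\varphi$ force temperedness of $v$ on $U$? Your remark about $e^{i\Im\varphi}$ only trades $e^{\varphi}$ for $e^{\Re\varphi}$; it does nothing to remove the remaining factor $e^{t-\Re\varphi}$, and you cannot simply multiply by its inverse $e^{\Re\varphi-t}$ since that is \emph{not} tempered on $S_\varphi$. The paper's device (its Sublemma) is to choose a bump function $\chi$ supported in $(-2,-1)$ with $\int e^t\chi(t)\,dt=1$, form the tempered product $\chi(t-\Re\varphi(x))\,e^{t-\varphi(x)}v(x)$, extend it by zero across the rest of $(V\cap U)\times\PR$, and then integrate out $t$ to recover $v$ as a tempered distribution. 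This cutoff-and-integrate trick is the missing idea in your sketch; the bound $|e^{t-\varphi}|\le1$ alone does not suffice.
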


\begin{proof}
Recall that $j\colon X\times\R_\infty \to X\times \PR$ and $\overline\pi\colon X\times\PR \to X$ denote the natural morphisms. Recall that
\[
\DbT_{X_\R} = 
\epb j \rhom[\D_\PP](\she_{\C|\PP}^\tau,\Dbt_{X_\R\times\PR})[1].
\]
Hence $\roim\pi\rihom(\Cfield_{\{t<\Re\varphi\}}[1],\DbT_{X_\R})$ is represented by the complex
\begin{equation}
\label{eq:piihomtleqphi}
\oim{\overline\pi}\ihom(\Cfield_{\{t<\Re\varphi\}},\Dbt_{X_\R\times\PR})
\To[\partial_t -1]
\oim{\overline\pi}\ihom(\Cfield_{\{t<\Re\varphi\}},\Dbt_{X_\R\times\PR})
\end{equation}
with components in degree $0$ and $1$.
We consider them as subanalytic ind-sheaves.

For any relatively compact open subanalytic subset $V\subset X$, \eqref{eq:piihomtleqphi} induces a complex
\begin{equation}
\label{eq:piihomtleqphiV}
\Dbt_{X_\R\times\PR}(\{t<\Re\varphi\}\cap(V\times\R))
\To[\partial_t -1]
\Dbt_{X_\R\times\PR}(\{t<\Re\varphi\}\cap(V\times\R)).
\end{equation}
Hence it is enough to show that \eqref{eq:piihomtleqphiV} is surjective and that its kernel
\[
\ker(\partial_t-1) = \{u(t,x)\in \Dbt_{X_\R\times\PR}(\{t<\Re\varphi\}\cap(V\times\R)) \semicolon \partial_t u = u \}
\]
is given by
\begin{equation}
\label{eq:kertau-1}
\{e^{t-\varphi(x)}v(x)\semicolon v(x)\in\Dbt_{X_\R}(V\cap U) \}.
\end{equation}
Note that the morphism 
\[
\bl\she^{-\varphi}_{U|X} \tens[\O_X] \Dbt_{X_\R}\br(V\cap U) 
\simeq \C\,e^{-\vphi}\tens\Dbt_{X_\R}(V\cap U)  \to \ker(\partial_t-1)
\]
is given by $e^{-\varphi} \tens v(x) \mapsto e^{t-\varphi(x)}v(x)$.

The surjectivity follows from the surjectivity of
\[
\Dbt_{X_\R\times\PR}(V\times\R)
\to[\partial_t -1]
\Dbt_{X_\R\times\PR}(V\times\R).
\]

Neglecting the tempered growth, it is obvious that
\begin{multline*}
\{u(t,x)\in \Db_{X_\R\times\PR}(\{t<\Re\varphi\}\cap(V\times\R)) \semicolon \partial_t u = u \} \\
=\{e^{t-\varphi(x)}v(x)\semicolon v(x)\in\Db_{X_\R}(V\cap U) \}.
\end{multline*}
Hence, \eqref{eq:kertau-1} coincides with $\ker(\partial_t-1)$
by the following sublemma.
\end{proof}

\begin{sublemma}
\label{sublemma}
For $v(x)\in\Db_{X_\R}(V\cap U)$, one has 
\[
e^{t-\varphi(x)}v(x) \in \Dbt_{X_\R\times\PR}(\{t<\Re\varphi\}\cap(V\times\R))
\] 
if and only if 
\[
v(x)\in\Dbt_{X_\R}(V\cap U).
\]
\end{sublemma}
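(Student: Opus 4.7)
The proof reduces to showing that multiplication by $e^{t-\vphi(x)}$ induces a bijection between the two spaces of tempered distributions in question. I would establish the two implications separately, with the ``only if'' direction being the essential difficulty.

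For the ``if'' direction, the plan is to verify that $e^{t-\vphi(x)}$ defines a section of the subanalytic sheaf $\Cit_{X_\R\times\PR}$ on the open subanalytic set $\{t<\Re\vphi\}\cap(V\times\R)\subset V\times\PR$. The crucial estimates are: on this set $|e^{t-\vphi(x)}|=e^{t-\Re\vphi(x)}\leq 1$, while any derivative $\partial_t^a\partial_x^\alpha e^{t-\vphi(x)}$ is a polynomial in derivatives of $\vphi$ multiplied by $e^{t-\vphi(x)}$. Since $\vphi\in\O_X(*Y)$ is meromorphic, each $\partial_x^\beta\vphi$ has polynomial growth in $1/\dist(x,Y)$, and combined with $|e^{t-\vphi}|\leq 1$ this yields polynomial growth of all derivatives at the boundary in $V\times\PR$. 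At $t=-\infty$ the function is rapidly decreasing; at $t=+\infty$ the constraint $t<\Re\vphi$ forces $x\to Y$ with $\Re\vphi\to+\infty$, so $|e^{t-\vphi}|\leq 1$ persists. Once temperedness of $e^{t-\vphi(x)}$ as a smooth function is established, Proposition~\ref{pro:CDtoD} guarantees that multiplying it against the pull-back of $v\in\Dbt_{X_\R}(V\cap U)$ produces an element of $\Dbt_{X_\R\times\PR}(\{t<\Re\vphi\}\cap(V\times\R))$.

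For the ``only if'' direction, given a global extension $\tilde w\in\Db_{X_\R\times\PR}(X\times\PR)$ of $e^{t-\vphi(x)}v(x)$, I would try to recover a global extension of $v$ by an integration device. Fix $\rho\in\shc^\infty_c(\R)$ supported in $[-1,0]$ with $\int\rho(t)e^t\,dt=1$, and for each $a\in\R$ set $\rho_a(t)\defeq e^{-a}\rho(t-a)$, so that $\supp\rho_a\subset[a-1,a]$ and $\int\rho_a(t)e^t\,dt=1$. Writing $p\colon X\times\PR\to X$ for the projection, define $\tilde v_a\defeq p_*(\rho_a\cdot\tilde w)\in\Db_{X_\R}(X)$. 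A direct computation shows that $\tilde v_a(x)=v(x)$ on the open subanalytic subset $\{x\in V\cap U\semicolon \Re\vphi(x)\geq a\}$, since on $\{x\}\times[a-1,a]$ the distribution $\tilde w$ coincides with $e^{t-\vphi(x)}v(x)$. As $a\to-\infty$ these subsets exhaust $V\cap U$, and any two $\tilde v_a$ and $\tilde v_{a'}$ agree with $v$ (hence with each other) on $V\cap U\cap\{\Re\vphi\geq\max(a,a')\}$.

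The main obstacle will be the final amalgamation step when $\Re\vphi$ is unbounded below near some point $y_0\in V\cap Y$: a single $\tilde v_a$ does not in general restrict to $v$ on all of $V\cap U$, because on $\{\Re\vphi<a\}$ the formula $\tilde v_a=p_*(\rho_a\tilde w)$ is unrelated to $v$. To resolve this, I would cover $V\setminus Y$ by subanalytic opens $\{\Re\vphi>a_k\}$, use the compatibility of the $\tilde v_{a_k}$'s on overlaps inside $V\cap U$, and glue them by a subanalytic partition of unity to produce a single $\tilde v\in\Db_{X_\R}(X)$ whose restriction to $V\cap U$ equals $v$. Verifying that this patching is well-defined across the boundary points of $V\cap Y$ where $\Re\vphi$ has no lower bound is the main technical hurdle, and will require care in choosing the partition of unity to be compatible with the (subanalytic) level sets of $\Re\vphi$.
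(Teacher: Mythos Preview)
Your ``if'' direction is correct and essentially identical to the paper's: one checks that $e^{t-\varphi(x)}\in\Cit_{X_\R\times\PR}(\{t<\Re\varphi\})$ and then invokes Proposition~\ref{pro:CDtoD}.

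The ``only if'' direction, however, contains a computational error that breaks the argument. On $\{\Re\varphi(x)>a\}$ one has
\[
\tilde v_a(x)=\int\rho_a(t)\,e^{t-\varphi(x)}v(x)\,dt
= v(x)\,e^{-\varphi(x)}\int\rho_a(t)e^t\,dt
= v(x)\,e^{-\varphi(x)},
\]
not $v(x)$. Your normalization $\int\rho_a(t)e^t\,dt=1$ kills the $t$-integral but leaves the factor $e^{-\varphi(x)}$, which is exactly the non-tempered piece you are trying to control. No fixed cutoff $\rho_a(t)$ can remedy this, because $\int\rho_a(t)e^{t-\varphi(x)}\,dt$ always depends on $x$ through $e^{-\varphi(x)}$.

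The paper resolves this in one stroke by using an $x$-dependent cutoff. One fixes $\chi\in\shc^\infty_c(\R)$ supported in $(-2,-1)$ with $\int\chi(s)e^s\,ds=1$, and multiplies the tempered distribution $e^{t-\varphi(x)}v(x)$ by the tempered smooth function $\chi(t-\Re\varphi(x))\,e^{\sqrt{-1}\Im\varphi(x)}$. This produces $\chi(t-\Re\varphi(x))\,e^{t-\Re\varphi(x)}v(x)$, whose integral in $t$ is $v(x)\int\chi(s)e^s\,ds=v(x)$ uniformly in $x$. Because the support in $t$ lies in the band $\Re\varphi(x)-2\le t\le\Re\varphi(x)-1$, the result extends by zero to all of $(U\cap V)\times\PR$ as a tempered distribution, and pushing forward along the (proper) projection yields $v\in\Dbt_{X_\R}(U\cap V)$ directly. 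No gluing is needed; the moving cutoff is precisely what your attempted partition-of-unity patching would have to approximate, and it sidesteps the obstruction you correctly identified at points of $Y$ where $\Re\varphi\to-\infty$.
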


\begin{proof}
Assume $v(x)\in\Dbt_{X_\R}(V\cap U)$.
Since $e^{t-\varphi(x)}$ belongs to $\Cit_{X_\R\times\PR}(\{t<\Re\varphi\})$, one has
\[
e^{t-\varphi(x)}v(x) \in \Dbt_{X_\R\times\PR}(\{t<\Re\varphi\}\cap(V\times\R)).
\]
Conversely, assume $e^{t-\varphi(x)}v(x)\in\Dbt_{X_\R\times\PR}(\{t<\Re\varphi\}\cap(V\times\R))$.
Take a $C^\infty$-function $\chi(t)$ on $\PR$ whose support is contained in 
$\set{t\in\R}{-2<t<-1}$ and such that $\int e^t\chi(t)dt=1$. 
Set 
\eqn
&&W_1=\set{(x,t)\in (U\cap V)\times \R}{t<\Re\varphi(x)}\\
&&Z=\set{(x,t)\in (U\cap V) \times \R}{\Re\varphi(x)-2\le t\le\Re\varphi(x)-1}\\
&&W_2=\bigl((U\cap V)\times \PR\bigr)\setminus Z
\eneqn
Then $W_1$ and $W_2$ are subanalytic open subsets and we have
$(U\cap V) \times \PR=W_1\cup W_2$.
Since
$\chi(t-\Re\varphi(x))e^{\sqrt{-1}\Im\varphi(x)}$ belongs to $\Cit_{X_\R\times\PR}(
W_1)$, we obtain
\[
\chi(t-\Re\varphi(x))e^{t-\Re\varphi(x)}v(x) \in \Dbt_{X_\R\times\PR}(W_1).
\]
Since $\chi(t-\Re\varphi(x))e^{\sqrt{-1}\Im\varphi(x)}v(x)$ vanishes on $W_1\cap W_2$,
there exists $w(x,t) \in \Dbt_{X_\R\times\PR}((U\cap V) \times \PR)$ such that
$$\text{$w(x,t)\vert_{W_1}
= \chi(t-\Re\varphi(x))e^{t-\Re\varphi(x)}v(x)$ and
$w(x,t)\vert_{W_2}=0$.}$$
Hence, $v(x) = \int w(x,t)dt \in \Dbt_{X_\R}(U\cap V)$.
\end{proof}

We deduce the following result, analogous to \cite[Proposition 8.1]{DAg12}.

\begin{proposition}
\label{pro:rec-ephi}
Using the same notations as in {\rm Lemma~\ref{lem:rec-f-temp}}, one has
\[
\roim\pi\rihom(\LE \solE_X(\she^\varphi_{U|X}),\RE \OEn_X)
\simeq \she^\varphi_{U|X} \ltens[\O_X] \Ot_X.
\]
In particular,
\[
\fhom(\solE_X(\she^\varphi_{U|X}),\OEn_X)
\simeq \she^\varphi_{U|X}.
\]
\end{proposition}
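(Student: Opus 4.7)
The plan is first to produce a concrete representative of $\LE\solE_X(\she^\varphi_{U|X})$ in $\BDC(\iCfield_{X\times\R_\infty})$. By Corollary~\ref{cor:solTEphi} one has $\solE_X(\she^\varphi_{U|X})\simeq \Cfield_X^\enh\ctens\Cfield_{\{t=-\Re\varphi\}}$ in $\BEC[\iCfield]X$, and unfolding the definition of $\Cfield_X^\enh$ yields the representative
\[
F_1\defeq\indlim[a\to+\infty]\Cfield_{\{t\geq a-\Re\varphi\}} \in \BDC(\iCfield_{X\times\R_\infty}).
\]
Since $\reeim\pi\Cfield_X^\enh\simeq 0$, one has $\reeim\pi F_1\simeq 0$, so $F_1$ lies in ${}^\bot\ind\shc_{\{t^*=0\}}$ and therefore represents $\LE\solE_X(\she^\varphi_{U|X})$.

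Second, I would invoke the Dolbeault presentation $\RE\OEn_X\simeq\rhom[\opb\pi\D_{\overline X}](\opb\pi\O_{\overline X},\DbT_{X_\R})$ from the proof of Theorem~\ref{thm:OTgeq0} and apply Proposition~\ref{pro:Prolim} (with $F^\bullet_n=\Cfield_{\{t\geq n-\Re\varphi\}}$) to reduce the computation to
\[
\prolim[a\to+\infty]\rhom[\D_{\overline X}]\bl\O_{\overline X},\,\roim\pi\rihom(\Cfield_{\{t\geq a-\Re\varphi\}},\DbT_{X_\R})\br.
\]
For each fixed $a$, the distinguished triangle $\Cfield_{\{t<a-\Re\varphi\}}\to\Cfield_{U\times\R}\to\Cfield_{\{t\geq a-\Re\varphi\}}\To[+1]$ combined with the vanishing $\roim\pi\rihom(\Cfield_{U\times\R},\DbT_{X_\R})\simeq\rhom(\Cfield_U,\roim\pi\DbT_{X_\R})\simeq 0$ (a consequence of Proposition~\ref{pro:DbTgeq0} and Corollary~\ref{cor:pieeim}) reduces the inner term to $\roim\pi\rihom(\Cfield_{\{t<a-\Re\varphi\}},\DbT_{X_\R})[-1]$. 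Applying Lemma~\ref{lem:rec-f-temp} to the meromorphic function $\psi=a-\varphi\in\O_X(*Y)$ then identifies
\[
\roim\pi\rihom(\Cfield_{\{t\geq a-\Re\varphi\}},\DbT_{X_\R})\simeq \she^{\varphi-a}_{U|X}\ltens[\O_X]\Dbt_{X_\R}.
\]

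The delicate step, and the main obstacle, is computing the projective limit of the pro-system $\{\she^{\varphi-a}_{U|X}\ltens\Dbt_{X_\R}\}_{a\to+\infty}$. At face value, the transition morphism for $a<a'$, induced by restriction from $\{t<a'-\Re\varphi\}$ to the smaller set $\{t<a-\Re\varphi\}$, looks like multiplication by $e^{a-a'}$ and appears to shrink to zero. However, combining the explicit formula $e^{-\psi}\tens v\mapsto e^{t-\psi(x)}v(x)$ from the proof of Lemma~\ref{lem:rec-f-temp} with the canonical identification $\she^{\varphi-a}_{U|X}=\she^\varphi_{U|X}$ (the underlying $\D_X$-modules coincide, since their generators $e^{\varphi-a}$ and $e^{-a}e^\varphi$ define the same submodule of $\O_X(*Y)$), one checks that under these identifications every transition morphism becomes the identity of $\she^\varphi_{U|X}\ltens[\O_X]\Dbt_{X_\R}$. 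The pro-system is therefore essentially constant, its limit equals $\she^\varphi_{U|X}\ltens[\O_X]\Dbt_{X_\R}$, and applying $\rhom[\D_{\overline X}](\O_{\overline X},-)$ yields the first isomorphism of the proposition. The ``In particular'' part then follows from Definition~\ref{def:HomT} by applying $\alpha_X$, using that $\alpha_X$ commutes with the tensor product by the classical $\D_X$-module $\she^\varphi_{U|X}$ and that $\alpha_X\Ot_X\simeq\O_X$.
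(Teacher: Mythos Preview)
Your argument is correct, but it unwinds by hand something the paper disposes of in one line.

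The paper writes $\LE\solE_X(\she^\varphi_{U|X})\simeq\Cfield_{\{t\gg0\}}\ctens\Cfield_{\{t<-\Re\varphi\}}[1]$ and then simply applies Lemma~\ref{lem:rec-f-temp} (with $-\varphi$ in place of $\varphi$) after passing through the Dolbeault resolution. The $\Cfield_{\{t\gg0\}}\ctens(-)$ factor disappears because, by Lemma~\ref{lem:piihomctens},
\[
\roim\pi\rihom\bigl(\Cfield_{\{t\gg0\}}\ctens\Cfield_{\{t<-\Re\varphi\}}[1],\DbT_{X_\R}\bigr)
\simeq
\roim\pi\rihom\bigl(\Cfield_{\{t<-\Re\varphi\}}[1],\cihom(\Cfield_{\{t\gg0\}},\DbT_{X_\R})\bigr),
\]
and Proposition~\ref{pro:DbTgeq0} (together with the implication (ii)$\Rightarrow$(iv) of Proposition~\ref{pro:equivTam}, itself proved via Proposition~\ref{pro:Prolim}) gives $\cihom(\Cfield_{\{t\gg0\}},\DbT_{X_\R})\simeq\DbT_{X_\R}$ in $\BDC(\iCfield_{X\times\R_\infty})$. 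So a single application of Lemma~\ref{lem:rec-f-temp} suffices.

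Your route---applying Proposition~\ref{pro:Prolim} directly to $\indlim_a\Cfield_{\{t\geq a-\Re\varphi\}}$, invoking Lemma~\ref{lem:rec-f-temp} for each $a$, and then checking that the resulting pro-system $\{\she^{\varphi-a}_{U|X}\ltens\Dbt_{X_\R}\}_a$ is essentially constant---reproves exactly this stability statement in the special case at hand. Your verification that the transition maps become identities under the identification $e^{\varphi-a}\leftrightarrow e^{-a}e^{\varphi}$ is correct and is the concrete content of that stability. What you gain is explicitness; what you lose is brevity, since the paper has already packaged the pro-limit computation once and for all in Propositions~\ref{pro:DbTgeq0} and~\ref{pro:equivTam}.
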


\begin{proof}
We have
\begin{align*}
&\LE \solE_X(\she^\varphi_{U|X}) \underset{(*)}\simeq \Cfield_{\{t\gg0\}}\ctens\Cfield_{\{t<-\Re\varphi\}}[1], \\
&\RE \OEn_X \simeq \rhom[\D_{\overline X}](\O_{\overline X},\DbT_{X_\R}), \\
&\Ot_X \simeq \rhom[\D_{\overline X}](\O_{\overline X},\Dbt_{X_\R}),
\end{align*}
where $(*)$ follows from Corollary~\ref{cor:solTEphi}.

The statement then follows by applying the functor $\rhom[\D_{\overline X}](\O_{\overline X},\ast)$ to the isomorphism of Lemma~\ref{lem:rec-f-temp}.
\end{proof}

Now it remains to prove the following result, required in the proof of Theorem~\ref{thm:reconstruction}.

\begin{lemma}
\label{lem:rec-f}
Let $\shm$ be a holonomic $\D_X$-module with a normal form along a normal crossing divisor. Then
\[
\shm \isoto \fhom(\solE_X(\shm),\OEn_X).
\]
\end{lemma}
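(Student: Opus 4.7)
The strategy is to pass to the real blow-up $\varpi\colon\widetilde X\to X$ of $X$ along the normal crossing divisor $D$ where $\shm$ has its singularities. On $\widetilde X$ the $\D_{\widetilde X}^\sha$-module $\shm^\sha$ decomposes locally as a finite direct sum of exponential summands $(\she^{\varphi_i}_{X\setminus D|X})^\sha$, for which Proposition~\ref{pro:rec-ephi} already supplies the required reconstruction isomorphism. So my plan is to transfer the morphism $\shm\to\fhom(\solE_X(\shm),\OEn_X)$ through $\varpi$, compute the transferred morphism fibrewise on $\widetilde X$ using the local decomposition, and then push back down.

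First I would transfer the reconstruction morphism to $\widetilde X$. Since $\shm\simeq\shm(*D)$, Corollary~\ref{cor:drTXtilde} gives $\drE_X(\shm)\simeq\Eoim\varpi\drE_{\widetilde X}(\shm^\sha)$ and $\drE_{\widetilde X}(\shm^\sha)\simeq\Eepb\varpi\drE_X(\shm)$. Since $\varpi$ is proper and the objects involved are $\R$-constructible, $\Eeeim\varpi\simeq\Eoim\varpi$, so Theorem~\ref{thm:drTsolT} together with Proposition~\ref{pro:eopbTdual} yields
\[
\solE_X(\shm)[d_X]\simeq\Eoim\varpi\,\Edual_{\widetilde X}\drE_{\widetilde X}(\shm^\sha).
\]
Lemma~\ref{homepb} then gives
\[
\fhom(\solE_X(\shm),\OEn_X)[d_X]\simeq\roim\varpi\,\fhom\bl\Edual_{\widetilde X}\drE_{\widetilde X}(\shm^\sha),\,\Eepb\varpi\OEn_X\br.
\]
Using Corollary~\ref{cor:SolXminusY} (which writes $\solE_X(\shm)$ as $\opb\pi\Cfield_{X\setminus D}\tens\solE_X(\shm)$) I can freely replace $\OEn_X$ by $\rihom(\opb\pi\Cfield_{X\setminus D},\OEn_X)$ inside the inner $\fhom$; its $\Eepb\varpi$ is the enhanced version of Theorem~\ref{thm:forOTtilde} and should be identified with $\OEn_{\widetilde X}$. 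With this identification the right-hand side becomes $\roim\varpi\,\fhom(\solE_{\widetilde X}(\shm^\sha),\OEn_{\widetilde X})[d_X]$, where $\solE_{\widetilde X}:=\Edual_{\widetilde X}\drE_{\widetilde X}[-d_X]$.

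Next I would run the local computation on $\widetilde X$. At each $\widetilde x\in\widetilde X^0$ the normal-form hypothesis provides a neighborhood $V\subset\widetilde X$ and meromorphic functions $\varphi_1,\dots,\varphi_r\in\O_X(*D)$ with $\shm^\sha|_V\simeq\soplus_i(\she^{\varphi_i}_{X\setminus D|X})^\sha|_V$. For each summand, pulling back Proposition~\ref{pro:rec-ephi} through $\varpi$ (using Corollary~\ref{cor:drTXtilde} for $\drE_{\widetilde X}((\she^{\varphi_i})^\sha)\simeq\Eepb\varpi\drE_X(\she^{\varphi_i})$ and the $\Eepb\varpi\OEn_X\simeq\OEn_{\widetilde X}$ identification above) gives
\[
\fhom\bl\solE_{\widetilde X}((\she^{\varphi_i})^\sha),\OEn_{\widetilde X}\br\simeq(\she^{\varphi_i})^\sha\quad\text{on $V$,}
\]
and summing yields $\fhom(\solE_{\widetilde X}(\shm^\sha),\OEn_{\widetilde X})\simeq\shm^\sha$ locally on $\widetilde X$. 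Naturality of the reconstruction morphism shows these local isomorphisms patch. Finally, applying $\roim\varpi$ and using Proposition~\ref{pro:AOt} together with Corollary~\ref{cor:varpiOt} to compute $\roim\varpi\sha_{\widetilde X}\simeq\O_X(*D)$, we get $\roim\varpi\shm^\sha\simeq\O_X(*D)\ltens[\O_X]\shm\simeq\shm$, which closes the argument.

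The main obstacle I anticipate is the second step: establishing the $\OEn$-analog of Theorem~\ref{thm:forOTtilde} and verifying that $\Eepb\varpi\OEn_X$ (after tensoring with $\opb\pi\Cfield_{X\setminus D}$) carries the correct $\D_{\widetilde X}^\sha$-structure, so that the local decomposition of $\shm^\sha$ genuinely passes through the $\fhom$. A subsidiary difficulty is that the normal-form decomposition holds only locally on $\widetilde X$, so one must ensure the local reconstruction isomorphisms produced by Proposition~\ref{pro:rec-ephi} glue — this is morally automatic from the functoriality of $\drE_X$, $\solE_X$, $\fhom$ and duality, but the bookkeeping with the $\sha_{\widetilde X}$-module framework and the appropriate definition of $\solE_{\widetilde X}$ must be set up cleanly.
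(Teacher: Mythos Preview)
Your strategy is the paper's: pass to the real blow-up, establish the reconstruction isomorphism for $\shm^\sha$ on $\widetilde X$ via the local normal-form decomposition and Proposition~\ref{pro:rec-ephi}, then push down by $\roim\varpi$. The one meaningful difference is how $\solE_{\widetilde X}$ is handled. You define it as $\Edual_{\widetilde X}\drE_{\widetilde X}[-d_X]$ and route through duality and Proposition~\ref{pro:eopbTdual}; the paper instead sets $\solE_{\widetilde X}(\shl)=\rhom[\D^\sha_{\widetilde X}](\shl,\OEn_{\widetilde X})$ directly. This buys two things. First, the local decomposition of $\shm^\sha$ as a $\D^\sha_{\widetilde X}$-module passes through $\solE_{\widetilde X}$ tautologically, and the comparison $\opb{\tilde\varpi}\opb\pi\Cfield_U\tens\solE_{\widetilde X}(\shm^\sha)\simeq\Eopb\varpi\solE_X(\shm)$ follows just by unwinding $\OEn_{\widetilde X}\simeq\Eepb\varpi\rihom(\opb\pi\Cfield_U,\OEn_X)$ (the $\OEn$-analogue of Theorem~\ref{thm:forOTtilde}, i.e.\ exactly your ``main obstacle'', which holds for the same reason). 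Second, your gluing worry evaporates: the morphism $\shm^\sha\to\fhom(\solE_{\widetilde X}(\shm^\sha),\OEn_{\widetilde X})$ is defined globally and functorially, so one only checks it is an isomorphism locally by reducing to $\shm=\she^\varphi_{U|X}$. For that local step, note that ``pulling back Proposition~\ref{pro:rec-ephi}'' is not automatic since $\fhom$ does not commute with $\epb\varpi$; the paper works instead at the ind-sheaf level, showing $\roimv{\pi_{\widetilde X\sep*}}\rihom(\LE\solE_{\widetilde X}(\shm^\sha),\RE\OEn_{\widetilde X})\simeq\epb\varpi(\shm\ltens[\O_X]\Ot_X)$ by base change and the unsheafified version of Proposition~\ref{pro:rec-ephi}, and then applies $\alpha_{\widetilde X}$ together with Proposition~\ref{pro:AOt}.
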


\begin{proof}
Let $D\subset X$ be a normal crossing divisor and $\shm$ a holonomic $\D_X$-module with a normal form along $D$. Set $U=X\setminus D$.
We keep the same notations as in Section~\ref{se:realblow} 
such as  $\varpi\colon\widetilde X\to X$ and $\D^\sha_{\widetilde X}$. 
We also consider the natural morphisms
\[
X \times \R_\infty\from[\tilde\varpi] \widetilde X \times \R_\infty \to[\pi_{\widetilde X}] \widetilde X.
\]
For a $\D^\sha_{\widetilde X}$-module $\shl$, we set
\[
\solE_{\widetilde X}(\shl) = \rhom[\D^\sha_{\widetilde X}](\shl, \OEn_{\widetilde X}) \in \BEC[\iCfield]{\widetilde X}.
\]
Similarly to the construction of \eqref{eq:rec-morph}, 
we have a morphism
\begin{equation}
\label{eq:Topbmorph}
\shl
\to \fhom(\solE_{\widetilde X}(\shl),\OEn_{\widetilde X}).
\end{equation}

\smallskip\noindent(i)
We shall first show
\begin{equation}
\label{eq:Topba}
\opb{\tilde\varpi}\opb\pi\Cfield_U \tens \solE_{\widetilde X}(\shm^\sha) 
\simeq \Eopb\varpi\solE_X(\shm).
\end{equation}
Since
\begin{equation}
\label{eq:tempOT}
\OEn_{\widetilde X} \simeq \Eepb\varpi\rihom(\opb\pi\Cfield_U,\OEn_X),
\end{equation}
we have
\begin{align*}
\solE_{\widetilde X}(\shm^\sha)  
&= \rhom[\shd_{\widetilde X}^\sha](\shm^\sha,\OEn_{\widetilde X}) \\
&\simeq \rhom[\opb\varpi\D_X](\opb\varpi\shm, \Eepb\varpi\rihom(\opb\pi\Cfield_U,\OEn_X)) \\
&\simeq \Eepb\varpi \rihom(\opb\pi\Cfield_U,\rhom[\D_X](\shm,\OEn_X)) \\
&\simeq \rihom(\opb{\tilde\varpi}\opb\pi\Cfield_U,\Eepb\varpi \solE_X(\shm)) \\
&\simeq \rihom(\opb{\tilde\varpi}\opb\pi\Cfield_U,\Eopb\varpi \solE_X(\shm)),
\end{align*}
where the last isomorphism follows from the fact that $\varpi$ is an isomorphism over $U$. 
Hence we obtain
\begin{align*}
\opb{\tilde\varpi}\opb\pi\Cfield_U \tens \solE_{\widetilde X}(\shm^\sha) 
&\simeq \opb{\tilde\varpi}\opb\pi\Cfield_U \tens  \Eopb\varpi \solE_X(\shm) \\
&\simeq \Eopb\varpi (\opb\pi\Cfield_U \tens   \solE_X(\shm)) \\
&\simeq \Eopb\varpi\solE_X(\shm).
\end{align*}
Here, the last isomorphism follows from Corollary~\ref{cor:SolXminusY}.

\medskip
\noindent(ii)
Next, we shall show
\begin{equation}
\label{eq:Topbb}
\shm^\sha 
\isoto \fhom(\solE_{\widetilde X}(\shm^\sha),\OEn_{\widetilde X}).
\end{equation}
Since the question is local, we can assume $\shm=\she_{U|X}^\varphi$ for $\varphi\in\O_X(*D)$.
Then we have
\begin{align*}
\rihom&(\LE \solE_{\widetilde X}(\shm^\sha),\RE\OEn_{\widetilde X}) \\
& \simeq \rihom(\LE \solE_{\widetilde X}(\shm^\sha),\rihom(\opb{\tilde\varpi}\opb\pi\Cfield_U,\epb{\tilde\varpi}\RE\OEn_X)) \\
& \simeq \rihom(\LE \solE_{\widetilde X}(\shm^\sha) \tens \opb{\tilde\varpi}\opb\pi\Cfield_U,\epb{\tilde\varpi}\RE\OEn_X) \\
& \underset{(*)}\simeq \rihom(\opb{\tilde\varpi}\LE \solE_X(\shm),\epb{\tilde\varpi}\RE \OEn_X) \\
& \simeq \epb{\tilde\varpi}\rihom(\LE \solE_X(\shm),\RE \OEn_X),
\end{align*}
where $(*)$ follows \eqref{eq:Topba}. 
Hence
\begin{align*}
\roimv{\pi_{\widetilde X\sep*}}\rihom&(\LE \solE_{\widetilde X}(\shm^\sha),\RE \OEn_{\widetilde X}) \\
&\simeq \roimv{\pi_{\widetilde X\sep*}}\epb{\tilde\varpi}\rihom(\LE\solE_X(\shm),\RE \OEn_X) \\
&\simeq \epb\varpi\roim\pi\rihom(\LE\solE_X(\shm),\RE \OEn_X) \\
&\simeq \epb\varpi(\shm \ltens[\O_X] \Ot_X),
\end{align*}
where the last isomorphism follows from Proposition~\ref{pro:rec-ephi}.
We have
\begin{align*}
\epb\varpi(\shm \ltens[\O_X] \Ot_X)
&\simeq \epb\varpi(\shm \ltens[\O_X] \rihom(\Cfield_U,\Ot_X)) \\
&\simeq \opb\varpi\shm \ltens[\opb\varpi\O_X] \Ot_{\widetilde X}.
\end{align*}
Hence, by applying $\alpha_{\widetilde X}$, we obtain
\begin{equation}
\label{eq:Topbd}
\fhom(\solE_{\widetilde X}(\shm^\sha),\OEn_{\widetilde X})
\simeq \alpha_{\widetilde X}(\opb\varpi\shm \ltens[\opb\varpi\O_X] \Ot_{\widetilde X}) 
\simeq \shm^\sha
\end{equation}
by Proposition~\ref{pro:AOt}.

\smallskip\noindent(iii)
Now we shall prove the statement
\[
\shm \isoto \fhom(\solE_X(\shm),\OEn_X).
\]
By Proposition~\ref{pro:AOt}, we have
\begin{align*}
\roim\varpi\shm^\sha
&\simeq \alpha_X\roim\varpi(\Ot_{\widetilde X} \ltens[\opb\varpi\O_X] \opb\varpi\shm) \\
&\simeq (\alpha_X\rihom(\Cfield_U,\Ot_X)) \ltens[\O_X] \shm \\
&\simeq \O_X(*D) \ltens[\O_X] \shm \simeq \shm.
\end{align*}
We have
\begin{align*}
\fhom&(\solE_{\widetilde X}(\shm^\sha),\OEn_{\widetilde X}) \\
& \simeq
\fhom(\solE_{\widetilde X}(\shm^\sha),\rihom(\opb{\tilde\varpi}\opb\pi\Cfield_U,\Eepb\varpi\OEn_X)) \\
& \simeq
\fhom(\solE_{\widetilde X}(\shm^\sha) \tens \opb{\tilde\varpi}\opb\pi\Cfield_U,\Eepb\varpi\OEn_X) \\
& \simeq
\fhom(\Eopb\varpi\solE_X(\shm),\Eepb\varpi\OEn_X),
\end{align*}
where the last isomorphism follows from \eqref{eq:Topba}.
It follows that
\[
\roim\varpi\fhom(\solE_{\widetilde X}(\shm^\sha),\OEn_{\widetilde X})
\simeq \fhom(\Eeeim\varpi\Eopb\varpi\solE_X(\shm),\OEn_X)
\]
by Lemma~\ref{homepb}.
Hence, applying $\roim\varpi$ to \eqref{eq:Topbb}, we get
\begin{equation}
\label{eq:Topbe}
\shm \isoto \fhom(\Eeeim\varpi\Eopb\varpi\solE_X(\shm),\OEn_X).
\end{equation}
By Corollary~\ref{cor:SolXminusY}, we have
$\solE_X(\shm) \simeq \opb\pi\Cfield_U\tens\solE_X(\shm)$.
Moreover, $\varpi\colon\widetilde X\to X$ is an isomorphism over $U$. Hence we have
\begin{align*}
\Eeeim\varpi\Eopb\varpi\solE_X(\shm)
&\simeq \Eeeim\varpi\Eopb\varpi(\opb\pi\Cfield_U\tens\solE_X(\shm)) \\
&\simeq \opb\pi\Cfield_U\tens\Eeeim\varpi\Eopb\varpi\solE_X(\shm) \\
&\simeq \opb\pi\Cfield_U\tens\solE_X(\shm) \\
&\simeq \solE_X(\shm) .
\end{align*}
We thus obtain the desired result.
\end{proof}

Thus the proof of Theorem~\ref{thm:reconstruction} is complete.

As a consequence of Theorem~\ref{thm:reconstruction}, we get the following result (which is also a consequence of Lemmas~\ref{lem:i0OT} and \ref{lem:ai0}).

\begin{corollary}
\label{cor:OOT}
There is an isomorphism in $\BDC(\D_X)$
\[
\O_X 
\simeq \fhom( \Cfield_X^\enh , \OEn_X).
\]
\end{corollary}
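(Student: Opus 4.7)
The plan is to deduce Corollary~\ref{cor:OOT} as a one-line specialization of Theorem~\ref{thm:reconstruction} to the trivial $\D_X$-module $\shm = \O_X$. Since the heavy lifting has already been done in Theorem~\ref{thm:reconstruction}, no serious obstacle is expected; the only remaining work is to identify the input $\solE_X(\O_X)$ with $\Cfield_X^\enh$.

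Concretely, applying Theorem~\ref{thm:reconstruction} to $\O_X \in \BDC_\hol(\D_X)$ produces a functorial isomorphism
\[
\O_X \isoto \fhom\bl\solE_X(\O_X),\OEn_X\br
\]
in $\BDC(\D_X)$, so it suffices to compute $\solE_X(\O_X)$. Using the general identity $\solE_X(\shm) \simeq \drE_X(\ddual_X\shm)[-d_X]$ recorded right after the definitions of $\drE_X$ and $\solE_X$, together with $\ddual_X\O_X \simeq \O_X$ and Proposition~\ref{prop:regireg}'s computation $\drE_X(\O_X) \simeq \Cfield_X^\enh[d_X]$, I obtain $\solE_X(\O_X) \simeq \Cfield_X^\enh$, and the corollary follows by substitution.

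Alternatively, as suggested by the parenthetical remark in the statement, the corollary can be proved directly by unwinding the right-hand side, without appealing to reconstruction. The plan here is as follows. Starting from the factorization $\Cfield_X^\enh \simeq \Cfield_X^\enh \ctens \Cfield_{\{t=0\}}$ and invoking Lemma~\ref{lem:homT} combined with the stability isomorphism $\cihom(\Cfield_X^\enh,\OEn_X) \simeq \OEn_X$ of Corollary~\ref{cor:CTamOT}, I reduce to $\fhom(\Cfield_{\{t=0\}},\OEn_X)$. Writing $\Cfield_{\{t=0\}} \simeq \Cfield_{\{t=0\}} \tens \opb\pi\Cfield_X$ and applying Lemma~\ref{lem:ai0} with $L = \Cfield_X$ further identifies this with $\alpha_X\epb{i_0}\RE\OEn_X$. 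Finally, Lemma~\ref{lem:i0OT} gives $\epb{i_0}\RE\OEn_X \simeq \Ot_X$, and the classical fact $\alpha_X\Ot_X \simeq \O_X$ (which follows from $\alpha_X\Dbt_{X_\R} \simeq \Db_{X_\R}$ applied to the Dolbeault presentation $\Ot_X \simeq \rhom[\D_{\overline X}](\O_{\overline X},\Dbt_{X_\R})$) finishes the proof. In both routes the $\D_X$-module structures on either side agree automatically, since each identification is functorial in the $\D_X$-action inherited from $\OEn_X \in \BEC[\ind\D]X$.
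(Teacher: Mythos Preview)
Your proposal is correct and matches the paper's own indications exactly: the paper states the corollary as a consequence of Theorem~\ref{thm:reconstruction} and parenthetically notes it also follows from Lemmas~\ref{lem:i0OT} and \ref{lem:ai0}, which are precisely your two routes. Your identification $\solE_X(\O_X)\simeq\Cfield_X^\enh$ via $\ddual_X\O_X\simeq\O_X$ and Proposition~\ref{prop:regireg} is the intended specialization, and your alternative chain through Lemma~\ref{lem:homT}, Corollary~\ref{cor:CTamOT}, Lemma~\ref{lem:ai0}, Lemma~\ref{lem:i0OT}, and $\alpha_X\Ot_X\simeq\O_X$ correctly fleshes out the second route.
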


\subsection{Fully faithfulness}\label{sse:Fullyfaithfulness}

Let us now show that the functor $\drE_X$ is fully faithful.

\begin{theorem}
For $\shm,\shn\in\BDC_\hol(\D_X)$, there is an isomorphism
\[
\rhom[\D_X](\shm,\shn) \isoto \fhom(\drE_X(\shm), \drE_X(\shn)).
\]
In particular, the functor
\[
\drE_X\colon \BDC_\hol(\D_X) \to \BEC[\iCfield]X
\]
is fully faithful.
\end{theorem}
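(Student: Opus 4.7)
The plan is to establish the chain of natural isomorphisms
\begin{equation*}
\rhom[\D_X](\shm,\shn) \isoto \fhom(\solE_X(\shn), \solE_X(\shm)) \isoto \fhom(\drE_X(\shm), \drE_X(\shn));
\end{equation*}
the full faithfulness of $\drE_X$ then follows by taking $H^0\rsect(X;\cdot)$.

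For the first isomorphism I would apply the reconstruction Theorem~\ref{thm:reconstruction} to $\shn$, obtaining $\shn \isoto \fhom(\solE_X(\shn), \OEn_X)$, and then apply $\rhom[\D_X](\shm,\cdot)$ to both sides. The right-hand side is rewritten by commuting $\rhom[\D_X](\shm,\cdot)$ past $\fhom(\solE_X(\shn),\cdot)$, the $\D_X$-module structure being carried by $\OEn_X$, and recognizing $\rhom[\D_X](\shm, \OEn_X) = \solE_X(\shm)$.

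For the second isomorphism I would invoke the duality formula Proposition~\ref{prop:homdual}~(iv) applied to the $\R$-constructible pair $\bl\solE_X(\shn), \solE_X(\shm)\br$, producing
\begin{equation*}
\fhom(\solE_X(\shn), \solE_X(\shm)) \simeq \fhom(\Edual_X\solE_X(\shm), \Edual_X\solE_X(\shn)).
\end{equation*}
Using $\solE_X(\cdot) \simeq \drE_X(\ddual_X\cdot)[-d_X]$ together with Theorem~\ref{thm:drTsolT}, one computes $\Edual_X\solE_X(\shp) \simeq \Edual_X\drE_X(\ddual_X\shp)[d_X] \simeq \drE_X(\shp)[d_X]$ for any $\shp \in \BDC_\hol(\D_X)$; the two shifts $[d_X]$ cancel in $\fhom$, giving the desired identification with $\fhom(\drE_X(\shm), \drE_X(\shn))$.

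The main obstacle is the internal swap in the first step: for $K_1 \in \BECRc[\iCfield]X$ and an enhanced ind-sheaf $K_2$ equipped with a $\D_X$-action (here $K_2 = \OEn_X$), one needs
\begin{equation*}
\rhom[\D_X](\shm, \fhom(K_1, K_2)) \simeq \fhom(K_1, \rhom[\D_X](\shm, K_2)).
\end{equation*}
I would establish this using the sheaf-level presentation $\fhom(K_1,K_2) \simeq \roim{\overline\pi}\rhom(\reeimv{j_{X\sep!!}}\LE K_1, \roimv{j_{X\sep*}}\RE K_2)$ from the last equality of Definition~\ref{def:HomT}: this expression bypasses $\alpha_X$ (which does not commute with $\rhom[\D_X]$ in general) and is built from $\roim{\overline\pi}$ and $\rhom(F,\cdot)$ with $F$ a sheaf on $X\times\overline\R$, both of which commute with $\rhom[\D_X](\shm,\cdot)$ for $\shm$ coherent, the $\D_X$-action being inherited from $\roimv{j_{X\sep*}}\RE K_2$.
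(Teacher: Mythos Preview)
Your proposal is correct and follows essentially the same approach as the paper's proof: both use the reconstruction theorem applied to $\shn$, the commutation $\rhom[\D_X](\shm,\fhom(K,\OEn_X))\simeq\fhom(K,\solE_X(\shm))$, and the duality identification $\fhom(\solE_X\shn,\solE_X\shm)\simeq\fhom(\drE_X\shm,\drE_X\shn)$ via Theorem~\ref{thm:drTsolT} and Proposition~\ref{prop:homdual}~(iv). The only difference is the order in which the two isomorphisms are established, and you are somewhat more explicit than the paper about justifying the commutation step.
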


\Proof
By Theorem~\ref{thm:drTsolT} and Proposition~\ref{prop:homdual} (iv),
we have
$$\fhom(\drE_{X}\shm,\drE_{X}\shn)
\simeq\fhom(\solE_X\shn,\solE_X\shm).$$
Then, we have
\eqn
\fhom(\solE_X\shn,\solE_X\shm)
&\simeq&\fhom\bl\solE_X\shn,\rhom[\D_X](\shm, \OEn_X)\br\\
&\simeq&\rhom[\D_X]\bl\shm,\fhom\bl\solE_X\shn,\OEn_X)\br\\
&\simeq&\rhom[\D_X](\shm,\shn).
\eneqn
Here the last isomorphism follows from Theorem~\ref{thm:reconstruction}.
\QED

\subsection{Stokes phenomenon}\label{sse:Stokes}

Liner ordinary differential equations with irregular singularities are subjected to the Stokes phenomenon (see for example \cite[Section 15]{Was65} or \cite[\S9.7]{Hil76}). Following \cite[\S7]{DK12}, we show here through an example how, in our setting, the Stokes phenomenon arises in a purely topological fashion.

\medskip

Let $X$ be an open disc in $\C$ centered at $0$.
(We will shrink $X$ if necessary.) 
Consider the real blow-up $\varpi\colon \widetilde X\to X$ of $X$ along $\{0\}$, and recall that $\widetilde X^0 = \opb\varpi(0)$ is the set of normal directions to $0$ in $X$.

Let $\varphi,\psi\in\O_X(*0)$, and assume that $\psi-\varphi$ has an effective pole at $0$.
For $U = X\setminus \{0\}$, set
\[
\shm_0 \defeq \she^{\varphi}_{U|X} \dsum \she^{\psi}_{U|X}.
\]
Let $\shm$ be a holonomic $\D_X$-module such that
$\shm \simeq \shm(*0)$,
$\ss(\shm)=\{0\}$, and one has
\begin{equation}
\label{eq:isoStokes}
(\shm^\sha)|_\theta \simeq
(\shm_0^\sha)|_\theta\quad\text{for any $\theta\in \widetilde X^0$.}
\end{equation}
Note that $\shm$ has a normal form along $\{0\}$.

The Stokes curves are the real analytic arcs $\ell_i\subset X$ defined by
\[
\{\Re(\psi-\varphi) = 0\} = \bigsqcup\nolimits_{i\in I}\ell_i.
\]
(Here we possibly shrink $X$ to avoid crossings of the $\ell_i$'s and to ensure that they admit $|z|$ as parameter.)
Since $\she^{\varphi}_{U|X} \simeq \she^{\varphi+\varphi_0}_{U|X}$ for $\varphi_0\in\O_X$, the Stokes curves depend on the choice of $\varphi$ and $\psi$.

The Stokes lines $L_i$, defined as the half-lines tangent to $\ell_i$ at $0$, are independent of the choice of $\varphi$ and $\psi$. 

The Stokes multipliers of $\shm$ describe how the isomorphism \eqref{eq:isoStokes} changes when $\theta$ crosses a Stokes line.

Let us show how these data are topologically encoded in $\drE_X(\shm)$.

Set
\begin{align*}
F &\defeq \C_X^\enh \ctens \C_{\{t=\Re\varphi\}} \simeq \indlim[a\rightarrow+\infty]\C_{\{t-\Re\varphi \geq a \}} , \\
G &\defeq \C_X^\enh \ctens \C_{\{t=\Re\psi\}} \simeq \indlim[a\rightarrow+\infty]\C_{\{t-\Re\psi \geq a \}} .
\end{align*}
By Corollary~\ref{cor:drTXtilde}, Lemma~\ref{lem:drTEphi} and \eqref{eq:isoStokes},
\begin{equation}
\label{eq:drH}
\drE_X(\shm) \simeq \rihom(\opb\pi\C_U ,H)[1],
\end{equation}
where $H$ is an enhanced ind-sheaf such that $H \simeq \opb\pi\Cfield_U \tens H$ and
\[
\opb\pi\C_{S} \tens H \simeq \opb\pi\C_{S} \tens (F\dsum G)
\]
for any sufficiently small open sector $S$.

Let $\fb^\pm$ be the vector space of upper/lower triangular matrices in $\operatorname{M}_2(\C)$, and let $\ft=\fb^+\cap \fb^-$ be the vector space of diagonal matrices.
Using Proposition~\ref{pro:pro:HomStab} one gets

\begin{lemma}\label{lem:FcGc}
Let $S$ be an open sector.
\bnum 
\item
If $S\subset \{\pm\Re(\varphi-\psi)>0\}$, then
\[
\Endo[{\BEC[\iCfield]X}](\opb\pi\C_{S} \tens (F\dsum G)) \simeq \fb^\pm.
\]
\item
If $S\supset L_i$ for some $i\in I$ and $S\cap L_j=\emptyset$ for $i\neq j$, then
\[
\Endo[{\BEC[\iCfield]X}](\opb\pi\C_{S} \tens (F\dsum G)) \simeq \ft.
\]
\ee
\end{lemma}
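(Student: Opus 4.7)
Proof plan for Lemma~\ref{lem:FcGc}.

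The statement asserts a block-matrix computation: the endomorphism algebra of $\opb\pi\C_S\tens(F\oplus G)$ splits as
\[
\begin{pmatrix}\End(F_S) & \Hom(G_S,F_S)\\ \Hom(F_S,G_S) & \End(G_S)\end{pmatrix},
\]
where $F_S=\opb\pi\C_S\tens F$ and $G_S=\opb\pi\C_S\tens G$. Using that $\opb\pi\C_S$ commutes with the convolution $\ctens$ (as in the argument just after Corollary~\ref{cor:pifieldT}), one first rewrites $F_S\simeq\C_X^\enh\ctens\C_{Z_\varphi}$ and $G_S\simeq\C_X^\enh\ctens\C_{Z_\psi}$, where $Z_\chi=\{(x,t)\in S\times\R\semicolon t=\Re\chi(x)\}$ for $\chi=\varphi,\psi$. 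After shrinking $X$ (hence $S$) to secure the compactness hypothesis, I would apply Proposition~\ref{pro:pro:HomStab} to each of the four matrix entries, obtaining for $\chi_1,\chi_2\in\{\varphi,\psi\}$
\[
\Hom[{\BEC[\iCfield]X}]\bl\C_X^\enh\ctens\C_{Z_{\chi_1}},\C_X^\enh\ctens\C_{Z_{\chi_2}}\br
\simeq\varinjlim_{a\to+\infty}\Hom[{\BEC[\iCfield]X}]\bl\C_{Z_{\chi_1}},\C_{W^{\chi_2}_a}\br,
\]
where, by the explicit convolution formula (Lemma~\ref{lem:musigma} applied to the graph $Z_{\chi_2}$), $W^{\chi_2}_a=\{(x,t)\in S\times\R\semicolon t\geq a+\Re\chi_2(x)\}$ is the translated epigraph.

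The next step is the core analytic input. I would show that
\[
\varinjlim_{a\to+\infty}\Hom[{\BEC[\iCfield]X}](\C_{Z_{\chi_1}},\C_{W^{\chi_2}_a})
\simeq
\begin{cases}\C & \text{if } \Re(\chi_1-\chi_2) \text{ is bounded below on } S,\\ 0 & \text{otherwise.}\end{cases}
\]
The existence of a nonzero morphism in the colimit amounts, via the quotient structure of $\BEC[\iCfield]X$ and the explicit form of $\LE,\RE$, to the set-theoretic condition $Z_{\chi_1}\subset W^{\chi_2}_{a}$ for some $a\in\R$, i.e.\ $\Re\chi_1(x)\geq a+\Re\chi_2(x)$ for all $x\in S$. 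Such an $a$ exists exactly when $\Re(\chi_1-\chi_2)$ is bounded below on $S$; the resulting Hom is one-dimensional (generated by the translation-of-graphs morphism), because over a connected base $S$ the global constant is the only degree of freedom, which can be checked by passing to $\fhom$ and using that $Z_{\chi_1}$ is connected. Applied to the four combinations, this gives: the diagonal entries are always $\C$ (since $\Re(\chi-\chi)=0$), while the off-diagonal entry $\Hom(F_S,G_S)$ is $\C$ iff $\Re(\varphi-\psi)$ is bounded below on $S$, and $\Hom(G_S,F_S)$ is $\C$ iff $\Re(\varphi-\psi)$ is bounded above on $S$.

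Given these computations, both cases of the lemma follow by a sign analysis. Since $\psi-\varphi$ has an effective pole at $0$, for any open sector $S$ (however small) $\Re(\varphi-\psi)$ is unbounded above and unbounded below on each connected component of $S\setminus\bigsqcup_i\ell_i$. In case (i), $S\subset\{\pm\Re(\varphi-\psi)>0\}$ means $\Re(\varphi-\psi)$ has a fixed sign on $S$, hence is bounded on exactly one side (by $0$) and unbounded on the other; this kills exactly one of the two off-diagonal entries, yielding $\fb^\pm$. In case (ii), $S$ crosses a single Stokes line $L_i$, so $\Re(\varphi-\psi)$ takes both signs on $S$ and is unbounded on both sides near $0$; hence both off-diagonal entries vanish and the endomorphism algebra reduces to the diagonal $\ft$.

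The main obstacle is the Key Lemma of the second paragraph: checking that the colimit of Homs in $\BEC[\iCfield]X$ between a graph-sheaf and a translating family of epigraph-sheaves really produces $\C$ or $0$ according to the boundedness condition. The correct framework is to reduce, via the adjoints $\LE,\RE$ of Notation~\ref{not:Tlr}, to an ordinary Hom-computation in $\BDC(\iCfield_{X\times\R_\infty})$ and then exploit that $\C_{Z_{\chi_1}}$ (graph) and $\C_{W^{\chi_2}_a}$ (epigraph) are $\R$-constructible with very explicit microsupports; the resulting comparison with the "tempered growth of $e^{\chi_1-\chi_2}$ on $S$" is precisely the content of the boundedness dichotomy, paralleling the elementary computation $\Hom_{\D_X}(\she^\varphi_{U|X},\she^\psi_{U|X})=0$ whenever $\varphi-\psi$ is non-constant, but refined to the sector-by-sector Stokes level.
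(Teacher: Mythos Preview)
Your overall strategy is exactly what the paper intends: it only says ``Using Proposition~\ref{pro:pro:HomStab} one gets'' before stating the lemma, so the block-matrix decomposition together with the colimit formula from Proposition~\ref{pro:pro:HomStab} is the whole argument. Your reductions $F_S\simeq\C_X^\enh\ctens\C_{Z_\varphi}$ and $\field_{\{t\geq a\}}\ctens\C_{Z_{\chi_2}}\simeq\C_{W^{\chi_2}_a}$ are correct.

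There is, however, a genuine sign error in your core claim. After applying $\LE$ to $\C_{Z_{\chi_1}}$, the $+$-component is the epigraph sheaf $\C_{W^{\chi_1}_0}$, and you must compute $\Hom(\C_{W^{\chi_1}_0},\C_{W^{\chi_2}_a})$ between constant sheaves on \emph{closed} subsets of $S\times\R$. For closed connected $A,B$ one has $\Hom(\C_A,\C_B)\simeq\C$ iff $B\subset A$ (the nonzero map being restriction), not $A\subset B$. Hence the condition for a nonzero morphism is $W^{\chi_2}_a\subset W^{\chi_1}_0$, i.e.\ $\Re(\chi_1-\chi_2)\leq a$ on $S$. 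Taking the colimit over $a\to+\infty$ gives $\C$ iff $\Re(\chi_1-\chi_2)$ is \emph{bounded above} on $S$, not bounded below as you wrote. Your stated criterion ``$Z_{\chi_1}\subset W^{\chi_2}_a$'' is backwards.

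This matters: in case (i)$^+$ one has $\Re(\varphi-\psi)>0$ on $S$ and, by the effective pole hypothesis, $\Re(\varphi-\psi)\to+\infty$ near $0$; so $\Re(\varphi-\psi)$ is bounded below but unbounded above. With the corrected criterion this gives $\Hom(F_S,G_S)=0$ and $\Hom(G_S,F_S)=\C$, hence $\fb^+$ as claimed. Your version would produce $\fb^-$ instead. Once the inequality is flipped, your sign analysis for both cases goes through verbatim.
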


This proves that the Stokes lines are encoded in $H$.
Let us show how to recover the Stokes multipliers of $\shm$ as gluing data for $H$.

Let $S_i$ be an open sector which contains $L_i$ and is disjoint from $L_j$ for $i\neq j$. We choose $S_i$ so that $\Union\nolimits_{i\in I} S_i = U$.

Then for each $i\in I$, there is an isomorphism
\[
\alpha_i\colon \opb\pi\C_{S_i} \tens H \isoto \opb\pi\C_{S_i} \tens (F\dsum G).
\]
Note that $\alpha_i$ is unique only up to left multiplication by elements of $\mathfrak{t} \cap \operatorname{GL}_2(\C)$ by Lemma~\ref{lem:FcGc}~(ii).

Take a cyclic ordering of $I$ such that the Stokes lines get ordered counterclockwise. 

Since $\{S_i\}_{i\in I}$ is an open cover of $U$, the enhanced ind-sheaf $H$ is reconstructed from $F\dsum G$ via the gluing data given by the Stokes multipliers
\[
A_i = \alpha_{i+1}\alpha_i^{-1}|_{\opb \pi(S_i\cap S_{i+1})},
\]
where $A_i \in \fb^\pm \cap \operatorname{GL}_2(\C)$ if $\pm\Re(\varphi-\psi)>0$ on $S_i\cap S_{i+1}$  by Lemma~\ref{lem:FcGc}~(i).

Note that, replacing $A_i$ with $A_i' = \gamma_{i+1} A_i \gamma_i^{-1}$ for $\gamma_i \in \mathfrak{t} \cap \operatorname{GL}_2(\C)$, one gets an enhanced ind-sheaf isomorphic to $H$.

\end{document}